%
%

\documentclass{memo-l}


\usepackage[utf8]{inputenc}
\usepackage{eucal}
\usepackage{mathtools}
\usepackage[mode=image]{standalone}
\usepackage{tikz}
\usetikzlibrary{matrix,arrows}
\usepackage{tikz-cd}
\usepackage[alphabetic]{amsrefs}
\usepackage[normalem]{ulem}
\usepackage{extarrows}
\usepackage{amssymb}
\usepackage{url}
\usepackage[shortlabels]{enumitem}
  \setitemize[1]{leftmargin=2em}
  \setenumerate[1]{leftmargin=*}
\usepackage[colorlinks=true,linktocpage=true,citecolor=blue]{hyperref}


\newtheorem{theorem}{Theorem}[section]
\newtheorem{lemma}[theorem]{Lemma}
\newtheorem{ThmB}{Theorem}
\newtheorem{corollary}[theorem]{Corollary}
\newtheorem{proposition}[theorem]{Proposition}

\theoremstyle{definition}
\newtheorem{definition}[theorem]{Definition}
\newtheorem{example}[theorem]{Example}

\newtheorem{examples}[theorem]{Example}
\newtheorem{notation}[theorem]{Notation}
\newtheorem{warning}[theorem]{Warning}

\newtheorem{variant}[theorem]{Variant}
\newtheorem{proc}[theorem]{Procedure}
\newtheorem{construction}[theorem]{Construction}
\newtheorem{problem}[theorem]{Problem}
\newtheorem{convention}[theorem]{Convention}

\theoremstyle{remark}
\newtheorem{remark}[theorem]{Remark}

\numberwithin{section}{chapter}
\numberwithin{equation}{chapter}


\newcommand{\IFF}{if and only if}
\DeclareMathOperator{\limIL}{lim}

\newcommand{\op}{\txt{op}}
\newcommand{\RR}{\mathbb{R}}

\newcommand{\CC}{\mathbb{C}}
\newcommand{\ZZ}{\mathbb{Z}}

\newcommand{\icat}{$\infty$-category}
\newcommand{\icats}{$\infty$-categories}
\newcommand{\icatl}{$\infty$-categorical}

\newcommand{\xto}[1]{\xrightarrow{#1}}

\newcommand{\from}{\leftarrow}
\newcommand{\xfrom}[1]{\xleftarrow{#1}}

\newcommand{\catname}[1]{\ensuremath{\text{\textup{#1}}}}
\newcommand{\txt}[1]{\ensuremath{\text{\textup{#1}}}}
\newcommand{\Set}{\catname{Set}}
\newcommand{\sSet}{\Set_{\Delta}}

\newcommand{\Cat}{\catname{Cat}}
\newcommand{\CatI}{\catname{Cat}_\infty}
\newcommand{\LCatI}{\widehat{\catname{Cat}}_\infty}

\newcommand{\Sp}{\catname{Sp}}

\newcommand{\Mod}{\catname{Mod}}

\newcommand{\Fun}{\txt{Fun}}

\newcommand{\Map}{\txt{Map}}
\newcommand{\Hom}{\txt{Hom}}

\newcommand{\Span}{\txt{Span}}
\newcommand{\SPAN}{\txt{SPAN}}
\newcommand{\Cospan}{\txt{Cospan}}
\newcommand{\COSPAN}{\txt{COSPAN}}
\newcommand{\oSPAN}{\overline{\txt{SPAN}}}
\newcommand{\oCOSPAN}{\overline{\txt{COSPAN}}}
\newcommand{\lsp}{\txt{sp}}

\newcommand{\Der}{\txt{Der}}
\newcommand{\ev}{\txt{ev}}
\newcommand{\Art}{\txt{Art}}
\newcommand{\fin}{\txt{fin}}
\newcommand{\dAff}{\txt{dAff}}
\newcommand{\et}{\txt{\'{e}t}}

\newcommand{\dSt}{\txt{dSt}}
\newcommand{\dStg}{\dSt^{\txt{geom}}}
\newcommand{\dStgop}{\dSt^{\txt{geom},\op}}
\newcommand{\dStArt}{\dSt^{\txt{Art}}}
\newcommand{\UCC}{\txt{UCC}}
\newcommand{\dStUCC}{\dSt^{\UCC}}
\newcommand{\dStUCCop}{\dSt^{\UCC,\op}}
\newcommand{\PSymp}{\txt{PreSymp}}

\newcommand{\POr}{\txt{PreOr}}

\newcommand{\POrc}{\POr^{\txt{cpt}}}

\newcommand{\Lag}{\txt{Lag}}
\newcommand{\LAG}{\txt{LAG}}

 \newcommand{\Or}{\txt{Or}}
\newcommand{\OR}{\txt{OR}}

\newcommand{\ORcSd}{\OR^{\txt{cpt},S,d}}
\newcommand{\OrcSd}{\Or^{\txt{cpt},S,d}}
\newcommand{\bbS}{\bbSigma}
\newcommand{\pbbS}{\partial\bbS}
\newcommand{\tbbS}{\widetilde{\bbS}}

\newcommand{\bbL}{\bbLambda}
\newcommand{\tbbL}{\widetilde{\bbL}}

\newcommand{\Ch}{\sfop{Ch}}
\newcommand{\Chk}{\Ch_{\k}}
\newcommand{\Chkd}{\Ch_{\k/\k[-d]}}
\newcommand{\CAlg}{\txt{CAlg}}
\newcommand{\CAlgk}{\CAlg_{\k}}
\newcommand{\fX}{\mathfrak{X}}
\newcommand{\Xd}{\fX^{(d)}}
\newcommand{\Xdr}{\fX^{(d,\mathrm{red})}}
\newcommand{\Xdpr}{\fX^{(d+1,\mathrm{red})}}
\newcommand{\Xdj}{\Xd_{\ind{j}}}

\newcommand{\Xdjl}{\Xd_{\ind{j},l}}
\newcommand{\Xdrjl}{\Xdr_{\ind{j},l}}
\newcommand{\tX}{\widetilde{\fX}}
\newcommand{\tXd}{\tX^{(d)}}
\newcommand{\tXdj}{\tXd_{\ind{j}}}
\newcommand{\tXdjl}{\tXd_{\ind{j},l}}
\newcommand{\tXdr}{\tX^{(d,\mathrm{red})}}
\newcommand{\tXdrj}{\tXdr_{\ind{j}}}
\newcommand{\tXdrjl}{\tXdr_{\ind{j},l}}
\newcommand{\fY}{\mathfrak{Y}}
\newcommand{\Yd}{\fY^{(d)}}
\newcommand{\Ydr}{\fY^{(d,\mathrm{red})}}
\newcommand{\Ydpr}{\fY^{(d+1,\mathrm{red})}}
\newcommand{\Ydj}{\Yd_{\ind{j}}}

\newcommand{\Ydrj}{\Ydr_{\ind{j}}}

\newcommand{\tY}{\widetilde{\fY}}
\newcommand{\tYd}{\tY^{(d)}}
\newcommand{\tYdj}{\tYd_{\ind{j}}}
\newcommand{\tYdjl}{\tYd_{\ind{j},l}}
\newcommand{\tYdr}{\tY^{(d, \mathrm{red})}}
\newcommand{\tYdrj}{\tYdr_{\ind{j}}}
\newcommand{\tYdrjl}{\tYdr_{\ind{j},l}}
\newcommand{\tYdpr}{\tY^{(d+1,\mathrm{red})}}
\newcommand{\fZ}{\mathfrak{Z}}
\newcommand{\Zd}{\fZ^{(d)}}
\newcommand{\tZ}{\widetilde{\fZ}}
\newcommand{\tZd}{\tZ^{(d)}}
\newcommand{\Pj}{P_{\ind{j}}}
\newcommand{\Pjl}{P_{\ind{j},l}}
\newcommand{\AKSZ}{\txt{AKSZ}}
\newcommand{\nd}{\txt{nd}}
\newcommand{\semi}{\txt{semi}}
\newcommand{\Dnd}{\Delta_{\txt{nd}}}
\newcommand{\Dndspi}{\Delta_{\txt{nd},\spi}}
\DeclareMathOperator{\Tw}{Tw}
\DeclareMathOperator{\Nrv}{N}
\DeclareMathOperator{\Ex}{Ex}
\newcommand{\Sh}{\txt{Sh}}
\newcommand{\Shet}{\txt{Sh}^{\et}}

\newcommand{\tSp}{\widetilde{\Sp}}
\newcommand{\tlsp}{\widetilde{\lsp}}
\newcommand{\id}{\txt{id}}
\DeclareMathOperator{\colimP}{colim}
\newcommand{\colim}{\mathop{\colimP}}
\DeclareMathOperator{\hocolimP}{hocolim}
\newcommand{\hocolim}{\mathop{\hocolimP}}

\newcommand{\blank}{\text{\textendash}}

\newcommand{\isoto}{\xrightarrow{\sim}}
\newcommand{\isofrom}{\xleftarrow{\sim}}

\newcommand{\bbDelta}{\boldsymbol{\Delta}}

\newcommand{\bbSigma}{\boldsymbol{\Sigma}}
\newcommand{\bbLambda}{\boldsymbol{\Lambda}}

\newcommand{\Int}{\mathrm{Int}}

\DeclareMathOperator{\Bord}{Bord}

\DeclareMathOperator{\BORD}{BORD}
\newcommand{\sfop}[1]{\textmd{\textup{\textsf{#1}}}}
\newcommand{\sBORD}{\sfop{BORD}}
\newcommand{\sBord}{\sfop{Bord}}
\newcommand{\xint}{\txt{int}}
\newcommand{\ext}{\txt{ext}}
\newcommand{\BORDor}{\BORD^{\txt{or}}}
\newcommand{\Bordor}{\Bord^{\txt{or}}}
\newcommand{\sBORDor}{\sBORD^{\txt{or}}}
\newcommand{\sBORDorsemi}{\sBORD^{\txt{or},\semi}}
\newcommand{\sBordor}{\sBord^{\txt{or}}}
\newcommand{\sBordorsemi}{\sBord^{\txt{or},\semi}}

\newcommand{\R}{{\mathbb{R}}}

\DeclareMathOperator{\sd}{sd}
\DeclareFontFamily{OT1}{pzc}{}
\DeclareFontShape{OT1}{pzc}{m}{it}{<-> s * [1.10] pzcmi7t}{}
\DeclareMathAlphabet{\mathpzc}{OT1}{pzc}{m}{it}
\DeclareMathOperator{\Mfd}{Mfd}

\newcommand{\oul}[1]{{\overline{\underline{#1}}}}

\renewcommand{\k}{\mathbb{K}}

\newcommand{\cut}{\operatorname{cut}}
\newcommand{\scut}{\sfop{cut}}
\newcommand{\orcut}{\operatorname{orcut}}
\newcommand{\sorcut}{\sfop{orcut}}
\newcommand{\cutT}{\widetilde{\cut}}
\newcommand{\scutT}{\widetilde{\scut}}

\newcommand{\grid}{{\mathrm{grid}}}
\newcommand{\gridT}{{\widetilde{\mathrm{grid}}}}
\newcommand{\ind}[1]{\mathbf{#1}}
\newcommand{\spi}{{\txt{sp}}}
\newcommand{\pred}{(\txt{red})}
\newcommand{\igpd}{$\infty$-groupoid}
\newcommand{\igpds}{$\infty$-groupoids}
\newcommand{\sdDl}{\sd(\Delta^{l})}
\newcommand{\ponej}{(1,\ind{j})}
\newcommand{\bbSj}{\bbS^{\ind{j}}}
\newcommand{\bbSponej}{\bbS^{\ponej}}
\newcommand{\Modkd}{\Mod_{\k/\k[-d]}}
\newcommand{\sMod}{\sfop{Mod}}
\newcommand{\Modconn}{\Mod^{c}}
\newcommand{\CAlgconn}{\CAlg^{c}}
\newcommand{\CAlgkc}{\CAlgconn_{\k}}
\newcommand{\CAlgkcop}{\CAlg_{\k}^{c,\op}}

\newcommand{\eg}{e.g.\@}
\newcommand{\ie}{i.e.\@}

\newcommand{\cf}{cf.\@}

\newcommand{\xF}{\mathbb{F}}
\newcommand{\angled}[1]{\langle #1 \rangle}

\newcommand{\opctriangle}[6]{ %
\[ %
\begin{tikzpicture} %
\matrix (m) [matrix of math nodes,row sep=3em,column sep=1.2em,text height=1.5ex,text depth=0.25ex] %
{  #1 \pgfmatrixnextcell \pgfmatrixnextcell #2 \\ %
  \pgfmatrixnextcell #3 \pgfmatrixnextcell \\ %
}; %
\path[->,font=\footnotesize] %
(m-1-1) edge node[above] {$#4$} (m-1-3)%
(m-1-1) edge node[below left] {$#5$} (m-2-2)%
(m-1-3) edge node[below right] {$#6$} (m-2-2);%
\end{tikzpicture}%
\]%
}

\newcommand{\csquare}[8]{ %
\[ %
\begin{tikzpicture} %
\matrix (m) [matrix of math nodes,row sep=3em,column sep=2.5em,text height=1.5ex,text depth=0.25ex] %
{ #1 \pgfmatrixnextcell #2 \\ %
  #3 \pgfmatrixnextcell #4 \\ }; %
\path[->,font=\footnotesize] %
(m-1-1) edge node[auto] {$#5$} (m-1-2)%
(m-1-1) edge node[left] {$#6$} (m-2-1)%
(m-1-2) edge node[auto] {$#7$} (m-2-2)%
(m-2-1) edge node[below] {$#8$} (m-2-2);%
\end{tikzpicture}%
\]%
}

\newcommand{\nodispcsquare}[8]{ %
\begin{tikzpicture} %
\matrix (m) [matrix of math nodes,row sep=3em,column sep=2.5em,text height=1.5ex,text depth=0.25ex] %
{ #1 \pgfmatrixnextcell #2 \\ %
  #3 \pgfmatrixnextcell #4 \\ }; %
\path[->,font=\footnotesize] %
(m-1-1) edge node[auto] {$#5$} (m-1-2)%
(m-1-1) edge node[left] {$#6$} (m-2-1)%
(m-1-2) edge node[auto] {$#7$} (m-2-2)%
(m-2-1) edge node[below] {$#8$} (m-2-2);%
\end{tikzpicture}%
}

\newcommand{\nolabelcsquare}[4]{\csquare{#1}{#2}{#3}{#4}{}{}{}{}}

\DeclareMathOperator{\Spec}{Spec}
\newcommand{\QCoh}{\txt{QCoh}}
\newcommand{\QCohconn}{\QCoh^{c}}

\newcommand{\QC}{\mathcal{Q}}
\newcommand{\QCconn}{\mathcal{Q}^c}
\newcommand{\QCAlg}{\txt{QCAlg}}
\newcommand{\QCAlgconn}{\QCAlg^{c}}

\newcommand{\QCAconn}{\mathcal{QA}^{c}}
\newcommand{\QCAconnop}{\mathcal{QA}^{c,\op}}

\newcommand{\Top}{\txt{Top}}
\newcommand{\ThmA}{\cite[Theorem 4.1.3.1]{HTT}}
\newcommand{\Dop}{\simp^{\op}}
\newcommand{\Dnop}{\simp^{n,\op}}

\newcommand{\Lagn}{\Lag_{n}}
\newcommand{\LagnSs}{\Lag_{n}^{S,s}}

\newcommand{\LAGnSs}{\LAG_{n}^{S,s}}
\newcommand{\LAGnpSs}{\LAG_{n}^{+,S,s}}
\newcommand{\Orn}{\Or_{n}}
\newcommand{\OrnSd}{\Or_{n}^{S,d}}

\newcommand{\ORnSd}{\OR_{n}^{S,d}}
\newcommand{\ORnpSd}{\OR_{n}^{+,S,d}}
\newcommand{\por}{(\txt{or})}

\newcommand{\simp}{\bbDelta}

\newcommand{\Seg}{\txt{Seg}}

\newcommand{\Mon}{\txt{Mon}}

\newcommand{\Alg}{\catname{Alg}}

\newcommand{\PrL}{\catname{Pr}^{\mathrm{L}}}
\newcommand{\iopd}{$\infty$-operad}
\newcommand{\iopds}{$\infty$-operads}

\newcommand{\ffcsquare}[4]{
  \[
  \begin{tikzcd}
    #1 \arrow[hookrightarrow]{r}
    \arrow[hookrightarrow]{d} \pgfmatrixnextcell #2
    \arrow[hookrightarrow]{d}\\
    #3 \arrow[hookrightarrow]{r}
    \pgfmatrixnextcell #4
  \end{tikzcd}
  \]
}

\newcommand{\GMC}{\txt{GMCAlg}}
\newcommand{\GC}{\txt{GrCAlg}}
\newcommand{\GMM}{\txt{GMMod}}
\newcommand{\sGMM}{\sfop{GMMod}}

\newcommand{\DR}{\txt{DR}}
\DeclareMathOperator{\Sym}{Sym}
\DeclareMathOperator{\disc}{disc}
\DeclareMathOperator{\diag}{diag}
\DeclareMathOperator{\const}{const}
\newcommand{\Apcl}{\mathcal{A}^{p,\txt{cl}}}
\newcommand{\Ap}{\mathcal{A}^{p}}
\newcommand{\Atcl}{\mathcal{A}^{2,\txt{cl}}}

\newcommand{\hS}{\widehat{\mathcal{S}}}

\newcommand{\Dint}{\simp_{\txt{int}}}
\newcommand{\inj}{\txt{inj}}
\newcommand{\Dinj}{\simp_{\txt{inj}}}
\newcommand{\Del}{\simp_{\txt{el}}}
\newcommand{\Delop}{\Del^{\op}}
\newcommand{\Dintop}{\Dint^{\op}}
\newcommand{\Dinjop}{\Dinj^{\op}}
\newcommand{\Delnop}{\Del^{n,\op}}
\newcommand{\Dintnop}{\Dint^{n,\op}}
\newcommand{\DnelopI}{\simp^{n,\op}_{\txt{el}/\mathbf{i}}}
\newcommand{\DelnI}{\simp^{n}_{\txt{el}/\mathbf{i}}}
\newcommand{\ssSet}{\sSet^{\txt{semi}}}



\begin{document}

\frontmatter

\title[The AKSZ Construction in Derived Algebraic Geometry as an Extended
TFT]{The AKSZ 
 Construction in Derived Algebraic Geometry as an Extended Topological Field Theory}

\author{Damien Calaque}
\address{IMAG, Univ Montpellier, CNRS, Montpellier, France}
\urladdr{http://imag.umontpellier.fr/~calaque}
\email{damien.calaque@umontpellier.fr}
\thanks{Damien Calaque has received funding from the European Research Council (ERC) under the European Union’s
Horizon 2020 research and innovation programme (grant agreement No 768679).}

\author{Rune Haugseng}
\address{NTNU, Trondheim, Norway}
\urladdr{http://folk.ntnu.no/runegha}
\email{rune.haugseng@ntnu.no}
\thanks{Parts of this paper were written while Rune Haugseng was employed by the IBS Center for Geometry and Physics in 
a position funded by the grant IBS-R003-D1 of the Institute for Basic Science, Republic of Korea.}

\author{Claudia Scheimbauer}
\address{Technische Universit\"at M\"unchen, Munich, Germany}
\urladdr{http://www.scheimbauer.at}
\email{scheimbauer@tum.de}
\thanks{Claudia Scheimbauer received funding from the Bergen Research (Trond Mohn) Foundation, and was partially supported by
the Swiss National Science Foundation (grants P300P2 164652 and P2EZP2 159113) during her wonderful time at the 
Mathematical Institute at Oxford University and at the Max Planck Institute for Mathematics in Bonn.}

\date{}

\subjclass[2020]{Primary: 14A30, 18N65, 57R56. Secondary: 14D21, 14J42, 53D30}

\keywords{Derived Stacks, Shifled Symplectic Structures, Lagrangian Correspondences, Topological Field Theories, AKSZ Construction, $(\infty,n)$-categories}


\begin{abstract}
  We construct a family of oriented extended topological field
  theories using the AKSZ construction in derived algebraic geometry,
  which can be viewed as an algebraic and topological version of the
  classical AKSZ field theories that occur in physics.  These have as
  their targets higher categories of symplectic derived stacks, with
  higher morphisms given by iterated Lagrangian correspondences. We
  define these, as well as analogous higher categories of oriented
  derived stacks and iterated oriented cospans, and prove that all
  objects are fully dualizable. Then we set up a functorial version of
  the AKSZ construction, first implemented in this context by
  Pantev-To\"{e}n-Vaquié-Vezzosi, and show that it induces a family of
  symmetric monoidal functors from oriented stacks to symplectic
  stacks.  Finally, we construct forgetful functors from the
  unoriented bordism $(\infty,n)$-category to cospans of spaces, and
  from the oriented bordism $(\infty,n)$-category to cospans of spaces
  equipped with an orientation; the latter combines with the AKSZ functors
  by viewing spaces as constant stacks, giving the desired field theories.
\end{abstract}

\maketitle

\tableofcontents


\mainmatter

\chapter{Introduction}
In physics, classical field theories typically associate to a suitable
manifold $X$ a space $\mathcal{F}(X)$ of \emph{fields}
on $X$, where the precise notion of ``space'' required might be some
sort of stack, or even derived stack. In many examples the space of
fields is a mapping space $\Map(X, T)$ for some target $T$; such field
theories are called \emph{$\sigma$-models}. The space of fields often
has a symplectic structure (or more generally a Poisson structure),
which plays an important role in quantizing the classical theory, for
example in the context of geometric quantization.

In the case of a $\sigma$-model with target $T$, one can construct a
symplectic structure on the mapping space $\Map(X, T)$ from a
symplectic structure on $T$ and an orientation on $X$ using the so-called 
\emph{AKSZ construction}, originally introduced by
Alexandrov--Kontsevich--Schwarz--Zaboronsky~\cite{AKSZ}. 
This gives the class of \emph{AKSZ $\sigma$-models}, which includes many
interesting examples of classical field theories (including classical
Chern-Simons theory and the classical versions of the A-- and B--models,
the theories relevant to mirror symmetry).

Roughly speaking, the AKSZ symplectic structure on the mapping space is given via a 
transgression procedure as
\[
\int_{[X]}\ev^*\omega\,,
\]
where $\omega$ is the symplectic form on $T$, $[X]$ is a fundamental
class on $X$, and $\ev \colon X\times\Map(X, T)\to T$ is the
evaluation morphism. The original reference~\cite{AKSZ} uses the
language of $Q$-manifolds, and $\int_{[X]}$ is given by Berezin
integration. These tools can be viewed as pre-derived geometric
concepts, and are very much model-dependent (in the sense that they
are not invariant under weak homotopy equivalences, such as \eg{} quasi-isomorphisms).

The AKSZ construction was promoted to the more general context of
derived (algebraic) geometry by
Pantev--To\"en--Vaqui\'{e}--Vezzosi~\cite{PTVV}. The main advantage of
passing to this setting is that derived geometry allows one to make
sense of many constructions with only a small number of assumptions
(and is model-independent, by definition); it is also essential for
rigorously stating the functorial nature of the AKSZ
procedure. 

A crucial feature of this functoriality is that it ensures that the
symplectic structure on the space of fields of a gluing of two
bordisms along a common boundary component is determined by the spaces
of fields of the individual bordisms. As discussed in
\cite{CalaqueTFT}, this can be used to define an Atiyah-style
functorial field theory for every symplectic derived stack.  Our main
goal in this paper is to give a mathematical construction of classical
AKSZ $\sigma$-models using the more refined formalism of
\emph{extended} topological field theories (TFTs), thereby showing in
particular that the AKSZ construction of \cite{PTVV} is compatible
with iterated gluings of manifolds with corners.\footnote{See
  Example~\ref{intr:ex of AKSZ} for some examples of field theories
  that fit into our framework.}

\section{The 1-Categorical Story}\label{1-cat-sketch}
Before we describe our results in more detail, it is helpful to
first briefly consider the simplest, 1-categorical, version of it; see
also~\cite{CalaqueTFT} for a more detailed discussion of this.

A (non-extended) oriented $n$-dimensional TFT, according to the original definition of
Atiyah~\cite{AtiyahTQFT}, is a symmetric monoidal functor
\[ \mathcal{Z} \colon h\!\Bordor_{n-1,n} \longrightarrow \mathbf{C}, \]
where $h\!\Bordor_{n-1,n}$ is a symmetric monoidal category whose
\begin{itemize}
\item objects are $(n-1)$-dimensional closed oriented manifolds,
\item morphisms are (diffeomorphism classes of) $n$-dimensional oriented cobordisms,
\item composition of morphisms is gluing of cobordisms,
\item tensor product is disjoint union of manifolds.
\end{itemize}
To describe topological quantum field theories, the target
$\mathbf{C}$ is typically taken to be the category of complex vector
spaces (or variations thereof). Here, however, we want to model
(semi-)classical topological field theories, which requires a
different target: A classical $n$-dimensional field theory should
assign to a closed $(n-1)$-manifold $X$ the space, or more generally derived
stack, $\mathcal{F}(X)$, of fields on $X$, together with its
symplectic structure. The objects of our target category should thus
be symplectic derived stacks. More precisely, we use the theory of symplectic
structures on derived Artin stacks introduced by
Pantev--To\"en--Vaqui\'{e}--Vezzosi~\cite{PTVV}.\footnote{We expect
  that most of our constructions will apply equally in the setting of
  derived $C^{\infty}$-stacks, but appropriate notions of differential
  and symplectic forms have not yet been studied in this setting.}  

To see what the morphisms should be, let us consider what a classical
field theory $\mathcal{F}$ should assign to a cobordism $X$ from $M$
to $N$: We have the space $\mathcal{F}(X)$ of fields on $X$, but we
can also \emph{restrict} these fields to the boundaries $M$ and
$N$. Thus, we have a span (or correspondence), which is a zig-zag
\[
  \begin{tikzcd}[row sep=tiny]
    {} & \mathcal{F}(X) \arrow{dl} \arrow{dr} \\
    \mathcal{F}(M)& & \mathcal{F}(N).
  \end{tikzcd}
  \]
 Moreover,
if the cobordism $X$ is a composite $X_{1} \cup_{K} X_{2}$ then the
fields on $X$ should be the same as pairs of fields on $X_{1}$ and
$X_{2}$ that agree on the boundary $K$, \ie{} we should have a
pullback square
\nolabelcsquare{\mathcal{F}(X)}{\mathcal{F}(X_1)}{\mathcal{F}(X_2)}{\mathcal{F}(K).}
Ignoring the symplectic structures, a morphism in the target should
thus be a span (or correspondence) of derived stacks, with composition
of spans given by taking pullbacks. The required compatibility with
the symplectic structures on the objects is then that this should be a
\emph{Lagrangian correspondence}.

In symplectic geometry, a Lagrangian correspondence between
symplectic manifolds $(M,\omega_{M})$ and $(N,\omega_{N})$ is a
Lagrangian submanifold (i.e. a submanifold of maximal dimension on
which the symplectic form vanishes) of the product $M \times N$, where
this is equipped with the symplectic form $\pi_{M}^{*}\omega_{M} -
\pi_{N}^{*}\omega_{N}$. The idea that these give a good notion of
\emph{morphisms} between symplectic manifolds was introduced by
Weinstein~\cite{WeinsteinSymplGeom,WeinsteinSymplCat}. However, due to
the potential failure of intersections to be transverse, it is not
always possible to compose Lagrangian correspondences by taking
pullbacks. Fortunately, this issue disappears when we work in the
setting of \emph{derived} geometry, as derived intersections are
always well-behaved.

On the other hand, in the derived setting we can consider
\emph{shifted} symplectic structures, essentially because 2-forms on a
derived stack form a chain complex instead of a vector space. A key example is the classifying stack $BG$ for a semisimple algebraic group $G$, whose Killing form can be reinterpreted as a 2-shifted symplectic structure.

The
target of our family of field theories is therefore a symmetric
monoidal category $h\Lag_{1}^{s}$ whose
\begin{itemize}
\item objects are
$s$-symplectic derived stacks, \ie{} derived stacks equipped with an
$s$-shifted symplectic structure,
\item morphisms are (equivalence classes of) Lagrangian correspondences, 
\item composition of morphisms is given by pullback (which always makes sense in derived geometry),
\item tensor product is the cartesian product of derived stacks.
\end{itemize}

In this context, Pantev, To\"en, Vaqui\'e, and Vezzosi introduced an
algebro-geometric version of the AKSZ construction by reinterpreting
pullback and pushforward constructions on differential forms. If
$\Sigma$ is a \emph{$d$-oriented} derived stack and $X$ is an
$s$-symplectic derived stack, then this produces an $(s-d)$-shifted
symplectic structure on the mapping stack $\Map(\Sigma,X)$. The key
example of a $d$-oriented stack is the constant stack (or Betti stack)
$M_{B}$ determined by the singular chains on a closed oriented
$d$-manifold $M$.

Oriented $(d+1)$-dimensional cobordisms also have an interesting
structure on their Betti stacks: the inclusion of the boundaries into
the cobordism is a cospan, and Poincar\'e-Lefschetz duality can be
reinterpreted as a non-degeneracy condition. Based on this we can
formulate a general 
notion of a $d$-oriented cospan of derived stacks
(see~\cite{CalaqueTFT}), with the main example given by an oriented
$(d+1)$-dimensional cobordism.

We can now state a key formal property of the AKSZ construction: 
If
\[
  \begin{tikzcd}[row sep=tiny]
    \Sigma \arrow{dr} &  & \Sigma' \arrow{dl}\\
     & T
  \end{tikzcd}
  \]
is a $d$-oriented cospan and $X$ is an $s$-symplectic derived
stack, then the AKSZ construction makes the span
\[
  \begin{tikzcd}[row sep=tiny]
    {} & \Map(T,X) \arrow{dl} \arrow{dr} \\
    \Map(\Sigma,X)& & \Map(\Sigma',X)
  \end{tikzcd}
  \]
a Lagrangian correspondence. Moreover, this construction is compatible
with composition of cospans and spans, and so determines a symmetric
monoidal functor
\begin{equation}\label{intro:eqn aksz} h\txt{Or}^{d}_{1} \xto{\txt{aksz}_{X}} h\Lag^{s-d}_{1},	
\end{equation}
where $h\txt{Or}^{d}_{1}$ is a category whose
\begin{itemize}
\item objects are $d$-oriented derived stacks,
\item morphisms are (equivalence classes of) $d$-oriented cospans, with composition given by pushouts,
\item symmetric monoidal structure is the coproduct of derived stacks.
\end{itemize}
As we already mentioned, we can obtain $d$-oriented stacks from closed
oriented $d$-manifolds, and $d$-oriented cospans from oriented
$(d+1)$-dimensional cobordisms, by taking constant
stacks. This gives a symmetric monoidal functor
\begin{equation}\label{intro:eqn Betti functor} h\!\Bordor_{n-1,n} \longrightarrow h\Or^{n-1}_{1}.	
\end{equation}
Composing this with the previous functor we see that for any
$s$-symplectic derived stack $X$ the AKSZ construction determines an
oriented $n$-dimensional TFT valued in symplectic stacks as the composite
\[ h\!\Bordor_{n-1,n} \longrightarrow h\Or^{n-1}_{1} \xto{\txt{aksz}_{X}}
  h\Lag^{s-n+1}_{1}.\]
In this paper we will extend this construction to obtain fully
extended TFTs.

\section{Extended TFTs}
The definition of \emph{extended} topological field theories is a
higher-categorical analogue of Atiyah's axiomatization: An extended
field theory is a symmetric monoidal functor from a symmetric monoidal
higher category of iterated cobordisms to some target symmetric
monoidal higher category.

Heuristically, the $n$-category of iterated bordisms $\Bord^{}_{0,n}$
can be described as follows:
\begin{itemize}
\item objects are $0$-dimensional manifolds, 
\item $1$-morphisms are $1$-dimensional cobordisms between $0$-dimensional manifolds, 
\item $2$-morphisms are $2$-dimensional cobordisms (with $0$-dimensional corners) between $1$-dimensional cobordisms, 
\item $\dots$
\item $n$-morphisms are diffeomorphism classes of $n$-dimensional cobordisms (with corners of all codimensions), 
\item various compositions are given by gluings of cobordisms, 
\item the tensor product is the disjoint union. 
\end{itemize}
The idea behind this extension is that it is a fully 
local version of Atiyah's functorial field theories: indeed, the value of an extended TFT on an $n$-dimensional manifold 
can be recovered from a single elementary building block by gluing iterated cobordisms. 

This locality is captured by the \emph{Cobordism Hypothesis}, the key statement for the classification of extended TFTs.
This was first conjectured by Baez and
Dolan~\cite{BaezDolanTQFT} and though a complete proof has not yet
appeared, Lurie~\cite{LurieCob} has written a detailed sketch of a
proof.
The simplest case of the Cobordism Hypothesis classifies
\emph{framed} TFTs, where all the manifolds involved are equipped with
a framing: Such a framed $n$-dimensional extended TFT is completely
determined by its value at the point, which is the mathematical incarnation 
of the locality principle that we mentioned above. 
The objects that correspond to framed extended TFTs are precisely the \emph{fully
 dualizable} objects in the target, where being fully dualizable is
an inductively defined finiteness condition. 

In fact, Lurie's approach~\cite{LurieCob} makes use of
$(\infty,n)$-categories rather than mere $n$-categories, in order to
take into account the action of the diffeomorphism group on top
dimensional cobordisms: this roughly amounts to replacing the set of
diffeomorphisms classes of $n$-dimensional cobordisms by the
classifying \emph{space} or {\em $\infty$-groupoid} thereof.  For
instance, there are invertible $(n+1)$-morphisms, which are
diffeomorphisms of $n$-dimensional cobordisms. We refer
to~\cite{LurieCob,CS} for a precise definition of the symmetric
monoidal $(\infty,n)$-category of $n$-iterated cobordisms; we will
also review the definition we use in more detail in \S\ref{sec bordism
  background}.

\section{Overview of Results}
As already explained, the main goal of this paper is to give a
mathematical construction of classical AKSZ $\sigma$-models as
oriented extended topological field theories (TFTs), by developing a
higher-categorical version of the
construction of 1-categorical field theories we sketched in
\S\ref{1-cat-sketch}. In other words, for every $s$-symplectic
derived stack $X$ we want to construct a symmetric monoidal functor of
$(\infty,n)$-categories
\[
\Bord^{\txt{or}}_{0,n} \longrightarrow \Lag_{n}^{s}
\]
where $\Bord_{0,n}^{\txt{or}}$ is the oriented bordism
$(\infty,n)$-category, which has $i$-dimensional oriented cobordisms
with corners as its $i$-morphisms, and $\Lag_{n}^{s}$ is a suitable
symmetric monoidal $(\infty,n)$-category of symplectic derived stacks
and Lagrangian correspondences.  Roughly speaking, this functor will
be given by the AKSZ construction applied functorially to maps from all pieces of the boundary of a cobordism
with corners to a fixed target $X$.

To carry out this programme, we must first define a symmetric monoidal
$(\infty,n)$-category $\Lag_{n}^{s}$ that will be the target for our
field theories. This should  have $s$-symplectic derived stacks as
its objects and Lagrangian correspondences as its morphisms, while for
the $i$-morphisms we need a notion of \emph{$i$-fold Lagrangian
  correspondences} (equivalent to that considered in
\cite{CalaqueTFT}). We introduce this in \S\ref{sec:lagor}, where we
use the construction of higher categories of iterated spans from
\cite{spans} to prove:
\begin{ThmB}[see \S\ref{subsec:itlagcat} and \S\ref{subsec:itlagsymmon}]
  There is a symmetric monoidal $(\infty,n)$-category $\Lag_{n}^{s}$
  whose objects are $s$-symplectic derived stacks and whose
  $i$-morphisms for $i \leq n$ are $i$-fold Lagrangian
  correspondences.
\end{ThmB}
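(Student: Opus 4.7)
The plan is to decorate the iterated-span $(\infty,n)$-category
$\Span_n(\dSt)$ of \cite{spans} with the shifted symplectic and
Lagrangian data of \cite{PTVV}, extended to iterated correspondences
as in \cite{CalaqueTFT}. Recall that \cite{spans} presents $\Span_n$
of any $\infty$-category with finite limits as a complete $n$-fold
Segal space whose value at $([k_1],\ldots,[k_n]) \in \simp^{n,\op}$ is
the space of diagrams, indexed by a grid built from twisted arrow
categories, that send certain squares to pullbacks. I would enhance
this by recording extra data at each vertex of the diagram: an
$s$-shifted symplectic structure on each object vertex and an
$i$-fold Lagrangian structure on each vertex of grid-codimension $i$,
all coherently compatible with the restriction maps in the diagram.

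Concretely, for any diagram $F$ of derived stacks of the shape of an
$i$-fold corner, with fixed $s$-shifted symplectic structures on its
outer objects, I would introduce the space of $i$-fold Lagrangian
structures on $F$; this is a space of coherent nullhomotopies of
iterated pullbacks of a closed $2$-form, together with an iterated
non-degeneracy condition. For $i=1$ this recovers the notion of
\cite{PTVV}, and for general $i$ it matches \cite{CalaqueTFT}. Pasting
these fibrewise over the underlying span diagrams gives a levelwise
enhancement
\[
\Lag_{n}^{s,\bullet} \colon \simp^{n,\op} \longrightarrow \hS,
\]
with a forgetful map to the underlying $n$-fold simplicial space of
$\Span_n(\dSt)$. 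The face maps lift: the crucial inner face, which
composes two spans by pullback, carries a pair of Lagrangian
structures on its factors to a Lagrangian structure on the composite,
by the composability of iterated Lagrangian correspondences in derived
geometry.

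To conclude that $\Lag_{n}^{s,\bullet}$ presents an
$(\infty,n)$-category, one verifies the Segal and completeness
conditions. Segal reduces to the corresponding statement for
$\Span_n(\dSt)$ combined with the locality of iterated Lagrangian
structures along the grid decomposition of a composite $n$-fold span,
which is a higher-dimensional repackaging of the pullback--pushforward
formalism for closed forms of \cite{PTVV}. Completeness is inherited
from $\Span_n(\dSt)$, since the decorating data is functorial in
equivalences. For the symmetric monoidal structure in
\S\ref{subsec:itlagsymmon}, I would use that cartesian products of
derived stacks preserve both shifted symplectic and iterated
Lagrangian structures: following the $\Gamma$-space approach used for
$\Span_n$ in \cite{spans}, this upgrades $\Lag_{n}^{s,\bullet}$ to a
functor $\N(\catname{Fin}_{*}) \longrightarrow
\Fun(\simp^{n,\op},\hS)$ satisfying the Segal condition over finite
pointed sets, which encodes the desired symmetric monoidal
$(\infty,n)$-category.

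The main obstacle will be the bookkeeping required to verify that the
inner Segal maps of $\Lag_{n}^{s,\bullet}$ are equivalences rather
than merely maps: one must prove that the space of $i$-fold Lagrangian
structures on a composite span is equivalent to the space of matching
pairs of Lagrangian structures on its factors. I expect this to be
treated by interpreting all relevant spaces of forms through the
pre-symplectic and pre-oriented functors $\PSymp$ and $\POr$ developed
earlier in the paper, reducing the claim to the additivity of closed
forms along pushouts of cospans in $\dSt$, and then inductively
propagating this coherence through the $n$-fold Segal structure.
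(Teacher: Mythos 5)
Your overall architecture — realize the target as a subobject of an iterated-span $(\infty,n)$-category, with Lagrangian-ness imposed as a non-degeneracy condition checked on the elementary pieces, and inherit Segal and completeness from the ambient object — is the same as the paper's (which works with $\Span_{n}(\PSymp_{S,s})$, so that your ``coherent nullhomotopies'' are automatically packaged as spans in the slice $\dSt_{S/\Atcl_{S}(s)}$). But there is a genuine gap, and you have also misplaced where the difficulty sits. The Segal maps being equivalences is essentially formal: the Lagrangian locus is a union of components of each level of $\SPAN_{n}(\PSymp_{S,s})$, and the defining condition is by construction detected on the inert restrictions to $\bbS^{\ind{i}}$ with $i_{t}\in\{0,1\}$, so the Segal condition for the sub-simplicial space follows immediately from that of the ambient one. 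What is \emph{not} formal — and what you assert in one sentence (``by the composability of iterated Lagrangian correspondences in derived geometry'') without proof — is that the sub-simplicial space is well-defined at all, i.e.\ that the active structure maps (inner faces and degeneracies) carry Lagrangian diagrams to Lagrangian diagrams. For $1$-fold correspondences this is a known theorem, but for $i$-fold correspondences it is the core content of the result: one must show that when two $n$-uple spans in $\PSymp_{S,s}$ with non-degenerate induced diagrams $\tlsp{}^{n}\to\QCoh(X)$ are composed by pullback, the composite's induced diagram is again a limit diagram. The paper proves this by an intricate right-Kan-extension and coinitiality analysis on the posets $\Tw^{r}_{!}\bbS^{2,1,\ldots,1}$, using the fibre sequences of cotangent complexes for fibre products. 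Your proposed reduction to ``the additivity of closed forms along pushouts of cospans'' does not touch this: additivity of forms handles the isotropic (nullhomotopy) part of the composite, which the span formalism already gives for free, but says nothing about why the non-degeneracy property survives composition.

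On the symmetric monoidal structure you genuinely diverge from the paper: you propose a $\Gamma$-space over finite pointed sets built from cartesian products, whereas the paper obtains the monoidal structure by delooping, via the equivalence $(\PSymp_{S,s})_{/S,S}\simeq\PSymp_{S,s-1}$ (coming from $\Atcl_{S}(s-1)\simeq\Omega\,\Atcl_{S}(s)$) and hence $\Span_{n}(\PSymp_{S,s})(S,S)\simeq\Span_{n-1}(\PSymp_{S,s-1})$. Your route is plausible but requires you to make the addition of closed $2$-forms on products coherently $\Einf$ and to check separately that products of Lagrangian correspondences are Lagrangian; the delooping route gets coherence for free and, as a bonus, produces the shift equivalences $\Lag_{n}^{S,s}(S,S)\simeq\Lag_{n-1}^{S,s-1}$ that the paper needs later for full dualizability. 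Either way, the composability of higher Lagrangian correspondences must be supplied first.
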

The homotopy bicategory of $\Lag_{2}^{s}$ is the bicategory
previously constructed by Amorim and Ben-Bassat~\cite{AmorimBenBassatLag}.
  
Recall that the $1$-categorical construction from \S\ref{1-cat-sketch}
has two steps: The first step \eqref{intro:eqn aksz} maps the category
of (oriented) manifolds and cobordisms into that of oriented stacks
and cospans, while the second step \eqref{intro:eqn Betti functor} is
the formal application of a functorial AKSZ construction.

Following the same path, in \S\ref{sec:lagor} we also introduce a
notion of $i$-fold oriented cospans, which in a sense is ``dual'' to
that of an $i$-fold Lagrangian correspondence. Then we construct
symmetric monoidal $(\infty,n)$-categories $\Or_{n}^{d}$ of
$d$-oriented derived stacks where these are the $i$-morphisms, in a
similar way to the construction of $\Lag_{n}^{s}$.

In \S\ref{subsec:dual}, we prove:
\begin{ThmB}
  All objects of $\Lag_{n}^{s}$ and $\Or_{n}^{d}$ are fully
  dualizable, and so correspond to $n$-dimensional framed extended
  TFTs under the Cobordism Hypothesis.
\end{ThmB}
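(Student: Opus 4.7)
The plan is to deduce full dualizability from the analogous statement for iterated spans and cospans, by lifting the standard duality data along the forgetful symmetric monoidal functors $\Lag_{n}^{s} \to \Span_{n}(\dSt)$ and $\Or_{n}^{d} \to \Cospan_{n}(\dSt)$ that forget the shifted symplectic and orientation structures.

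The starting point is that in $\Span_{n}(\dSt)$ every object is fully dualizable (as proved in \cite{spans}): each object $X$ is self-dual, the unit and counit are the diagonal spans $\mathrm{pt} \leftarrow X \to X \times X$ and $X \times X \leftarrow X \to \mathrm{pt}$, and the adjoints of $k$-morphisms for $k < n$ required by the cobordism hypothesis are produced by reversing spans at the appropriate level. The same is true dually for $\Cospan_{n}(\dSt)$. So it suffices to upgrade these canonical duality witnesses to Lagrangian correspondences (respectively oriented cospans).

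For $\Lag_{n}^{s}$ I would take the dual of $(X,\omega)$ to be $\overline{X} := (X,-\omega)$, with coevaluation the span $\mathrm{pt} \leftarrow X \xrightarrow{\Delta} X \times \overline{X}$ carrying the canonical Lagrangian structure coming from the tautological null-homotopy $\Delta^{*}(\omega \oplus (-\omega)) \simeq 0$; non-degeneracy reduces to the identity equivalence on $\TT_{X}$ fitting into the expected fiber sequence. The evaluation is symmetric. The higher adjoints that appear inductively are iterated spans whose non-identity legs are diagonals or folds in successive factors, and for each of them the relevant forms cancel tautologically, producing canonical iterated Lagrangian structures with automatic non-degeneracy. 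The same argument, with spans replaced by cospans and shifted symplectic structures replaced by orientations, yields full dualizability of $\Or_{n}^{d}$; the relevant null-homotopies there come from the fact that the orientation pulled back along a fold or codiagonal is canonically null.

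The main obstacle is bookkeeping: full dualizability is an inductive condition demanding both left and right adjoints for $k$-morphisms at every level $k < n$, and at each step one must verify that the adjoint produced by the construction in $\Span_{n}(\dSt)$ or $\Cospan_{n}(\dSt)$ comes equipped with a compatible iterated isotropic (resp.\@ co-isotropic) structure that is non-degenerate. The technical input that closes the induction is that the space of Lagrangian structures on an identity-like correspondence is essentially contractible, so once $\Lag_{n}^{s}$ and $\Or_{n}^{d}$ have been constructed as symmetric monoidal $(\infty,n)$-categories fibered over iterated spans/cospans in \S\ref{sec:lagor}, the classical diagonal/fold duality data from \cite{spans} lift in an essentially unique way.
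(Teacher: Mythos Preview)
Your high-level idea --- check that the standard duality/adjunction data in iterated spans is Lagrangian --- is right, and it is indeed how the paper begins. But two things in your write-up do not do the work.

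First, the relevant ambient category is $\Span_{n}(\PSymp_{S,s})$, not $\Span_{n}(\dSt)$. Since $\LagnSs$ is by construction a sub-$(\infty,n)$-category of $\Span_{n}(\PSymp_{S,s})$, the isotropic structures on the duality data are already present; there is nothing to ``lift'', only the Lagrangian \emph{property} to verify. So your claimed technical input about contractibility of spaces of Lagrangian structures is beside the point (and, incidentally, the space of isotropic structures on the identity span is $\Omega_{\omega}\Atcl_{S}(X,s)$, which is not contractible).

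Second, and more seriously, the unit and counit $2$-morphisms witnessing that a Lagrangian correspondence $X \xleftarrow{f} Y \xrightarrow{g} X'$ is adjoint to its reverse are not ``diagonals or folds''. The unit is the $2$-fold span $X \leftarrow Y \to Y \times_{X'} Y$ over $X$ and $X$, and checking that this is Lagrangian is a genuine computation: the paper (Proposition~\ref{propn:adj12}) does it by manipulating a chain of cartesian squares involving $\mathbb{T}_{Y/S}$, $\mathbb{T}_{Y \times_{X'}Y/S}$, and the auxiliary object $M := \mathbb{T}_{X/S} \times_{\mathbb{L}_{Y/S}[s]} \mathbb{T}_{X/S}$. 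This is not tautological, and at higher levels the corresponding checks become more involved, not less.

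The paper avoids repeating that kind of computation at every level by a different inductive mechanism: after establishing adjoints for $1$- and $2$-morphisms directly, it proves (Corollary~\ref{cor:mapsareLagkS}) that for Lagrangian morphisms $Y,Z \to X$ one has
\[
\Lagn^{X/S,s}(Y,Z) \simeq \Lag_{n-1}^{Z \times_{X} Y^{\txt{rev}}/S,\,s-1},
\]
using the adjunction just established together with the delooping equivalence of Proposition~\ref{propn:deloop}. This collapses the question of adjoints for $i$-morphisms in $\Lagn^{X/S,s}$ to adjoints for $(i-1)$-morphisms in a category of the same type with smaller $n$, closing the induction (Corollary~\ref{cor:LagnSadj}). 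Your proposal has no analogue of this step, and without it the ``bookkeeping'' you allude to is where the argument actually lives.
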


Our main goal, however, is to construct \emph{oriented} TFTs. There is
also a version of the Cobordism Hypothesis for oriented TFTs, but
applying this requires understanding an $O(n)$-action on the space of
fully dualizable objects. However, this $O(n)$-action is defined via
the Cobordism Hypothesis by translating a canonical $O(n)$-action on
the space of fully extended framed $n$-dimensional TFTs, and it is
unfortunately still unclear how to obtain the action on fully
dualizable objects without passing through this big machinery.

Moreover, our goal is to extend the AKSZ construction of field
theories, and so we want
to show that our TFTs are explicitly given by applying the AKSZ
construction. Therefore, in \S\ref{sec:aksz} we define a completely
functorial version of the AKSZ construction, the higher categorical
analog of \eqref{intro:eqn aksz}, and prove:
\begin{ThmB}[see \S\ref{subsec:aksznondeg}]
  For every $s$-symplectic derived stack $X$ the AKSZ construction gives a
  symmetric monoidal functor
  \[ \txt{aksz}_{X} \colon \Or_{n}^{d} \longrightarrow
    \Lag_{n}^{s-d}.\]
\end{ThmB}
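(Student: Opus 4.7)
The plan is to build $\txt{aksz}_X$ as a levelwise construction on the iterated (co)span models used to define $\Or_n^d$ and $\Lag_n^{s-d}$. An $i$-morphism in $\Or_n^d$ is (by the construction in \S\ref{sec:lagor}) an $i$-fold cospan diagram of derived stacks, indexed by the appropriate shape $\bbS^i$ or its variant, decorated with a closed $d$-shifted 2-form-like datum (an orientation) satisfying a non-degeneracy condition at every face; an $i$-morphism in $\Lag_n^{s-d}$ is the dual $i$-fold span of derived stacks, decorated with a closed $(s-d)$-shifted 2-form satisfying the Lagrangian non-degeneracy at every face. On diagrams, I would first send such an $i$-fold cospan $\mathcal{C} \colon \bbS^i \to \dSt$ to the $i$-fold span $\Map(\mathcal{C}, X) \colon (\bbS^i)^{\op} \to \dSt$ obtained by post-composing with the contravariant mapping-stack functor $\Map(-, X)$, which turns pushouts into pullbacks. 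That this is well-defined as a functor of iterated span $(\infty,n)$-categories follows formally from the universal properties of the construction $\SPAN_n$ recalled in the paper, together with the fact that $\Map(-, X)$ sends the pushouts required by the Segal conditions for cospans to the pullbacks required for spans.

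Next I would equip the image with its AKSZ 2-form. Given the closed $s$-shifted 2-form $\omega$ on $X$, for each cospan diagram I pull back along the evaluation map and then integrate against the orientation: writing schematically $\int_{\mathcal{C}/\partial \mathcal{C}} \ev^{*}\omega$ for each face of the diagram, where the integration is interpreted as in~\cite{PTVV} via the orientation data. Functoriality of this transgression in diagrams of stacks, plus the compatibility of the integrations with the boundary/gluing data encoded in the cospan shape, produces coherent closed $(s-d)$-shifted 2-forms at every level of the iterated span. This step is essentially a functoriality argument for the PTVV transgression, carried out once for the universal diagram and then restricted; it does not require genuinely new geometric input.

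The heart of the theorem is the non-degeneracy (Lagrangian) condition for each face of each iterated span in the image. I would reduce this, face by face, to a relative Poincaré--Lefschetz-type statement: given a cospan $\Sigma \to T \leftarrow \Sigma'$ underlying a $d$-oriented face of $\mathcal{C}$, the induced map $\Map(T, X) \to \Map(\Sigma, X) \times \Map(\Sigma', X)$ must be Lagrangian for the transgressed form. This follows by identifying the relative cotangent complex of $\Map(T, X)$ with the global sections of $\ev^* \mathbb{T}_X[-d]$ on the relative cospan and using the non-degeneracy of the $d$-orientation to match this with the shifted dual of the absolute cotangent complex. Running this argument in families over the iterated cospan shape gives the Lagrangian condition at every face simultaneously, which is exactly what the iterated-span model of $\Lag_n^{s-d}$ demands. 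This is the step I expect to carry essentially all of the real work, and it is presumably the content of \S\ref{subsec:aksznondeg}.

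Finally, symmetric monoidality is essentially formal: the tensor product on $\Or_n^d$ is the coproduct of stacks, that on $\Lag_n^{s-d}$ is the product, and $\Map(\Sigma_1 \sqcup \Sigma_2, X) \simeq \Map(\Sigma_1, X) \times \Map(\Sigma_2, X)$ naturally, with the AKSZ form on the coproduct being the sum of the two transgressions under this identification. Packaging this using the universal property of the symmetric monoidal structures constructed in \S\ref{subsec:itlagsymmon} upgrades $\txt{aksz}_X$ to a symmetric monoidal functor. The main obstacle is genuinely the iterated non-degeneracy in step three; the other steps are either formal consequences of the span/cospan machinery of the paper or direct generalizations of the PTVV transgression construction.
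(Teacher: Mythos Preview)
Your outline is correct in architecture and lands on the right target, but the paper's execution of the key step differs from yours in a way worth noting. For the non-degeneracy step, rather than reducing face-by-face to a Poincar\'e--Lefschetz statement and then ``running in families'', the paper gives a single global identification: the entire $\tSp^{n}$-diagram $\widehat{X_{S}^{\Phi}}$ in $\QCoh(X_{S}^{T})$ that tests whether the image span is Lagrangian is shown to coincide, via Proposition~\ref{propn:mapscotgt} and base change, with the $\tSp^{n}$-diagram $\widehat{X_{S}^{T}\times_{S}\Phi}_{\txt{ev}^{*}\mathbb{T}_{X/S}}$ that tests whether the base-changed cospan $X_{S}^{T}\times_{S}\Phi$ is oriented with respect to the dualizable sheaf $\txt{ev}^{*}\mathbb{T}_{X/S}$. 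Since the latter is non-degenerate by definition of oriented cospan, one is done in a single stroke---no induction over faces is needed. Your inductive approach could in principle be made to work via Proposition~\ref{propn:itlagRKE}, but the paper's identification avoids this entirely and is what makes the proof short. (As a side remark: an orientation is a morphism $\Gamma_{S}\mathcal{O}_{T}\to\mathcal{O}_{S}[-d]$, not a $2$-form-like datum; your phrasing there is imprecise.)

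Two further points you elide. First, $X_{S}^{T}$ need not be Artin, so the paper restricts to \emph{$X$-good} preoriented stacks and also imposes $\mathcal{O}$-compactness (Definition~\ref{defn:Ocompact}), which is what justifies the base-change and dualizability manipulations above; the actual functor lands as $\Or_{n}^{\txt{cpt},X\text{-good},S,d}\to\Lag_{n}^{S,s-d}$. Second, since the symmetric monoidal structures on both sides were \emph{defined} via delooping (Corollary~\ref{cor:Lagsymmon}), the paper proves monoidality not via your product formula $\Map(\Sigma_{1}\amalg\Sigma_{2},X)\simeq\Map(\Sigma_{1},X)\times\Map(\Sigma_{2},X)$ but by checking that the AKSZ functor intertwines the delooping equivalences of Corollary~\ref{cor:SpanSSeq}; this amounts to a computation on de Rham complexes showing that the pushforward against the induced $(d{+}1)$-preorientation on an endomorphism of $\emptyset$ agrees with the original construction shifted. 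Your approach and the paper's are equivalent in content, but only the delooping argument matches the framework already set up in \S\ref{subsec:itlagsymmon}.
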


We are now left with constructing the analog of \eqref{intro:eqn Betti functor}, a symmetric monoidal functor from
$\Bordor_{0,n}$ to $\Or_{n}^{0}$, which we can then compose with
$\txt{aksz}_{X}$ to obtain the desired TFT.

Before doing so, we
consider the simpler case of \emph{unoriented} cobordisms (where the
manifolds involved are not equipped with \emph{any} tangential structure) in
\S\ref{sec:cobcospan}, where we show:
\begin{ThmB}[see \S\ref{subsec:TFTspan}]
  There is a symmetric monoidal functor
  \[ \cut_{0,n} \colon \Bord_{0,n} \longrightarrow
    \Cospan_{n}(\mathcal{S})\] from the unoriented cobordism
  $(\infty,n)$-category $\Bord_{0,n}$ to the $(\infty,n)$-category of
  iterated cospans of spaces.
\end{ThmB}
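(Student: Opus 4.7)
The plan is to construct $\cut_{0,n}$ directly using explicit presentations of both sides as symmetric monoidal $n$-fold complete Segal spaces. For $\Bord_{0,n}$ I will use the Calaque--Scheimbauer model, whose $(k_1,\ldots,k_n)$-simplices are compact cobordisms-with-corners $W\subset\mathbb{R}^n\times\mathbb{R}^\infty$ equipped with a compatible $n$-fold grid of cut-hyperplanes, with $W$ required to be cylindrical in a neighborhood of each cut. For $\Cospan_n(\mathcal{S})$ I will use the iterated-span presentation as an $n$-fold Segal space, whose $(k_1,\ldots,k_n)$-simplices are diagrams on the $n$-fold twisted-arrow category of $[k_1]\times\cdots\times[k_n]$ valued in $\mathcal{S}$ and satisfying the iterated-cospan condition.

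I would then define $\cut_{0,n}$ levelwise: to a gridded cobordism $W$ I assign the diagram that sends each subinterval of the grid to the underlying homotopy type (singular simplicial set) of the corresponding sub-cobordism of $W$. The cylindricity at each cut provides collar neighborhoods, hence honest inclusions of topological spaces, which assemble into the required $n$-fold cospan shape. Simplicial structure maps have transparent geometric meaning: face maps delete a cut (and so merge two slabs), while degeneracies insert a cylindrical slab, which induces a homotopy equivalence of underlying spaces. The symmetric monoidal structure is transported from disjoint union of cobordisms, which carries to cartesian product---the coproduct in $\mathcal{S}$---of diagrams of spaces, so the construction is manifestly symmetric monoidal.

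The heart of the proof is verifying the Segal condition: the diagram associated to $W$ must be recoverable from its restriction to edges of the grid. Geometrically this is the assertion that whenever $W = W' \cup_K W''$ is obtained by gluing two cobordisms along a common boundary component $K$, the induced square
\[
  \begin{tikzcd}[row sep=small,column sep=small]
    K \arrow{r} \arrow{d} & W' \arrow{d} \\
    W'' \arrow{r} & W
  \end{tikzcd}
\]
is a homotopy pushout of underlying homotopy types. This is classical: closed boundary inclusions admit collars and are therefore cofibrations, so ordinary and homotopy pushouts agree. Iterating this observation in each of the $n$ cut-directions yields the full multi-dimensional Segal condition; completeness is automatic on the target and is built into the bordism model on the source.

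The principal anticipated obstacle is not any individual geometric or homotopical step, but rather the $\infty$-categorical bookkeeping required to promote the levelwise assignment to a morphism of multisimplicial presheaves that is functorial on the nose, rather than merely up to coherent homotopy. My approach is to define $\cut_{0,n}$ directly at the point-set level by composing the continuous ``underlying manifold'' assignment with the strict ``singular complex'' functor, thereby avoiding any appeal to abstract straightening or rectification. A small but essential additional verification is $\Sigma_n$-equivariance for the symmetric monoidal structure, which holds because the construction is manifestly invariant under permutations of the $n$ cut-directions in the ambient $\mathbb{R}^n$.
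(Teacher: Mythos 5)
Your overall geometric picture (cut along the grid, take underlying homotopy types, observe that gluing along collared boundaries is a homotopy pushout) is the same as the paper's, and your verification of the Segal condition matches Proposition~\ref{propn:cut_functor}. The gap is in the step you dismiss as ``bookkeeping'': the claim that composing the underlying-manifold assignment with the singular complex functor yields a strict map of multisimplicial objects. Each level $\sBORD_{0,n,\ind{j}}$ is itself a simplicial set whose $l$-simplices are smooth families $\pi'\colon M \to |\Delta^{l}|_{e}$, and your construction must say what these map to in the space $\Map(\bbS^{\ind{j}},\mathcal{S})$. There is no canonical way to do this strictly: from a $1$-simplex one extracts only a cospan $M_{0}\hookrightarrow M_{01}\hookleftarrow M_{1}$ of trivial-fibration inclusions, not a map $M_{0}\to M_{1}$; choosing homotopy inverses (trivializations of the fibration) destroys strict functoriality. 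This is exactly why the paper replaces $\Delta^{l}$ by its subdivision $\sd(\Delta^{l})$ and maps into $\Ex^{1}$ of the nerve of the weak-equivalence subcategory, and why, even then, the assignment fails to respect degeneracies in the spatial direction (the degenerate $1$-simplex on $M$ gives $M\hookrightarrow M\times|\Delta^{1}|_{e}\hookleftarrow M$, not the constant diagram), forcing a further passage to semisimplicial sets. None of this is addressed by working ``at the point-set level''; it is the core technical content of \S\ref{subsec:cutidea}--\ref{subsec:cut}, and without it you do not have a morphism of $n$-uple Segal spaces.

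Two smaller issues. First, your account of the symmetric monoidal structure is off: it is not witnessed by $\Sigma_{n}$-equivariance under permuting the $n$ cut-directions (that symmetry relates different presentations of the same $n$-fold structure, not the tensor product), and ``cartesian product --- the coproduct in $\mathcal{S}$'' conflates two different things. The monoidal structure on both sides is encoded by delooping: one must check that $\cut_{n,d}$ is compatible with the equivalences $\Bord_{d-1,d+n}(\emptyset,\emptyset)\simeq\Bord_{d,d+n}$ and $\Cospan_{n+1}(\mathcal{S})(\emptyset,\emptyset)\simeq\Cospan_{n}(\mathcal{S})$, which is step (4) of Procedure~\ref{proc:recipe_for_functor}. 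Second, completeness is not ``built into the bordism model'': $\Bord_{0,n}$ is generally not complete, and one invokes the universal property of completion to get a functor out of $\widehat{\Bord}_{0,n}$.
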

This functor assigns to an $i$-dimensional cobordism with corners $M$
the homotopy type of the manifold $M$, together with the inclusions of
the homotopy types of all the parts of the boundary of $M$, which
together form an $i$-fold cospan.

In \S\ref{sec:orcob} we then upgrade this to take orientations into
account:
\begin{ThmB}[see \S\ref{sec:cobnondeg}]
  There is a symmetric monoidal functor
  \[ \orcut_{0,n} \colon \Bordor_{0,n} \longrightarrow
    \Or_{n}^{0}.\]  
\end{ThmB}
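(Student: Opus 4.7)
The plan is to construct $\orcut_{0,n}$ as an oriented enhancement of the unoriented cutting functor $\cut_{0,n}$ of the previous theorem, decorating each iterated cospan of Betti stacks produced by $\cut_{0,n}$ with the orientation data coming from the fundamental classes of the underlying oriented manifolds with corners. More precisely, I would first exhibit a symmetric monoidal forgetful functor from $\Or_{n}^{0}$ to $\Cospan_{n}(\mathcal{S})$ that discards the orientation decoration, and then produce $\orcut_{0,n}$ as a lift of $\cut_{0,n}$ along this forgetful functor, fitting into a commutative diagram over the corresponding forgetful functor $\Bordor_{0,n} \to \Bord_{0,n}$.

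First, I would construct the forgetful functor. Both $\Or_{n}^{0}$ and $\Cospan_{n}(\mathcal{S})$ are built using the same iterated cospan machinery developed earlier in the paper: $\Or_{n}^{0}$ is assembled from grid diagrams in derived stacks equipped with compatible iterated orientation data on each cell, while $\Cospan_{n}(\mathcal{S})$ is assembled from grid diagrams in spaces. Restricting to constant (Betti) stacks and forgetting the orientation data on each cell yields the forgetful functor essentially tautologically. Second, I would lift $\cut_{0,n}$: the cutting construction assigns to every $i$-cobordism with corners $M$ a certain iterated cospan whose pieces are the homotopy types of the open strata of the corner decomposition of $M$. When $M$ is oriented, each stratum inherits a relative orientation, hence a relative fundamental class, which assembles into candidate orientation data for the resulting iterated cospan of Betti stacks.

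The main obstacle is verifying the \emph{non-degeneracy} of the resulting orientations, which is why the argument is deferred to \S\ref{sec:cobnondeg}. Non-degeneracy for the iterated oriented cospan associated with an oriented $i$-cobordism $M$ amounts to an iterated Poincar\'e--Lefschetz duality statement: cap product with the relative fundamental class $[M, \partial M]$, together with its deeper-corner analogues, must induce the required equivalences between tangent and cotangent complexes of the Betti stacks involved. I would identify these tangent and cotangent complexes with shifted singular cochains and chains on the underlying manifolds and strata, thereby reducing the non-degeneracy condition to the classical Poincar\'e--Lefschetz duality for oriented manifolds with corners; compatibility with the iterated structure would follow from the standard functoriality of duality under excision and gluing along corner strata.

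Finally, symmetric monoidality of $\orcut_{0,n}$ reduces to the symmetric monoidality of $\cut_{0,n}$ already established in the previous theorem, together with the multiplicativity of fundamental classes under disjoint union, which matches the symmetric monoidal structure of $\Or_{n}^{0}$ given by the product of stacks.
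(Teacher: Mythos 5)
Your high-level architecture agrees with the paper's: $\orcut_{0,n}$ is built by lifting $\cut_{0,n}$ into the pullback defining $\Cospan_{n}(\POr_{0}^{\mathcal{S}})$ (i.e.\ supplying the missing leg to $\Cospan_{n}(\Mod_{\k/\k}^{\op})$ compatibly with $C^{*}\circ\cut$), and the non-degeneracy is indeed reduced to Poincar\'e--Lefschetz duality for manifolds with corners, with coefficients in dualizable local systems. So the skeleton is right.

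The genuine gap is in the sentence where the relative fundamental classes ``assemble into candidate orientation data.'' This is where essentially all of the paper's work in this chapter lives, and it is not routine. A preorientation of a cospan $M_{0}\to M\leftarrow M_{1}$ is not just a triple of fundamental classes: it is a specified nullhomotopy of $\int_{M_{0}}-\int_{M_{1}}$ on $\Gamma\mathcal{O}_{M_{B}}$, supplied by Stokes's theorem, and for iterated cospans one needs a full tower of higher coherences, produced functorially out of the strict simplicial model $\sBORDor_{d,d+n}$. Your proposal never mentions Stokes's theorem or any mechanism for producing these homotopies coherently. Concretely, the naive diagram over the constant object $\k[-d]$ does \emph{not} commute strictly (the two composites through $\Omega^{*}(M_{0})$ and $\Omega^{*}(M_{1})$ are unequal cochain maps), which forces the paper to replace the constant diagram by the quasi-isomorphic non-constant diagrams $P_{\mathbf{j},l}[-d]$, to work semisimplicially to handle degeneracies, and to insert correction maps ($Q_{2}\to Q_{1}$) so that composition of bordisms matches composition of cospans. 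Without some substitute for this machinery (or an appeal to a flexible enough model of the bordism category), ``assembles'' is an assertion rather than a proof, and it is precisely the step most likely to fail if attempted directly.

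Two smaller slips: the non-degeneracy condition for oriented cospans is not about tangent and cotangent complexes (that is the Lagrangian side); it is the condition that $C^{*}(M;\mathcal{E}^{\vee})\to C_{*}(M;\mathcal{E})[-d]$ be an equivalence for every dualizable local system $\mathcal{E}$ of $A$-modules, so you must run Poincar\'e--Lefschetz duality with arbitrary such coefficients, not just $\k$. And the symmetric monoidal structure on $\Or_{n}^{0}$ is the \emph{coproduct} (disjoint union) of stacks with the sum of pushed-forward orientations, not the product; moreover the paper proves monoidality of $\orcut$ by an explicit delooping compatibility, which again requires strict-model bookkeeping rather than following formally from the unoriented case.
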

If we ignore the orientations, this is given by applying $\cut_{0,n}$
and then taking constant stacks to pass from spaces to derived stacks.

Putting these results together, for any $s$-symplectic derived
stack $X$ we get an $n$-dimensional oriented extended TFT as the composite
\[ \Bordor_{0,n}\xto{\orcut_{0,n}} \Or_{n}^{0} \xto{\txt{aksz}_{X}}
  \Lag_{n}^{s}.\]

\begin{remark}[Related works]
  In~\cite{CMNBV}, Cattaneo--Mnev--Reshetikhin develop an extension of
  the BV-BFV formalism on manifolds with corners, which can be used to
  describe similar field theories to those we consider here.  Another
  way of formalizing the concept of locality in quantum field theory
  uses the notion of a factorization algebra, and the work of
  Costello--Gwilliam~\cite{CostelloGwilliam2} can be seen as a
  systematic way of dealing with locality for BV-type field theories,
  which includes classical AKSZ theories.  Neither of these frameworks
  have so far been used to construct fully extended functorial field
  theories, but on the other hand they also encompass
  \emph{quantizations} of classical field theories, which we do not
  consider at all.
\end{remark}

For the convenience of the reader, we include two appendices: In
\S\ref{sec:cat} we recall some background material on higher
categories, including the construction of $(\infty,n)$-categories of
iterated spans from \cite{spans}, and in \S\ref{sec:dag} we do the
same for derived algebraic geometry. Here we also prove some new but
rather technical
results on these topics that are needed in the main body of the paper,
including a strict model of higher categories of spans
in a model category in \S\ref{subsec:spanmodel} and
a description of the left adjoint to the twisted arrow
\icat{} functor in \S\ref{subsec:twarr}.

\begin{examples}[Examples of AKSZ Theories]\label{intr:ex of AKSZ}
As further motivation for our construction, we mention some examples
from physics that fit into our formalism:
\begin{itemize}
\item If $G$ is a reductive algebraic group, then the classifying
  stack $BG$ has a 2-symplectic structure (given by the Killing form
  of the corresponding Lie algebra $\mathfrak{g}$ when $G$ is
  semisimple); the corresponding 3-dimensional field theory is
  \emph{classical Chern--Simons theory}.
\item If $X$ is a smooth algebraic variety equipped with an ordinary 
  algebraic symplectic structure, then it is
  $0$-shifted symplectic; the corresponding 3-dimensional field theory
  is \emph{classical Rozansky--Witten theory}~\cite{RW}.
\item There are also examples that would require some adaptation to
  fit into our framework, for instance when $X$ is the quotient of a
  shifted symplectic Lie algebroid in the sense of~\cite{PymSafronov}.
  In principle, if the shifted symplectic Lie algebroid is the
  one associated with a Poisson manifold (or smooth Poisson algebraic
  variety), then the corresponding 2-dimensional field theory will be
  the \emph{Poisson sigma model} (originally introduced by
  Ikeda~\cite{Ikeda} and Schaller--Strobl~\cite{SchStr}, and made into
  an AKSZ theory by Cattaneo--Felder~\cite{CatFel}).  
\end{itemize}
\end{examples}

\section{Acknowledgments}
We are grateful to Owen Gwilliam, Tyler Lawson, Irakli Patchkoria, Christopher
Schommer-Pries, Vivek Shende, and Mahmoud Zeinalian for helpful
discussions and comments related to this paper. RH also thanks the
denizens of the homotopy theory chat room back in 2015, in particular
Marc Hoyois and Adeel Khan, for algebro-geometric assistance.

This paper has taken a while to finish, and various subsets of the
authors have had the pleasure of working together during visits to the
Center for Symmetry and Deformation in Copenhagen, the Mathematical
Sciences Research Institute in Berkeley, the Max Planck Institute for
Mathematics in Bonn, the Newton Institute in Cambridge, and the
Perimeter Institute for Theoretical Physics in Waterloo. 


\section{Notation}
This paper is written in the language of \icats{}. 
We reuse much of the notation from \cite{spans}, as some of our
results build on the constructions of that paper. For the reader's
convenience, we recall some of this notation here, as well as other
conventions we use throughout the paper:
\begin{itemize}
\item By a \emph{commutative diagram} of some shape $\mathcal{I}$ we
  mean a homotopy-coherent diagram, i.e. a functor of \icats{} with
  source $\mathcal{I}$. Thus when we say that, for instance, a certain
  square \emph{commutes}, we mean that there is an implicitly chosen
  homotopy that fills in the square.
\item $\simp$ is the simplex category, with objects the
  non-empty finite ordered sets $[n] := \{0, 1, \ldots, n\}$
  and morphisms order-preserving functions between them.
\item If $f \colon \mathcal{C} \to \mathcal{D}$ is left adjoint to a
  functor $g \colon \mathcal{D} \to \mathcal{C}$, we will refer to the
  adjunction as $f \dashv g$.
\item $\mathcal{S}$ is the \icat{} of spaces (\ie{} $\infty$-groupoids
  or homotopy types); $\mathcal{S}_{\fin} \subseteq \mathcal{S}$ is the full
  subcategory of finite cell complexes, \ie{} the smallest full
  subcategory containing the point and closed under finite
  colimits.
\item $\sSet$ is the category of simplicial sets.
\item We say a functor $F \colon \mathcal{C} \to \mathcal{D}$ of
  \icats{} is \emph{coinitial} if the opposite functor $F^{\op} \colon
  \mathcal{C}^{\op} \to \mathcal{D}^{\op}$ is cofinal, i.e. if
  composing with $F$ always takes $\mathcal{D}$-indexed limits to
  $\mathcal{C}$-indexed limits.
\item If $\mathcal{C}$ is an \icat{}, we denote the cone points of the
  \icats{} $\mathcal{C}^{\triangleright}$ and $\mathcal{C}^{\triangleleft}$,
   obtained by freely adjoining a final and an initial object to $\mathcal{C}$,
   by $\infty$ and $-\infty$, respectively.
 \item We use three nested Grothendieck universes, and refer to the sets
   contained in these as \emph{small}, \emph{large}, and \emph{very
     large}. If $\mathcal{C}$ is the (large) \icat{} of small objects
   of some kind, we refer to the (very large) \icat{} of large objects
   of this type as $\widehat{\mathcal{C}}$. In particular,
   $\widehat{\mathcal{S}}$ is the (very large) \icat{} of large
   spaces, and $\LCatI$ is the \icat{} of large \icats{}.
 \item If $\mathcal{C}$ is an \icat{} and $x$ and $y$ are objects of
   $\mathcal{C}$, we write $\mathcal{C}_{/x,y}$ for the ``double
   slice'' \icat{} of diagrams $x \from c \to y$; this can be defined
   as the pullback of \icats{}
   \[
     \begin{tikzcd}
       \mathcal{C}_{/x,y} \arrow{r} \arrow{d} & \Fun(\Lambda^{2}_{0},
       \mathcal{C}) \arrow{d}{\txt{ev}_{1,2}} \\
       \{(x,y)\} \arrow{r} & \mathcal{C} \times \mathcal{C}.
     \end{tikzcd}
   \]
   (If the product $x \times y$ exists in $\mathcal{C}$, then its
   universal property says precisely that composition
   with the projections to $x$ and $y$ induces an equivalence
   $\mathcal{C}_{/x\times y} \isoto \mathcal{C}_{/x,y}$.)
 \item If $\mathcal{C}$ is an \icat{}, we'll denote the space obtained
   by inverting all the morphisms in $\mathcal{C}$ by
   $\|\mathcal{C}\|$ (in other words, this is the value at
   $\mathcal{C}$ of the left adjoint to the inclusion $\mathcal{S}
   \hookrightarrow \CatI$).
 \item We write $\xF_{*}$ for a skeleton of the category of finite
   pointed sets, with objects $\angled{n} := (\{0,1,\ldots,n\},0)$. A
   symmetric monoidal \icat{} can then be defined as a cocartesian
   fibration over $\xF_{*}$ whose corresponding functor $F \colon
   \xF_{*} \to \CatI$ is
   a commutative monoid in the sense that the maps $F(\angled{n}) \to
   \prod_{i=1}^{n} F(\angled{1})$, induced by the maps $\angled{n} \to
   \angled{1}$ that send all but one element of the source to the base
   point, are all equivalences.
 \item If $\mathcal{C}$ is a closed symmetric monoidal \icat{}, we
   write $X^{\vee}$ for the (linear) dual of an object $X \in
   \mathcal{C}$, meaning the internal Hom from $X$ to the monoidal unit.
\end{itemize}

\chapter{Symplectic and Oriented Derived Stacks}\label{sec:lagor}

Our goal in this chapter is to construct symmetric monoidal
$(\infty,n)$-categories of symplectic and oriented derived stacks and
prove that all their objects are fully dualizable.

We begin by reviewing
the basic objects we will work with: (pre)symplectic derived stacks in
\S\ref{subsec:symp} and (pre)oriented derived stacks in
\S\ref{subsec:or}. Then we discuss the \icats{} of presymplectic and
preoriented stacks in \S\ref{subsec:psympcat}.  Next, we review
Lagrangian correspondences in \S\ref{sec:lag} and oriented cospans in
\S\ref{subsec:orcospan} before we introduce their higher-dimensional
analogues in \S\ref{subsec:defitlag} and
\S\ref{subsec:higherorcospan}, respectively. In \S\ref{subsec:nondeg}
we provide some alternative characterizations of these objects, before
we construct the symplectic and oriented $(\infty,n)$-categories in
\S\ref{subsec:itlagcat}. We provide symmetric monoidal
structures on them in \S\ref{subsec:itlagsymmon} and prove that all  objects therein
are fully dualizable in \S\ref{subsec:dual}. Finally, we discuss the
special case of  oriented (higher) cospans of spaces in
\S\ref{subsec:orspc}.

\section{Symplectic Derived Stacks}\label{subsec:symp}
In this section we review the definition of symplectic
structures on derived stacks, introduced in \cite{PTVV}.

\begin{notation}\label{not:symp}
  We will use the following notation and terminology, which is
  discussed in more detail in appendix~\ref{sec:dag}:
  \begin{itemize}
  \item $\k$ is a fixed base field of characteristic zero
  \item $\dSt$ is the \icat{} of derived stacks over $\k$
    (Definition~\ref{defn:dSt}) and for $S$ a derived stack, $\dSt_{S}
    := \dSt_{/S}$ is the \icat{} of \emph{derived $S$-stacks} (\ie{}
    derived stacks \emph{over} $S$). We will usually leave the
    adjective ``derived'' implicit and just talk about $S$-stacks.
  \item $\dStArt_{S}$ is the full subcategory of $\dSt_{S}$ on the
    Artin $S$-stacks (Definition~\ref{defn:Artst}).
  \item For $X \in \dSt_{S}$, $s$ an integer, and $p$ a non-negative
    integer, $\Apcl_{S}(X,s)$ is the space of \emph{closed $s$-shifted
      relative $p$-forms} and $\Ap_{S}(X,s)$ is the space of
    \emph{$s$-shifted relative $p$-forms}
    (Definition~\ref{defn:pforms}). As functors in $X$, these are both
    represented by objects of $\dSt_{S}$
    (Proposition~\ref{propn:Apsheaf}), which we denote by
    $\Apcl_{S}(s)$ and $\Ap_{S}(s)$, respectively.
  \item For $X \in \dSt$, $\QCoh(X)$ is the (symmetric monoidal
    stable) \icat{} of quasicoherent sheaves on $X$ (Definition~\ref{defn:QCoh}).
  \item For $X \in \dSt_{S}$, $\mathbb{L}_{X/S}$
    is the relative cotangent complex of $X$ over $S$ (Definition~\ref{defn:LXS}), when this
    exists and is unique. 
  \item If $X$ is an Artin $S$-stack, then
    $\mathbb{L}_{X/S}$ exists (uniquely) and is a dualizable object of
    $\QCoh(X)$ (Theorem~\ref{thm:geomhascotgt}). Its dual
    $\mathbb{T}_{X/S}:= \mathbb{L}_{X/S}^{\vee}$ is the \emph{relative
      tangent complex} of $X$ over $S$.
  \end{itemize}
\end{notation}

Classically, a symplectic manifold is a manifold equipped with a
closed non-degenerate 2-form. Since we have a good theory of
differential forms on relative Artin stacks, we can mimic this in the
setting of derived stacks:
\begin{definition}
  Let $X$ be an Artin $S$-stack. By
  Proposition~\ref{propn:formscotgt} we can identify $\Ap_{S}(X,s)$
  with the space of shifted sections
  $\mathcal{O}_{X}[-s] \to \Lambda^{p}\mathbb{L}_{X/S}$ in
  $\QCoh(X)$. In particular, any $s$-shifted 2-form $\omega$
  corresponds to a map
  $\mathcal{O}_{X}[-s] \to \Lambda^{2}\mathbb{L}_{X/S}$, which induces
  a map $\widetilde{\omega} \colon \mathbb{T}_{X/S} \to \mathbb{L}_{X/S}[s]$ since
  $\mathbb{L}_{X/S}$ is dualizable. We say $\omega$ is
  \emph{non-degenerate} if this morphism in $\QCoh(X)$ is an
  equivalence. We say that a \emph{closed} relative 2-form is
  non-degenerate if its underlying relative 2-form is non-degenerate;
  we also refer to a non-degenerate closed relative 2-form as an
  \emph{$s$-symplectic form} (or \emph{$s$-shifted symplectic form}).
\end{definition}

\begin{definition}
  An \emph{$s$-presymplectic $S$-stack} $(X,\omega)$ is an
  Artin $S$-stack $X$ together with
  $\omega \in \Atcl_{S}(X,s)$. If $\omega$ is $s$-symplectic, we say
  that $(X,\omega)$ is an \emph{$s$-symplectic $S$-stack}.
\end{definition}

\begin{examples}\ 
  \begin{enumerate}[(i)]
\item For the terminal object $S \in \dSt_{S}$ we have $\Atcl_{S}(S,s)
  \simeq *$ and $\mathbb{L}_{S/S}
  \simeq 0$, so $S$ has a unique $s$-presymplectic structure $(S,0)$,
  which is symplectic.
\item For $k$ a noetherian $\k$-algebra, and $G$ an affine
  algebraic $k$-group with Lie $k$-algebra $\mathfrak{g}$, every
  element $c\in\mathrm{Sym}^2(\mathfrak{g}^\vee)^G$ determines a
  $2$-presymplectic structure on the classifying $\Spec(k)$-stack
  $BG$, which is symplectic if and only if the pairing $c$ is
  non-degenerate in the usual sense (meaning that the induced map
  $\tilde{c}:\mathfrak{g}\to\mathfrak{g}^\vee$ is an isomorphism). It
  is very likely that a similar result is still true over an arbitrary
  noetherian base scheme $S$, and it would be interesting to see if it
  holds over a more general base stack.
  \item The derived stack $\mathbf{Perf}_{k}$ ($k$ as above) of perfect $k$-modules has a 
  $2$-symplectic structure, which is essentially given by the weight two component of the Chern 
  character (see~\cite[\S2.3]{PTVV}). 
  \item If $X$ is a derived Artin $\k$-stack, then its shifted cotangent stack $\mathbf{T}^*[n]X$, 
  that is the stack over $X$ classifying sections of $\mathbb{L}_X[n]$, carries a tautological degree $n$ 
  one-form $\lambda_X\in\mathcal{A}^1_{\k}(n)$. By composing with the de Rham differential
  $\mathcal{A}^1_{\k}(n)\to \Atcl_{\k}(X,n)$ we get an $n$-presymplectic structure on 
  $\mathbf{T}^*[n]X$, that turns out to be $n$-symplectic (see \cite{CalCot}). The proof in \cite{CalCot}
  extends vertabim over a more general base $S$. 
  \end{enumerate}
\end{examples}

\section{Oriented Derived Stacks}\label{subsec:or}
In this section we review the definition of oriented derived stacks,
also introduced in \cite{PTVV}.

\begin{notation}
  In addition to Notation~\ref{not:symp}, we will here use the
  following terminology from appendix~\ref{sec:dag}:
  \begin{itemize}
  \item A morphism $f \colon X \to Y$ in $\dSt$ is
    \emph{cocontinuous} if
    $f_{*} \colon \QCoh(X) \to \QCoh(Y)$ preserves colimits, and
    \emph{universally cocontinuous} if every base change
    of $f$ is cocontinuous
    (Definition~\ref{defn:pfcc}). Universally cocontinuous
    morphisms satisfy base change for $\QCoh$
    (Theorem~\ref{thm:bcpf}). We denote the full
subcategories of
    universally cocontinuous morphisms in $\dSt_{S}$ and
    $\Fun(\Delta^{1}, \dSt)$ by $\dSt_{S}^{\txt{UCC}}$ and
    $\Fun(\Delta^{1}, \dSt)^{\txt{UCC}}$, respectively.
  \item If $X$ is an $S$-stack, corresponding to a
    morphism $f \colon X \to S$ in $\dSt$, we will often write $\Gamma_{S}
    := f_{*}$ to avoid introducing notation for the map $f$.
  \end{itemize}
\end{notation}

\begin{definition}
  A \emph{$d$-preorientation} of an $S$-stack $X$ is a
  morphism \[[X] \colon \Gamma_{S}\mathcal{O}_{X} \to \mathcal{O}_{S}[-d]\]
  in $\QCoh(S)$.
  A \emph{$d$-preoriented} stack $(X,[X])$ over $X$ is a universally
  cocontinuous\footnote{Although the assumption of universal
  cocontinuity is not strictly speaking required to make this definition, it
  will be convenient to include it as part of the notion of
  $d$-preoriented stack as we will repeatedly need to make use of base
  change below.} morphism $X \to S$ together with a $d$-preorientation
  $[X]$ of $X$ over $S$. 
\end{definition}

\begin{definition}\label{defn:weakor}
  Let $(X,[X])$ be a $d$-preoriented $S$-stack. Given any
  $\mathcal{E}$ in $\QCoh(X)$ we have an evaluation map
  $\mathcal{E} \otimes \mathcal{E}^{\vee} \to \mathcal{O}_{X}$, where
  $\mathcal{E}^{\vee}$ denotes the (linear) dual (\ie{} the internal
  Hom from $\mathcal{E}$ to $\mathcal{O}_{X}$).  Since
  $\Gamma_{S}$ is a lax monoidal functor, this induces a morphism
  \[ \Gamma_{S}(\mathcal{E}) \otimes \Gamma_{S}(\mathcal{E}^{\vee})
    \to \Gamma_{S}(\mathcal{E} \otimes \mathcal{E}^{\vee}) \to
    \Gamma_{S}\mathcal{O}_{X} \xto{[X]} \mathcal{O}_{S}[-d].\]
  Adjoint to this we have a morphism
  \[ \Gamma_{S}(\mathcal{E}^{\vee}) \to
    \Gamma_{S}(\mathcal{E})^{\vee}[-d].\]
  We say that $(X,[X])$ is \emph{weakly $d$-oriented} if this morphism
  is an equivalence whenever $\mathcal{E}$ is a dualizable object of
  $\QCoh(X)$.
\end{definition}

\begin{definition}
  Given a pullback square
  \csquare{\sigma^{*}X}{X}{S'}{S}{\xi}{}{}{\sigma}
  in $\dSt$, from a $d$-preorientation $[X]$ of $X$ over $S$ we get a
  $d$-preorientation  of $\sigma^{*}X$ over $S'$ using base change, as the composite
  \[ \Gamma_{S'}\mathcal{O}_{\sigma^{*}X} \simeq
    \Gamma_{S'}\xi^{*}\mathcal{O}_{X} \simeq
    \sigma^{*}\Gamma_{S}\mathcal{O}_{X} \xto{\sigma^{*}[X]}
    \sigma^{*}\mathcal{O}_{S}[-d] \simeq \mathcal{O}_{S'}[-d];\]
  we will denote this by $\sigma^{*}[X]$ for simplicity. We then say
  that $(X,[X])$ is \emph{$d$-oriented} if for every
  morphism $\sigma \colon S' \to S$ the base change
  $(\sigma^{*}X,\sigma^{*}[X])$ is weakly $d$-oriented over $S'$.
\end{definition}

\begin{proposition}\label{propn:doraff}
  A $d$-preoriented $S$-stack $(X, [X])$ is $d$-oriented \IFF{} for
  every map $p \colon \Spec A \to S$, the base change
  $(p^{*}X, p^{*}[X])$ is weakly $d$-oriented over $\Spec A$.
\end{proposition}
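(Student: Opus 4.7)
The forward implication is immediate: taking $\sigma = p \colon \Spec A \to S$ in the definition of $d$-oriented gives precisely the hypothesis. So the content is the converse.

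For the converse, fix an arbitrary morphism $\sigma \colon S' \to S$; the plan is to show that the weak $d$-orientation map
\[ \Gamma_{S'}(\mathcal{E}^{\vee}) \to \Gamma_{S'}(\mathcal{E})^{\vee}[-d] \]
associated to $(\sigma^{*}X, \sigma^{*}[X])$ is an equivalence for every dualizable $\mathcal{E} \in \QCoh(\sigma^{*}X)$, by reducing the claim to the affine case already assumed. Since quasicoherent sheaves satisfy descent for affines, a morphism in $\QCoh(S')$ is an equivalence if and only if its pullback along every $p \colon \Spec A \to S'$ is an equivalence; so it suffices to verify this after pulling back along each such $p$.

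Now fix $p \colon \Spec A \to S'$ and let $q = \sigma \circ p \colon \Spec A \to S$, giving the composed pullback square
\[
\begin{tikzcd}
q^{*}X \arrow{r} \arrow{d} & \sigma^{*}X \arrow{r}{\xi} \arrow{d} & X \arrow{d} \\
\Spec A \arrow{r}{p} & S' \arrow{r}{\sigma} & S.
\end{tikzcd}
\]
The map $X \to S$ is universally cocontinuous, so its base change $\sigma^{*}X \to S'$ is as well, and Theorem~\ref{thm:bcpf} gives the base change equivalence $p^{*}\Gamma_{S'}(-) \simeq \Gamma_{\Spec A}(p_{X}^{*}(-))$, where $p_{X} \colon q^{*}X \to \sigma^{*}X$ is the top pullback. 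Pullback is symmetric monoidal, so it preserves dualizable objects and commutes with linear duality on them. Combining these two facts and using that the preorientation pulls back as $p^{*}(\sigma^{*}[X]) \simeq q^{*}[X]$ (by transitivity of base change), applying $p^{*}$ to the displayed map yields, up to canonical equivalence, the weak $d$-orientation map for $(q^{*}X, q^{*}[X])$ on the dualizable sheaf $p_{X}^{*}\mathcal{E}$. By hypothesis this latter map is an equivalence, completing the proof.

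The main obstacle is the bookkeeping in the final step: one must verify that pullback of the map really does produce the weak $d$-orientation map of the affine base change, which requires checking compatibility of base change with the lax monoidal structure of $\Gamma$, with the evaluation pairing $\mathcal{E} \otimes \mathcal{E}^{\vee} \to \mathcal{O}$, and with the duality adjunction used in Definition~\ref{defn:weakor}. All of these are naturality statements for the symmetric monoidal base change equivalence, and so follow from its functoriality, but they must be strung together carefully.
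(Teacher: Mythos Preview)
Your proof is correct and follows essentially the same route as the paper. The paper makes one small simplification you do not: since the affine hypothesis on $X \to S$ is inherited by $\sigma^{*}X \to S'$ (every affine point of $S'$ composes to an affine point of $S$), it suffices to prove only the case $\sigma = \id_{S}$, after which the general case follows formally. Apart from that, the argument---check the weak orientation map is an equivalence by pulling back along every $p \colon \Spec A \to S'$ via Corollary~\ref{cor:qcoheqonpts}, then use base change for the universally cocontinuous morphism to identify the result with the affine weak orientation map---is exactly the paper's.
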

\begin{proof}
  It suffices to show that if $(p^{*}X, p^{*}[X])$ is weakly
  $d$-oriented over $\Spec A$ for every map $p \colon \Spec A \to S$,
  then $(X,[X])$ is weakly oriented. Thus given $\mathcal{E} \in
  \QCoh(X)$ dualizable, we want to show that under this assumption the
  induced map $\Gamma_{S}(\mathcal{E}^{\vee}) \to
  (\Gamma_{S}\mathcal{E})^{\vee}[-d]$ is an equivalence. By
  Corollary~\ref{cor:qcoheqonpts} it suffices to check that $p^{*}\Gamma_{S}(\mathcal{E}^{\vee}) \to
  p^{*}(\Gamma_{S}\mathcal{E})^{\vee}[-d]$ is an equivalence for every
  point $p \colon \Spec A \to S$. Applying base change to the pullback
  square
  \csquare{p^{*}X}{X}{\Spec A}{S}{q}{}{}{p}
  we can identify this with the corresponding map
  $\Gamma_{A}(q^{*}\mathcal{E})^{\vee} \to
  (\Gamma_{A}q^{*}\mathcal{E})^{\vee}[-d]$; this is an equivalence
  since by assumption $p^{*}X$ is weakly oriented over $\Spec A$.
\end{proof}

\begin{examples}\ 
  \begin{enumerate}[(i)]
  \item The initial $S$-stack $\emptyset$ has a unique
    $d$-preorientation for every $d$ (since
    $\mathcal{O}_{\emptyset} \simeq 0$), which is a $d$-orientation
    (since $\QCoh(\emptyset) \simeq \{0\}$).
  \item Let $M$ be a compact $d$-manifold. Then the underlying space of 
  	$M$ belongs to $\mathcal{S}_{\fin}$, and thus the Betti $\k$-stack $M_B$ 
    of $M$ (that is, the constant stack associated to this space, see Definition~\ref{defn:betti}) belongs to 
    $\dSt_{\Spec_{\k}}^{\txt{UCC}}$. If $M$ is closed and oriented, then 
    integrating along the fundamental class provides a $d$-preorientation 
    $\int_M:\Gamma\mathcal O_{M_B}\simeq C^*(M,\k)\to\k[-d]$.
    One can show that this is a $d$-orientation --- this is roughly
    Poincar\'e duality for local systems on $M$, which are precisely
    quasicoherent sheaves on $M_B$ (see \S\ref{subsec:orspc}). 
  \item More generally, the Betti $\k$-stack $M_B$ associated
    with a Poincar\'e duality space $(M,[M])$ is $d$-oriented. Indeed,
    the class $[M]$ provides a $d$-orientation
    $\Gamma\mathcal{O}_{M_B}\simeq
    C^*(M,\k)\to\k[-d]$. In fact, over any base $S$,
    one can consider the constant $S$-stack associated with the space
    $M$, and $[M]$ will again provide a $d$-orientation.  It would be
    interesting to work out a generalization of this, for a (possibly
    non-trivial) $S$-family of Poincar\'e duality spaces.
  \item Similarly, it is shown in \cite{PTVV} that if $X=Y_{DR}$ is the de Rham stack of a smooth proper 
    Deligne-Mumford $k$-stack $Y$ ($k$ a noetherian $\k$-algebra), with connected geometric fibers, of $k$-dimension $d$, 
    then every choice of a fundamental de Rham class for $Y$ leads to a $2d$-orientation on $Y_{\txt{DR}}$. With the 
    same assumption on $Y$, the same conclusion holds for the Dolbeault stack $Y_{\txt{Dol}}$ if one picks a fundamental 
    class in Hodge cohomology instead. In both cases, one can safely replace $\Spec(k)$ with a (not necessarily affine) noetherian 
    $\k$-scheme $S$. It would be interesting to see if one can consider a more general base. 
  \item Finally, if $k$ and $Y$ are as above, it is also shown in
    \cite{PTVV} that every trivialization of the canonical sheaf
    $\wedge^d\Omega^1_{Y/\Spec(k)}$ leads to a $d$-orientation on $Y$
    itself. (The same observations and questions concerning
    generalizations over a more general base also apply here.)
  \end{enumerate}
\end{examples}

\section{The $\infty$-Categories of Presymplectic and Preoriented
  Stacks}\label{subsec:psympcat} 
Before we proceed any further, it is convenient to define the \icats{}
of presymplectic and preoriented stacks, and prove some of their basic
formal properties. First we recall some further notation:
\begin{notation}\ 
  \begin{itemize}
  \item As in Definition~\ref{def:Qfib}, $\mathcal{Q} \to \dSt^{\op}$
    denotes the cocartesian fibration for the functor $\QCoh(\blank)$
    (using pullback functors); it is also the cartesian fibration for
    the right adjoint pushforward functors. We write
    $\mathcal{O} \colon \dSt^{\op} \to \mathcal{Q}$ for the
    (cocartesian) section taking $X \in \dSt^{\op}$ to its structure
    sheaf $\mathcal{O}_{X}$.
  \item As in Remark~\ref{rmk:formfunctor}, we write
    $\mathrm{ClDF}^{2,s} \to \dSt^{\Delta^{1}}$ for the right
    fibration corresponding to the functor
    $\Atcl_{(\blank)}(\blank,s) \colon \dSt^{\Delta^{1},\op} \to
    \mathcal{S}$, which takes $X \to S$ to the space $\Atcl_{S}(X,s)$
    of $s$-shifted relative 2-forms on $X$ over $S$. The composite
    $\mathrm{ClDF}^{2,s} \to \dSt^{\Delta^{1}} \xto{\ev_{1}} \dSt$
    is then a cartesian fibration. The corresponding functor encodes
    the compatibility
    of closed differential 2-forms with base change: it takes
    $S$ to $\dSt_{S/\Atcl_{S}(s)}$ and a morphism $\sigma
    \colon S' \to S$ to a functor $\sigma^{*} \colon
    \dSt_{S/\Atcl_{S}(s)} \to \dSt_{S'/\Atcl_{S'}(s)}$, given by
    pullback along $\sigma$ on underlying stacks together with
    composition with a morphism $\sigma^{*}\Atcl_{S}(s) \to
    \Atcl_{S'}(s)$.
  \end{itemize}
\end{notation}

\begin{definition}
  An $s$-presymplectic stack over $S$ is equivalently a morphism
  $X \to \Atcl_{S}(s)$ in $\dSt_{S}$, so we can define the \icat{}
  $\PSymp_{S,s}$ of $s$-presymplectic stacks over $S$ as the pullback
  \[ \PSymp_{S,s} := \dStArt_{S} \times_{\dSt_{S}}
    \dSt_{S/\Atcl_{S}(s)}.\]
  Equivalently, we can identify this with the full subcategory of
  $\dSt_{S/\Atcl_{S}(s)}$ spanned by the morphisms $X \to \Atcl_{S}(s)$
  such that the composite $X \to \Atcl_{S}(s) \to S$ is an Artin
  $S$-stack. 
\end{definition}

\begin{remark}\label{rmk:PSympRfib}
  Note that the forgetful functor $\PSymp_{S,s} \to \dStArt_{S}$
  is a right fibration, since it is a base change of the right
  fibration $\dSt_{S/\Atcl_{S}(s)} \to \dSt_{S}$.
\end{remark}

\begin{definition}
  We define 
   \[ \PSymp_{s} := \dSt^{\Delta^{1},\Art} \times_{\dSt^{\Delta^{1}}}
     \mathrm{ClDF}^{2,s},\]
   where $\dSt^{\Delta^{1},\Art}$ is the full subcategory of
   $\dSt^{\Delta^{1}}$ spanned by the relative Artin stacks. Since
   these are closed under base change, the projection $\PSymp_{s} \to
   \dSt$ is again a cartesian fibration, which encodes compatible
   functors
   \[\sigma^{*} \colon \PSymp_{S,s} \to \PSymp_{S',s} \]
   for $\sigma \colon S' \to S$, given by pullback along $\sigma$ on
   underlying stacks.
 \end{definition}

\begin{remark}
  For
  $(X,\omega)$ in $\PSymp_{S,s}$, with $\omega$ corresponding to $\tilde{\omega}
  \colon \mathbb{T}_{X/S} \to \mathbb{L}_{X/S}[s]$, in terms of the pullback square
  \csquare{X'}{X}{S'}{S,}{\xi}{}{}{\sigma}
  the closed 2-form
  of the presymplectic
  stack $\sigma^{*}(X, \omega)$ corresponds to the map
  \[\mathbb{T}_{X'/S'} \simeq \xi^{*}\mathbb{T}_{X/S} \xto{\xi^{*}\tilde{\omega}}
    \xi^{*}\mathbb{L}_{X/S}[s] \simeq \mathbb{L}_{X'/S'}[s].\]
  
\end{remark}

\begin{lemma}\label{lem:symplocal}
  For an $s$-presymplectic $S$-stack $(X,\omega)$, the following are equivalent:
  \begin{enumerate}[(i)]
  \item $X$ is symplectic.
  \item For every morphism $\sigma \colon S' \to S$, the
    $s$-presymplectic stack $(\sigma^{*}X, \sigma^{*}\omega)$ over
    $S'$ is symplectic.
  \item For every point $p \colon \Spec A \to S$, the $s$-presymplectic stack $(p^{*}X, p^{*}\omega)$ over $\Spec A$ is symplectic.
  \end{enumerate}
\end{lemma}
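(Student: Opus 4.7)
The plan is to prove the cyclic implications. The implications (ii) $\Rightarrow$ (i) (take $\sigma = \id_S$) and (ii) $\Rightarrow$ (iii) are immediate, so I only need to establish (i) $\Rightarrow$ (ii) and (iii) $\Rightarrow$ (i). The core observation is the formula $\widetilde{\sigma^{*}\omega} \simeq \xi^{*}\widetilde{\omega}$ recorded in the preceding remark, combined with base change for the cotangent complex, which identifies $\mathbb{T}_{X'/S'} \simeq \xi^{*}\mathbb{T}_{X/S}$ and $\mathbb{L}_{X'/S'} \simeq \xi^{*}\mathbb{L}_{X/S}$ in the pullback square displayed there.

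For (i) $\Rightarrow$ (ii), given any $\sigma \colon S' \to S$, the pullback functor $\xi^{*} \colon \QCoh(X) \to \QCoh(X')$ is symmetric monoidal and in particular preserves equivalences. Under the identification above, $\widetilde{\sigma^{*}\omega} \simeq \xi^{*}\widetilde{\omega}$, so if $\widetilde{\omega}$ is an equivalence then so is $\widetilde{\sigma^{*}\omega}$, proving that $(\sigma^{*}X, \sigma^{*}\omega)$ is $s$-symplectic.

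For (iii) $\Rightarrow$ (i), I want to show that $\widetilde{\omega} \colon \mathbb{T}_{X/S} \to \mathbb{L}_{X/S}[s]$ is an equivalence in $\QCoh(X)$. By Corollary~\ref{cor:qcoheqonpts}, it suffices to check that $q^{*}\widetilde{\omega}$ is an equivalence in $\QCoh(\Spec B)$ for every morphism $q \colon \Spec B \to X$. Given such a $q$, let $p \colon \Spec B \to S$ be the composite with the structure map $X \to S$. The universal property of the pullback square
\[
\begin{tikzcd}
p^{*}X \arrow{r}{\xi} \arrow{d} & X \arrow{d} \\
\Spec B \arrow{r}{p} & S
\end{tikzcd}
\]
produces a canonical factorization $q = \xi \circ q'$ for some $q' \colon \Spec B \to p^{*}X$, so $q^{*}\widetilde{\omega} \simeq (q')^{*}\xi^{*}\widetilde{\omega} \simeq (q')^{*}\widetilde{p^{*}\omega}$. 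By hypothesis $(p^{*}X, p^{*}\omega)$ is $s$-symplectic, so $\widetilde{p^{*}\omega}$ is already an equivalence, and hence so is its pullback along $q'$.

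The only nontrivial ingredient is Corollary~\ref{cor:qcoheqonpts}, which lets us test equivalences in $\QCoh(X)$ on affine points; the rest is bookkeeping with base change for the cotangent complex and for pullback of closed forms. I expect no real obstacle beyond carefully invoking these two compatibilities.
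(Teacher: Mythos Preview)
Your proof is correct and uses the same two ingredients as the paper: the identification $\widetilde{\sigma^{*}\omega} \simeq \xi^{*}\widetilde{\omega}$ (from Lemma~\ref{lem:relcotgtpb}) and Corollary~\ref{cor:qcoheqonpts}. The only difference is that for (iii) $\Rightarrow$ (i) the paper invokes Corollary~\ref{cor:qcoheqonpts} directly in its relative form (testing $\widetilde{\omega}$ on base changes $X_{p}$ for $p \colon \Spec A \to S$), whereas you test on affine points $q \colon \Spec B \to X$ and then factor through $p^{*}X$; this is a harmless detour that unwinds to the same argument.
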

\begin{proof}
  (i) holds when the morphism
  $\mathbb{T}_{X/S} \xto{\tilde{\omega}} \mathbb{L}_{X/S}[s]$ induced
  by $\omega$ is an equivalence. Given $\sigma \colon S' \to S$, by
  applying Lemma~\ref{lem:relcotgtpb} we can identify
  $\sigma^{*}\tilde{\omega}$ with
  $\widetilde{\sigma^{*}\omega} \colon \mathbb{T}_{\sigma^{*}X/S'} \to
  \mathbb{L}_{\sigma^{*}X/S'}[s]$. Hence this is an equivalence if
  $\tilde{\omega}$ is one, which proves that (i) implies (ii). Since (iii)
  is a special case of (ii), it remains to prove that (iii) implies
  (i). But by Lemma~\ref{cor:qcoheqonpts} the morphism
  $\tilde{\omega}$ is an equivalence \IFF{} the morphism
  $p^{*}\tilde{\omega}$ is an equivalence for every
  $p \colon \Spec A \to S$. Since
  $p^{*}\tilde{\omega} \simeq \widetilde{p^{*}\omega}$, this completes
  the proof.
\end{proof}

\begin{lemma}\label{lem:PSymppullback}
  The \icat{} $\PSymp_{S,s}$ has pullbacks (and more generally all
  weakly contractible finite limits), and these are computed in $\dSt_{S}$.
\end{lemma}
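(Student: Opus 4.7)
I would prove the lemma by decomposing through the defining pullback
\[ \PSymp_{S,s} = \dStArt_{S} \times_{\dSt_{S}} \dSt_{S/\Atcl_{S}(s)} \]
and checking the claim separately for each factor before recombining.

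First, for any \icat{} $\mathcal{C}$ that admits weakly contractible finite limits and any object $A \in \mathcal{C}$, the forgetful functor $\mathcal{C}_{/A} \to \mathcal{C}$ creates such limits. Indeed, a $K$-shaped diagram in $\mathcal{C}_{/A}$ is the same as an extension $K^{\triangleright} \to \mathcal{C}$ sending the cone point to $A$; when $K$ is weakly contractible, the canonical inclusion $K \hookrightarrow K^{\triangleright}$ is cofinal, so the limit of the diagram over $K^{\triangleright}$ agrees with the limit of the underlying diagram over $K$, and the cone map to $A$ is determined by the projection from the limit to any chosen vertex of $K$ followed by its structure map. Applied to $\mathcal{C} = \dSt_S$ and $A = \Atcl_S(s)$, this shows that $\dSt_{S/\Atcl_S(s)}$ has all weakly contractible finite limits and that they are computed in $\dSt_S$.

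Second, it is a basic closure property of Artin stacks (recalled in \S\ref{sec:dag}) that $\dStArt_S \hookrightarrow \dSt_S$ is stable under pullback, and hence under all weakly contractible finite limits (since these can be built from pullbacks and the terminal object of the slice, which already lies in $\dStArt_S$). Thus the full subcategory inclusion $\dStArt_S \hookrightarrow \dSt_S$ creates weakly contractible finite limits.

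Finally, I would invoke the general fact that if $F \colon \mathcal{A} \to \mathcal{C}$ and $G \colon \mathcal{B} \to \mathcal{C}$ are functors of \icats{} that each create $K$-indexed limits for some diagram shape $K$, then the pullback $\mathcal{A} \times_{\mathcal{C}} \mathcal{B}$ admits $K$-indexed limits, computed componentwise (and so preserved by both projections). Specializing to our two factors with $K$ any weakly contractible finite diagram yields the lemma: the underlying Artin stack of the limit is the limit in $\dSt_S$ of the underlying diagram of Artin stacks, equipped with the closed 2-form induced on this limit by the universal property.

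\textbf{Main obstacle.} The argument is essentially formal once the two inputs are in hand. The only genuinely geometric ingredient is the closure of $\dStArt_S$ under pullbacks, but this is standard and is dealt with in the appendix; the rest is abstract nonsense about slices and pullbacks of \icats{}.
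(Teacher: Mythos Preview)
Your proof is correct and follows essentially the same approach as the paper: both arguments decompose through the defining pullback square, verify that the slice \icat{} $\dSt_{S/\Atcl_S(s)}$ and the full subcategory $\dStArt_S$ each create weakly contractible finite limits over $\dSt_S$, and then recombine using the general principle that such limits in a pullback of \icats{} are computed componentwise (the paper cites \cite{HTT}*{Lemma 5.4.5.5} for this step, and \cite{propn:geomfinlim} for the closure of Artin $S$-stacks under finite limits).
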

\begin{proof}
  The \icat{} $\dSt_{S/\Atcl_{S}(s)}$ has all limits, and the
  forgetful functor $\dSt_{S/\Atcl_{S}(s)} \to \dSt_{S}$ preserves
  weakly contractible limits (as is true for any overcategory). On the other hand, the full subcategory $\dStArt_{S}$ is
  closed under finite limits in $\dSt_{S}$ by
  Proposition~\ref{propn:geomfinlim}. In the pullback square
  \nolabelcsquare{\PSymp_{S,s}}{\dSt_{S/\Atcl_{S}(s)}}{\dStArt_{S}}{\dSt_{S}}
  we thus know that all the vertices except the top left have weakly
  contractible finite limits, and that both the bottom and right-hand
  morphisms preserve these. It follows by \cite{HTT}*{Lemma 5.4.5.5} that
  $\PSymp_{S,s}$ also has weakly contractible finite limits, and that
  these are computed in $\dSt_{S}$ (since both functors to $\dSt_{S}$
  detect equivalences).
\end{proof}

Next, we want to define an \icat{} $\POr_{S,d}$ of $d$-preoriented
$S$-stacks:
\begin{definition}
  From Definition~\ref{defn:GammaSO} we have a functor
  $\Gamma_{S}\mathcal{O} \colon \dSt_{S}^{\op} \to \QCoh(S)$, taking
  an $S$-stack $f \colon X \to S$ to the pushforward
  $f_{*}\mathcal{O}_{X}$ of its structure sheaf.
  Now we define $\POr_{S,d}$ by the pullback
  \csquare{\POr_{S,d}^{\op}}{\QCoh(S)_{/\mathcal{O}_{S}[-d]}}{\dStUCCop_{S}}{\QCoh(S),}{}{}{}{\Gamma_{S}\mathcal{O}}
  where the right vertical map is the forgetful functor.
\end{definition}

\begin{remark}\label{rmk:POrLfib}
  Note that the forgetful functor $\POr_{S,d} \to \dStUCC_{S}$ is a
  left fibration, since it is a base change of the left fibration
  $(\QCoh(S)_{/\mathcal{O}_{S}[-d]})^{\op} \to \QCoh(S)^{\op}$. In
  particular, if $(T,[T])$ is $d$-preoriented and $f \colon T \to T'$
  is any morphism in $\dSt_{S}$, then $T'$ has a canonical
  preorientation $f_{*}[T]$, given by the composite
  \[ \Gamma_{S}\mathcal{O}_{T'} \to \Gamma_{S}\mathcal{O}_{T}
    \xto{[T]} \mathcal{O}_{S}[-d].\]
\end{remark}

\begin{lemma}\label{lem:POreqces}
  The forgetful functor $\POr_{S,d} \to \dSt_{S}$ detects equivalences.
\end{lemma}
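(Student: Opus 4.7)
The plan is to factor the forgetful functor as the composite $\POr_{S,d} \to \dStUCC_{S} \hookrightarrow \dSt_{S}$ and handle each arrow separately. The second arrow is the inclusion of a full subcategory (by the definition of $\dStUCC_S$ recalled in the Notation section), hence fully faithful, so it automatically detects equivalences: a morphism between objects of $\dStUCC_S$ is an equivalence in $\dStUCC_S$ if and only if it is one in $\dSt_S$.

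For the first arrow, I would invoke Remark~\ref{rmk:POrLfib}, which identifies $\POr_{S,d} \to \dStUCC_S$ as a left fibration, and then appeal to the standard fact that any left fibration detects equivalences. The latter holds because every morphism in a left fibration is cocartesian, and any cocartesian morphism lying over an equivalence is itself an equivalence (equivalently, under straightening, a morphism of the total space of a left fibration corresponds to a morphism $\alpha$ in the base together with a path in the fiber $F(\alpha)(e) \simeq e'$; if $\alpha$ is an equivalence, so is $F(\alpha)$, and an inverse can be constructed from $\alpha^{-1}$ together with the inverse path). Concretely, a morphism $(X, [X]) \to (Y, [Y])$ in $\POr_{S,d}$ amounts to a morphism $f \colon X \to Y$ in $\dStUCC_S$ together with a witness that $f_{*}[X] \simeq [Y]$ (in the notation of Remark~\ref{rmk:POrLfib}); if $f$ is an equivalence in $\dSt_S$, then it is one in $\dStUCC_S$, and the inverse of $f$ combined with the induced identification of preorientations assembles to an inverse in $\POr_{S,d}$.

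No real obstacle is expected, and the step that carries the most mathematical content is already done in Remark~\ref{rmk:POrLfib}; that is precisely why the lemma is placed immediately afterward.
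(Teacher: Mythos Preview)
Your proof is correct. The paper's proof is phrased slightly differently: it works directly with the pullback square defining $\POr_{S,d}^{\op}$, observing that a morphism in a pullback is an equivalence if and only if its two projections are, and then uses that the forgetful functor $\QCoh(S)_{/\mathcal{O}_{S}[-d]} \to \QCoh(S)$ detects equivalences (so the condition on the slice factor is implied by the condition on $\dSt_S$ via $\Gamma_S\mathcal{O}$). Your route through Remark~\ref{rmk:POrLfib} and the conservativity of left fibrations is logically equivalent --- indeed, the left fibration in that remark is precisely the base change of the left fibration $(\QCoh(S)_{/\mathcal{O}_{S}[-d]})^{\op} \to \QCoh(S)^{\op}$, so both arguments rest on the same underlying fact. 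Your packaging is arguably cleaner since it reuses the remark directly; the paper's version is a touch more self-contained.
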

\begin{proof}
  A morphism in the pullback $\POr_{S,d}$ is an equivalence \IFF{} its
  images in $\dSt_{S}$ and $\QCoh(S)_{/\mathcal{O}_{S}[-d]}^{\op}$ are
  equivalences. Since the forgetful functor
  $\QCoh(S)_{/\mathcal{O}_{S}[-d]}^{\op} \to \QCoh(S)^{\op}$ detects
  equivalences, this is true \IFF{} the image in $\dSt_{S}$ is an
  equivalence.
\end{proof}

\begin{proposition}\label{propn:POrpushout}
  The \icat{} $\POr_{S,d}$ has pushouts (and more generally all finite
  weakly contractible colimits) and these are computed in $\dSt_{S}$.
\end{proposition}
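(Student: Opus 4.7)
The plan is to mimic the strategy of Lemma~\ref{lem:PSymppullback}, dualized to colimits. Concretely, I would apply the dual of \cite{HTT}*{Lemma 5.4.5.5} to the pullback square
\[
\begin{tikzcd}
\POr_{S,d} \arrow{r} \arrow{d} & (\QCoh(S)_{/\mathcal{O}_{S}[-d]})^{\op} \arrow{d} \\
\dStUCC_S \arrow{r} & \QCoh(S)^{\op}
\end{tikzcd}
\]
obtained by taking opposites in the defining pullback for $\POr_{S,d}^{\op}$. Given this, the conclusion follows once the three other vertices admit weakly contractible finite colimits, the two functors landing in $\QCoh(S)^{\op}$ preserve them, and the colimits in $\dStUCC_S$ agree with the corresponding colimits in $\dSt_S$.

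For the right-hand vertices, $\QCoh(S)$ is presentable and hence has all limits; the overcategory $\QCoh(S)_{/\mathcal{O}_S[-d]}$ inherits these, and the forgetful functor to $\QCoh(S)$ preserves weakly contractible limits, so after taking opposites the right vertical map preserves weakly contractible colimits. For the bottom-left vertex, $\dSt_S$ inherits all colimits from the presentable \icat{} $\dSt$, and I would argue that $\dStUCC_S$ is closed under weakly contractible finite colimits in $\dSt_S$, using that such colimits of stacks commute with base change (since $\dSt$ is an \icat{} of sheaves) and that universal cocontinuity can be tested affine-locally on the base. The bottom horizontal functor preserves these colimits \IFF{} $\Gamma_S\mathcal{O} \colon \dSt_S^{\op} \to \QCoh(S)$ sends the corresponding colimits of stacks to limits of quasicoherent sheaves, which is a consequence of descent for $\QCoh$ combined with the fact that pushforward to $S$ is a right adjoint and so preserves limits.

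Having verified these hypotheses, the dual of \cite{HTT}*{Lemma 5.4.5.5} produces pushouts, and more generally weakly contractible finite colimits, in $\POr_{S,d}$, with both projections preserving them; explicitly, the pushout of a span $(X_1, [X_1]) \leftarrow (X_0, [X_0]) \to (X_2, [X_2])$ has underlying stack $X_1 \cup_{X_0} X_2$ and preorientation the unique map $\Gamma_S\mathcal{O}_{X_1 \cup_{X_0} X_2} \simeq \Gamma_S\mathcal{O}_{X_1} \times_{\Gamma_S\mathcal{O}_{X_0}} \Gamma_S\mathcal{O}_{X_2} \to \mathcal{O}_S[-d]$ induced by $[X_1]$ and $[X_2]$ and their common restriction $[X_0]$. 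By Lemma~\ref{lem:POreqces}, the forgetful functor $\POr_{S,d} \to \dSt_S$ detects equivalences, so the underlying stack of the computed colimit genuinely represents the colimit in $\dSt_S$. The main obstacle in this plan is the geometric input that $\Gamma_S\mathcal{O}$ converts colimits of stacks to limits in $\QCoh(S)$: this encodes descent for quasicoherent sheaves together with preservation of limits by pushforward, and is the only nontrivial step beyond general \icatl{} nonsense.
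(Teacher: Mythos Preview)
Your proposal is correct and follows essentially the same route as the paper's proof: both apply the dual of \cite{HTT}*{Lemma 5.4.5.5} to the defining pullback square, reduce to checking that $\dStUCC_S$ is closed under finite colimits in $\dSt_S$ (the paper packages this as Proposition~\ref{propn:UCCcolim}) and that $\Gamma_S\mathcal{O}$ sends colimits of stacks to limits in $\QCoh(S)$, which both you and the paper derive from descent for $\QCoh$ (the paper via Corollary~\ref{cor:qcohrellim}) together with $\Gamma_S$ being a right adjoint.
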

\begin{proof}
  Consider the pullback diagram defining $\POr_{S,d}$:
  \csquare{\POr_{S,d}^{\op}}{\QCoh(S)_{/\mathcal{O}_{S}[-d]}}{\dSt_{S}^{\txt{UCC},\op}}{\QCoh(S).}{}{}{}{\Gamma_{S}\mathcal{O}}
  According to Proposition~\ref{propn:UCCcolim}, the \icat{}
  $\dSt_{S}^{\txt{UCC}}$ has finite colimits and these are computed
  in $\dSt_{S}$. Moreover,
  $\QCoh(S)$ has all limits, and the forgetful functor
  $\QCoh(S)_{/\mathcal{O}_{S}[-d]} \to \QCoh(S)$ preserves weakly
  contractible limits. It remains to show that $\Gamma_{S}\mathcal{O}$
  takes colimits in $\dSt_{S}$ to limits in $\QCoh(S)$. Given a colimit
  $X \simeq \colim_{i \in \mathcal{I}} X_{i}$ in $\dSt_{S}$, by
  Corollary~\ref{cor:qcohrellim} we have an equivalence
  $\mathcal{O}_{X} \simeq \lim_{i \in \mathcal{I}^{\op}}
  f_{i,*}\mathcal{O}_{X_{i}}$ in $\QCoh(X)$, where $f_{i}$ is the map
  $X_{i} \to X$ in the colimit diagram. Applying $\Gamma_{S}$, we get
  the required equivalence
  $\Gamma_{S}\mathcal{O}_{X} \simeq \lim_{i \in \mathcal{I}^{\op}}
  \Gamma_{S}\mathcal{O}_{X_{i}}$.
\end{proof}

Now we want to show that the \icats{} $\POr_{S,d}$ are functorial in
$S$ with respect to pullback.
\begin{construction}
  Let $\mathcal{Q}_{/\mathcal{O}[-d]}$ denote the fibrewise slice of
  the cocartesian fibration $\mathcal{Q} \to \dSt^{\op}$ over the
  section $\mathcal{O}[-d]$, \ie{}
  the pullback
  \[
    \begin{tikzcd}
      \mathcal{Q}_{/\mathcal{O}[-d]} \arrow{r} \arrow{d} &
      \mathcal{Q}^{\Delta^{1}} \times_{(\dSt^{\op})^{\Delta^{1}}} \dSt
      \arrow{d}{\txt{ev}_{1}} \\
      \dSt^{\op} \arrow{r}{\mathcal{O}[-d]} & \mathcal{Q}.
    \end{tikzcd}
    \]
  Since $\mathcal{O}$ is a cocartesian section,
  this is a pullback square of cocartesian fibrations over $\dSt^{\op}$
  along functors that preserve cocartesian morphisms. Hence, it gives a cocartesian fibration
  $\mathcal{Q}_{/\mathcal{O}[-d]}\to \dSt^{\op}$ such that the forgetful
  functor $\mathcal{Q}_{/\mathcal{O}[-d]} \to \mathcal{Q}$ preserves
  cocartesian morphisms. We can then form the pullback
  \csquare{\POr_{d}^{\op}}{\mathcal{Q}_{/\mathcal{O}[-d]}}{\Fun(\Delta^{1},
    \dSt)^{\txt{UCC},\op}}{\mathcal{Q},}{}{}{}{\Gamma\mathcal{O}}
  where $\Gamma \mathcal{O}$ is the functor defined in
  Construction~\ref{constr:GammaO} (which takes $f \colon X \to S$ to
  $f_{*}\mathcal{O}_{X} \in \QCoh(S)$). Since the bottom and
  right-hand functors preserve cocartesian morphisms, the result is a
  cocartesian fibration $\POr_{d}^{\op} \to \dSt^{\op}$. Passing to
  opposite categories, we get a cartesian fibration
  $\POr_{d} \to \dSt$, which corresponds to the desired functor
  $\dSt^{\op} \to \CatI$ taking $S$ to $\POr_{S,d}$.
\end{construction}

\begin{lemma}\label{lem:POrpbpo}
  For any map $\sigma \colon S' \to S$, the pullback functor
  $\POr_{S,d} \to \POr_{S',d}$ preserves pushouts.
\end{lemma}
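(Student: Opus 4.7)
The plan is to argue using the pullback description
\[ \POr_{S,d}^{\op} \simeq \dSt_{S}^{\UCC,\op} \times_{\QCoh(S)} \QCoh(S)_{/\mathcal{O}_{S}[-d]}, \]
and show that $\sigma^{*,\op} \colon \POr_{S,d}^{\op} \to \POr_{S',d}^{\op}$ preserves pullbacks, which is equivalent to the statement that $\sigma^{*}$ preserves pushouts in $\POr_{S,d}$. Recall from the proof of Proposition~\ref{propn:POrpushout} that pullbacks in $\POr_{S,d}^{\op}$ are computed componentwise from pullbacks in $\dSt_{S}^{\UCC,\op}$ and $\QCoh(S)_{/\mathcal{O}_{S}[-d]}$, using that $\Gamma_{S}\mathcal{O}$ carries the pullbacks in $\dSt_{S}^{\UCC,\op}$ (i.e.\ pushouts of UCC-morphisms) to pullbacks in $\QCoh(S)$ via Corollary~\ref{cor:qcohrellim}. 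Thus it suffices to check that $\sigma^{*,\op}$ preserves pullbacks on each factor and is compatible with the functors to $\QCoh(S)$, up to the natural equivalence coming from base change.

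On the stack factor, first I would observe that the pullback $\sigma^{*} \colon \dSt_{S} \to \dSt_{S'}$ preserves colimits (by universality of colimits in the ambient $\infty$-topos, equivalently by the existence of both adjoints $\sigma_{!} \dashv \sigma^{*} \dashv \sigma_{*}$), and that it sends UCC-morphisms to UCC-morphisms, since universal cocontinuity is stable under base change. Combined with Proposition~\ref{propn:UCCcolim}, this means that the restriction $\sigma^{*} \colon \dSt_{S}^{\UCC} \to \dSt_{S'}^{\UCC}$ preserves pushouts (and more generally weakly contractible colimits). Passing to opposites, $\sigma^{*,\op}$ preserves pullbacks on this factor.

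On the $\QCoh$ factor, $\sigma^{*} \colon \QCoh(S) \to \QCoh(S')$ is a symmetric monoidal left adjoint between presentable stable $\infty$-categories, hence exact, so it preserves finite limits. Since $\sigma^{*}\mathcal{O}_{S}[-d] \simeq \mathcal{O}_{S'}[-d]$, the induced functor $\QCoh(S)_{/\mathcal{O}_{S}[-d]} \to \QCoh(S')_{/\mathcal{O}_{S'}[-d]}$ therefore preserves pullbacks. The compatibility with the $\Gamma\mathcal{O}$-functors is exactly the base change equivalence $\sigma^{*}\Gamma_{S}\mathcal{O}_{X} \simeq \Gamma_{S'}\mathcal{O}_{\sigma^{*}X}$ provided by Theorem~\ref{thm:bcpf} for UCC-morphisms $X \to S$, which is encoded in the construction of $\POr_{d}$ as a cartesian fibration over $\dSt$.

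The main obstacle I expect is the verification that these base change equivalences are genuinely natural in $X$, so that they assemble into a natural equivalence filling the square of functors $\Gamma_{S}\mathcal{O}$, $\Gamma_{S'}\mathcal{O}$ and the pullbacks $\sigma^{*}$; this is what is actually needed to reduce pullback-preservation in $\POr^{\op}$ to pullback-preservation on the two factors. Granted this, a standard fact about pullbacks of $\infty$-categories (that a functor between pullbacks preserves limits whose component projections are preserved by each of the two component functors, under the given compatibility) concludes the proof.
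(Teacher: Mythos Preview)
Your argument is correct, but it takes a more elaborate route than the paper's proof. You work directly with the pullback description of $\POr_{S,d}^{\op}$ and verify that $\sigma^{*,\op}$ preserves pullbacks on each factor separately, together with the base-change compatibility needed to glue these. The paper instead exploits two facts already established: pushouts in $\POr_{S,d}$ and $\POr_{S',d}$ are computed in $\dSt_{S}$ and $\dSt_{S'}$ respectively (Proposition~\ref{propn:POrpushout}), and the forgetful functor $\POr_{S',d} \to \dSt_{S'}$ detects equivalences (Lemma~\ref{lem:POreqces}). Since $\sigma^{*} \colon \dSt_{S} \to \dSt_{S'}$ preserves colimits (as $\dSt$ is an $\infty$-topos), the result follows immediately: the image of a pushout square in $\POr_{S,d}$ has underlying square a pushout in $\dSt_{S'}$, and since the comparison map to the pushout in $\POr_{S',d}$ becomes an equivalence in $\dSt_{S'}$, it is already an equivalence in $\POr_{S',d}$. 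This sidesteps entirely the naturality-of-base-change issue you flag as the main obstacle, since one never needs to look at the $\QCoh$-factor at all.
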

\begin{proof}
  Since $\dSt$ is an $\infty$-topos,
  $\sigma^{*}\colon \dSt_{S} \to \dSt_{S'}$ preserves colimits. By
  Proposition~\ref{propn:POrpushout} pushouts in $\POr_{S',d}$ are
  computed in $\dSt_{S'}$; since the forgetful functor
  $\POr_{S',d} \to \dSt_{S'}$ also detects equivalences by
  Lemma~\ref{lem:POreqces}, it follows that $\sigma^{*}$ preserves
  pushouts.
\end{proof}

\section{Lagrangian Correspondences}
\label{sec:lag}
Here we review the definitions of isotropic and Lagrangian morphisms
between (pre)symplectic derived stacks from \cite{PTVV}, as well as those
of isotropic and Lagrangian correspondences from \cite{CalaqueTFT}. 

Recall that if $(M,\omega)$ is a symplectic manifold, then an
\emph{isotropic} morphism is a smooth morphism $f \colon N \to M$ such
that $f^{*}\omega = 0$ (though usually this is only considered for
submanifolds). This notion also has a natural analogue for derived
stacks, but in this setting ``being $0$'' is not a property, but
rather extra structure:
\begin{definition}
  Suppose $(X,\omega)$ is an $s$-presymplectic $S$-stack. An
  \emph{isotropic morphism} is a morphism of Artin $S$-stacks $f\colon L \to X$
  together with an equivalence $f^{*}\omega \simeq 0$ of closed relative
  2-forms in $L$, \ie{} a path from $0$ to $f^{*}\omega$ in the space
  $\Atcl_{S}(L,s)$. 
\end{definition}

\begin{remark}
  Equivalently, an isotropic structure on $f\colon L \to X$ is an equivalence
  $f^{*}\omega \simeq 0$ of morphisms $L \to \Atcl_{S}(s)$.
  Since $f^{*}\omega$ is the composite morphism $L \xto{f} X
  \xto{\omega} \Atcl_{S}(s)$, and $0$ is the composite $L \to S
  \xto{0} \Atcl_{S}(s)$, we can also describe this as a commutative
  square
  \csquare{L}{X}{S}{\Atcl_{S}(s).}{f}{}{\omega}{0}
  Thus, an isotropic morphism is the same thing as a \emph{span}
  \[
    \begin{tikzcd}
      {} & L \arrow{dl} \arrow{dr}{f} \\
      S & & X
    \end{tikzcd}
  \]
  in the \icat{} $\PSymp_{S,s}$. This motivates the following definition:
\end{remark}
\begin{definition}
  Let $X$ and $Y$ be $s$-presymplectic $S$-stacks. An
  \emph{isotropic correspondence} from $X$ to $Y$ is a span
    \[
    \begin{tikzcd}
      {} & L \arrow{dl}[swap]{f} \arrow{dr}{g} \\
      X & & Y
    \end{tikzcd}
  \]
  in the \icat{} $\PSymp_{S,s}$.
\end{definition}
\begin{remark}
  Unwinding the definition, we see that, if $\omega_{X}$ and
  $\omega_{Y}$ denote the presymplectic forms on $X$ and $Y$,
  respectively, then an isotropic correspondence is given by a span of
  derived stacks as above, together with an equivalence of closed relative 2-forms
  $f^{*}\omega_{X} \simeq g^{*}\omega_{Y}$. This corresponds to an
  equivalence $f^{*}\omega_{X} - g^{*}\omega_{Y} \simeq 0$, which
  allows us to identify isotropic correspondences from $X$ to $Y$ with
  isotropic morphisms to $X \times \overline{Y}$, where $\overline{Y}$ denotes
  $Y$ equipped with the negative presymplectic form $-\omega_{Y}$.
\end{remark}

In symplectic geometry, a key role is played by \emph{Lagrangian}
submanifolds, which are the isotropic submanifolds of maximal
dimension. In our setting, we can similarly single out a class of
\emph{Lagrangian} morphisms among the isotropic ones:
\begin{definition}
  Suppose $(X,\omega)$ is an $s$-presymplectic stack over $S$, and $\tilde{\omega}
  \colon \mathbb{T}_{X/S} \to \mathbb{L}_{X/S}[-s]$ the corresponding
  morphism in $\QCoh(X)$. Any morphism $f \colon L \to X$ gives
  morphisms $\mathbb{T}_{L/S} \to f^{*}\mathbb{T}_{X/S}$ and (dually)
  $f^{*}\mathbb{L}_{X/S} \to \mathbb{L}_{L/S}$. We thus have a
  composite morphism
  \[ \mathbb{T}_{L/S} \to f^{*}\mathbb{T}_{X/S} \to
    f^{*}\mathbb{L}_{X/S}[-s] \to \mathbb{L}_{L/S}[-s].\]
  An isotropic structure on $f$ gives an equivalence between this and
  the zero morphism, which we can interpret as a commutative square
  \nolabelcsquare{\mathbb{T}_{L/S}}{f^{*}\mathbb{T}_{X/S}}{0}{\mathbb{L}_{L/S}[-s].}
  We say the isotropic morphism is \emph{Lagrangian} if this square is
  cartesian, or in other words if we have an exact triangle (or (co)fibre sequence)
  \[ \mathbb{T}_{L/S} \to f^{*}\mathbb{T}_{X/S} \to
    \mathbb{L}_{L/S}[-s].\]
\end{definition}
\begin{remark}
  To see how this definition relates to the classical notion of
  Lagrangian submanifolds, observe that in the setting of symplectic manifolds an isotropic submanifold $L
  \to M$ also gives a commutative diagram of vector bundles on $L$,
  \nolabelcsquare{T_L}{f^* T_{M}}{0}{T_{L}^*,}
  where the top horizontal morphism is a subbundle.
  Since $\dim T_{L} = \dim T_{L}^{*} = \dim L$ and $\dim f^{*}T_{M} =
  \dim M$, we see that this is a pullback square \IFF{} we have $\dim
  M = 2 \dim L$, \ie{} \IFF{} $L$ is a Lagrangian submanifold.
\end{remark}

This definition has a natural generalization to general isotropic
correspondences:
\begin{definition}\label{defn:Lagcorr}
  An isotropic correspondence
    \[
    \begin{tikzcd}
      {} & L \arrow{dl}[swap]{f} \arrow{dr}{g} \\
      X & & Y
    \end{tikzcd}
  \]
  over $S$ induces a commutative square
  \csquare{\mathbb{T}_{L/S}}{g^{*}\mathbb{T}_{Y/S}}{f^{*}\mathbb{T}_{X/S}}{\mathbb{L}_{L/S}[-s]}{g_*}{f_*}{}{}
  of quasicoherent sheaves on $L$. We say the isotropic correspondence
  is \emph{Lagrangian} if this square is cartesian.
\end{definition}

\begin{lemma}\label{lem:laglocal}
  The following are equivalent for a span in $\PSymp_{S,s}$:
  \begin{enumerate}[(i)]
  \item The given span is a Lagrangian correspondence.
  \item For every morphism $\sigma \colon S' \to S$, the
    pullback of the span to $S'$ is Lagrangian.
  \item For every point $p \colon \Spec A \to S$, the pullback of the
    span to $\Spec A$ is Lagrangian.
  \end{enumerate}
\end{lemma}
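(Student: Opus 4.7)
The proof runs parallel to Lemma~\ref{lem:symplocal}. The key point is that the defining square from Definition~\ref{defn:Lagcorr} is compatible with base change along any $\sigma \colon S' \to S$. Write the span as $X \xleftarrow{f} L \xrightarrow{g} Y$ in $\PSymp_{S,s}$ and consider the induced pullback span $\sigma^{*}X \xleftarrow{f'} \sigma^{*}L \xrightarrow{g'} \sigma^{*}Y$ over $S'$, with pullback squares whose top map we denote $\xi \colon \sigma^{*}L \to L$. Applying Lemma~\ref{lem:relcotgtpb} (the compatibility of the relative cotangent complex with base change) to each vertex of the square of quasicoherent sheaves on $L$, together with the observation that $\xi^{*}(f^{*}\mathbb{T}_{X/S}) \simeq f'{}^{*}\sigma^{*}\mathbb{T}_{X/S} \simeq f'{}^{*}\mathbb{T}_{\sigma^{*}X/S'}$ and similarly for $Y$, identifies the pullback $\xi^{*}$ of the square defining the Lagrangian condition for $L$ over $S$ with the corresponding square for $\sigma^{*}L$ over $S'$; unwinding, one checks that the map $\sigma^{*}\tilde{\omega}$ identifying sides in the original square corresponds to $\widetilde{\sigma^{*}\omega}$ in the pulled-back square, exactly as in the proof of Lemma~\ref{lem:symplocal}.

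Given this compatibility, the implication (i) $\Rightarrow$ (ii) follows because pullback along $\xi$ preserves cartesian squares (as $\xi^{*}$ is exact, being a left adjoint of a stable \icat{}). The implication (ii) $\Rightarrow$ (iii) is immediate, since affine points are a special case of arbitrary morphisms to $S$.

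For (iii) $\Rightarrow$ (i), we need to show that a square in $\QCoh(L)$ is cartesian provided its pullback to $\QCoh(p^{*}L)$ is cartesian for every $p \colon \Spec A \to S$. Since cartesianness of a square in a stable \icat{} is equivalent to contractibility of its total fibre, and forming the total fibre commutes with pullback (\ie{} with the exact functor $\xi^{*}$), it suffices to show that an object of $\QCoh(L)$ is zero when its pullback along each map $q \colon p^{*}L \to L$ (for $p \colon \Spec A \to S$) is zero. But any map $\Spec B \to L$ factors through $q$ for some $p \colon \Spec A \to S$, namely by composing $L \to S$ with the given map, so this reduces to Corollary~\ref{cor:qcoheqonpts}.

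The main obstacle is the bookkeeping in the first step: namely, checking that the square of Definition~\ref{defn:Lagcorr} is genuinely natural under base change, since it is built from the cotangent complexes of three different stacks and from the morphism $\tilde{\omega}$ induced by the presymplectic form. Once one invokes Lemma~\ref{lem:relcotgtpb} pointwise and tracks the coherences, the rest is a formal application of the stable \icat{} fact that cartesian squares are detected on pullbacks over a conservative family of points.
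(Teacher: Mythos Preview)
Your proof is correct and follows essentially the same approach as the paper, which simply says ``As Lemma~\ref{lem:symplocal}, using Lemma~\ref{lem:relcotgtpb} and Corollary~\ref{cor:colimeqonpoints}.'' The only difference is cosmetic: for (iii) $\Rightarrow$ (i) you pass through the total fibre and invoke Corollary~\ref{cor:qcoheqonpts}, whereas the paper appeals directly to Corollary~\ref{cor:colimeqonpoints}, which already packages the statement that finite limit diagrams in $\QCoh(X)$ can be checked after pullback to each $X_{p}$.
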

\begin{proof}
  As Lemma~\ref{lem:symplocal}, using Lemma~\ref{lem:relcotgtpb} and
  Corollary~\ref{cor:colimeqonpoints} to see that we can check the
  limit condition pointwise.
\end{proof}

\begin{examples}\ 
  \begin{enumerate}[(i)]
  \item It follows from~\cite{CalCot} that for every morphism of Artin 
  stacks $f:X\to Y$, there is Lagrangian structure on the correspondence 
  $\mathbf{T}^*[s]X\leftarrow f^*\mathbf{T}^*[s]Y\rightarrow \mathbf{T}^*[s]Y$. 
  Indeed, the pullbacks of the tautological $n$-shifted $1$-forms $\lambda_X$ and 
  $\lambda_Y$ coincide on $f^*\mathbf{T}^*[s]Y$, so that we get an isotropic 
  correspondence, which happens to be nondegenerate. The statement and proofs in \cite{CalCot}
  are given over $\Spec \k$, but remain exactly the same over an
  arbitrary base. 
  \item Let $X$ be a smooth symplectic algebraic variety equipped with two commuting 
  Hamiltonian actions of affine algebraic groups $G$ and $H$, with respective moment 
  maps $\mu:X\to \mathfrak{g}^\vee$ and $\nu:X\to\mathfrak{h}^\vee$. Then 
  there is a Lagrangian structure on  
   \[
    \begin{tikzcd}
      {} & {[X/G\times H]} \arrow{dl}[swap]{\mu} \arrow{dr}{-\nu} \\
      {[\mathfrak{g}^\vee/G]} & & {[\mathfrak{h}^\vee/H]}
    \end{tikzcd}
  \]
  To see this, one uses the following facts: first of all $[\mathfrak{g}^\vee/G]\simeq\mathbf{T}^*[1](BG)$ is $1$-symplectic, 
  then a moment map $\mu$ as above leads to a Lagrangian morphism $[X/G]\to [\mathfrak{g}^\vee/G]$ (see~\cite{CalaqueTFT}), 
  and finally the fibrewise sign reversal on shifted cotangents gives an equivalence of $n$-symplectic stacks
  $\mathbb{T}^*[s]Z\simeq \overline{\mathbb{T}^*[s]Z}$ (as it sends $\lambda_Z$ to $-\lambda_Z$) for every Artin stack $Z$. 
\item Over a general base $S$, which we equip with the trivial
  $s$-symplectic structure, for every Artin $S$-stack $X$ the space of
  Lagrangian (isotropic) structures on $S\leftarrow X\rightarrow S$ is
  equivalent to the space of $(s-1)$-(pre)symplectic
  structures on $X$. This was first observed in~\cite{CalaqueTFT} for
  $S=\Spec \k$, and the generalization to an arbitrary base is
  straightforward.  Indeed, the statement for isotropic versus
  presymplectic structures is easy, and then one checks that the
  non-degeneracy condition (which is a property) is the same on both
  sides of the equivalence. See \S\ref{subsec:itlagsymmon} and
  \S\ref{subsec:dual} below for more general statements.
  \end{enumerate}
\end{examples}

\begin{remark}
  Since $\QCoh(L)$ is a stable \icat{}, the square in
  Definition~\ref{defn:Lagcorr} is cartesian \IFF{} the corresponding
  square \csquare{\mathbb{T}_{L/S}}{f^*\mathbb{T}_{X/S} \oplus
    g^*\mathbb{T}_{X/S}}{0}{\mathbb{L}_{L/S}[-s]}{(f_*,-g_*)}{}{}{} is
  cartesian. It follows that the isotropic correspondence is
  Lagrangian precisely when the corresponding isotropic morphism
  $L \to X \times \overline{Y}$ is Lagrangian. Thus, we could
  equivalently have defined Lagrangian correspondences from $X$ to $Y$
  to be Lagrangian morphisms with target $X \times \overline{Y}$. The
  latter is close to the usual definition of Lagrangian correspondences
  between symplectic manifolds, and is also the formulation of the
  definition introduced in \cite{CalaqueTFT}. On the other hand, our
  definition has the advantage that it suggests a construction of an
  \icat{} of symplectic stacks with Lagrangian correspondences as
  morphisms, using the following observation:
\end{remark}

\begin{remark}\label{rmk:SpanPSymp}
  The \icat{} $\PSymp_{S,s}$ has pullbacks by
  Lemma~\ref{lem:PSymppullback}. It follows (using for example the
  simplest case of the construction of \cite{spans}*{\S 5}) that there
  is an \icat{} $\Span(\PSymp_{S,s})$ whose objects are
  $s$-presymplectic stacks over $S$ and whose morphisms are isotropic
  correspondences, with composition given by taking pullbacks. Since
  $s$-symplectic stacks are $s$-presymplectic stacks satisfying an
  additional property, and similarly for Lagrangian correspondences,
  we can then define an \icat{} of symplectic stacks and Lagrangian
  correspondences as a \emph{subcategory} of $\Span(\PSymp_{S,s})$ ---
  the only thing we need to check is that the composite of two
  Lagrangian correspondences is again Lagrangian. This is \cite{CalaqueTFT}*{Theorem 4.4} (see also
  \cite{spans}*{Proposition 14.12} for a version of the proof that is
  closer to our generalization below).

The construction of \cite{spans}*{\S 5} (which we review in
\S\ref{subsec:spans}) also gives more generally an
$(\infty,n)$-category $\Span_{n}(\PSymp_{S,s})$ of $s$-presymplectic
stacks, with $i$-morphisms given by ``$i$-fold isotropic
correspondences''. Our strategy for defining an $(\infty,n)$-category
of $s$-symplectic stacks is to define $i$-fold Lagrangian
correspondences as $i$-fold isotropic correspondences satisfying a
non-degeneracy condition, and then check that these are closed under
composition in $\Span_{n}(\PSymp_{S,s})$, so that there is a
sub-$(\infty,n)$-category where the $i$-morphisms are Lagrangian for
all $i$.
\end{remark}

\section{Oriented Cospans}\label{subsec:orcospan}
In this section we review the definitions of oriented morphisms and
cospans between derived stacks, introduced in \cite{CalaqueTFT}.

First we consider preoriented morphisms and cospans, which are
analogous to isotropic morphisms and correspondences:
\begin{definition}
  Let $(X, [X])$ be a $d$-preoriented $S$-stack. A
  \emph{$d$-preorientation} of a morphism $\phi \colon X \to Y$ in
  $\dSt_{S}^{\UCC}$ is an equivalence $\phi_{*}[X] \simeq 0$ in the space of
  maps $\Gamma_{S}\mathcal{O}_{Y} \to \mathcal{O}_{S}[-d]$; a
  $d$-preoriented morphism is a morphism equipped with a
  $d$-preorientation.
\end{definition}

\begin{remark}
  Equivalently, a $d$-preorientation on $\phi \colon X \to Y$ is a commutative square
  \nolabelcsquare{\Gamma_{S}\mathcal{O}_{Y}}{\Gamma_{S}\mathcal{O}_{X},}{0}{\mathcal{O}_{S}[-d]}
  which we can identify with a cospan
  \[
    \begin{tikzcd}
      \emptyset \arrow{dr} & & X \arrow{dl} \\
       & Y
    \end{tikzcd}
  \]
  in $\POr_{S,d}$. More generally, we can make the following definition:
\end{remark}
\begin{definition}
  Let $X$ and $Y$ be $d$-preoriented $S$-stacks. A \emph{$d$-preoriented cospan} from $X$ to $Y$ is a cospan
    \[
    \begin{tikzcd}
      X \arrow{dr} & & Y \arrow{dl} \\
       & Z
    \end{tikzcd}
  \]
  in $\POr_{S,d}$. This amounts to specifying a commutative square
  \csquare{\Gamma_{S}\mathcal{O}_{Z}}{\Gamma_{S}\mathcal{O}_{X}}{\Gamma_{S}\mathcal{O}_{Y}}{\mathcal{O}_{S}[-d]}{}{}{{[X]}}{{[Y]}}
  in $\QCoh(S)$.
\end{definition}

Now we consider a non-degeneracy condition on preoriented cospans, in a sense analogous to that for Lagrangian correspondences:
\begin{definition}
  Suppose     \[
    \begin{tikzcd}
      X \arrow{dr}[swap]{f} & & Y \arrow{dl}{g} \\
       & Z
    \end{tikzcd}
  \]
  is a $d$-preoriented cospan over $S$, and $\mathcal{E}$ is a dualizable object of $\QCoh(Z)$. Then from the evaluation pairing $\mathcal{E} \otimes \mathcal{E}^{\vee}
\to \mathcal{O}_{Z}$ we get a diagram
\[
\begin{tikzcd}
  \Gamma_{S}(f^{*}\mathcal{E}) \otimes
  \Gamma_{S}(f^{*}\mathcal{E}^{\vee}) \arrow{d} &  \Gamma_{S}(\mathcal{E}) \otimes
  \Gamma_{S}(\mathcal{E}^{\vee}) \arrow{d} \arrow{l} \arrow{r}&  \Gamma_{S}(g^{*}\mathcal{E}) \otimes
  \Gamma_{S}(g^{*}\mathcal{E}^{\vee})\arrow{d} \\
  \Gamma_{S}(f^{*}\mathcal{E}\otimes f^{*}\mathcal{E}^{\vee})\arrow{d} &
  \Gamma_{S}(\mathcal{E} \otimes \mathcal{E}^{\vee})\arrow{d} \arrow{l} \arrow{r}&
  \Gamma_{S}(g^{*}\mathcal{E} \otimes g^{*}\mathcal{E}^{\vee})\arrow{d} \\
  \Gamma_{S}(\mathcal{O}_{X})\arrow{dr} & \Gamma_{S}(\mathcal{O}_{Z})
  \arrow{l} \arrow{d} \arrow{r}&
  \Gamma_{S}(\mathcal{O}_{Y}) \arrow{dl}\\
  {} & \mathcal{O}_{S}[-d]. &
\end{tikzcd}
\]
In particular, we get a commutative square
\nolabelcsquare{\Gamma_{S}(\mathcal{E}) \otimes
  \Gamma_{S}(\mathcal{E}^{\vee})}{\Gamma_{S}(f^{*}\mathcal{E}) \otimes
  \Gamma_{S}(f^{*}\mathcal{E}^{\vee})}{\Gamma_{S}(g^{*}\mathcal{E})
  \otimes \Gamma_{S}(g^{*}\mathcal{E}^{\vee})}{\mathcal{O}_{S}[-d].}
Taking duals, we get from this a diagram
\[ 
\begin{tikzcd}
{}    & \Gamma_{S}(\mathcal{E}^{\vee}) \arrow{dl} \arrow{dr}  \\
 \Gamma_{S}(f^{*}\mathcal{E}^{\vee}) \arrow{d} & & \Gamma_{S}(g^{*}\mathcal{E}^{\vee}) \arrow{d}\\
 \Gamma_{S}(f^{*}\mathcal{E})^{\vee}[-d]  \arrow{dr}& &
 \Gamma_{S}(g^{*}\mathcal{E})^{\vee}[-d]  \arrow{dl}\\ 
{}    & \Gamma_{S}(\mathcal{E})^{\vee}[-d].
\end{tikzcd}\]
If $X$ and $Y$ are $d$-oriented, then the two vertical morphisms are
equivalences, and this reduces to a commutative square
\nolabelcsquare{\Gamma_{S}(\mathcal{E}^{\vee})}{\Gamma_{S}(f^{*}\mathcal{E}^{\vee})}{\Gamma_{S}(g^{*}\mathcal{E}^{\vee})}{\Gamma_{S}(\mathcal{E})^{\vee}[-d].}
We say that the cospan is \emph{weakly $d$-oriented} over $S$ if this square is cartesian for every dualizable $\mathcal{E} \in \QCoh(Z)$, and that the cospan is \emph{$d$-oriented} if for every morphism $\sigma \colon S' \to S$ the base-change to a cospan over $S'$ is weakly $d$-oriented.
\end{definition}

\begin{lemma}
  A cospan
  \[
    \begin{tikzcd}
     X \arrow{dr}[swap]{f} & & Y \arrow{dl}{g} \\
       & Z
    \end{tikzcd}
  \]
  in $\POr_{S,d}$ is $d$-oriented \IFF{} for every point $p \colon \Spec A \to S$, the pulled-back cospan
    \[
    \begin{tikzcd}
      p^{*}X \arrow{dr} & & p^{*}Y \arrow{dl}\\
       & p^{*}Z
    \end{tikzcd}
  \]
  is weakly $d$-oriented over $\Spec A$.
\end{lemma}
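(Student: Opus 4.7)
The plan is to mirror the structure of the proof of Proposition~\ref{propn:doraff}, transposing the argument for $d$-preoriented stacks to the setting of $d$-oriented cospans. The ``only if'' direction is immediate, since every morphism $p\colon \Spec A \to S$ is in particular a morphism into $S$, and the definition of $d$-orientation already demands the weak orientation condition after base change along every $\sigma \colon S' \to S$.

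For the converse, I would fix an arbitrary $\sigma\colon S'\to S$ and a dualizable $\mathcal{E}' \in \QCoh(\sigma^*Z)$, writing $f'$ and $g'$ for the pullbacks of $f$ and $g$, and set up the commutative square
\[
\begin{tikzcd}
\Gamma_{S'}(\mathcal{E}'^{\vee}) \arrow{r}\arrow{d} & \Gamma_{S'}(f'^{*}\mathcal{E}'^{\vee}) \arrow{d} \\
\Gamma_{S'}(g'^{*}\mathcal{E}'^{\vee}) \arrow{r} & \Gamma_{S'}(\mathcal{E}')^{\vee}[-d]
\end{tikzcd}
\]
in $\QCoh(S')$ whose cartesianness defines weak $d$-orientation of the base-changed cospan. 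The main reduction is that this square is cartesian \IFF{} the canonical comparison map from its top-left vertex to the pullback of the remaining three vertices is an equivalence; by Corollary~\ref{cor:qcoheqonpts}, this condition can be tested after pullback along every affine $q\colon \Spec A \to S'$. Since the pullback functors $q^{*}$ are exact, they preserve finite limits, so $q^{*}$ of the comparison map is the comparison map for $q^{*}$ of the square.

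Next, I would use base change (Theorem~\ref{thm:bcpf}) together with the preservation of dualizable objects by the symmetric monoidal pullback functors (so that dualization commutes with pullback on $\mathcal{E}'$ and its $f'^{*}, g'^{*}$ pullbacks) to identify the $q^{*}$-pulled-back square with the weak $d$-orientation square associated to the cospan $p^{*}X \to p^{*}Z \leftarrow p^{*}Y$ (where $p := \sigma \circ q$) and the dualizable sheaf obtained by pulling $\mathcal{E}'$ back to $p^{*}Z$. By hypothesis, this cospan over $\Spec A$ is weakly $d$-oriented, so the corresponding square is cartesian, completing the argument.

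The main obstacle, exactly as in Proposition~\ref{propn:doraff}, is to verify carefully that each vertex of the square transforms correctly under base change --- in particular, that the bottom-right corner $q^{*}(\Gamma_{S'}\mathcal{E}')^{\vee}[-d]$ is identified with $(\Gamma_{A} q'^{*}\mathcal{E}')^{\vee}[-d]$, which is the subtle step since in general dualization need not commute with pullback. This identification is handled by the same argument implicit in Proposition~\ref{propn:doraff}, combining the lax symmetric monoidal structure on the $\Gamma$-functors with base change along universally cocontinuous morphisms; once this bookkeeping is in place the proof is essentially formal.
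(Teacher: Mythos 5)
Your proof is correct and follows essentially the same route as the paper: the paper's own argument (deferred to the proof of Lemma~\ref{lem:ncospanorpts}) likewise reduces the cartesianness of the weak-orientation square over an arbitrary base change $S'\to S$ to a pointwise check via Corollary~\ref{cor:colimeqonpoints}, and then identifies the pulled-back square with the one attached to $(p^*\Phi, p^*\mathcal{E})$ using the naturality $p^{*}\widehat{\Phi}_{\mathcal{E}} \simeq \widehat{p^{*}\Phi}_{p^{*}\mathcal{E}}$ together with the fact that $p^*$ preserves dualizable objects. The ``subtle bookkeeping'' you flag about compatibility of the duals with base change is exactly what the paper packages into the naturality statement of the corollary following Proposition~\ref{propn:cospanqcohdiag}, so your appeal to it is legitimate.
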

\begin{proof}
  This follows by a slightly more involved version of the argument in
  the proof of Proposition~\ref{propn:doraff}. (We will give a careful
  proof of a more elaborate version of this statement below in
  Lemma~\ref{lem:ncospanorpts}.)
\end{proof}

\begin{remark}
  A $d$-preoriented morphism $f \colon X \to Y$ over $S$ is weakly
  $d$-oriented (when viewed as a $d$-preoriented cospan from
  $\emptyset$ to $X$) if for every dualizable $\mathcal{E} \in
  \QCoh(X)$, the induced diagram
  \nolabelcsquare{\Gamma_{S}(\mathcal{E}^{\vee})}{\Gamma_{S}(f^*
    \mathcal{E}^\vee)}{0}{\Gamma_{S}(\mathcal{E})^{\vee}[-d]}
  is cartesian, \ie{} if we have a canonical exact triangle (or (co)fibre sequence)
  \[ \Gamma_{S}(\mathcal{E}^{\vee}) \to
    \Gamma_{S}(f^{*}\mathcal{E}^{\vee}) \to
    \Gamma_{S}(\mathcal{E})^{\vee}[-d],\]
  in $\QCoh(S)$.
\end{remark}

\begin{remark}
  Just as in the case of Lagrangian correspondences, we can
  equivalently define a (weakly) $d$-oriented cospan from $X$ to $Y$
  to be a (weakly) $d$-oriented morphism from the coproduct $X \amalg \overline{Y}$,
  where $\overline{Y}$ denotes $Y$ with the negative orientation $-[Y]$;
  this is the definition used in \cite{CalaqueTFT}.
\end{remark}

\begin{examples}\
  \begin{enumerate}[(i)]
  \item Let us consider the initial $S$-stack $\emptyset$ together with its unique $d$-orientation. Then for every stack $X$, the space of $d$-orientations 
  on the cospan $\emptyset\rightarrow X\leftarrow \emptyset$ is equivalent to the space of $(d+1)$-orientations on $X$ (see~\cite{CalaqueTFT}). 
  As in the case of Lagrangian self-correspondences of the terminal $S$-stack, one first observes that the statement is very easy for preorientations, and 
  then that the non-degeneracy condition (a property) coincides on both sides of the equivalence. We again refer to \S\ref{subsec:itlagsymmon} and 
  \S\ref{subsec:dual} below for more general statements. 
\item Let $N$ be a compact oriented cobordism between oriented closed
  $d$-manifolds $M^+$ and $M^-$. As we have already seen, the Betti
  stacks of $M^+$ and $M^-$ are $d$-oriented stacks. This lifts to an
  orientation on the cospan $M^+_B\rightarrow N_B\leftarrow M^-_B$.
  The homotopy for the preorientations is given by $\int_N$; indeed,
  Stokes's theorem tells us that it is a homotopy for
  $\int_{M^+}-\int_{M^-}$.
\item Let $X$ be a geometrically connected smooth and proper algebraic
  $k$-variety of dimension $d+1$, with $k$ a noetherian $\k$-algebra,
  and let $s:\omega_{X/k}\to\mathcal O_X$ be a section of the
  anticanonical sheaf $\omega_{X/k}^{-1}$. Then the closed embedding
  $Z(s)\hookrightarrow X$ of the derived zero locus of $s$ carries a
  $d$-orientation (see~\cite[\S5.3]{ToenEMS}). In the case where
  $Z(s)$ is a smooth Cartier divisor, then the embedding is
  Calabi--Yau (\ie{} its canonical sheaf is trivializable), and one
  recovers the example from~\cite[\S3.2.1]{CalaqueTFT}. (It would be
  interesting to explore the situation where $Z(s)$ is a normal
  crossing divisor made of two smooth irreducible components that
  intersect transversely; we may obtain an oriented cospan between
  the two components.)
  \end{enumerate}
\end{examples}

\begin{remark}
  The \icat{} $\POr_{S,d}$ has pushouts by Proposition~\ref{propn:POrpushout}. We can therefore define an \icat{} $\Cospan(\POr_{S,d}) :=
\Span(\POr_{S,d}^{\op})$ of $d$-preoriented stacks and cospans, with
composition given by taking pushouts in $\POr_{S,d}$. Since
$d$-oriented stacks and cospans are $d$-preoriented ones satisfying an
additional property, we can define an \icat{} of these as a
subcategory of $\Cospan(\POr_{S,d})$ --- we need only check that
$d$-oriented cospans are closed under composition, which is proved as
\cite{CalaqueTFT}*{Theorem 4.3}. Just as in the symplectic case, we
will similarly consider an $(\infty,n)$-category
$\Cospan_{n}(\POr_{S,d})$ and obtain an $(\infty,n)$-category of
$d$-oriented stacks as a sub-$(\infty,n)$-category where the $i$-morphisms are
``$d$-oriented $i$-fold cospans'' by checking that these are closed
under composition.
\end{remark}

\begin{remark}
  The \icats{}
  $\Cospan(\POr_{S,d})$ are contravariantly functorial in $S$ via
  pullback by Lemma~\ref{lem:POrpbpo}. Moreover, the subcategories of $d$-oriented stacks and
  cospans are preserved by pullbacks, so these are also functorial in $S$.
\end{remark}

\section{Higher Lagrangian Correspondences}\label{subsec:defitlag}
In this section we will introduce the definition of higher
Lagrangian correspondences, which we will view as a non-degeneracy
condition on iterated spans in $\PSymp_{S,s}$. This requires two
preliminary steps, which are analogues of the following observations
that were implicit in our definition of Lagrangian correspondences
above:
\begin{enumerate}[(1)]
\item From a span $X \xfrom{f} L \xto{g} Y$ of $s$-presymplectic
  stacks we obtain a commutative square
  \nolabelcsquare{\mathcal{O}_{L}}{\Lambda^{2}
    f^{*}\mathbb{L}_{X/S}[s]}{\Lambda^{2} g^*
    \mathbb{L}_{Y/S}[s]}{\Lambda^{2} \mathbb{L}_{L/S}[s]}
  in $\QCoh(L)$.
\item From such a diagram, we can use the dualizability of the
  cotangent complexes to ``move one factor in the tensor products to
  the other side'', giving a diagram
  \[ 
    \begin{tikzcd}
      {}    & \mathbb{T}_{L/S} \arrow{dl} \arrow{dr}  \\
      f^{*}\mathbb{T}_{X/S}\arrow{d} & & g^{*}\mathbb{T}_{Y/S}
      \arrow{d} \\
      f^{*}\mathbb{L}_{X/S}[s] \arrow{dr}& & g^{*}\mathbb{L}_{Y/S}[s] \arrow{dl}\\ 
      {}    & \mathbb{L}_{L/S}[s]
    \end{tikzcd}\]
  in $\QCoh(L)$.
\end{enumerate}
We can now say that the span is \emph{Lagrangian} if the diagram in
(2) is a limit diagram. 
\begin{remark}\label{rmk:Lagsquare}
  To recover the definition we gave above, we also need to observe
  that the diagram in (2) is a limit diagram precisely when the square
  \nolabelcsquare{\mathbb{T}_{L/S}}{f^{*}\mathbb{T}_{X/S}}{g^{*}\mathbb{T}_{Y/S}}{\mathbb{L}_{L/S}[s]}
  is cartesian.
\end{remark}

\begin{remark}
  There are other useful ways to simplify and reformulate the notion
  of a Lagrangian correspondence. For example, if $X$ and $Y$ are symplectic,
  then the diagram in (2) is a limit precisely when the square
  \nolabelcsquare{\mathbb{T}_{L/S}}{f^{*}\mathbb{L}_{X/S}[s]}{g^{*}\mathbb{L}_{Y/S}[s]}{\mathbb{L}_{L/S}[s]}
  is cartesian --- or equivalently cocartesian, since $\QCoh(L)$ is
  stable, which is equivalent to the diagram in (2) being a colimit
  diagram. We will prove versions of these statements for iterated
  correspondences below in \S\ref{subsec:nondeg}.
\end{remark}

\begin{notation}
  We use the following notation and terminology from
  appendix \ref{sec:cat}:
  \begin{itemize}
  \item We have partially ordered sets $\Sp^{n}$ and $\Sp^{n,\circ}$
    (Definition~\ref{defn:uplespan}), and $\lsp^{n}$ and
    $\lsp^{n,\circ}$ (Definition~\ref{defn:foldspan}).
  \item An \emph{$n$-uple span} in an \icat{} $\mathcal{C}$ is a
    functor $\Sp^{n} \to \mathcal{C}$, and an \emph{$n$-fold span} is
    a functor $\lsp^{n} \to \mathcal{C}$. By
    Lemma~\ref{lem:foldspanloc} we may identify $n$-fold spans with
    the \emph{reduced} $n$-uple spans (Definition~\ref{defn:redspan}).
  \item We have a functor assigning to an \icat{} $\mathcal{C}$ its
    \emph{twisted arrow \icat{}} $\Tw^{r}(\mathcal{C})$
    (Definition~\ref{defn:Tw}), and this functor has a left adjoint
    $\Tw^{r}_{!}$, described in Proposition~\ref{propn:Tw!desc}.
  \end{itemize}
\end{notation}

The following is the general version of step (1) above, which we prove
using some results from \cite{paradj}:
\begin{proposition}\label{propn:twoformTwDiag}
  Suppose given a diagram $p \colon K^{\triangleleft} \to \PSymp_{S,s}$
  that sends $-\infty$ to the Artin $S$-stack $X$. Then there is
  an induced diagram $\tilde{p} \colon (K^{\triangleleft})^{\op} \to
  \Tw^{r}(\QCoh(X))$ valued in the twisted arrow
  \icat{} of quasicoherent sheaves on $X$; if $X_{k}$ is the value of
  $p$ at $k \in K$ and $f_{k} \colon X \to X_{k}$ is the value of $p$
  at the unique map $-\infty \to k$, then this diagram sends $k$ to
  the map $f_{k}^{*}\mathbb{T}_{X_{k}/S} \to
  f_{k}^{*}\mathbb{L}_{X_{k}/S}[n]$ induced by the 2-form on $X_{k}$.
\end{proposition}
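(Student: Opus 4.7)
The plan is to produce $\tilde p$ via the adjunction $\Tw^r_! \dashv \Tw^r$ from Proposition~\ref{propn:Tw!desc}: under this adjunction, giving a functor $(K^{\triangleleft})^{\op} \to \Tw^r(\QCoh(X))$ is equivalent to giving a functor $\Tw^r_!((K^{\triangleleft})^{\op}) \to \QCoh(X)$, and I will construct the latter using the cotangent-complex functor together with the presymplectic form carried by $p$.

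To motivate the construction, I first describe the pointwise behavior. The diagram $p$ sends each $k$ to a presymplectic stack $(X_k, \omega_k)$ with $X_{-\infty} = X$, and the initial morphism $-\infty \to k$ induces $f_k \colon X \to X_k$ (with $f_{-\infty} = \id_X$). Each $\omega_k$ corresponds via Proposition~\ref{propn:formscotgt} to a morphism $\tilde\omega_k \colon \mathbb{T}_{X_k/S} \to \mathbb{L}_{X_k/S}[s]$ in $\QCoh(X_k)$, and pulling back by $f_k$ produces the desired object $f_k^*\tilde\omega_k \in \Tw^r(\QCoh(X))$. For a morphism $g \colon k \to k'$ in $K^{\triangleleft}$, the equivalence $\omega_k \simeq g^*\omega_{k'}$ (coming from the fact that $p$ lands in $\PSymp_{S,s}$) combined with the canonical maps $\mathbb{T}_{X_k/S} \to g^*\mathbb{T}_{X_{k'}/S}$ and $g^*\mathbb{L}_{X_{k'}/S} \to \mathbb{L}_{X_k/S}$ yields a commutative square in $\QCoh(X_k)$ with horizontal arrows $\tilde\omega_k$ and $g^*\tilde\omega_{k'}$; pulling back along $f_k$ and using $f_{k'} \simeq g \circ f_k$ produces a morphism $f_{k'}^*\tilde\omega_{k'} \to f_k^*\tilde\omega_k$ in $\Tw^r(\QCoh(X))$, corresponding to the morphism $k' \to k$ in $(K^{\triangleleft})^{\op}$.

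The main obstacle is bundling this pointwise data into a coherent $\infty$-functor: the tangent and cotangent complexes have opposite variances in the diagram, so no single natural transformation suffices, which is precisely the reason for invoking the universal property of $\Tw^r$. The strategy is to use the parametrized adjunction machinery of \cite{paradj} to combine the relative cotangent complex (viewed as a functor out of the arrow $\infty$-category of $\dStArt$), its linear dual, and the universal $2$-form on $\Atcl_S(s)$, all compatibly pulled back along the $f_k$'s. Once these are organized into a single functor landing in $\QCoh(X)$ with domain $\Tw^r_!((K^{\triangleleft})^{\op})$, the adjunction $\Tw^r_! \dashv \Tw^r$ delivers $\tilde p$ with the asserted values and functoriality.
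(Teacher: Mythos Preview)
Your pointwise description is correct, and you have correctly identified both the main obstacle (the opposite variance of $\mathbb{T}$ and $\mathbb{L}$) and the relevant tool (\cite{paradj}). However, the proposed strategy has the logic inverted, and the crucial step that resolves the variance problem is missing.

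Reducing to a construction of a functor $\Tw^r_!((K^{\triangleleft})^{\op}) \to \QCoh(X)$ via the adjunction does not simplify anything: assembling such a functor coherently requires exactly the same mixed-variance data you are trying to produce, so you have only reformulated the difficulty. (Indeed, in the paper the passage through $\Tw^r_!$ is the \emph{consequence} of this proposition, not a step in its proof.)

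The paper's resolution is to first arrange everything to vary \emph{covariantly}. By Lemma~\ref{lem:twoformftr} there is a functor $\PSymp_{S,s}^{\op} \to \mathcal{Q}^{\Delta^{1}}$ sending $(Y,\omega)$ to the map $\mathcal{O}_{Y} \to \otimes^{2}\mathbb{L}_{Y/S}[s]$ corresponding to $\omega$; here both source and target pull back, so composing with $p^{\op}$ and taking the cocartesian pushforward to the fibre over $X$ gives a genuine functor $(K^{\triangleleft})^{\op} \to \QCoh(X)^{\Delta^{1}}$, $k \mapsto (\mathcal{O}_X \to \otimes^2 f_k^*\mathbb{L}_{X_k/S}[s])$. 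Only \emph{now} does one invoke \cite{paradj}*{Corollary 4.2.9} (as in \cite{paradj}*{Example 4.2.10}): since $f_k^*\mathbb{L}_{X_k/S}$ is dualizable, this result functorially transports the diagram in $\QCoh(X)^{\Delta^1}$ to one in $\Tw^r(\QCoh(X))$ by adjointing one tensor factor to the other side, yielding $k \mapsto (f_k^*\mathbb{T}_{X_k/S} \to f_k^*\mathbb{L}_{X_k/S}[s])$. Your sketch gestures at \cite{paradj} but does not identify this ``write the form as $\mathcal{O} \to \otimes^2\mathbb{L}$ first, then adjoint'' manoeuvre, which is the actual content of the argument.
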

\begin{proof}
  By Lemma~\ref{lem:twoformftr} there is a functor $\PSymp_{S,s}^{\op}
  \to \mathcal{Q}^{\Delta^{1}}$ that takes $(X,\omega)$ to the
  corresponding map $\mathcal{O}_{X} \to
  \otimes^{2}\mathbb{L}_{X/S}[s]$. Composing with $p$ we get a functor
  \[ (K^{\triangleleft})^{\op} \to \mathcal{Q}^{\Delta^{1}}.\]
  We can now take the cocartesian pushforward in $\mathcal{Q}$ to the
  fibre over $X$ to get a functor
  \[ (K^{\triangleleft})^{\op} \to \QCoh(X)^{\Delta^{1}},\]
  which takes $k \in K$ to \[\mathcal{O}_{X} \simeq
  f_{k}^{*}\mathcal{O}_{X_{k}} \to
  f_{k}^{*}(\otimes^{2}\mathbb{L}_{X/S}[s]) \simeq \otimes^{2}
  f^{*}\mathbb{L}_{X/S}[s].\]
We know from Theorem~\ref{thm:geomhascotgt}(ii) that the cotangent complexes of derived Artin stacks are dualizable, so we can now apply
  \cite{paradj}*{Corollary 4.2.9} as in \cite{paradj}*{Example 4.2.10} to get
  the required functor 
  \[ (K^{\triangleleft})^{\op} \to \Tw^{r}(\QCoh(X)),\]
  which takes $k$ to the adjoint morphism $f_{k}^{*}\mathbb{T}_{X/S}
  \to f^{*}\mathbb{L}_{X/S}[s]$.
\end{proof}

Since $\Tw^{r}$ has a left adjoint $\Tw^{r}_{!}$ by
Corollary~\ref{cor:Tw!}, we obtain the following generalization of
step (2) as an immediate consequence:
\begin{corollary}\label{cor:twoformTw!Diag}
  Suppose given a diagram $p \colon K^{\triangleleft} \to \PSymp_{S,s}$
  that sends $-\infty$ to the derived Artin stack $X$. Then there is
  an induced diagram \[\widehat{p} \colon \Tw^{r}_{!}(K^{\triangleleft})^{\op} \to \QCoh(X)\] valued in
  the \icat{} of quasicoherent sheaves on $X$. \qed
\end{corollary}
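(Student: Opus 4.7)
The plan is to obtain $\widehat{p}$ simply as the image of $\tilde{p}$ under the adjunction $\Tw^{r}_{!} \dashv \Tw^{r}$ established in Corollary~\ref{cor:Tw!}. Setting $\mathcal{A} := (K^{\triangleleft})^{\op}$ and $\mathcal{B} := \QCoh(X)$, that corollary supplies a natural equivalence
\[
\Map_{\LCatI}(\mathcal{A}, \Tw^{r}(\mathcal{B})) \simeq \Map_{\LCatI}(\Tw^{r}_{!}(\mathcal{A}), \mathcal{B}),
\]
and the functor $\tilde{p} \colon (K^{\triangleleft})^{\op} \to \Tw^{r}(\QCoh(X))$ produced by Proposition~\ref{propn:twoformTwDiag} corresponds under this equivalence to a functor $\widehat{p} \colon \Tw^{r}_{!}((K^{\triangleleft})^{\op}) \to \QCoh(X)$, which is the object sought.

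All the real content has already been extracted from derived algebraic geometry in the previous proposition: dualizability of cotangent complexes of Artin stacks together with the parametrized adjoints machinery of \cite{paradj} is exactly what lets one ``move the cotangent factor to the other side'' and thus land in the twisted arrow category. Once $\tilde{p}$ is in hand, no further geometric input is needed; the passage to $\widehat{p}$ is purely formal. If one wished to unravel $\widehat{p}$ on objects (say, to verify that it recovers, at the vertex corresponding to an object $k \in K$, the composite $f_{k}^{*}\mathbb{T}_{X_{k}/S} \to f_{k}^{*}\mathbb{L}_{X_{k}/S}[s]$ evaluated through the twisted arrow structure), one can plug in the explicit pointwise formula for $\Tw^{r}_{!}$ recorded in Proposition~\ref{propn:Tw!desc}, but this is not needed for the bare existence statement.

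The only potential subtlety is a bookkeeping issue about size: one must ensure that the adjunction $\Tw^{r}_{!} \dashv \Tw^{r}$ applies at the size level where $\QCoh(X)$ lives (it is a large \icat{}), so the adjunction should be invoked in $\LCatI$ rather than $\CatI$. This is addressed by the general setup of Corollary~\ref{cor:Tw!}, so there is no genuine obstacle and the proof is essentially the single line exhibited above.
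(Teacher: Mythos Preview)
Your proposal is correct and matches the paper's approach exactly: the paper also treats this as an immediate consequence of the adjunction $\Tw^{r}_{!} \dashv \Tw^{r}$ applied to the diagram $\tilde{p}$ from Proposition~\ref{propn:twoformTwDiag}, and accordingly gives no proof beyond the one-line remark preceding the statement. Your added commentary on size and the pointwise unwinding via Proposition~\ref{propn:Tw!desc} is accurate but, as you note, not needed for the bare existence claim.
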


\begin{remark}\label{rmk:pbTwdiagnatural}
  It is clear from the construction (and Lemma~\ref{lem:relcotgtpb})
  that for any map $\sigma \colon S' \to S$  we have a natural equivalence
  $\widehat{\sigma^{*}p} \simeq \sigma^{*}\widehat{p}$.
\end{remark}

\begin{notation}
  We will use the following abbreviations: $\tSp{}^{n} :=
  \Tw^{r}_{!}\Sp^{n}$, $\tSp{}^{n,\circ}:= \Tw^{r}_{!}\Sp^{n,\circ}$,
  $\tlsp{}^{n} := \Tw^{r}_{!}\lsp^{n}$, $\tlsp{}^{n,\circ} :=
  \Tw^{r}_{!}\lsp^{n,\circ}$. 
\end{notation}

In the particular cases we are interested in here, Corollary~\ref{cor:twoformTw!Diag} specializes to:
\begin{corollary}\label{cor:spantSpDiag}\ 
  \begin{enumerate}[(i)]
  \item An $n$-uple span in $\PSymp_{S,s}$ with apex $X$ induces a diagram of shape $\tSp{}^{n}$ in $\QCoh(X)$.
  \item An $n$-fold span in $\PSymp_{S,s}$ with apex $X$ induces a diagram of shape $\tlsp{}^{n}$ in $\QCoh(X)$. \qed
  \end{enumerate}
\end{corollary}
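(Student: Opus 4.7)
Both parts are immediate specializations of Corollary~\ref{cor:twoformTw!Diag}, and the argument reduces to exhibiting $\Sp^{n}$ and $\lsp^{n}$ in the form $K^{\triangleleft}$. First I would observe that the posets $\Sp^{n}$ and $\lsp^{n}$ (from Definitions~\ref{defn:uplespan} and~\ref{defn:foldspan}) each contain a unique initial element, namely the apex of the corresponding $n$-uple or $n$-fold span, from which all other arrows emanate. Deleting the apex and then adjoining it back as a new initial object gives the canonical identifications
\[
\Sp^{n} \;\cong\; K^{\triangleleft}, \qquad \lsp^{n} \;\cong\; (K')^{\triangleleft},
\]
where $K := \Sp^{n}\setminus\{\text{apex}\}$ and $K' := \lsp^{n}\setminus\{\text{apex}\}$, with the apex corresponding to the cone point $-\infty$.

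Under these identifications, an $n$-uple (resp.\ $n$-fold) span in $\PSymp_{S,s}$ with apex $X$ is by definition a functor $p \colon K^{\triangleleft} \to \PSymp_{S,s}$ (resp.\ $(K')^{\triangleleft} \to \PSymp_{S,s}$) sending $-\infty$ to $X$. I would then directly invoke Corollary~\ref{cor:twoformTw!Diag} to produce the associated diagrams $\widehat{p}$ with values in $\QCoh(X)$, whose shapes are $\Tw^{r}_{!}(\Sp^{n})^{\op}$ and $\Tw^{r}_{!}(\lsp^{n})^{\op}$, i.e.\ $\tSp{}^{n}$ and $\tlsp{}^{n}$ in the established notation.

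Since Corollary~\ref{cor:twoformTw!Diag} was itself obtained as a formal consequence of Proposition~\ref{propn:twoformTwDiag} via the adjunction $\Tw^{r}_{!} \dashv \Tw^{r}$ of Corollary~\ref{cor:Tw!}, there is no real mathematical obstacle — the argument is purely one of specialization. The only point requiring care is the bookkeeping of opposites (matching the ``$\op$'' appearing in $\Tw^{r}_{!}(K^{\triangleleft})^{\op}$ with the conventions establishing $\tSp{}^{n}$ and $\tlsp{}^{n}$), together with a one-line verification that the apex really is initial in the posets $\Sp^{n}$ and $\lsp^{n}$.
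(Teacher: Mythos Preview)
Your proposal is correct and follows exactly the paper's approach. The paper treats this as an immediate specialization of Corollary~\ref{cor:twoformTw!Diag} (indeed, the statement is introduced by ``In the particular cases we are interested in here, Corollary~\ref{cor:twoformTw!Diag} specializes to:'' and carries a bare \qed), and the identifications $\Sp^{n} \simeq (\Sp^{n,\circ})^{\triangleleft}$ and $\lsp^{n} = (\lsp^{n,\circ})^{\triangleleft}$ that you sketch are already recorded explicitly in the paper (in the Notation following Definition~\ref{defn:uplespan} and in Definition~\ref{defn:foldspan}), so no additional verification is needed.
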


\begin{remark}
  By Lemma~\ref{lem:Tw!initial} we have natural equivalences
  \[ \tSp{}^{n} \simeq (\tSp{}^{n,\circ})^{\triangleleft\triangleright}, \qquad \tlsp{}^{n} \simeq
  (\tlsp{}^{n,\circ})^{\triangleleft\triangleright}.\] In particular, the
  categories $\tSp{}^{n}$ and $\tlsp{}^{n}$ both have initial objects,
  which lets us make the following definitions:
\end{remark}
\begin{definition}
  Let $\mathcal{C}$ be an \icat{}. We say a diagram $\tSp{}^{n} \to
  \mathcal{C}$ or $\tlsp{}^{n} \to \mathcal{C}$ is \emph{non-degenerate} if it is a limit diagram.
\end{definition}

\begin{definition}
  An \emph{$n$-uple Lagrangian correspondence} is a $n$-uple span
  $\Sp{}^{n} \to \PSymp_{S,s}$ (with apex $X$) such that the induced
  diagram $\tSp{}^{n} \to \QCoh(X)$ is non-degenerate. Similarly, an
  \emph{$n$-fold Lagrangian correspondence} is an $n$-fold span 
  $\lsp{}^{n} \to \PSymp_{S,s}$ (with apex $X$) such that the induced
  diagram $\tlsp{}^{n} \to \QCoh(X)$ is non-degenerate.
\end{definition}

\begin{remark}
  In  \cite{CalaqueTFT}, an inductive definition of $n$-fold Lagrangian correspondences is given. 
  We will see below in Remark~\ref{rmk:Calaquedefinitioneq} that the two definitions agree.
\end{remark}

\begin{remark}
  Proposition~\ref{propn:Tw!desc} gives an explicit description of the
  categories $\tSp{}^{n}$ and $\tlsp{}^{n}$, which we now consider in the
  smallest cases $n = 1,2$. $\Sp^{1} \simeq \lsp^{1}$ obviously satisfies the criterion of
  Corollary~\ref{cor:Tw!poset}, and so $\tSp{}^{1} \simeq \tlsp{}^{1}$ is
  the partially ordered set
  \[
    \begin{tikzcd}
      {} & -\infty \arrow{dl} \arrow{dr} \\
      A_{1} \arrow{d} & & B_{1} \arrow{d} \\
      A_{1}^{\vee} \arrow{dr} & & B_{1}^{\vee} \arrow{dl} \\
       & -\infty^{\vee}.
    \end{tikzcd}
  \]
  In general, the partially ordered set $\Sp^{n}$ satisfies the
  criterion of Corollary~\ref{cor:Tw!poset} for all $n$, and so
  $\tSp{}^{n}$ is again a partially ordered set, with objects $I$ and
  $I^{\vee}$ for all $I \in \Sp^{n}$, and ordering defined by:
  \begin{itemize}
  \item $I \leq J$ in $\tSp{}^{n}$ \IFF{} $I \leq J$ in $\Sp^{n}$,
  \item $I^{\vee} \leq J^{\vee}$ in $\tSp{}^{n}$ \IFF{} $J \leq I$ in $\Sp^{n}$,
  \item $I \leq J^{\vee}$ in $\tSp{}^{n}$ \IFF{} there exists $K \in \Sp^{n}$ such that
    $I \leq K$ and $J \leq K$,
  \item $I^{\vee}$ is never $\leq J$.
  \end{itemize}
  For example, $\tSp{}^{2}$ can be depicted as
  \[
    \begin{tikzcd}
      \bullet \arrow{ddd}& & \bullet\arrow[dash]{dd} \arrow{ll} \arrow{rr} & & \bullet \arrow[dash]{dd}\\
      & \bullet \arrow{ddd}\arrow{ul} \arrow{dr} & & \bullet
       \arrow[crossing over]{ll} \arrow[crossing over]{rr} \arrow{ul} \arrow{dr}& &
      \bullet \arrow{ul} \arrow{dr} \arrow{ddd} \\
      & & \bullet & & \bullet \arrow[crossing over]{rr} & & \bullet\\ 
      \bullet \arrow{rr} \arrow{dr} & & \bullet\arrow{dr} & & \bullet
      \arrow{ll} \arrow{dr}\\
      & \bullet\arrow{rr} \arrow[from=uuu,crossing over] & &
      \bullet \arrow[from=uuu,crossing over] & & \bullet\arrow{ll} \\
      & & \bullet \arrow{rr} \arrow{ul} \arrow[from=uuu,crossing
      over]& & \bullet \arrow{ul} \arrow[crossing over,from=uuu] & &
      \bullet\arrow{ll}\arrow{ul}\arrow[crossing over,from=uuu]
      \arrow[from=3-5,to=3-3,crossing over]
    \end{tikzcd}
  \]
  On the other hand $\tlsp{}^{n}$ is \emph{not} a partially ordered set
  for $n \geq 2$, though it is still an ordinary category: According
  to Proposition~\ref{propn:Tw!desc} we can view $\tlsp{}^{n}$ as
  containing a copy of $\lsp^{n}$  and a copy of $\lsp^{n,\op}$; if we
  write $X, X^{\vee} \in \tlsp{}^{n}$ for the objects corresponding to
  $X$ under these two embeddings, we have that
  $\Hom_{\tlsp{}^{n}}(A_{k}, B_{k}^{\vee})$ is a 2-element set for $k
  \neq 1$. We can depict $\tlsp{}^{2}$ as
  \[
    \begin{tikzcd}
      {} & & \bullet \arrow{dl} \arrow{dr} \\
      & \bullet  \arrow{dl} \arrow{drrr} & & \bullet \arrow{dr}
      \arrow[crossing over]{dlll} \\
      \bullet \arrow{d} & & & & \bullet\arrow{d} \\
      \bullet \arrow{dr} \arrow{drrr} & & & & \bullet \arrow{dl}
      \arrow[crossing over]{dlll} \\
      & \bullet \arrow{dr} & & \bullet \arrow{dl} \\
      {} & & \bullet.      
    \end{tikzcd}
    \]
\end{remark}

We also have the following generalization of
Remark~\ref{rmk:Lagsquare}:
\begin{proposition}\label{propn:nondeghalf}\ 
  \begin{enumerate}[(i)]
  \item The following are equivalent for a diagram $\Phi \colon
    \tSp{}^{n} \to \mathcal{C}$:
    \begin{enumerate}[(1)]
    \item $\Phi$ is non-degenerate.
    \item The composite $\Sp^{n,\triangleright} \to \tSp{}^{n}
      \xto{\Phi} \mathcal{C}$ is a limit diagram.
    \end{enumerate}
  \item The following are equivalent for a diagram $\Phi \colon
    \tlsp{}^{n} \to \mathcal{C}$:
    \begin{enumerate}[(1)]
    \item $\Phi$ is non-degenerate.
    \item The composite $\lsp^{n,\triangleright} \to \tlsp{}^{n}
      \xto{\Phi} \mathcal{C}$ is a limit diagram.
    \item The composite
      $\tSp{}^{n} \xto{\Tw^{r}_{!}(j_{n})} \tlsp{}^{n} \xto{\Phi}
      \mathcal{C}$ is non-degenerate, where $j_{n}$ is the functor of
      Definition~\ref{defn:jn}.
    \end{enumerate}
  \end{enumerate}
\end{proposition}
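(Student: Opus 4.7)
The plan is to reduce all three equivalences to coinitiality statements for certain maps of indexing categories, and to verify these using the explicit descriptions of $\tSp{}^n$ and $\tlsp{}^n$ from Proposition~\ref{propn:Tw!desc} and Corollary~\ref{cor:Tw!poset}. Since $\tSp{}^n \simeq (\tSp{}^{n,\circ})^{\triangleleft\triangleright}$ has $-\infty$ as its initial object, a functor $\Phi \colon \tSp{}^n \to \mathcal{C}$ is a limit diagram iff $\Phi(-\infty) \simeq \lim_{\tSp{}^n \setminus \{-\infty\}} \Phi$, and analogously for $\tlsp{}^n$ and for the restrictions to $\Sp^{n,\triangleright}$ and $\lsp^{n,\triangleright}$. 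Thus (i) amounts to coinitiality of the inclusion
\[
\iota \colon \Sp^{n,\triangleright}\setminus\{-\infty\} \hookrightarrow \tSp{}^n\setminus\{-\infty\}
\]
(sending $\infty \mapsto -\infty^\vee$), and the two halves of (ii) reduce to the analogous coinitiality for $\lsp^{n,\triangleright}\setminus\{-\infty\} \hookrightarrow \tlsp{}^n\setminus\{-\infty\}$ and, after applying (i) to the pullback along $\Tw^r_!(j_n)$, to coinitiality of the extended $j_n \colon \Sp^{n,\triangleright}\setminus\{-\infty\} \to \lsp^{n,\triangleright}\setminus\{-\infty\}$.

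To verify coinitiality of $\iota$ via Quillen's Theorem A, I compute the slices $\iota_{/q} = \{p \in \Sp^{n,\triangleright}\setminus\{-\infty\} : p \leq q\}$ for each $q \in \tSp{}^n\setminus\{-\infty\}$. Using Proposition~\ref{propn:Tw!desc}, when $q \in \Sp^{n,\circ}$ the slice has $q$ as a maximum, and when $q = -\infty^\vee$ it has $\infty$ as a maximum; in both cases contractibility is immediate. The nontrivial case is $q = I^\vee$ for $I \in \Sp^{n,\circ}$, where the slice equals the sub-poset $S_I \subseteq \Sp^{n,\circ}$ of elements sharing a common upper bound with $I$ in $\Sp^n$. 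The plan here is to exploit the fact that $\Sp^n$ admits joins of any pair of elements having a common upper bound (a structural property that can be read off Definition~\ref{defn:uplespan}): this gives a functor $r \colon S_I \to U_I := \{K \in \Sp^{n,\circ} : K \geq I\}$ by $p \mapsto p \vee I$, which is left adjoint to the inclusion $U_I \hookrightarrow S_I$. The inclusion is therefore cofinal, so $|S_I| \simeq |U_I|$; but $U_I$ has $I$ as a minimum and is hence contractible.

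Part (ii) is proved analogously. For (ii)(1)$\Leftrightarrow$(2), the same strategy applies to the inclusion into $\tlsp{}^n$. The extra subtlety is that $\tlsp{}^n$ is a genuine $1$-category for $n \geq 2$ rather than a poset, so the slices carry non-trivial morphism data; however, the combinatorial heart of the argument still reduces to a common-upper-bound analysis in $\lsp^n$, which inherits the requisite join property from $\Sp^n$ via Lemma~\ref{lem:foldspanloc}. For (ii)(2)$\Leftrightarrow$(3), applying (i) to $\Phi \circ \Tw^r_!(j_n)$ identifies condition (3) with the statement that $(\Phi\circ\Tw^r_!(j_n))|_{\Sp^{n,\triangleright}}$ is a limit diagram; since this composite factors through the extended $j_n \colon \Sp^{n,\triangleright} \to \lsp^{n,\triangleright}$ followed by the inclusion into $\tlsp{}^n$, the equivalence with (2) boils down to coinitiality of $j_n$ on the complement of $-\infty$, which follows from Lemma~\ref{lem:foldspanloc} recognising $j_n$ as a localization.

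The main obstacle will be the contractibility of $S_I$ for $n \geq 2$, because in these cases $S_I$ genuinely contains elements incomparable to $I$ and no maximum is available. Once the join property of $\Sp^n$ (and correspondingly $\lsp^n$) is verified, the remainder of the proof is a formal assembly of cofinality and its dual, together with Quillen's Theorem A.
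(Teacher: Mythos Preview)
Your reduction to coinitiality and your treatment of part (i) are correct: $\Sp^{n}$ does have joins of pairs admitting a common upper bound (this is immediate coordinate-wise in $\Sp^{1}$), so the retraction $p \mapsto p \vee I$ onto $U_I$ works exactly as you describe. This is, in fact, essentially the argument the paper gives in the poset case, although the paper packages it as a special case of the general statement Proposition~\ref{propn:CtoTw!Ccoinit}(i), that $\mathcal{C} \hookrightarrow \Tw^{r}_{!}\mathcal{C}$ is coinitial for \emph{any} \icat{} $\mathcal{C}$, and then simply invokes that together with Lemma~\ref{lem:coinittrright}.

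There is, however, a genuine gap in your argument for (ii)(1)$\Leftrightarrow$(2). You assert that $\lsp^{n}$ ``inherits the requisite join property from $\Sp^{n}$ via Lemma~\ref{lem:foldspanloc}'', but this is false: localizations do not preserve conditional joins, and indeed $\lsp^{n}$ does not have them for $n \geq 2$. Concretely, in $\lsp^{2}$ the elements $A_{2}$ and $B_{2}$ have exactly the two common upper bounds $A_{1}$ and $B_{1}$, which are incomparable, so no join exists. Your proposed retraction $S_I \to U_I$ therefore cannot be defined. The relevant slice over $I^{\vee}$ is still weakly contractible (for $I = B_{2}$ in $\lsp^{2,\circ}$ it is a five-object zigzag), but not for the reason you give. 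The paper's proof of Proposition~\ref{propn:CtoTw!Ccoinit}(i) handles this without joins: it identifies the slice $\mathcal{C}_{/x^{\vee}}$ with the groupoid completion of the cartesian fibration for $c \mapsto \mathcal{C}_{c,x/}$, which is $\mathcal{C}_{x/} \times_{\mathcal{C}} \mathcal{C}^{\Delta^{1}}$, and shows the latter is contractible by recognizing $\txt{ev}_{1}\colon \mathcal{C}^{\Delta^{1}} \to \mathcal{C}$ as cofinal. You should either invoke that general result directly (as the paper does), or supply a different combinatorial argument for $\lsp^{n,\circ}$; your (ii)(2)$\Leftrightarrow$(3) argument via $j_{n}^{\circ}$ and part (i) is fine and agrees with the paper's approach once the gap above is filled.
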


For the proof we need the following trivial observation:
\begin{lemma}\label{lem:coinittrright}
  A functor $\mathcal{C} \to \mathcal{D}$ is coinitial \IFF{} the
  induced functor $\mathcal{C}^{\triangleright} \to
  \mathcal{D}^{\triangleright}$ is coinitial.
\end{lemma}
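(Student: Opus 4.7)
The plan is to unwind the standard cofinality criterion and check the two conditions against each other object-by-object. Recall that a functor $F \colon \mathcal{C} \to \mathcal{D}$ is coinitial \IFF{} for every object $d \in \mathcal{D}$ the slice \icat{} $\mathcal{C} \times_{\mathcal{D}} \mathcal{D}_{/d}$ is weakly contractible (equivalently, $F^{\op}$ is cofinal). So I need to compare this criterion applied to $F$ with the same criterion applied to the induced functor $F^{\triangleright}\colon \mathcal{C}^{\triangleright} \to \mathcal{D}^{\triangleright}$, which by construction sends the newly adjoined terminal object $\infty$ to $\infty$.

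The key observation is simply that $\infty \in \mathcal{D}^{\triangleright}$ is a terminal object, so no object other than $\infty$ itself admits a morphism to $\infty$ in $\mathcal{D}^{\triangleright}$. I will split the check on $F^{\triangleright}$ into two cases depending on whether the chosen object of $\mathcal{D}^{\triangleright}$ is $\infty$ or lies in $\mathcal{D}$.

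For $d = \infty$, we have $(\mathcal{D}^{\triangleright})_{/\infty} \simeq \mathcal{D}^{\triangleright}$ since $\infty$ is terminal, so the pullback $\mathcal{C}^{\triangleright} \times_{\mathcal{D}^{\triangleright}} (\mathcal{D}^{\triangleright})_{/\infty}$ is just $\mathcal{C}^{\triangleright}$, which has a terminal object and is therefore weakly contractible. Hence this case imposes no condition. For $d \in \mathcal{D}$, the terminality of $\infty$ forces $(\mathcal{D}^{\triangleright})_{/d} \simeq \mathcal{D}_{/d}$, and similarly the portion of $\mathcal{C}^{\triangleright}$ lying over $\mathcal{D}_{/d}$ is exactly $\mathcal{C} \times_{\mathcal{D}} \mathcal{D}_{/d}$, since $\infty \in \mathcal{C}^{\triangleright}$ maps only to $\infty \in \mathcal{D}^{\triangleright}$.

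Thus the coinitiality of $F^{\triangleright}$ amounts to the conjunction of the trivially satisfied condition at $\infty$ together with the coinitiality condition for $F$ at every $d \in \mathcal{D}$, which is precisely the coinitiality of $F$. The only routine verification I would need to spell out is the identification of the slices $(\mathcal{D}^{\triangleright})_{/d}$ above, which is an elementary consequence of the universal property of the right cone; there is no real obstacle.
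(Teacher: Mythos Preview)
Your approach is correct and essentially identical to the paper's: both apply the coinitiality criterion (Theorem A) and split into the cases $d \in \mathcal{D}$ and $d = \infty$, observing that the relevant slice is $\mathcal{C}_{/d}$ in the former case and $\mathcal{C}^{\triangleright}$ (hence weakly contractible) in the latter. One sentence is mis-stated, though: you write that ``no object other than $\infty$ itself admits a morphism to $\infty$,'' which is exactly backwards (since $\infty$ is terminal, every object maps to it); what you actually need, and what you correctly use, is that $\infty$ admits no morphism \emph{to} any $d \in \mathcal{D}$, so $(\mathcal{D}^{\triangleright})_{/d} \simeq \mathcal{D}_{/d}$.
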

\begin{proof}
  Immediate from \ThmA{}, as $(\mathcal{C}^{\triangleright})_{/d}
  \simeq \mathcal{C}_{/d}$ for $d \in \mathcal{D}$ and
  $(\mathcal{C}^{\triangleright})_{/\infty} \simeq
  \mathcal{C}^{\triangleright}$, which is weakly contractible since it
  has a terminal object.
\end{proof}

\begin{proof}[Proof of Proposition~\ref{propn:nondeghalf}]
  Proving (i) is equivalent to showing that the functor $\Sp^{n,\circ,\triangleright}
  \to \tSp{}^{n,\circ,\triangleright}$ is coinitial, which follows from
  Proposition~\ref{propn:CtoTw!Ccoinit}(i) and
  Lemma~\ref{lem:coinittrright}. The same argument for
  $\lsp^{n,\circ}$ shows that (ii)(1) and (ii)(2) are equivalent. To
  show that (ii)(1) and (ii)(3) are equivalent, we want to prove that
  $\tSp{}^{n,\circ,\triangleright} \to \tlsp{}^{n,\circ,\triangleright}$
  is coinitial, which follows by combining
  Lemma~\ref{lem:foldspanloc}(iii),
  Proposition~\ref{propn:CtoTw!Ccoinit}(ii), and Lemma~\ref{lem:coinittrright}.
\end{proof}

In Lemma~\ref{lem:foldspanloc} we show that we can identify $n$-fold
spans in an \icat{} $\mathcal{C}$ with the \emph{reduced} $n$-uple
spans in $\mathcal{C}$ (Definition~\ref{defn:redspan}), via the
functor $j_{n}$. In the case of higher spans in $\PSymp_{S,s}$, this
identifies the $n$-fold Lagrangian correspondences with the reduced
$n$-uple Lagrangian correspondences:
\begin{corollary}
  $\Phi \colon \lsp^{n} \to \PSymp_{S,s}$ is an $n$-fold Lagrangian
  correspondence \IFF{} $\Phi \circ j_{n} \colon \Sp^{n} \to
  \PSymp_{S,s}$ is a (reduced) $n$-uple Lagrangian correspondence.
\end{corollary}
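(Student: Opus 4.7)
The plan is to deduce this from Proposition~\ref{propn:nondeghalf}(ii) via the naturality of the construction in Corollary~\ref{cor:spantSpDiag}. The key observation is that the assignment sending a diagram $p \colon K^{\triangleleft} \to \PSymp_{S,s}$ (with apex $X$) to the associated diagram $\widehat{p}$ in $\QCoh(X)$ is natural in the shape category $K$: given a functor $\iota \colon K \to L$ of shape categories and an appropriate diagram $p \colon L^{\triangleleft} \to \PSymp_{S,s}$ preserving the apex, the diagram in $\QCoh(X)$ associated to $p \circ \iota^{\triangleleft}$ agrees canonically with the composite $\widehat{p} \circ \Tw^{r}_{!}((\iota^{\triangleleft})^{\op})$. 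This is immediate from inspection of the construction in Proposition~\ref{propn:twoformTwDiag} and Corollary~\ref{cor:twoformTw!Diag}, which proceeds by composing a sequence of manifestly functorial operations: the two-form map $\PSymp_{S,s}^{\op} \to \mathcal{Q}^{\Delta^{1}}$ from Lemma~\ref{lem:twoformftr}, the cocartesian pushforward to the fibre $\QCoh(X)^{\Delta^{1}}$, the dualization $\QCoh(X)^{\Delta^{1}} \to \Tw^{r}(\QCoh(X))$ from \cite{paradj}*{Corollary 4.2.9} (using dualizability of relative cotangent complexes of Artin stacks), and finally the adjunction $\Tw^{r}_{!} \dashv \Tw^{r}$.

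Applying this naturality to the functor $j_{n} \colon \Sp^{n} \to \lsp^{n}$ of Definition~\ref{defn:jn}, I obtain a canonical equivalence
\[ \widehat{\Phi \circ j_{n}} \simeq \widehat{\Phi} \circ \Tw^{r}_{!}(j_{n}), \]
where $\widehat{\Phi} \colon \tlsp{}^{n} \to \QCoh(X)$ is the diagram associated to $\Phi$ by Corollary~\ref{cor:spantSpDiag}(ii), and the left-hand side is the diagram associated by Corollary~\ref{cor:spantSpDiag}(i) to the $n$-uple span $\Phi \circ j_{n}$.

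By the definition of higher Lagrangian correspondences, $\Phi$ is an $n$-fold Lagrangian correspondence if and only if $\widehat{\Phi}$ is non-degenerate, while $\Phi \circ j_{n}$ is an $n$-uple Lagrangian correspondence if and only if $\widehat{\Phi} \circ \Tw^{r}_{!}(j_{n})$ is non-degenerate. The equivalence of these two non-degeneracy conditions is precisely the implication $(1) \Leftrightarrow (3)$ in Proposition~\ref{propn:nondeghalf}(ii), and the corollary follows.

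The only point requiring actual verification is the naturality claim, which is essentially bookkeeping: each of the intermediate functors in the construction is defined without reference to the particular shape category, so precomposing the input with $j_{n}^{\triangleleft}$ commutes with all the intermediate steps, and the final adjunction step produces the stated composition with $\Tw^{r}_{!}(j_{n})$. I expect this to be the longest part of a fully written proof, but it presents no conceptual obstacle.
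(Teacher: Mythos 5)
Your proof is correct and follows essentially the same route as the paper's: the paper likewise observes that the construction of Corollary~\ref{cor:twoformTw!Diag} is natural, yielding the commutative triangle $\widehat{\Phi j_{n}} \simeq \widehat{\Phi} \circ \Tw^{r}_{!}(j_{n})$, and then invokes the equivalence $(1)\Leftrightarrow(3)$ of Proposition~\ref{propn:nondeghalf}(ii). Your more detailed unpacking of why the naturality holds (tracing it through each step of the construction) is just an expansion of what the paper leaves implicit.
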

\begin{proof}
  The construction of Corollary~\ref{cor:twoformTw!Diag} is natural,
  so we have a commutative triangle
  \opctriangle{\tSp{}^{n}}{\tlsp{}^{n}}{\QCoh(X)}{\Tw^{r}_{!}j_{n}}{\widehat{\Phi
      j_n}}{\widehat{\Phi}}
  The result then follows from Proposition~\ref{propn:nondeghalf}(ii).
\end{proof}

The naturality observed in Remark~\ref{rmk:pbTwdiagnatural}, together
with the same argument as in Lemma~\ref{lem:laglocal}, implies:
\begin{lemma}
  The following are equivalent for an $n$-fold ($n$-uple) span \[\Phi
  \colon \Sp^{n}\to \PSymp_{S,s} \quad (\lsp^{n} \to \PSymp_{S,s})\colon\]
  \begin{enumerate}[(i)]
  \item $\Phi$ is Lagrangian.
  \item For every morphism $\sigma \colon S' \to S$, the pulled-back
    diagram $\sigma^{*}\Phi$ is Lagrangian.
  \item For every point $p \colon \Spec A \to S$, the pulled-back
    diagram $p^{*}\Phi$ is Lagrangian.
  \end{enumerate}
\end{lemma}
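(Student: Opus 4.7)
The proof follows the three-step pattern of Lemma \ref{lem:laglocal} and Lemma \ref{lem:symplocal}, exploiting that by definition $\Phi$ is Lagrangian precisely when its associated diagram $\widehat{\Phi}$, valued in $\QCoh(X)$ (where $X$ is the apex of $\Phi$), is a limit diagram of shape $\tSp{}^{n}$ (resp.\ $\tlsp{}^{n}$).

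For (i) $\Rightarrow$ (ii), let $\sigma \colon S' \to S$ be arbitrary and let $\xi \colon \sigma^{*}X \to X$ be the induced base change on apices. The naturality recorded in Remark~\ref{rmk:pbTwdiagnatural} gives a natural equivalence $\widehat{\sigma^{*}\Phi} \simeq \xi^{*}\widehat{\Phi}$. Now $\xi^{*}$ is a symmetric monoidal colimit-preserving functor between stable \icats{}, hence is exact and in particular preserves finite limits. Since $\tSp{}^{n}$ and $\tlsp{}^{n}$ are finite categories, $\xi^{*}\widehat{\Phi}$ is again a limit diagram, so $\sigma^{*}\Phi$ is Lagrangian. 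The implication (ii) $\Rightarrow$ (iii) is trivial, as (iii) is a special case.

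For the key implication (iii) $\Rightarrow$ (i), recall that the condition that $\widehat{\Phi}$ is a limit diagram amounts to a specific comparison morphism in $\QCoh(X)$ being an equivalence. Using the pointwise principle employed in the proof of Lemma~\ref{lem:symplocal} --- namely, Corollary~\ref{cor:qcoheqonpts} combined with the observation that any affine point $\Spec B \to X$ factors through the base change $p^{*}X \to X$ along its image $p \colon \Spec B \to S$ under the structure map --- this comparison morphism is an equivalence if and only if its pullback to $\QCoh(p^{*}X)$ is an equivalence for every $p \colon \Spec A \to S$. Applying Remark~\ref{rmk:pbTwdiagnatural} once more, these pulled-back morphisms are exactly the comparison morphisms for $\widehat{p^{*}\Phi}$, which are equivalences by hypothesis (iii).

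The only subtle point is the reduction of pointwise tests over $X$ to pointwise tests over $S$ in the last step; this is a routine extension of the argument already used in Lemma~\ref{lem:symplocal}, and is where essentially all the real work lies, since the forward direction is immediate from exactness of $\xi^{*}$.
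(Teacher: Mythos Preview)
Your proof is correct and follows essentially the same approach as the paper. The paper's proof is the one-line ``This follows from Lemma~\ref{lem:relcotgtpb} and Corollary~\ref{cor:colimeqonpoints} as in the proof of Lemmas~\ref{lem:symplocal} and \ref{lem:laglocal}, since the diagram shapes $\tSp{}^{n}$ and $\tlsp{}^{n}$ are finite.'' Your (iii)$\Rightarrow$(i) step unpacks Corollary~\ref{cor:colimeqonpoints} into its ingredients (Corollary~\ref{cor:qcoheqonpts} plus the factoring-through-$S$ observation), which is fine but slightly more elaborate than simply citing Corollary~\ref{cor:colimeqonpoints} directly; that corollary already packages the statement that a finite limit diagram in $\QCoh(X)$ can be tested after base change to $X_{p}$ for all $p\colon \Spec A \to S$.
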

\begin{proof}
  This follows from Lemma~\ref{lem:relcotgtpb} and
  Corollary~\ref{cor:colimeqonpoints} as in the proof of
  Lemmas~\ref{lem:symplocal} and \ref{lem:laglocal}, since the diagram
  shapes $\tSp{}^{n}$ and $\tlsp{}^{n}$ are finite.
\end{proof}

\section{Higher Oriented Cospans}\label{subsec:higherorcospan}
Having defined higher Lagrangian correspondences, we now want to
consider the analogous higher version of oriented cospans. This will
be a non-degeneracy condition on a higher cospan in $\POr_{S,d}$,
again defined as certain diagrams of shape $\tSp{}^{n}$ and $\tlsp{}^{n}$
being non-degenerate. Our first goal is to construct these diagrams,
for which we need the following result:
\begin{proposition}\label{propn:cospanqcohdiag}
  Suppose given a diagram $p \colon \mathcal{I}^{\triangleright} \to
  \POr_{S,d}$ with $X := p(\infty)$, $X_{i}:= p(i)$ and $f_{i} \colon
  X_{i} \to X$ the map $p(i \to \infty)$. For any $\mathcal{E} \in \QCoh(X)$
  the functor $p$ induces a diagram
  \[ p_{\mathcal{E}} \colon (\mathcal{I}^{\triangleright})^{\op} \to \QCoh(S)_{/\mathcal{O}_{S}[-d]}\]
  which takes $i \in \mathcal{I}$ to
  \[ \Gamma_{S}f_{i}^{*}\mathcal{E} \otimes
    \Gamma_{S}f_{i}^{*}(\mathcal{E}^{\vee}) \to
    \Gamma_{S}f_{i}^{*}(\mathcal{E} \otimes \mathcal{E}^{\vee}) \xto{\Gamma_{S}f_{i}^{*}\txt{ev}}
    \Gamma_{S}f_{i}^{*}\mathcal{O}_{X}[-d] \simeq
    \Gamma_{S}\mathcal{O}_{X_{i}} \xto{[X_{i}]} \mathcal{O}_{S}[-d].\]
  Moreover, for any morphism $\sigma \colon S' \to S$ we have a
  natural equivalence
  \[ \sigma^{*}p_{\mathcal{E}} \simeq (\sigma^{*}p)_{\sigma^{*}\mathcal{E}},\]
  where $\sigma^{*}\mathcal{E}$ denotes the pullback of $\mathcal{E}$ to
  $\sigma^{*}X$.
\end{proposition}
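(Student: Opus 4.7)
The plan is to build $p_{\mathcal{E}}$ by assembling three coherent pieces via the fibrational machinery already in place, rather than by writing out simplicial formulas. First, from the evaluation pairing $\ev \colon \mathcal{E} \otimes \mathcal{E}^\vee \to \mathcal{O}_X$ in $\QCoh(X)$ and the underlying diagram $\bar{p} \colon \mathcal{I}^{\triangleright} \to \dSt_{S}$ obtained by forgetting the preorientation data, I would use the cocartesian fibration $\QC \to \dSt^{\op}$ to transport $\ev$ along $\bar{p}$, yielding a functor $(\mathcal{I}^{\triangleright})^{\op} \to \QC^{\Delta^{1}}$ whose value at $i$ is the pulled-back evaluation $f_{i}^{*}\mathcal{E} \otimes f_{i}^{*}\mathcal{E}^\vee \to \mathcal{O}_{X_{i}}$ in $\QCoh(X_{i})$.

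Next, I would push this diagram forward to $S$. The pushforwards are encoded by the cartesian structure on the same fibration $\QC \to \dSt$, and since each pushforward $\Gamma_{S}$ is lax monoidal, this produces a coherent diagram $(\mathcal{I}^{\triangleright})^{\op} \to \QCoh(S)^{\Delta^{1}}$ whose value at $i$ is the composite $\Gamma_{S} f_{i}^{*}\mathcal{E} \otimes \Gamma_{S} f_{i}^{*}\mathcal{E}^\vee \to \Gamma_{S}\mathcal{O}_{X_{i}}$. Separately, from the defining pullback square of $\POr_{S,d}$ and the fact that $\POr_{S,d} \to \dSt_{S}$ is a left fibration (Remark~\ref{rmk:POrLfib}), the functor $p$ itself induces a diagram $(\mathcal{I}^{\triangleright})^{\op} \to \QCoh(S)_{/\mathcal{O}_{S}[-d]}$ sending $i$ to the preorientation $[X_{i}] \colon \Gamma_{S}\mathcal{O}_{X_{i}} \to \mathcal{O}_{S}[-d]$. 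Composing the pushforward diagram with this slice-category diagram gives the desired $p_{\mathcal{E}}$ landing in $\QCoh(S)_{/\mathcal{O}_{S}[-d]}$, with the prescribed values at each $i$.

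For the naturality claim, I would verify that each ingredient is compatible with a base change $\sigma \colon S' \to S$: the transported evaluation pairing is compatible because cocartesian transport in $\QC$ is natural in morphisms of $\dSt$; the pushforward step is compatible because $\QC$ satisfies base change along UCC morphisms by Theorem~\ref{thm:bcpf}, giving an equivalence $\sigma^{*}\Gamma_{S} \simeq \Gamma_{S'}\xi^{*}$ on the relevant sheaves (where $\xi$ denotes the base change of each $X_{i} \to X$); and the preorientation data is compatible by the construction of the cartesian fibration $\POr_{d} \to \dSt$. Combining these compatibilities produces the required equivalence $\sigma^{*}p_{\mathcal{E}} \simeq (\sigma^{*}p)_{\sigma^{*}\mathcal{E}}$.

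The main difficulty here is not any individual step, each of which is a formal application of available fibrational constructions, but rather the bookkeeping required to keep the coherences synchronized when combining the pushforward diagram in $\QCoh(S)^{\Delta^{1}}$ with the slice-category diagram from $\POr_{S,d}$ into a single functor valued in $\QCoh(S)_{/\mathcal{O}_{S}[-d]}$. The role of the fibrational perspective is precisely to avoid constructing the higher coherences by hand.
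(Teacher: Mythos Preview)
Your overall strategy matches the paper's: assemble the diagram from fibrational pieces (cocartesian transport of the evaluation map, pushforward to $S$, and the preorientation data from $\POr_{S,d}$), then combine them and check base-change compatibility using the cartesian fibration $\POr_d \to \dSt$ and Theorem~\ref{thm:bcpf}. The identification of the difficulty as coherence bookkeeping is also correct.

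However, there is a genuine gap in your second step. You transport the evaluation map to obtain $f_i^*\mathcal{E} \otimes f_i^*\mathcal{E}^\vee \to \mathcal{O}_{X_i}$, and then say that pushing forward via the cartesian structure on $\QC$ and the lax monoidality of $\Gamma_S$ yields a coherent diagram with source $\Gamma_S f_i^*\mathcal{E} \otimes \Gamma_S f_i^*\mathcal{E}^\vee$. But the cartesian structure on $\QC$ only gives you $\Gamma_S(f_i^*\mathcal{E} \otimes f_i^*\mathcal{E}^\vee) \to \Gamma_S\mathcal{O}_{X_i}$; to get the desired source you must precompose with the lax monoidal structure map $\Gamma_S f_i^*\mathcal{E} \otimes \Gamma_S f_i^*\mathcal{E}^\vee \to \Gamma_S(f_i^*\mathcal{E} \otimes f_i^*\mathcal{E}^\vee)$, and you have not explained how to do this \emph{coherently} as $i$ varies. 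Invoking ``lax monoidality'' pointwise does not produce a functor $(\mathcal{I}^{\triangleright})^{\op} \to \QCoh(S)^{\Delta^1}$.

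The paper fixes exactly this by working operadically throughout: the evaluation map is regarded as a morphism $(\mathcal{E},\mathcal{E}^\vee) \to \mathcal{O}_X$ in $\QCoh(X)^{\otimes}$ lying over the active map $\langle 2 \rangle \to \langle 1 \rangle$, and is transported inside the cocartesian fibration $\mathcal{Q}_S^{\otimes} \to \dSt_S^{\op} \times \xF_*$. One then applies the functor $\Gamma_S^{\otimes} \colon \mathcal{Q}_S^{\otimes} \to \QCoh(S)^{\otimes}$ of Construction~\ref{constr:GammaSotimes}, which packages the lax monoidal structure of all the pushforwards simultaneously, and only at the very end pushes forward along $\langle 2 \rangle \to \langle 1 \rangle$ in $\QCoh(S)^{\otimes}$. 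This last step is what performs the tensor product in $\QCoh(S)$ and automatically inserts the lax monoidal comparison map coherently. For the naturality in $S$ one upgrades to the global versions $\Gamma$ and $\Gamma^{\otimes}$ of Constructions~\ref{constr:Gamma} and~\ref{constr:Gammaotimes}. Your proposal would go through once you replace the plain fibration $\QC$ by $\mathcal{Q}_S^{\otimes}$ in this way.
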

\begin{proof}
  We can view the evaluation map
  $\mathcal{E} \otimes \mathcal{E}^{\vee} \to \mathcal{O}_{X}$ as a
  morphism $(\mathcal{E},\mathcal{E}^{\vee}) \xto{\epsilon}
  \mathcal{O}_{X}$ in the symmetric monoidal \icat{}
  $\QCoh(X)^{\otimes}$ (over the unique active map $\angled{2} \to \angled{1}$ in
  $\xF_{*}$). We have a cocartesian fibration
  $\mathcal{Q}_{S}^{\otimes}\to \dSt_{S}^{\op} \times \xF_{*}$
  from Definition~\ref{defn:Qotimes}, which encodes the symmetric
  monoidal structures on quasicoherent sheaves and the symmetric
  monoidal functoriality of the pullback functors. Here we can
  take cocartesian pushforwards of $\epsilon$ along $p^{\op}$ to
  obtain a commutative square
  \[
    \begin{tikzcd}
      (\mathcal{I}^{\triangleright})^{\op} \times \Delta^{1} \arrow{r}
      \arrow{d} & \mathcal{Q}^{\otimes}_{S}  \arrow{d} \\
      \POr_{S,d}^{\op} \times \Delta^{1} \arrow{r} & \dSt_{S}^{\op} \times \xF_{*},
    \end{tikzcd}
  \]
  where the top horizontal map takes $i \in \mathcal{I}$ to
  $(f_{i}^{*}\mathcal{E}, f_{i}^{*}\mathcal{E}^{\vee})
  \xto{f_{i}^{*}\epsilon} f_{i}^{*}\mathcal{O}_{X}\simeq \mathcal{O}_{X_{i}}$.
  Now we can compose this with the functor $\Gamma_{S}^{\otimes}
  \colon \mathcal{Q}_{S}^{\otimes}\to \QCoh(S)^{\otimes}$ from
  Construction~\ref{constr:GammaSotimes}, which encodes the compatible
  lax monoidal structures on the pushforward functors to $S$, to get a
  commutative square
  \[
    \begin{tikzcd}
      (\mathcal{I}^{\triangleright})^{\op} \times \Delta^{1} \arrow{r}
      \arrow{d} & \QCoh(S)^{\otimes} \arrow{d} \\
      \POr_{S,d}^{\op} \times \Delta^{1} \arrow{r} & \xF_{*}.
    \end{tikzcd}
  \]
  Finally, since $\QCoh(S)^{\otimes}\to \xF_{*}$ is a
  cocartesian fibration, we can push forward along $\angled{2} \to
  \angled{1}$ to the fibre over $\angled{1}$, obtaining a functor
  \[ (\mathcal{I}^{\triangleright})^{\op} \times \Delta^{1} \to
    \QCoh(S)\]
  which takes $i \in \mathcal{I}$ to the composite morphism
  \[ \Gamma_{S}f_{i}^{*}\mathcal{E} \otimes
    \Gamma_{S}f_{i}^{*}(\mathcal{E}^{\vee}) \to
    \Gamma_{S}(f_{i}^{*}\mathcal{E} \otimes
    f_{i}^{*}\mathcal{E}^{\vee}) \xto{\Gamma_{S}f_{i}^{*}\epsilon}
    \Gamma_{S}f_{i}^{*}\mathcal{O}_{X} \simeq \Gamma_{S}\mathcal{O}_{X_{i}}.\]
  Evaluated at $1$, this is the composite
  $(\mathcal{I}^{\triangleright})^{\op} \to \POr_{S,d}^{\op} \to \dSt^{\op}_{S}
  \xto{\Gamma_{S}\mathcal{O}} \QCoh(S)$. Using the forgetful functor
  $\POr_{S,d}^{\op} \to \QCoh(S)_{/\mathcal{O}_{S}[-d]}$ we obtain
  another functor $(\mathcal{I}^{\triangleright})^{\op} \times
  \Delta^{1} \to \QCoh(S)$ whose value at $0$ agrees with that of the
  first functor at $1$. We can therefore combine them to a functor $
  (\mathcal{I}^{\triangleright})^{\op} \times \Delta^{2} \to \QCoh(S)$;
  restricting to $\Delta^{\{0,2\}}$ we get the desired diagram
  $p_{\mathcal{E}}$.

  To see that this is natural in $S$ with respect to pullback, we
  carry out a slightly more elaborate version of the same
  construction, using instead the functors $\Gamma$ and
  $\Gamma^{\otimes}$ of Constructions~\ref{constr:Gamma} and
  \ref{constr:Gammaotimes}. (For example, these imply that for
  $\sigma \colon S' \to S$ we have a commutative diagram
  \[
    \begin{tikzcd}
      {} & \mathcal{Q}_{S}^{\txt{UCC},\otimes}
      \arrow{r}{\Gamma_{S}^{\otimes}} \arrow{dd}{\sigma^{*}} &
      \QCoh(S) \arrow{dd}{\sigma^{*}} \\
      (\mathcal{I}^{\triangleright})^{\op} \times \Delta^{1} \arrow{dr} \arrow{ur}\\
       & \mathcal{Q}_{S'}^{\txt{UCC},\otimes}
       \arrow{r}{\Gamma_{S'}^{\otimes}} & \QCoh(S'),
    \end{tikzcd}
    \]
    which encodes the compatibility of the first part of the
    construction with pullback along $\sigma$.)
\end{proof}

We can now apply \cite{paradj}*{Corollary 4.2.9} and the left adjoint of $\Tw^{r}$ to the parametrized tensor-hom
adjunction in $\QCoh(S)$ to get:
\begin{corollary}
  Consider a diagram $p \colon \mathcal{I}^{\triangleright} \to
  \POr_{S,d}$ with $X := p(\infty)$, $X_{i}:= p(i)$ and $f_{i} \colon
  X_{i} \to X$ the map $p(i \to \infty)$. For any $\mathcal{E} \in \QCoh(X)$
  the functor $p$ induces
  \begin{enumerate}[(i)]
  \item   a diagram
    \[ \tilde{p}_{\mathcal{E}} \colon (\mathcal{I}^{\triangleright})^{\op} \to \Tw^{r}\QCoh(S)\]
    which takes $i \in \mathcal{I}$ to the morphism
    \[ \Gamma_{S}f_{i}^{*}\mathcal{E}^{\vee} \to
      (\Gamma_{S}f_{i}^{*}\mathcal{E})^{\vee}[-d]\]
    adjoint to $p_{\mathcal{E}}(i)$,
  \item a diagram
    \[ \widehat{p}_{\mathcal{E}} \colon
      \Tw^{r}_{!}((\mathcal{I}^{\triangleright})^{\op}) \to \QCoh(S)\]
    adjoint to $\tilde{p}_{\mathcal{E}}$.
  \end{enumerate}
  Moreover, for any morphism $\sigma \colon S' \to S$ we have
  natural equivalences
  \[ \sigma^{*}\tilde{p}_{\mathcal{E}} \simeq
    (\widetilde{\sigma^{*}p})_{\sigma^{*}\mathcal{E}},\qquad \sigma^{*}\widehat{p}_{\mathcal{E}} \simeq (\widehat{\sigma^{*}p})_{\sigma^{*}\mathcal{E}},  \]
  where $\sigma^{*}\mathcal{E}$ denotes the pullback of $\mathcal{E}$ to
  $\sigma^{*}X$. \qed
\end{corollary}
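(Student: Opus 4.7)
The plan is to assemble the three statements by post-composing the output of Proposition~\ref{propn:cospanqcohdiag} with two further categorical operations: the parametrized tensor--hom adjunction of $\QCoh(S)$ (packaged via \cite{paradj}*{Corollary 4.2.9}), and the adjunction $\Tw^{r}_{!} \dashv \Tw^{r}$ from Corollary~\ref{cor:Tw!}. Since Proposition~\ref{propn:cospanqcohdiag} already does the hard work of producing $p_{\mathcal{E}}$ and verifying its pullback-compatibility, everything reduces to invoking general machinery and tracking the naturality through each step.

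For part (i), I would start from the diagram
\[ p_{\mathcal{E}} \colon (\mathcal{I}^{\triangleright})^{\op} \to
\QCoh(S)_{/\mathcal{O}_{S}[-d]}\]
supplied by Proposition~\ref{propn:cospanqcohdiag}. Composing with the forgetful functor to $\Fun(\Delta^{1},\QCoh(S))$, its value at $i$ is the morphism
$\Gamma_{S}f_{i}^{*}\mathcal{E} \otimes \Gamma_{S}f_{i}^{*}\mathcal{E}^{\vee} \to \mathcal{O}_{S}[-d]$,
whose source is the tensor product of the two values $\Gamma_{S}f_{i}^{*}\mathcal{E}$ and $\Gamma_{S}f_{i}^{*}\mathcal{E}^{\vee}$ of functors $(\mathcal{I}^{\triangleright})^{\op}\to \QCoh(S)$ and whose target is constant. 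Since $\QCoh(S)$ is closed symmetric monoidal, we have the parametrized tensor--hom adjunction in the sense of \cite{paradj}, and \cite{paradj}*{Corollary 4.2.9} (in the same way it is used in the proof of Proposition~\ref{propn:twoformTwDiag}) produces the adjoint diagram
\[ \tilde p_{\mathcal{E}} \colon (\mathcal{I}^{\triangleright})^{\op} \to \Tw^{r}(\QCoh(S))\]
whose value at $i$ is the morphism $\Gamma_{S}f_{i}^{*}\mathcal{E}^{\vee} \to (\Gamma_{S}f_{i}^{*}\mathcal{E})^{\vee}[-d]$ obtained by shifting one tensor factor across via the internal Hom and the shift by $-d$. (The fact that this transports morphisms to the required maps in $\Tw^{r}$ is exactly the content of \cite{paradj}*{Corollary 4.2.9}.)

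Part (ii) is then formal: applying the adjunction $\Tw^{r}_{!}\dashv \Tw^{r}$ of Corollary~\ref{cor:Tw!} to $\tilde p_{\mathcal{E}}$ yields the adjoint diagram
\[ \widehat p_{\mathcal{E}} \colon \Tw^{r}_{!}\bigl((\mathcal{I}^{\triangleright})^{\op}\bigr) \to \QCoh(S),\]
as required.

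For part (iii), naturality in $S$, I would chase the construction step by step. Proposition~\ref{propn:cospanqcohdiag} already furnishes an equivalence $\sigma^{*}p_{\mathcal{E}} \simeq (\sigma^{*}p)_{\sigma^{*}\mathcal{E}}$, and the symmetric monoidal pullback functor $\sigma^{*} \colon \QCoh(S) \to \QCoh(S')$ is compatible with internal Homs on dualizable objects (which is the case here since the cotangent complexes, and hence the $\Gamma_{S}f_{i}^{*}\mathcal{E}$ built from the dualizable $\mathcal{E}$ against universally cocontinuous $f_i$, behave well under base change via Theorem~\ref{thm:bcpf}). Thus $\sigma^{*}$ intertwines the tensor--hom adjunctions of $S$ and $S'$, so applying $\sigma^{*}$ to $\tilde p_{\mathcal{E}}$ produces $\widetilde{(\sigma^{*}p)}_{\sigma^{*}\mathcal{E}}$. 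Finally, the adjunction $\Tw^{r}_{!}\dashv \Tw^{r}$ is functorial in the ambient \icat{}, so post-composing with $\sigma^{*}$ then taking the $\Tw^{r}_{!}$-adjoint agrees with first taking the adjoint and then post-composing, giving $\sigma^{*}\widehat p_{\mathcal{E}}\simeq \widehat{(\sigma^{*}p)}_{\sigma^{*}\mathcal{E}}$.

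The main potential obstacle is keeping track of the fact that the parametrized tensor--hom construction of \cite{paradj} is applied to a \emph{family} of objects varying in $(\mathcal{I}^{\triangleright})^{\op}$ with a constant target $\mathcal{O}_{S}[-d]$, and that all the requisite dualizability hypotheses hold; once this is set up correctly the statements are a mechanical consequence of naturality, and no calculation with specific diagrams of shape $\tSp{}^{n}$ or $\tlsp{}^{n}$ is needed at this stage.
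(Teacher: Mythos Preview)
Your proposal is correct and matches the paper's approach exactly: the corollary is obtained by applying \cite{paradj}*{Corollary 4.2.9} (the parametrized tensor--hom adjunction) to the output of Proposition~\ref{propn:cospanqcohdiag}, and then the adjunction $\Tw^{r}_{!}\dashv \Tw^{r}$; naturality in $S$ is inherited at each step. One small slip: in your discussion of part (iii) you invoke ``cotangent complexes'', which belong to the symplectic side of the story, not the oriented one --- here the relevant objects are the $\Gamma_{S}f_{i}^{*}\mathcal{E}$ for arbitrary $\mathcal{E}\in\QCoh(X)$, and no dualizability is needed at this stage (the tensor--hom adjunction exists regardless); dualizability only enters later, in the definition of oriented cospans.
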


In particular, for higher cospans in $\POr_{S,d}$ we have:
\begin{corollary}\label{cor:cospanEdiag}
  An $n$-uple ($n$-fold) cospan $\Phi$ in $\POr_{S,d}$ with bottom vertex $X$
  induces for every $\mathcal{E} \in \QCoh(X)$ a diagram
  $\widehat{\Phi}_{\mathcal{E}}\colon \tSp{}^{n} \to \QCoh(S)$
  ($\tlsp{}^{n} \to \QCoh(S)$). \qed
\end{corollary}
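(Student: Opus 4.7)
The plan is to obtain this corollary as a direct specialization of the preceding corollary, applied to suitable indexing shapes.

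First, I would observe that an $n$-uple cospan in $\POr_{S,d}$ is, by definition, a functor $\Phi \colon (\Sp^{n})^{\op} \to \POr_{S,d}$, and its bottom vertex is the image of the apex of $\Sp^{n}$. Since the apex is initial in $\Sp^{n}$, we have $\Sp^{n} \simeq (\Sp^{n,\circ})^{\triangleleft}$, and consequently $(\Sp^{n})^{\op} \simeq ((\Sp^{n,\circ})^{\op})^{\triangleright}$. Setting $\mathcal{I} := (\Sp^{n,\circ})^{\op}$, an $n$-uple cospan $\Phi$ with bottom vertex $X$ is therefore exactly a diagram $p \colon \mathcal{I}^{\triangleright} \to \POr_{S,d}$ with $p(\infty) = X$, which is the input data for the preceding corollary.

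Next, I would apply that corollary to $p := \Phi$ and the given $\mathcal{E} \in \QCoh(X)$, producing a diagram
\[
\widehat{\Phi}_{\mathcal{E}} \colon \Tw^{r}_{!}\bigl((\mathcal{I}^{\triangleright})^{\op}\bigr) \longrightarrow \QCoh(S).
\]
Since $(\mathcal{I}^{\triangleright})^{\op} \simeq \Sp^{n}$, this is precisely a diagram of shape $\tSp{}^{n} = \Tw^{r}_{!}(\Sp^{n})$ in $\QCoh(S)$, as required.

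For the $n$-fold case the argument is identical with $\lsp^{n}$ and $\lsp^{n,\circ}$ in place of $\Sp^{n}$ and $\Sp^{n,\circ}$, using that the apex of $\lsp^{n}$ is likewise initial; taking $\mathcal{I} := (\lsp^{n,\circ})^{\op}$ produces the desired diagram $\tlsp{}^{n} \to \QCoh(S)$. There is no real difficulty in this proof: the preceding corollary does all of the substantive work, and what remains is the purely combinatorial identification $(\Sp^{n})^{\op} \simeq ((\Sp^{n,\circ})^{\op})^{\triangleright}$ (and its $\lsp^{n}$ analogue), which is immediate from the fact that the apex is initial in these indexing categories. As a bonus, the naturality in $S$ asserted in the preceding corollary is inherited by $\widehat{\Phi}_{\mathcal{E}}$ automatically, which will be convenient for the non-degeneracy checks later.
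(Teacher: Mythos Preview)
Your proposal is correct and matches the paper's approach: the corollary carries a \qed with no proof, precisely because it is the direct specialization of the preceding corollary to $\mathcal{I} = (\Sp^{n,\circ})^{\op}$ (respectively $(\lsp^{n,\circ})^{\op}$) that you have spelled out. The combinatorial identification $(\mathcal{I}^{\triangleright})^{\op} \simeq \Sp^{n}$ (resp.\ $\lsp^{n}$) is exactly the content the paper is leaving implicit.
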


\begin{definition}
  An $n$-uple ($n$-fold) cospan $\Phi$ in $\POr_{S,d}$ with bottom vertex $X$ is \emph{weakly
    oriented} if for every dualizable $\mathcal{E} \in \QCoh(X)$, the
  corresponding diagram $\widehat{\Phi}_{\mathcal{E}}$ is non-degenerate.
\end{definition}

\begin{definition}
  An $n$-uple ($n$-fold) cospan $\Phi$ in $\POr_{S,d}$ is
  \emph{oriented} if $\sigma^{*}\Phi$ is weakly oriented for every
  morphism $\sigma \colon S' \to S$ in $\dSt$.
\end{definition}

\begin{lemma}\label{lem:ncospanorpts}
  An $n$-uple ($n$-fold) cospan $\Phi$ in $\POr_{S,d}$ is oriented
  \IFF{} $p^{*}\Phi$ is weakly oriented for every point $p \colon \Spec A \to S$.
\end{lemma}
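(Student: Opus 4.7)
The plan is to treat the two directions separately. The forward direction will be immediate: every affine point $p\colon\Spec A \to S$ is a morphism to $S$, so $\Phi$ being oriented forces $p^{*}\Phi$ to be weakly oriented directly from the definition.

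For the converse I will fix an arbitrary morphism $\sigma\colon S'\to S$ together with a dualizable $\mathcal{E}\in\QCoh(\sigma^{*}X)$, and then show that the diagram $\widehat{\sigma^{*}\Phi}_{\mathcal{E}}$ of shape $\tSp{}^{n}$ (respectively $\tlsp{}^{n}$) in $\QCoh(S')$ is a limit diagram. This amounts to verifying that a single canonical comparison map $\alpha$, from the value at the initial object to the limit over the remaining shape, is an equivalence.

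My strategy is to detect $\alpha$ pointwise: by Corollary~\ref{cor:qcoheqonpts}, $\alpha$ is an equivalence if and only if $q^{*}\alpha$ is an equivalence for every $q\colon\Spec A\to S'$. The two ingredients I will then combine are (i) the naturality in Corollary~\ref{cor:cospanEdiag}, which provides the identification
\[
q^{*}\widehat{\sigma^{*}\Phi}_{\mathcal{E}}\;\simeq\;\widehat{(\sigma\circ q)^{*}\Phi}_{\,q^{*}\mathcal{E}},
\]
and (ii) the fact that the affine pullback $q^{*}$ preserves the relevant finite limit, so that $q^{*}\alpha$ is identified with the comparison map of the right-hand diagram. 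Once these are in place, the hypothesis applied to the point $\sigma\circ q\colon\Spec A\to S$ (together with the preservation of dualizability by the symmetric monoidal functor $q^{*}$) shows this comparison map is an equivalence, closing the argument.

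The only potential subtlety is the preservation of finite limits by $q^{*}$, but this is automatic in our setting: a colimit-preserving functor between stable $\infty$-categories is exact, and the shapes $\tSp{}^{n}$ and $\tlsp{}^{n}$ are finite. I therefore do not expect any genuine obstacle beyond bookkeeping with the naturality equivalence, in close analogy with the proofs of Lemmas~\ref{lem:symplocal} and \ref{lem:laglocal}.
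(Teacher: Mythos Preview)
Your proposal is correct and follows essentially the same route as the paper's proof. The paper is slightly terser---it reduces to showing $\Phi$ itself is weakly oriented (the general $\sigma$ then follows since affine points of $S'$ compose with $\sigma$ to give affine points of $S$) and invokes Corollary~\ref{cor:colimeqonpoints}(ii) directly rather than Corollary~\ref{cor:qcoheqonpts} together with exactness of $q^{*}$---but the substance is identical.
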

\begin{proof}
  It suffices to prove that if $p^{*}\Phi$ is weakly oriented for
  every point $p \colon \Spec A \to S$, then $\Phi$ is weakly
  oriented. Let $X$ be the bottom vertex of $\Phi$, and consider $\mathcal{E}
  \in \QCoh(X)$. We want to prove that if $\mathcal{E}$ is dualizable,
  then $\widehat{\Phi}_{\mathcal{E}}$ is a limit diagram in $\QCoh(S)$.
  Since \[\QCoh(S) \simeq \lim_{p \colon \Spec A \to S} \Mod_{A},\] by
  Corollary~\ref{cor:colimeqonpoints}
  the diagram
  $\widehat{\Phi}_{\mathcal{E}}$ is a limit diagram \IFF{}
  $p^{*}\widehat{\Phi}_{\mathcal{E}}$ is a limit diagram for all points
  $p$. But we have a natural equivalence
  $p^{*}\widehat{\Phi}_{\mathcal{E}} \simeq
  \widehat{p^{*}\Phi}_{p^{*}\mathcal{E}}$, so this is indeed a limit
  diagram as $p^{*}\Phi$ is by assumption weakly oriented, and
  $p^{*}\mathcal{E}$ is dualizable since $p^{*}$ is a symmetric
  monoidal functor.
\end{proof}

Using Proposition~\ref{propn:nondeghalf} we can identify 
$n$-fold oriented cospans with  (reduced) $n$-uple oriented cospans:
\begin{corollary}
  A functor $\Phi \colon \lsp^{n,\op} \to \POr_{S,d}$ is an $n$-fold
  oriented cospan \IFF{} the composite
  $\Phi \circ j_{n} \colon \Sp^{n,\op} \to \POr_{S,d}$ is an $n$-uple
  oriented cospan. \qed
\end{corollary}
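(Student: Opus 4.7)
The plan is to reduce the claim to Proposition~\ref{propn:nondeghalf}(ii)(3) by naturality of the construction $\Phi \mapsto \widehat{\Phi}_{\mathcal{E}}$ provided by Corollary~\ref{cor:cospanEdiag}. Indeed, the construction of $\widehat{\Phi}_{\mathcal{E}}$ in Proposition~\ref{propn:cospanqcohdiag} proceeds by (cocartesian) pushforward of a fixed evaluation morphism along the given cospan and by subsequent application of the left adjoint to $\Tw^{r}$, all of which are manifestly functorial in the indexing diagram. So for $\Phi \colon \lsp^{n,\op} \to \POr_{S,d}$ and $\mathcal{E} \in \QCoh(X)$, where $X$ is the bottom vertex (which is preserved by $j_n^{\op}$ since $j_n$ takes the initial object of $\Sp^{n}$ to that of $\lsp^{n}$), there is a natural equivalence
\[
\widehat{\Phi \circ j_n^{\op}}_{\mathcal{E}} \simeq \widehat{\Phi}_{\mathcal{E}} \circ \Tw^{r}_{!}(j_n) \colon \tSp{}^n \to \QCoh(S),
\]
where $\Tw^{r}_{!}(j_n) \colon \tSp{}^n \to \tlsp{}^n$ is the functor appearing in Proposition~\ref{propn:nondeghalf}(ii)(3).

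Once this naturality is in place, Proposition~\ref{propn:nondeghalf}(ii)(3) gives that $\widehat{\Phi}_{\mathcal{E}}$ is non-degenerate if and only if $\widehat{\Phi}_{\mathcal{E}} \circ \Tw^{r}_{!}(j_n)$ is non-degenerate. Thus, for every dualizable $\mathcal{E} \in \QCoh(X)$, the diagram associated to $\Phi$ is a limit diagram if and only if the one associated to $\Phi \circ j_n^{\op}$ is, so $\Phi$ is weakly oriented if and only if $\Phi \circ j_n^{\op}$ is weakly oriented.

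Finally, to pass from weak orientation to orientation, I would invoke the compatibility of the construction with base change (already recorded in Proposition~\ref{propn:cospanqcohdiag} and the subsequent corollary): for every $\sigma \colon S' \to S$ one has $\sigma^{*}(\Phi \circ j_n^{\op}) \simeq (\sigma^{*}\Phi) \circ j_n^{\op}$, and pullback of a dualizable $\mathcal{E}$ remains dualizable. Applying the previous paragraph over $S'$ shows that $\sigma^{*}\Phi$ is weakly oriented exactly when $\sigma^{*}(\Phi \circ j_n^{\op})$ is; quantifying over all $\sigma$ yields the stated equivalence. The only genuine input is Proposition~\ref{propn:nondeghalf}(ii)(3); everything else is naturality that has been set up in the preceding subsections.
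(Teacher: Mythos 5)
Your proposal is correct and follows essentially the same route as the paper, which states this as an immediate consequence of Proposition~\ref{propn:nondeghalf}(ii) via the naturality of the construction $\Phi \mapsto \widehat{\Phi}_{\mathcal{E}}$ (exactly as in the paper's proof of the analogous corollary for Lagrangian correspondences). Your extra care with the base-change step, reducing orientation to weak orientation over each $S' \to S$, is a correct and worthwhile elaboration of what the paper leaves implicit.
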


\section{Characterizations of Non-Degeneracy}\label{subsec:nondeg}
In this section we will give some alternative
characterizations of non-degenerate diagrams of shape $\tSp{}^{n}$ and $\tlsp{}^{n}$, and
thus alternative descriptions of higher Lagrangian
correspondences and higher oriented cospans, which will be useful
later on. 

Recall that an isotropic correspondence $X \xfrom{f} L \xto{g} Y$ is
Lagrangian when the induced commutative square
\nolabelcsquare{\mathbb{T}_{L/S}}{f^{*}\mathbb{T}_{X/S}}{g^{*}\mathbb{T}_{Y/S}}{\mathbb{L}_{L/S}[s]}
is cartesian. If we in addition assume that $X$ and $Y$ are
\emph{symplectic}\footnote{Both conditions make sense for
  presymplectic $X$ and $Y$, but they are not equivalent in general.}
then this is equivalent to the square
\nolabelcsquare{\mathbb{T}_{L/S}}{f^{*}\mathbb{L}_{X/S}[s]}{g^{*}\mathbb{L}_{Y/S}[s]}{\mathbb{L}_{L/S}[s]}
being cartesian. We now extend this characterization to higher
spans\footnote{Here we consider only $n$-uple spans, as this is the
  case we make use of below, but the obvious variation for $n$-fold
  spans also holds, with essentially the same proof.}:
\begin{definition}
  We say a diagram $\tSp{}^{n} \to \mathcal{C}$ has \emph{non-degenerate
    boundary} if for every proper subset $S \subsetneq
  \{0,\ldots,n\}$ the restricted diagram $\tSp{}^{|S|} \hookrightarrow
  \tSp{}^{n} \to \mathcal{C}$ is non-degenerate.
\end{definition}

\begin{proposition}\label{propn:itlagRKE}\ 
  \begin{enumerate}[(i)]
  \item   A diagram $\Phi \colon \tSp{}^{n} \to \mathcal{C}$ has non-degenerate
    boundary \IFF{} $\Phi|_{\tSp{}^{n,\circ,\triangleright}}$ is a right
    Kan extension of $\Phi|_{\Sp^{n,\op}}$.
  \item Suppose $\Phi \colon \tSp{}^{n} \to \mathcal{C}$ is a diagram with
    non-degenerate boundary. Then the following are equivalent:
    \begin{enumerate}[(1)]
    \item $\Phi$ is non-degenerate.
    \item $\Phi$ is a right Kan extension of its
      restriction along $\Sp^{n,\op} \hookrightarrow \tSp{}^{n}$.
    \item $\Phi(-\infty)$ is the limit of
      $\Phi$ restricted to $\Sp^{n,\op}$.
    \end{enumerate}
      \end{enumerate}
\end{proposition}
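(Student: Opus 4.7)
For (i), the plan is to apply the pointwise criterion for right Kan extensions. This reduces the claim to checking, for each object $K$ in $\tSp{}^{n,\circ,\triangleright} \setminus \Sp^{n,\op} = \Sp^{n,\circ}$, that $\Phi(K)$ agrees with the limit of $\Phi|_{\Sp^{n,\op}}$ over the undercategory $(\Sp^{n,\op})_{K/}$. Using the explicit poset description of $\tSp{}^{n}$ from Proposition~\ref{propn:Tw!desc}, this undercategory consists of those $J^{\vee}$ for which $J$ and $K$ share a common upper bound in $\Sp^{n}$. The crucial combinatorial step is to identify each $K \in \Sp^{n,\circ}$ with the ``apex'' of a unique proper boundary sub-bicone $\tSp{}^{|S_K|} \hookrightarrow \tSp{}^{n}$ indexed by a proper subset $S_K \subsetneq \{0,\ldots,n\}$, and to verify that $(\Sp^{n,\op})_{K/}$ is canonically isomorphic to the $\vee$-copy of $\Sp^{|S_K|}$ inside $\tSp{}^{|S_K|}$. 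Under this identification the pointwise right Kan extension condition at $K$ becomes exactly the non-degeneracy of $\Phi$ restricted to $\tSp{}^{|S_K|}$, and since $K \mapsto S_K$ establishes a bijection between $\Sp^{n,\circ}$ and the proper subsets of $\{0,\ldots,n\}$, the totality of these pointwise conditions is precisely the non-degenerate boundary hypothesis.

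For (ii), the plan is to apply transitivity of right Kan extensions along the chain $\Sp^{n,\op} \hookrightarrow \tSp{}^{n,\circ,\triangleright} \hookrightarrow \tSp{}^{n}$. Under the non-degenerate boundary hypothesis, (i) implies that $\Phi|_{\tSp{}^{n,\circ,\triangleright}}$ is already a right Kan extension of $\Phi|_{\Sp^{n,\op}}$. By transitivity, $\Phi$ is then a right Kan extension of $\Phi|_{\Sp^{n,\op}}$ (condition~(2)) if and only if $\Phi$ is a right Kan extension of $\Phi|_{\tSp{}^{n,\circ,\triangleright}}$. Since $-\infty$ is initial in $\tSp{}^{n}$ and is the only object to check, this reduces to the single equation $\Phi(-\infty) \simeq \lim_{\tSp{}^{n,\circ,\triangleright}} \Phi$, which is precisely condition~(1). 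So (1)~$\Leftrightarrow$~(2).

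The equivalence (2)~$\Leftrightarrow$~(3) is similar: the pointwise right Kan extension condition at $-\infty$ for (2) reads $\Phi(-\infty) \simeq \lim_{(\Sp^{n,\op})_{-\infty/}} \Phi$, and since $-\infty$ is initial in $\tSp{}^{n}$ we have $(\Sp^{n,\op})_{-\infty/} = \Sp^{n,\op}$, which is exactly condition~(3). The direction (2)~$\Rightarrow$~(3) is therefore immediate, and the converse uses (i) and transitivity: the remaining pointwise conditions at objects of $\Sp^{n,\circ}$ are automatic under the non-degenerate boundary hypothesis, so only the condition at $-\infty$ remains. Alternatively, (1)~$\Leftrightarrow$~(3) follows directly from the general identity $\lim_{\tSp{}^{n,\circ,\triangleright}} (i_{*} F) \simeq \lim_{\Sp^{n,\op}} F$ for the inclusion $i \colon \Sp^{n,\op} \hookrightarrow \tSp{}^{n,\circ,\triangleright}$ (expressing that $\lim \circ i_{*}$ is right adjoint to the constant diagram functor), combined with (i) applied to $\Phi|_{\tSp{}^{n,\circ,\triangleright}}$.

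The principal obstacle will be the combinatorial identification in (i), tying undercategories in $\tSp{}^{n}$ to boundary bicones via the correspondence $K \leftrightarrow S_K$; once this is established, (ii) is a formal consequence of the transitivity of right Kan extensions and standard adjointness.
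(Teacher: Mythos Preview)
Your argument for part~(ii) is correct and essentially matches the paper. The gaps are all in part~(i).

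The asserted isomorphism $(\Sp^{n,\op})_{K/} \cong \Sp^{|S_K|,\op}$ is false, and there is no bijection between $\Sp^{n,\circ}$ and proper subsets of $\{0,\ldots,n\}$ (the cardinalities $3^n-1$ and $2^{n+1}-1$ already disagree). For a concrete case take $n=2$ and $K = (A_1, -\infty)$: the undercategory $(\Sp^{2,\op})_{K/}$ consists of those $J^\vee$ with $J_1 \in \{-\infty, A_1\}$ and $J_2$ arbitrary, hence has $6$ objects, which is not $3^m$ for any $m$. What is actually true is that $(\Sp^n_{K/})^{\op} \cong \Sp^{m,\op}$ (with $m$ the number of $-\infty$-coordinates of $K$) embeds \emph{coinitially} in $(\Sp^{n,\op})_{K/}$; the paper verifies this via Theorem~A using that the join $K \vee J$ exists in $\Sp^n$ whenever $K$ and $J$ have any common upper bound. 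This yields $\lim_{(\Sp^{n,\op})_{K/}}\Phi \simeq \lim_{\Sp^{m,\op}}\Phi$.

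Even granting this, you are not finished: the pointwise right Kan extension condition at $K$ now reads $\Phi(K) \simeq \lim_{\Sp^{m,\op}}\Phi$, whereas non-degeneracy of the boundary sub-bicone $\tSp^m$ at $K$ reads $\Phi(K) \simeq \lim_{\tSp^{m,\circ,\triangleright}}\Phi$. These two conditions are \emph{not} tautologically equivalent --- the inclusion $\Sp^{m,\op} \hookrightarrow \tSp^{m,\circ,\triangleright}$ is not coinitial (for $m=1$ its slice over $A_1 \in \Sp^{1,\circ}$ is empty, since no $L^\vee$ maps to a non-$\vee$ object). Identifying these two limit conditions is exactly the content of (i)~together with (ii)(1)$\Leftrightarrow$(3) in dimension $m < n$, and the paper closes the argument by proving (i) and (ii) simultaneously by induction on $n$. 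Your write-up needs to make this induction explicit.
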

\begin{proof}
  We will prove both statements by induction on $n$. For $n = 0$ they
  are both vacuous, so assume we have proved them for all $i < n$.

  Suppose $X = (I_{1},\ldots,I_{n}) \in \Sp^{n,\circ}$.  Let
  $m := |\{i : I_{i} = -\infty\}|$; then $\Sp^{n}_{X/} \simeq \Sp^{m}$
  and so we have an inclusion
  \[ \Sp^{m,\op} \simeq (\Sp^{n}_{X/})^{\op} \hookrightarrow
    (\Sp^{n,\op})_{X/} := \Sp^{n,\op} \times_{\tSp{}^{n,\circ}}
  \tSp{}^{n,\circ}_{X/}.\]
  We claim that this inclusion is 
  coinitial. To see this we must show that for every $Y^{\vee} \in
  (\Sp^{n,\op})_{X/}$ the category $(\Sp^{m,\op})_{/Y^{\vee}} \simeq
  (\Sp^{n,\op})_{/X,/Y}$ is weakly contractible; but since $Y^{\vee}
  \geq X$ we know by Example~\ref{ex:epsTwnprod} that this
  category has a terminal object, and so is weakly
  contractible.

  Thus $\Phi(X)$ is the limit of $\Phi$ restricted to
  $(\Sp^{n,\op})_{X/}$ \IFF{} it is the limit of the restriction of
  this diagram to $\Sp^{m,\op}$. The former condition for all $X$ is
  precisely the requirement for
  $\Phi|_{\tSp{}^{n,\circ,\triangleright}}$ to be a right Kan extension
  of its restriction to $\Sp^{n,\op}$, while by the inductive
  hypothesis the latter condition for all $X$ is equivalent to $\Phi$
  having non-degenerate boundary. This proves (i) for $n$.

  To prove (ii) for $n$, we may therefore assume that
  $\Phi|_{\tSp{}^{n,\circ,\triangleright}}$ is the right Kan extension
  of its restriction to $\Sp^{n,\op}$. Clearly $\Phi$ is
  non-degenerate \IFF{} it is the right Kan extension of its
  restriction to $\tSp{}^{n,\circ,\triangleright}$, so we see that (1)
  and (2) are equivalent, since right Kan extensions are
  transitive. Moreover, (2) and (3) are equivalent since in (3) we are
  evaluating at the initial object.
\end{proof}

The following consequence will be useful in \S\ref{subsec:itlagsymmon}:
\begin{corollary}\label{cor:Lagoneaxis}
  Suppose $\Phi \colon \tSp{}^{n} \to \mathcal{C}$ ($n>1$) is a diagram with
  non-degenerate boundary. Then the following are equivalent:
  \begin{enumerate}[(1)]
  \item $\Phi$ is non-degenerate.
  \item Let $\xi$ denote the inclusion $\Sp^{n,\op}
    \hookrightarrow \Sp^{1,\op,\triangleleft} \times
    \Sp^{n-1,\op}$, and consider the right Kan extension
    $\Psi := \xi_{*}\Phi|_{\Sp^{n,\op}}$ of the restriction of $\Phi$ along
    $\xi$. Then $\Phi(-\infty)$ is the limit of 
    $\Psi|_{(-\infty) \times \Sp^{n-1,\op}}$.
  \end{enumerate}
\end{corollary}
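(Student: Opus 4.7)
The plan is to combine Proposition~\ref{propn:itlagRKE}(ii) with the decomposition of limits over a product $\infty$-category as iterated limits. Since $\Phi$ has non-degenerate boundary, the equivalence $(1)\Leftrightarrow(3)$ there reduces non-degeneracy of $\Phi$ to the single condition that $\Phi(-\infty)$ equals the limit of $\Phi|_{\Sp^{n,\op}}$. So the task becomes identifying this limit with the limit of $\Psi|_{(-\infty)\times\Sp^{n-1,\op}}$.

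Using the product decomposition $\Sp^{n,\op} \simeq \Sp^{1,\op}\times\Sp^{n-1,\op}$, I would rewrite
\[
\lim_{\Sp^{n,\op}} \Phi \;\simeq\; \lim_{t\in\Sp^{n-1,\op}}\, \lim_{s\in\Sp^{1,\op}} \Phi(s,t).
\]
The key step is to recognize the inner limit as $\Psi(-\infty,t)$ via the pointwise formula for the right Kan extension $\Psi = \xi_{*}\Phi|_{\Sp^{n,\op}}$. Since $-\infty$ is the initial object of $\Sp^{1,\op,\triangleleft}$ while the second factor is untouched, the relevant comma category is equivalent to $\Sp^{1,\op} \times (\Sp^{n-1,\op})_{t/}$. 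Because $t$ is initial in $(\Sp^{n-1,\op})_{t/}$, the inclusion $\Sp^{1,\op}\times\{t\} \hookrightarrow \Sp^{1,\op}\times(\Sp^{n-1,\op})_{t/}$ is coinitial, yielding $\Psi(-\infty,t)\simeq\lim_{s\in\Sp^{1,\op}}\Phi(s,t)$. Substituting this back gives $\lim_{\Sp^{n,\op}}\Phi \simeq \lim_{(-\infty)\times\Sp^{n-1,\op}}\Psi$, as desired.

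No step appears to present a serious obstacle: the argument essentially unwinds the pointwise formula for right Kan extensions and invokes the coinitiality criterion of \ThmA{} (already used in Lemma~\ref{lem:coinittrright}). The only thing to verify carefully is that the comma category $\Sp^{n,\op}_{(-\infty,t)/}$ really is the claimed product, which follows immediately from the initiality of $-\infty$ in $\Sp^{1,\op,\triangleleft}$.
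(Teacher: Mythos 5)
Your proposal is correct and takes essentially the same route as the paper: both first invoke Proposition~\ref{propn:itlagRKE} to reduce (1) to the condition that $\Phi(-\infty)$ is the limit of $\Phi|_{\Sp^{n,\op}}$, and then identify that limit with $\lim \Psi|_{(-\infty)\times\Sp^{n-1,\op}}$. Your Fubini-plus-pointwise-Kan-extension computation is just an unwound version of the paper's argument, which instead uses transitivity of right Kan extensions along the square of fully faithful inclusions followed by coinitiality of $\{-\infty\}\times\Sp^{n-1,\op}\hookrightarrow\Sp^{1,\op,\triangleleft}\times\Sp^{n-1,\op}$.
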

\begin{proof}
  By Proposition~\ref{propn:itlagRKE} we know that (1) is equivalent to
  $\Phi$ being the right Kan extension of its restriction to
  $\Sp^{n,\op}$. Consider the commutative square of fully faithful
  inclusions
  \ffcsquare{\Sp^{n,\op}}{\Sp^{n,\op,\triangleleft}}{\Sp^{1,\op,\triangleleft}
    \times \Sp^{n-1,\op}}{\Sp^{1,\op,\triangleleft} \times
    \Sp^{n-1,\op,\triangleleft}.}  Right Kan extensions along
  fully faithful maps do not change the value of the functor on the
  full subcategory, so the limit of $\Phi|_{\Sp^{n,\op}}$ is
  equivalent to the limit of $\Psi$. But the inclusion
  $\{-\infty\} \times \Sp^{n-1,\op} \hookrightarrow
  \Sp^{1,\op,\triangleleft} \times \Sp^{n-1,\op}$ is a product of
  coinitial maps, and hence is coinitial by \cite[Corollary
  4.1.1.13]{HTT}, so this limit is equivalent to the limit of $\Phi$
  restricted to $\{-\infty\} \times \Sp^{n-1,\op}$, as required.
\end{proof}

We also record an inductive description of non-degeneracy for $n$-fold
spans:
\begin{proposition}\label{propn:foldLagcrit}
  Suppose given a diagram $F \colon
  \lsp^{n,\triangleright} \to \mathcal{C}$. Define
  $M_{i}$ inductively by starting with $M_{0} := F(\infty)$ and then
  setting $M_{i}$ to be the pullback
  \nolabelcsquare{M_{i}}{F(A_{i})}{F(B_{i})}{M_{i-1}.}
  Then $F$ is a limit diagram \IFF{} the induced map $F(-\infty) \to
  M_{n}$ is an equivalence, \ie{} the square
  \nolabelcsquare{F(-\infty)}{F(A_{n})}{F(B_{n})}{M_{n-1}}
  is cartesian.  
\end{proposition}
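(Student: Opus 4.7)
The strategy is induction on $n$. For the base case $n=1$, the category $\lsp^{1,\triangleright}$ is equivalent to the commutative-square shape with initial object $-\infty$, terminal object $\infty$, and ``leg'' objects $A_{1}, B_{1}$. The condition that $F$ is a limit diagram then amounts to $F(-\infty)$ being the pullback $F(A_{1}) \times_{F(\infty)} F(B_{1})$, which is precisely $M_{1}$ by construction.

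For the inductive step, suppose the claim holds in dimension $n-1$, and let $F \colon \lsp^{n,\triangleright} \to \mathcal{C}$ be given. The plan is to exploit the recursive structure of the $n$-fold span shape: $\lsp^{n,\triangleright}$ admits a decomposition in which the outermost pair of legs $(A_{n}, B_{n})$ is glued onto a sub-shape that plays the role of an $(n-1)$-fold span, with $A_{n}$ and $B_{n}$ mapping into the sub-shape through a common ``inner apex.'' Concretely, I would identify a full subcategory $K \subset \lsp^{n,\triangleright}\setminus\{-\infty\}$ such that (a) by the inductive hypothesis combined with a coinitiality argument, $\lim F|_{K}$ computes $M_{n-1}$, and (b) the morphisms from $A_{n}$ and $B_{n}$ into $K$ factor through objects whose limit also agrees with $M_{n-1}$, so that adjoining $A_{n}$ and $B_{n}$ contributes exactly one further pullback step, yielding
\[
  \lim F|_{\lsp^{n,\triangleright}\setminus\{-\infty\}} \;\simeq\; F(A_{n}) \times_{M_{n-1}} F(B_{n}) \;=\; M_{n}.
\]
Being a limit diagram then unfolds as the condition $F(-\infty) \to M_{n}$ is an equivalence, matching the square in the statement.

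The main obstacle is establishing (a) and (b) cleanly. This requires constructing an explicit coinitial functor from a ``tower-of-pullbacks'' indexing category into $\lsp^{n,\triangleright}\setminus\{-\infty\}$ and verifying coinitiality via \ThmA{} applied to the combinatorial structure of $\lsp^{n}$; the delicacy is that the map $A_{n} \to K$ (and similarly for $B_{n}$) must be shown to see all of the inner $M_{n-1}$-data only through its cone point, so that the pullback $F(A_{n}) \times_{M_{n-1}} F(B_{n})$ really does compute the full limit rather than some weaker intermediate object. Once this coinitiality is in hand, the remaining assembly of the pullback tower, and hence the equivalence of the limit condition with the cartesianness of the final square, is routine.
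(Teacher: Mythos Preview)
Your inductive approach is correct, though the step you flag as the main obstacle is simpler than you fear. Take $K = \lsp^{n,\circ,\triangleright}\setminus\{A_{n},B_{n}\}$; this is isomorphic as a poset to $\lsp^{n-1,\circ,\triangleright}$, so the inductive hypothesis gives $\lim_{K}F\simeq M_{n-1}$ with no extra work. For the remaining pullback, right Kan extend along the functor $q\colon \lsp^{n,\circ,\triangleright}\to \Lambda^{2}_{0}$ sending $A_{n},B_{n}$ to the two outer vertices and all of $K$ to the cone point; the only thing to check is that $q_{*}F$ at the left vertex is $F(A_{n})$, i.e.\ that $A_{n}$ is initial in $\lsp^{n,\circ,\triangleright}\setminus\{B_{n}\}$, which is immediate from the definition of the order (it maps to every $A_{k},B_{k}$ with $k<n$ and to $\infty$). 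No auxiliary ``tower-of-pullbacks'' category or elaborate Theorem~A verification is needed.

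The paper's proof is essentially the mirror image of yours: rather than peeling off the outermost pair $(A_{n},B_{n})$ and invoking induction, it collapses the \emph{innermost} level $\{A_{1},B_{1},\infty\}$ via a functor $p^{n}\colon \lsp^{n,\circ,\triangleright}\to \lsp^{n-1,\circ,\triangleright}$ (sending $A_{1},B_{1},\infty\mapsto\infty$ and $A_{i},B_{i}\mapsto A_{i-1},B_{i-1}$ for $i>1$), and iterates. Each right Kan extension $p^{k}_{*}$ replaces the value at $\infty$ by the next $M_{i}$ and shifts the remaining values, again because the relevant comma categories have initial objects. So the paper builds the chain $M_{1},M_{2},\ldots,M_{n}$ directly without a separate induction on $n$; the two arguments are dual decompositions of the same poset, and neither is materially harder than the other.
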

\begin{proof}
  Let $p^{n} \colon \lsp^{n,\circ,\triangleright} \to \lsp^{n-1,\circ,\triangleright}$
  be the functor that sends $A_{1}$,
  $B_{1}$ and $\infty$ to $\infty$ and $A_{i}$ and $B_{i}$ to
  $A_{i-1}$ and $B_{i-1}$ for $i > 1$. Then the limit of a diagram $F$
  of shape $\lsp^{n,\circ,\triangleright}$ is given by the sequence of
  right Kan extensions $p^{0}_{*}\cdots p^{n-1}_{*}p^{n}_{*}F$ (where
  we take $\lsp^{0,\circ}$ to be empty). The value of $p^{n}_{*}F$ at $X \in
  \lsp^{n-1,\circ,\triangleright}$ is the limit of $F$ restricted to the
  partially ordered set of $Y$ such that $p^{n}(Y) \geq X$. If $X$ is
  $A_{i}$ or $B_{i}$ then this has an initial element (namely
  $A_{i+1}$ or $B_{i+1}$, respectively), so $p^{n}_{*}F(A_{i}) \simeq
  F(A_{i+1})$ and $p^{n}_{*}(B_{i}) \simeq F(B_{i+1})$. On the other
  hand, if $X = \infty$ then this set consists of $A_{1}$, $B_{1}$ and
  $\infty$, so $p^{n}_{*}F(\infty) \simeq F(A_{1}) \times_{F(\infty)}
  F(B_{1})$. Applying this observation inductively to the sequence of
  right Kan extensions $p^{i}_{*} \cdots p_{*}^{n}F$ then gives the result.
\end{proof}

\begin{corollary}\label{cor:Spnsphere} 
  There are coinitial functors $\lsp^{n,\circ}\to S^{n-1}$, $\Sp^{n,\circ}\to S^{n-1}$.
\end{corollary}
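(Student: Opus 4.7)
Since $S^{n-1}$ is an \igpd{}, a functor $F \colon \mathcal{C} \to S^{n-1}$ is coinitial \IFF{} each homotopy fibre of $F$ is weakly contractible, which (by Whitehead) is equivalent to the induced map $\|\mathcal{C}\| \to S^{n-1}$ on classifying spaces being an equivalence. The plan is thus to compute $\|\Sp^{n,\circ}\|$ and $\|\lsp^{n,\circ}\|$ and to identify them with $S^{n-1}$; the coinitial functors can then be taken to be the canonical maps $\Sp^{n,\circ} \to \|\Sp^{n,\circ}\|$ and $\lsp^{n,\circ} \to \|\lsp^{n,\circ}\|$ post-composed with a chosen equivalence to $S^{n-1}$.

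For $\Sp^{n,\circ}$ I would identify the poset with the opposite of the face poset of the boundary of the $n$-cube $\partial [-1,1]^n$. Writing $\Sp^n$ as the product $P^n$ with $P = \{-\infty, A, B\}$ (where $-\infty < A$, $-\infty < B$ and $A,B$ incomparable), the bijection sends $(I_1,\ldots,I_n) \in \Sp^{n,\circ}$ to the face where coordinate $i$ is free if $I_i = -\infty$, fixed at $+1$ if $I_i = A_i$, and fixed at $-1$ if $I_i = B_i$; the condition that not all $I_i$ equal $-\infty$ corresponds to the face being proper. A direct check shows that fixing more coordinates (going up in $\Sp^{n,\circ}$) corresponds to passing to a sub-face, which reverses face inclusion. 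Since the nerve of the face poset of a regular CW complex is homeomorphic to its barycentric subdivision, this yields $\|\Sp^{n,\circ}\| \simeq |\partial [-1,1]^n| \simeq S^{n-1}$.

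For $\lsp^{n,\circ}$ I would reduce to the previous case via the functor $j_n \colon \Sp^n \to \lsp^n$ from Lemma~\ref{lem:foldspanloc}. The appropriate restriction of $j_n$ to $\circ$-parts is coinitial --- this is essentially what drives the equivalence of (ii)(1) and (ii)(3) in Proposition~\ref{propn:nondeghalf}, via Lemma~\ref{lem:foldspanloc}(iii) and Proposition~\ref{propn:CtoTw!Ccoinit}(ii). Hence $\|\Sp^{n,\circ}\| \isoto \|\lsp^{n,\circ}\|$, so $\|\lsp^{n,\circ}\| \simeq S^{n-1}$ as well, and composing $j_n$ with the coinitial functor to the sphere constructed above gives the required $\lsp^{n,\circ} \to S^{n-1}$.

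The main obstacle is the combinatorial identification of $\Sp^{n,\circ}$ with the opposite of the cube face poset: one must carefully match the order relations (noting that the ``natural'' direction in $\Sp^{n,\circ}$ is the opposite of face inclusion). Once this identification is established, the classifying space computation is standard, and the reduction from $\lsp^{n,\circ}$ to $\Sp^{n,\circ}$ is formal.
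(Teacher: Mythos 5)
Your proposal is correct, and its overall skeleton matches the paper's: both arguments observe that the localization functor $\mathcal{C} \to \|\mathcal{C}\|$ is coinitial, both use Lemma~\ref{lem:foldspanloc}(iii) (that $j_n^{\circ} \colon \Sp^{n,\circ} \to \lsp^{n,\circ}$ is coinitial) to pass between the two posets, and both therefore reduce to identifying a classifying space with $S^{n-1}$. (You can cite Lemma~\ref{lem:foldspanloc}(iii) directly rather than routing through Proposition~\ref{propn:nondeghalf}.) Where you genuinely diverge is in \emph{which} classifying space you compute and \emph{how}. The paper works with $\lsp^{n,\circ}$ and applies Proposition~\ref{propn:foldLagcrit} to the constant functor at $*$, so that $\|\lsp^{n,\circ}\|$ is exhibited as an iterated pushout $* \amalg_{M_{i-1}} *$, i.e.\ an iterated unreduced suspension of $S^{0}$; this is short because the inductive limit/colimit formula is already in place and is the form actually used later (e.g.\ in Proposition~\ref{propn:Spklimfibseq}). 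You instead compute $\|\Sp^{n,\circ}\|$ by identifying the poset with the opposite of the face poset of $\partial[-1,1]^{n}$ (free coordinate $\leftrightarrow{}-\infty$, fixed at $\pm 1 \leftrightarrow A_i, B_i$, with the order reversing face inclusion) and invoking the fact that the order complex of the face poset of a regular CW complex is its barycentric subdivision. That identification is correct, and it buys an explicit geometric model of $\Sp^{n,\circ}$ as the cube boundary, at the cost of importing the regular-CW/barycentric-subdivision fact and of having to transfer back to $\lsp^{n,\circ}$ afterwards. Either route is a complete proof.
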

\begin{proof}
  By Lemma~\ref{lem:foldspanloc}(iii), the functor
  $j_{n}^{\circ}\colon \Sp^{n,\circ} \to \lsp^{n,\circ}$ is coinitial,
  so it suffices to prove the first statement. Now
  it follows from Proposition~\ref{propn:foldLagcrit}, applied to the functor
  $\lsp^{n,\circ} \to \mathcal{S}^{\op}$ constant at $*$, that
  $\colim_{\lsp^{n,\circ,\op}} * \simeq S^{n-1}$. This gives the
  required coinitial functor $\lsp^{n,\circ}\to
  \|\lsp^{n,\circ}\| \simeq S^{n-1}$.
\end{proof}

Our final characterization of non-degenerate diagrams extends the
observation that an isotropic span $X \xfrom{f} L \xto{g} Y$ is
Lagrangian precisely when a certain square
\nolabelcsquare{\mathbb{T}_{L/S}}{f^* \mathbb{T}_{X/S} \oplus g^*
  \mathbb{T}_{Y/S}}{0}{\mathbb{L}_{L/S}[s]} is cartesian:
\begin{proposition}\label{propn:Spklimfibseq}
  Suppose given a diagram
  $\Phi \colon \Sp^{n,\triangleright} \to \mathcal{C}$ where
  $\mathcal{C}$ is a stable \icat{}. Then there is a
  canonical commutative square \nolabelcsquare{\Phi(-\infty)}{\lim
    \Phi|_{\Sp^{n,\circ}}}{0}{\Phi(\infty)[1-n]} such that $\Phi$ is a limit
  diagram \IFF{} this square is cartesian.
\end{proposition}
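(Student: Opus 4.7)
The plan is to reduce the statement to two more elementary facts: a general pullback formula computing $\lim_{K^{\triangleright}} F$, and the identification $\|\Sp^{n,\circ}\| \simeq S^{n-1}$.

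First, since $-\infty$ is initial in $\Sp^n$, one has $\Sp^{n,\triangleright} \simeq (\Sp^{n,\circ,\triangleright})^{\triangleleft}$, so $\Phi$ is a limit diagram \IFF{} the canonical comparison $\Phi(-\infty) \to \lim \Phi|_{\Sp^{n,\circ,\triangleright}}$ is an equivalence. To handle the right-hand side, I would establish that for any diagram $F \colon K^{\triangleright} \to \mathcal{C}$ there is a natural pullback square
\csquare{\lim_{K^{\triangleright}} F}{\lim_K F|_K}{F(\infty)}{F(\infty)^{\|K\|},}{}{}{\psi}{\iota}
where $\iota$ is the constant-map inclusion and $\psi$ is induced by post-composing the universal cone $\lim_K F|_K \to F(x)$ with the cocone maps $F(x) \to F(\infty)$. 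This follows by inspecting mapping spaces into a test object: a cone over $F$ decomposes as a cone over $F|_K$ and a map to $F(\infty)$, with compatibility data identifying the two induced families of maps into $F(\infty)^{\|K\|}$.

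Next, I would establish $\|\Sp^{n,\circ}\| \simeq S^{n-1}$ by induction on $n$. The projection onto the last coordinate $\Sp^n = \Sp^{n-1} \times \Sp^1 \to \Sp^1$ exhibits $\Sp^{n,\circ}$ as the Grothendieck construction of the diagram $\Sp^1 \to \Cat$ sending $-\infty \mapsto \Sp^{n-1,\circ}$ and $A_1, B_1 \mapsto \Sp^{n-1}$, with both non-identity arrows being the inclusion $\Sp^{n-1,\circ} \hookrightarrow \Sp^{n-1}$. Since $\Sp^{n-1}$ has an initial object, $\|\Sp^{n-1}\| \simeq *$; passing to classifying spaces (which converts a Grothendieck construction into a homotopy colimit) then yields $\|\Sp^{n,\circ}\| \simeq \Sigma \|\Sp^{n-1,\circ}\|$. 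Iterating from the base case $\|\Sp^{1,\circ}\| \simeq S^0$ gives $S^{n-1}$.

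Finally, in the stable $\infty$-category $\mathcal{C}$ the cotensor $X^{S^{n-1}}$ splits as $X \oplus X[1-n]$, with the constant-inclusion $\iota$ identified as the first-factor inclusion, so the general pullback becomes $\txt{fib}\bigl(\lim_K F|_K \to F(\infty)[1-n]\bigr)$. Translated back, this gives a natural cartesian square
\nolabelcsquare{\lim \Phi|_{\Sp^{n,\circ,\triangleright}}}{\lim \Phi|_{\Sp^{n,\circ}}}{0}{\Phi(\infty)[1-n].}
Pre-composing the top-left vertex with the comparison map $\Phi(-\infty) \to \lim \Phi|_{\Sp^{n,\circ,\triangleright}}$ produces the canonical square of the statement, which is cartesian \IFF{} that comparison is an equivalence, \ie{} $\Phi$ is a limit diagram. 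The main obstacle is the sphere identification in the inductive step (careful handling of the Grothendieck construction on classifying spaces); the remainder is formal manipulation of limits and splittings in stable $\infty$-categories.
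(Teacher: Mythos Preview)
Your proof is correct and follows the same overall architecture as the paper's: reduce to computing $\lim \Phi|_{\Sp^{n,\circ,\triangleright}}$ via a pullback involving $\Phi(\infty)^{\|\Sp^{n,\circ}\|}$, identify $\|\Sp^{n,\circ}\| \simeq S^{n-1}$, and then use the stable splitting $A^{S^{n-1}} \simeq A \oplus A[1-n]$.

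The one noteworthy difference is in how you establish the sphere identification. The paper derives $\|\Sp^{n,\circ}\| \simeq S^{n-1}$ as Corollary~\ref{cor:Spnsphere}, by first passing through the coinitial map $\Sp^{n,\circ} \to \lsp^{n,\circ}$ and then applying the iterated-pullback description of Proposition~\ref{propn:foldLagcrit} to the constant diagram at a point. Your argument is more direct: you observe that the projection $\Sp^{n,\circ} \to \Sp^{1}$ onto the last factor is a cocartesian fibration with fibres $\Sp^{n-1,\circ}$ over $-\infty$ and $\Sp^{n-1}$ (hence contractible) over $A_{1},B_{1}$, and conclude $\|\Sp^{n,\circ}\| \simeq \Sigma\|\Sp^{n-1,\circ}\|$ via Thomason's theorem. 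This is a cleaner inductive proof of the sphere identification that avoids the detour through $\lsp^{n}$. Your pullback formula for $\lim_{K^{\triangleright}} F$ is the same content as the paper's Lemma~\ref{lem:overlimpb}, just stated in the unsliced form.
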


We make use of the following observation:
\begin{lemma}\label{lem:overlimpb}
  Suppose $\mathcal{I}$ is a small \icat{} and $\mathcal{C}$ is an
  \icat{} with pullbacks and $\mathcal{I}$-indexed limits. Given a
  functor $F \colon \mathcal{I} \to \mathcal{C}_{/A}$ then there is a
  natural pullback square \nolabelcsquare{u(\lim_{\mathcal{I}}
    F)}{\lim_{\mathcal{I}} uF}{A}{\lim_\mathcal{I} A} where $u$ is the
  forgetful functor $\mathcal{C}_{/A}\to \mathcal{C}$.
\end{lemma}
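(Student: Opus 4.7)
The plan is to identify $\mathcal{C}_{/A}$ with the fiber of $\ev_1 \colon \Fun(\Delta^1, \mathcal{C}) \to \mathcal{C}$ over $A$, and to compute limits in this fiber by taking pointwise limits in $\Fun(\Delta^1, \mathcal{C})$ and then correcting the target. Under this identification, the forgetful functor $u$ becomes $\ev_0$, and a functor $F \colon \mathcal{I} \to \mathcal{C}_{/A}$ becomes a diagram in $\Fun(\Delta^1, \mathcal{C})$ whose composite with $\ev_1$ is the constant diagram $cA$ at $A$.

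First, since $\mathcal{C}$ has $\mathcal{I}$-indexed limits, so does $\Fun(\Delta^1, \mathcal{C})$, and these are computed pointwise by $\ev_0$ and $\ev_1$. Hence the limit of $F$ viewed in $\Fun(\Delta^1, \mathcal{C})$ is the arrow $G \colon \lim_{\mathcal{I}} uF \to \lim_{\mathcal{I}} cA$. The canonical map $\alpha \colon A \to \lim_{\mathcal{I}} cA$, induced by the identity cone, promotes to a morphism in $\Fun(\Delta^1, \mathcal{C})$ from $(A \xto{=} A)$ to $G$.

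Next, I would argue that pulling $G$ back along $\alpha$ computes the limit of $F$ in $\mathcal{C}_{/A}$. The universal property of the pullback $P := \lim_{\mathcal{I}} uF \times_{\lim_{\mathcal{I}} cA} A$, together with the universal property of $G$, identifies maps $T \to P$ in $\mathcal{C}_{/A}$ (for $T \in \mathcal{C}_{/A}$) with pairs consisting of a cone $uT \to uF$ in $\mathcal{C}$ and a lift to a cone in $\Fun(\Delta^1, \mathcal{C})$ whose target leg is the constant cone at $A$, which is precisely the space of cones on $F$ in $\mathcal{C}_{/A}$ with apex $T$. This exhibits $P$ as $\lim_{\mathcal{C}_{/A}} F$, and since $u = \ev_0$ sends this pullback square in $\Fun(\Delta^1, \mathcal{C})$ to the square of the lemma, we are done; naturality in $F$ is automatic from the naturality of pointwise limits and of the comparison $\alpha$.

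The main subtlety will be making the ``fiber of $\ev_1$ over $A$'' argument rigorous in the $\infty$-categorical sense, since $\ev_1$ is not a (co)cartesian fibration. I would sidestep this by taking the displayed pullback as the \emph{definition} of the candidate $P$ and verifying its universal property directly via the pullback formula $\Map_{\mathcal{C}_{/A}}(T, P) \simeq \Map_{\mathcal{C}}(uT, \lim uF) \times_{\Map_{\mathcal{C}}(uT,\, \lim cA)} \Map_{\mathcal{C}}(uT, A)$, which after commuting $\Map_{\mathcal{C}}(uT, -)$ past the limits and unwinding the fiber recovers exactly the mapping space of cones on $F$ in $\mathcal{C}_{/A}$.
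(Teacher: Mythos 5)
Your proposal is correct and, once the $\Fun(\Delta^1,\mathcal{C})$ packaging is stripped away, it is essentially the paper's own argument: define the candidate object as the displayed pullback and verify the universal property of $\lim_{\mathcal{I}}F$ by computing $\Map_{/A}(T,-)$ as an iterated fiber product of mapping spaces and commuting $\Map(uT,-)$ past the $\mathcal{I}$-indexed limits. (Just note that your displayed formula with $\Map_{\mathcal{C}}(uT,A)$ computes $\Map_{\mathcal{C}}(uT,P)$; as you indicate, one must still pass to the fiber over the structure map $uT\to A$ to get $\Map_{\mathcal{C}_{/A}}(T,P)\simeq\lim_{\mathcal{I}}\Map_{/A}(T,F)$.)
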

\begin{proof}
  Let $X$ be the object of $\mathcal{C}$ defined by the pullback
  \nolabelcsquare{X}{\lim_{\mathcal{I}}
    uF}{A}{\lim_\mathcal{I} A.}
  We will show that $X$ satisfies the universal property of
  $\lim_{\mathcal{I}} F$. For any object $\pi \colon B \to A$ in
  $\mathcal{C}_{/A}$ we have a commutative diagram
  \[
  \begin{tikzcd}
    \Map_{/A}(B, X) \arrow{r} \arrow{d} & \Map(B, X) \arrow{r}
    \arrow{d} & \Map(B, \lim_{\mathcal{I}} uF) \arrow{d} \\
    \{\pi\} \arrow{r} & \Map(B, A) \arrow{r} & \Map(B, \lim_{\mathcal{I}}A),
  \end{tikzcd}
  \]
  where both squares are cartesian. Thus the composite square is also
  cartesian, so we have a cartesian square
  \nolabelcsquare{\Map_{/A}(B, X)}{\lim_{\mathcal{I}}\Map(X,
    uF)}{\lim_{\mathcal{I}} \{\pi\}}{\lim_{\mathcal{I}}\Map(X,A)}
  Since limits commute, this means $\Map_{/A}(B, X) \simeq \lim
  \Map_{/A}(B, F)$, and so $X \simeq \lim_{\mathcal{I}}F$ as required.
\end{proof}

\begin{proof}[Proof of Proposition~\ref{propn:Spklimfibseq}]
  A functor $F \colon \Sp^{n,\circ,\triangleright} \to \mathcal{C}$
  corresponds to a functor $F' \colon \Sp^{n,\circ} \to \mathcal{C}_{/A}$
  where $A := F(\infty)$, and the limit of $F$ in $\mathcal{C}$ can be
  identified with the image in $\mathcal{C}$ of the limit of $F'$ in
  $\mathcal{C}_{/A}$. By Lemma~\ref{lem:overlimpb} we therefore have a
  pullback square \nolabelcsquare{\lim F}{\lim
    F|_{\Sp^{n,\circ}}}{A}{\lim_{\Sp^{n,\circ}} A.}  Here the limit
  $\lim_{\Sp^{n,\circ}} A$ of the constant diagram indexed by $\Sp^{n,\circ}$ can
  be identified with the cotensoring $A^{X}$ where $X$ is the space
  obtained by inverting the morphisms in $\Sp^{n,\circ}$; by
  Corollary~\ref{cor:Spnsphere} this can be identified with the sphere
  $S^{n-1}$. Since $\mathcal{C}$ is stable, it is cotensored over
  \emph{pointed} finite spaces, and with respect to this we can identify
  $A^{S^{n-1}}$ with the cotensoring with the pointed space
  $S^{n-1}_{+}$; moreover, $\mathcal{C}$ is cotensored over finite spectra,
  so this in turn is identified with the cotensoring with
  $\Sigma^{\infty}_{+} S^{n-1} \simeq \Sigma^{n-1}\mathbb{S} \vee
  \mathbb{S}$ (where $\mathbb{S}$ is the sphere spectrum). Thus
  $\lim_{\Sp^{n,\circ}} A \simeq A[1-n] \oplus A$, from which we get a cartesian
  square \nolabelcsquare{A}{\lim_{\Sp^{n,\circ}} A}{0}{A[1-n].}
  Combining this with the first
  square gives a cartesian square
  \nolabelcsquare{\lim F}{\lim F|_{\Sp^{n,\circ}}}{0}{A[1-n],}
  as required.
\end{proof}

Combining Propositions~\ref{propn:nondeghalf} and
\ref{propn:Spklimfibseq}, we get:
\begin{corollary}
  Suppose $\mathcal{C}$ is a stable \icat{}. Then for $\Phi
  \colon \tSp^{n} \to \mathcal{C}$ there is a commutative
  square
  \nolabelcsquare{\Phi(-\infty)}{\lim
    \Phi|_{\Sp^{n,\circ}}}{0}{\Phi(\infty)[1-n]}
  such that $\Phi$ is non-degenerate \IFF{} the square is cartesian.\qed
\end{corollary}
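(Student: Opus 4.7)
The plan is to deduce the corollary directly by combining Propositions~\ref{propn:nondeghalf} and \ref{propn:Spklimfibseq}; no new arguments are needed beyond bookkeeping of cone points. First I would record that there is a canonical inclusion $\iota \colon \Sp^{n,\triangleright} \hookrightarrow \tSp{}^{n}$, realising $\Sp^{n,\triangleright}$ as the subposet of $\tSp{}^{n} \simeq (\tSp{}^{n,\circ})^{\triangleleft\triangleright}$ consisting of the copy of $\Sp^{n} \subset \Tw^{r}_{!}\Sp^{n}$ together with the terminal cone point $\infty$. Under this inclusion, $-\infty \in \Sp^{n,\triangleright}$ (the apex of the span) is sent to the initial object $-\infty$ of $\tSp{}^{n}$, the added $\infty$ is sent to the terminal object $\infty$, and $\Sp^{n,\circ}$ is sent to the corresponding subposet of $\tSp{}^{n,\circ}$. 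Consequently, for any $\Phi \colon \tSp{}^{n} \to \mathcal{C}$ we have $(\Phi\circ\iota)(\pm\infty) = \Phi(\pm\infty)$ and $(\Phi\circ\iota)|_{\Sp^{n,\circ}} = \Phi|_{\Sp^{n,\circ}}$.

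Next, I would invoke Proposition~\ref{propn:nondeghalf}(i), which asserts that $\Phi$ is non-degenerate if and only if the composite $\Phi \circ \iota \colon \Sp^{n,\triangleright} \to \mathcal{C}$ is a limit diagram. Since $\mathcal{C}$ is stable, Proposition~\ref{propn:Spklimfibseq} then applies to $\Phi\circ\iota$, yielding a canonical commutative square
\[
\begin{tikzcd}
(\Phi\circ\iota)(-\infty) \arrow{r} \arrow{d} & \lim (\Phi\circ\iota)|_{\Sp^{n,\circ}} \arrow{d} \\
0 \arrow{r} & (\Phi\circ\iota)(\infty)[1-n]
\end{tikzcd}
\]
whose cartesianness is equivalent to $\Phi\circ\iota$ being a limit diagram. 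By the identifications above, this is literally the square in the statement, and chaining the two equivalences proves the corollary.

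I do not expect any substantial obstacle: the argument is purely formal once the two preceding propositions are in hand. The only point requiring minimal attention is the identification of the cone points of $\Sp^{n,\triangleright}$ with those of $\tSp{}^{n}$, which is immediate from the description $\tSp{}^{n} \simeq (\tSp{}^{n,\circ})^{\triangleleft\triangleright}$ combined with the explicit picture of $\Tw^{r}_{!}$ recalled in Proposition~\ref{propn:Tw!desc}.
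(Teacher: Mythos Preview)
Your proposal is correct and matches the paper's approach exactly: the paper states the corollary with a \qed{} immediately after the lead-in ``Combining Propositions~\ref{propn:nondeghalf} and \ref{propn:Spklimfibseq}, we get,'' which is precisely the two-step reduction you spell out. The bookkeeping of cone points you record is the only thing left implicit in the paper, and you have it right.
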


\section{Higher Categories of Symplectic and Oriented Derived Stacks}\label{subsec:itlagcat}

Our goal in this section is to construct $(\infty,n)$-categories
$\LagnSs$ and $\OrnSd$ whose $i$-morphisms are $i$-fold Lagrangian
correspondences in $\PSymp_{S,s}$ and $i$-fold oriented cospans in
$\POr_{S,d}$, respectively. (We review the definition of
$(\infty,n)$-categories that we use in \S\ref{subsec:segsp}.) To do
this we will show that these determine sub-$(\infty,n)$-categories of
the $(\infty,n)$-categories $\Span_{n}(\PSymp_{S,s})$, where the $i$-morphisms are arbitrary $i$-fold spans in $\PSymp_{S,s}$, and $\Cospan_{n}(\POr_{S,d}) = \Span_{n}(\POr_{S,d}^{\op})$, respectively. These  $(\infty,n)$-categories arise as special
cases of the construction of general higher categories of iterated
spans in \cite{spans}, which we review in \S\ref{subsec:spans}.

\begin{notation}\ 
  \begin{itemize}
  \item For $\mathcal{C}$ an \icat{} with pullbacks,
    $\Span_{n}(\mathcal{C})$ is defined using the partially ordered
    sets $\bbS^{\mathbf{k}}$ and $\bbL^{\mathbf{k}}$ for $\mathbf{k}
    \in \simp^{n}$
    (Definition~\ref{defn:bbS}).
  \item $\oSPAN_{n}^{+}(\mathcal{C}) \to \Dnop$ is the cocartesian
    fibration for $\Fun(\bbS^{(\blank)}, \mathcal{C})$.
  \item $\SPAN_{n}^{+}(\mathcal{C})$ is the full subcategory of
    \emph{cartesian} functors $\bbS^{\mathbf{k}} \to \mathcal{C}$,
    meaning those that are right Kan
    extensions of their restriction to $\bbL^{\mathbf{k}}$; the
    restricted projection to $\Dnop$ is the cocartesian fibration for
    an $n$-uple category object in $\CatI$.
  \item $\SPAN_{n}(\mathcal{C}) \to \Dnop$ is the underlying left
    fibration of $\SPAN^{+}_{n}(\mathcal{C})$. This corresponds to an $n$-uple Segal space, and
    $\Span_{n}(\mathcal{C})$ is the underlying $n$-fold Segal space of this.
  \end{itemize}
\end{notation}

Just as the $(\infty,n)$-category $\Span_{n}(\mathcal{C})$ is
conveniently defined as a subobject of an $n$-uple category object
$\SPAN^{+}_{n}(\mathcal{C})$ in $\CatI$, we will show that iterated
Lagrangian correspondences and iterated oriented cospans determine
$n$-uple category objects $\LAGnpSs$ and $\ORnpSd$ inside
$\SPAN_{n}^{+}(\PSymp_{S,s})$ and $\SPAN_{n}^{+}(\POr_{S,d})$; the
$(\infty,n)$-categories $\LagnSs$ and $\OrnSd$ are
then simply the underlying $(\infty,n)$-categories of these $n$-uple
category objects. We thus wish to define $\LAGnpSs$ and $\ORnpSd$ as 
full subcategories of $\SPAN^{+}_{n}(\PSymp_{S,s})$ and $\COSPAN^{+}_{n}(\POr_{S,d})$, and so we need
to specify the $\bbS^{\mathbf{k}}$-shaped diagrams
that lie in these subcategories:
\begin{definition}
  A map $\bbL^{\mathbf{k}} \to \PSymp_{S,s}$ is \emph{Lagrangian} if for
  every $\mathbf{i} = (i_{1},\ldots,i_{n})$ with each $i_{t} = 0$ or
  $1$, and every inert map $\mathbf{i} \to \mathbf{k}$, the composite
  $\Sp^{m} \simeq \bbS^{\mathbf{i}} \to \bbL^{\mathbf{k}} \to \PSymp_{S,s}$ (where
  $m = \sum_{t} i_{t}$) is an $m$-fold Lagrangian correspondence. We define
  \emph{oriented} maps $\bbL^{\mathbf{k},\op} \to \POr_{S,d}$ similarly.
\end{definition}

\begin{definition}
  A map $F \colon \bbS^{\mathbf{k}} \to \PSymp_{S,s}$ is \emph{Lagrangian}
  if $F$ is cartesian and the restriction $F|_{\bbL^{\mathbf{k}}}$ is
  Lagrangian in the sense of the previous definition. We define
  \emph{oriented} maps $\bbS^{\mathbf{k}} \to \POr_{S,d}$ similarly.
\end{definition}

\begin{definition}
  Let $\LAGnpSs$ be the full subcategory of the \icat{}
  $\SPAN^{+}_{n}(\PSymp_{S,s})$ spanned by the Lagrangian maps
  $\bbS^{\mathbf{k}} \to \PSymp_{S,s}$ for all $\mathbf{k}$. We define
  $\ORnpSd$ similarly as the full subcategory of $\COSPAN^{+}_{n}(\POr_{S,d})$ spanned by the oriented maps.
\end{definition}

\begin{theorem}\label{thm:LAGcoCart}
  The restricted projections from $\LAGnpSs$ and $\ORnpSd$ to
  $\Dnop$ are cocartesian fibrations.
\end{theorem}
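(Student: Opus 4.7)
Both $\SPAN_n^+(\PSymp_{S,s}) \to \Dnop$ and $\COSPAN_n^+(\POr_{S,d}) \to \Dnop$ are cocartesian fibrations by construction, and $\LAGnpSs$ and $\ORnpSd$ are defined as full subcategories thereof. It therefore suffices to show that the ambient cocartesian lifts land in the Lagrangian (respectively, oriented) subcategory whenever they start there; then these restricted lifts are automatically cocartesian in the subcategory. We focus on the symplectic case; the oriented case is completely parallel, replacing $\PSymp_{S,s}$ with $\POr_{S,d}$ and invoking Corollary~\ref{cor:cospanEdiag} and Lemma~\ref{lem:ncospanorpts} in place of their symplectic counterparts.

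A cocartesian lift over a morphism from $\mathbf{k}$ to $\mathbf{k}'$ in $\Dnop$ (\ie{} $\alpha \colon \mathbf{k}' \to \mathbf{k}$ in $\simp^n$) takes $F \colon \bbS^{\mathbf{k}} \to \PSymp_{S,s}$ to $F \circ \bbS^{\alpha} \colon \bbS^{\mathbf{k}'} \to \PSymp_{S,s}$, which remains cartesian since $\SPAN_n^+(\PSymp_{S,s})$ is itself cocartesian. Assuming $F$ is Lagrangian, we must check that for every inert $\iota \colon \mathbf{i} \to \mathbf{k}'$ with $\mathbf{i} \in \{0,1\}^n$ and $m := \sum_t i_t$, the composite $\Sp^m \simeq \bbS^{\mathbf{i}} \xrightarrow{\bbS^{\alpha \circ \iota}} \bbS^{\mathbf{k}} \xrightarrow{F} \PSymp_{S,s}$ is an $m$-fold Lagrangian correspondence. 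I would factor $\alpha \circ \iota$ in $\simp^n$ as an active map $\beta \colon \mathbf{i} \to \mathbf{j}$ followed by an inert map $\iota' \colon \mathbf{j} \to \mathbf{k}$; then $F \circ \bbS^{\iota'} \colon \bbS^{\mathbf{j}} \to \PSymp_{S,s}$ is a cartesian diagram that is Lagrangian in each $\{0,1\}^n$ sub-cell by hypothesis on $F$, and cartesian-ness assembles it from these sub-cells via simplicial composition. The claim reduces to: precomposition with the active map $\bbS^{\beta}$ sends such a diagram to an $m$-fold Lagrangian correspondence.

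This last claim is the core of the proof and is handled by induction on the simplicial directions. In a single direction ($n=1$, with $\beta \colon [1] \to [N]$ the unique active map) it is the statement that composition of Lagrangian correspondences in $\PSymp_{S,s}$ --- \ie{} pullback, which exists by Lemma~\ref{lem:PSymppullback} --- is again Lagrangian; this is \cite{CalaqueTFT}*{Theorem 4.4}, with a version closer to our iterated-span formulation appearing as \cite{spans}*{Proposition 14.12}. For general $n$, any active map in $\simp^n$ decomposes as a product/composite of active maps in single coordinates, and one composes one coordinate at a time. The main technical tool is Proposition~\ref{propn:itlagRKE}: the non-degeneracy of the induced $\tSp{}^m$-diagram for the composite is equivalent to being a right Kan extension of its restriction to $\Sp^{m,\op}$ together with the non-degenerate boundary condition. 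The boundary condition is inherited from the factors by induction on $m$, while the apex non-degeneracy for the composite is a pasting of the cartesian squares in $\QCoh$ that witness the non-degeneracy of the factors.

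By Lemma~\ref{lem:laglocal} this pasting argument may be performed pointwise over $\Spec A \to S$, where Lemma~\ref{lem:relcotgtpb} provides the base-change identifications on cotangent complexes and stability of $\QCoh$ guarantees that the pasting of cartesian squares remains cartesian. The main obstacle is precisely this pasting verification: to make it rigorous one must combine the explicit description of $\tSp{}^m$ (Proposition~\ref{propn:Tw!desc}) with the base-change compatibility of Lagrangian structures, and simultaneously track the $n$ independent simplicial directions. Closure under degeneracies is routine, as degeneracies insert identity correspondences whose associated factors become equivalences in the pullback pasting and thus do not affect non-degeneracy. Once the composition closure is established, the argument for $\ORnpSd$ follows by formally identical reasoning on $\POr_{S,d}$.
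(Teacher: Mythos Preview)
Your overall strategy matches the paper's: reduce to showing cocartesian pushforwards preserve the Lagrangian property, use the active--inert factorization, and then decompose active maps into elementary moves (degeneracies and inner face maps) in a single coordinate. This reduction is correct and is exactly what the paper does.

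However, the two key steps are not filled in, and neither is as routine as you suggest. For the degeneracy $s^{0}\colon [1]\to[0]$ in one coordinate, the resulting $\Sp^{n}$-diagram is the composite of an $(n{-}1)$-uple Lagrangian correspondence with the projection $\pi_{j}\colon \Sp^{n}\to\Sp^{n-1}$. Saying this ``inserts identity correspondences'' and is therefore routine glosses over the fact that the induced map $\widetilde{\pi}_{j}\colon\tSp{}^{n}\to\tSp{}^{n-1}$ does \emph{not} restrict to a map $\tSp{}^{n,\circ,\triangleright}\to\tSp{}^{n-1,\circ,\triangleright}$, so one cannot simply check coinitiality. The paper handles this via Proposition~\ref{propn:deg}, which requires a Kan-extension argument (Lemma~\ref{lem:Kanextsquare}) comparing two overlapping full subcategories.

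For the inner face map $d^{1}\colon [1]\to[2]$, your ``pasting of cartesian squares'' is the right intuition, but you yourself flag it as the main obstacle and do not carry it out. The $n=1$ case you cite (\cite{CalaqueTFT}*{Theorem~4.4}, \cite{spans}*{Proposition~14.12}) does not directly handle composing in one direction of an $n$-uple span for $n>1$; the induced $\tSp{}^{n}$-diagram at the pullback apex must be analyzed, and its non-degeneracy does not reduce to a single cube. The paper's Proposition~\ref{propn:tbbS2RKE} proves that the full $\tbbS^{2,1,\ldots,1}$-diagram is a right Kan extension from $(\pbbS^{2,1,\ldots,1})^{\op}$, using three separate conditions (a cartesian-square condition at the ``dual'' vertices, the factorwise non-degeneracy, and the cartesianness of the original diagram), and Corollary~\ref{cor:tbbS2nd} then extracts the non-degeneracy of the composite via a further cofinality argument. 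Your invocation of Proposition~\ref{propn:itlagRKE} is appropriate for the inductive structure, but the actual pasting---showing that the apex limit over $\Sp^{n,\op}$ matches---requires precisely this intermediate structural result, which is the technical heart of the proof.
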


Before we turn to the proof of Theorem~\ref{thm:LAGcoCart}, we note
some immediate consequences of it:
\begin{corollary}\label{cor:LAGSegCond}
  The functors associated to the cocartesian fibrations
  \[\LAGnpSs, \ORnpSd \to \simp^{n,\op}\]  are
  $n$-uple category objects of $\CatI$.
\end{corollary}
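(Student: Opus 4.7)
The plan is to deduce this formally from the fact that $\SPAN^{+}_{n}(\mathcal{C})$ is already known to be (the unstraightening of) an $n$-uple category object in $\CatI$ whenever $\mathcal{C}$ has pullbacks. Since Theorem~\ref{thm:LAGcoCart} tells us that the projections from $\LAGnpSs$ and $\ORnpSd$ to $\Dnop$ are cocartesian fibrations, and $\LAGnpSs \hookrightarrow \SPAN^{+}_{n}(\PSymp_{S,s})$, $\ORnpSd \hookrightarrow \COSPAN^{+}_{n}(\POr_{S,d})$ are full subcategory inclusions compatible with the cocartesian edges, what remains is to verify the $n$-uple Segal conditions for the associated functors $\Dnop \to \CatI$.

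Concretely, for every $\mathbf{k} \in \Dnop$ we must show that the Segal map
\[
  \LAGnpSs(\mathbf{k}) \longrightarrow \lim_{\mathbf{i} \in \txt{Spine}(\mathbf{k})} \LAGnpSs(\mathbf{i})
\]
is an equivalence, where $\txt{Spine}(\mathbf{k})$ consists of the $\mathbf{i}$'s with each entry $i_{t} \in \{0,1\}$ (together with the inert maps between them arising as iterated spine decompositions of each $k_{t}$). The map in question sits in a commutative square with the corresponding Segal map for $\SPAN^{+}_{n}(\PSymp_{S,s})$, which is already an equivalence by hypothesis; the vertical sides are full subcategory inclusions, so the top map is automatically fully faithful. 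Hence we need only check essential surjectivity.

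For essential surjectivity, suppose we are given a compatible family of objects in $\LAGnpSs(\mathbf{i})$ for $\mathbf{i} \in \txt{Spine}(\mathbf{k})$. The Segal condition for $\SPAN^{+}_{n}(\PSymp_{S,s})$ provides a (unique up to equivalence) cartesian diagram $F \colon \bbS^{\mathbf{k}} \to \PSymp_{S,s}$ restricting to the given data on each spine piece, and we must verify that $F$ itself is Lagrangian. By definition, this means that for every $\mathbf{i}$ with entries in $\{0,1\}$ and every inert map $\mathbf{i} \to \mathbf{k}$, the composite $\bbS^{\mathbf{i}} \to \bbS^{\mathbf{k}} \xrightarrow{F} \PSymp_{S,s}$ is an $m$-fold Lagrangian correspondence ($m = \sum_{t} i_{t}$). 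But any such inert map is, by construction, one of the generators of $\txt{Spine}(\mathbf{k})$, so this is exactly the given hypothesis on the spine pieces. The argument for $\ORnpSd$ is identical, replacing $\PSymp_{S,s}$ by $\POr_{S,d}$ and spans by cospans.

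No real obstacle is expected: the key conceptual point is that the Lagrangian (respectively, oriented) condition has been deliberately formulated in terms of evaluation on inert maps from $\{0,1\}$-cells, which is precisely the data needed to witness the Segal condition, and therefore the condition passes without loss from the ambient $n$-uple category object to the full subcategory. The only mild book-keeping is to recognize the set of inert $\mathbf{i} \to \mathbf{k}$ appearing in the definition of ``Lagrangian'' as exactly indexing the spine of $\mathbf{k}$ in $\Dnop$.
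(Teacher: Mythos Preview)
Your proposal is correct and takes essentially the same approach as the paper. The paper phrases the argument more tersely by observing directly that the definition of Lagrangian forces $(\LAGnpSs)_{\mathbf{k}}$ to sit in a pullback square over the Segal map for $\SPAN^{+}_{n}(\PSymp_{S,s})$; your decomposition into ``fully faithful'' plus ``essentially surjective'' is exactly the unwinding of why that square is a pullback, and the key observation in both cases is that the Lagrangian condition is imposed precisely on the inert $\{0,1\}$-pieces indexing the Segal limit.
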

\begin{proof}
  It remains to check the Segal condition. But by definition of
  $\LAGnpSs$ we have for every $\mathbf{i} =
  ([i_{1}],\ldots,[i_{k}])$ a pullback square
  \nolabelcsquare{(\LAGnpSs)_{\mathbf{i}}}{\SPAN^{+}_{n}(\PSymp_{S,s})_{\mathbf{i}}}{\lim_{\mathbf{e}
      \in \DnelopI}
    (\LAGnpSs)_{\mathbf{e}}}{\lim_{\mathbf{e} \in
      \DnelopI}
    \SPAN^{+}_{k}(\PSymp_{S,s})_{\mathbf{e}}.} 
    (We use the notation from Remark \ref{app:cat:notation Del}.) Here we know the right vertical map
  is an equivalence by \cite[Proposition 3.10]{spans}, hence so is the
  left vertical map. The proof for $\ORnpSd$ is the same.
\end{proof}

\begin{definition}
  Let $\LAGnSs \to \Dnop$ be the underlying left fibration
  of the cocartesian fibration $\LAGnpSs$, and define $\ORnSd$
  similarly; then Corollary~\ref{cor:LAGSegCond} implies that the
  associated functors are $n$-uple Segal spaces. We define $\LagnSs$
  and $\OrnSd$ to be their underlying $n$-fold Segal spaces
  $U_{\Seg}\LAGnSs$ and $U_{\Seg}\ORnSd$ (cf. \cite[Proposition 2.13]{spans}).
\end{definition}

\begin{proposition}\label{propn:Lagcompl}
  The $n$-fold Segal spaces $\LagnSs$ and $\OrnSd$ are complete.
\end{proposition}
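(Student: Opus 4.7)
The plan is to deduce completeness of $\LagnSs$ and $\OrnSd$ from completeness of the ambient $n$-fold Segal spaces $\Span_{n}(\PSymp_{S,s})$ and $\Cospan_{n}(\POr_{S,d})$, which is a general result proved in \cite{spans} (applicable here because $\PSymp_{S,s}$ has pullbacks by Lemma~\ref{lem:PSymppullback} and $\POr_{S,d}$ has pushouts by Proposition~\ref{propn:POrpushout}).

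The key structural input is that, by construction, $\LagnSs$ and $\OrnSd$ are \emph{locally full} sub-$n$-fold Segal spaces: at each level $\mathbf{k} \in \simp^{n,\op}$, the space $(\LagnSs)_{\mathbf{k}}$ is a full subspace of $\Span_{n}(\PSymp_{S,s})_{\mathbf{k}}$, namely the subspace of those cartesian diagrams $\bbS^{\mathbf{k}} \to \PSymp_{S,s}$ whose restrictions to $\bbS^{\mathbf{i}}$ (for $\mathbf{i}$ a tuple of $0$s and $1$s) are Lagrangian correspondences. All totally degenerate diagrams are automatically Lagrangian, since the corresponding cotangent complex diagrams are zero, so the inclusion contains all identities. (The analogous statements for $\OrnSd$ follow from the definitions.)

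Next, I would reduce completeness of an $n$-fold Segal space to a condition that is manifestly inherited by locally full subobjects containing identities. Recall that completeness of an $n$-fold Segal space $X$ is equivalent to checking, for each $1 \le t \le n$ and each $\mathbf{i} \in \simp^{n-1,\op}$, that the Segal space obtained by fixing the other directions is a complete Segal space in the sense of Rezk, \ie{} that the map from the space of equivalences into $X_{\mathbf{i}, 1, \ldots}$ factors through the degeneracy from the underlying space of objects. For the inclusion $\iota\colon \LagnSs \hookrightarrow \Span_{n}(\PSymp_{S,s})$, an equivalence in $\LagnSs$ maps to an equivalence in $\Span_{n}(\PSymp_{S,s})$, since $\iota$ preserves composition and identities. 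Conversely, any morphism of $\LagnSs$ that is invertible in $\Span_{n}(\PSymp_{S,s})$ is invertible in $\LagnSs$: its inverse in $\Span_{n}(\PSymp_{S,s})$, being a composite of degenerate simplices of $\iota$, lies in $\LagnSs$, and the unit and counit witnessing invertibility are likewise Lagrangian (induction on the dimension of the cell using local fullness and the fact that degenerate simplices are Lagrangian). By completeness of the ambient $n$-fold Segal space, such morphisms lie in the image of the degeneracy from objects; since the source and target are objects of $\LagnSs$ and $\LagnSs$ is full on its objects, they lie in the image of the degeneracy from $(\LagnSs)_{\mathbf{0}}$, proving completeness. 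The argument for $\OrnSd$ is entirely parallel, using the inclusion into $\Cospan_{n}(\POr_{S,d})$.

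The main obstacle is the inductive invertibility argument in the previous paragraph: one must check that invertibility of a $k$-morphism in $\Span_{n}(\PSymp_{S,s})$ transfers back to invertibility inside $\LagnSs$. This is where local fullness of the inclusion and the observation that all degenerate cells are Lagrangian are essential: both the candidate inverse and the chain of higher cells producing the unit/counit can be found inside $\LagnSs$. Once this is in place, the completeness of the ambient higher Segal space of spans (resp.\ cospans) from \cite{spans} transports directly to $\LagnSs$ and $\OrnSd$.
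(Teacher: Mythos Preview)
Your approach is essentially the same as the paper's: reduce completeness of $\LagnSs$ and $\OrnSd$ to completeness of the ambient span (resp.\ cospan) $(\infty,n)$-categories, which is established in \cite{spans}, together with the observation that the inclusions are levelwise full and contain all degeneracies.

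The one step where your execution diverges from the paper is the claim that a morphism in $\LagnSs$ that is invertible in $\Span_{n}(\PSymp_{S,s})$ has its inverse and invertibility witnesses in $\LagnSs$. Your justification (``its inverse\ldots\ being a composite of degenerate simplices of $\iota$'') is not correct as stated: the inverse of an arbitrary equivalence is not a composite of degeneracies. The paper instead appeals to \cite[Lemma 6.2]{spans}, which gives an explicit description of equivalences in $\Span_n$ from which one reads off directly that an equivalence between $i$-fold Lagrangian correspondences is itself Lagrangian. Your abstract line of argument can be salvaged without this citation, but the fix is slightly different from what you wrote: once the ambient is known to be complete, any equivalence $f$ in the ambient is already (homotopic to) a degeneracy $s_0(x)$, and since $x = d_1 f$ lies in $(\LagnSs)_0$ the degeneracy lies in $\LagnSs$; hence $f$ lies in the same path component of $(\LagnSs)_1$ as an identity and is therefore an equivalence in $\LagnSs$. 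This bypasses the need to separately locate the inverse and the unit/counit data, and the ``induction on the dimension of the cell'' is then unnecessary.
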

\begin{proof}
  It is clear from \cite[Lemma 6.2]{spans} that equivalences between
  $i$-fold Lagrangian correspondences are again Lagrangian
  correspondences. Thus the completeness of the $n$-fold Segal space
  $\LagnSs$ follows from that of $\Span_{n}(\PSymp_{S,s})$,
  proved in \cite[Corollary 6.5]{spans}. The proof for $\OrnSd$ is the same.
\end{proof}

The rest of this section is devoted to the proof of
Theorem~\ref{thm:LAGcoCart}. To prove that $\LAGnpSs \to \Dnop$ is a
cocartesian fibration, it suffices to check that if we have a cocartesian
morphism $X \to X'$ in $\SPAN^{+}_{n}(\PSymp_{S,s})$ where $X$ is
Lagrangian, then $X'$ is also Lagrangian. It's enough to show this for
elementary degeneracies and face maps (in one of the $n$ variables)
since we can factor every morphism in $\Dnop$ as a composite of such
maps. For degeneracies, this will follow from the following observation:
\begin{proposition}\label{propn:deg}
  For $j = 1,\ldots,n$, let $\pi_{j} \colon \Sp^{n} \to \Sp^{n-1}$ be
  the map induced by projecting off the $j$th factor of
  $\Sp^{1}$. Then the induced functor $\widetilde{\pi}_{j} \colon
  \tSp{}^{n} \to \tSp{}^{n-1}$ has the property that for any functor $F
  \colon \tSp{}^{n-1} \to \mathcal{C}$ that is a limit diagram, the
  composite $F \circ \widetilde{\pi}_{j}$ also is a limit diagram.
\end{proposition}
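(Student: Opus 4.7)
Since $\Tw^r_!$ is a left adjoint (to $\Tw^r$), it preserves initial objects; applied to $\pi_j$, which itself preserves $-\infty$, this gives $\widetilde{\pi}_j(-\infty) = -\infty$, and hence $(F\circ\widetilde{\pi}_j)(-\infty) = F(-\infty)$. Write $L := F(-\infty)$; the hypothesis that $F$ is a limit diagram means $L \simeq \lim F|_{\tSp{}^{n-1}\setminus\{-\infty\}}$, and the task is to show $L \simeq \lim H$, where $H := (F\circ\widetilde{\pi}_j)|_{\tSp{}^{n}\setminus\{-\infty\}}$.

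The key ingredient is a section $s\colon \tSp{}^{n-1}\setminus\{-\infty\}\to \tSp{}^{n}\setminus\{-\infty\}$ of $\widetilde{\pi}_j$, built from the left adjoint $t_j\dashv\pi_j$ in $\Sp^{n-1}\rightleftarrows\Sp^n$ (where $t_j$ inserts $-\infty$ at position $j$): put $s(Y):=t_j(Y)$ on non-dual $Y$ and $s(Y^\vee):=t_j(Y)^\vee$ on dual $Y^\vee$. Order-preservation is routine, and $\widetilde{\pi}_j\circ s=\mathrm{id}$. I would then define $\alpha\colon L\to\lim H$ by the universal property of $\lim H$ applied to the canonical cone whose $X$-component is the $F$-image of the unique arrow $-\infty\leq\widetilde{\pi}_j(X)$ in $\tSp{}^{n-1}$, and $\beta\colon\lim H\to L$ by the universal property of $L$ applied to the cone $\lim H\Rightarrow H\circ s=F|_{\tSp{}^{n-1}\setminus\{-\infty\}}$. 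The identity $\beta\alpha=\mathrm{id}_L$ is immediate from the universal property of $L$.

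The main obstacle is $\alpha\beta=\mathrm{id}_{\lim H}$. This reduces, via the universal property of $\lim H$, to showing that for every $X\in\tSp{}^{n}\setminus\{-\infty\}$ the cone map of $\lim H$ at $X$ equals the one determined by $\beta$. For $X$ with $\widetilde{\pi}_j(X)=Y\in\tSp{}^{n-1}\setminus\{-\infty\}$, the fibre $\widetilde{\pi}_j^{-1}(Y)$ is, as a subposet of $\tSp{}^n$, isomorphic to $\Sp^1$ for $Y$ non-dual and to $\Sp^{1,\op}$ for $Y$ dual, each contractible with $s(Y)$ as its unique initial or terminal element; since $\widetilde{\pi}_j$ sends all fibre morphisms to identities, $H$ is constant on the fibre, and cone consistency forces the cone maps at $X$ and $s(Y)$ to coincide. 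The truly delicate case is $X\in\widetilde{\pi}_j^{-1}(-\infty)\cap(\tSp{}^{n}\setminus\{-\infty\})$, which consists of the two incomparable non-dual elements having $A_1$ (respectively $B_1$) at position $j$ and $-\infty$ elsewhere; the fibre is disconnected and the previous argument fails. For these, one observes that such an $X$ lies below $s(Y_0^\vee)$ for every $Y_0\in\Sp^{n-1}$, via the explicit upper bound in $\Sp^n$ with $A_1$ or $B_1$ at position $j$; combining cone consistency in $\lim H$ with the universal property of $L$ (and crucially the hypothesis that $F$ is a limit diagram, so that the full cone of $L$ over $F|_{\tSp{}^{n-1}\setminus\{-\infty\}}$ provides enough compatibility data) then pins down the cone map at $X$ to agree with the one induced by $\beta$.
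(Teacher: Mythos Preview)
Your approach is genuinely different from the paper's. The paper first invokes Proposition~\ref{propn:nondeghalf}(i) to reduce to the statement that $\pi_j^{\triangleright}\colon \Sp^{n,\triangleright}\to\Sp^{n-1,\triangleright}$ preserves limit diagrams, and then applies a Kan-extension lemma (Lemma~\ref{lem:Kanextsquare}) to the decomposition of $\Sp^{n,\triangleright}$ along the subcategories $(\Sp^{n-1,\circ}\times\Sp^1)^{\triangleright}$ and $(\Sp^{n-1,\circ}\times\Sp^1)^{\triangleleft\triangleright}$. You instead work directly with $\tSp{}^n$ and try to exhibit an explicit inverse to $\alpha$ via the section $s$.

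There is a genuine gap in your delicate case. You show that each problematic $X$ (with $\widetilde{\pi}_j(X)=-\infty$) satisfies $X\leq s(Y_0^\vee)$ for all $Y_0\in\Sp^{n-1}$, and this gives agreement of $\gamma_X:=(\text{cone map at }X)$ with $\beta$ after composing with $L\to F(Y_0^\vee)$. But this checks only the \emph{dual} part of $\tSp{}^{n-1}\setminus\{-\infty\}$, and the inclusion $\Sp^{n-1,\op}\hookrightarrow\tSp{}^{n-1}\setminus\{-\infty\}$ is not coinitial: for any non-dual $Y\in\Sp^{n-1,\circ}$ the slice $(\Sp^{n-1,\op})_{/Y}$ is empty, since no $W^\vee$ lies below a non-dual element. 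So the universal property of $L$ cannot be invoked from the data you have assembled, and the appeal to ``the full cone of $L$'' does not bridge this. You need to also establish that $\gamma_X$ and $\beta$ agree after composing with $L\to F(Y)$ for each non-dual $Y\in\Sp^{n-1,\circ}$. This \emph{can} be done: for such $Y$ there is an $X'\geq X$ with $\widetilde{\pi}_j(X')=Y$ (take $X'$ with $j$th coordinate equal to $X_j\in\{A_1,B_1\}$ and remaining coordinates matching $Y$), and cone consistency along $X\leq X'$ combined with $s(Y)\leq X'$ in the contractible fibre over $Y$ yields the missing compatibility --- but this step is absent from your sketch.

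A secondary concern is that even the completed componentwise verification of $\alpha\beta=\id$ requires care to be $\infty$-categorically rigorous: agreement of cone components need not imply agreement of cones as points of a limit of mapping spaces. The paper's approach, phrased entirely in terms of right Kan extensions and coinitiality, avoids this issue by never passing through an explicit inverse.
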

Since $\widetilde{\pi}_{j}$ does not restrict to a functor from
$\tSp{}^{n,\circ,\triangleright}$ to $\tSp{}^{n-1,\circ,\triangleright}$
we cannot prove this by checking that a functor is coinitial. Instead,
we will use the following trivial observation:
\begin{lemma}\label{lem:Kanextsquare}
  Suppose given a commutative square of \icats{}
  \[
  \begin{tikzcd}
    \mathcal{A}_{00} \arrow[hookrightarrow]{r}
    \arrow[hookrightarrow]{d} & \mathcal{A}_{01}
    \arrow[hookrightarrow]{d}\\
    \mathcal{A}_{10} \arrow[hookrightarrow]{r}
    & \mathcal{A}_{11}
  \end{tikzcd}
  \]
  where all the morphisms are fully faithful inclusions, with the
  property that every object of $\mathcal{A}_{11}$ lies in the
  essential image of $\mathcal{A}_{01}$ or $\mathcal{A}_{10}$. Let $F
  \colon \mathcal{A}_{11} \to \mathcal{B}$ be a functor such that:
  \begin{enumerate}[(1)]
  \item $F|_{\mathcal{A}_{01}}$ is a (pointwise) right Kan extension of $F|_{\mathcal{A}_{00}}$,
  \item $F|_{\mathcal{A}_{10}}$ is a (pointwise) right Kan extension of $F|_{\mathcal{A}_{00}}$.
  \end{enumerate}
  Then:
  \begin{enumerate}[(i)]
  \item $F$ is a right Kan extension of $F|_{\mathcal{A}_{00}}$.
  \item $F$ is a right Kan extension of $F|_{\mathcal{A}_{01}}$.
  \item $F$ is a right Kan extension of $F|_{\mathcal{A}_{10}}$.    
  \end{enumerate}
\end{lemma}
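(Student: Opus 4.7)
The plan is to prove (i) directly from the pointwise right Kan extension formula, and then deduce (ii) and (iii) from (i) by transitivity of right Kan extensions along fully faithful inclusions.

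For (i), I would fix $x \in \mathcal{A}_{11}$ and verify that the natural comparison map identifies $F(x)$ with the limit of $F|_{\mathcal{A}_{00}}$ over the slice $\mathcal{A}_{00} \times_{\mathcal{A}_{11}} (\mathcal{A}_{11})_{x/}$. By the assumption that every object of $\mathcal{A}_{11}$ lies in the essential image of $\mathcal{A}_{01}$ or $\mathcal{A}_{10}$, without loss of generality we may assume $x$ comes from $\mathcal{A}_{01}$. The key observation is that full faithfulness of $\mathcal{A}_{01} \hookrightarrow \mathcal{A}_{11}$ implies that the natural functor
\[
\mathcal{A}_{00} \times_{\mathcal{A}_{01}} (\mathcal{A}_{01})_{x/} \longrightarrow \mathcal{A}_{00} \times_{\mathcal{A}_{11}} (\mathcal{A}_{11})_{x/}
\]
is an equivalence of \icats{}: both sides have the same objects (pairs $(a, f)$ with $a \in \mathcal{A}_{00}$ and $f$ a morphism from $x$ to $a$), and the mapping spaces agree because all the morphisms involved have source and target in $\mathcal{A}_{01}$, where full faithfulness of the inclusion into $\mathcal{A}_{11}$ applies. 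Combining this equivalence with hypothesis (1), which provides $F(x) \simeq \lim F|_{\mathcal{A}_{00} \times_{\mathcal{A}_{01}} (\mathcal{A}_{01})_{x/}}$, yields the desired identification. The argument for $x$ coming from $\mathcal{A}_{10}$ is identical, using hypothesis (2).

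For (ii), I would invoke the standard transitivity of pointwise right Kan extensions: if $g \colon \mathcal{A}_{01} \hookrightarrow \mathcal{A}_{11}$ and $h \colon \mathcal{A}_{00} \hookrightarrow \mathcal{A}_{01}$, then whenever the intermediate right Kan extension $h_*F|_{\mathcal{A}_{00}}$ and the composite $(gh)_*F|_{\mathcal{A}_{00}}$ both exist, there is a canonical equivalence $g_* h_* F|_{\mathcal{A}_{00}} \simeq (gh)_* F|_{\mathcal{A}_{00}}$. By hypothesis (1) we have $h_* F|_{\mathcal{A}_{00}} \simeq F|_{\mathcal{A}_{01}}$, and by (i) just proved we have $(gh)_*F|_{\mathcal{A}_{00}} \simeq F$, so the two combine to give $g_* F|_{\mathcal{A}_{01}} \simeq F$. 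This is exactly statement (ii). Statement (iii) follows by the symmetric argument with the roles of $\mathcal{A}_{01}$ and $\mathcal{A}_{10}$ exchanged.

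There is no real obstacle here: the only content is the equivalence of slice categories, which is a direct consequence of full faithfulness, and the transitivity of Kan extensions, which is a standard fact available in the \icatl{} setting (e.g.\ from the general theory of Kan extensions in $\infty$-categories). The lemma is essentially formal.
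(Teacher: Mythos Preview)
Your proposal is correct and follows essentially the same approach as the paper: the paper proves (i) by noting that for $x$ in $\mathcal{A}_{01}$ the slice $\mathcal{A}_{00} \times_{\mathcal{A}_{11}} \mathcal{A}_{11,x/}$ is equivalent to $\mathcal{A}_{00} \times_{\mathcal{A}_{01}} \mathcal{A}_{01,x/}$ by full faithfulness (and similarly for $\mathcal{A}_{10}$), and then deduces (ii) and (iii) from transitivity of right Kan extensions. Your writeup is slightly more detailed, but the argument is the same.
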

\begin{proof}
  To prove (i), observe that if $x \in \mathcal{A}_{11}$ lies in
  $\mathcal{A}_{01}$ then 
  \[\mathcal{A}_{00} \times_{\mathcal{A}_{11}} \mathcal{A}_{11,x/}
    \simeq \mathcal{A}_{00} \times_{\mathcal{A}_{01}}
    \mathcal{A}_{01,x/},\]
  since the functors are fully faithful, and similarly for $x \in
  \mathcal{A}_{10}$. Now (ii) and (iii) follow since right Kan
  extensions are transitive.
\end{proof}

\begin{proof}[Proof of Proposition~\ref{propn:deg}]
  By Proposition~\ref{propn:nondeghalf}(i)  it suffices to prove that the
  corresponding map $\pi_{j}^{\triangleright} \colon \Sp^{n,\triangleright} \to
  \Sp^{n-1,\triangleright}$ has the same property; let $\Phi :=
  F|_{\Sp^{n-1,\triangleright}}$ and $\Phi' := \Phi \circ \pi_{j}^{\triangleright}$. Consider the
  commutative diagram
  \[
  \begin{tikzcd}
    (\Sp^{n-1,\circ} \times \Sp^{1})^{\triangleright}
    \arrow[hookrightarrow]{r} \arrow[hookrightarrow]{d}
    \arrow[bend left=15]{rr}&
    \Sp^{n,\circ,\triangleright} \arrow[hookrightarrow,crossing over]{d} &
    \Sp^{n-1,\circ,\triangleright} \arrow[hookrightarrow]{d}\\
    (\Sp^{n-1,\circ} \times \Sp^{1})^{\triangleleft\triangleright} \arrow[hookrightarrow]{r} &
    \Sp^{n,\triangleright} \arrow{r}[below]{\pi_{j}^{\triangleright}} &  \Sp^{n-1,\triangleright}.
  \end{tikzcd}
  \]
  We wish to show that $\Phi'$ is the right Kan extension of its
  restriction to $\Sp^{n,\circ,\triangleright}$. Applying
  Lemma~\ref{lem:Kanextsquare} to the left-hand square in the
  diagram, we see that to prove this it suffices to show the following:
  \begin{enumerate}[(a)]
  \item The restriction of $\Phi'$ to $(\Sp^{n-1,\circ} \times
    \Sp^{1})^{\triangleleft\triangleright}$ is the right Kan extension
    of its restriction to $(\Sp^{n-1,\circ} \times \Sp^{1})^{\triangleright}$.
  \item The restriction of $\Phi'$ to  $\Sp^{n,\circ,\triangleright}$ is the right Kan extension
    of its restriction to $(\Sp^{n-1,\circ} \times
    \Sp^{1})^{\triangleright}$.
  \end{enumerate}
  For (a), we need to check the value of $\Phi'$ at the initial object in $(\Sp^{n-1,\circ} \times
    \Sp^{1})^{\triangleleft\triangleright}$ is the limit of $\Phi'$ restricted to $(\Sp^{n-1,\circ} \times \Sp^1)^\triangleright$. Since we know that it is the limit of $\Phi$ on $(\Sp^{n-1,\circ})^{\triangleright}$,    it suffices by Lemma~\ref{lem:coinittrright} to check that the map $\Sp^{n-1,\circ} \times \Sp^{1} \to
  \Sp^{n-1,\circ}$ is coinitial. Since
  products of coinitial functors are coinitial by
  \cite{HTT}*{Corollary 4.1.1.3}, this is implied by $\Sp^{1}$ being
  weakly contractible.

  For (b), observe that the objects of $\Sp^{n,\circ}$ that do not lie
  in $\Sp^{n-1,\circ} \times \Sp^{1}$ are precisely the two objects
  $X_{0} := (-\infty, (0,0))$ and $X_{1}:= (-\infty, (1,1))$. We thus
  need to show that the value of $\Phi'$ at $X_{i}$ is the limit of
  its restriction to
  \[ (\Sp^{n-1,\circ} \times \Sp^{1})_{X_{i}/} \simeq
    \Sp^{n-1,\circ}_{-\infty/} \times \Sp^{1}_{(i,i)/} \simeq
    \Sp^{n-1,\circ},\]
  which is immediate from $\Phi$ being a limit diagram.
\end{proof}

Next we consider inner face maps. For this we first describe the
structure of the diagram of quasicoherent sheaves induced by a
Lagrangian map $\bbS^{2,1,\ldots,1} \to \PSymp_{S,s}$. This requires a bit
of notation:
\begin{definition}
  Let $\pbbS^{i}$ be the full subcategory of $\bbS^{i}$ spanned by the
  objects of the form $(0,j)$ and $(j,i)$, for $j = 0,\ldots,i$. For
  $\mathbf{i} = (i_{1},\ldots,i_{n})$, we then write
  $\pbbS^{\mathbf{i}}$ for the product $\pbbS^{i_{1}} \times \cdots
  \times \pbbS^{i_{n}}$.
\end{definition}

\begin{notation}
  We write $\tbbS^{\mathbf{i}} := \Tw_{!}\bbS^{\mathbf{i}}$ and $\tbbL^{\mathbf{i}} := \Tw_{!}\bbL^{\mathbf{i}}$.
\end{notation}

\begin{proposition}\label{propn:tbbS2RKE}
  Suppose given a diagram
 \[\Phi \colon \tbbS^{2,1,\ldots,1} \cong
  \Tw_{!}(\bbS^{2}\times \Sp^{n-1}) \to \mathcal{C}\] such
  that:
  \begin{enumerate}[(1)]
  \item For every $Y^{\vee} = ((1,1), X)^{\vee} \in
    (\bbS^{2,1,\ldots,1})^{\op} \subseteq \tbbS^{2,1,\ldots,1}$ the
    commutative square \nolabelcsquare{\Phi(((1,1),
      X)^{\vee})}{\Phi(((0,1),
      X)^{\vee})}{\Phi(((1,2),
      X)^{\vee})}{\Phi(((0,2), X)^{\vee})} is a pullback
    square.
  \item The restriction of $\Phi$ along any inclusion $\tSp{}^{n} \to
    \tbbS^{2,1,\ldots,1}$ induced by an inclusion $\Sp^{n}
    \hookrightarrow \bbL^{2,1,\ldots,1}$ is non-degenerate.
  \item For every $X \in \bbS^{2,1,\ldots,1} \subseteq
    \tbbS^{2,1,\ldots,1}$, the object $\Phi(X)$ is the
    limit of the restriction of $\Phi$ to~$\bbL^{2,1,\ldots,1}_{X/}$,
  \end{enumerate}
  Then $\Phi$ is a right Kan extension of its restriction along
  $(\pbbS^{2,1,\ldots,1})^{\op} \hookrightarrow \tbbS^{2,1,\ldots,1}$.
\end{proposition}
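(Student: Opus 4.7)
The plan is to factor the inclusion $\pbbS^{\mathbf{i},\op} \hookrightarrow \tbbS^{\mathbf{i}}$ (writing $\mathbf{i} = (2,1,\ldots,1)$) through intermediate subcategories and apply transitivity of right Kan extensions, with each step invoking exactly one of the three hypotheses. A natural chain of intermediates inside $\tbbS^{\mathbf{i}}$ is
\[
\pbbS^{\mathbf{i},\op} \;\subseteq\; \bbS^{\mathbf{i},\op} \;\subseteq\; \bbS^{\mathbf{i},\op} \cup \tbbL^{\mathbf{i}} \;\subseteq\; \tbbS^{\mathbf{i}},
\]
where the unions are taken inside $\tbbS^{\mathbf{i}}$. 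This chain exhausts $\tbbS^{\mathbf{i}}$ because $\pbbS^{\mathbf{i}} \cup \bbL^{\mathbf{i}} = \bbS^{\mathbf{i}}$ and $\bbL^{\mathbf{i}} = \bbL^{2} \times \bbS^{n-1}$: the second subcategory contributes all $\vee$-objects, the third adds the non-$\vee$ copies of $\bbL^{\mathbf{i}}$, and the last adds the non-$\vee$ copies of $\bbS^{\mathbf{i}} \setminus \bbL^{\mathbf{i}} = \{(1,1)\}^{c} \cap \bbS^{\mathbf{i}} \setminus \bbL^{\mathbf{i}} = \{(0,2)\} \times \bbS^{n-1}$.

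For the first step (using condition~(1)), the objects to handle are $((1,1), X)^{\vee}$ for $X \in \bbS^{n-1}$. The slice $\pbbS^{\mathbf{i},\op}_{((1,1),X)^{\vee}/}$ factors as the product of the span-shaped poset on $\{(0,1)^{\vee}, (1,2)^{\vee}, (0,2)^{\vee}\}$ (coming from $\pbbS^{2,\op}$) with $\bbS^{n-1,\op}_{X^{\vee}/}$; since the latter factor has $X^{\vee}$ as initial object, the limit over the product collapses to the pullback of condition~(1) evaluated at $X$, which yields $\Phi(((1,1),X)^{\vee})$.

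For the second step (using condition~(2)), the new objects are the non-$\vee$ copies of $X \in \bbL^{\mathbf{i}}$. The poset $\bbL^{\mathbf{i}}$ is covered by two copies of $\Sp^{n}$ corresponding to the two inert maps $(1,\ldots,1) \to (2,1,\ldots,1)$ in $\simp^{n}$, intersecting over $\{(1,1)\} \times \bbS^{n-1}$. My plan is to apply Lemma~\ref{lem:Kanextsquare} to the square of subposet inclusions of $\tbbL^{\mathbf{i}}$ these produce, using condition~(2) and Proposition~\ref{propn:itlagRKE} to supply the required RKE property along each individual $\tSp^{n}$, together with a coinitiality argument (again exploiting the initial object in the auxiliary $\bbS^{n-1,\op}$-slices, as in Step~1) to transfer this to the RKE property in $\bbS^{\mathbf{i},\op}$.

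For the third step (using condition~(3)), the remaining objects are non-$\vee$ copies of $X = ((0,2), X') \in \bbS^{\mathbf{i}} \setminus \bbL^{\mathbf{i}}$. Condition~(3) already provides $\Phi(X)$ as the limit over $\bbL^{\mathbf{i}}_{X/}$, so it suffices to verify that the inclusion $\bbL^{\mathbf{i}}_{X/} \hookrightarrow (\bbS^{\mathbf{i},\op} \cup \tbbL^{\mathbf{i}})_{X/}$ is coinitial, which reduces to a direct poset computation in $\tbbS^{\mathbf{i}}$. The main technical obstacle is Step~2: gluing the two $\Sp^{n}$-embeddings together over their overlap $\{(1,1)\} \times \bbS^{n-1}$ requires carefully matching the slice structure in $\bbS^{\mathbf{i},\op}$ at each $X \in \bbL^{\mathbf{i}}$ with that in the corresponding $\Sp^{n,\op}$, and checking that the cover hypothesis of Lemma~\ref{lem:Kanextsquare} is really satisfied; this combinatorial bookkeeping is the core of the argument.
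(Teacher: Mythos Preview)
Your three-step filtration coincides with the paper's (written there as $\mathcal{C}_0 \subset \mathcal{C}_1 \subset \mathcal{C}_2 \subset \mathcal{C}_3$ with $\mathcal{C}_1 = \bbS^{\mathbf{i},\op}$ and $\mathcal{C}_2 = \bbS^{\mathbf{i},\op} \cup \bbL^{\mathbf{i}}$), and Steps~1 and~3 are handled the same way.

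In Step~2 your route diverges: you propose Lemma~\ref{lem:Kanextsquare} to glue along the two $\Sp^n$-copies, whereas the paper works directly, one $X \in \bbL^{\mathbf{i}}$ at a time. The gluing lemma is harmless but buys nothing: verifying its hypotheses already forces you to check, for each $X$ in each copy, that $\Phi(X)$ is the limit over $(\mathcal{C}_1)_{X/}$ --- which is exactly the pointwise RKE condition for $\mathcal{C}_1 \hookrightarrow \mathcal{C}_2$ and makes the gluing redundant. The actual content here is the coinitiality of $(\bbS^{\mathbf{i}}_{X/})^{\op} \cong \Sp^{i,\op} \hookrightarrow (\mathcal{C}_1)_{X/}$; the paper checks this via \ThmA{} by observing that each double slice $(\bbS^{\mathbf{i}})^{\op}_{/X,/Y}$ has a terminal object (Example~\ref{ex:epsTwnprod}). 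This is a different mechanism from the ``initial object in an auxiliary $\Sp^{n-1,\op}$-factor'' trick you suggest recycling from Step~1.

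One caveat that affects both your plan and the paper's proof: invoking Proposition~\ref{propn:itlagRKE} (or, in the paper, non-degeneracy of ``the copy of $\tSp{}^{i}$ determined by $X$'') tacitly uses non-degeneracy of all $\tSp{}^{m}$'s for $m \le n$ arising from inert sub-inclusions, not just the two full $\tSp{}^{n}$'s. Condition~(2) is being read this way; in the only application (Corollary~\ref{cor:d1}) the Lagrangian/oriented hypothesis supplies this directly.
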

\begin{proof}
  Let $\mathcal{C}_{3} := \tbbS^{2,1,\ldots,1}$, and define
  $\mathcal{C}_{2} \subseteq \mathcal{C}_{3}$ to be the full
  subcategory spanned by the objects in $(\bbS^{2,1,\ldots,1})^{\op}$
  together with the objects in $\bbL^{2,1,\ldots,1} \subseteq
  \bbS^{2,1,\ldots,1}$. Then set $\mathcal{C}_{1} :=
  (\bbS^{2,1,\ldots,1})^{\op}$ and $\mathcal{C}_{0} :=
  (\pbbS^{2,1,\ldots,1})^{\op}$. We will then prove that the
  restriction of $\Phi$ to $\mathcal{C}_{i}$ is a right
  Kan extension of its restriction to $\mathcal{C}_{i-1}$ for $i =
  1,2,3$.

  First consider the inclusion $\mathcal{C}_{0} \hookrightarrow
  \mathcal{C}_{1}$. The objects $X^{\vee}$ of $\mathcal{C}_{1}$ that
  are not in $\mathcal{C}_{0}$ are those of the form $((1,1),
  Y)^{\vee}$ for $Y \in \Sp^{n-1}$. We must thus show that
  $\Phi(((1,1), Y)^{\vee})$ is the limit of
  $\Phi$ restricted to $\mathcal{C}_{0,X^{\vee}/} \simeq
  (\pbbS^{2})^{\op}_{(1,1)/} \times \Sp^{n-1,\op}_{Y/}$. The
  inclusion $(\pbbS^{2})^{\op}_{(1,1)/} \times \{Y\} \hookrightarrow
  (\pbbS^{2})^{\op}_{(1,1)/} \times \Sp^{n-1,\op}_{Y/}$ is a
  product of coinitial maps, and so is coinitial by \cite[Corollary
  4.1.1.3]{HTT}. Thus it suffices to know that the commutative square
  \nolabelcsquare{\Phi(((1,1),
    Y)^{\vee})}{\Phi(((0,1),
    Y)^{\vee})}{\Phi(((1,2),
    Y)^{\vee})}{\Phi(((0,2), Y)^{\vee})} is a pullback
  square, which is condition (1).

  Next we consider the inclusion $\mathcal{C}_{1} \hookrightarrow
  \mathcal{C}_{2}$. For $X \in \bbL^{2,1,\ldots,1}$, we must show that
  $\Phi(X)$ is the limit of $(\mathcal{C}_{1})_{X/}$. If
  $X = ((\alpha_{1}, \beta_{1}), \ldots, (\alpha_{k}, \beta_{k}))$,
  set $i := \sum_{t} (\beta_{t}-\alpha_{t})$. By assumption (2) the
  restriction of $\Phi$ to the copy of $\tSp{}^{i}$
  determined by $X$ is non-degenerate, hence by
  Proposition~\ref{propn:itlagRKE} we know that $\Phi(X)$
  is the limit of the restriction of $\Phi$ to the
  corresponding copy of $\Sp^{i,\op} \simeq
  (\bbS^{2,1,\ldots,1})^{\op}_{/X}$. It therefore suffices to show
  that the inclusion $\Sp^{i, \op} \hookrightarrow
  (\mathcal{C}_{1})_{X/}$ is coinitial. Calling on \ThmA{} yet again,
  we must thus prove that for every $Y \in (\mathcal{C}_{1})_{X/}$,
  the category $(\Sp^{i, \op})_{/Y} \simeq
  (\bbS^{2,1,\ldots,1})^{\op}_{/X,/Y}$ is weakly contractible. By
  Example~\ref{ex:epsTwnprod} we know this category has a terminal
  object, and hence is indeed weakly contractible.

  Finally we consider the inclusion $\mathcal{C}_{2} \hookrightarrow
  \mathcal{C}_{3}$. By assumption (3) we know that for $X \in
  \mathcal{C}_{3}$ not contained in $\mathcal{C}_{2}$, the object $\Phi(X)$ is the
  limit of the restriction of $\Phi$ to
  $\bbL^{2,1,\ldots,1}_{X/}$. Thus it suffices to show that the
  inclusion $\bbL^{2,1,\ldots,1}_{X/} \hookrightarrow
  (\mathcal{C}_{2})_{X/}$ is coinitial. We now apply \ThmA{} once more to see
  that it is enough to prove that for each $Y^{\vee} \in
  (\bbS^{2,1,\ldots,1})^{\op}_{X/}$ the category
  $(\bbL^{2,1,\ldots,1}_{X/})_{/Y^{\vee}}$ is weakly contractible. If $X
  = (X_{1},\ldots, X_{n})$ and $Y = (Y_{1},\ldots,Y_{n})$, then this
  category can be identified with the product $\prod_{i}
  (\bbL^{a_i}_{X_{i}/})_{/Y_{i}^{\vee}}$, where $a_{i} = 2$ for $i = 1$
  and 1 otherwise. A simple case-by-case analysis now finishes the
  proof.
\end{proof}

\begin{remark}
  By a more elaborate version of this argument, from which we will
  spare the reader, it is possible to show for any $\mathbf{i}$ that
  if a functor $\Phi \colon \tbbS^{\mathbf{i}} \to \mathcal{C}$
  satisfies analogues of assumptions (1)--(3), then it is a right
  Kan extension of its restriction to $(\pbbS^{\mathbf{i}})^{\op}$.
\end{remark}

\begin{corollary}\label{cor:tbbS2nd}
  Suppose a diagram $\Phi \colon \tbbS^{2,1,\ldots,1} \to \mathcal{C}$
  satisfies the three hypothesis in
  Proposition~\ref{propn:tbbS2RKE}. Then the restriction $\Psi \colon \tSp{}^{n} \to
  \mathcal{C}$ along the inclusion $\tSp{}^{n} \hookrightarrow
  \tbbS^{2,1,\ldots,1}$ induced by the functor $(d^{1},\id,\ldots,\id)
  \colon \Sp^{n} \to \bbS^{2,1,\ldots,1}$ is non-degenerate.
\end{corollary}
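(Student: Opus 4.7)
The plan is to apply Proposition~\ref{propn:itlagRKE}(ii) to $\Psi \colon \tSp{}^{n} \to \mathcal{C}$. That proposition reduces the non-degeneracy of $\Psi$ to two statements: (a) $\Psi$ has non-degenerate boundary, and (b) $\Psi(-\infty)$ is the limit of $\Psi|_{\Sp^{n,\op}}$. For (a), I would appeal directly to hypothesis (2) of Proposition~\ref{propn:tbbS2RKE}: any boundary face of $\Sp^{n}$ is obtained by fixing some coordinates to legs of $\bbS^{1}$, and composing with $(d^{1},\id,\ldots,\id)$ sends such a face into $\bbL^{2,1,\ldots,1}$, since $d^{1}$ carries each leg $[0,0]$, $[1,1]$ of $\bbS^{1}$ to a leg ($[0,0]$, $[2,2]$) of $\bbS^{2}$ lying in $\bbL^{2}$. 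Hypothesis (2) then provides non-degeneracy of $\Phi$ along the resulting inclusion $\tSp{}^{m} \hookrightarrow \tbbS^{2,1,\ldots,1}$, which is exactly non-degenerate boundary for $\Psi$.

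For (b), the main input is the conclusion of Proposition~\ref{propn:tbbS2RKE}: $\Phi$ is a right Kan extension of $\Phi|_{(\pbbS^{2,1,\ldots,1})^{\op}}$. Since $-\infty$ is initial in $\tbbS^{2,1,\ldots,1}$, this right Kan extension formula evaluates to
\[ \Phi(-\infty) \simeq \lim \Phi|_{(\pbbS^{2,1,\ldots,1})^{\op}}. \]
Now $(d^{1},\id,\ldots,\id)$ lands in $\pbbS^{2,1,\ldots,1}$ (as $d^{1}$ sends the three objects of $\bbS^{1}$ to $[0,0], [0,2], [2,2]$, all in $\pbbS^{2}$), so $\Psi|_{\Sp^{n,\op}}$ is the pullback of $\Phi|_{(\pbbS^{2,1,\ldots,1})^{\op}}$ along a functor $\iota \colon \Sp^{n,\op} \to (\pbbS^{2,1,\ldots,1})^{\op}$. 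It is therefore enough to show that $\iota$ is coinitial, for this will give $\lim \Psi|_{\Sp^{n,\op}} \simeq \lim \Phi|_{(\pbbS^{2,1,\ldots,1})^{\op}} \simeq \Phi(-\infty) = \Psi(-\infty)$.

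To verify coinitiality via \ThmA{}, it suffices to show that for every $b = (b_{1}, b') \in \pbbS^{2} \times \Sp^{n-1} = \pbbS^{2,1,\ldots,1}$ the comma category $\Sp^{n} \times_{\pbbS^{2,1,\ldots,1}} (\pbbS^{2,1,\ldots,1})_{b/}$ is weakly contractible. Since both the source and target factor as products and $(d^{1},\id,\ldots,\id)$ is a product of maps, this comma category decomposes as
\[ \bigl(\bbS^{1} \times_{\pbbS^{2}} (\pbbS^{2})_{b_{1}/}\bigr) \times \Sp^{n-1}_{b'/}. \]
The second factor has the top element $(-\infty,\ldots,-\infty) \in \Sp^{n-1}$ as a terminal object. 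For the first factor, the key observation is that $d^{1}$ sends the apex $[0,1] \in \bbS^{1}$ to the top element $[0,2]$ of $\bbS^{2}$, hence of $\pbbS^{2}$; so $[0,1]$ lies above every $b_{1}$ and is terminal in the first factor. Each factor is therefore weakly contractible, and so is their product.

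The argument is essentially a bookkeeping exercise, combining the right Kan extension output of Proposition~\ref{propn:tbbS2RKE} with Proposition~\ref{propn:itlagRKE}. The only nontrivial content is the coinitiality verification, and the main obstacle will be handling its dependence on the poset structure of $\pbbS^{2}$; this is what the observation $d^{1}(-\infty) = [0,2] = \sup \pbbS^{2}$ takes care of cleanly.
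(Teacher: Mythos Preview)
Your overall strategy matches the paper's: reduce via Proposition~\ref{propn:itlagRKE}(ii) to (a) non-degenerate boundary and (b) the limit condition at the apex, then use the right Kan extension output of Proposition~\ref{propn:tbbS2RKE} together with a cofinality argument. However, both steps contain errors.

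For (a), your claim that every boundary face lands in $\bbL^{2,1,\ldots,1}$ under $(d^{1},\id,\ldots,\id)$ is false whenever the \emph{first} coordinate is the one left free. The apex $(0,1)$ of $\bbS^{1}$ maps under $d^{1}$ to $(0,2) \in \bbS^{2}$, and $(0,2) \notin \bbL^{2}$ since $2-0 > 1$. So hypothesis~(2) does not directly cover such faces. The paper closes this gap by inducting on $n$: a boundary face with first coordinate free sits inside a smaller $\tbbS^{2,1,\ldots,1}$ (with fewer $\Sp^{1}$ factors), and the restriction of $\Phi$ to that smaller diagram again satisfies hypotheses~(1)--(3), so the inductive case of the Corollary gives the required non-degeneracy.

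For (b), you have the ordering on $\bbS^{2}$ backwards. By Definition~\ref{defn:bbS}, $(i,j) \leq (i',j')$ iff $i \leq i'$ and $j' \leq j$, which makes $(0,2)$ the \emph{initial} object of $\bbS^{2}$ and of $\pbbS^{2}$, not the top. Thus $d^{1}(0,1) = (0,2)$ does not lie above every $b_{1}$; rather $(0,2) \leq b_{1}$ for all $b_{1}$. The comma category $\bbS^{1} \times_{\pbbS^{2}} (\pbbS^{2})_{b_{1}/}$ consists of those $x \in \bbS^{1}$ with $b_{1} \leq d^{1}(x)$: for $b_{1} = (0,1)$ it is the single point $\{(0,0)\}$, for $b_{1} = (1,2)$ it is $\{(1,1)\}$, and for $b_{1} = (0,2)$ it is all of $\bbS^{1}$ (contractible since $(0,1)$ is initial there). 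The same reversal affects your second factor: $(-\infty,\ldots,-\infty)$ is initial in $\Sp^{n-1}$, so $\Sp^{n-1}_{b'/}$ has $b'$ as an initial object, not $-\infty$ as a terminal one. Your cofinality conclusion is correct, but the reason is that each comma factor is either a point or has an initial object --- exactly as in the paper's computation --- not the terminal-object argument you give.
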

\begin{proof}
  We induct on $n$. For $n = 0$ there is nothing to prove, so assume
  the result is known for all $i < n$. Then $\Psi$ has non-degenerate
  boundary, so by Proposition~\ref{propn:itlagRKE} it suffices to
  show that $\Psi((0,1),\ldots,(0,1))$ is the limit of
  $\Psi$ restricted to $\Sp^{n,\op}$. By
  Proposition~\ref{propn:tbbS2RKE} we know that $\Phi$ is a
  right Kan extension of its restriction to $(\partial
  \bbS^{2,1,\ldots,1})^{\op}$, so in particular
  $\Psi((0,1),\ldots,(0,1)) =
  \Phi((0,2),\ldots,(0,1))$ is the limit of
  $\Phi$ restricted to $(\pbbS^{2,1,\ldots,1})^{\op}$. To
  complete the proof it this suffices to show that the inclusion
  $\Sp^{n} \hookrightarrow \pbbS^{2,1,\ldots,1}$ is cofinal. Appealing
  yet again to \ThmA{}, it suffices to show that for each $X \in
  \pbbS^{2,1,\ldots,1}$, the category $\Sp^{n}_{X/}$ is weakly
  contractible. If $X = ((\alpha_{1},\beta_{1}), \ldots,
  (\alpha_{n},\beta_{n}))$, then  $\Sp^{n}_{X/} \simeq
  \prod_{i} (\Sp^{1})_{(\alpha_{i},\beta_{i})/}$, and so it suffices
  to show that $(\Sp^{1})_{(\alpha,\beta)/}$ is weakly contractible
  when $(\alpha,\beta)$ is $(0,1)$ or $(1,2)$. But this is clear,
  since in these two cases this category consists of the single object
  $(0,0)$ or $(1,1)$, respectively.
\end{proof}

\begin{corollary}\label{cor:d1}\ 
  \begin{enumerate}[(i)]
  \item Suppose given a Lagrangian diagram \[F \colon \bbS^{2}
    \times \Sp^{n-1} = \bbS^{2,1,\ldots,1} \to \PSymp_{S,s}.\] Then the
    composite
    \[ F' \colon \Sp^{n} \xto{(d^{1},\id,\ldots,\id)}
    \bbS^{2,1,\ldots,1} \longrightarrow \PSymp_{S,s} \] is an $n$-uple Lagrangian
    correspondence.
  \item Suppose given an oriented diagram \[G \colon \bbS^{2} \times
    \Sp^{n-1} = \bbS^{2,1,\ldots,1} \to \POr_{S,d}^{\op}.\] Then the composite
    \[ G' \colon \Sp^{n} \xto{(d^{1},\id,\ldots,\id)}
    \bbS^{2,1,\ldots,1} \longrightarrow \POr_{S,d}^{\op} \] is an $n$-uple oriented
    cospan.
  \end{enumerate}
\end{corollary}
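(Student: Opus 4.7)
We prove (i); statement (ii) follows by a parallel argument using the diagrams of Corollary~\ref{cor:cospanEdiag} (for arbitrary dualizable $\mathcal{E} \in \QCoh$ of the bottom vertex) in place of those of Corollary~\ref{cor:twoformTw!Diag}, with the compatibility of pushforwards of dualizable sheaves under colimits of stacks playing the role of the cotangent-complex analysis below.

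The plan is to reduce to Corollary~\ref{cor:tbbS2nd}. Viewing $\bbS^{2,1,\ldots,1}$ as $K^{\triangleleft}$ with $-\infty$ its unique maximum $((0,2),(0,1),\ldots,(0,1))$, the construction of Corollary~\ref{cor:twoformTw!Diag} produces a diagram $\widehat{F}\colon \tbbS^{2,1,\ldots,1} \to \QCoh(X)$, where $X := F((0,2),(0,1),\ldots,(0,1))$. Since $(d^{1},\id,\ldots,\id)\colon \Sp^{n} \to \bbS^{2,1,\ldots,1}$ preserves this apex, the naturality of the construction (in the spirit of Remark~\ref{rmk:pbTwdiagnatural}, but applied to restriction along an apex-preserving inclusion rather than to base change in $S$) identifies the diagram $\widehat{F'}\colon \tSp^{n} \to \QCoh(X)$ associated to $F'$ with the restriction of $\widehat{F}$ along $\Tw_{!}^{r}(d^{1},\id,\ldots,\id)$. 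By Corollary~\ref{cor:tbbS2nd}, it therefore suffices to verify the three hypotheses of Proposition~\ref{propn:tbbS2RKE} for $\widehat{F}$.

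Condition~(2) is essentially a restatement of the Lagrangian assumption on $F|_{\bbL^{2,1,\ldots,1}}$: by definition, each inert inclusion $\Sp^{m} \hookrightarrow \bbL^{2,1,\ldots,1}$ restricts $F$ to an $m$-fold Lagrangian correspondence, whose associated non-degenerate diagram on $\tSp{}^{m}$ is, again by the naturality just used, exactly the corresponding restriction of $\widehat{F}$. For conditions~(1) and~(3) we use that $F$ is cartesian: by Lemma~\ref{lem:PSymppullback}, this gives $F(Z) \simeq \lim_{Y \in \bbL^{2,1,\ldots,1}_{Z/}} F(Y)$ in $\dSt_{S}$ for every $Z \in \bbS^{2,1,\ldots,1}$. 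The standard cofibre sequence for the relative cotangent complex of a pullback square of Artin $S$-stacks yields a cartesian square of pulled-back relative tangent complexes; dualizing, using the dualizability from Theorem~\ref{thm:geomhascotgt}(ii), yields the analogous statement for shifted cotangent complexes. Iterating this over the diagram shapes $\bbL^{2,1,\ldots,1}_{Z/}$ and pulling back along $f_{Z}\colon X \to F(Z)$ via Lemma~\ref{lem:relcotgtpb} gives the required limit presentations of $\widehat{F}(Z) = f_{Z}^{*}\mathbb{T}_{F(Z)/S}$ and $\widehat{F}(Z^{\vee}) = f_{Z}^{*}\mathbb{L}_{F(Z)/S}[s]$, which are precisely conditions~(3) and~(1).

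The main technical obstacle is the bookkeeping: making the naturality of $\widehat{(\cdot)}$ with respect to the apex-preserving inclusion $(d^{1},\id,\ldots,\id)$ precise, and iterating the passage from cartesian squares of stacks to cartesian squares of pulled-back cotangent complexes across the somewhat elaborate diagram shapes $\bbL^{2,1,\ldots,1}_{Z/}$. Once these are set up, verifying conditions~(1)--(3) is direct from the definition of a Lagrangian (resp.\ oriented) diagram: (2) from the sub-span condition on $\bbL^{2,1,\ldots,1}$, and (1), (3) from $F$ being cartesian (resp.\ $G$ being cocartesian).
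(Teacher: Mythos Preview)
Your proposal is correct and follows the same strategy as the paper: reduce to Corollary~\ref{cor:tbbS2nd} by verifying the three hypotheses of Proposition~\ref{propn:tbbS2RKE} for the induced diagram on $\tbbS^{2,1,\ldots,1}$.

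The paper's execution is crisper in two places. For condition~(3), the paper simply invokes Proposition~\ref{propn:tgtlim} (the cotangent complex functor takes finite limits of geometric morphisms to relative colimits, hence tangent complexes to limits) in case~(i), and Corollary~\ref{cor:qcohrellim} together with the fact that $\Gamma_{S'}$ is a right adjoint in case~(ii); these encapsulate exactly the ``iterated cofibre sequences'' argument you sketch. For condition~(1), the paper observes that the square in question is the \emph{linear dual} of the square
\[
\Phi((0,2),X) \to \Phi((0,1),X),\ \Phi((1,2),X) \to \Phi((1,1),X),
\]
which is already known to be cartesian by condition~(3). Dualizing a cartesian square gives a cocartesian one, and stability of $\QCoh$ makes it cartesian again. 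Your phrasing of condition~(1) as a ``limit presentation of $\widehat{F}(Z^{\vee})$ over $\bbL^{2,1,\ldots,1}_{Z/}$'' is slightly imprecise --- condition~(1) is a single pullback-square requirement on the $(\blank)^{\vee}$ side, not a general limit description --- and you do not make explicit the cocartesian-to-cartesian step via stability, but your underlying idea (dualize the tangent-complex statement) is exactly what the paper does.
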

\begin{proof}
  Applying Corollary~\ref{cor:tbbS2nd} it suffices to check in case
  (i) that the three assumptions in Proposition~\ref{propn:tbbS2RKE}
  hold for the induced diagram $\widetilde{F} \colon
  \tbbS^{2,1,\ldots,1} \to \QCoh(S)$ ($S :=
  F((0,2),(0,1),\ldots,(0,1))$), and in case (ii) that they hold for
  the induced diagram $\widetilde{G}_{\mathcal{E}} \colon
  \tbbS^{2,1,\ldots,1} \to \QCoh(S')$ for any morphism $S' \to S$ and
  any dualizable $\mathcal{E} \in \QCoh(S' \times_{S} X)$.

  Condition (3) follows from Proposition~\ref{propn:tgtlim} in case
  (i) and in case (ii) from Corollary~\ref{cor:qcohrellim} and the fact
  that $\Gamma_{S'}$ is a right adjoint by Remark~\ref{rmk:GammaSradj}.

  Condition (2) says precisely that $F$ is Lagrangian in case (i) and
  oriented in case (ii).

  In both cases the commutative square in (1) is the dual of the
  commutative square \nolabelcsquare{\Phi((0,2), X)}{\Phi((0,1),
    X)}{\Phi((1,2), X)}{\Phi((1,1), X),} (with $\Phi = \widetilde{F}$
  or $\widetilde{G}_{\mathcal{E}}$, respectively). This is a pullback
  square by condition (3), hence the dual is a pushout square. But
  then as $\QCoh(S)$ and $\QCoh(S')$ are stable \icats{} it is also a
  pullback square.
\end{proof}

\begin{proof}[Proof of Theorem~\ref{thm:LAGcoCart}]
  Since we know the projections \[\SPAN^{+}_{n}(\PSymp_{S,s}), \COSPAN^{+}_{n}(\POr_{S,d}) \longrightarrow
  \Dnop\] are cocartesian fibrations by \cite[Proposition
  3.8]{spans}, it suffices to show that for every morphism $f \colon I
  \to J$ in $\Dnop$ and every $\Phi \in \LAGnpSs$ or $\ORnpSd$ over $I$,
  the cocartesian pushforward $f_{!}\Phi$ 
  is also in $\LAGnpSs$ or $\ORnpSd$. In other words, for $\Phi \colon \bbS^{I} \to
  \PSymp_{S,s}$ Lagrangian, we must show that the composite with $f^{*}
  \colon \bbS^{J} \to \bbS^{I}$ is also Lagrangian, and similarly for
  $\Phi \colon \bbS^{I} \to \POr_{S,d}$ oriented. By the definition of a
  Lagrangian or oriented functor, it suffices to consider this in the case where
  $\bbS^{J} \cong \Sp^{n}$ for some $n$. Moreover, the result is
  obvious if $f$ is inert, so using the active-inert factorization
  system it suffices to consider active $f$. Decomposing $f$ as a
  composite of elementary (inner) face maps and degeneracies, we
  conclude that it suffices to consider $f$ of the form
  $(\id_{[i_{1}]},\ldots, g, \ldots, \id_{[i_{k}]})$, where each
  $i_{t}$ is either $0$ or $1$, and $g$ is either the degeneracy
  $s^{0} \colon [1] \to [0]$ or the inner face map $d^{1} \colon [1]
  \to [2]$. In the first case the result follows from
  Proposition~\ref{propn:deg} and in the second case from
  Corollary~\ref{cor:d1}.
\end{proof}

\section{Symmetric Monoidal Structures}\label{subsec:itlagsymmon}
Our goal in this section is to show that the
$(\infty,n)$-categories $\Lag_{n}^{S,s}$ and $\Or_{n}^{S,d}$ have natural
symmetric monoidal structures. These will be restricted from symmetric
monoidal structures on $\Span_{n}(\PSymp_{S,s})$ and
$\Cospan_{n}(\POr_{S,d})$, which we will obtain by ``delooping'' as in
Remark~\ref{rmk:spanmondeloop}: We will show that there are natural
equivalences
\[ \Span_{n}(\PSymp_{S,s})(S,S) \simeq
  \Span_{n-1}(\PSymp_{S,s-1}),\]
\[  \Cospan_{n}(\POr_{S,d})(\emptyset,\emptyset) \simeq
  \Cospan_{n-1}(\POr_{S,d+1}),\] which implies that we have the
desired symmetric monoidal structures. By \cite{spans}*{Proposition
  8.3} we have equivalences of \icats{}
\[ \Span_{n}(\PSymp_{S,s})(S,S) \simeq \Span_{n-1}((\PSymp_{S,s})_{/S,S}),\]
\[ \Cospan_{n}(\POr_{S,d})(\emptyset,\emptyset)\simeq
\Cospan_{n-1}((\POr_{S,d})_{\emptyset,\emptyset/}),\] so this amounts
to identifying these slices.

We first consider the presymplectic case, where the following is the
key observation:
\begin{proposition}\label{propn:Atclloop}
  There is a pullback square
  \nolabelcsquare{\Atcl_{S}(s-1)}{S}{S}{\Atcl_S(s),}
  natural in $S$.
\end{proposition}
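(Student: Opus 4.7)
The plan is to identify $\Atcl_S(s-1)$ with the based loop space of $\Atcl_S(s)$ at the zero section, using the fact that the closed $s$-shifted 2-form functor is naturally pointed (by the zero form) and shifts by one under looping.

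First, I would recall from the definitions in appendix~\ref{sec:dag} that the functor $X \mapsto \Atcl_S(X,s)$ on $\dSt_S$ is represented by the stack $\Atcl_S(s) \in \dSt_S$ and can be described as the mapping space (in an appropriate complex built from the weighted negative cyclic complex of the relative cotangent complex) at homological degree $s$; in particular this functor lifts naturally to a functor valued in connective spectra, pointed by the zero closed 2-form, and shifting the parameter $s$ by $-1$ corresponds to looping. Concretely, for every $X \in \dSt_S$ there is a natural equivalence
\[
\Atcl_S(X, s-1) \;\simeq\; \Omega_0 \Atcl_S(X, s),
\]
where $\Omega_0$ denotes the loop space at the zero form. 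This already identifies $\Atcl_S(s-1)$ with a representing stack for the looping functor.

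Next, I would interpret the map $S \to \Atcl_S(s)$ in $\dSt_S$ as the section classifying the zero 2-form, which agrees with the basepoint used above; since $S$ is terminal in $\dSt_S$, for any $X \in \dSt_S$ the Yoneda computation gives
\[
\Map_{\dSt_S}\bigl(X, S \times_{\Atcl_S(s)} S\bigr) \;\simeq\; \{0\} \times_{\Atcl_S(X,s)} \{0\} \;\simeq\; \Omega_0 \Atcl_S(X,s) \;\simeq\; \Atcl_S(X, s-1).
\]
By the Yoneda lemma applied in $\dSt_S$, this identifies $S \times_{\Atcl_S(s)} S$ with $\Atcl_S(s-1)$, yielding the desired pullback square.

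For naturality in $S$, I would observe that all ingredients are compatible with base change $\sigma \colon S' \to S$: the functor $\Atcl_S(\blank, s)$ is compatible with pullbacks of relative cotangent complexes (Lemma~\ref{lem:relcotgtpb}), the zero section is preserved under $\sigma^*$, and the loop/shift identification is constructed pointwise. The main technical point to verify is the shift equivalence $\Atcl_S(X, s-1) \simeq \Omega_0 \Atcl_S(X, s)$ itself; this is built into the definition of the complex of closed shifted forms, but deserves an explicit cross-reference to the appendix, and is the only place where one needs to look inside the construction of $\Atcl_S(s)$ rather than simply use its universal property.
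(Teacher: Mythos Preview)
Your proof is correct and follows essentially the same approach as the paper: both establish the pointwise equivalence $\Atcl_S(X,s-1)\simeq\Omega_0\Atcl_S(X,s)$ and then invoke Yoneda in $\dSt_S$ to identify the pullback $S\times_{\Atcl_S(s)}S$ with $\Atcl_S(s-1)$. The only difference is that where you defer the shift/loop identification to ``the definition of the complex of closed shifted forms,'' the paper makes it explicit: since $\Atcl_S(X,s)=\Map_{\GMM_\k}(\k(2)[-2-s],\DR(X/S))$ and there is a pushout square $\k(2)[-2-s]\to 0,\,0\to\k(2)[-2-s+1]$ in graded mixed modules, mapping out converts this to the required pullback of spaces.
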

\begin{proof}
  For every $S$-stack $X$, we have a natural cartesian square
  \nolabelcsquare{\Atcl_{S}(X,s-1)}{*}{*}{\Atcl_{S}(X,s),}
  since by the definition of $\Atcl_{S}$ this is
  \[
    \begin{tikzcd}[column sep=-1em]
      \Map_{\txt{GMMod}_\k}(\k(2)[-2-s+1],
      \DR(X/S)) \arrow{r} \arrow{d} & * \arrow{d} \\
      * \arrow{r} & \Map_{\txt{GMMod}_\k}(\k(2)[-2-s], \DR(X/S)),
    \end{tikzcd}
  \]
  which is cartesian since we have a pushout square of graded mixed
  modules
  \nolabelcsquare{\k(2)[-2-s]}{0}{0}{\k(2)[-2-s+1].}
  These diagrams are functorial in $X$, and identifying them with
  $S$-stacks we get the desired pullback square; they are also
  natural in $S$ (since $\DR(X/S)$ is functorial in both $X$ and $S$).
\end{proof}

\begin{remark}
  Proposition~\ref{propn:Atclloop} shows that $\Atcl_{S}(s)$ is an
  infinite loop object in the $\infty$-topos $\dSt_{S}$, and hence a
  (grouplike) commutative monoid. It follows from the proof that this structure is
  equivalent to the commutative monoid structure arising from
  $\txt{GMMod}_\k$ being a stable \icat{} (which is the natural
  \emph{addition} of differential forms), since both arise from the
  shift functor in $\txt{GMMod}_\k$.
\end{remark}

\begin{corollary}\label{cor:deloopPSymp}
  There is an equivalence \[(\PSymp_{S,s})_{/S,S} \isoto
  \PSymp_{S,s-1},\] natural in $S$, which fits in a commutative square
  \[
    \begin{tikzcd}
      (\PSymp_{S,s})_{/S,S} \arrow{r}{\sim} \arrow{d} & \PSymp_{S,s-1}
      \arrow{d} \\
      (\dSt_{S})_{/S,S} \arrow{r}{\sim} & \dSt_{S},
    \end{tikzcd}
  \]
  where the vertical maps are the forgetful functors.\qed
\end{corollary}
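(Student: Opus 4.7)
\medskip

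\noindent\textbf{Proof plan.}
The strategy is to reduce the claim to Proposition~\ref{propn:Atclloop} by unwinding the definitions of the double slice and of $\PSymp_{S,s}$. First, recall that $\PSymp_{S,s}$ is the full subcategory of $\dSt_{S/\Atcl_{S}(s)}$ spanned by those $(X,\omega)$ for which the underlying $S$-stack $X \to S$ is Artin; equivalently, by Remark~\ref{rmk:PSympRfib}, the forgetful functor $\PSymp_{S,s}\to \dStArt_{S}$ is the base change of the right fibration $\dSt_{S/\Atcl_{S}(s)}\to \dSt_{S}$, which classifies the functor $X \mapsto \Map_{\dSt_{S}}(X,\Atcl_{S}(s))$. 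Since the condition on $X$ of being Artin is insensitive to the closed 2-form, it suffices to produce an equivalence of right fibrations
\[
  (\dSt_{S/\Atcl_{S}(s)})_{/(S,0),(S,0)} \isoto \dSt_{S/\Atcl_{S}(s-1)}
\]
over $\dSt_{S}$, and then restrict to the full subcategory where $X$ is Artin.

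The plan for the middle step is: a double slice of a right fibration classifies the pointwise double loop space of the classifying functor. More precisely, the right fibration $(\dSt_{S/\Atcl_{S}(s)})_{/(S,0),(S,0)} \to \dSt_{S}$ classifies the functor
\[
  X \;\longmapsto\; \{0\} \times_{\Map_{\dSt_{S}}(X, \Atcl_{S}(s))} \{0\}.
\]
Now apply $\Map_{\dSt_{S}}(X,\blank)$ to the pullback square of Proposition~\ref{propn:Atclloop}: since this functor preserves limits, one obtains a natural equivalence
\[
  \Map_{\dSt_{S}}(X,\Atcl_{S}(s-1)) \;\simeq\; \{0\} \times_{\Map_{\dSt_{S}}(X,\Atcl_{S}(s))} \{0\},
\]
which is exactly the classifying functor for the right fibration on the left-hand side above. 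This gives the desired equivalence of right fibrations, and restricting to Artin underlying stacks yields $(\PSymp_{S,s})_{/S,S} \isoto \PSymp_{S,s-1}$.

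The commutativity of the square with the forgetful functors is automatic from this description, since both equivalences are constructed over $\dSt_{S}$ (using $(\dSt_{S})_{/S,S} \simeq \dSt_{S/S \times_{S} S} = \dSt_{S}$ from the universal property recalled in the Notation section). For naturality in $S$, the cleanest route is to repeat the argument globally: instead of working over a fixed base $S$, carry out the same construction on the cartesian fibrations $\PSymp_{s}\to\dSt$ and $\PSymp_{s-1}\to\dSt$ of the subsection \S\ref{subsec:psympcat}, using the naturality in $S$ built into Proposition~\ref{propn:Atclloop}. There are no substantial obstacles here; the only mildly delicate point is the identification of double slices of right fibrations with pointwise double loop spaces, which is formal.
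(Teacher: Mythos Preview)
Your proof is correct and follows the intended approach: the paper leaves this corollary with a bare \qed, treating it as immediate from Proposition~\ref{propn:Atclloop}, and you have correctly unpacked that implication. One small terminological slip: the fibre $\{0\}\times_{\Map_{\dSt_S}(X,\Atcl_S(s))}\{0\}$ is the \emph{single} loop space $\Omega_{0}\Atcl_{S}(X,s)$, not a double loop space, but your formula and its use are right.

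If anything, there is a slightly slicker phrasing available that avoids arguing fibrewise: for any $\infty$-category $\mathcal{C}$ with pullbacks, one has $(\mathcal{C}_{/A})_{/x,y}\simeq \mathcal{C}_{/x\times_{A}y}$ (products in $\mathcal{C}_{/A}$ are fibre products over $A$, and slices compose). Applied with $\mathcal{C}=\dSt_{S}$, $A=\Atcl_{S}(s)$ and $x=y=(S,0)$, this gives $(\dSt_{S/\Atcl_{S}(s)})_{/(S,0),(S,0)}\simeq \dSt_{S/(S\times_{\Atcl_{S}(s)}S)}$, and then Proposition~\ref{propn:Atclloop} identifies $S\times_{\Atcl_{S}(s)}S$ with $\Atcl_{S}(s-1)$ directly. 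This is equivalent to your right-fibration argument but packages the ``double slice classifies the loop space'' step into a single categorical identity; either route is fine.
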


In the preoriented case, we similarly have:
\begin{proposition}\label{propn:deloopPOr}
  There is an equivalence $(\POr_{S,d})_{\emptyset,\emptyset/} \simeq
  \POr_{S,d+1}$, natural in $S$, which fits in a commutative square
  \[
    \begin{tikzcd}
      (\POr_{S,d})_{\emptyset,\emptyset/} \arrow{r}{\sim} \arrow{d} & \POr_{S,d+1}
      \arrow{d} \\
      (\dSt_{S})_{\emptyset,\emptyset/} \arrow{r}{\sim} & \dSt_{S},
    \end{tikzcd}
  \]
  where the vertical maps are the forgetful functors.
\end{proposition}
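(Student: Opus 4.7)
The plan is to mimic the delooping argument in the proof of Corollary~\ref{cor:deloopPSymp}, where the role of the pullback square in Proposition~\ref{propn:Atclloop} will be played by the fact that the stable \icat{} $\QCoh(S)$ has a natural ``delooping'' pullback square
\[
\begin{tikzcd}
\mathcal{O}_S[-d-1] \arrow{r} \arrow{d} & 0 \arrow{d} \\
0 \arrow{r} & \mathcal{O}_S[-d],
\end{tikzcd}
\]
that is natural in $S$. The strategy is to apply this delooping to the pullback defining $\POr_{S,d}$. Since double under-slices commute with pullbacks of \icats{},
\[
(\POr_{S,d})_{\emptyset,\emptyset/} \simeq (\dSt^{\UCC}_S)_{\emptyset,\emptyset/} \times_{(\QCoh(S)^{\op})_{0,0/}} ((\QCoh(S)_{/\mathcal{O}_S[-d]})^{\op})_{\bullet,\bullet/},
\]
where $\bullet$ denotes the image of $\emptyset$, namely the initial object $(0 \to \mathcal{O}_S[-d])$ of $\QCoh(S)_{/\mathcal{O}_S[-d]}$. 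Since $\emptyset$ is initial in $\dSt^{\UCC}_S$ and $0$ is terminal in $\QCoh(S)$ (hence initial in $\QCoh(S)^{\op}$), the first two factors simplify to $\dSt^{\UCC}_S$ and $\QCoh(S)^{\op}$ respectively, which also produces the bottom equivalence in the commuting square of the statement.

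The heart of the argument is computing the third piece. Setting $\mathcal{A} := \QCoh(S)_{/\mathcal{O}_S[-d]}$, I would first apply the slice-of-slice identity $(\mathcal{C}_{/x})_{/(y \to x)} \simeq \mathcal{C}_{/y}$ to get $\mathcal{A}_{/\bullet} \simeq \QCoh(S)_{/0} \simeq \QCoh(S)$, under which the forgetful functor $\mathcal{A}_{/\bullet} \to \mathcal{A}$ becomes the ``zero section'' $c \mapsto (c \xto{0} \mathcal{O}_S[-d])$. Then the double slice $\mathcal{A}_{/\bullet,\bullet} \simeq \mathcal{A}_{/\bullet} \times_{\mathcal{A}} \mathcal{A}_{/\bullet}$ becomes the pullback $\QCoh(S) \times_{\mathcal{A}} \QCoh(S)$ with both legs the zero section; combining the delooping pullback above with the fact that slicing commutes with pullbacks, this is naturally equivalent to $\QCoh(S)_{/\mathcal{O}_S[-d-1]}$. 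Taking opposites, the third factor is $(\QCoh(S)_{/\mathcal{O}_S[-d-1]})^{\op}$, and reassembling the pullback we recognize the defining pullback of $\POr_{S,d+1}$.

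The remaining work is essentially formal bookkeeping: tracking that the map $\dSt^{\UCC}_S \to (\QCoh(S)_{/\mathcal{O}_S[-d-1]})^{\op}$ induced by the equivalence is indeed $\Gamma_S\mathcal{O}$ followed by the zero section, and that the forgetful functors down to $\dSt^{\UCC}_S$ and $\dSt_S$ are preserved at every step; naturality in $S$ propagates since the defining pullback of $\POr_{S,d}$, the stability of $\QCoh(S)$, and the delooping square are all natural in $S$. I expect the main technical obstacle to lie in the pullback manipulations involving opposite categories and the carefully-matched ``zero section'' maps, which are conceptually clear but need to be handled with some care to ensure the identifications are coherent.
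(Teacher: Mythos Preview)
Your proposal is correct and follows essentially the same approach as the paper. The paper's proof is much terser---it simply writes down the pullback square for $(\POr_{S,d})_{\emptyset,\emptyset/}^{\op}$ with corners $(\dSt_S)_{\emptyset,\emptyset/}^{\op}$, $\QCoh(S)_{/0,0}$, and $\QCoh(S)_{/0\to\mathcal{O}_S[-d]\from 0}$, invokes the delooping pullback $\mathcal{O}_S[-d-1]\simeq 0\times_{\mathcal{O}_S[-d]}0$, and declares this to be the defining pullback of $\POr_{S,d+1}$---but your more detailed unpacking (slice-of-slice, identification of the zero section, commuting slicing with pullbacks) is exactly what lies beneath that declaration.
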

\begin{proof}
  From the definition of $\POr_{S,d}$ we have a pullback square
  \[
    \begin{tikzcd}
      (\POr_{S,d})_{\emptyset,\emptyset/}^{\op} \arrow{r} \arrow{d} &
      \QCoh(S)_{/0 \to \mathcal{O}_{S}[-d] \from 0} \arrow{d} \\
      (\dSt_{S})_{\emptyset,\emptyset/}^{\op} \arrow{r} & \QCoh(S)_{/0,0}.
    \end{tikzcd}
  \]
  Since we have a pullback square
  \nolabelcsquare{\mathcal{O}_{S}[-d-1]}{0}{0}{\mathcal{O}_{S}[-d],}
  we may identify this with the pullback square defining $\POr_{S,d+1}$.
\end{proof}

Applying \cite{spans}*{Proposition 8.3}, we get:
\begin{corollary}\label{cor:SpanSSeq}
  For all $n,s,d,S$, there are natural equivalences
  \[\Span_{n}(\PSymp_{S,s})(S,S) \simeq \Span_{n-1}(\PSymp_{S,s-1}),\]
  \[\Cospan_{n}(\POr_{S,d})(\emptyset,\emptyset) \simeq
    \Cospan_{n-1}(\POr_{S,d+1}).\]
  \qed
\end{corollary}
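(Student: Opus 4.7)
The plan is to obtain both equivalences as a direct two-step composition: first invoke the mapping space identification of \cite{spans}*{Proposition 8.3} for higher spans/cospans, and then substitute the slice/coslice identifications established in Corollary~\ref{cor:deloopPSymp} and Proposition~\ref{propn:deloopPOr}. Concretely, \cite{spans}*{Proposition 8.3} provides natural equivalences
\[
  \Span_{n}(\mathcal{C})(X,Y) \simeq \Span_{n-1}(\mathcal{C}_{/X,Y}), \qquad \Cospan_{n}(\mathcal{D})(X,Y) \simeq \Cospan_{n-1}(\mathcal{D}_{X,Y/}),
\]
provided $\mathcal{C}$ has pullbacks and $\mathcal{D}$ has pushouts (or at least the weakly contractible finite limits/colimits needed to form iterated spans and cospans). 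Both hypotheses are in place here: $\PSymp_{S,s}$ has pullbacks by Lemma~\ref{lem:PSymppullback}, and $\POr_{S,d}$ has pushouts by Proposition~\ref{propn:POrpushout}.

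Applying the first equivalence to $\mathcal{C} = \PSymp_{S,s}$ with $X = Y = S$ gives $\Span_{n}(\PSymp_{S,s})(S,S) \simeq \Span_{n-1}((\PSymp_{S,s})_{/S,S})$, and then Corollary~\ref{cor:deloopPSymp} identifies $(\PSymp_{S,s})_{/S,S}$ with $\PSymp_{S,s-1}$, yielding the first asserted equivalence. Analogously, applying the cospan version to $\mathcal{D} = \POr_{S,d}$ with $X = Y = \emptyset$ gives $\Cospan_{n}(\POr_{S,d})(\emptyset,\emptyset) \simeq \Cospan_{n-1}((\POr_{S,d})_{\emptyset,\emptyset/})$, and Proposition~\ref{propn:deloopPOr} identifies the coslice with $\POr_{S,d+1}$.

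The only thing left to worry about is the naturality in $S$ claimed by the statement. Both of the underlying slice/coslice identifications were shown to be natural in $S$ (via the commutative squares appearing in Corollary~\ref{cor:deloopPSymp} and Proposition~\ref{propn:deloopPOr}), and the mapping space identification in \cite{spans}*{Proposition 8.3} is functorial in the ambient \icat{}; composing two natural transformations of functors out of $\dSt^{\op}$ (say, to $\CatI$) thus produces the desired natural equivalences. I expect the only mildly subtle point will be verifying that the functor sending $S$ to $\Span_{n-1}(\PSymp_{S,s-1})$ really assembles from the pullback functoriality of $\PSymp_{S,s-1}$ through the construction of $\Span_{n-1}$ in a way compatible with the mapping-space functoriality on $\Span_{n}(\PSymp_{S,s})(S,S)$ --- but this is a formal consequence of the functoriality of both constructions in the base \icat{} and requires no additional input.
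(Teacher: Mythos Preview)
Your proposal is correct and matches the paper's approach exactly: the paper simply says ``Applying \cite{spans}*{Proposition 8.3}, we get'' and places a \qed, relying on precisely the slice identifications of Corollary~\ref{cor:deloopPSymp} and Proposition~\ref{propn:deloopPOr} that you spell out. Your additional remarks on naturality are a reasonable elaboration of what the paper leaves implicit.
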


Now Remark~\ref{rmk:spanmondeloop} gives the following:
\begin{corollary}\label{cor:SpanPSympSM}
  There are natural symmetric monoidal structures on the $(\infty,n)$-categories
  $\Span_{n}(\PSymp_{S,s})$ and $\Cospan_{n}(\POr_{S,d})$. The former
  is given on objects by assigning to $(X, \omega
  \in \Atcl_{S}(X,s))$ and $(X', \omega' \in \Atcl_{S}(X',s)$ the
  presymplectic $S$-stack $(X
  \times_{S} X', \pi_{X}^{*}\omega + \pi_{X'}^{*}\omega')$, where
  $\pi_{X}$ and $\pi_{X'}$ are the projections from $X \times_{S}X'$
  to $X$ and $X'$. The latter is given by assigning to $(T,[T])$ and
  $(T',[T'])$ the preoriented $S$-stack $(T \amalg T', i_{T,*}[T] +
  i_{T',*}[T'])$, where $i_{T}$ and $i_{T'}$ are the inclusions of $T$
  and $T'$ in the coproduct $T \amalg T'$. \qed
\end{corollary}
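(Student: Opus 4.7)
The plan is to obtain both symmetric monoidal structures via the delooping principle of Remark~\ref{rmk:spanmondeloop}, fed by the sequence of equivalences from Corollary~\ref{cor:SpanSSeq}, and then to identify the tensor products on objects by tracing through a single looping step.

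For the existence of a symmetric monoidal structure on $\Span_{n}(\PSymp_{S,s})$, iterating Corollary~\ref{cor:SpanSSeq} yields, for every $k \geq 0$, natural equivalences of $(\infty,n)$-categories
\[
  \Span_{n}(\PSymp_{S,s}) \;\simeq\; \Omega^{k}_{(S,0)} \Span_{n+k}(\PSymp_{S,s+k}),
\]
where $\Omega_{(S,0)}$ denotes the based endomorphism $(\infty,n+k-1)$-category at $(S,0)$. By Remark~\ref{rmk:spanmondeloop}, each such presentation endows $\Span_{n}(\PSymp_{S,s})$ with an $E_{k}$-monoidal structure; compatibility across all $k$ promotes this to an $E_{\infty}$-, and hence a symmetric monoidal, structure. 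The parallel argument, using Proposition~\ref{propn:deloopPOr} in place of Corollary~\ref{cor:deloopPSymp}, gives the symmetric monoidal structure on $\Cospan_{n}(\POr_{S,d})$. Naturality in $S$ follows from the naturality built into every step.

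To identify the tensor product on objects in the presymplectic case, I observe first that the unit is $(S,0)$ by construction, as it is the base object of the looping. Given two objects $(X,\omega_{X})$ and $(X',\omega_{X'})$ in $\PSymp_{S,s}$, their tensor product corresponds, under the equivalence of Corollary~\ref{cor:deloopPSymp}, to the composition in $\Span_{n+1}(\PSymp_{S,s+1})$ of the two self-spans of $(S,0)$; this composition is the pullback $(X) \times_{(S,0)} (X')$ in $\PSymp_{S,s+1}$. By Lemma~\ref{lem:PSymppullback} the underlying stack is $X \times_{S} X'$, and the pullback square in $\PSymp_{S,s+1}$ encodes equivalences of the $(s+1)$-shifted form on $X \times_{S} X'$ with the pullbacks along the two projections. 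Reinterpreting these coherences through the delooping of Proposition~\ref{propn:Atclloop}, they assemble into the $s$-shifted closed form $\pi_{X}^{*}\omega_{X} + \pi_{X'}^{*}\omega_{X'}$ on $X \times_{S} X'$.

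The oriented case is formally dual. The composition of self-cospans of $(\emptyset,0)$ in $\Cospan_{n+1}(\POr_{S,d-1})$ is a pushout in $\POr_{S,d-1}$, which by Proposition~\ref{propn:POrpushout} has underlying stack $T \amalg T'$; its preorientation, induced by the left fibration structure of Remark~\ref{rmk:POrLfib}, corresponds under the identification of Proposition~\ref{propn:deloopPOr} to $i_{T,*}[T] + i_{T',*}[T']$. The main (and only) real step is the bookkeeping that identifies the loop concatenation coming from the delooping with the pointwise additive structure on closed forms (respectively, on preorientations). This was already observed in the remark after Proposition~\ref{propn:Atclloop} for the presymplectic case, and follows from the same infinite-loop-space argument applied to $\QCoh(S)_{/\mathcal{O}_{S}[-d]}$ in the oriented case, using that $\QCoh(S)$ is stable; with this identification in hand, the claimed formulas are immediate.
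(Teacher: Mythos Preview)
Your proposal is correct and follows essentially the same approach as the paper, which treats this corollary as immediate (hence the \qed{} with no proof text) from Corollary~\ref{cor:SpanSSeq} combined with the delooping principle of Remark~\ref{rmk:spanmondeloop}. You have simply unpacked the identification of the tensor product on objects that the paper leaves implicit, correctly invoking the remark after Proposition~\ref{propn:Atclloop} to match the loop-space concatenation with the additive structure on forms.
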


\begin{proposition}\label{propn:deloop}
  The equivalences of Corollary~\ref{cor:SpanSSeq} restrict to
  equivalences
  \[ \LagnSs(S,S) \simeq \Lag_{n-1}^{S,s-1}, \qquad
    \OrnSd(\emptyset,\emptyset) \simeq \Or_{n-1}^{S,d+1}.\]
\end{proposition}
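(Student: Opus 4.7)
The plan is to show that under the equivalence $\Span_{n}(\PSymp_{S,s})(S,S) \simeq \Span_{n-1}(\PSymp_{S,s-1})$ of Corollary~\ref{cor:SpanSSeq}, an $(i{+}1)$-fold span ending at $S$ on both sides is Lagrangian in the $s$-shifted sense exactly when the corresponding $i$-fold span is Lagrangian in the $(s{-}1)$-shifted sense (and dually in the oriented case). I will concentrate on the symplectic statement; the oriented one follows by the same strategy using Proposition~\ref{propn:deloopPOr} and Corollary~\ref{cor:cospanEdiag} in place of Corollary~\ref{cor:deloopPSymp} and Proposition~\ref{propn:twoformTwDiag}.

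An $i$-morphism of $\LagnSs(S,S)$ is, via \cite{spans}*{Proposition 8.3}, represented by an $(i{+}1)$-fold span $\Phi \colon \lsp^{i+1} \to \PSymp_{S,s}$ sending the first pair of legs $A_{1},B_{1}$ to $S$ (with its zero form), whose apex $L := \Phi(-\infty)$ carries an $s$-shifted closed $2$-form $\omega$ equipped with two null-homotopies. Under the equivalence of Corollary~\ref{cor:deloopPSymp}, this data corresponds to an $i$-fold span $\Psi \colon \lsp^{i} \to \PSymp_{S,s-1}$ with the same underlying span of stacks and with apex carrying the $(s{-}1)$-shifted closed $2$-form $\omega'$ determined by the loop identification of Proposition~\ref{propn:Atclloop}. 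My plan is to compare the diagrams $\widehat{\Phi} \colon \tlsp{}^{i+1} \to \QCoh(L)$ and $\widehat{\Psi} \colon \tlsp{}^{i} \to \QCoh(L)$ produced by Corollary~\ref{cor:spantSpDiag} and show their non-degeneracy conditions coincide.

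The core step is an axis-by-axis reduction using Proposition~\ref{propn:foldLagcrit} (or equivalently Corollary~\ref{cor:Lagoneaxis}). Since $\Phi$ sends $A_{1}$ and $B_{1}$ to $S$, the values of $\widehat{\Phi}$ at the corresponding objects of $\tlsp{}^{i+1}$ vanish (because $\pi_{L}^{*}\mathbb{T}_{S/S} = 0 = \pi_{L}^{*}\mathbb{L}_{S/S}[s]$). The iterative limit of Proposition~\ref{propn:foldLagcrit} therefore produces at the first stage
\[
M_{1} \simeq 0 \times_{\mathbb{L}_{L/S}[s]} 0 \simeq \mathbb{L}_{L/S}[s-1],
\]
while the subsequent $M_{j}$ assemble exactly the iterated pullback that computes the limit of $\widehat{\Psi}$. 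The plan is then to check that the induced map $\mathbb{T}_{L/S} \to M_{1} \simeq \mathbb{L}_{L/S}[s-1]$ agrees with the tangent-to-cotangent map $\widetilde{\omega'}$ associated to $\omega'$; this is the natural compatibility of the construction in Proposition~\ref{propn:twoformTwDiag} with the loop-space identification of Proposition~\ref{propn:Atclloop}, essentially the statement that the cocartesian pushforward in $\mathcal{Q}$ intertwines the addition of $2$-forms with the stable structure of graded mixed modules used to build the pullback square of $\Atcl_{S}$. I expect this verification of compatibility to be the main technical obstacle; given it, iterating the argument on every sub-span corresponding to an inert inclusion $\Sp^{m} \hookrightarrow \bbS^{\mathbf{k}}$ shows that $\Phi$ is Lagrangian (in the sense defining $\LAGnpSs$) if and only if $\Psi$ is, which gives the desired equivalence.

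The oriented case runs in parallel: an $i$-morphism of $\OrnSd(\emptyset,\emptyset)$ is an $(i{+}1)$-fold cospan starting from $\emptyset$ on both sides, which under Proposition~\ref{propn:deloopPOr} becomes an $i$-fold cospan in $\POr_{S,d+1}$. For each dualizable $\mathcal{E} \in \QCoh(X)$ the diagrams $\widehat{\Phi}_{\mathcal{E}}$ and $\widehat{\Psi}_{\mathcal{E}}$ of Corollary~\ref{cor:cospanEdiag} are compared by the same inductive pullback procedure (now in $\QCoh(S)$, via Proposition~\ref{propn:foldLagcrit}), using that $\Gamma_{S}\mathcal{O}_{\emptyset} \simeq 0$; the analogue of the pullback square of Proposition~\ref{propn:Atclloop} is the pullback square $\mathcal{O}_{S}[-d-1] \simeq 0 \times_{\mathcal{O}_{S}[-d]} 0$ used in the proof of Proposition~\ref{propn:deloopPOr}, which shifts the orientation degree from $d$ to $d{+}1$.
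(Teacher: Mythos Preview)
Your approach is correct and closely parallels the paper's. The paper works with $n$-uple spans and invokes Corollary~\ref{cor:Lagoneaxis}: it right Kan extends $\widetilde{F}|_{\Sp^{n,\op}}$ along $\Sp^{n,\op} \hookrightarrow \Sp^{1,\op,\triangleleft} \times \Sp^{n-1,\op}$, observes that the two outer slices $(0,0)\times\Sp^{n-1,\op}$ and $(1,1)\times\Sp^{n-1,\op}$ are constant at $0$ since $F$ lands in $S$ there, so that the restriction to $\{-\infty\}\times\Sp^{n-1,\op}$ is $\widetilde{F}|_{(0,1)\times\Sp^{n-1,\op}}[-1]$; then Proposition~\ref{propn:itlagRKE} plus induction identifies this shifted diagram with the non-degeneracy condition for the corresponding $(n{-}1)$-uple span in $\PSymp_{S,s-1}$. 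You instead phrase everything in the $n$-fold picture and use the iterative pullback description of Proposition~\ref{propn:foldLagcrit}, computing $M_{1}\simeq 0\times_{\mathbb{L}_{L/S}[s]}0\simeq\mathbb{L}_{L/S}[s-1]$ and then matching the remaining $M_{j}$'s with those for $\widehat{\Psi}$. These are two packagings of the same collapse-one-axis argument; the paper's right Kan extension and your iterated fiber product compute the same limit. The compatibility you flag---that the induced map $\mathbb{T}_{L/S}\to\mathbb{L}_{L/S}[s-1]$ really is $\widetilde{\omega'}$, and that the maps $\widehat{\Phi}(A_{j})\to M_{1}$ match those for $\widehat{\Psi}$---is equally needed in the paper's proof (hidden in the sentence ``Appealing to Proposition~\ref{propn:itlagRKE} and the inductive hypothesis''), so your explicit acknowledgment of it is a virtue rather than a gap.
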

\begin{proof}
  We will prove the Lagrangian case; the oriented case is similar.
  Inducting on $n$, it suffices to show that an iterated span $F \colon
  \Sp^{n} \to \PSymp_{S,s}$ from $S$ to $S$ with Lagrangian boundary is
  Lagrangian \IFF{} the restriction of $F$ to $\{(0,1)\} \times
  \Sp^{n-1}$ is Lagrangian when considered as a functor to
  $\PSymp_{S,s-1}$. Let $\widetilde{F}$ be the associated diagram $\tSp{}^{n}
  \to \QCoh(X)$ ($X = F((0,1),\ldots,(0,1))$). By
  Corollary~\ref{cor:Lagoneaxis} we know that $F$ is Lagrangian \IFF{}
  $\mathbb{T}_{X/S}$ is the limit of $\Xi|_{(-\infty) \times
    \Sp^{n-1,\op}}$, where \[\Xi \colon
  (\Sp^{1, \op})^{\triangleleft} \times \Sp^{n-1,\op} \longrightarrow
  \QCoh(X)\] is the right Kan extension of $\widetilde{F}$. Since $F$
  lies in $\Span_{n}(\PSymp_{S,s})$, the restrictions of $F$ to $(0,0)
  \times \Sp^{n-1}$ and $(1,1) \times \Sp^{n-1}$ are constant at the
  terminal object
  $S$ hence the restriction of $\widetilde{F}$ to $(0,0) \times
  \Sp^{n-1,\op}$ and $(1,1) \times \Sp^{n-1,\op}$ is constant at
  $0$. Thus $F$ is Lagrangian \IFF{} $\mathbb{T}_{X}$ is the limit of
  $\widetilde{F}|_{(0,1) \times
    \Sp_{-1}^{\op}}[-1]$. Appealing to Proposition~\ref{propn:itlagRKE}
  and the inductive hypothesis we see that this is equivalent to $F$
  being Lagrangian when considered as an $(n-1)$-fold span in
  $\PSymp_{S,s-1}$, which completes
  the proof.
\end{proof}

\begin{corollary}\label{cor:Lagsymmon}
  The symmetric monoidal structures of Corollary~\ref{cor:SpanPSympSM}
  restrict to the sub-$(\infty,n)$-categories $\LagnSs$ and $\OrnSd$. \qed
\end{corollary}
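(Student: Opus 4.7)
The plan is to show that the symmetric monoidal structure on $\Span_n(\PSymp_{S,s})$ (resp.\ $\Cospan_n(\POr_{S,d})$) from Corollary~\ref{cor:SpanPSympSM}, which arises via Remark~\ref{rmk:spanmondeloop} by iterated delooping of Corollary~\ref{cor:SpanSSeq}, restricts to the sub-$(\infty,n)$-category $\Lag_n^{S,s}$ (resp.\ $\Or_n^{S,d}$). Concretely, iterating the one-step delooping equivalence of Corollary~\ref{cor:SpanSSeq} $k$ times yields
\[ \Span_n(\PSymp_{S,s}) \simeq \Span_{n+k}(\PSymp_{S,s+k})(S,\ldots,S), \]
where the right-hand side denotes the iterated endomorphism $(\infty,n)$-category at the terminal presymplectic stack $S$; having these equivalences coherently for all $k$ assembles $\Span_n(\PSymp_{S,s})$ into a symmetric monoidal $(\infty,n)$-category. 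The parallel story for $\Cospan_n(\POr_{S,d})$ uses the initial preoriented stack $\emptyset$ and Proposition~\ref{propn:deloopPOr}.

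The key input for the restriction is Proposition~\ref{propn:deloop}, which states that the one-step deloop equivalence restricts to $\Lag_n^{S,s}(S,S) \simeq \Lag_{n-1}^{S,s-1}$ and analogously in the oriented case. Note that $S$ belongs to $\Lag_{n+k}^{S,s+k}$ for all $k$ because $\mathbb{L}_{S/S} \simeq 0$ makes its trivial presymplectic structure tautologically symplectic, and similarly $\emptyset$ belongs to $\Or_{n+k}^{S,d-k}$ for all $k$. Iterating Proposition~\ref{propn:deloop} therefore gives, for every $k \geq 0$, equivalences
\[ \Lag_n^{S,s} \simeq \Lag_{n+k}^{S,s+k}(S,\ldots,S), \qquad \Or_n^{S,d} \simeq \Or_{n+k}^{S,d-k}(\emptyset,\ldots,\emptyset) \]
fitting into commutative squares with the corresponding equivalences for $\Span_n$ and $\Cospan_n$ via the fully faithful inclusions of sub-$(\infty,n)$-categories. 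Passing to the colimit in $k$ then produces the desired restricted symmetric monoidal structure.

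The main obstacle will be articulating the coherence between these iterated delooping equivalences as $k$ varies, so that the $E_k$-monoidal structures on $\Lag_n^{S,s}$ genuinely assemble into an $E_\infty$-monoidal (i.e.\ symmetric monoidal) structure compatible with the one on $\Span_n(\PSymp_{S,s})$. This should follow from the naturality built into Proposition~\ref{propn:deloop} and its ingredients (Corollary~\ref{cor:deloopPSymp} and Proposition~\ref{propn:deloopPOr}, which ultimately rest on the naturality of the pullback square in Proposition~\ref{propn:Atclloop}), together with the standard principle that an $(\infty,n)$-category exhibited as the iterated endomorphism object at a chosen point of an $(\infty,n+k)$-category for every $k$ acquires a canonical symmetric monoidal structure.
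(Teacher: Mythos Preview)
Your proposal is correct and matches the paper's approach: the corollary is marked \qed{} because it follows immediately from Proposition~\ref{propn:deloop}, which is exactly the key input you identify. Your worry about coherence in the final paragraph is overstated --- since Proposition~\ref{propn:deloop} exhibits the equivalences $\Lag_n^{S,s}(S,S) \simeq \Lag_{n-1}^{S,s-1}$ as \emph{restrictions} of the equivalences in Corollary~\ref{cor:SpanSSeq}, the full sub-$(\infty,n)$-categories $\Lag_{n+k}^{S,s+k}$ form a sub-tower of the tower defining the symmetric monoidal structure on $\Span_n(\PSymp_{S,s})$, and the coherence is inherited automatically rather than needing to be checked separately.
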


\section{Full Dualizability}\label{subsec:dual}
Our goal in this section is to prove that all $s$-symplectic
$S$-stacks are fully dualizable as objects of $\LagnSs$, and similarly
that 
all $d$-oriented $S$-stacks are fully dualizable in $\OrnSd$. As a first step,
which will allow us to reach this conclusion by induction, we will
show that 1- and 2-morphisms have adjoints:
\begin{proposition}\label{propn:adj12}\ 
  \begin{enumerate}[(i)]
  \item The $(\infty,n)$-category $\LagnSs$ has adjoints for 1-morphisms.
  \item Suppose $X$ and $X'$ are $s$-symplectic $S$-stacks. Then the
    $(\infty,n-1)$-category $\LagnSs(X, X')$ has adjoints for
    1-morphisms.
  \item The $(\infty,n)$-category $\OrnSd$ has adjoints for 1-morphisms.    
  \item Suppose $T$ and $T'$ are $d$-oriented $S$-stacks.  Then the
    $(\infty,n-1)$-category $\OrnSd(T, T')$ has adjoints for
    1-morphisms.
  \end{enumerate}
\end{proposition}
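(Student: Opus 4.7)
The plan is to exhibit the adjoint of a $1$-morphism in the bare $(\infty,n)$-category $\Span_{n}(\PSymp_{S,s})$ (resp. $\Cospan_{n}(\POr_{S,d})$) by reversing the span (resp. cospan), and then verify that all the adjunction data — reversed $1$-morphism, unit, counit — lies in the Lagrangian (resp. oriented) sub-$(\infty,n)$-category. The adjoint of $X\xfrom{f} L \xto{g} Y$ is the reversed correspondence $Y \xfrom{g} L\xto{f} X$, and the Lagrangian non-degeneracy square in Definition~\ref{defn:Lagcorr} is manifestly symmetric in $f$ and $g$, so the reversed span is Lagrangian whenever the original is. For the oriented case (iii), exactly the same remark applies after swapping the roles of pullbacks/pushouts and tangent/pushforward data.

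For the unit $\eta\colon \id_{X}\to L^{\vee}\circ L$ and counit $\epsilon\colon L\circ L^{\vee}\to \id_{Y}$ in $\Span_{n}(\PSymp_{S,s})$, the standard construction takes the apex to be $L$ itself, with legs given by the diagonal $L\to L\times_{X}L$ (resp. $L\to L\times_{Y}L$) and the structural maps $f$ and $g$. The corresponding $2$-fold span is a reduced $\lsp^{2}$-diagram built entirely out of identities, diagonals, and the two legs of $L$. To see that it is a $2$-fold Lagrangian correspondence, I would use Proposition~\ref{propn:foldLagcrit}, which reduces non-degeneracy of a $2$-fold span to a single cartesian square in $\QCoh$ of the apex. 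The innermost pullback $M_{1}$ is, up to the identifications given by the identity spans, equivalent to $\mathbb{T}_{L/S}$, and the resulting square agrees, after applying the twisted arrow construction of Corollary~\ref{cor:twoformTw!Diag}, with the Lagrangian square for $L$ itself. In other words, the non-degeneracy of unit and counit is exactly the non-degeneracy assumption on $L$.

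For (iii) and the unit/counit constructions there, the dualization is completely formal: pullbacks become pushouts in $\POr_{S,d}$, and Proposition~\ref{propn:foldLagcrit} is applied instead to the diagram $\widehat{\Phi}_{\mathcal{E}}$ produced by Corollary~\ref{cor:cospanEdiag} for each dualizable $\mathcal{E}\in\QCoh(L)$. The required cartesian square again reduces pointwise to the orientation condition on the original cospan, which one checks base-changes correctly to every $\Spec A\to S$ using Lemma~\ref{lem:ncospanorpts}.

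For (ii) and (iv), a $1$-morphism of $\LagnSs(X,X')$ (resp. $\OrnSd(T,T')$) is a $2$-fold Lagrangian (resp. oriented) correspondence between two fixed $1$-morphisms, and one constructs its adjoint by reversing in the ``second'' coordinate direction, with unit and counit given by $3$-fold diagrams built from identities and diagonals on the apex. The same strategy works: by Proposition~\ref{propn:itlagRKE} non-degeneracy of the resulting $\tSp^{3}$-diagrams reduces to the non-degeneracy of boundary faces and finally to the Lagrangian condition on the original $2$-fold correspondence. The main technical obstacle in all four cases is the last step — verifying the non-degeneracy of the unit/counit diagrams — because the reduction via Propositions~\ref{propn:itlagRKE} and~\ref{propn:foldLagcrit} hides a careful bookkeeping of cartesian squares and $\Tw^{r}_{!}$-structures, which one must match directly against the original Lagrangian/oriented data; once this identification is made the remaining checks are formal from the stability of $\QCoh$ and the symmetry of the non-degeneracy conditions.
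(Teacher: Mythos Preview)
Your overall strategy matches the paper's: reduce to the ambient span category, reverse the span for the adjoint, then verify the unit and counit are Lagrangian using Proposition~\ref{propn:foldLagcrit}. The claim that the reversed span is Lagrangian is correct and immediate, just as you say.

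The gap is in your verification of the unit (and by symmetry the counit). Your assertion that ``the innermost pullback $M_{1}$ is \ldots\ equivalent to $\mathbb{T}_{L/S}$'' is false. Take the unit for $X \xfrom{f} L \xto{g} X'$: the $2$-fold span has $A_1 = B_1 = X$, $A_2 = X$ (identity), $B_2 = L \times_{X'} L$, apex $L$. Applying Proposition~\ref{propn:foldLagcrit} gives $M_0 = \mathbb{L}_{L/S}[s]$ and
\[
M_1 \;=\; \mathbb{T}_{X/S} \times_{\mathbb{L}_{L/S}[s]} \mathbb{T}_{X/S},
\]
which has no reason to be $\mathbb{T}_{L/S}$ (the Lagrangian condition gives $\mathbb{T}_{L/S} \simeq \mathbb{T}_{X/S} \times_{\mathbb{L}_{L/S}[s]} \mathbb{T}_{X'/S}$, with $X'$, not $X$). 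Consequently the square you must show cartesian,
\[
\begin{tikzcd}
\mathbb{T}_{L/S} \arrow{r} \arrow{d} & \mathbb{T}_{L \times_{X'} L/S} \arrow{d} \\
\mathbb{T}_{X/S} \arrow{r} & M_1,
\end{tikzcd}
\]
is \emph{not} the Lagrangian square for $L$, and you cannot conclude directly.

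What the paper actually does is a three-step pasting argument: first use Proposition~\ref{propn:tgtlim} to identify $\mathbb{T}_{L \times_{X'} L/S}$ as the pullback $\mathbb{T}_{L/S} \times_{\mathbb{T}_{X'/S}} \mathbb{T}_{L/S}$; stack this on top of the Lagrangian square for $L$ to get a cartesian rectangle with bottom-right corner $\mathbb{L}_{L/S}[s]$; then refactor through $M_1$ using its defining pullback square and apply $2$-out-of-$3$ for cartesian squares twice. This is exactly the ``careful bookkeeping'' you flag at the end, but it is not a formality --- the intermediate object $\mathbb{T}_{L \times_{X'} L/S}$ and the use of Proposition~\ref{propn:tgtlim} are essential ingredients that your sketch omits. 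The same issue will recur, one level up, in your argument for (ii) and (iv).
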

\begin{proof}
  We will prove the Lagrangian case; the proof in the oriented case is
  similar. We know from \cite{spans}*{Theorem 10.3} that
  $\Span_{n}(\PSymp_{S,s})$ has adjoints for $i$-morphisms for all $i =
  1,\ldots,n-1$. To show that $\LagnSs$ has adjoints for $i$-morphisms
  it therefore suffices to check that given an $i$-morphism, its
  adjoint as well as the unit and counit $(i+1)$-morphisms are all
  Lagrangian.
  
  Suppose $X$ and $X'$ are $s$-symplectic $S$-stacks, and $X
  \xfrom{f} Y \xto{g} X'$ is a Lagrangian correspondence, i.e. the
  induced square
  \nolabelcsquare{\mathbb{T}_{Y/S}}{\mathbb{T}_{X/S}}{\mathbb{T}_{X'/S}}{\mathbb{L}_{Y/S}[s]}
  is cartesian. (Here, and in the rest of the proof, we have
  simplified our notation by not indicating that the tangent and
  cotangent complexes are pulled back to $Y$.) The (left and right) adjoint in
  $\Span_{n}(\PSymp_{S,s})$ is given by reading the span the other way,
  which corresponds to reorienting the square of quasicoherent
  sheaves, so this is obviously again Lagrangian. We will prove that
  the unit map is Lagrangian; the proof for the counit is similar. The
  unit is given by the 2-fold span 
  $X \from Y \to Y \times_{X'} Y$ over
  $X$ and $X$. To prove that this is Lagrangian, we use the criterion
  of Proposition~\ref{propn:foldLagcrit}: let $M$ be defined by the pullback
  \nolabelcsquare{M}{\mathbb{T}_{X/S}}{\mathbb{T}_{X/S}}{\mathbb{L}_{Y/S}[s],}
  then we must show that the commutative square
  \nolabelcsquare{\mathbb{T}_{Y/S}}{\mathbb{T}_{Y\times_{X'}Y/S}}{\mathbb{T}_{X/S}}{M}
  is cartesian. Now consider the diagram
\[ \begin{tikzcd}
   \mathbb{T}_{Y \times_{X'} Y/S} \arrow{r}\arrow{d}& \mathbb{T}_{Y/S} \arrow{d}\\
   \mathbb{T}_{Y/S} \arrow{r}\arrow{d}& \mathbb{T}_{X'/S} \arrow{d}\\
   \mathbb{T}_{X/S} \arrow{r}& \mathbb{L}_{Y/S}[s]. \\
 \end{tikzcd}
\]
Here the top square is cartesian by Proposition~\ref{propn:tgtlim} and
the bottom square is cartesian since $Y$ is by assumption a Lagrangian
correspondence from $X$ to $X'$; thus the composite square is also
cartesian. Next look at the diagram
\[ \begin{tikzcd}
   \mathbb{T}_{Y \times_{X'} Y/S} \arrow{r}\arrow{d}& \mathbb{T}_{Y/S} \arrow{d}\\
   M \arrow{r}\arrow{d}& \mathbb{T}_{X/S} \arrow{d}\\
   \mathbb{T}_{X/S} \arrow{r}& \mathbb{L}_{Y/S}[s]. \\
 \end{tikzcd}
\]
The composite square is the same as before, and the bottom square is
cartesian by definition; it follows that the top square is also
cartesian. Finally we consider the diagram
 \[ \begin{tikzcd}
    \mathbb{T}_{Y/S} \arrow{r}\arrow{d}& \mathbb{T}_{Y \times_{X'} Y/S}
    \arrow{r}\arrow{d} & \mathbb{T}_{Y/S} \arrow{d}\\
    \mathbb{T}_{X/S} \arrow{r} & M \arrow{r} & \mathbb{T}_{X/S}\\
  \end{tikzcd}
 \]
Here we know the right-hand square is cartesian, and the composite
square is cartesian since the two horizontal composites are clearly
identity morphisms. It follows that the left-hand square is also
cartesian, which completes the proof.

The proof for 2-morphisms is essentially the same: we use
Proposition~\ref{propn:foldLagcrit} and the same arguments about cartesian
squares; we omit the details.
\end{proof}

\begin{remark}
  The same type of argument could be used to show that $\LagnSs$ and
  $\OrnSd$ have adjoints for $i$-morphisms for all $i$. We have chosen
  to instead give an inductive proof based on identifying the higher
  categories of maps in $\LagnSs$, as this will also allow us to
  relate the $(\infty,n)$-categories $\LagnSs$ to the definition of
  such higher categories sketched in \cite{CalaqueTFT}.
\end{remark}

\begin{definition}
  Suppose $X$ is an $s$-symplectic $S$-stack. We define
  $\Lagn^{X/S,s}$ to be the $(\infty,n)$-category
  $\Lag_{n+1}^{S,s}(S, X)$ of maps from the terminal object $S$ to $X$ in
  $\Lag_{n+1}^{S,s}$. The objects of $\Lagn^{X/S,s}$ are thus
  \emph{Lagrangian morphisms} with target $X$. Similarly, for $T$ a
  $d$-oriented $S$-stack, we define $\Orn^{T/S,d}$ to be
  $\Or_{n+1}^{S,d}(\emptyset, T)$.
\end{definition}

\begin{corollary}\label{cor:mapsareLagkS}
  Suppose $X$ is an $s$-symplectic $S$-stack, and $Y$ and
  $Z$ are objects of $\Lagn^{X/S,s}$. Then there is an equivalence
  \[ \Lagn^{X/S,s}(Y, Z) \simeq \Lag_{n-1}^{Z \times_{X}
    Y^{\txt{rev}}/S,s-1},\]
  where $Y^{\txt{rev}}$ means $Y$ viewed as a Lagrangian correspondence from
  $X$ to $S$, and $Z \times_{X} Y^{\txt{rev}}$ denotes the
  $(s-1)$-symplectic derived stack corresponding to the composite of
  $Z$ and $Y^{\txt{rev}}$ according to
  Proposition~\ref{propn:deloop}. Similarly, if $T$ is a $d$-oriented $S$-stack and $U$ and $V$ are objects of $\Orn^{T/S,d}$, then there
  is an equivalence
  \[ \Orn^{T/S,d}(U, V) \simeq \Or_{n-1}^{V \amalg_{T} U^{\txt{rev}}/S,d+1}.\]
\end{corollary}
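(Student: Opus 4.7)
The plan is to combine the adjunction structure on $\Lag_{n+1}^{S,s}$ established in Proposition~\ref{propn:adj12} with the delooping equivalence of Proposition~\ref{propn:deloop}. First I would observe that by Proposition~\ref{propn:adj12}(i), the $1$-morphism $Y\colon S\to X$ in $\Lag_{n+1}^{S,s}$ admits both adjoints, given by reading the Lagrangian correspondence backwards: this is precisely $Y^{\txt{rev}}\colon X\to S$. Standard adjunction yoga in an $(\infty,n)$-category then supplies an equivalence of $(\infty,n-1)$-categories
\[ \Lag_{n+1}^{S,s}(S,X)(Y,Z)\;\simeq\;\Lag_{n+1}^{S,s}(S,S)\big(\id_{S},\,Y^{\txt{rev}}\circ Z\big), \]
and, by the definition of composition in the span $(\infty,n+1)$-category, the composite $Y^{\txt{rev}}\circ Z\colon S\to S$ is the Lagrangian self-correspondence of $S$ whose apex is the pullback $Z\times_X Y$.

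Next, I would apply Proposition~\ref{propn:deloop} to obtain an equivalence $\Lag_{n+1}^{S,s}(S,S)\simeq\Lag_n^{S,s-1}$. Tracing through Corollary~\ref{cor:deloopPSymp}, the identity self-correspondence $\id_S$ corresponds to the terminal $S$-stack equipped with its (unique, trivially non-degenerate) $(s-1)$-symplectic structure, while the self-correspondence with apex $Z\times_X Y$ corresponds to the $(s-1)$-symplectic $S$-stack denoted $Z\times_X Y^{\txt{rev}}$ in the statement. Combining this with the previous paragraph and unfolding the defining identity $\Lag_{n-1}^{W/S,s-1}=\Lag_n^{S,s-1}(S,W)$ yields the desired equivalence in the symplectic case. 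The oriented case is then handled by the same argument with the evident substitutions: Proposition~\ref{propn:adj12}(iii) in place of~(i), Proposition~\ref{propn:deloopPOr} in place of Corollary~\ref{cor:deloopPSymp}, cospans in place of spans, pushouts in place of pullbacks, and the initial $S$-stack $\emptyset$ in place of the terminal one.

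The main obstacle will be the bookkeeping needed to track the specific objects through the delooping. Concretely, one must verify that under Corollary~\ref{cor:deloopPSymp} the identity self-correspondence of $S$ indeed corresponds to the trivial $(s-1)$-symplectic structure on $S$, and that the natural $s$-presymplectic form on $Z\times_X Y$ (which is the difference of pullbacks of $\omega_Z$ and $\omega_Y$, rendered nullhomotopic by the isotropic data on $Y$ and $Z$) is sent under the loop-space shift of Proposition~\ref{propn:Atclloop} to the correct $(s-1)$-symplectic form on $Z\times_X Y^{\txt{rev}}$. Since Propositions~\ref{propn:adj12} and \ref{propn:deloop} are already in hand, the remaining work amounts to unwinding definitions, and the analogous verification for orientations in the cospan case goes through by complete duality.
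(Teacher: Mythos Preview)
Your proposal is correct and follows essentially the same approach as the paper: use the adjunction $Y \dashv Y^{\txt{rev}}$ from Proposition~\ref{propn:adj12} to obtain $\Lag_{n+1}^{S,s}(S,X)(Y,Z)\simeq \Lag_{n+1}^{S,s}(S,S)(\id_S,\,Z\times_X Y^{\txt{rev}})$, and then apply the delooping equivalence of Proposition~\ref{propn:deloop}. The paper phrases the first step via the general fact that an adjunction $f\dashv g$ in an $(\infty,n)$-category induces adjunctions $f_*\dashv g_*$ on hom-categories, but this is exactly your ``standard adjunction yoga''.
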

\begin{proof}
  We will prove the Lagrangian case; the oriented case is proved
  similarly. If $\mathcal{C}$ is an $(\infty,n)$-category, and $f
  \colon A \to B$ is a 1-morphism in $\mathcal{C}$ that has a right
  adjoint $g \colon B \to A$, then the adjunction identities imply
  that for any $C \in \mathcal{C}$, composition with $f$ and $g$
  induces an adjunction
  \[ f_{*} : \mathcal{C}(C, A) \rightleftarrows \mathcal{C}(C,B) :
  g_{*},\]
  and so in particular a natural equivalence of $(\infty,n-2)$-categories
  \[ \mathcal{C}(C,B)(f_{*}\alpha, \beta) \simeq
  \mathcal{C}(C,A)(\alpha,g_{*}\beta)\]
  for any $\alpha \in \mathcal{C}(C,A)$ and $\beta \in
  \mathcal{C}(C,B)$. We apply this with $\mathcal{C} =
  \Lag_{n+1}^{S,s}$, $C = A = S$, $B = X$, $f = Y$ (viewed as a
  1-morphism from $S$ to $X$ in $\Lag_{n+1}^{S,s}$), $g =
  Y^{\txt{rev}}$, $\alpha = \id_{S}$ and $\beta = Z$ to get an
  equivalence
  \[ \Lagn^{X/S,s}(Y, Z) \simeq \Lag_{n+1}^{S,s}(S, X)(Y, Z) \simeq
  \Lag_{n+1}^{S,s}(S,S)(S, Z \times_{X} Y^{\txt{rev}}).\]
  Now we use Proposition~\ref{propn:deloop} to get an equivalence
  \[ \Lag_{n+1}^{S,s}(S,S)(S, Z \times_{X} Y^{\txt{rev}})
  \simeq \Lagn^{S,s-1}(S, Z \times_{X} Y^{\txt{rev}}) \simeq \Lag_{n-1}^{Z \times_{X} Y^{\txt{rev}}/S,s-1}.\qedhere\]  
\end{proof}

\begin{remark}\label{rmk:Calaquedefinitioneq}
  In \cite{CalaqueTFT} an inductive definition of
  iterated Lagrangian correspondences is given as follows: If $X$ is an
  $s$-symplectic $S$-stack and $Y \to X$ and $Z \to X$ are Lagrangian
  morphisms, then one defines an $n$-fold Lagrangian
  correspondence from $Y$ to $Z$ over $X$ to be an $(n-1)$-fold
  Lagrangian correspondence over $Z \times_{X}Y^{\txt{rev}}$. Taking a
  0-fold Lagrangian correspondence over $X$ to be a Lagrangian
  morphism to $X$, this defines $n$-fold Lagrangian correspondences
  over $S$ inductively in terms of Lagrangian morphisms; iterated
  Lagrangian correspondences are then defined to be iterated
  Lagrangian correspondences over $S$. It follows from
  Corollary~\ref{cor:mapsareLagkS} that this definition is equivalent
  to ours.
\end{remark}

\begin{corollary}\label{cor:LagnSadj}
  For any $s$-symplectic $S$-stack $X$ and any $d$-oriented
  $S$-stack $T$, the
  $(\infty,n)$-categories $\Lagn^{X/S,s}$ and $\Orn^{T/S,d}$ have adjoints.
\end{corollary}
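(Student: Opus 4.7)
The plan is to prove both statements simultaneously by induction on $n$. The key ingredients are already in hand: Proposition~\ref{propn:adj12}(ii) and (iv) provide adjoints for 1-morphisms in mapping $(\infty,n-1)$-categories of $\Lag_{n}^{S,s}$ and $\Or_{n}^{S,d}$, while Corollary~\ref{cor:mapsareLagkS} identifies the mapping $(\infty,n-1)$-categories in $\Lag_{n}^{X/S,s}$ and $\Or_{n}^{T/S,d}$ as $(\infty,n-1)$-categories of the same form, but one categorical level down and with the shift parameter changed by $\pm 1$.

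For the base case $n=1$, the statement that $\Lag_{1}^{X/S,s}$ has adjoints amounts to requiring its 1-morphisms to have left and right adjoints; unwinding the definition, $\Lag_{1}^{X/S,s}=\Lag_{2}^{S,s}(S,X)$, so this is exactly the content of Proposition~\ref{propn:adj12}(ii) (with $n$ replaced by $2$). The oriented case is identical using Proposition~\ref{propn:adj12}(iv).

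For the inductive step, assume the result is established for all $(\infty,m)$-categories of the form $\Lag_{m}^{W/S,t}$ and $\Or_{m}^{U/S,e}$ with $m<n$ and any choice of base stack and shift. To show that $\Lag_{n}^{X/S,s}=\Lag_{n+1}^{S,s}(S,X)$ has adjoints, it suffices to verify the two clauses in the recursive definition: (a) every 1-morphism admits left and right adjoints, and (b) every mapping $(\infty,n-1)$-category has adjoints. Clause (a) is Proposition~\ref{propn:adj12}(ii) applied to $\Lag_{n+1}^{S,s}(S,X)$, while clause (b) follows because by Corollary~\ref{cor:mapsareLagkS}, for any pair of objects $Y,Z\in\Lag_{n}^{X/S,s}$ we have $\Lag_{n}^{X/S,s}(Y,Z)\simeq\Lag_{n-1}^{Z\times_{X}Y^{\txt{rev}}/S,s-1}$, which has adjoints by the inductive hypothesis. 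The argument for $\Or_{n}^{T/S,d}$ is verbatim the same, using Proposition~\ref{propn:adj12}(iv) and the oriented half of Corollary~\ref{cor:mapsareLagkS}.

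There is no real obstacle here: the two preliminary results have been organized precisely so that this recursion unwinds cleanly. The only delicate point to record explicitly is that the identification in Corollary~\ref{cor:mapsareLagkS} is natural enough that ``having adjoints'' transports along it — but since that corollary is stated as an equivalence of $(\infty,n-1)$-categories, and the property of having adjoints is invariant under equivalence, this is automatic.
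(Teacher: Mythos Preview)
Your argument is correct and follows essentially the same approach as the paper: both use Proposition~\ref{propn:adj12}(ii),(iv) to handle 1-morphisms and Corollary~\ref{cor:mapsareLagkS} to reduce higher-morphism adjoints to the same statement one categorical level down. The paper phrases this as an induction on the morphism level $i$ (across all $n$ and $X$ simultaneously), while you induct on $n$; these are equivalent.

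One small slip in your base case: for an $(\infty,1)$-category the condition ``has adjoints'' is vacuous (it asks for adjoints of $i$-morphisms for $1\leq i\leq n-1=0$), so there is nothing to check when $n=1$ and no need to invoke Proposition~\ref{propn:adj12}(ii) there. Your inductive step then handles $n\geq 2$ correctly.
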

\begin{proof}
  By Proposition~\ref{propn:adj12} we know that $\Lagn^{X/S,s}$ has
  adjoints for $1$-morphisms for all $n$ and~$X$. To prove that
  $\Lagn^{X/S,s}$ has adjoints for $i$-morphisms for $i > 1$ we must show
  that for all $Y, Z \in \Lagn^{X/S,s}$ the $(\infty,n-1)$-category
  $\Lagn^{X/S,s}(Y,Z)$ has adjoints for $(i-1)$-morphisms. But by
  Corollary~\ref{cor:mapsareLagkS} this $(\infty,n-1)$-category is
  equivalent to $\Lag_{n-1}^{Z \times_{X} Y^{\txt{rev}}/S,s-1}$, so
  this follows by induction on $i$. The proof for $\Orn^{T/S,d}$ is the
  same.
\end{proof}

\begin{corollary}
  For any $s$ and $d$, the symmetric monoidal $(\infty,n)$-categories
  $\LagnSs$ and $\OrnSd$ have duals. In particular, every $s$-symplectic
  derived Artin stack is fully dualizable as an object of $\LagnSs$ and
  every $d$-oriented derived stack is fully dualizable as an object of
  $\OrnSd$.
\end{corollary}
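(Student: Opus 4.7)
The plan is to combine the results already established: Corollary~\ref{cor:Lagsymmon} supplies the symmetric monoidal structures, Proposition~\ref{propn:adj12} gives adjoints for $1$-morphisms in $\LagnSs$ and $\OrnSd$, and Corollary~\ref{cor:LagnSadj} gives adjoints for \emph{every} $i$-morphism in $\Lagn^{X/S,s}$ and $\Orn^{T/S,d}$.  To upgrade these to the statement that $\LagnSs$ and $\OrnSd$ have duals, I need to supply two further ingredients: \emph{(a)} explicit duals for every object, and \emph{(b)} an identification of the higher mapping categories of $\LagnSs$ and $\OrnSd$ that lets me feed them into Corollary~\ref{cor:LagnSadj}.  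I spell out the Lagrangian case; the oriented case will go through in parallel, with spans replaced by cospans, products by coproducts, and the diagonal by the fold map.

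For (a), I take the dual of $(X,\omega)$ to be $\overline{X} := (X,-\omega)$.  The diagonal $\Delta \colon X \to X\times_S X$ pulls the form $\pi_1^*(-\omega) + \pi_2^*\omega$ on $\overline{X}\otimes X$ back to zero, providing a canonical isotropic structure on the correspondence $S \leftarrow X \xto{\Delta} \overline{X}\otimes X$.  To see it is Lagrangian I will check the associated diagram of (co)tangent complexes: since $\widetilde\omega\colon\mathbb{T}_{X/S}\to\mathbb{L}_{X/S}[s]$ is an equivalence (as $X$ is symplectic), the defining square reduces to
\nolabelcsquare{\mathbb{T}_{X/S}}{\mathbb{T}_{X/S}\oplus\mathbb{T}_{X/S}}{0}{\mathbb{T}_{X/S}}
with right vertical $\pi_1-\pi_2$, which is manifestly cartesian (its fibre is the diagonal).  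A symmetric argument produces the evaluation $X\otimes\overline{X}\to S$, and the triangle identities should follow from the standard self-duality structure in $\Span_n(\PSymp_{S,s})$ together with a verification (via Proposition~\ref{propn:foldLagcrit}) that the required $2$-fold spans are Lagrangian.

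For (b), once duals exist the universal property of a dual in a symmetric monoidal $(\infty,n)$-category produces a natural equivalence of $(\infty,n-1)$-categories
\[
  \LagnSs(X, X') \;\simeq\; \LagnSs(S,\, \overline{X}\otimes X'),
\]
and by the definition $\Lag_{n-1}^{Y/S,s} := \Lag_n^{S,s}(S,Y)$ the right-hand side is precisely $\Lag_{n-1}^{\overline{X}\otimes X'/S,\,s}$.  Corollary~\ref{cor:LagnSadj} then supplies adjoints at every level in this $(\infty,n-1)$-category, which yields adjoints for $i$-morphisms of $\LagnSs$ with $i\ge 2$; combined with Proposition~\ref{propn:adj12}(i) for $i=1$, this establishes the required adjoints at every level.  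Together with the duals from (a), this is the statement that $\LagnSs$ has duals, and in particular every object is fully dualizable.

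I expect the main obstacle to be the verification of the triangle identities.  The underlying spans realize the standard self-duality structure available in any $\Span_n$ of an \icat{} with pullbacks, but I must check that the witnessing $2$-morphisms are Lagrangian as iterated spans, and not merely isotropic.  Using Proposition~\ref{propn:foldLagcrit}, this will reduce to a short list of pullback-square checks in $\QCoh$ built from $\widetilde\omega$ and the diagonal, in the same spirit as the argument appearing in the proof of Proposition~\ref{propn:adj12}, just with more bookkeeping.
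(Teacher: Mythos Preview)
Your approach is correct in outline, but it does substantially more work than the paper's proof, and the extra work is precisely the part you flagged as the main obstacle.

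The paper bypasses both your steps (a) and (b) by exploiting the delooping picture more fully.  For adjoints, it simply observes that $\LagnSs$ is itself of the form $\Lagn^{X/S,s'}$: by Proposition~\ref{propn:deloop} we have $\LagnSs \simeq \Lag_{n+1}^{S,s+1}(S,S) = \Lagn^{S/S,s+1}$, so taking $X=S$ in Corollary~\ref{cor:LagnSadj} immediately yields adjoints for \emph{all} $i$-morphisms in $\LagnSs$, with no need to first identify $\LagnSs(X,X')$ via a duality equivalence.  For duals, the symmetric monoidal structure on $\LagnSs$ was \emph{defined} by viewing it as endomorphisms of $S$ in $\Lag_{n+1}^{S,s+1}$; under this identification, the dual of an object is just its adjoint as a $1$-morphism in $\Lag_{n+1}^{S,s+1}$, and such adjoints exist by Proposition~\ref{propn:adj12}(i).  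The triangle identities for the duality are then literally the triangle identities for that adjunction, which were already established there --- so there is nothing further to check.

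What your route buys is an explicit description of the dual as $\overline{X}=(X,-\omega)$ with the diagonal as (co)evaluation, and indeed the paper records this as a remark after the proof.  But your proposed verification of the triangle identities is redundant: it re-proves, in the specific case of endomorphisms of $S$, exactly the unit/counit Lagrangian check already carried out in Proposition~\ref{propn:adj12}.  If you want to keep your explicit description, the cleanest justification is not a fresh computation but the observation that $X$, viewed as a correspondence $S\leftarrow X\to S$, has $\overline{X}$ as its reverse, and then invoke the adjunction from Proposition~\ref{propn:adj12} directly.
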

\begin{proof}
  Taking $X = S$ in Corollary~\ref{cor:LagnSadj} we know that $\LagnSs$
  has adjoints for all $n$ and $s$. Furthermore, the monoidal structure on
  $\LagnSs$ is defined by viewing $\LagnSs$ as the endomorphisms of $S$
  in $\Lag_{n+1}^{S,s+1}$. Hence, the objects of $\LagnSs$ have
  duals as they have adjoints when viewed as 1-morphisms in
  $\Lag_{n+1}^{S,s+1}$. The proof for $\OrnSd$ is the same.
\end{proof}

\begin{remark}
  Let $X$ be an $s$-symplectic $S$-stack. We may view $X$ as
  a Lagrangian correspondence $\xi$ from $S$ to $S$, and the reverse
  Lagrangian correspondence $\xi^{\txt{rev}}$ corresponds to the
  $s$-symplectic $S$-stack $\overline{X}$, meaning $X$ equipped
  with the negative of its symplectic form. Thus $\overline{X}$ is the
  dual of $X$ in $\LagnSs$.
\end{remark}

Invoking the cobordism hypothesis, we get:
\begin{corollary}\
  \begin{enumerate}[(i)]
  \item   Every $s$-symplectic $S$-stack $X$ gives rise to a framed
  extended $n$-dimensional topological field theory
  \[ \mathcal{Z}_{S} \colon \txt{Bord}_{0,n}^{\txt{fr}} \longrightarrow
  \LagnSs \]
  for every $n$.
\item Every $d$-oriented $S$-stack $T$ gives rise to a framed
  extended $n$-dimensional topological field theory
  \[ \mathcal{Z}_{T} \colon \txt{Bord}_{0,n}^{\txt{fr}} \longrightarrow
  \OrnSd \]
  for every $n$.
  \end{enumerate}
\end{corollary}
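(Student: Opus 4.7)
The plan is to deduce this as a direct application of the Cobordism Hypothesis for framed TFTs, using as input the full dualizability results already established. By the (framed) Cobordism Hypothesis of Baez--Dolan, with a detailed sketch of proof due to Lurie~\cite{LurieCob}, for any symmetric monoidal $(\infty,n)$-category $\mathcal{C}$ evaluation at the positively framed point induces an equivalence between the space of symmetric monoidal functors $\Bord_{0,n}^{\txt{fr}} \to \mathcal{C}$ and the space $\mathcal{C}^{\txt{fd}}$ of fully dualizable objects of $\mathcal{C}$.

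First, I would invoke the preceding corollary: the symmetric monoidal $(\infty,n)$-categories $\LagnSs$ and $\OrnSd$ (whose symmetric monoidal structures were constructed in Corollary~\ref{cor:Lagsymmon}) have duals and adjoints for $i$-morphisms at every level $1 \le i \le n-1$, so every object is fully dualizable. Hence the $s$-symplectic $S$-stack $X$ defines a point of $(\LagnSs)^{\txt{fd}}$, and the $d$-oriented $S$-stack $T$ defines a point of $(\OrnSd)^{\txt{fd}}$.

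Next, I would apply the Cobordism Hypothesis to each of these symmetric monoidal $(\infty,n)$-categories in turn: the point $X \in (\LagnSs)^{\txt{fd}}$ is the image under evaluation at the framed point of a symmetric monoidal functor $\mathcal{Z}_S \colon \Bord_{0,n}^{\txt{fr}} \to \LagnSs$, and similarly $T$ determines $\mathcal{Z}_T \colon \Bord_{0,n}^{\txt{fr}} \to \OrnSd$. This is the desired pair of framed extended $n$-dimensional TFTs, and both are uniquely determined up to a contractible space of choices by their value at the point.

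The only genuine obstacle is that this argument is not self-contained: it rests on the Cobordism Hypothesis, for which a complete proof is still not in the literature (only Lurie's detailed sketch~\cite{LurieCob}). In the body of the paper this is unavoidable at this level of generality; the substantive TFTs that the paper actually constructs from scratch (without appealing to the Cobordism Hypothesis) are the \emph{oriented} ones obtained later via the explicit functors $\orcut_{0,n}$ and $\txt{aksz}_{X}$. So the corollary should be framed as a formal consequence of full dualizability plus the framed Cobordism Hypothesis, with no further work required beyond citing the previous corollary and \cite{LurieCob}.
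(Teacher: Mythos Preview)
Your proposal is correct and matches the paper's approach exactly: the paper simply prefaces this corollary with ``Invoking the cobordism hypothesis, we get:'' and provides no further argument, relying on the preceding full dualizability result together with the framed Cobordism Hypothesis from \cite{LurieCob}. Your additional remarks about the status of the Cobordism Hypothesis and the contrast with the oriented TFTs constructed later are accurate contextual observations, though the paper itself leaves them implicit.
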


\section{Oriented Cospans of Spaces}\label{subsec:orspc}

We have already mentioned that  constant stacks on closed
oriented $d$-manifolds give $d$-oriented derived stacks, while oriented
$(d+1)$-dimensional cobordisms give $d$-oriented cospans. These and
higher-dimensional versions thereof are key to our construction of extended
TFTs via the AKSZ construction: we will later construct a symmetric
monoidal ``forgetful functor'' from the oriented cobordism
$(\infty,n)$-category to $\Or_{n}^{S,d}$ for any derived stack
$S$. For this it is convenient to isolate an intermediate
$(\infty,n)$-category of iterated oriented cospans of \emph{spaces},
given as a subobject of iterated cospans in \emph{preoriented spaces},
which we introduce in this section.

\begin{definition}
  For a space $X \in \mathcal{S}$ we write $C^{*}(X) \in \Mod_{\k}$
  for the $\k$-cochains on $X$, \ie{} $C^{*}(X) \coloneqq \lim_{X} \k$.  A
  \emph{$d$-preorientation} on a space $X \in \mathcal{S}$ is a
  morphism $[X]\colon C^{*}(X) \to \k[-d]$ in $\Mod_{\k}$. A
  \emph{$d$-preoriented space} is a finite cell complex $X$ equipped
  with a $d$-preorientation.
\end{definition}

\begin{remark}
  For a finite cell complex $X$ the cochains $C^{*}(X)$ are dualizable
  with dual the $\k$-chains $C_{*}(X) \coloneqq \colim_{X} \k$. A
  $d$-preorientation of $X$ is then equivalent to a map
  $\k[d] \to C_{*}(X)$, \ie{} a homology class in degree $d$.
\end{remark}

\begin{notation}
  For $A$ a commutative $\k$-algebra, a \emph{local system} of
  $A$-modules on $X$ is a functor $X \to \Mod_{A}$. For
  $\mathcal{E} \in \Fun(X, \Mod_{A})$ such a local system,
  we write
  \[ C_{*}(X;\mathcal{E}) \coloneqq \colim_{x \in X} \mathcal{E}_{x},\quad
    C^{*}(X;\mathcal{E}) \coloneqq \lim_{x \in X} \mathcal{E}_{x};\]
  these are the (co)chains for (co)homology with local coefficients on $X$.
  If $\mathcal{E}$ is the constant local system with value $A$, we
  also write $C_{*}(X; A)$ and $C^{*}(X; A)$.
\end{notation}

\begin{remark}
  An object $\mathcal{E} \in \Fun(X, \Mod_{A})$ is dualizable \IFF{}
  the value $\mathcal{E}_{x}$ at every point $x \in X$ is dualizable
  in $\Mod_{A}$. Note that if $X$ is a finite cell complex, then for
  $\mathcal{E}$ dualizable the $A$-module
  $C_{*}(X; \mathcal{E})$ is dualizable with dual
  $C^{*}(X;\mathcal{E}^{\vee})$, since dualizable objects are closed under
  finite (co)limits.
\end{remark}

\begin{definition}
  We say a $d$-preoriented space $(X, [X])$ is \emph{$d$-oriented} if
  for every commutative $\k$-algebra $A$ and every dualizable object $\mathcal{E}
  \in \Fun(X, \Mod_{A})$, the induced map
  \[ C^{*}(X; \mathcal{E}) \longrightarrow C_{*}(X; \mathcal{E})[-d] \]
  is an equivalence in $\Mod_{A}$, where this map is adjoint to the
  composite
  \[ C^{*}(X; \mathcal{E}^{\vee}) \otimes_{A} C^{*}(X; \mathcal{E}) \longrightarrow
    C^{*}(X; \mathcal{E}^{\vee}\otimes \mathcal{E}) \longrightarrow C^{*}(X; A)
    \simeq C^{*}(X) \otimes_{\k} A \xto{[X] \otimes_{\k} \id} A.\]
\end{definition}

\begin{example}
  If $X$ is the homotopy type of a closed oriented $d$-manifold and
  $[X]$ is the dual fundamental class of $X$, then Poincar\'e duality
  implies that $(X,[X])$ is $d$-oriented. More precisely, this follows
  easily from the approach to Poincar\'e duality through Verdier
  duality, as described in the textbooks \cite{DimcaSheaves}*{\S 3.3},
  \cite{IversenSheaves}*{\S 5.3}.
\end{example}

\begin{definition}
  We define $\POr_{d}^{\mathcal{S}}$ by the pullback
  \[
    \begin{tikzcd}
      \POr_{d}^{\mathcal{S}} \arrow{d} \arrow{r} &
      (\Mod_{\k/\k[-d]})^{\op} \arrow{d} \\
      \mathcal{S}_{\fin} \arrow{r}{C^{*}} & \Mod_{\k}^{\op}.
    \end{tikzcd}
  \]
\end{definition}

\begin{lemma}
  $\POr_{d}^{\mathcal{S}}$ has pushouts (and more generally weakly
  contractible finite colimits) and these are detected by the forgetful
  functor to $\mathcal{S}_{\fin}$.
\end{lemma}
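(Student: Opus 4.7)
The plan is to mirror the proof of Proposition~\ref{propn:POrpushout} essentially verbatim, replacing $\dSt_{S}^{\UCC}$ with $\mathcal{S}_{\fin}$ and $\Gamma_{S}\mathcal{O}$ with the cochain functor $C^{*}$. Passing to opposite categories throughout, I want to show that $(\POr_{d}^{\mathcal{S}})^{\op}$ has pullbacks (and more generally finite weakly contractible limits), computed on the underlying spaces in $\mathcal{S}_{\fin}^{\op}$, and that these are detected by the projection to $\mathcal{S}_{\fin}^{\op}$.

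The key ingredient is \cite[Lemma 5.4.5.5]{HTT} applied to the defining pullback square (with sides reversed). Its hypotheses require checking three standard facts: (a) $\mathcal{S}_{\fin}$ has finite colimits by definition, so $\mathcal{S}_{\fin}^{\op}$ has finite limits; (b) the overcategory $\Mod_{\k/\k[-d]}$ has all limits, and its forgetful functor to $\Mod_{\k}$ preserves weakly contractible limits, as is true for any overcategory; (c) the functor $C^{*}\colon \mathcal{S}_{\fin}^{\op} \to \Mod_{\k}$ preserves all limits, since by definition $C^{*}(X) = \lim_{X}\k$ and limits commute with limits in the indexing variable. Applying the lemma yields existence of finite weakly contractible limits in $(\POr_{d}^{\mathcal{S}})^{\op}$ together with preservation by both projections.

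For the ``detected'' clause, I will argue as in Lemma~\ref{lem:POreqces} that the projection to $\mathcal{S}_{\fin}^{\op}$ detects equivalences: the right-hand forgetful functor $\Mod_{\k/\k[-d]} \to \Mod_{\k}$ detects equivalences (as any slice forgetful functor does), so a morphism in the pullback is an equivalence iff its image in $\mathcal{S}_{\fin}^{\op}$ is one. A functor that preserves a given class of (co)limits and detects equivalences automatically detects those (co)limits, so we may conclude. Dualizing returns the stated result for pushouts in $\POr_{d}^{\mathcal{S}}$.

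I do not foresee any genuine obstacle: once $C^{*}$ is recognized as a limit functor (so that it transports colimits of spaces to limits of $\k$-modules), the argument is purely formal. The only care needed is the opposite-category bookkeeping, which is why the underlying space of a pushout of preoriented spaces gets a preorientation as a limit in $\Mod_{\k/\k[-d]}$ of the preorientations on the pieces.
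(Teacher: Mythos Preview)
Your proposal is correct and follows essentially the same approach as the paper: the paper's proof reads in its entirety ``Same as Proposition~\ref{propn:POrpushout},'' and you have faithfully unpacked that argument with $\mathcal{S}_{\fin}$ in place of $\dSt_{S}^{\UCC}$ and $C^{*}$ in place of $\Gamma_{S}\mathcal{O}$. Your additional remark that the forgetful functor detects equivalences (hence detects these colimits) makes explicit what the paper leaves implicit in the word ``detected'' versus ``computed in.''
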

\begin{proof}
  Same as Proposition~\ref{propn:POrpushout}.
\end{proof}

\begin{definition}
  For $X \in \mathcal{S}$, let $X_{B} \coloneqq \colim_{X} \Spec \k \in \dSt$ be the
  corresponding  constant stack, or \emph{Betti stack}. This determines a functor
  $(\blank)_{B} \colon \mathcal{S} \to \dSt$, which is the unique
  colimit-preserving functor taking $*$ to $\Spec \k$.
\end{definition}

\begin{proposition}
  For $X \in \mathcal{S}$ and $S \in \dSt$ there is a natural equivalence
  \[\QCoh(X_{B} \times S) \simeq \Fun(X, \QCoh(S)),\] whereby for a morphism
  $f \colon X \to Y$ in $\mathcal{S}$ the functor $(f_{B} \times \id_{S})^{*}$
  corresponds to the functor
  \[f^{*} \colon \Fun(Y, \QCoh(S)) \longrightarrow \Fun(X, \QCoh(S))\]
  given by composition with $f$,
  and $(f_{B} \times \id_{S})_{*}$ corresponds to the functor
  \[f_{*} \colon \Fun(X, \QCoh(S)) \longrightarrow \Fun(Y, \QCoh(S))\]
  given by right Kan extension along $f$.
\end{proposition}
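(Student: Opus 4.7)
The plan is to reduce everything to the fact that the Betti stack functor preserves colimits, combined with descent for $\QCoh$. First, since $(\blank)_B \colon \mathcal{S} \to \dSt$ is colimit-preserving by definition, writing $X \simeq \colim_X *$ in $\mathcal{S}$ gives $X_B \simeq \colim_X \Spec \k$ in $\dSt$. Because $\dSt$ is an $\infty$-topos, the product functor $(\blank) \times S$ preserves colimits, so
\[
X_B \times S \;\simeq\; \colim_{x \in X} \bigl(\Spec \k \times S\bigr) \;\simeq\; \colim_{x \in X} S.
\]
Here the structure maps $S \to X_B \times S$ of the colimit cocone are indexed by the points $x \in X$ and are given by $(x_B, \id_S) \colon \Spec \k \times S \to X_B \times S$.

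Next I would invoke the fact that $\QCoh \colon \dSt^{\op} \to \LCatI$ sends colimits to limits (a form of \'etale descent that holds by definition of $\QCoh$ via Kan extension, and is recorded as Corollary~\ref{cor:qcohrellim} in the appendix). Applying this to the colimit above yields
\[
\QCoh(X_B \times S) \;\simeq\; \lim_{x \in X} \QCoh(S).
\]
Since $X$ is a space, the right-hand side is the cotensor of $\QCoh(S)$ by $X$ in $\LCatI$, which is by definition $\Fun(X, \QCoh(S))$. Unwinding the identification, the equivalence sends $\mathcal{F} \in \QCoh(X_B \times S)$ to the functor $x \mapsto (x_B, \id_S)^* \mathcal{F}$, i.e.\@ the ``fibre'' of $\mathcal{F}$ at each point of $X$.

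For functoriality in $X$: a morphism $f \colon X \to Y$ induces a map of colimit cocones, compatible with the point inclusions in the evident way. Thus for any $y \in Y$ the pullback $(y_B, \id_S)^*$ factors through $(f_B \times \id_S)^*$ via $(x_B, \id_S)^*$ for any $x$ with $f(x) = y$. Under the identification above, this precisely says that $(f_B \times \id_S)^*$ corresponds to precomposition with $f$, proving the statement for $(\blank)^*$. The statement for $(\blank)_*$ is then formal: $(f_B \times \id_S)_*$ is the right adjoint to $(f_B \times \id_S)^*$, and the right adjoint of precomposition with $f \colon X \to Y$ on functor categories valued in a complete $\infty$-category is right Kan extension along $f$. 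The main subtlety is checking that the cotensoring identification $\lim_X \QCoh(S) \simeq \Fun(X,\QCoh(S))$ is natural in $X$ in the correct way, but this is immediate from the universal property of the cotensor, together with the compatibility of the point inclusions $\Spec \k \to X_B$ with the map $f_B$. No step presents a serious obstacle; the only care required is in the bookkeeping to confirm that the cocartesian structure on $x \mapsto S$ really does realize $X_B \times S$ as the claimed colimit naturally in $X$.
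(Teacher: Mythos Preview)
Your proposal is correct and follows the same approach as the paper: write $X_B \times S \simeq \colim_X S$ using that products in $\dSt$ preserve colimits, apply that $\QCoh$ sends colimits to limits to get $\lim_X \QCoh(S) \simeq \Fun(X,\QCoh(S))$, and deduce the pushforward statement by passing to right adjoints. One small citation issue: the fact that $\QCoh$ sends colimits of stacks to limits of \icats{} is part of Definition~\ref{defn:QCoh} (where $\QCoh$ is constructed as a limit-preserving functor), not Corollary~\ref{cor:qcohrellim}, which instead describes how individual objects decompose.
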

\begin{proof}
  This follows from descent: $X_{B} \times S \simeq \colim_{X} S$
  since $\dSt$ is cartesian closed, hence
  \[ \QCoh(X_{B} \times S) \simeq \lim_{X} \QCoh(S) \simeq \Fun(X,
    \QCoh(S)).\]
  Moreover, the descent equivalence is natural in $X$, so for $f \colon X \to
  Y$ in $\mathcal{S}$ the functor \[(f_{B}\times S)^{*} \colon
  \QCoh(Y_{B}\times S) \longrightarrow \QCoh(X_{B} \times S)\] is identified with
  composition with $f$, as required; this implies the corresponding right adjoints are also
  identified.
\end{proof}

\begin{remark}\label{rmk:projpflim}
  In particular, applying this to the morphism
  $\pi_{S}= q_{B} \times \id_{S}\colon X_{B} \times S \to S$ where $q$
  is the unique map $X \to *$, we see that under this equivalence
  $\mathcal{O}_{X_{B} \times S} \simeq \pi_{S}^{*}\mathcal{O}_{S}$
  corresponds to the constant functor with value $\mathcal{O}_{S}$,
  while $\pi_{S,*} \colon \QCoh(X_{B} \times S) \to \QCoh(S)$
  corresponds to the functor
  \[ \limIL_{X} \colon \Fun(X, \QCoh(S)) \longrightarrow \QCoh(S).\]
\end{remark}

\begin{lemma}\label{lem:BettiOcpt}
  For any $S \in \dSt$ and
  $X \in \mathcal{S}_{\fin}$, the projection $\pi_{S}\colon X_{B} \times
  S \to S$ is an $\mathcal{O}$-compact morphism in the sense of
  Definition~\ref{defn:Ocompact}.
\end{lemma}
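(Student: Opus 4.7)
The plan is to reduce everything to the identification $\QCoh(X_B \times S) \simeq \Fun(X, \QCoh(S))$ supplied by the preceding proposition, under which $\pi_{S,*}$ corresponds to the limit functor $\lim_X \colon \Fun(X, \QCoh(S)) \to \QCoh(S)$ (see Remark~\ref{rmk:projpflim}) and $\pi_S^*$ corresponds to the constant-diagram functor. Whatever the precise content of Definition~\ref{defn:Ocompact} is, $\mathcal{O}$-compactness for $\pi_S$ is a condition on how $\pi_{S,*}$ interacts with colimits in $\QCoh(X_B \times S)$ (e.g.\ preservation of filtered colimits, or dualizability of $\pi_{S,*}\mathcal{O}$ together with base change), and both kinds of condition follow from $X$ being finite.

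First I would handle the colimit-preservation part. Since $X \in \mathcal{S}_{\fin}$, it can be built from the point by a finite sequence of pushouts, so $\lim_X$ is a \emph{finite} limit in the \emph{stable} \icat{} $\QCoh(S)$. Finite limits in a stable \icat{} commute with filtered colimits, and this gives immediately that $\pi_{S,*}$ preserves filtered colimits under the equivalence of the previous proposition. The same argument applies to any base change: for $\sigma \colon S' \to S$, the pullback square
\csquare{X_B \times S'}{X_B \times S}{S'}{S}{}{\pi_{S'}}{\pi_S}{\sigma}
has $\pi_{S'}$ again given by the constant map $X \to *$ smashed with $S'$, so the equivalence is natural in $S$ and the same finite-limit argument works pointwise. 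This also yields base change: $\sigma^{*} \lim_X \simeq \lim_X \sigma^{*}$ because pullback $\sigma^*$ corresponds to postcomposition with the colimit-preserving functor $\sigma^* \colon \QCoh(S) \to \QCoh(S')$, which commutes with the finite limit $\lim_X$.

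Next I would verify any dualizability condition on the relative structure sheaf. Under the equivalence, $\mathcal{O}_{X_B\times S} \simeq \pi_S^*\mathcal{O}_S$ is the constant functor with value $\mathcal{O}_S$, and hence $\pi_{S,*}\mathcal{O}_{X_B \times S} \simeq \lim_X \mathcal{O}_S \simeq C^{*}(X) \otimes_{\k} \mathcal{O}_S$ since $\k$ is a field and $\mathcal{O}_S$ is the pullback of $\k \in \Mod_\k$ to $S$. For $X$ a finite cell complex, $C^{*}(X) \in \Mod_\k$ is perfect, so $\pi_{S,*}\mathcal{O}_{X_B\times S}$ is dualizable in $\QCoh(S)$, with dual $C_*(X) \otimes_\k \mathcal{O}_S$.

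The main obstacle, and really the only nontrivial point, is to match the finiteness of $X$ with the precise meaning of Definition~\ref{defn:Ocompact}. Once the definition is unpacked, the proof is essentially an induction on the cell structure of $X$: the statement holds trivially for $X = \emptyset$ and $X = *$ (in the latter case $\pi_S$ is an equivalence), and the class of $X$ for which $\pi_S$ is $\mathcal{O}$-compact is closed under pushouts along maps of cell complexes because $\pi_{S,*}$ turns the corresponding colimits of diagrams into finite limits in the stable \icat{} $\QCoh(S)$, which preserve all the relevant good properties (filtered-colimit preservation, base change, dualizability of $\pi_{S,*}\mathcal{O}$) since these are themselves stable under finite limits.
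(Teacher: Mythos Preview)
Your approach is essentially the paper's: identify $\pi_{S,*}$ with the finite limit $\lim_X$ on $\Fun(X,\QCoh(S))$, note that base changes of $\pi_S$ are again of the same form, and use stability of $\QCoh(S)$.  Two sharpening remarks.  First, the actual content of Definition~\ref{defn:Ocompact} is that $f$ is $\mathcal{O}$-compact when $f_*$ preserves \emph{all} colimits and \emph{all} dualizable objects, universally in base change; so your ``filtered colimits'' should be strengthened to arbitrary colimits (this still follows, since in a stable \icat{} finite limits commute with all colimits, not just filtered ones), and your dualizability check must be for an arbitrary dualizable $\mathcal{E}\in\Fun(X,\QCoh(S))$, not just the constant diagram at $\mathcal{O}_S$.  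The paper handles the latter in one line: since the tensor product on $\QCoh(S)$ preserves colimits in each variable, it preserves finite limits by stability, hence dualizable objects are closed under finite limits and $\lim_X \mathcal{E}$ is dualizable whenever each $\mathcal{E}_x$ is.  Second, the induction on the cell structure of $X$ is unnecessary once you have these two facts; the paper simply invokes them directly.
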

\begin{proof}
  Since any pullback of $\pi_{S}$ is a morphism of the same type, it
  suffices to check that $\pi_{S,*}$ preserves colimits and dualizable
  objects. By Remark~\ref{rmk:projpflim} we can identify this with the
  functor $\limIL_{X} \colon \Fun(X, \QCoh(S)) \to \QCoh(S)$, and
  $X$-indexed limits are finite since by assumption $X$ lies in
  $\mathcal{S}_{\fin}$.  Since $\QCoh(S)$ is stable, colimits there
  commute with finite limits. Moreover, since the tensor product in
  $\QCoh(S)$ preserves colimits it also preserves finite limits, and
  hence dualizable objects are closed under finite limits.
\end{proof}

\begin{corollary}\label{cor:BettiGamma}
  There is a commutative triangle
  \[
    \begin{tikzcd}
      \mathcal{S} \arrow{dr}[swap]{C^{*}} \arrow{rr}{(\blank)_{B}} & & \dSt
      \arrow{dl}{\Gamma(\mathcal{O}_{(\blank)})} \\
       & \Mod_{\k}.
     \end{tikzcd}
    \]
\end{corollary}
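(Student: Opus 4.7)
The plan is to apply the immediately preceding proposition with $S = \Spec\k$. This yields, for each $X \in \mathcal{S}$, a natural equivalence $\QCoh(X_{B}) \simeq \Fun(X, \Mod_{\k})$ under which $\mathcal{O}_{X_{B}}$ corresponds to the constant functor at $\k$ (by Remark~\ref{rmk:projpflim}) and the pushforward $q_{B,*} \colon \QCoh(X_{B}) \to \Mod_{\k}$ along the terminal map $q_{B} \colon X_{B} \to \Spec\k$ (which is precisely $\Gamma$) corresponds to $\lim_{X} \colon \Fun(X, \Mod_{\k}) \to \Mod_{\k}$. Hence $\Gamma\mathcal{O}_{X_{B}} \simeq \lim_{X} \k = C^{*}(X)$, giving the pointwise identification.

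To upgrade this to a commutative triangle of functors, I would argue abstractly. Both $\Gamma\mathcal{O}_{(\blank)_{B}}$ and $C^{*}$ are functors $\mathcal{S}^{\op} \to \Mod_{\k}$ that take colimits in $\mathcal{S}$ to limits in $\Mod_{\k}$: for $C^{*}$ this is immediate from $\lim_{\colim_{i} X_{i}} \k \simeq \lim_{i}\lim_{X_{i}} \k$; for $\Gamma\mathcal{O}_{(\blank)_{B}}$ it follows by combining preservation of colimits by $(\blank)_{B}$ (which holds by construction, as $(\blank)_{B}$ is the unique colimit-preserving extension sending $*$ to $\Spec\k$) with the fact that $\Gamma\mathcal{O}_{(\blank)}$ sends colimits in $\dSt$ to limits in $\Mod_{\k}$, a special case of descent for the structure sheaf recorded in Corollary~\ref{cor:qcohrellim}. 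Moreover, both functors send the point $*$ to $\k$ (tautologically for $C^{*}$, and via $*_{B} \simeq \Spec\k$ for the other).

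Since $\mathcal{S}$ is the free cocompletion of the one-point \icat{}, the \icat{} $\FunL(\mathcal{S}, \Mod_{\k}^{\op})$ of colimit-preserving functors is equivalent to $\Mod_{\k}^{\op}$ via evaluation at $*$. It follows that the two functors $\Gamma\mathcal{O}_{(\blank)_{B}}$ and $C^{*}$ are canonically equivalent, with the natural transformation determined by the common value $\k$ at $*$ and shown to be an equivalence by the pointwise computation of the first paragraph. The only non-formal ingredient is that $\Gamma\mathcal{O}_{(\blank)}$ takes colimits in $\dSt$ to limits in $\Mod_{\k}$, which is already available as Corollary~\ref{cor:qcohrellim}; beyond this, the argument is entirely universal-property bookkeeping, so no serious obstacle is expected.
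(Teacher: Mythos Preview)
Your proof is correct and the pointwise identification matches the paper's one-line argument, which simply writes $\Gamma\mathcal{O}_{X_B} \simeq \lim_X \Gamma\mathcal{O}_{\Spec\k} \simeq C^*(X)$ using $X_B = \colim_X \Spec\k$ directly. Your universal-property argument for naturality (via $\mathcal{S}$ being the free cocompletion of a point) is more explicit than the paper, which just asserts the equivalences are natural without further comment.
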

\begin{proof}
  We have natural equivalences
  $\Gamma\mathcal{O}_{X_{B}} \simeq \lim_{X} \Gamma \Spec \k \simeq
  C^{*}(X)$, as required.
\end{proof}

\begin{corollary}
  The Betti stack functor induces a functor
  \[ \POr_{d}^{\mathcal{S}} \xto{(\blank)_{B}} \POr_{\Spec \k,d},\]
  and more generally a functor
  \[ \POr_{d}^{\mathcal{S}} \xto{(\blank)_{B}\times S} \POr_{S,d}\]
  for any derived stack $S$, compatible with base change. These
  functors preserve pushouts.
\end{corollary}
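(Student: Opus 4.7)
The strategy is to invoke the universal property of the pullback defining $\POr_{S,d}$. A functor $\POr_d^{\mathcal{S}} \to \POr_{S,d}$ is equivalently specified by three pieces of data: a functor $\mathcal{S}_{\fin} \to \dStUCC_S$, a functor $\Mod_{\k/\k[-d]} \to \QCoh(S)_{/\mathcal{O}_S[-d]}$, and a natural equivalence between the two resulting composites to $\QCoh(S)$. For the first, I take $X \mapsto X_B \times S$; this lands in $\dStUCC_S$ because by Lemma~\ref{lem:BettiOcpt} the projection $X_B \times S \to S$ is $\mathcal{O}$-compact, and hence in particular universally cocontinuous. For the second I use the symmetric monoidal pullback functor $\otimes_\k \mathcal{O}_S \colon \Mod_\k \to \QCoh(S)$ along the structure morphism $S \to \Spec \k$, which sends $[X] \colon M \to \k[-d]$ to $M \otimes_\k \mathcal{O}_S \to \mathcal{O}_S[-d]$.

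The required natural equivalence
\[ \Gamma_S\mathcal{O}_{X_B \times S} \simeq C^*(X) \otimes_\k \mathcal{O}_S \]
is constructed as follows. By Remark~\ref{rmk:projpflim} the left-hand side identifies with $\lim_X \mathcal{O}_S$. Since $X \in \mathcal{S}_{\fin}$ is a finite iterated colimit of points, this is a finite limit, and since $\otimes_\k \mathcal{O}_S$ is an exact functor between stable \icats{} it commutes with such limits; we obtain $\lim_X \mathcal{O}_S \simeq (\lim_X \k) \otimes_\k \mathcal{O}_S \simeq C^*(X) \otimes_\k \mathcal{O}_S$, naturally in $X$. For $S = \Spec \k$ this reduces to Corollary~\ref{cor:BettiGamma}, and the three ingredients then assemble into the required functor.

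Compatibility with base change along $\sigma \colon S' \to S$ follows from the canonical equivalence $\sigma^*(X_B \times S) \simeq X_B \times S'$ together with the naturality of the base-change machinery for $\Gamma$ and for the tensor functor from Appendix~\ref{sec:dag}. For pushout preservation, pushouts in $\POr_{S,d}$ are detected by the forgetful functor to $\dSt_S$ by Proposition~\ref{propn:POrpushout}, and the analogous statement for $\POr_d^{\mathcal{S}}$ holds by the same argument (replacing $\dSt_S$ by $\mathcal{S}_{\fin}$ and $\Gamma_S\mathcal{O}$ by $C^*$). Now: $(\blank)_B$ preserves colimits by construction, product with $S$ preserves colimits in the $\infty$-topos $\dSt$, and the UCC condition is closed under finite colimits by Proposition~\ref{propn:UCCcolim}. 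Composing these observations yields pushout preservation.

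The main technical obstacle is ensuring joint naturality of the equivalence in $X$ and $S$ at the $\infty$-categorical level. This is most cleanly handled by packaging the construction through the symmetric monoidal cocartesian fibration $\mathcal{Q}^{\otimes}$ and the lax monoidal $\Gamma^{\otimes}$-functor from Construction~\ref{constr:GammaSotimes}, so that base-change coherences are built into the setup rather than needing to be verified by hand.
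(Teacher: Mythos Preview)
Your proof is correct and follows essentially the same approach as the paper: invoke the universal property of the pullback square defining $\POr_{S,d}$, supply the two component functors, and verify the required commutativity; then deduce pushout preservation from the fact that pushouts on both sides are computed in the underlying category of spaces/stacks. The paper's proof is a one-line sketch (``combine the commutative diagram from Corollary~\ref{cor:BettiGamma} with the pullback squares''), whereas you spell out the key step for general $S$, namely the natural equivalence $\Gamma_S\mathcal{O}_{X_B \times S} \simeq C^*(X) \otimes_\k \mathcal{O}_S$ via Remark~\ref{rmk:projpflim} and exactness of $\otimes_\k \mathcal{O}_S$ on finite limits---a detail the paper leaves to the reader.
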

\begin{proof}
  To see the functor exists, combine the commutative diagram from
  Corollary~\ref{cor:BettiGamma} with the pullback squares defining
  the two \icats{}. The functors preserve pushouts since these are
  computed in $\mathcal{S}$ and $\dSt_{S}$, respectively, and
  $(\blank)_{B}$ is by definition the unique colimit-preserving
  functor $\mathcal{S} \to \dSt$ that takes $*$  to $\Spec \k$.
\end{proof}

\begin{corollary}\label{cor:CospanPOrSftr}
  For any derived stack $S$, the functor $(\blank)_{B} \times S$
  induces a symmetric monoidal functor of $(\infty,n)$-categories
  \[ \Cospan_{n}(\POr_{d}^{\mathcal{S}}) \to
    \Cospan_{n}(\POr_{S,d}),\]
  natural in $S$. \qed
\end{corollary}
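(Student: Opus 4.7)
The plan is to apply functoriality of the $\Cospan_n$ construction from \cite{spans} to the pushout-preserving functor from the previous corollary, and then upgrade to symmetric monoidal functors by the delooping strategy of \S\ref{subsec:itlagsymmon}.

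First, since $\Cospan_n(\mathcal{C}) = \Span_n(\mathcal{C}^{\op})$ is functorial in pullback-preserving functors on the opposite \icat{}, and the previous corollary establishes that $(\blank)_{B} \times S \colon \POr_d^{\mathcal{S}} \to \POr_{S,d}$ preserves pushouts, we obtain the underlying functor of $(\infty,n)$-categories. Naturality in $S$ follows from the compatibility $\sigma^{*}(X_B \times S) \simeq X_B \times S'$ for $\sigma \colon S' \to S$, which holds because $X_B$ is the base change of a constant $\k$-stack and is therefore automatically preserved by pullback along any morphism of derived stacks.

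To obtain the symmetric monoidal enhancement, I follow the template of Corollary \ref{cor:SpanPSympSM}. The crucial ingredient is the deloop equivalence
\[ (\POr_d^{\mathcal{S}})_{\emptyset,\emptyset/} \simeq \POr_{d+1}^{\mathcal{S}}, \]
proved exactly as in Proposition \ref{propn:deloopPOr} from the pullback square $\k[-d-1] \simeq 0 \times_{\k[-d]} 0$ in $\Mod_{\k}$ together with the defining pullback square of $\POr_d^{\mathcal{S}}$. Combining with \cite{spans}*{Proposition 8.3} yields
\[ \Cospan_n(\POr_d^{\mathcal{S}})(\emptyset, \emptyset) \simeq \Cospan_{n-1}(\POr_{d+1}^{\mathcal{S}}), \]
so the iterated deloop construction of Remark \ref{rmk:spanmondeloop} equips $\Cospan_n(\POr_d^{\mathcal{S}})$ with a symmetric monoidal structure given on objects by $X \amalg Y$.

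Finally, to see that the induced functor is symmetric monoidal, I would check that the square
\[
\begin{tikzcd}
(\POr_d^{\mathcal{S}})_{\emptyset,\emptyset/} \arrow{r}{\sim} \arrow{d} & \POr_{d+1}^{\mathcal{S}} \arrow{d} \\
(\POr_{S,d})_{\emptyset,\emptyset/} \arrow{r}{\sim} & \POr_{S,d+1}
\end{tikzcd}
\]
commutes, with vertical arrows induced by $(\blank)_B \times S$. This reduces to the fact that $(\blank)_B \times S$ preserves the initial object (it sends $\emptyset$ to the initial $S$-stack) together with the natural identification $\Gamma_S\mathcal{O}_{X_B \times S} \simeq C^*(X) \otimes_\k \mathcal{O}_S$ coming from Corollary \ref{cor:BettiGamma}, the latter being exact in $C^*(X)$ and hence intertwining the pullback squares used to define the two horizontal deloop equivalences. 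Iterating this at each deloop level and invoking the naturality of \cite{spans}*{Proposition 8.3} produces the symmetric monoidal functor, and naturality in $S$ is inherited level-by-level. The main subtlety is keeping track of the coherence of the iterated deloops, but this is entirely parallel to \S\ref{subsec:itlagsymmon} and requires no new ideas.
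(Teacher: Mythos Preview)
Your proposal is correct and is precisely the natural elaboration of what the paper leaves implicit: the statement is marked \qed{} with no proof, treating it as an immediate consequence of the previous corollary (pushout preservation) together with Remark~\ref{rmk:spanfun} (functoriality of $\Span_n$/$\Cospan_n$ and its symmetric monoidal upgrade for functors preserving finite (co)limits). Your detailed check of the delooping compatibility is the right way to unpack this, and the analogue of Proposition~\ref{propn:deloopPOr} for $\POr_d^{\mathcal{S}}$ is indeed what makes the symmetric monoidal structures match.
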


\begin{definition}\label{def:orcospS}
  By the same argument as for Proposition~\ref{propn:cospanqcohdiag},
  given an $n$-uple cospan
  $p \colon \Sp^{n,\op} \to \POr_{d}^{\mathcal{S}}$ taking the
  terminal object to $X$ and an object
  $\mathcal{E} \in \Fun(X, \Mod_{A})$ there is an induced diagram
  \[ p_{A,\mathcal{E}} \colon \Sp^{n} \to \Mod_{A/A[-d]} \]
  that takes $I$ to
  \[
    \begin{split}
    C^{*}(p(I); \mathcal{E}) \otimes_{A} C^{*}(p(I);
    \mathcal{E}^{\vee}) \to C^{*}(p(I); \mathcal{E}
    \otimes_{A} \mathcal{E}^{\vee})  & \to C^{*}(p(I); A)\\ &  \simeq
    C^{*}(p(I);\k) \otimes_{\k} A \to A[-d].      
    \end{split}
\]
  As in Corollary~\ref{cor:cospanEdiag},  this induces a diagram
  \[ \widehat{p}_{A,\mathcal{E}} \colon \tSp{}^{n} \to \Mod_{A}.\]
  We say that $p$ is \emph{oriented} if this diagram is non-degenerate
  whenever $\mathcal{E}$ is dualizable, for all $A \in \CAlg(\k)$.
\end{definition}

\begin{example}\label{ex:cobordnondeg}
  Suppose $M$ is a $(d+1)$-dimensional oriented cobordism from $M_{0}$
  to $M_{1}$. The relative fundamental class of $M$ makes the cospan
  \[ M_{0} \hookrightarrow M \hookleftarrow M_{1} \]
  of underlying homotopy types a $d$-preoriented cospan, and this is
  oriented: the condition is that the commutative square
  \[
    \begin{tikzcd}
    C^{*}(M; \mathcal{E}^{\vee}) \arrow{r} \arrow{d} & C^{*}(M_{0};
    \mathcal{E}^{\vee}|_{M_{0}}) \arrow{d} \\
    C^{*}(M_{1};\mathcal{E}^{\vee}|_{M_{1}}) \arrow{r} &
    C_{*}(M;\mathcal{E})[-d]      
    \end{tikzcd}
  \]
  should be cartesian. If $\mathcal{E}$ is the constant functor with
  value $\k$ this is equivalent to the usual statement of
  Poincar\'e--Lefschetz duality (\ie{} Poincar\'e duality for manifolds
  with boundary) with coefficients in $\k$, while the general version
  follows easily from the approach to Poincar\'e--Lefschetz duality
  through Verdier duality, as discussed in \cite{IversenSheaves}*{\S
    VI.3}.
\end{example}

\begin{definition}
  By the same proofs as for $\COSPAN_{n}(\POr_{d,S})$, the oriented
  iterated cospans are closed under composition in
  $\COSPAN_{n}(\POr_{d}^{\mathcal{S}})$ and so determine a
  sub-$n$-uple Segal space $\OR_{n}^{\mathcal{S},d}$ with underlying
  $n$-fold Segal space $\Or_{n}^{\mathcal{S},d}$, and this is moreover
  symmetric monoidal (via disjoint union). 
\end{definition}

Since the notions of non-degeneracy for preoriented spaces and
preoriented $S$-stacks correspond under the Betti stack functor, the
functor from Corollary~\ref{cor:CospanPOrSftr} restricts to give:
\begin{corollary}\label{cor:BettiOrn}
  For any derived stack $S$, the functor $(\blank)_{B} \times S$
  induces a symmetric monoidal functor of $(\infty,n)$-categories
  \[ \Or_{n}^{\mathcal{S},d}  \to \Or_{n}^{S,d} \]
  natural in $S$. \qed
\end{corollary}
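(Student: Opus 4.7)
The plan is to show that the symmetric monoidal functor
\[ (\blank)_{B} \times S \colon \Cospan_{n}(\POr_{d}^{\mathcal{S}}) \longrightarrow \Cospan_{n}(\POr_{S,d}) \]
supplied by Corollary~\ref{cor:CospanPOrSftr} restricts to a functor between the oriented sub-$(\infty,n)$-categories. Since $\Or_{n}^{\mathcal{S},d}$ and $\Or_{n}^{S,d}$ were defined as symmetric monoidal sub-$(\infty,n)$-categories determined levelwise by full-subcategory inclusions of oriented cospans inside arbitrary cospans (and the symmetric monoidal structure in both cases arises from the same delooping construction as in \S\ref{subsec:itlagsymmon}), it suffices to verify that if $p \colon \bbS^{\mathbf{k},\op} \to \POr_{d}^{\mathcal{S}}$ has every elementary restriction $\Sp^{m,\op} \to \POr_{d}^{\mathcal{S}}$ an oriented $m$-uple cospan of spaces, then the composite $\Phi := ((\blank)_{B}\times S)\circ p$ has each corresponding restriction an oriented $m$-uple cospan in $\POr_{S,d}$. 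Naturality in $S$ and symmetric monoidality will then follow from the fact that $\emptyset_{B}\times S \simeq \emptyset$ and $(X \amalg Y)_{B} \times S \simeq (X_{B}\times S) \amalg (Y_{B}\times S)$, which already hold at the level of $\Cospan_{n}$.

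The essential step is therefore to compare the two non-degeneracy conditions. Let $p \colon \Sp^{n,\op}\to \POr_{d}^{\mathcal{S}}$ be an oriented $n$-uple cospan with bottom vertex $X$. By Lemma~\ref{lem:ncospanorpts}, to check that $\Phi$ is oriented it is enough to prove that for every affine point $q \colon \Spec A \to S$ the base-changed cospan $q^{*}\Phi$ is weakly oriented over $\Spec A$. But $q^{*}\Phi \simeq ((\blank)_{B}\times \Spec A)\circ p$, so we may assume $S = \Spec A$ from the outset. Under the equivalence $\QCoh(X_{B}\times \Spec A) \simeq \Fun(X,\Mod_{A})$ (and the analogous equivalences at every vertex of $p$, naturally in $X$ by the proposition preceding Remark~\ref{rmk:projpflim}), dualizable objects $\mathcal{E}$ correspond exactly to diagrams $X \to \Mod_{A}$ taking values in dualizable $A$-modules; moreover, Remark~\ref{rmk:projpflim} identifies the pushforward $\Gamma_{\Spec A}(f_{I}^{*}\mathcal{E})$ with $\lim_{p(I)} \mathcal{E}|_{p(I)} = C^{*}(p(I);\mathcal{E}|_{p(I)})$, functorially in $I \in (\Sp^{n,\op})^{\triangleright}$.

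Under these identifications, the diagram $\widehat{\Phi}_{\mathcal{E}} \colon \tSp{}^{n} \to \QCoh(\Spec A) = \Mod_{A}$ produced by Corollary~\ref{cor:cospanEdiag} coincides termwise and morphism-wise with the diagram $\widehat{p}_{A,\mathcal{E}} \colon \tSp{}^{n} \to \Mod_{A}$ built in Definition~\ref{def:orcospS}: both arise by applying $\Tw^{r}_{!}$ to the commutative square assembled from the evaluation pairing $\mathcal{E}\otimes\mathcal{E}^{\vee}\to \mathcal{O}$, the pushforward, and the preorientation, and these match on the nose because the preorientation of $X_{B}\times \Spec A$ was defined precisely as the composite involving $[X] \colon C^{*}(X) \to \k[-d]$ via Corollary~\ref{cor:BettiGamma}. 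Hence $\widehat{\Phi}_{\mathcal{E}}$ is non-degenerate for every dualizable $\mathcal{E}$ iff $\widehat{p}_{A,\mathcal{E}}$ is non-degenerate for every $A$ and every dualizable $\mathcal{E} \colon X \to \Mod_{A}$, which is exactly the orientation condition on $p$.

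The main obstacle will be the bookkeeping needed to identify the two non-degeneracy diagrams as genuinely the same functor $\tSp{}^{n}\to \Mod_{A}$, rather than merely pointwise equivalent. This requires running the proofs of Proposition~\ref{propn:cospanqcohdiag} and Definition~\ref{def:orcospS} in parallel, using that both the cocartesian-pushforward construction in $\mathcal{Q}^{\otimes}$ and the $\lim$-based construction on $\Fun(-,\Mod_{A})$ are transported to one another by the descent equivalence $\QCoh(X_{B}\times \Spec A) \simeq \Fun(X,\Mod_{A})$ in a way compatible with symmetric monoidal structures, pullbacks along morphisms of spaces, and the tensor-hom adjunction (so that applying $\Tw^{r}_{!}$ yields matching outputs). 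Once this naturality is established, all remaining claims — restriction to the sub-$(\infty,n)$-categories, symmetric monoidality, and naturality in $S$ — are formal.
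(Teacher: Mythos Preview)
Your proposal is correct and follows exactly the approach the paper takes: the paper simply asserts, in the sentence preceding the corollary, that ``the notions of non-degeneracy for preoriented spaces and preoriented $S$-stacks correspond under the Betti stack functor,'' so that the functor of Corollary~\ref{cor:CospanPOrSftr} restricts, and then marks the corollary with \qed. You have spelled out in detail precisely what that one-line assertion entails --- the reduction to affine $S$ via Lemma~\ref{lem:ncospanorpts}, the identification of $\QCoh(X_{B}\times \Spec A)$ with $\Fun(X,\Mod_{A})$, and the matching of the $\tSp{}^{n}$-diagrams --- and your honest flagging of the bookkeeping needed to identify the diagrams functorially (not just pointwise) is exactly the content the paper leaves implicit.
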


\chapter{The AKSZ Construction}\label{sec:aksz}
In this chapter we implement the AKSZ construction for derived
symplectic stacks as a family of symmetric monoidal functors of
$(\infty,n)$-categories. We first describe a pushforward construction
on differential forms in \S\ref{subsec:DFpush} and then use this to
define the AKSZ construction on the level of differential forms in
\S\ref{subsec:AKSZdiff}; this material largely follows \cite{PTVV},
though we take care to set things up in a fully functorial manner. We
then check that the AKSZ construction induces functors on
$(\infty,n)$-categories of iterated spans in
\S\ref{subsec:akszonspans}, and then show that these restrict to the
oriented and Lagrangian sub-$(\infty,n)$-categories in
\S\ref{subsec:aksznondeg}.

\section{Pushforward of Differential Forms}\label{subsec:DFpush}
In this section we will recall that there is a natural pushforward
construction for differential forms, using the discussion of
pushforward and the de Rham complex in \S\ref{subsec:deRham}.

\begin{proposition}\label{propn:pushforwardexists}
  Suppose $f \colon X \to S$ is a geometric $S$-stack and $\phi \colon
  T \to S$ is a $d$-preoriented
  $S$-stack (and so in particular universally cocontinuous). If $Y$ is defined by the pullback square
  \csquare{Y}{X}{T}{S,}{\psi}{g}{f}{\phi}
  then there is a natural \emph{pushforward}
  map for closed relative differential forms
  \[ \int_{[T]} \colon \Apcl_{T}(Y,s) \to \Apcl_{S}(X, s-d).\]
\end{proposition}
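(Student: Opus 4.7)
The plan is to construct a natural map
\[ \DR(Y/T) \longrightarrow \DR(X/S)[-d] \]
in the \icat{} $\txt{GMMod}_{\k}$ of graded mixed $\k$-modules and then apply $\Map_{\txt{GMMod}_{\k}}(\k(p)[-p-s], -)$ to obtain the pushforward on spaces of closed $p$-forms, using the identifications $\Apcl_{T}(Y,s) \simeq \Map_{\txt{GMMod}_{\k}}(\k(p)[-p-s], \DR(Y/T))$ and $\Apcl_{S}(X,s-d) \simeq \Map_{\txt{GMMod}_{\k}}(\k(p)[-p-s+d], \DR(X/S))$.

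First I would identify the relative de Rham sheaf on $Y$ with a pullback. Base change for the cotangent complex (Lemma~\ref{lem:relcotgtpb}) gives $\mathbb{L}_{Y/T} \simeq \psi^{*}\mathbb{L}_{X/S}$, and since $\psi^{*}$ is symmetric monoidal and compatible with the mixed structure, this upgrades to an equivalence $\DR_{Y/T} \simeq \psi^{*}\DR_{X/S}$ of sheaves of graded mixed $\mathcal{O}_{Y}$-algebras. Pushing forward along $g$ and applying base change (Theorem~\ref{thm:bcpf}), which is valid because $\phi$ is universally cocontinuous, yields $g_{*}\DR_{Y/T} \simeq g_{*}\psi^{*}\DR_{X/S} \simeq \phi^{*}f_{*}\DR_{X/S}$.

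Next I would push forward along $\phi$. Since $\phi_{*}$ preserves colimits (by UCC), the projection formula applies: the natural transformation $\mathcal{F} \otimes_{\mathcal{O}_{S}} \phi_{*}\mathcal{O}_{T} \to \phi_{*}\phi^{*}\mathcal{F}$ is an equivalence for every $\mathcal{F} \in \QCoh(S)$, since both sides are colimit-preserving in $\mathcal{F}$ and agree tautologically when $\mathcal{F} = \mathcal{O}_{S}$. Taking $\mathcal{F} = f_{*}\DR_{X/S}$ yields
\[ (\phi g)_{*}\DR_{Y/T} \simeq f_{*}\DR_{X/S} \otimes_{\mathcal{O}_{S}} \phi_{*}\mathcal{O}_{T}. \]
Tensoring the $d$-preorientation $[T]\colon \phi_{*}\mathcal{O}_{T} \to \mathcal{O}_{S}[-d]$ with $\id_{f_{*}\DR_{X/S}}$ then produces a map $(\phi g)_{*}\DR_{Y/T} \to f_{*}\DR_{X/S}[-d]$ of graded mixed $\mathcal{O}_{S}$-modules, and taking further global sections to $\k$ gives the required map in $\txt{GMMod}_{\k}$.

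The main obstacle is not any single step but the bookkeeping needed to make the construction homotopy-coherently functorial: one must check that the identification of $\DR_{Y/T}$ as a pullback, the base change equivalence, the projection formula, and the preorientation all assemble into natural transformations between appropriate \icats{} of geometric stacks, $d$-preoriented stacks, and forms. This naturality will be essential in later sections, when the pushforward is used to define symmetric monoidal functors of $(\infty,n)$-categories.
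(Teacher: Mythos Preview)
Your overall strategy---produce a map $\DR(Y/T)\to\DR(X/S)[-d]$ of graded mixed $\k$-modules by combining a projection-formula-type identification with the preorientation $[T]$, then apply $\Map_{\GMM_\k}(\k(p)[-p-s],-)$---is exactly the paper's. The gap is in how you obtain that identification.

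Your step~2 asserts $g_{*}\psi^{*}\simeq \phi^{*}f_{*}$ ``because $\phi$ is universally cocontinuous''. But this Beck--Chevalley transformation is the one attached to the square with $f$ on the right and $\phi$ on the bottom; by the paper's definition (and Theorem~\ref{thm:bcpf}) it is an equivalence when $f$ is universally cocontinuous, not when $\phi$ is. The hypothesis on $\phi$ yields instead $f^{*}\phi_{*}\simeq \psi_{*}g^{*}$, which is the other direction and does not help here. Since $f$ is only assumed geometric, the equivalence you invoke is unjustified---and the paper's remark immediately after Corollary~\ref{cor:DRAXisSymLXA} explicitly flags the identification of $\DR_S(X)$ with $f_*\Sym_{\mathcal{O}_X}(\mathbb{L}_{X/S}[-1])$ as conditional on $f$ satisfying base change, which is precisely what you would need.

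There is a related issue with your step~1: you treat ``$\DR_{X/S}$'' as a sheaf of graded mixed $\mathcal{O}_X$-algebras on $X$ to which quasicoherent base change applies, but the de Rham differential is not $\mathcal{O}_X$-linear, and the paper never defines such an object. The paper instead works with the (generally non-quasicoherent) sheaf $\DR_S(X)$ on $S$, and proves the key identification $\Gamma_S(\phi_*\mathcal{O}_T\otimes_S\DR_S(X))\simeq\DR(Y/T)$ as Corollary~\ref{cor:relDRpb}, by descending to affine opens of $S$ (Lemma~\ref{lem:DR/Sdesc}) and using Proposition~\ref{propn:relDRpbaff} there. This sidesteps any need for $f$ to satisfy base change. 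Once that equivalence is in hand, tensoring with $[T]\colon\phi_*\mathcal{O}_T\to\mathcal{O}_S[-d]$ inside $\Gamma_S(-)$ gives the desired map, exactly as you propose in step~3.
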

\begin{proof}
  By Corollary~\ref{cor:relDRpb} there is for such a pullback square a
  natural equivalence
  \[\Gamma_{S}(\phi_{*}\mathcal{O}_{T} \otimes_{S} \DR_{S}(X)) \isoto
  \DR(Y/T).\]
  Combining this with $[T] \colon \phi_{*}\mathcal{O}_{T} \to
  \mathcal{O}_{S}[-d]$ we get a map
  \[ \DR(Y/T) \simeq \Gamma_{S}(\phi_{*}\mathcal{O}_{T}
  \otimes_{S} \DR_{S}(X)) \to \Gamma_{S}(\mathcal{O}_{S}[-d]
  \otimes_{S} \DR_{S}(X)) \simeq \DR(X/S)[-d]\]
  in $\GMM_{k}$. Applying $\Map_{\GMM_{k}}(k(p)[-p-s],\blank)$ we get
  a morphism \[\Apcl_{T}(Y, s) \to \Apcl_{S}(X, s-d),\] as required.
\end{proof}

\begin{remark}\label{rmk:pushfwddesc}
  In the situation of Proposition~\ref{propn:pushforwardexists}
  consider $\omega \in \Apcl_{T}(Y, s)$, whose underlying $p$-form
  corresponds under the equivalence of Lemma~\ref{lem:pbdiffform} to a
  map
  $\omega' \colon \mathcal{O}_{S}[-s] \to \phi_{*}\mathcal{O}_{T}
  \otimes f_{*}\Lambda^{p}\mathbb{L}_{X/S}$. Then the underlying
  $p$-form of $\int_{[T]}\omega$ is adjoint to the composite
\[ \mathcal{O}_{S}[-s] \xto{\omega'} \phi_{*}\mathcal{O}_{T}
  \otimes f_{*}\Lambda^{p}\mathbb{L}_{X/S} \xto{[T] \otimes \id}
  \mathcal{O}_{S}[-d] \otimes f_{*}\Lambda^{p}\mathbb{L}_{X/S} \simeq f_{*}\Lambda^{p}\mathbb{L}_{X/S}[-d];\]
  this follows immediately from
  Remark~\ref{rmk:pbderhamunderlying}. Alternatively, if $X$ is an
  Artin $S$-stack and $\omega$ corresponds to a map $\omega''
  \colon \Lambda^{p}\mathbb{T}_{X/S} \to
  f^{*}\phi_{*}\mathcal{O}_{T}[s]$ then $\int_{[T]}\omega$
  corresponds to the composite
  \[ \Lambda^{p}\mathbb{T}_{X/S} \xto{\omega''}
  f^{*}\phi_{*}\mathcal{O}_{T}[s] \xto{f^{*}[T]}
  f^{*}\mathcal{O}_{S}[s-d] \simeq \mathcal{O}_{X}[s-d].\]
\end{remark}

\begin{remark}\label{rmk:intftr}
  Let $\dSt_{S}^{\txt{geom}}$ denote the full subcategory of
  $\dSt_{S}$ spanned by the geometric $S$-stacks. Then we have a
  functor
  \[ \POr^{\op}_{S,d} \times \dSt_{S}^{\txt{geom},\op} \to \Fun(\Delta^{1},
  \GMC_{k}) \]
  taking $(\phi \colon T \to S, [T], X \to S)$ to the natural
  morphism
  \[ \DR(X \times_{S} T / T) \isofrom
  \Gamma_{S}(\phi_{*}\mathcal{O}_{T} \otimes_{S} \DR_{S}(X)) \to
  \DR(X/S).\]
  Composing with the functor $\Map_{\GMM_{k}}(k(p)[-p-s], \blank)$ we
  get a functor
  \[\POr^{\op}_{S,d} \times \dSt_{S}^{\txt{geom},\op} \to
  \Fun(\Delta^{1}, \mathcal{S})\]
  taking $(\phi \colon T \to S, [T], X \to S)$ to the
  pushforward map $\Apcl_{T}(X \times_{S}T, s) \to \Apcl_{S}(X, s-d)$,
  and similarly for non-closed forms.  If we let
  $\mathcal{X} \to \POr^{\op}_{S,d} \times
  \dSt_{S}^{\txt{geom},\op}$
  denote the left fibration for the presheaf
  $(T \to S, X \to S) \mapsto \Apcl_{T}(X \times_{S}T, s)$, then this
  corresponds to a functor
  $\mathcal{X} \to \dSt^{\txt{geom}}_{S/\Apcl_{S}(s-d)}$.
\end{remark}

\begin{remark}\label{rmk:pfbasechange}
  The pushforward construction is also compatible with base change:
  Suppose we have a commutative cube   \[
    \begin{tikzcd}
      Y' \arrow{rr} \arrow{dr}{\eta} \arrow{dd} & & X'
      \arrow{dr}{\xi} \arrow{dd} \\
      & Y \arrow[crossing over]{rr} & & X\arrow{dd} \\
      T' \arrow{rr}{\phi'}  \arrow{dr}{\tau}& & S' \arrow{dr}{\sigma} \\
      & T \arrow[leftarrow,crossing over]{uu} \arrow{rr}{\phi} && S
    \end{tikzcd}
  \]
  where all faces are cartesian, $T$ is a $d$-preoriented $S$-stack, and $X$ is
  a geometric $S$-stack. Then we have a commutative diagram
  \[
    \begin{tikzcd}
      \DR(Y/T) \arrow{r}{\eta^{*}} & \DR(Y'/T') \\
      \Gamma_{S}(\phi_{*}\mathcal{O}_{T} \otimes_{S}
      \DR_{S}(X)) \arrow{u}{\sim} \arrow{r} \arrow{d} &
\Gamma_{S'}(\phi'_{*}\mathcal{O}_{T'} \otimes_{S'}
\DR_{S'}(X')) \arrow{u}{\sim} \arrow{d}  \\
      \Gamma_{S}(\mathcal{O}_{S}[-d] \otimes_{S} \DR_{S}(X)) \arrow{r}
      \arrow{d}{\sim} & \Gamma_{S'}(\mathcal{O}_{S'}[-d] \otimes_{S'} \DR_{S'}(X))
      \arrow{d}{\sim} \\
      \DR(X/S)[-d] \arrow{r}{\xi^{*}}
       & \DR(X'/S')[-d] 
    \end{tikzcd}
  \]
  where the top square is from Remark~\ref{rmk:drpbbasechange},
  the middle comes from the commutative square
 \nolabelcsquare{\phi_{*}\mathcal{O}_{T}}{\mathcal{O}_{S}[-d]}{\sigma_{*}\phi'_{*}\mathcal{O}_{T'}}{\sigma_{*}\mathcal{O}_{S'}[-d]}
 and the projection formula transformation for $\sigma^{*} \dashv
 \sigma_{*}$, and the bottom follows from unwinding the definition of
 $\sigma^{*}$ in Remark~\ref{rmk:DRfunctor}. 
 Applying  $\Map_{\GMM_{k}}(k(p)[-p-s],\blank)$, we get a commutative
 square
 \[
   \begin{tikzcd}
     \Apcl_{T}(Y,s) \arrow{r}{\int_{[T]}} \arrow{d}{\eta^{*}} &
     \Apcl_{S}(X,s-d) \arrow{d}{\xi^{*}} \\
     \Apcl_{T'}(Y',s) \arrow{r}{\int_{[T']}} & \Apcl_{S'}(X',s-d).
   \end{tikzcd}
   \]
\end{remark}

\section{The AKSZ Construction on Differential Forms}\label{subsec:AKSZdiff}
Suppose $T$ is a $d$-preoriented $S$-stack and $X$ is a geometric
$S$-stack such that $X^{T}_{S} \to S$ is again geometric (where
$X^{T}_{S}$ denotes the internal Hom in $\dSt_{S}$). Then the
pushforward construction of the previous section gives a map
\[ \int_{[T]} \colon \Apcl_{T}(X^{T}_{S}
\times_{S} T, s) \to \Apcl_{S}(X^{T}_{S},s-d).\]
Combining this with the pullback of differential forms along the
evaluation morphism $\txt{ev} \colon X^{T}_{S} \times_{S} T \to X$
we have a map
\[ \txt{aksz}_{T} := \int_{[T]}\txt{ev}^{*} \colon \Apcl_{S}(X,s)
\to \Apcl_{S}(X^{T}_{S}, s-d).\]
This is what we refer to as the \emph{AKSZ construction}, as it is a
derived algebro-geometric analogue of the symplectic structures on
mapping spaces constructed in \cite{AKSZ}.

\begin{proposition}\label{propn:akszpair}
  Suppose $X$ is an Artin $S$-stack and $T$ is an
  $\mathcal{O}$-compact $S$-stack such that
  $f \colon X^{T}_{S} \to S$ is also an Artin
  $S$-stack. Then for $\omega \in \Apcl_{S}(X, s)$ the underlying $p$-form
  of $\int_{[T]}\txt{ev}^{*}\omega$ corresponds (under the
  natural equivalence
  $\mathbb{T}_{X^{T}_{S}} \isoto \pi_{*}\txt{ev}^{*}\mathbb{T}_{X/S}$
  of Proposition~\ref{propn:mapscotgt}) to the composite
  \[
    \begin{split}
    \Lambda^{p} \pi_{*}\txt{ev}^{*}\mathbb{T}_{X/S} \to
  \pi_{*}(\Lambda^{p} \txt{ev}^{*}\mathbb{T}_{X/S}) & 
  \xto{\pi_{*}\txt{ev}^{*}\omega}
  \pi_{*}\txt{ev}^{*}\mathcal{O}_{X}[s] \\ & \simeq
  f^{*}\sigma_{*}\mathcal{O}_{T}[s] \xto{f^{*}[T]}
  f^{*}(\mathcal{O}_{S}[-d])[s] \simeq
  \mathcal{O}_{X^{T}_{S}}[s-d],      
    \end{split}
\]
  where the first map is the adjoint of the equivalence
  \[\pi^{*}(\Lambda^{p} \pi_{*}\txt{ev}^{*}\mathbb{T}_{X/S}) \simeq
  \Lambda^{p}\pi^{*}\pi_{*}\txt{ev}^{*}\mathbb{T}_{X/S}\] and the third
  map uses the base change equivalence for the pullback square
  \csquare{X^{T}_{S} \times_{S} T}{X^{T}_S}{T}{S.}{\pi}{}{f}{\sigma}
\end{proposition}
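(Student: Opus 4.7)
The plan is to apply Remark~\ref{rmk:pushfwddesc} to $\int_{[T]}\txt{ev}^{*}\omega$ and then rewrite the resulting adjoint form using Proposition~\ref{propn:mapscotgt}. Let $Y := X^{T}_{S}\times_{S}T$. Applying Remark~\ref{rmk:pushfwddesc} with the Artin $S$-stack taken to be $X^{T}_{S}$, the $d$-preoriented stack taken to be $T$, and the closed form taken to be $\txt{ev}^{*}\omega\in\Apcl_{T}(Y,s)$, yields immediately that the underlying $p$-form of $\int_{[T]}\txt{ev}^{*}\omega$ equals the composite
\[
\Lambda^{p}\mathbb{T}_{X^{T}_{S}/S}\xto{\tau}f^{*}\sigma_{*}\mathcal{O}_{T}[s]\xto{f^{*}[T]}\mathcal{O}_{X^{T}_{S}}[s-d],
\]
where $\tau$ is the $\pi^{*}\dashv\pi_{*}$ adjoint (combined with base change $\pi_{*}\mathcal{O}_{Y}\simeq f^{*}\sigma_{*}\mathcal{O}_{T}$) of the underlying $p$-form of $\txt{ev}^{*}\omega$ on $Y$ over $T$. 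It remains to identify $\tau$ with the composite stated.

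Next, I would unpack this underlying form. By definition of pullback of differential forms, the underlying $p$-form of $\txt{ev}^{*}\omega$ on $Y/T$ is the composite
\[
\pi^{*}\Lambda^{p}\mathbb{T}_{X^{T}_{S}/S}\simeq\Lambda^{p}\pi^{*}\mathbb{T}_{X^{T}_{S}/S}\xto{\Lambda^{p}\beta}\Lambda^{p}\txt{ev}^{*}\mathbb{T}_{X/S}\xto{\txt{ev}^{*}\omega}\mathcal{O}_{Y}[s],
\]
where $\beta\colon \pi^{*}\mathbb{T}_{X^{T}_{S}/S}\simeq\mathbb{T}_{Y/T}\to\txt{ev}^{*}\mathbb{T}_{X/S}$ is dual to the cotangent map of $\txt{ev}$, using the base-change equivalence $\mathbb{T}_{Y/T}\simeq\pi^{*}\mathbb{T}_{X^{T}_{S}/S}$. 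By Proposition~\ref{propn:mapscotgt}, the $\pi^{*}\dashv\pi_{*}$ adjoint of $\beta$ is precisely the equivalence $\mathbb{T}_{X^{T}_{S}/S}\isoto\pi_{*}\txt{ev}^{*}\mathbb{T}_{X/S}$ used in the statement.

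The final step is to identify $\tau$ with the claimed composite via the following coherence: for any $\pi^{*}\dashv\pi_{*}$ adjoint pair $\beta\colon\pi^{*}A\to B$ with adjoint $\beta^{\sharp}\colon A\to\pi_{*}B$, the square
\[
\begin{tikzcd}
\Lambda^{p}A \arrow{r}{\Lambda^{p}\beta^{\sharp}} \arrow{d}{\eta} & \Lambda^{p}\pi_{*}B \arrow{d}{\text{lax}} \\
\pi_{*}\pi^{*}\Lambda^{p}A\simeq\pi_{*}\Lambda^{p}\pi^{*}A \arrow{r}{\pi_{*}\Lambda^{p}\beta} & \pi_{*}\Lambda^{p}B
\end{tikzcd}
\]
commutes, where ``lax'' is the map described in the proposition (the adjoint of the symmetric monoidal equivalence $\pi^{*}\Lambda^{p}\pi_{*}B\simeq\Lambda^{p}\pi^{*}\pi_{*}B$ followed by the counit) and $\eta$ is the unit. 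This is a formal consequence of $\pi^{*}$ being symmetric monoidal, the induced lax structure on $\pi_{*}$, and the triangle identity. Applying this with $A=\mathbb{T}_{X^{T}_{S}/S}$, $B=\txt{ev}^{*}\mathbb{T}_{X/S}$ and $\beta$ as above, and then postcomposing with $\pi_{*}\txt{ev}^{*}\omega$, identifies $\tau$ with the composite in the proposition (after invoking the equivalence $\mathbb{T}_{X^{T}_{S}/S}\simeq\pi_{*}\txt{ev}^{*}\mathbb{T}_{X/S}$ along the left edge). The main obstacle will be verifying this coherence square rigorously in the $\infty$-categorical setting; this is a standard compatibility for symmetric monoidal adjunctions, but assembling it carefully requires unpacking the symmetric monoidal adjunction $\pi^{*}\dashv\pi_{*}$ on $\QCoh$.
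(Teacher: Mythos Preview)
Your proposal is correct and follows exactly the approach of the paper, which simply says ``Combine the descriptions of Proposition~\ref{propn:mapscotgt} and Remark~\ref{rmk:pushfwddesc}.'' You have spelled out in detail what this combination entails, including the coherence square relating $\Lambda^{p}$ of an adjoint map to the lax monoidal structure on $\pi_{*}$; this coherence is indeed a formal consequence of the symmetric monoidal adjunction $\pi^{*}\dashv\pi_{*}$ and is implicitly assumed in the paper's one-line proof.
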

\begin{proof}
  Combine the descriptions of Proposition~\ref {propn:mapscotgt} and
  Remark~\ref{rmk:pushfwddesc}.
\end{proof}

Our goal for the remainder of this section is to show that the AKSZ
construction is functorial, in 
the sense that it gives a functor 
\[ (\POr_{S,d}^{\op} \times \dSt^{\txt{Art}}_{S/\Apcl_{S}(s)})^{\txt{good}} \longrightarrow
\dSt^{\txt{Art}}_{S/\Apcl_{S}(s-d)},\]
where $(\POr_{S,d}^{\op} \times
\dSt^{\txt{Art}}_{S/\Apcl_{S}(s)})^{\txt{good}}$ denotes
the full subcategory of
$\POr_{S,d}^{\op} \times
\dSt^{\txt{Art}}_{S/\Apcl_{S}(s)}$
spanned by those pairs $((T,[T]),X)$ such that $X^{T}_{S}$ is again
an Artin $S$-stack. This
takes a bit of work because of the ``bivariant'' naturality of
evaluation maps: given a map $T \to T'$ over $S$ and $X$ over $S$ we
get a commutative diagram
\[
\begin{tikzcd}
   X^{T}_{S} \times_{S} T \arrow{dr} \\
X^{T'}_{S} \times_{S} T \arrow{u}{X^{\sigma}_{S} \times_{S}T}
\arrow{d}[left]{X^{T'}_{S}\times_{S} \sigma} \arrow{r} & X\\
 X^{T'}_{S} \times_{S} T'  \arrow{ur}
\end{tikzcd}
\]
along which we want to pull back differential forms on $X$, giving
\[
\begin{tikzcd}
  & \Apcl_{T}(X^{T}_{S} \times_{S} T, s) \arrow{d}{(X^{\sigma}_{S}
    \times_{S} T)^{*}}\\
\Apcl_{S}(X, s)   \arrow{ur} \arrow{dr} \arrow{r} &
\Apcl_{T}(X^{T'}_{S} \times_{S} T, s) \\
 & \Apcl_{T'}(X^{T'}_{S} \times_{S} T', s). \arrow{u}[right]{(X^{T'}_{S}
   \times_{S} \sigma)^{*}} 
\end{tikzcd}
\]
Next, if $\sigma \colon T \to T'$ is a morphism of $d$-preoriented stacks over
$S$, the functoriality of Remark~\ref{rmk:intftr} gives a commutative diagram of pullback and pushforward morphisms
\[
  \begin{tikzcd}
    \Apcl_{T}(X^{T}_{S} \times_{S} T, s) \arrow{r}{\int_{[T]}} \arrow{d}[left]{(X^{\sigma}_{S}
    \times_{S} T)^{*}} &
    \Apcl_{S}(X^{T}_{S},s-d) \arrow{d}{(X^{\sigma}_{S})^{*}} \\
    \Apcl_{T}(X^{T'}_{S} \times_{S} T,s) \arrow{r}{\int_{[T]}}  &
    \Apcl_{S}(X^{T'}_{S},s-d) \\
    \Apcl_{T'}(X^{T'}_{S} \times_{S} T',s). \arrow{u}{(X^{T'}_{S}
      \times_{S} \sigma)^{*}} \arrow{ur}[below right]{\int_{[T']}}
\end{tikzcd}
\]
Combining the two diagrams we get a big diagram from which we can
extract the commutative triangle
\[
\begin{tikzcd}
  {} & \Apcl_{S}(X^{T}_{S},s-d) \arrow{dd}{(X^{\sigma}_{S})^{*}} \\
  \Apcl_{S}(X,s) \arrow{ur} \arrow{dr} \\
   & \Apcl_{S}(X^{T'}_{S},s-d)
\end{tikzcd}
\]
that relates the AKSZ constructions for $T$ and $T'$ --- this gives
the required functoriality for an object of $\dSt^{\txt{Art}}_{S}$
and a morphism in $\POr_{S,d}$.

\begin{remark}\label{rmk:akszmor}
  Note that this argument shows that for any morphism
  $\tau \colon T \to T'$ in $\POr_{S,d}$, the AKSZ map
  $\Apcl_{S}(X,s) \to \Apcl_{S}(X^{T'}_{S},s-d)$ is equivalent to the
  composite
  \[ \Apcl_{S}(X,s) \to \Apcl_{T}(X^{T'}_{S} \times_{S} T,s)
    \xto{\int_{[T]}} \Apcl_{S}(X^{T'}_{S},s-d).\]
\end{remark}

To generalize this construction, we use the results on
parametrized adjunctions from \cite{paradj}. From the cartesian
closed \icat{} $\dSt_{S}$, applying (the dual of) \cite{paradj}*{Construction 4.2.1} as in
\cite{paradj}*{Example 4.2.4} gives a functor
\[ \dSt_{S} \times \Tw^{r}(\dSt_{S}) \to \dSt_{S}^{\Delta^{1}},\]
taking $(X, T \xto{\tau} T')$ to the map $X^{T'}_{S} \times_{S} T \to X$
adjoint to $X_{S}^{T'} \xto{X_{S}^{\tau}} X_{S}^{T}$. Moreover, at the terminal
object $S \in \dSt_{S}$, this takes $\tau$ to $T \simeq S^{T'}_{S}
\times_{S}T \to S$ and so induces a functor
\[ \dSt_{S} \times \Tw^{r}(\dSt_{S}) \to \dSt^{\Delta^{1}\times
    \Delta^{1}} \]
taking $(X, \tau)$ to the commutative square
\[
  \begin{tikzcd}
    X_{S}^{T'} \times_{S} T \arrow{r} \arrow{d} & X \arrow{d} \\
    T \arrow{r} & S.
  \end{tikzcd}
\]
Composing with $\Apcl_{\blank}(\blank) \colon \dSt^{\Delta^{1},\op} \to \mathcal{S}$ we
get a functor
\[ \dSt_{S}^{\op} \times \Tw^{r}(\dSt_{S})^{\op}
  \to \mathcal{S}^{\Delta^{1}},\]
which takes $(X,\tau)$ to the pullback morphism on differential forms
\[ \Apcl_{S}(X) \to \Apcl_{T}(X^{T'} \times_{S}T).\]
On the other hand, if we write $(\dSt_{S}
\times\dSt_{S}^\op)^{\txt{good}}$ for the full subcategory of $\dSt_{S}
\times\dSt_{S}^\op$ spanned by the pairs $(X,T)$ such that $X^{T}_{S}$ is
an Artin $S$-stack, then we can define a functor
\[ (\dSt_{S}^{\op} \times\dSt_{S})^{\txt{good}} \times_{\dSt_{S}} \Tw^{r}(\dSt_{S})^{\op}
  \times_{\dSt_{S}^{\op}} \POr_{S,d}^{\op} \to (\dSt_{S}^{\op} \times
  \POr_{S,d}^{\op})^{\txt{good}},\]
taking $(X, T \to T', [T])$ to $(X^{T'}_{S}, T, [T])$, which we can
combine with the naturality of the pushforward construction from
Remark~\ref{rmk:intftr}  to get a functor
\[ (\dSt_{S}^{\op} \times\dSt_{S})^{\txt{good}} \times_{\dSt_{S}}
  \Tw^{r}(\dSt_{S})^{\op} \times_{\dSt_{S}^{\op}} \POr_{S,d}^{\op} \to
  \mathcal{S}^{\Delta^{1}},\] taking $(X, T \to T', [T])$ to the
pushforward map
$\Apcl_{T}(X^{T'} \times T,s) \xto{\int_{[T]}} \Apcl_{S}(X^{T'})$.

We can now combine these two functors into a single functor
\[ (\dSt_{S}^{\op} \times\dSt_{S})^{\txt{good}} \times_{\dSt_{S}}
  \Tw^{r}(\dSt_{S})^{\op} \times_{\dSt_{S}^{\op}} \POr_{S,d}^{\op} \to
  \mathcal{S}^{\Delta^{2}},\]
and then compose to get a functor
\[ (\dSt_{S}^{\op} \times\dSt_{S})^{\txt{good}} \times_{\dSt_{S}}
  \Tw^{r}(\dSt_{S})^{\op} \times_{\dSt_{S}^{\op}} \POr_{S,d}^{\op} \to
  \mathcal{S}^{\Delta^{1}},\]
taking $(X, T \to T', [T])$ to the composite map \[\Apcl_{S}(X,s) \to
\Apcl_{T}(X^{T'}_{S} \times_{S} T,s) \to
\Apcl_{S}(X^{T'},s-d).\]

This functor takes the maps
$(X, T \to T', [T]) \to (X, T'' \to T', [T''])$ lying over $\id_{X}$
and $\id_{T'}$ to equivalences in $\mathcal{S}^{\Delta^{1}}$. We can
now apply \cite{paradj}*{Corollary 4.2.6} to understand the
localization at these morphisms. This result gives a functor from
\[
  (\dSt_{S}^{\op} \times\dSt_{S})^{\txt{good}} \times_{\dSt_{S}}
  \Tw^{r}(\dSt_{S})^{\op} \times_{\dSt_{S}^{\op}} \POr_{S,d}^{\op}\]
to the \icat{}
\[ 
  (\dSt_{S}^{\op} \times\dSt_{S})^{\txt{good}} \times_{\dSt_{S}}
  \POr_{S,d} \simeq (\dSt_{S}^{\op} \times \POr_{S,d})^{\txt{good}}    
\]
that exhibits the target as the localization we want. The functor we
defined above thus factors uniquely through a functor
\[  (\dSt_{S}^{\op} \times \POr_{S,d})^{\txt{good}}
  \to \mathcal{S}^{\Delta^{1}},\]
taking $(X, T, [T])$ to the AKSZ morphism
\[ \Apcl_{S}(X, s) \to \Apcl_{S}(X^{T}_{S}, s-d).\]

Now consider the morphism of right fibrations corresponding to this
natural transformation of presheaves. Over $0$, we get the composite
\[(\POr_{S,d} \times \dSt_{S}^{\op})^{\txt{good}} \to \dSt_{S}^{\op} \xto{\Apcl_{S}(s)}
\mathcal{S},\] with associated right fibration \[(\POr_{S,d}^{\op} \times
\dSt_{S/\Apcl_{S}(s)})^{\txt{good}} \to (\POr_{S,d}^{\op} \times
\dSt_{S})^{\txt{good}},\] and over $1$ we get the right fibration for the
composite
\[ (\POr_{S,d} \times \dSt_{S}^{\op})^{\txt{good}} \xto{\txt{exp}} \dSt_{S}^{\op}
  \xto{\Apcl_{S}(s-d)} \mathcal{S} \] where $\txt{exp}$ denotes the
functor taking $(T, X)$ to $X_{S}^{T}$; the corresponding
fibration is then the pullback of $\dSt_{S/\Apcl_{S}(s-d)} \to \dSt_{S}$ along
$\txt{exp}$, so our map of right fibrations corresponds to a
commutative square \csquare{(\POr_{S,d}^{\op} \times
  \dSt_{S/\Apcl_{S}})^{\txt{good}}}{\dSt_{S/\Apcl_{S}(s-d)}}{(\POr_{S,d}^{\op}
  \times \dSt_{S})^{\txt{good}}}{\dSt_{S}}{\txt{aksz}_{S,s,d}^{p}}{}{}{\txt{exp}} In other
words, given a $d$-preoriented $S$-stack $T$ and an Artin $S$-stack
 $X$, we have functorially associated to every closed
$s$-shifted $p$-form $\omega \in \Apcl_{S}(X,s)$ the $(s-d)$-shifted $p$-form
$\int_{[T]}\txt{ev}^{*}\omega$ on $X^{T}_{S}$.

\begin{remark}\label{rmk:akszbasechange}
  The AKSZ construction is compatible with base change, in the
  following sense: Suppose given a commutative diagram
  \[
    \begin{tikzcd}
    T' \arrow{r} \arrow{d}{\tau} & S' \arrow{d}{\sigma} & X' \arrow{l}
    \arrow{d}{\xi} \\
    T \arrow{r} & S & X \arrow{l}
    \end{tikzcd}
  \]
  where $T$ is a $d$-preoriented $S$-stack, $X$ is an Artin
  $S$-stack, and the squares are cartesian. Then we have a commutative
  diagram
\[
    \begin{tikzcd}
      X & X^{T}_{S} \times_{S} T \arrow{l} \arrow{r} & X^{T}_{S} \\
      X' \arrow{u} & \sigma^{*}X^{T}_{S} \times_{S'} T' \arrow{l}
      \arrow{r} \arrow{u}& \sigma^{*}X^{T}_{S} \arrow{u} \\
       & X'^{T'}_{S'} \times_{S'}T' \arrow{ul} \arrow{u} \arrow{r} &
       X'^{T'}_{S'} \arrow{u},
     \end{tikzcd}
   \]
   which induces on differential forms a commutative diagram
   \[
     \begin{tikzcd}
       \Apcl_{S}(X,s) \arrow{r} \arrow{d} & \Apcl_{T}(X^{T}_{S}
       \times_{S} T,s) \arrow{d} \arrow{r} & \Apcl_{S}(X^{T}_{S},s-d)
       \arrow{d} \\
       \Apcl_{S'}(X',s) \arrow{dr}\arrow{r}  & \Apcl_{T'}(\sigma^{*}X^{T}_{S}
       \times_{S'} T',s) \arrow{d} \arrow{r} & \Apcl_{S'}(\sigma^{*}X^{T}_{S},s-d)
       \arrow{d} \\       
        & \Apcl_{T'}(X'^{T'}_{S'} \times_{S'} T',s) \arrow{r} & \Apcl_{T'}(X'^{T'}_{S'},s-d).
     \end{tikzcd}
   \]
   Here the upper right square commutes by 
  Remark~\ref{rmk:pfbasechange}, the bottom right square commutes by
  Remark~\ref{rmk:intftr}, and the left part of the diagram commutes
  by the naturality of pullback of
  differential forms. Looking at the morphism on the outer boundary of
  this diagram, we get a commutative square
  \[
    \begin{tikzcd}
      \Apcl_{S}(X,s) \arrow{d} \arrow{r} &
      \Apcl_{S}(X^{T}_{S},s-d) \arrow{d} \\
      \Apcl_{S'}(X',s) \arrow{r} & \Apcl_{S'}(X'^{T'}_{S'},s-d).
    \end{tikzcd}
  \]
  where the horizontal maps are the AKSZ morphisms and the vertical
  ones are pullback morphisms.
\end{remark}

\section{The AKSZ Construction on Iterated Spans}\label{subsec:akszonspans}
Suppose $X$ is an $s$-presymplectic $S$-stack; we write
$\POr_{S,d}^{X\txt{-good}}$ for the full subcategory of $\POr_{S,d}$
spanned by the preoriented $S$-stacks $(T,[T])$ such that $X^{T}_{S}$
is an Artin $S$-stack. Then the AKSZ functor (for 2-forms) from the previous
section restricts to a functor
\[ \txt{aksz}^{X}_{S,d} \colon \POr_{S,d}^{X\txt{-good},\op} \to
  \PSymp_{S,s-d}.\]
Our goal in this section is to prove that this induces a symmetric
monoidal functor
\[ \Cospan_{n}(\POr_{S,d}^{X\txt{-good}}) \to \Span_{n}(\PSymp_{S,s-d})\]
on iterated spans.

As a first step we check that $\txt{aksz}^{X}_{S,s,d}$ induces a
functor on iterated spans, which follows from the following observation:
\begin{proposition}
  Let $X$ be an $s$-presymplectic $S$-stack. The \icat{}
  $\POr_{S,d}^{X\txt{-good}}$ has pushouts, and the AKSZ functor
  $\POr_{S,d}^{X\txt{-good},\op} \to \PSymp_{S,s-d}$ preserves fibre
  products.
\end{proposition}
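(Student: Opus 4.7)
The plan is to reduce both claims to a single observation: since $\dSt_S$ is an $\infty$-topos, and in particular cartesian closed, the internal Hom functor $X_S^{(-)} \colon \dSt_S^{\op} \to \dSt_S$ converts colimits in its argument into limits. Concretely, for any span $T \from T_0 \to T'$ in $\dSt_S$ we have a canonical equivalence
\[ X_S^{T \cup_{T_0} T'} \simeq X_S^T \times_{X_S^{T_0}} X_S^{T'}. \]

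For the existence of pushouts in $\POr_{S,d}^{X\text{-good}}$, I would first appeal to Proposition~\ref{propn:POrpushout}, which gives that $\POr_{S,d}$ admits pushouts and that they are computed at the level of underlying stacks in $\dSt_S$. It then remains to show that the good subcategory is closed under them. Given a span of good preoriented stacks, the equivalence above realizes $X_S^{T \cup_{T_0} T'}$ as a fibre product of three Artin $S$-stacks, which is again Artin by closure of the Artin locus under finite limits (Proposition~\ref{propn:geomfinlim}).

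For preservation of fibre products by the AKSZ functor, I would use (the proof of) Lemma~\ref{lem:PSymppullback}: weakly contractible finite limits in $\PSymp_{S,s-d}$ exist and are computed in $\dSt_S$ via both forgetful functors to $\dStArt_S$ and to $\dSt_{S/\Atcl_S(s-d)}$, both of which create such limits from $\dSt_S$ (the second because the overcategory projection creates connected limits). Consequently it suffices to check that the composite
\[ (\POr_{S,d}^{X\text{-good}})^{\op} \xto{\txt{aksz}_{S,d}^X} \PSymp_{S,s-d} \longrightarrow \dSt_S \]
sends pushouts to pullbacks; but by construction this composite is $T \mapsto X_S^T$, so the first observation finishes the proof.

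The one conceptual point worth flagging --- where I expect a reader to pause --- is the compatibility of the closed $2$-forms on the apex: one might a priori worry that the presymplectic form on $X_S^{T \cup_{T_0} T'}$ defined via the pushforward $\int_{[T \cup_{T_0} T']}$ does not match the form forced by the universal property of the pullback in $\PSymp_{S,s-d}$. No direct manipulation of this pushforward is required, however: the naturality of AKSZ along the three cospan maps $T, T', T_0 \to T \cup_{T_0} T'$ already supplies a cone in $\dSt_{S/\Atcl_S(s-d)}$, and by the creation-of-connected-limits property of the overcategory projection this is automatically a pullback cone there as soon as its underlying cone of stacks is a pullback, which we have just established.
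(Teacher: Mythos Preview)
Your proof is correct and matches the paper's approach essentially line for line: both reduce the existence of pushouts to closure of Artin $S$-stacks under finite limits via $X_S^{(\blank)}$ taking colimits to limits, and both reduce preservation of fibre products to the fact that pullbacks in $\PSymp_{S,s-d}$ are detected in $\dSt_S$ (Lemma~\ref{lem:PSymppullback}). Your final paragraph spelling out the $2$-form compatibility via creation of connected limits by the overcategory projection is a helpful elaboration of what the paper leaves implicit in the word ``detected'', but it is not a different argument.
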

\begin{proof}
  To see that $\POr_{S,d}^{X\txt{-good}}$ has pushouts, we will show that it is closed
  under pushouts in $\POr_{S,d}$. Pushouts in $\POr_{S,d}$ are
  detected in $\dSt_{S}$ by Proposition~\ref{propn:POrpushout}, so to
  see this it suffices to show that the full subcategory of $\dSt_{S}$
  on $S$-stacks $T$ such that $X^{T}_{S}$ is an Artin $S$-stack is
  closed under pushouts. This holds since
  $X^{(\blank)}_{S} \colon \dSt_{S}^{\op} \to \dSt_{S}$ preserves
  limits, and Artin $S$-stacks are closed under finite limits by
  Proposition~\ref{propn:geomfinlim}.

  It remains to see that $\txt{aksz}_{S,d}^{X}$ preserves fibre
  products.
  Since pullbacks in $\PSymp_{s-d}$ are detected in
  $\dSt_{S}$ by Lemma~\ref{lem:PSymppullback}, it suffices to prove
  that the composite $\POr_{S,d}^{X\txt{-good},\op} \to
  \dSt_{S}$ preserves pullbacks. But this factors as the composite of the
  forgetful functor $\POr_{S,d}^{X\txt{-good},\op} \to \dSt^{\op}$,
  which we just saw preserves pullbacks, and the internal Hom functor
  $X_{S}^{(\blank)} \colon \dSt_{S}^{\op} \to \dSt_{S}$, which preserves all
  limits.
\end{proof}

\begin{corollary}\label{cor:AKSZspan}
  Let $X$ be an $s$-presymplectic $S$-stack. Then 
  $\txt{aksz}_{S,d}^{X}$ induces a morphism of $n$-uple Segal spaces
  \[ \COSPAN_{n}(\POr_{S,d}^{X\txt{-good}}) \to
    \SPAN_{n}(\PSymp_{S,s-d}),\]
  and hence a functor 
  \[ \txt{AKSZ}_{n,X}^{S,d} \colon \Cospan_{n}(\POr_{S,d}^{X\txt{-good}}) \to
    \Span_{n}(\PSymp_{S,s-d}).\]
  of $(\infty,n)$-categories. \qed
\end{corollary}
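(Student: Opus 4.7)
The plan is to reduce this to the functoriality of the $\Span_n$ construction established in \cite{spans}. Recall that by definition $\Cospan_n(\mathcal{C}) := \Span_n(\mathcal{C}^{\op})$, so producing the claimed functor amounts to producing a morphism $\Span_n((\POr_{S,d}^{X\txt{-good}})^{\op}) \to \Span_n(\PSymp_{S,s-d})$ (and analogously at the level of $\SPAN_n^+$).

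Viewing the AKSZ construction as a functor
\[ \txt{aksz}_{S,d}^{X} \colon (\POr_{S,d}^{X\txt{-good}})^{\op} \to \PSymp_{S,s-d},\]
the preceding proposition tells us two things: first, $\POr_{S,d}^{X\txt{-good}}$ has pushouts, so its opposite has pullbacks; second, this functor preserves fibre products. Combined with Lemma~\ref{lem:PSymppullback}, which gives that $\PSymp_{S,s-d}$ has pullbacks, we have a pullback-preserving functor between \icats{} that admit pullbacks.

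The construction of $\SPAN_n^+(\mathcal{C})$ as the full subcategory of $\oSPAN_n^+(\mathcal{C})$ spanned by those functors $\bbS^{\mathbf{k}} \to \mathcal{C}$ that are right Kan extensions of their restriction to $\bbL^{\mathbf{k}}$ is manifestly functorial in pullback-preserving functors: postcomposition with such a functor sends cartesian diagrams to cartesian diagrams, since the Kan extension condition amounts pointwise to a finite limit condition. This yields the required morphism of cocartesian fibrations over $\Dnop$, hence of $n$-uple category objects in $\CatI$; passing to the underlying left fibrations gives the morphism of $n$-uple Segal spaces
\[ \COSPAN_{n}(\POr_{S,d}^{X\txt{-good}}) \to \SPAN_{n}(\PSymp_{S,s-d}),\]
and applying the underlying $n$-fold Segal space functor $U_{\Seg}$ (as in the construction of $\LagnSs$) produces the desired functor $\txt{AKSZ}_{n,X}^{S,d}$ of $(\infty,n)$-categories.

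There is essentially no obstacle beyond invoking this functoriality of \cite{spans}; the substantive content, namely that AKSZ preserves the relevant limits, was already done in the previous proposition. The only thing worth verifying carefully is that the functoriality of $\SPAN_n^+$ with respect to pullback-preserving maps is stated in the form needed, but this is routine within the framework of \cite{spans}.
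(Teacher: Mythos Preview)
Your proposal is correct and follows exactly the approach the paper intends: the corollary is marked with \qed{} in the paper because it is an immediate consequence of the preceding proposition (that $\txt{aksz}_{S,d}^{X}$ preserves fibre products) together with the functoriality of $\SPAN_n$ and $\Span_n$ in pullback-preserving functors, which is recorded in Remark~\ref{rmk:spanfun}. You have simply made explicit what the paper treats as evident.
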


Our next goal is to show that this functor is symmetric monoidal,
which we do by checking that it is compatible with the identifications
we used to define the symmetric monoidal structures in
\S\ref{subsec:itlagsymmon}. The following is the key observation we need:
\begin{proposition}
  Let $X$ be an $s$-presymplectic $S$-stack. Then
  $\txt{aksz}_{S,d}^{X}$ induces a functor
  \[ (\txt{aksz}_{S,d}^{X})_{\emptyset,\emptyset/} \colon (\POr^{X\txt{-good}}_{S,d})_{\emptyset,\emptyset/} \to (\PSymp_{S,s-d})_{/S,S},\]
  since $X^{\emptyset}_{S} \simeq S$. There is a natural commutative
  square
  \[
    \begin{tikzcd}[column sep=huge]
      (\POr^{X\txt{-good}}_{S,d})_{\emptyset,\emptyset/} \arrow{r}{(\txt{aksz}_{S,d}^{X})_{\emptyset,\emptyset/}}
      \arrow{d}{\sim} &  (\PSymp_{S,s-d})_{/S,S} \arrow{d}{\sim} \\
      \POr^{X\txt{-good}}_{S,d+1} \arrow{r}{\txt{aksz}_{S,d}^{X}} &\PSymp_{S,s-d-1},
    \end{tikzcd}
  \]
  where the vertical morphisms are the equivalences of
  Corollary~\ref{cor:deloopPSymp} and
  Proposition~\ref{propn:deloopPOr}. 
\end{proposition}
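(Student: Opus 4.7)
The plan is to unwind both vertical equivalences and observe that the AKSZ functor is ``linear'' (i.e., natural) in the preorientation, so that it intertwines the two loop-space constructions defining the equivalences.

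First I would describe both equivalences as loop spaces of a common shape. By Proposition~\ref{propn:Atclloop}, the target equivalence $(\PSymp_{S,s-d})_{/S,S} \simeq \PSymp_{S,s-d-1}$ arises from the pullback square identifying $\Atcl_S(s-d-1)$ as the loop space $\Omega_{0}\Atcl_S(s-d)$, which in turn comes from the shift pushout $\k(2)[-2-(s-d)] \to 0 \to \k(2)[-2-(s-d)+1]$ in graded mixed modules. Similarly, the proof of Proposition~\ref{propn:deloopPOr} exhibits $(\POr_{S,d})_{\emptyset,\emptyset/} \simeq \POr_{S,d+1}$ as coming from the analogous pullback square with $0 \to \mathcal{O}_S[-d] \leftarrow 0$ identifying $\mathcal{O}_S[-d-1] \simeq \Omega_0(\mathcal{O}_S[-d])$. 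Thus both equivalences encode ``$\Omega$ of the coefficient object''.

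Next I would check that AKSZ is compatible with the underlying stack functor: on objects, $X^\emptyset_S \simeq S$, and the AKSZ form on $S$ vanishes, so $\txt{aksz}^X_{S,d}$ automatically carries the initial cospan $\emptyset \to T \leftarrow \emptyset$ to a span $S \leftarrow X^T_S \to S$ whose two legs intertwine the canonical zero $(s-d)$-forms on $S$ with the AKSZ form on $X^T_S$. Composing with the equivalences of Corollary~\ref{cor:deloopPSymp} and Proposition~\ref{propn:deloopPOr}, it remains to check the statement on closed $2$-forms: that the loop in $\Atcl_S(X^T_S, s-d)$ obtained from the two nullhomotopies of $\int_{[T]} \txt{ev}^*\omega$ coincides with $\int_{[T']} \txt{ev}^*\omega$, where $[T']$ is the $(d+1)$-preorientation of $T$ corresponding to the two nullhomotopies of $[T]$.

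For this, I would note that the AKSZ morphism $\Apcl_S(X,s)\to \Apcl_S(X^T_S, s-d)$ is, by construction, obtained by applying $\Map_{\GMM_{\k}}(\k(2)[-2-s],\blank)$ to a morphism of graded mixed modules built functorially from $[T]$ via the middle vertical composite in the proof of Proposition~\ref{propn:pushforwardexists}. In particular, the association $[T] \mapsto \txt{aksz}^X_{[T]}(\omega)$ is functorial in $[T]$ as an object of $\QCoh(S)_{/\mathcal{O}_S[-d]}$, so that loops at $0 \in \Map(\Gamma_S\mathcal{O}_T, \mathcal{O}_S[-d])$ are sent to loops at $0 \in \Apcl_S(X^T_S,s-d)$. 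Under the identifications from the previous step, this functoriality directly yields the asserted commutative square.

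The main obstacle is bookkeeping: one must show that the pushout square in $\GMM_{\k}$ defining the shift $\mathcal{O}_S[-d-1] \simeq \Omega \mathcal{O}_S[-d]$ is sent by the functor $\Gamma_S(\phi_*\mathcal{O}_T \otimes_S \DR_S(X)[2-2-(s-d)]) \to (\blank)$ (applied in the Hom description of the AKSZ morphism) to the pushout square defining $\Atcl_S(X^T_S, s-d-1) \simeq \Omega_0 \Atcl_S(X^T_S, s-d)$. This is essentially tautological once the cube in Remark~\ref{rmk:pfbasechange} and the naturality in $\sigma\colon S'\to S$ are laid out, but writing it down requires keeping track of several compatibilities coming from Remarks~\ref{rmk:intftr}, \ref{rmk:akszmor}, and~\ref{rmk:akszbasechange}. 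The remainder of the verification (naturality in $S$, and that everything lies in the $X$-good subcategories) follows formally from the fact that both vertical equivalences are natural in $S$ and detect equivalences on underlying stacks.
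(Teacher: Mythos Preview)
Your proposal is correct and follows essentially the same approach as the paper. Both arguments reduce to the observation that the AKSZ map $\Apcl_S(X,s)\to\Apcl_S(X^T_S,s-d)$ depends linearly on the preorientation $[T]$, so that the loop at $0$ on the preorientation side (which encodes the $(d+1)$-preorientation $[T']$) is carried to the loop at $0$ on the closed-form side (which encodes the $(s-d-1)$-form). The paper makes this explicit by writing out the relevant diagram of de Rham complexes and tracing through how the span $0\leftarrow\sigma_*\mathcal{O}_T\to 0$ of preorientations produces the map to the fibre product $\DR(X^T_S/S)[-d-1]$, whereas you phrase the same content as functoriality of $[T]\mapsto\txt{aksz}^X_{[T]}(\omega)$ on the pointed mapping space $\Map(\Gamma_S\mathcal{O}_T,\mathcal{O}_S[-d])$; the bookkeeping you flag as the main obstacle is exactly what the paper carries out concretely.
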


\begin{proof}
  Given an object $T \in
  (\POr^{X\txt{-good}}_{S,d})_{\emptyset,\emptyset/}$, the AKSZ
  construction gives a commutative diagram
  \[
    \begin{tikzcd}
      {} & \Atcl_{S}(S,s-d) \arrow{d} \\
      \Atcl_{S}(X,s) \arrow{r}{\txt{aksz}_{T}}
      \arrow{ur}{\txt{aksz}_{\emptyset}}
      \arrow{dr}[below left]{\txt{aksz}_{\emptyset}}& \Atcl_{S}(X^{T}_{S},s-d)\\
      & \Atcl_{S}(S,s-d) \arrow{u}
    \end{tikzcd}
  \]
  Unwinding the definitions, we see that we are required to show that
  the induced morphism \[\Atcl_{S}(X,s) \to \Atcl_{S}(X^{T}_{S},s-d-1)\]
  to the pullback (via Proposition~\ref{propn:Atclloop}) is
  $\txt{aksz}_{T'}$ where $T'$ denotes $T$ equipped with the
  $(d+1)$-orientation induced by the maps $\emptyset \to T$ as in
  Proposition~\ref{propn:deloopPOr}. The naturality of pullback and
  pushforward of differential forms, as discussed in \S\ref{subsec:AKSZdiff}, shows that
  each morphism $\emptyset \to T$ induces a
  commutative diagram
  \[
    \begin{tikzcd}
      {} & \Atcl_{\emptyset}(X_{S}^{\emptyset} \times_{S}\emptyset, s) \arrow{d}{\sim}
      \arrow{r}{\sim} & \Atcl_{S}(X_{S}^{\emptyset}, s-d) \arrow{d} \\
      \Atcl_{S}(X,s) \arrow{r} \arrow{ur} \arrow{dr} &
      \Atcl_{\emptyset}(X^{T}_{S} \times_{S} \emptyset) \arrow{r} &
      \Atcl_{S}(X^{T}_{S},s-d)\arrow[equals]{d} \\
      & \Atcl_{T}(X^{T}_{S} \times_{S} T,s) \arrow{r} \arrow{u} &
      \Atcl_{S}(X^{T}_{S},s-d).
    \end{tikzcd}
  \]
  We can therefore identify the diagram above with the outer square in
  the commutative
  diagram
  \[
    \begin{tikzcd}
    {} & \Atcl_{\emptyset}(X^{T}_{S}\times_{S} \emptyset,s) \arrow{dr}
    \\
    \Atcl_{S}(X,s)\arrow{ur} \arrow{dr} \arrow{r} &
    \Atcl_{T}(X^{T}_{S} \times_{S} T, s) \arrow{u} \arrow{d}
    \arrow{r}& \Atcl_{S}(X^{T}_{S},s-d) \\
     & \Atcl_{\emptyset}(X^{T}_{S}\times_{S} \emptyset,s). \arrow{ur}
   \end{tikzcd}
 \]
 This in turn arises from a commutative diagram of de Rham complexes:
 \[
   \begin{tikzcd}[column sep=small]
     {} & \DR(X^{T}_{S} \times_{S} \emptyset/\emptyset) & \arrow{l}{\sim} \Gamma_{S}(0
     \otimes_{S} \DR_{S}(X^{T}_{S})) \arrow{dr} \\
     \DR(X/S) \arrow{dr}\arrow{ur} \arrow{r} & \DR(X^{T}_{S} \times_{S}T/T)
     \arrow{u} \arrow{d} &
     \Gamma_{S}(\sigma_{*}\mathcal{O}_{T}
     \otimes_{S}\DR_{S}(X^{T}_{S}))  \arrow{u} \arrow{d}
     \arrow{l}{\sim} \arrow{r} & \DR_{S}(X^{T}_{S})[-d]\\
     {} & \DR(X^{T}_{S} \times_{S} \emptyset/\emptyset) & \arrow{l}{\sim} \Gamma_{S}(0
     \otimes_{S} \DR_{S}(X^{T}_{S})), \arrow{ur}
   \end{tikzcd}
 \]
 where we see that the right-most part arises by composing with the
 diagram of preorientations
 \[
   \begin{tikzcd}
     0 \arrow{dr} \\
     \sigma_{*}\mathcal{O}_{T} \arrow{r} \arrow{u} \arrow{d} &
     \mathcal{O}_{S}[-d] \\
     0. \arrow{ur}
   \end{tikzcd}
 \]
 It follows that the induced map to the fibre product
 \[ \DR(X/S) \to \DR(X^{T}_{S}/S)[-d-1] \]
 is the composite
 \[ \DR(X/S) \to \DR(X^{T}_{S} \times_{S} T/T) \isofrom \Gamma_{S}(\sigma_{*}\mathcal{O}_{T}
   \otimes_{S}\DR_{S}(X^{T}_{S})) \to \DR(X^{T}_{S}/S)[-d-1],\]
 where the third morphism arises from the induced map
 $\sigma_{*}\mathcal{O}_{T} \to \mathcal{O}_{S}[-d-1]$,
 which is exactly the $(d+1)$-preorientation on $T$. 
\end{proof}

\begin{corollary}
  Let $X$ be an $s$-presymplectic $S$-stack. Then there is a
  commutative square
  \[
    \begin{tikzcd}[column sep=huge]
      \Cospan_{n}(\POr_{S,d}^{X\txt{-good}})(\emptyset,\emptyset)
      \arrow{r}{\txt{AKSZ}_{S,d}^{X}(\emptyset,\emptyset)}
      \arrow{d}{\sim} & \Span_{n}(\PSymp_{S,s-d})(S,S) \arrow{d}{\sim}
      \\
      \Cospan_{n-1}(\POr_{S,d+1}^{X\txt{-good}})
      \arrow{r}{\txt{AKSZ}_{S,d+1}^{X}}
      & \Span_{n-1}(\PSymp_{S,s-d-1}),
    \end{tikzcd}
  \]
  where the vertical equivalences are those of
  Corollary~\ref{cor:SpanSSeq}. \qed
\end{corollary}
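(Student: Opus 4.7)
The plan is to obtain the square as a vertical concatenation of two commutative squares, both arising from naturality of the $\Span_n / \Cospan_n$ construction. Recall that $\txt{AKSZ}^{X}_{n,S,d}$ was defined in Corollary~\ref{cor:AKSZspan} by applying this construction to the underlying \icat{} functor $\txt{aksz}^{X}_{S,d}$, and that the equivalences of Corollary~\ref{cor:SpanSSeq} themselves factor, by construction, as the composite of the equivalence $\Span_n(\mathcal{C})(x,y) \simeq \Span_{n-1}(\mathcal{C}_{/x,y})$ of \cite{spans}*{Proposition 8.3} with the delooping equivalences of Proposition~\ref{propn:deloopPOr} and Corollary~\ref{cor:deloopPSymp}. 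It therefore suffices to check compatibility at each of these two steps separately.

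For the upper half, since $\txt{aksz}^{X}_{S,d}$ sends $\emptyset$ to $X^{\emptyset}_{S} \simeq S$ and preserves pushouts (by the proposition preceding Corollary~\ref{cor:AKSZspan}), the naturality of the hom-equivalence of \cite{spans}*{Proposition 8.3} in pushout-preserving functors of \icats{} produces a commutative square whose top edge is $\txt{AKSZ}^{X}_{n,S,d}(\emptyset,\emptyset)$ and whose bottom edge is the image under $\Cospan_{n-1}/\Span_{n-1}$ of the induced slice functor
\[ (\txt{aksz}^{X}_{S,d})_{\emptyset,\emptyset/} \colon (\POr^{X\txt{-good}}_{S,d})_{\emptyset,\emptyset/} \longrightarrow (\PSymp_{S,s-d})_{/S,S}. \]
For the lower half, the preceding proposition already furnishes a commutative square of underlying \icats{} in which this slice functor is identified with $\txt{aksz}^{X}_{S,d+1}$ via the equivalences of Proposition~\ref{propn:deloopPOr} and Corollary~\ref{cor:deloopPSymp}. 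These equivalences preserve pushouts (respectively pullbacks) because the relevant (co)limits on both sides are detected in $\dSt_S$, so we may apply $\Cospan_{n-1}$ on the left and $\Span_{n-1}$ on the right to obtain a second commutative square. Stacking the two gives the desired diagram, with vertical composites equal to the equivalences of Corollary~\ref{cor:SpanSSeq} by construction.

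There is no real obstacle beyond organizing the naturality claims carefully. The only content, rather than pure formalism, is encapsulated in the preceding proposition (the compatibility of AKSZ with the delooping identifications), which we merely invoke. The main care needed is verifying (a) that both sides of \cite{spans}*{Proposition 8.3} are sufficiently functorial to produce the top square, which is immediate from the construction in \cite{spans}*{\S 8}, and (b) that the delooping equivalences of Proposition~\ref{propn:deloopPOr} and Corollary~\ref{cor:deloopPSymp} preserve the (co)limits needed for $\Cospan_{n-1}/\Span_{n-1}$ to act on them --- which is built into their construction, since they are compatible with the forgetful functors to $\dSt_S$.
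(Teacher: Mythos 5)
Your proof is correct and is exactly the argument the paper intends (the corollary is left as an immediate consequence of the preceding proposition): factor the vertical equivalences of Corollary~\ref{cor:SpanSSeq} through \cite{spans}*{Proposition 8.3} and the delooping identifications, use naturality of the hom/slice equivalence in the pushout-preserving functor $\txt{aksz}^{X}_{S,d}$ for the top square, and apply $\Cospan_{n-1}/\Span_{n-1}$ to the square of the preceding proposition for the bottom one. The only cosmetic remark is that your justification for the delooping equivalences preserving (co)limits is more than needed, since they are equivalences of \icats{} and hence preserve all limits and colimits automatically.
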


\begin{corollary}\label{cor:AKSZsymmon}
  Let $X$ be an $s$-presymplectic $S$-stack. Then the functor of
  $(\infty,n)$-categories
  \[\txt{AKSZ}_{S,d}^{X} \colon \Cospan_{n}(\POr_{S,d}^{X\txt{-good}})
  \to \Span_{n}(\PSymp_{S,s-d})\] has a natural symmetric monoidal
  structure, with respect to the symmetric monoidal structures of
  Corollary~\ref{cor:SpanPSympSM}. \qed
\end{corollary}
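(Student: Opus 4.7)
The plan is to reduce the claim to the previous (unnamed) corollary by appealing to the deloop-style construction of the symmetric monoidal structures from Corollary~\ref{cor:SpanPSympSM}. Recall that, following Remark~\ref{rmk:spanmondeloop}, the symmetric monoidal structure on $\Span_{n}(\PSymp_{S,s-d})$ arises from iterating the equivalence of Corollary~\ref{cor:SpanSSeq} to identify this $(\infty,n)$-category with the $k$-fold iterated endomorphism object of $S$ inside $\Span_{n+k}(\PSymp_{S,s-d+k})$, which equips it with an $\mathbb{E}_{k}$-algebra structure that stabilizes to a symmetric monoidal structure as $k \to \infty$. Similarly, $\Cospan_{n}(\POr^{X\text{-good}}_{S,d})$ is identified with the $k$-fold iterated endomorphism object of $\emptyset$ inside $\Cospan_{n+k}(\POr^{X\text{-good}}_{S,d-k})$.

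First I would iterate the preceding corollary: since it is natural in $d$, induction on $k$ yields for every $k \geq 0$ a commutative square
\[
\begin{tikzcd}[column sep=large]
\Cospan_{n+k}(\POr^{X\text{-good}}_{S,d-k})(\emptyset,\ldots,\emptyset) \arrow{r}{\txt{AKSZ}^{X}_{S,d-k}(\emptyset,\ldots,\emptyset)} \arrow{d}{\sim} & \Span_{n+k}(\PSymp_{S,s-d+k})(S,\ldots,S) \arrow{d}{\sim} \\
\Cospan_{n}(\POr^{X\text{-good}}_{S,d}) \arrow{r}{\txt{AKSZ}^{X}_{S,d}} & \Span_{n}(\PSymp_{S,s-d}),
\end{tikzcd}
\]
where the vertical maps are the iterated delooping equivalences and the horizontal maps are the AKSZ functors of Corollary~\ref{cor:AKSZspan} at various shift parameters. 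The fact that $X^{\emptyset}_{S} \simeq S$ ensures that $\txt{AKSZ}^{X}_{S,d-k}$ sends the basepoint $\emptyset$ to the basepoint $S$, so it does restrict to a functor between these endomorphism objects in the first place.

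Next, I would invoke the general principle behind Remark~\ref{rmk:spanmondeloop}: a functor of $(\infty,n+k)$-categories preserving the distinguished basepoint restricts to an $\mathbb{E}_{k}$-monoidal functor on $k$-fold iterated endomorphism objects. The commuting square above therefore upgrades $\txt{AKSZ}^{X}_{S,d}$ to an $\mathbb{E}_{k}$-algebra map for every $k$. Letting $k \to \infty$ (or, concretely, checking that these $\mathbb{E}_{k}$-structures assemble coherently for all $k$ as in the construction of the symmetric monoidal structures in Corollary~\ref{cor:SpanPSympSM}), we obtain the desired symmetric monoidal refinement.

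The main obstacle is ensuring coherence of these commuting squares across varying $k$: one must verify that the compatibilities produced by iteration glue into a single morphism of the coherent delooping data that defines the symmetric monoidal structures, and not just a collection of compatibilities at each finite stage. This reduces to naturality of the single-step delooping square from the preceding corollary in $d$, combined with the observation that both deloopings are implemented by restriction along cartesian squares involving the distinguished objects $\emptyset$ (via Proposition~\ref{propn:deloopPOr}) and $S$ (via Proposition~\ref{propn:Atclloop}); the compatibility of $\txt{AKSZ}$ with base change (Remark~\ref{rmk:akszbasechange}) and with pushforward (Remark~\ref{rmk:pfbasechange}) then propagates the single-step compatibility through all higher stages.
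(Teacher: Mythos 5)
Your proposal is correct and follows essentially the same route as the paper: the result is obtained by iterating the delooping-compatibility square of the preceding corollary (itself derived from the identification $(\POr^{X\txt{-good}}_{S,d})_{\emptyset,\emptyset/}\simeq \POr^{X\txt{-good}}_{S,d+1}$ and $(\PSymp_{S,s-d})_{/S,S}\simeq\PSymp_{S,s-d-1}$) and feeding the resulting tower of compatibilities into the delooping construction of the symmetric monoidal structures from Remark~\ref{rmk:spanmondeloop} and Corollary~\ref{cor:SpanPSympSM}. The coherence point you flag is handled exactly as you suggest, by the naturality in $d$ (and $S$) of the single-step delooping equivalences.
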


\begin{remark}
  For $K \in \mathcal{S}_{\fin}$, the Betti stack $K_{B} \times S$ is
  $X$-good for any Artin $S$-stack $X$, since we have
  \[ X_{S}^{K_{B} \times S} \simeq \lim_{K} X \]
  (where the limit is computed in $\dSt_{S}$); since Artin $S$-stacks
  are closed under finite limits, this means $X_{S}^{K_{B} \times S}$
  is again an Artin $S$-stack when $K$ is finite.
  Combining the AKSZ construction with the functor
  $\POr_{d}^{\mathcal{S}} \xto{(\blank)_{B} \times S} \POr_{S,d}$ from
  \S\ref{subsec:orspc} we therefore get for any $s$-presymplectic
  $S$-stack $X$ a symmetric monoidal functor
  \[ \Cospan_{n}(\POr_{d}^{\mathcal{S}}) \to \Span_{n}(\PSymp_{S,s-d}).\]
\end{remark}

\section{Non-Degeneracy of the AKSZ Construction}\label{subsec:aksznondeg}
Suppose $X$ is an Artin $S$-stack and $T$ is an $\mathcal{O}$-compact
$d$-preoriented $S$-stack such that $X^{T}_{S}$ is also an Artin
$S$-stack. If $\omega$ is an $s$-shifted relative 2-form on $X$, then
it follows from Proposition~\ref{propn:akszpair} that the morphism
\[ \mathbb{T}_{X^{T}_{S}/S} \to \mathbb{L}_{X^{T}_{S}/S}[s-d] \]
corresponding to the AKSZ form $\int_{[T]}\txt{ev}^{*}\omega$ is given
by the composite
\[ \pi_{*}\txt{ev}^{*}\mathbb{T}_{X/S} \xto{\pi_{*}\txt{ev}^{*}\omega}
  \pi_{*}\txt{ev}^{*}\mathbb{L}_{X/S}[s] \longrightarrow
  (\pi_{*}\txt{ev}^{*}\mathbb{T}_{X/S})^{\vee}[s-d],\] where the
second morphism arises from the preorientation of
$\pi \colon X^{T}_{S}\times_{S}T \to X^{T}_{S}$ pulled back from
$T \to S$. If $\omega$ is symplectic then the first morphism is an
equivalence, and if $T$ is $d$-oriented then the second morphism is an
equivalence. Thus we see that if $X$ is symplectic and $T$ is
oriented, then $X^{T}_{S}$, equipped with the AKSZ 2-form, is again
symplectic. Our goal in this section is to generalize this
observation, to see that the symmetric monoidal
AKSZ functor we constructed above restricts to subcategories of
oriented and symplectic $S$-stacks.

For this we need to impose the additional hypothesis of
$\mathcal{O}$-compactness (see Definition~\ref{defn:Ocompact}) on
preoriented stacks, which requires some notation:
\begin{notation}\label{not:Orcpt}
  We write $\POrc_{S,d}$ for the full subcategory of $\POr_{S,d}$
  consisting of the $\mathcal{O}$-compact $d$-preoriented
  $S$-stacks. This is closed under pushouts in $\POr_{S,d}$ by
  Propositions~\ref{propn:UCCcolim}(v) and \ref{propn:POrpushout}, so
  we have a sub-$n$-uple Segal space $\COSPAN_{n}(\POrc_{S,d})
  \subseteq \COSPAN_{n}(\POr_{S,d})$. We can then define $\ORcSd_{n}$
  as the intersection of $\OR^{S,d}_{n}$ and
  $\COSPAN_{n}(\POrc_{S,d})$; we also define the $(\infty,n)$-category
  $\OrcSd_{n}$ similarly. For $X$ an $s$-presymplectic $S$-stack we
  also similarly define subobjects containing only the $X$-good
  objects, with analogous notation.
\end{notation}

\begin{proposition}
  Let $X$ be an $s$-symplectic $S$-stack. Then the morphism
  \[ \COSPAN_{n}(\POr_{S,d}^{X\txt{-good}}) \longrightarrow
    \SPAN_{n}(\PSymp_{S,s-d})\]
  of $n$-uple Segal spaces from Corollary~\ref{cor:AKSZspan}, given by
  the AKSZ construction, restricts to a morphism
  \[ \OR_{n}^{\txt{cpt},X\txt{-good},S,d} \longrightarrow \LAG_{n}^{S,s-d}.\]
\end{proposition}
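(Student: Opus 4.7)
The plan is to verify the Lagrangian non-degeneracy condition for the $n$-uple span output by the AKSZ construction by identifying its underlying $\tSp{}^{n}$-diagram in $\QCoh$ with the oriented cospan diagram associated to a judicious choice of dualizable sheaf. First I would reduce to proving that for every $\mathcal{O}$-compact $n$-uple oriented cospan $p \colon \Sp^{n,\op} \to \POr^{\txt{cpt}}_{S,d}$ with all $X$-good vertices, the associated $n$-uple span $q := \txt{AKSZ}(p) \colon \Sp^{n} \to \PSymp_{S,s-d}$ is Lagrangian; this reduction uses that the Lagrangian and oriented conditions defining $\LAG_{n}^{S,s-d}$ and $\OR_{n}^{\txt{cpt},X\txt{-good},S,d}$ inside the ambient iterated (co)span objects are imposed cellwise on the $\Sp^{m}$-subobjects of each $\bbS^{\mathbf{k}}$. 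By Lemma~\ref{lem:laglocal} together with the base-change compatibility of AKSZ (Remark~\ref{rmk:akszbasechange}), we may further assume $S = \Spec A$ is affine.

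Next, write $Z := p(\infty)$ for the bottom vertex of $p$ and $Y := X^{Z}_{S}$ for the apex of $q$, and base-change $p$ along the structure map $Y \to S$ to obtain an $\mathcal{O}$-compact $n$-uple oriented cospan $\widetilde{p}$ of $Y$-stacks with bottom vertex $Y \times_{S} Z$. The key dualizable sheaf on this new bottom vertex is $\mathcal{E} := \txt{ev}^{*}\mathbb{T}_{X/S}$, where $\txt{ev} \colon Y \times_{S} Z \to X$ is the evaluation morphism; dualizability follows from $\mathbb{T}_{X/S}$ being dualizable (Theorem~\ref{thm:geomhascotgt}). The oriented cospan condition, applied to $\widetilde{p}$ with coefficients $\mathcal{E}$, then produces a non-degenerate diagram $\widehat{\widetilde{p}}_{\mathcal{E}} \colon \tSp{}^{n} \to \QCoh(Y)$.

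The heart of the proof is to construct a natural equivalence of diagrams $\widehat{q} \simeq \widehat{\widetilde{p}}_{\mathcal{E}}$ in $\QCoh(Y)$, after which non-degeneracy of $\widehat{q}$ follows immediately. Pointwise, at a vertex $I \in \tSp{}^{n}$ the value $\widehat{q}(I)$ is built from the AKSZ $2$-form at $X^{p(I)}_{S}$ pulled back to $Y$. Proposition~\ref{propn:mapscotgt} identifies $\mathbb{T}_{X^{p(I)}_{S}/S}$ with $\pi_{I,*}\txt{ev}_{I}^{*}\mathbb{T}_{X/S}$, and Proposition~\ref{propn:akszpair} exhibits the associated pairing $\mathbb{T}_{X^{p(I)}_{S}/S} \to \mathbb{L}_{X^{p(I)}_{S}/S}[s-d]$ as the composite of the symplectic identification $\widetilde{\omega} \colon \mathbb{T}_{X/S} \isoto \mathbb{L}_{X/S}[s]$ with the orientation-induced pushforward along $p(I) \to S$. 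Identifying $\mathcal{E}^{\vee}$ with $\txt{ev}^{*}\mathbb{L}_{X/S}$ via $\widetilde{\omega}$, and using $\mathcal{O}$-compactness to commute $\pi_{I,*}$ with the base change to $Y$ and with the formation of duals, this factorization matches exactly the recipe defining $\widehat{\widetilde{p}}_{\mathcal{E}}(I)$.

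The main obstacle is making this identification natural in $I$ across the entire diagram shape $\tSp{}^{n}$: the two diagrams $\widehat{q}$ and $\widehat{\widetilde{p}}_{\mathcal{E}}$ are built from formally different parametrized tensor--hom adjunctions (the AKSZ pairing built from $\omega$ and $\int_{[\,\cdot\,]}$ versus the evaluation pairing of $\mathcal{E}$ against $\mathcal{E}^{\vee}$), and the symplectic identification must intertwine them coherently across all face and degeneracy maps. The natural strategy is to construct the equivalence at the level of the underlying $\Tw^{r}(\QCoh(Y))$-valued diagrams first and then invoke the adjunction $\Tw^{r}_{!} \dashv \Tw^{r}$ (Proposition~\ref{propn:Tw!desc}) to produce the equivalence of $\tSp{}^{n}$-diagrams automatically, with the required naturalities supplied by Remarks~\ref{rmk:intftr} and \ref{rmk:akszbasechange}.
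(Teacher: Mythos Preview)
Your proposal is correct and follows essentially the same approach as the paper: base-change the oriented cospan along $Y = X^{Z}_{S} \to S$, take $\mathcal{E} = \txt{ev}^{*}\mathbb{T}_{X/S}$, and identify the Lagrangian diagram $\widehat{q}$ with the oriented-cospan diagram $\widehat{\widetilde{p}}_{\mathcal{E}}$. Two minor remarks: the reduction to affine $S$ is unnecessary (the paper works directly over arbitrary $S$, since the identification and the weak-orientedness of the base-changed cospan already hold there), and the naturality you flag as the ``main obstacle'' is handled in the paper by observing that every ingredient---the equivalence of Proposition~\ref{propn:mapscotgt}, base change along the cartesian squares relating $X^{T}_{S} \times_{S} \Phi(i)$ to $X^{\Phi(i)}_{S} \times_{S} \Phi(i)$, and the symplectic identification $\mathbb{T}_{X/S} \simeq \mathbb{L}_{X/S}[s]$---is already natural in $i$, so no separate $\Tw^{r}_{!} \dashv \Tw^{r}$ argument is needed.
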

\begin{proof}
  Suppose $\Phi \colon \Sp^{n,\op} \to \POr_{S,d}^{\txt{cpt},X\txt{-good}}$ is
  an oriented $n$-uple cospan, and let $X_{S}^{\Phi} \colon \Sp^{n} \to
  \PSymp_{S,s-d}$ denote its image under the AKSZ construction. We
  also abbreviate $T := \Phi(\infty)$ and write $t_{i}\colon \Phi(i)
  \to T$ for the unique map to the terminal object in the diagram and
  $\bar{t}_{i}$ for $X_{S}^{t_{i}} \colon X_{S}^{T} \to X_{S}^{\Phi(i)}$. Our
  goal is then to prove that the induced diagram $\widehat{X_{S}^{\Phi}}
  \colon \tSp{}^{n} \to \QCoh(X_{S}^{T})$ is non-degenerate.

  For $i \in \Sp^{n,\op}$ we have a commutative diagram
  \[
    \begin{tikzcd}
      {} & X_{S}^{T} \times_{S} T \arrow{dr}{\txt{ev}} \\
      X_{S}^{T}\times_{S} \Phi(i) \arrow{ur}{\id \times_{S} t_{i}}
      \arrow{r}{\bar{t}_{i}\times_{S} \id}  \arrow{d}{\pi'_{i}}& X_{S}^{\Phi(i)} \times_{S}
      \Phi(i) \arrow{r}[swap]{\txt{ev}_{i}} \arrow{d}{\pi_{i}} & X \\
      X_{S}^{T} \arrow{r}{\bar{t}_{i}} & X_{S}^{\Phi(i)},
    \end{tikzcd}
  \]
  where $\pi_{i}$ and $\pi'_{i}$ are projections and $\txt{ev}_{i}$ is
  the evaluation map. By Proposition~\ref{propn:mapscotgt} and base
  change we then have a natural equivalence
  \[
    \begin{split}
    \widehat{X_{S}^{\Phi}}(i) &\simeq
    \bar{t}_{i}^{*}\mathbb{T}_{X^{\Phi(i)}/S}
    \simeq \bar{t}_{i}^{*}\pi_{i,*}\txt{ev}_{i}^{*}\mathbb{T}_{X/S}
    \simeq \pi'_{i,*}(\bar{t}_{i} \times_{S} \id)^{*} \txt{ev}_{i}^{*}
    \mathbb{T}_{X/S} \\
    & \simeq \pi'_{i,*} (\id \times_{S} t_{i})^{*}\txt{ev}^{*}
    \mathbb{T}_{X/S}
    \simeq \Gamma_{X^{T}_{S}}(\id \times_{S} t_{i})^{*}\txt{ev}^{*}\mathbb{T}_{X/S}.      
    \end{split}
\]
  Moreover, the morphism $\widehat{X^{\Phi}_{S}}(i) \to
  \widehat{X^{\Phi}_{S}}(i^{\vee})$ corresponds under this equivalence to
  the composite
  \[
    \begin{split}
    \Gamma_{X^{T}_{S}}(\id \times_{S} t_{i})^{*}\txt{ev}^{*}\mathbb{T}_{X/S}
    & \isoto \Gamma_{X^{T}_{S}}(\id \times_{S}
    t_{i})^{*}\txt{ev}^{*}\mathbb{L}_{X/S}[s] \\
    & \to (\Gamma_{X^{T}_{S}}(\id \times_{S}
    t_{i})^{*}\txt{ev}^{*}\mathbb{T}_{X/S})^{\vee}[s-d],      
    \end{split}
\]
  where the first morphism is the equivalence obtained from the
  $s$-symplectic form on $X$. We thus see that the diagram
  $\widehat{X_{S}^{\Phi}}$ is naturally equivalent to the diagram
  $\widehat{X_{S}^{T} \times_{S} \Phi}_{\txt{ev}^{*}\mathbb{T}_{X/S}}$ obtained by applying
  Corollary~\ref{cor:cospanEdiag} to the $n$-uple cospan $X_{S}^{T} \times_{S}
  \Phi$ and the quasicoherent sheaf $\txt{ev}^{*}\mathbb{T}_{X/S}$ on
  $X_{S}^{T} \times_{S} T$. This diagram is non-degenerate since by assumption
  $X_{S}^{T} \times_{S} \Phi$ is weakly oriented and
  $\txt{ev}^{*}\mathbb{T}_{X/S}$ is dualizable.
\end{proof}

\begin{corollary}\label{cor:AKSZOrLag}
  Let $X$ be an $s$-symplectic $S$-stack. Then the symmetric monoidal
  functor
  \[ \txt{AKSZ}_{n,X}^{S,d} \colon
    \Cospan_{n}(\POr_{S,d}^{X\txt{-good}})
    \longrightarrow\Span_{n}(\PSymp_{S,s-d}) \]
  from Corollary~\ref{cor:AKSZsymmon} restricts to a symmetric
  monoidal functor
  \[ \Or_{n}^{\txt{cpt},X\txt{-good},S,d} \longrightarrow \Lag_{n}^{S,s-d}\]
  of $(\infty,n)$-categories.\qed
\end{corollary}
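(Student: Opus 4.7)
The plan is to deduce this corollary essentially formally from the preceding proposition together with the delooping machinery developed in \S\ref{subsec:itlagsymmon}. The preceding proposition already provides the key content: it shows that AKSZ restricts to a morphism of $n$-uple Segal spaces $\OR_n^{\txt{cpt},X\txt{-good},S,d} \to \LAG_n^{S,s-d}$.

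First, I would pass from the restriction at the level of $n$-uple Segal spaces to a functor of the underlying $(\infty,n)$-categories $\Or_n^{\txt{cpt},X\txt{-good},S,d} \to \Lag_n^{S,s-d}$ by applying the underlying $n$-fold Segal space functor $U_{\Seg}$, just as was done to define these $(\infty,n)$-categories from the corresponding $n$-uple objects right after Corollary~\ref{cor:LAGSegCond}. This step is purely formal given the Proposition.

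Second, I would verify compatibility with the symmetric monoidal structures. Recall from Corollary~\ref{cor:Lagsymmon} that the symmetric monoidal structure on $\Lag_n^{S,s-d}$ arises via the identification with the endomorphism $(\infty,n)$-category of $S$ inside $\Lag_{n+1}^{S,s-d+1}$ (using Proposition~\ref{propn:deloop}), and similarly for the oriented side using Proposition~\ref{propn:deloopPOr}. The plan is therefore to apply the preceding proposition one dimension higher, at level $n+1$, giving that the AKSZ functor restricts on $(n+1)$-uple Segal spaces; the commutative square from the proposition just before Corollary~\ref{cor:AKSZsymmon} then shows that this restricted $(n+1)$-dimensional AKSZ intertwines the delooping identifications, so the restriction to endomorphisms of $S$ (resp.\ $\emptyset$) recovers the restricted functor at level $n$. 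Since the symmetric monoidal structure on $\txt{AKSZ}^{S,d}_{n,X}$ of Corollary~\ref{cor:AKSZsymmon} was manufactured precisely from these delooping identifications, the restricted functor inherits a symmetric monoidal structure from this.

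The main obstacle, such as it is, is bookkeeping: one has to check that the $\mathcal{O}$-compactness hypothesis used in Notation~\ref{not:Orcpt} (needed to apply base change and Proposition~\ref{propn:akszpair} in the proof of the previous proposition) and the $X$-goodness hypothesis are both preserved when passing to the level $n+1$ endomorphism categories under the delooping equivalences. For $\mathcal{O}$-compactness this follows from the closure of $\mathcal{O}$-compact morphisms under composition and base change (Proposition~\ref{propn:UCCcolim}(v)), while for $X$-goodness it follows from the closure of Artin stacks under finite limits together with the fact that $X^{(\blank)}_S$ turns pushouts into pullbacks. Once these closure properties are in hand, the proof is a direct packaging of the Proposition with Corollary~\ref{cor:AKSZsymmon}.
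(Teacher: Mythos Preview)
Your proposal is correct and matches the paper's approach: the paper gives no explicit proof here (just \qed), since the restriction at the level of $n$-uple Segal spaces is exactly the preceding proposition, and the symmetric monoidal structure follows because that proposition applies for all $n$ and $d$, so the delooping tower used in Corollary~\ref{cor:AKSZsymmon} restricts as well. Your write-up supplies the details the paper leaves implicit, including the bookkeeping about $\mathcal{O}$-compactness and $X$-goodness under delooping, which is a reasonable expansion of what the paper treats as obvious.
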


\begin{remark}
  \enlargethispage{2ex}
  The Betti stack functor $\Or_{n}^{\mathcal{S},d} \to \Or_{n}^{S,d}$
from Corollary~\ref{cor:BettiOrn}  factors through
$\Or_{n}^{\txt{cpt},X\txt{-good},S,d}$ for any $X$, so we have a
composite symmetric monoidal functor
\[ \Or_{n}^{\mathcal{S},d} \longrightarrow \Lag_{n}^{S,s-d}\]
for any $s$-symplectic $S$-stack $X$.
\end{remark}

\chapter{From Cobordisms to Cospans}\label{sec:cobcospan}
In this chapter we will construct a symmetric monoidal functor
\[ \Bord_{d,d+n} \to \Cospan_{n}(\mathcal{S}) \] from an
$(\infty,n)$-category of bordisms to $n$-fold cospans in the \icat{}
$\mathcal{S}$ of spaces. This extracts from a $k$-bordism with corners
a $k$-fold cospan that encodes the homotopy types of the bordism and
its sources and targets in all $k$ directions.

In \S\ref{sec bordism background} we recall the definition of the
$(\infty,n)$-categories $\Bord_{d,d+n}$ from \cite{CS}. We then sketch
the construction of the functor in \S\ref{subsec:cutidea} before we
actually construct it in \S\ref{subsec:cut}. In
\S\ref{subsec:TFTspan} we apply our symmetric monoidal functor to
define unoriented TFTs valued in $\Span_{n}(\mathcal{C})$; this
allows us to prove that the induced action of the orthogonal group
$O(n)$ on the space $\mathcal{C}^{\simeq}$ of fully dualizable objects
in $\Span_{n}(\mathcal{C})$ is trivial for any \icat{} $\mathcal{C}$
with finite limits.

\section{Higher Categories of Bordisms}\label{sec bordism background}
In this section we review the definition of higher categories of
bordisms from \cite{CS}. We will consider both the unoriented and
oriented versions --- the latter will be used in \S\ref{sec:orcob}.

Recall that an $m$-dimensional \emph{bordism} is defined to be an $m$-dimensional
manifold with boundary together with a decomposition of the boundary
into a disjoint union of two closed $(m-1)$-dimensional manifolds,
which we call the incoming and outgoing boundaries.  If
the boundaries themselves have boundaries, we have a bordism of
bordisms, a {\em 2-bordism}. Iterating this procedure, an
$m$-dimensional $k$-bordism is intuitively an $m$-dimensional manifold
with corners, for which we have $k$ ``directions'' in which we have a
decomposition of the boundary into an incoming and an outgoing part
(see \cite{Laures, MadsenBoekstedt} for a precise definition). By
appropriate versions of Whitney's embedding theorem for manifolds with
corners, we can assume that (compositions of) $k$-bordisms come
together with an embedding into $\R^\infty$, in which $k$ directions
are singled out to record the data of where the incoming and outgoing
boundaries of the (composed) bordisms are. 

Varying $k$, these structures can be organized into an
$(\infty,n)$-category: roughly speaking, objects are closed
$d$-dimensional manifolds and $k$-morphisms for $1\leq k\leq n$ are
given by $(d+k)$-dimensional $k$-bordisms. 

\begin{warning}
  So far in this paper we have worked with $(\infty,n)$-categories in
  the setting of $n$-fold Segal objects in the \icat{} $\mathcal{S}$
  of spaces. To describe the $(\infty,n)$-category $\Bord_{d,d+n}$ we
  will instead define an $n$-fold Segal object in the model category
  $\sSet$ of simplicial sets. This induces an $n$-fold Segal object in
  $\mathcal{S}$ by inverting the weak equivalences between simplicial
  sets. To distinguish these two settings, we will denote the $n$-fold
  Segal object in $\sSet$ by $\sBord_{d,d+n}$.
\end{warning}

The construction of $\sBord_{d,d+n}$ is based on an auxiliary \icat{}
of intervals which records the data of collars along which a
composition of $k$-bordisms can be decomposed. We will now briefly
recall this --- for more details, see
\cite{CS}*{\S 4}.
\begin{definition}
  Let $T(\Int)$ be the category internal to simplicial sets defined as
  follows:
  \begin{itemize}
  \item The objects (\ie{} 0-simplices in the simplicial set of
    objects) are pairs $(a,b)$ of real numbers satisfying $a<b$. We
    think of such a pair as an open interval in $\R$.
  \item The simplicial set of objects is given by taking smooth
    simplices of the submanifold of $\R^2$ consisting of pairs $(a,b)$
    where $a<b$.
  \item There is exactly one morphism (\ie{} 0-simplex in the
    simplicial set of morphisms) from an open interval
    $I_0=(a_0, b_0)$ to an open interval $I_1=(a_1, b_1)$ if
    $a_0\leq a_1$ and $b_0\leq b_1$, and no morphisms otherwise. One
    should think of this morphism as a pair of ordered intervals which
    are closed in $(a_0, b_1)$, and we indicate the resulting partial
    order on intervals by writing $I_0 \leq I_1$.
  \item The simplicial set of morphisms is again obtained by taking
    smooth simplices, now of the submanifold in $\R^4$ given by
    elements that satisfy the above inequalities.
  \end{itemize}
  Taking the nerve of $T(\Int)$ we obtain a complete Segal space
  $\Int$. The $l$-simplices in the $k$th space $\Int_k$ are smooth
  families of ordered $(k+1)$-tuples of intervals
  $I_0(t) \leq \cdots \leq I_k(t)$ varying over points $t$ in the
  extended $l$-simplex $|\Delta^l|_e$. 
  Furthermore, if $I_{j} = (a_{j}, b_{j})$ then
  we denote by $B(I_0\leq \cdots \leq I_k) := (a_0, b_k)$ the (family
  of) open interval(s) which is the convex hull of the union of the
  intervals.
\end{definition}

\begin{notation}
  Let $\Int^{n} \colon \Dnop \to \sSet$ be the $n$-uple Segal
  simplicial set given by \[\Int^{n}_{\ind{j}}:= \Int_{j_{1}} \times
    \cdots \times \Int_{j_{n}}\] where $\ind{j} =
  (j_{1},\ldots,j_{n})$. An $l$-simplex of $\Int^{n}_{\ind{j}}$ is
  then given by an $l$-simplex  $I^i_0\leq\cdots\leq I^i_{j_i}$ in
  $\Int_{j_{i}}$ for $i = 1,\ldots,n$. We will often denote this data
  simply as 
  \[\oul{I}=(I^i_0\leq\cdots\leq
    I^i_{j_i})_{1\leq i \leq n}.\]
  If $I^{i}_{j} = (a^{i}_{j}, b^{i}_{j})$, then we denote by 
    \[B(\oul{I}) := B(I^{1}) \times \cdots \times B(I^{n}) = (a^1_0, b^1_{j_1})\times\cdots \times (a^{n}_0,
      b^{n}_{j_{n}})\]
    the $|\Delta^{l}|_{e}$-indexed family of products of open
    intervals that gives the ``box'' surrounding the intervals
    $\oul{I}$. We regard $B(\oul{I})$ as a subspace of $\RR^{n} \times
    |\Delta^{l}|_{e}$.
\end{notation}

We can now summarize the definition of $\sBord_{d,d+n}$ from
\cite{CS}. This proceeds in two steps: we first consider a version where the bordisms are embedded
in a fixed finite-dimensional vector space, and then take a colimit of
these.
\begin{definition}\label{defn:BORDV}
  Let $V$ be a finite-dimensional $\mathbb{R}$-vector space, and let
  $d, n,l$ be integers with $n,l\geq0$ (so $d$ is allowed to be
  negative). For $\ind{j} = (j_{1},\ldots,j_{n}) \in \Dnop$, the
  simplical set $\sBORD_{d,d+n, \ind j}^{V}$ has as its set
  $\sBORD_{d,d+n,\ind{j},l}^{V}$ of $l$-simplices the collection of
  pairs $(M, \oul{I})$ where
  \begin{itemize}
  \item $\oul{I}$ is an $l$-simplex in $\Int^{n}$,
  \item $M$ is a closed and bounded $(d+n+l)$-dimensional submanifold of
    $V\times B(\oul{I})$,
  \end{itemize}
  such that:
  \begin{enumerate}[(1)]
  \item The composition
    $\pi: M \hookrightarrow V\times B(\oul{I})\twoheadrightarrow
    B(\oul{I})\subset \mathbb{R}^{n}\times |\Delta^l|_e$ is a proper
    map.
  \item\label{defn:Bord_cond_simplex} The composition $\pi'$ of $\pi$ with the projection onto
    $|\Delta^l|_e$ is a submersion $M\xto{\pi'} |\Delta^l|_e$ which is
    trivial outside the closed set $|\Delta^l|\subset |\Delta^l|_e$,
    and is a family of oriented smooth manifolds, as in \cite{GRW}.
  \item\label{defn:Bord_cond_grid} For every $S\subseteq\{1,\ldots, n\}$, let
    $p_S \colon M\xrightarrow{\pi}B(\oul{I})\xrightarrow{\pi_S}\R^S\times
    |\Delta^l|_e$ be the composition of $\pi$ with the projection
    $\pi_S$ onto the $S$-coordinates. Then for every collection
    $\{k_i\}_{i\in S}$, where $0\leq k_i \leq j_i$, the map $p_S$ does
    not have critical values in $(I_{k_i})_{i\in S}$.
  \end{enumerate}
  These conditions imply that the fiber $M_t$ of $\pi$ at
  $t \in |\Delta^l|_e$ is a $0$-simplex in $\sBORD^{V}_{d, d+n,\ind{j}}$;
  denote the projection at $t$ by
  $\pi_t \colon M_t \to B(\oul{I}(t) )$. The cosimplicial structure of
  the extended simplices $|\Delta^\bullet|_e$ can be used to define a
  simplicial set $\sBORD^{V}_{d,d+n, \ind j}$. Moreover, by varying over
  $\ind{j}\in \simp^{n,\op}$ using the $n$-fold simplicial structure
  of $\Int^{n}$ we can make $\sBORD^{V}_{d,d+n}$ an $n$-fold simplicial
  object in $\sSet$ --- the $n$-fold simplicial structure arises from
  forgetting or
  repeating intervals and possibly cutting off part of the submanifold
  $M$ by taking the preimage under $\pi$ of the new box around the
  intervals.
\end{definition}

\begin{theorem}[\cite{CS}]
  $\sBORD_{d,d+n}^{V}$ is an $n$-uple Segal space.\qed
\end{theorem}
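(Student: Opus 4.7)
The plan is to verify the Segal condition in each of the $n$ simplicial directions: for each $i \in \{1,\ldots,n\}$ and each $\mathbf{j} = (j_1,\ldots,j_n)$ with $j_i \geq 2$, the natural Segal map from $\sBORD^V_{d,d+n,\mathbf{j}}$ to the iterated (homotopy) fiber product decomposing along the $i$th direction into $j_i$ pieces of $1$-length (glued over $0$-length pieces) must be a weak equivalence of simplicial sets. As a preliminary input, I would first check that $\Int$ is itself a (complete) Segal space, so that $\Int^n$ is an $n$-uple Segal space --- this follows essentially by construction, since $\Int$ is the nerve of a category internal to $\sSet$ and the Segal maps on each level of the simplicial structure are literal bijections.

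Next I would analyze the forgetful projection $\sBORD^V_{d,d+n,\mathbf{j}} \to \Int^n_{\mathbf{j}}$ sending $(M,\oul{I})$ to $\oul{I}$. For a fixed $l$-simplex $\oul{I}$ of $\Int^n_{\mathbf{j}}$, the fiber consists of closed submanifolds $M \subseteq V \times B(\oul{I})$ satisfying conditions (1)--(3). The crucial geometric content is condition (3): over each interval $I^i_k$ the coordinate projection $p_i$ has no critical values; combined with properness from (1) and the submersion condition (2), Ehresmann's fibration theorem implies that $M$ is a trivial fibration (in the $i$th direction) over the cylindrical region around each $I^i_k$. These ``collar'' neighborhoods are precisely the data that permits cutting and regluing bordisms in direction $i$.

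The Segal map can then be analyzed fiberwise over $\Int^n$. Given $(M,\oul{I})$ with $j_i$ slots in direction $i$, restrict to the $j_i$ sub-boxes with a single slot in direction $i$ to produce compatible pieces. Conversely, given pieces over the $1$-slot sub-boxes whose restrictions over the $0$-slot overlaps coincide, glue them using the collar structure into a single smooth submanifold of $V\times B(\oul{I})$; the underlying point set is forced, and smoothness is guaranteed by the cylinder structure in the collars. These two constructions are mutually inverse bijections on $l$-simplices for each fixed $\oul{I}$, and combined with the Segal condition for $\Int^n$ this upgrades to a weak equivalence of the full Segal map of $\sBORD^V_{d,d+n}$.

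The main obstacle I anticipate is checking that the gluing defines a bijection in families (that is, smoothly in the parameter $t\in|\Delta^l|_e$) and preserves all three conditions after gluing --- in particular that properness and the no-critical-values condition are inherited by the glued submanifold. Some bookkeeping is also required to compare the strict pullback with the homotopy pullback; this should reduce to showing that the relevant restriction maps are Kan fibrations, which follows from the ability to smoothly deform submanifolds within the contractible choices of collar coordinates, so that lifts along horn inclusions can be constructed by a standard partition of unity / flow argument.
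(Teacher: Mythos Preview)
The paper does not give its own proof of this theorem: the statement is followed immediately by \qed{} and attributed to \cite{CS}, meaning the result is simply imported from that reference. There is therefore no proof in the paper to compare your proposal against.

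That said, your outline is a reasonable sketch of the argument in \cite{CS}: the Segal condition for $\Int^{n}$ is used as a base, the projection $\sBORD^{V}_{d,d+n,\mathbf{j}} \to \Int^{n}_{\mathbf{j}}$ is analyzed fiberwise, and the cutting/gluing of submanifolds along collars (guaranteed by the critical-value condition (3) together with properness) supplies the inverse to the Segal map. Your anticipated obstacle---comparing strict and homotopy pullbacks via a fibrancy argument---is indeed the technical heart of the \cite{CS} proof, where it is handled by showing the relevant restriction maps are Kan fibrations. So your plan matches the actual proof in the cited reference, even though the present paper offers nothing to compare.
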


\begin{definition}
  Taking the (filtered) colimit over all finite-dimensional vector spaces $V$ in a
  given infinite dimensional vector space, say $\R^\infty$, we define
  \[\sBORD_{d,d+n} := \colim_{V\subset\R^\infty} \sBORD_{d,d+n}^{V}.\]
  Then $\sBORD_{d,d+n}$ is an $n$-uple Segal space (since these are
  closed under filtered homotopy colimits).
\end{definition}

\begin{remark}
  Our notational convention in this article is slightly different from
  that in \cite{CS}: $d$ denotes the bottom dimension, $n$ the
  category number, and hence $d+n$ the top dimension of manifolds that
  appear.
\end{remark}

\begin{remark}\label{rmk:trivialfib}
  For $\ind{i} = \ind{0} = (0,\ldots,0)$, an $l$-simplex of
  $\sBORD_{d,d+n,\ind{0}}$ is either $\emptyset$ or a proper surjective submersion $\pi'\colon M \to
  |\Delta^{l}|_{e}$, which is a trivial fibration by
  Ehresmann's fibration theorem.
\end{remark}

Following \cite{GMTW} and \cite{GRW}, we can add tangential
structures. Here we will only consider orientations:
\begin{variant}
  The $n$-uple Segal space $\sBORDor_{d,d+n}$ of oriented bordisms is
  defined in exactly the same way, except that an $l$-simplex of
  $\sBORD^{\txt{or},V}_{d,d+n,\ind{j}}$ is now a triple $(M, \oul{I},
  f)$, where the additional item $f$ is a fibrewise orientation of the fibration
  $\pi' \colon M \to |\Delta^{l}|_{e}$.
\end{variant}

\begin{remark}\label{rem:fiberwiseorientation}
  Let us explain what a fibrewise orientation is in this context. Let
  $\theta:BSO(d+n)\to BO(d+n)$ be a fibration corresponding to the
  inclusion of the subgroup, and $\gamma$ the canonical vector bundle
  on $BO(n+d)$. Then a fiberwise orientation of $\pi' \colon M \to
  |\Delta^{l}|_{e}$ is a morphism $f \colon M \to BSO(d+n)$ and a bundle
  map \[\ker(d\pi'\colon TM\longrightarrow
    \pi'^*T|\Delta^l|_e) \xrightarrow{\overline{f}} \theta^*\gamma,\]
  over $f$, by which we mean a commutative square
  \[
    \begin{tikzcd}
      \ker d\pi' \arrow{r}{\overline{f}} \arrow{d} & \theta^{*}\gamma \arrow{d} \\
      M \arrow{r}{f} & BSO(d+n)
    \end{tikzcd}
  \]
  such that $\overline{f}$ restricts to  an isomorphism on fibres
  \[ (\ker d\pi')_{m}\isoto (\theta^{*}\gamma)_{f(m)} \]
  for each $m \in M$, and so exhibits $\ker d\pi'$ as the pullback
  $f^{*}\theta^{*}\gamma$. Homotopically, this data is equivalent to
  a choice of orientation on the bundle $\ker d \pi'$, or a lift of
  the map $M \to BO(d+n)$ classifying this bundle to $BSO(d+n)$.
  Note that since
  \[TM \cong \ker d\pi' \oplus \pi'^*T|\Delta^l|_e,\] if we
  choose an orientation on $|\Delta^l|_e$ then the fiberwise
  orientation induces an orientation on $M$ itself.
\end{remark}

\begin{definition}
  We write $\sBord_{d,d+n}^{\por}$ for the underlying $n$-fold Segal
  simplicial set of $\sBORD_{d,d+n}^{\por}$. More explicitly, this can
  be obtained by replacing condition (3) in
  Definition~\ref{defn:BORDV} by the stronger version
  \begin{enumerate}
  \item[($3'$)] For every $S\subseteq\{1,\ldots, n\}$, let
    $p_S:M\xrightarrow{\pi}B(\oul{I})\xrightarrow{\pi_S}\R^S\times
    |\Delta^l|_e$ be the composition of $\pi$ with the projection
    $\pi_S$ onto the $S$-coordinates. Then for every $1\leq i\leq n$
    and $0\leq k\leq j_i$, at every $x\in p_{\{i\}}^{-1}(I_k)$, the
    map $p_{\{i,\ldots, n\}}$ is submersive.
\end{enumerate}
\end{definition}

\begin{remark}
  For any $d$ and $n\geq 0$, there is a natural map
  \[ \sBord_{d-1, d+n}^{\por}(\emptyset, \emptyset) \to \sBord_{d,
      d+n}^{\por}\]
given by forgetting the data of intervals in the $(n+1)$th direction, and this is a weak
  equivalence. Thus the ``tower'' $\sBord_{d-r, d+n}^{\por}$ for
  $r\geq 0$ endows $\Bord_{d,d+n}^{\por}$ with a symmetric monoidal
  structure.
\end{remark}

\begin{remark}
  The $n$-fold Segal space $\Bord_{d,d+n}^{\por}$ is generally not
  complete; we write $\widehat{\Bord}_{d,d+n}^{\por}$ for its
  completion. This is a symmetric monoidal $(\infty,n)$-category,
  since completion preserves products (see \cite{spans}*{Lemma
    7.10}).
\end{remark}

\begin{definition}
  An {\em $n$-extended (oriented) $(d+n)$-dimensional topological
    field theory} is a symmetric monoidal functor out of the bordism
  $(\infty,n)$-category $\widehat{\Bord}_{d,d+n}^{\por}$. For $d=0$ we
  refer to this simply as an \emph{extended $n$-dimensional (oriented)
    topological field theory}.
\end{definition}

\begin{remark}
  By the universal property of completion, to define an $n$-extended
  (oriented) $(d+n)$-dimensional topological field theory it suffices
  to define a symmetric monoidal functor of $n$-fold Segal spaces out
  of $\Bord_{d,d+n}^{\por}$. Since our goal in this paper is to
  construct such functors, we use the shorter notation for the
  uncompleted version, differing from the convention chosen in
  \cite{CS}.
\end{remark}

\section{Cutting Bordisms: Idea}\label{subsec:cutidea}
In this section we will sketch our construction of a symmetric
monoidal functor of $(\infty,n)$-categories
$\Bord_{d,d+n} \to \Cospan_{n}(\mathcal{S})$. The basic idea is
simple: To a closed $d$-manifold $M$ we assign its homotopy type,
and to a $(d+1)$-dimensional cobordism $X$ from $M$ to $N$ we assign
the cospan
\[ M \hookrightarrow X \hookleftarrow N\] of boundary
inclusions. Composing cobordisms by gluing gives in particular a
homotopy pushout of topological spaces, and so corresponds to a
composition of
cospans. Making this into a functor out of our particular model of the
bordism $(\infty,n)$-category takes some work, however.

Since we have a strict model for the $n$-uple Segal space
$\BORD_{d,d+n}$, we want to construct a functor to a strict model for
$\COSPAN_{n}(\mathcal{S})$. As we discuss in \S\ref{subsec:spanmodel},
we can describe the latter using the strict $n$-uple simplicial space
$\simp^{n,\op} \to \sSet$ taking $\ind{j}$ to
$\mathrm{N}(\Fun(\bbS^{\ind{j},\op}, \Top)^{W})$, \ie{} the nerve of the
subcategory of $\Fun(\bbS^{\ind{j},\op}, \Top)$ containing only the
natural weak equivalences. We might thus hope to construct our functor
by defining a natural family of functors
\[ \sBORD_{d,d+n,\ind{j},l} \times \Delta^{l} \times
  \bbS^{\ind{j},\op} \to \Top,\]
and then checking that the morphisms in the
$\Delta^{l}$-direction are weak equivalences and the functors in the
$\bbS^{\ind{j},\op}$-direction are homotopy left Kan
extensions from $\bbL^{\ind{j},\op}$. Unfortunately there are several
problems with this idea, which force us to work with slightly more
complicated diagram shapes, as we will now explain:

\begin{problem}[Composing in the spatial direction]
From a 1-simplex in $\sBORD_{d,d+n,\ind{0}}$ we can extract a
submersion $M \to |\Delta^{1}|_{e}$, from which we can obtain
a cospan
\[ M_{0} \hookrightarrow M_{0,1} \hookleftarrow M_{1}\] by taking
preimages over the faces of $|\Delta^{1}| \subseteq
|\Delta^{1}|_{e}$. By Remark~\ref{rmk:trivialfib} the map
$M_{0,1} \to |\Delta^{1}|$ is a trivial fibration, so both maps in
this cospan are homotopy equivalences. However, there is no canonical
way to extract from this a map from $M_0$ to $M_{1}$, as this requires
choosing a homotopy inverse (or a trivialization of the fibration,
which can be done using (a variant of) the Morse lemma).

Similarly, from a 2-simplex we
get a submersion $M \to |\Delta^{2}|_{e}$, from which taking
pre-images of faces gives a diagram
\[
\begin{tikzpicture}[scale=1.5]
\draw (0,0) node (012) {$M_{012}$}  (0,-2) node (1) {$M_{1}$};
\path (0,-2) arc (-90:30:2) node (2) {$M_{2}$};
\path (0,-2) arc (270:150:2) node
(0) {$M_{0}$}; 
\path (0) -- node (01) {$M_{01}$} (1) -- node (12) {$M_{12}$} (2) -- node (02) {$M_{02}$} (0);
\draw[->] (0) -- (01);
\draw[->] (1) -- (01);
\draw[->] (0) -- (02);
\draw[->] (2) -- (02);
\draw[->] (2) -- (12);
\draw[->] (1) -- (12);
\draw[->] (01) -- (012);
\draw[->] (02) -- (012);
\draw[->] (12) -- (012);
\end{tikzpicture}
\]
where all the morphisms are homotopy equivalences. In general, from an
$l$-simplex of $\sBORD_{d,d+n,\ind{0}}$ we obtain a functor
$\txt{sd}(\Delta^{l}) \to \Top$ that takes all morphisms to weak
equivalences, where $\txt{sd}(\Delta^{l}):= \simp_{\txt{inj}/[l]}$ is
the partially ordered set of faces of $\Delta^{l}$. This suggests that
we should define compatible functors
\[ \sBORD_{d,d+n,\ind{j},l} \times \txt{sd}(\Delta^{l}) \times
  \bbS^{\ind{j},\op} \to \Top,\]
which will induce morphisms of simplicial sets \[\sBORD_{d,d+n,\ind{j}} \to
\txt{example}^{1}\mathrm{N}\Fun(\bbS^{\ind{j},\op}, \Top)^{W},\] where
$\txt{example}^{1}$ denotes the adjoint to subdivision. As is well-known,
the natural map $K \to \Ex^{1}K$ is a weak equivalence for any
simplicial set $K$, so this determines the data we want on the
$\infty$-category level.  
\end{problem}

\begin{problem}[Degeneracies in the spatial direction]
Unfortunately, the morphisms we considered above are not quite natural
for maps in $\simp$. In particular, we don't get a map of simplicial
sets $\sBORD_{d,d+n,\ind{0}} \to \Ex^{1}\Nrv\Top$ because our proposed
map is not compatible with degeneracies: if $M$ is a 0-simplex of
$\sBORD_{d,d+n,\ind{0}}$, then the degenerate 1-simplex on $M$ is the
trivial fibration $M \times |\Delta^{1}|_{e} \to |\Delta^{1}|_{e}$,
which gives a map $\sd(\Delta^{1}) \to \Top$ we can depict as
\[ M \hookrightarrow M \times
  |\Delta^{1}|_{e} \hookleftarrow M.\]
On the other hand, the degeneracy on $M$ in $\Ex^{1}\Nrv\Top$ 
is the constant diagram
\[ M \xlongequal{\phantom{x}} M \xlongequal{\phantom{x}} M.\]
We can circumvent this issue by working with semi-simplicial sets
(\ie{} presheaves on $\Dinj$)
instead of simplicial sets: if $i$ denotes the inclusion $\Dinj \hookrightarrow \simp$, we \emph{do} have a map of
semisimplicial sets
\[ \sBORD_{d,d+n,\ind{j}}^{\semi} \to (\Ex^{1}\Nrv\Fun(\bbS^{\ind{j},\op},
  \Top))^{\semi}. \]
This is enough since semi-simplicial sets also present the \icat{} of
spaces, as we discuss in \S\ref{subsec:spanmodel}.
\end{problem}

\begin{remark}
	In the cobordism direction, this problem does not appear: the degeneracy maps in $\sBORD_{d,d+n}$ simply double the intervals indicating where to cut, so the underlying manifold stays the same. In effect, the degeneracy on an object is given by an identity cospan on a width-zero cobordism.
\end{remark}

In conclusion, we have the following procedure that we will follow to
define a symmetric monoidal functor $\BORD_{d,d+n} \to
\COSPAN_{n}(\mathcal{S})$:
\begin{proc}\label{proc:recipe_for_functor}\mbox{}
\begin{enumerate}[(1)]
\item Define functors $\sBORD_{d,d+n,\ind{j},l}\times \sd(\Delta^{l})
  \times \bbS^{\ind{j},\op} \to \Top$, strictly natural in $l \in
  \Dinj^{\op}$, $\ind{j} \in \simp^{n,\op}$. In fact, they will take values in a strict category $\Mfd$ of manifolds with corners  and smooth maps, and we later take the composite with the forgetful map $\Mfd\to\Top$.
\item Check that morphisms in $\sd(\Delta^{l})$ are taken to weak equivalences
  in $\Top$.
\item Check that the functors from $\bbS^{\ind{j},\op}$ are homotopy
  left Kan extended from $\bbL^{\ind{j},\op}$.
\item Check that the functors are compatible with delooping.
\end{enumerate}
\end{proc}

\section{Cutting Bordisms: Details}\label{subsec:cut} 
In this section we carry out the details of the construction we
sketched in the previous section. We will thus define a functor
that ``cuts'' bordisms into cospans of spaces
 \[\cut \colon \BORD_{d, d+n} \longrightarrow \COSPAN_n(\mathcal{S})\]
 following \ref{proc:recipe_for_functor}, by finding a
compatible family of functors
\[ \scut_{\ind{j},l} \colon \sBORD_{d,d+n,\ind{j},l} \times
  \bbS^{\ind{j},\op} \times \sd(\Delta^{l}) \to \Mfd \] 
 strictly natural in $l \in
  \Dinj^{\op}$, $\ind{j} \in \simp^{n,\op}$. Here $\Mfd$ is the strict category of manifolds with corners and smooth maps.

  By construction, the manifolds in $\sBORD_{d,d+n,\ind{j},l}$ all come
  equipped with collars. To define the functor to cospans we can
  either keep the collars or remove them. Keeping the collars has the
  conceptual advantage that the diagrams that appear in compositions
  are actually strict pushouts of manifolds with corners.  However,
  this causes problems when we want to add orientations in the next
  section, as the manifolds don't have the right dimension. We
  therefore need the slightly more complex construction where we
  remove the collars; we still get pushouts of topological spaces, and
  these are all homotopy pushouts (and so pushouts in the \icat{} of
  spaces). Since a manifold with or without its
  collar both represent the same homotopy type, we can choose the one more
  convenient for us.  We will construct both versions of the cutting
  functor, implementing the two variants outlined above: In the first,
  a composed bordism is decomposed by cutting it along one line into
  two pieces, so that these pieces do not overlap. In the second
  version, we cut the bordism along two parallel lines, so that the
  pieces overlap on a little collar.

  Before we delve into what the cutting procedure does to bordisms, we
  must first understand how to extract the relevant grid data. Namely,
  we first define two families of maps
\[\grid_{\ind{j},l}, \gridT_{\ind{j},l} \colon  \Int^n_{\ind{j},l} \times \bbS^{\ind{j},\op}
  \times \sd(\Delta^{l}) \to \Mfd,\]
 together with a natural transformation to 
the composite
\[ \Int^n_{\ind{j},l} \times \bbS^{\ind{j},\op} \times \sd(\Delta^{l})
  \to \Int^{n}_{\ind{j},l} \xto{B} \Mfd.\] For $[l] \in \Dinj^{\op}$,
$\ind{j} \in \simp^{n,\op}$, recall that an element
$(M, \oul{I}) \in \sBORD_{d,d+n,\ind{j},l}$ includes the data of a
morphism $ \pi \colon M \to B(\oul{I})$ in $\Mfd$, where the ``box''
$B(\oul{I})$ is a subspace of $\RR^{n} \times |\Delta^{l}|_{e}$. We
will define the space $\scut_{\ind{j},l}(M,\oul{I},\xi,\tau)$ for
$\xi \in \bbS^{\ind{j},\op}, \tau \in \sd(\Delta^{l})$ as the preimage
$\pi^{-1}\grid_{\ind{j},l}(\oul{I}, \xi, \tau)$ in $M$, and similarly
for the second variant.
 
The following diagrams illustrate the difference between the functors
$\grid$ and $\gridT$ for a composition: 
$$
\begin{tikzpicture}[scale=0.9]

\draw (-5,0) -- (0,0);

\draw (0, -0.25) node [anchor=north] {\tiny $b_2$};
\draw (0,0) arc (0:30:0.5);
\draw (0,0) arc (0:-30:0.5);

\draw (-5, -0.25) node [anchor=north] {\tiny $a_0$};
\draw (-5,0) arc (0:30:-0.5);
\draw (-5,0) arc (0:-30:-0.5);

\draw (-4.1, 0.25) -- (-4, 0.25) -- (-4, -0.25) node[anchor=north] {\tiny $b_0$} -- (-4.1, -0.25);

\draw (-2.9, 0.25) -- (-3, 0.25) -- (-3, -0.25) node[anchor=north] {\tiny $a_1$} -- (-2.9, -0.25);

\draw (-2.1, 0.25) -- (-2, 0.25) -- (-2, -0.25) node[anchor=north] {\tiny $b_1$} -- (-2.1, -0.25);

\draw (-0.9, 0.25) -- (-1, 0.25) -- (-1, -0.25) node[anchor=north] {\tiny $a_2$} -- (-0.9, -0.25);

\fill[blue] (-4.7, 0.03) node[inner sep=0] (A0) {} circle (0.15em);
\fill[blue] (-2.3, 0.03) node[inner sep=0] (B0) {} circle (0.15em);
\draw[blue, thick] (A0)  -- node[anchor=south] {\tiny }  (B0) ;

\fill[red] (-2.7, -0.03) node[inner sep=0] (A1) {} circle (0.15em);
\fill[red] (-0.3, -0.03) node[inner sep=0] (B1) {} circle (0.15em);
\draw[red, thick] (A1) -- node[anchor=south] {\tiny }  (B1);

\draw (1,0) node {$\overset{\gridT}{\longmapsto}$};

\begin{scope}[shift={(7.5,0)}]

\begin{scope}[yshift=-1cm]
\fill[violet] (-2.3, 0) node[anchor=north] {} circle (0.15em);
\fill[violet] (-2.7, 0) node[anchor=north] {} circle (0.15em);
\draw[violet, thick] (-2.3, 0) -- node (M) {} node[anchor=north] {\tiny } (-2.7, 0);
\draw (-2.7, 0) node[inner sep=0] (L) {};
\draw (-2.3, 0) node[inner sep=0] (R) {};
\end{scope}

\begin{scope}[shift={(-3,-1)}]
\fill[blue] (-2.3, 0) node[anchor=north] {} circle (0.15em);
\fill[blue] (-2.7, 0) node[anchor=north] {} circle (0.15em);
\draw[blue, thick] (-2.3, 0) -- node (M0) {} node[anchor=north] {\tiny } (-2.7, 0);
\draw (-2.7, 0) node[inner sep=0] (L0) {};
\draw (-2.3, 0) node[inner sep=0] (R0) {};
\end{scope}

\begin{scope}[shift={(3,-1)}]
\fill[red] (-2.3, 0) node[anchor=north] {} circle (0.15em);
\fill[red] (-2.7, 0) node[anchor=north] {} circle (0.15em);
\draw[red, thick] (-2.3, 0) -- node (M2) {} node[anchor=north] {\tiny } (-2.7, 0);
\draw (-2.7, 0) node[inner sep=0] (L2) {};
\draw (-2.3, 0) node[inner sep=0] (R2) {};
\end{scope}

\begin{scope}[xshift=-0.5cm]
\fill[blue] (-4.7, 0) node[anchor=north] {} circle (0.15em);
\draw[blue, thick] (-4.7,0) -- node (blueM) {} node[anchor=north] {\tiny }  (-2.3,0);
\fill[blue] (-2.3, 0) node[anchor=north] {} circle (0.15em);
\draw (-2.7, 0) node[inner sep=0] (BL) {};
\draw (-2.3, 0) node[inner sep=0] (BR) {};
\draw (-4.7, 0) node[inner sep=0] (Bend) {};
\path (-4.3, 0) -- node (BM) {} (-4.7, 0);
\path (-2.3, 0) -- node (BM2) {} (-2.7, 0);
\end{scope}

\begin{scope}[xshift=0.5cm]
\fill[red] (-2.7, 0) node[anchor=north] {} circle (0.15em);
\draw[red, thick] (-2.7,0) -- node (redM) {} node[anchor=north] {\tiny }  (-0.3,0);
\fill[red] (-0.3, 0) node[anchor=north] {} circle (0.15em);
\draw (-2.7, 0) node[inner sep=0] (RL) {};
\draw (-2.3, 0) node[inner sep=0] (RR) {};
\draw (-0.3, 0) node[inner sep=0] (Rend) {};
\path (-0.3, 0) -- node (RM) {} (-0.7, 0);
\path (-2.3, 0) -- node (RM2) {} (-2.7, 0);
\end{scope}

\begin{scope}[yshift= 1cm]
\draw[blue, dotted] (-2.3, 0) -- (BR) -- (R);
\draw[blue, dotted] (-2.7, 0) -- (BL) -- (L);
\draw[blue, dotted] (-4.7, 0) -- (Bend) -- (L0);
\draw[blue, dotted] (-4.3, 0) -- (-4.8, -1) -- (R0);

\draw[red, dotted] (-2.3, 0) -- (RR) -- (R);
\draw[red, dotted] (-2.7, 0) -- (RL) -- (L);
\draw[red, dotted] (-0.3, 0) -- (Rend) -- (R2);
\draw[red, dotted] (-0.7, 0) -- (-0.2, -1) -- (L2);

\draw[very thick] (-4.7,0) -- node (topM) {} node[anchor=south] {\tiny }  (-0.3,0);
\fill[] (-4.7, 0) node[anchor=north] {} circle (0.15em);
\fill[] (-0.3, 0) node[anchor=north] {} circle (0.15em);
\path (-4.3, 0) -- node (topL) {} (-4.7, 0);
\path (-0.3, 0) -- node (topR) {} (-0.7, 0);
\path (-4.7,0) -- node (topM1) {}  (-2.3,0);
\path (-0.7,0) -- node (topM2) {}  (-2.3,0);
\end{scope}

\draw[right hook->] (M0) -- (BM);
\draw[right hook->] (M.north east) -- (RM2);
\draw[left hook->] (M.north west) -- (BM2);
\draw[left hook->] (M2) -- (RM);
\draw[right hook->]  (blueM) -- (topM1);
\draw[left hook->]  (redM) -- (topM2);
\end{scope}
\end{tikzpicture}
$$
$$
\begin{tikzpicture}[scale=0.9]

\draw (-5,0) -- (0,0);

\draw (0, -0.25) node [anchor=north] {\tiny $b_2$};
\draw (0,0) arc (0:30:0.5);
\draw (0,0) arc (0:-30:0.5);

\draw (-5, -0.25) node [anchor=north] {\tiny $a_0$};
\draw (-5,0) arc (0:30:-0.5);
\draw (-5,0) arc (0:-30:-0.5);

\draw (-4.1, 0.25) -- (-4, 0.25) -- (-4, -0.25) node[anchor=north] {\tiny $b_0$} -- (-4.1, -0.25);

\draw (-2.9, 0.25) -- (-3, 0.25) -- (-3, -0.25) node[anchor=north] {\tiny $a_1$} -- (-2.9, -0.25);

\draw (-2.1, 0.25) -- (-2, 0.25) -- (-2, -0.25) node[anchor=north] {\tiny $b_1$} -- (-2.1, -0.25);

\draw (-0.9, 0.25) -- (-1, 0.25) -- (-1, -0.25) node[anchor=north] {\tiny $a_2$} -- (-0.9, -0.25);

\fill[blue] (-4.5, 0.03) node[inner sep=0] (A0) {} circle (0.15em);
\fill[blue] (-2.5, 0.03) node[inner sep=0] (B0) {} circle (0.15em);
\draw[blue, thick] (A0) -- node[anchor=south] {\tiny }  (B0);

\fill[red] (-2.5, -0.03) node[inner sep=0] (A1) {} circle (0.15em);
\fill[red] (-0.5, -0.03) node[inner sep=0] (B1) {} circle (0.15em);
\draw[red, thick] (A1) -- node[anchor=south] {\tiny }  (B1);

\draw (1,0) node {$\overset{\grid}{\longmapsto}$};

\begin{scope}[shift={(7.5,0)}]

\begin{scope}[yshift=-1cm]
\fill[violet] (-2.5, 0) node[anchor=north] {} circle (0.15em);
\fill[violet] (-2.5, 0) node[anchor=north] {} circle (0.15em);
\draw[violet, thick] (-2.5, 0) -- node (M) {} node[anchor=north] {\tiny } (-2.5, 0);
\draw (-2.5, 0) node[inner sep=0] (L) {};
\draw (-2.5, 0) node[inner sep=0] (R) {};
\end{scope}

\begin{scope}[shift={(-3,-1)}]
\fill[blue] (-2.5, 0) node[anchor=north] {} circle (0.15em);
\fill[blue] (-2.5, 0) node[anchor=north] {} circle (0.15em);
\draw[blue, thick] (-2.5, 0) -- node (M0) {} node[anchor=north] {\tiny } (-2.5, 0);
\draw (-2.5, 0) node[inner sep=0] (L0) {};
\draw (-2.5, 0) node[inner sep=0] (R0) {};
\end{scope}

\begin{scope}[shift={(3,-1)}]
\fill[red] (-2.5, 0) node[anchor=north] {} circle (0.15em);
\fill[red] (-2.5, 0) node[anchor=north] {} circle (0.15em);
\draw[red, thick] (-2.5, 0) -- node (M2) {} node[anchor=north] {\tiny } (-2.5, 0);
\draw (-2.5, 0) node[inner sep=0] (L2) {};
\draw (-2.5, 0) node[inner sep=0] (R2) {};
\end{scope}

\begin{scope}[xshift=-0.5cm]
\fill[blue] (-4.5, 0) node[anchor=north] {} circle (0.15em);
\draw[blue, thick] (-4.5,0) -- node (blueM) {} node[anchor=north] {\tiny }  (-2.5,0);
\fill[blue] (-2.5, 0) node[anchor=north] {} circle (0.15em);
\draw (-2.5, 0) node[inner sep=0] (BL) {};
\draw (-2.5, 0) node[inner sep=0] (BR) {};
\draw (-4.5, 0) node[inner sep=0] (Bend) {};
\path (-4.5, 0) -- node (BM) {} (-4.5, 0);
\path (-2.5, 0) -- node (BM2) {} (-2.5, 0);
\end{scope}

\begin{scope}[xshift=0.5cm]
\fill[red] (-2.5, 0) node[anchor=north] {} circle (0.15em);
\draw[red, thick] (-2.5,0) -- node (redM) {} node[anchor=north] {\tiny }  (-0.5,0);
\fill[red] (-0.5, 0) node[anchor=north] {} circle (0.15em);
\draw (-2.5, 0) node[inner sep=0] (RL) {};
\draw (-2.5, 0) node[inner sep=0] (RR) {};
\draw (-0.5, 0) node[inner sep=0] (Rend) {};
\path (-0.5, 0) -- node (RM) {} (-0.5, 0);
\path (-2.5, 0) -- node (RM2) {} (-2.5, 0);
\end{scope}

\begin{scope}[yshift= 1cm]
\draw[blue, dotted] (-2.5, 0) -- (BR) -- (R);
\draw[blue, dotted] (-2.5, 0) -- (BL) -- (L);
\draw[blue, dotted] (-4.5, 0) -- (Bend) -- (L0);

\draw[red, dotted] (-2.5, 0) -- (RR) -- (R);
\draw[red, dotted] (-2.5, 0) -- (RL) -- (L);
\draw[red, dotted] (-0.5, 0) -- (Rend) -- (R2);

\draw[very thick] (-4.5,0) -- node (topM) {} node[anchor=south] {\tiny }  (-0.5,0);
\fill[] (-4.5, 0) node[anchor=north] {} circle (0.15em);
\fill[] (-0.5, 0) node[anchor=north] {} circle (0.15em);
\path (-4.5, 0) -- node (topL) {} (-4.5, 0);
\path (-0.5, 0) -- node (topR) {} (-0.5, 0);
\path (-4.5,0) -- node (topM1) {}  (-2.5,0);
\path (-0.5,0) -- node (topM2) {}  (-2.5,0);
\end{scope}

\draw[right hook->] (M0) -- (BM);
\draw[right hook->] (M.north east) -- (RM2);
\draw[left hook->] (M.north west) -- (BM2);
\draw[left hook->] (M2) -- (RM);
\draw[right hook->]  (blueM) -- (topM1);
\draw[left hook->]  (redM) -- (topM2);
\end{scope}
\end{tikzpicture}
$$

\begin{definition}
  Given a sequence of intervals $I = I_{0}\leq \cdots \leq I_{k}$ where
  $I_{j} = (a_{j},b_{j})$, we define two smaller versions of $B(I)$ by
  \[ B'(I) = [\frac{a_{0}+b_{0}}{2}, \frac{a_{k}+b_{k}}{2}],\qquad B''(I) = [\frac{2a_{0}+b_{0}}{3}, \frac{a_{k}+2b_{k}}{3}],   \]
  both being closed intervals in $\RR$.
  If  $(I(t))_{t \in |\Delta^{l}|_{e}}$ is an $l$-simplex of $\Int$,
  we regard $B'(I)$ and $B''(I)$ as families of closed intervals
  parametrized by the usual simplex $|\Delta^l|$ and the extended
  simplex $|\Delta^l|_{e}$, respectively. For $\oul{I} = (I^{1},\ldots,I^{n})$
  an $l$-simplex of $\Int^{n}_{\ind{j}}$ we write
  \[ B'(\oul{I}) := \prod_{i=1}^{n} B'(I^{i}), \qquad B''(\oul{I}) :=
    \prod_{i=1}^{n} B''(I^{i}). \] We regard these as subspaces of
  $|\Delta^{l}| \times \RR^{n}$ and $|\Delta^{l}|_{e} \times \RR^{n}$,
  respectively, where we have
  $B'(\oul{I}) \subseteq B''(\oul{I}) \subseteq B(\oul{I})$.
\end{definition}

\begin{remark}\label{rmk:bbSint}
  Recall from \cite{spans}*{Remark 5.4} that we can view $\bbS^{n}$ as
  the partially ordered set $\simp_{\xint/[n]}^{\op}$ of inert
  maps to $[n]$ in $\simp$ --- the inert map
  $\xi \colon [m] \hookrightarrow [n]$ corresponds to the pair
  $(\xi(0), \xi(m))$. 
\end{remark}

\begin{definition}\label{defn:grid}
We define
\[\grid_{\ind{j},l}, \gridT_{\ind{j},l} \colon  \Int^n_{\ind{j},l} \times \bbS^{\ind{j},\op}
    \times \sd(\Delta^{l}) \longrightarrow \Mfd\]
    as follows. Using the identifications of Remark~\ref{rmk:bbSint}, for
  \begin{itemize}
  \item an object $\oul{I}\in \Int^n_{\ind{j},l}$,
  \item an inert map $\ind{i} \hookrightarrow
  \ind{j}$ in $\simp^{n}$ viewed as an object $\xi\in\bbS^{\ind{j},\op}$, and
  \item an injective map $[k]\hookrightarrow[l]$ in $\simp_{\inj/[l]}$ viewed an an object $\tau\in \sd(\Delta^{l})$,
  \end{itemize}
let
 \[\grid_{\ind{j},l}(\oul{I}, \xi, \tau) :=
    B'(\xi^{*}_{\ext}\tau^{*}_{\xint}\oul{I}) \hookrightarrow
    B(\oul{I}),\]
  \[\gridT_{\ind{j},l}(\oul{I}, \xi, \tau) :=
    B''(\xi^{*}_{\ext}\tau^{*}_{\xint}\oul{I}) \hookrightarrow
    B(\oul{I}),\]
  where we write $\tau^{*}_{\xint}$ for the simplicial structure map in
  the simplicial set $\Int^{n}_{\ind{j}}$ and $\xi^{*}_{\ext}$ for
  the structure map $\Int^{n}_{\ind{j}} \to \Int^{n}_{\ind{i}}$.

  It is clear from the definition that if we
  have maps $\xi \to \xi'$, $\tau \to \tau'$, then
  \[ B'(\xi^{*}_{\ext}\tau^{*}_{\xint}\oul{I}) \subseteq
    B'(\xi'^{*}_{\ext}\tau'^{*}_{\xint}\oul{I}),\]
  which makes $\grid_{\ind{j},l}$ a functor, together with a natural transformation to 
the composite
\[ \Int^n_{\ind{j},l} \times \bbS^{\ind{j},\op}
  \times \sd(\Delta^{l}) \to \Int^{n}_{\ind{j},l} \xto{B} \Mfd\]
   and similarly for
  $\gridT_{\ind{j},l}$. Since these functors are defined in terms of
  the structure maps for $\Int^{n}$, it is also immediate that they
  are functorial in $\ind{j} \in \simp^{n,\op}$ and $[l]\in \Dinj^{\op}$.  Moreover, the inclusions $B'(\oul{I}) \subseteq
  B''(\oul{I})$ induce a compatible family of natural transformations
  \[ \gamma_{\ind{j},l} \colon \grid_{\ind{j},l} \to \gridT_{\ind{j},l}.\]
\end{definition}

We now use the maps $\grid_{\ind{j},l}$ to define ``cutting''
functors:  
\begin{definition}\label{defn:cut}
  Keeping the notation from \ref{defn:grid}, we define functors
  \[ \scut_{\ind{j},l}, \scutT_{\ind{j},l} \colon
    \sBORD_{d,d+n,\ind{j},l}^{(\mathrm{or})} \times \bbS^{\ind{j},\op} \times
    \sd(\Delta^{l}) \longrightarrow \Mfd^{(\mathrm{or})}\]
  on objects as follows: for $(M, \oul{I})$ the manifold $M$ is
  equipped with a map $\pi \colon M \to B(\oul{I})$, and we set
  \[ \scut_{\ind{j},l}(M,\oul{I},\xi,\tau) :=
    \pi^{-1}(\grid_{\ind{j},l}(\oul{I},\xi,\tau)), \qquad
    \scutT_{\ind{j},l}(M,\oul{I},\xi,\tau) :=
    \pi^{-1}(\gridT_{\ind{j},l}(\oul{I},\xi,\tau)).\]
  Generalizing the Morse lemma similarly to \cite[Proposition 8.9]{CS}, these are   manifolds with faces. If we started with an oriented bordism, they are oriented as closed submanifolds of the oriented manifold $M$.

 Since
  $\grid$ and $\gridT$ are functorial and taking preimages is
  functorial, this
  assignment on objects extends to a pair of functors, which are
  moreover natural in $\ind{j}\in\simp^{n,\op}, [l] \in
  \Dinj^{\op}$. In addition, the natural
  transformations $\gamma_{\ind{j},l}$ induce a 
  compatible family of natural transformations
 \[ \Gamma_{\ind{j},l} \colon \scut_{\ind{j},l} \to \scutT_{\ind{j},l}.\]
\end{definition}

\begin{remark}
  Essentially, these maps extract the composable cobordisms  and their
  sources and targets in the composition. For $d=0, n=1$, an example
  of the circle as a composition of two half circles is illustrated as
  follows: 
  \begin{center}
   \includestandalone{pics/pic_cut_extracted_bordism}\\
   \includestandalone{pics/pic_cut_extracted_bordism_2}
  \end{center}
 \end{remark}
 \begin{remark}\label{remark:mfldcorners}
 Generalizing \cite{CS}*{Proposition 8.9}, for $l=0$ every image of some
  $\ind{\xi}=(\ind{\alpha}, \ind{\beta})\in \bbS^{\ind{j}}$ is an (oriented)
  cubical $n$-bordism. If for every $i$ we have that $\beta_i-\alpha_i =1 $, then the image
  is one of the (oriented) cubical $d$-dimensional $n$-bordisms in the
  composition. In the picture in the previous remark, the 
  bordisms being composed are depicted in the second line.
\end{remark}

\begin{notation}\label{notn:Mxitau}
If the context is clear, we will use the short-hand notation
 \[ M_{\xi,\tau} := \scut_{\ind{j},l}(M,\oul{I},\xi,\tau) = 
    \pi^{-1}B'(\xi^{*}_{\ext}\tau^{*}_{\xint}\oul{I}).\]
By the previous remark, this is an (oriented) manifold with corners of
dimension less than or equal to
  $d+l'+\sum \epsilon_{i}$, where
  \[ \epsilon_{i} :=
    \begin{cases}
      0, & j_{i}=0,\\
      1, & j_{i}>0.
    \end{cases}
  \]
  In the above, we required $\xi$ to correspond to an inert map
  $\ind{i} \hookrightarrow \ind{j}$ in $\simp^{n}$ and $\tau$ to an
  injective map $[k]\hookrightarrow[l]$ in
  $\simp_{\inj/[l]}$. However, the right-hand side makes sense for
  \emph{any} maps in $\simp^{n}$ and $\simp_{/[l]}$, since we just use
  the $(n+1)$-fold simplicial structure of $\Int^n$.  The short-hand
  notation turns out to be useful also in the more general case, and
  we observe the following: From the definition of $B'$ we see that if
  $\alpha \colon \ind{j}'' \to \ind{j}'$ is an \emph{active} injective
  morphism, then
  \[ M_{\xi\alpha,\tau} = M_{\xi,\tau}. \]
  This applies in particular to the unique active map $\alpha \colon
  \ind{\epsilon} \to \ind{j}'$. On the other hand, if $\xi$ factors
  as $\ind{j'} \xto{\alpha} \ind{k} \xto{\iota} \ind{j}$ with $\iota$
  inert and $\alpha$ active, then we have
  \[ M_{\xi,\tau} = M_{\iota,\tau} = \scut_{\ind{j},l}(M,
    \oul{I}, \iota, \tau). \]
\end{notation}

\begin{remark}\label{rmk:Misthesame}
  Given 
  $\ind{j}'' \xto{\xi'} \ind{j}' \xto{\xi} \ind{j}$ in
  $\simp^{n}$ and  $ [l''] \xto{\tau'} [l'] \xto{\tau} [l]$ in $\simp$, it
  is immediate from the definition that we have
  \[ (\xi^{*}_{\ext}\tau^{*}_{\xint}M)_{\xi',\tau'} =
    M_{\xi\xi',\tau\tau'}.\]
\end{remark}

\begin{proposition}\label{propn:cut_functor}
  The maps $\scut_{\ind{j},l}$ and $\scutT_{\ind{j},l}$ induce functors
  of $n$-uple Segal spaces
  \[  \BORD_{d,d+n} \longrightarrow \COSPAN_{n}(\mathcal{S}_{\fin}) \]
  which restrict to functors between the underlying $n$-fold Segal spaces
  \[  \Bord_{d,d+n} \longrightarrow \Cospan_{n}(\mathcal{S}_{\fin}) \]
  Furthermore, the natural transformations $\Gamma_{\ind{j},l}$
  induce a natural equivalence between them.
\end{proposition}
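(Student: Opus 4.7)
The plan is to follow Procedure~\ref{proc:recipe_for_functor}. Step~(1) has already been carried out in Definition~\ref{defn:cut}: the functors $\scut_{\ind{j},l}$ and $\scutT_{\ind{j},l}$ are strictly natural in $\ind{j}\in\simp^{n,\op}$ and $[l]\in\Dinj^{\op}$, and all the $M_{\xi,\tau}$ are compact manifolds with corners, so composing with the forgetful functor $\Mfd\to\Top$ lands in $\mathcal{S}_{\fin}$. Using the strict semisimplicial model for $\COSPAN_{n}(\mathcal{S}_{\fin})$ given in \S\ref{subsec:spanmodel}, the transposed data together with the map $K \to \Ex^{1}K$ yields a candidate morphism of $n$-uple semisimplicial objects; it remains to verify conditions~(2)--(4).

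For Step~(2), fix $(M,\oul{I}) \in \sBORD_{d,d+n,\ind{j},l}$ and $\xi\in\bbS^{\ind{j},\op}$. I would show that any morphism $\tau\to\tau'$ in $\sd(\Delta^{l})$ induces a weak equivalence $M_{\xi,\tau}\hookrightarrow M_{\xi,\tau'}$. By conditions~(1) and~(2) of Definition~\ref{defn:BORDV}, the restriction of $\pi$ over the larger closed box is a proper submersion, and hence a locally trivial fibration by Ehresmann's theorem. Since both boxes are contractible products of closed intervals with one a deformation retract of the other, the preimages deformation retract onto a common fibre; the same argument handles $\scutT$.

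Step~(3), which I expect to be the main obstacle, is the cartesianness condition: for fixed $(M,\oul{I},\tau)$, the functor $\bbS^{\ind{j},\op}\to\Top$ sending $\xi\mapsto M_{\xi,\tau}$ should be a homotopy left Kan extension of its restriction to $\bbL^{\ind{j},\op}$, \ie{} corner diagrams must be homotopy pushouts. I would first establish this for $\scutT_{\ind{j},l}$, where the thicker grids $B''$ give honest overlapping covers of manifolds with corners: for an inner decomposition of $\xi$ along a single direction, the two halves $\scutT(M,\oul{I},\xi_{1},\tau)$ and $\scutT(M,\oul{I},\xi_{2},\tau)$ cover $\scutT(M,\oul{I},\xi,\tau)$ with overlap equal to a collar neighborhood of a common fibre, using the non-critical-value condition~(3) of Definition~\ref{defn:BORDV} and the generalized Morse lemma from \cite{CS}*{\S 8}. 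This presents the corner as a strict pushout along cofibrations, hence a homotopy pushout; iterating over the $n$ directions gives the full cartesianness condition. For $\scut$, where the pieces do not overlap, I would transfer the result via $\Gamma_{\ind{j},l}$: the inclusion $B'(I)\hookrightarrow B''(I)$ is a homotopy equivalence of closed intervals, and by the same Ehresmann argument as in Step~(2) its pullback along $\pi$ is a weak equivalence on preimages, so $\Gamma_{\ind{j},l}$ is pointwise a weak equivalence and the cartesianness for $\scutT$ transfers to $\scut$.

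Finally, Step~(4), compatibility with delooping, is formal once the extraction is understood: if all intervals in the last direction are degenerate, the grids $B'$ and $B''$ are constant in that coordinate, so the preimages agree with those obtained from the lower-dimensional cutting construction applied to $\sBord_{d,d+n-1,(\emptyset,\emptyset)/}^\semi$, and the same holds on the $\Cospan$ side. Combining these steps, the transposed semisimplicial morphisms factor through the cartesian subobjects and hence define the desired morphisms of $n$-uple Segal spaces $\BORD_{d,d+n}\to\COSPAN_{n}(\mathcal{S}_{\fin})$; passing to underlying $n$-fold Segal spaces via $U_{\Seg}$ gives the functor $\Bord_{d,d+n}\to\Cospan_{n}(\mathcal{S}_{\fin})$, and the observation that $\Gamma_{\ind{j},l}$ is pointwise a weak equivalence upgrades it to a natural equivalence between $\cut$ and $\cutT$.
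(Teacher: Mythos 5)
Your proposal is correct and follows essentially the same route as the paper: identify the cut pieces as compact manifolds with corners (hence finite cell complexes with cofibration boundary inclusions), use the submersivity/non‑critical‑value conditions of Definition~\ref{defn:BORDV} plus a Morse‑lemma/Ehresmann argument to handle the $\sd(\Delta^l)$‑direction and to show $\Gamma_{\ind{j},l}$ is a pointwise deformation retract, and verify the cartesianness condition by exhibiting the corner squares as strict pushouts along cofibrations, hence homotopy pushouts. The only difference is cosmetic: you verify cartesianness for $\scutT$ (overlapping collars) and transfer to $\scut$ along $\Gamma_{\ind{j},l}$, whereas the paper verifies it directly for $\scut$ (the grids $B'$ already give strict pushouts along boundary inclusions) and transfers to $\scutT$; both directions are justified by the same pointwise equivalence.
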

\begin{proof}
  The images
  $\scut_{\ind{j},l}(M,\oul{I},\xi,\tau) $ and
  $\scutT_{\ind{j},l}(M,\oul{I},\xi,\tau) $ are both compact manifolds with
  corners, by Remark \ref{remark:mfldcorners}. We can always smoothe corners to replace a compact manifold
  with corners by an ordinary compact manifold with boundary, up to
  homeomorphism. A standard argument using Morse theory (see for
  instance \cite[Ch. 7, Corollary 1.2]{Kosinski}) shows that a compact
  manifold with boundary has a handlebody decomposition relative to
  its boundary, which in particular means that the boundary inclusion
  is a finite relative cell complex. It follows that any compact
  manifold with corners has the homotopy type of a finite cell
  complex, and moreover that the inclusion of the boundary is a
  relative cell complex, and hence a Serre cofibration.
  
  Next we check the last statement, namely, that the natural
  transformations $\Gamma_{\ind{j},l}$ induce a natural
  equivalence. To see this, note that not only are the inclusions
  $B'(\oul{I}) \subseteq B''(\oul{I})$ deformation retracts, but we
  have even more: Using a Morse lemma
  argument, the submersivity condition on the projection
  $\pi$
  in \ref{defn:BORDV}\ref{defn:Bord_cond_grid}
  implies that the inclusions of the preimages
  \[ \Gamma_{\ind{j},l}(M,\oul{I},\xi,\tau) \colon \scut_{\ind{j},l}(M,\oul{I},\xi,\tau) \to \scutT_{\ind{j},l}(M,\oul{I},\xi,\tau).\]
  are deformation retracts, and so in particular homotopy equivalences. 
  Thus, we only need to check the steps (2) and (3) in \ref{proc:recipe_for_functor} for $\scut$.
  
 For (2), this follows from \ref{defn:Bord_cond_simplex} in
 \ref{defn:BORDV}: Given $\tau \to \tau'$ in $\sd(\Delta^{l})$, the
 submersivity of the projection $\pi'$ implies, again by using a Morse
 lemma argument, that the induced map on preimages under $\pi$ is a
 homotopy equivalence.
  
 For (3), we need to check that the images
 $\scut_{\ind{j},l}(M,\oul{I})$ indeed are ($l$-simplices of) cartesian
 diagrams. Observe that $\grid_{\ind{j},l}(\oul{I})$ is an
 ($l$-simplex of) cartesian diagrams in the category of topological
 spaces, and the same is true for the preimages under $\pi$. Furthermore,
 the pushout squares that appear are all homotopy pushouts, since the
 maps are  boundary inclusions for manifolds with corners, and hence
 are Serre cofibrations.
\end{proof}

\begin{definition}
We denote the functors from \ref{propn:cut_functor} by $\cut_{n,d}$
and ${\cutT}_{n,d}$. If $d=0$, we shorten the notation to $\cut_{n}$
and ${\cutT}_{n}$.
\end{definition}

Finally, we need to check (4) in \ref{proc:recipe_for_functor} and construct a delooping of the functors $\cut_{n,d}$ and ${\cutT}_{n,d}$.

\begin{construction}
	The cutting functors 
\[ \cutT_{n,d;r}:=\cutT_{n+r,d-r}\colon \Bord_{d-r,d+n} \longrightarrow \Cospan_{n+r}(\mathcal{S}),\]
\[ \cut_{n,d;r}:=\cut_{n+r,d-r}\colon \Bord_{d-r,d+n} \longrightarrow \Cospan_{n+r}(\mathcal{S}),\]
for $r\geq 0$
 induce functors
 \[ (\cutT_{n, d; r+1})_{\emptyset,\emptyset} \colon \Bord_{d-r-1,d+n}(\emptyset,\emptyset) \xrightarrow{\cutT_{n, d;r+1}}
\Cospan_{n+r+1}(\mathcal{S})(\emptyset,\emptyset),\]
\[ (\cut_{n, d; r+1})_{\emptyset,\emptyset} \colon \Bord_{d-r-1,d+n}(\emptyset,\emptyset) \xrightarrow{\cut_{n, d;r+1}}
\Cospan_{n+r+1}(\mathcal{S})(\emptyset,\emptyset).\]
\end{construction}

By inspection, we immediately get from the definitions that both
cutting functors are compatible with the deloopings of $\Bord_{d,d+n}$
from \cite{CS}*{\S 7.2} and those of $\Cospan_{n}(\mathcal{S})$ from
\cite{spans}*{Proposition 12.1}, in the following sense:
\begin{proposition}
  For every $d,n,\ind{j},l$, we have commutative squares
  \[
    \begin{tikzcd}
      \sBord_{d-1,d+n}(\emptyset,\emptyset)_{\ind{j},l} \times
      \bbS^{\ind{j}} \times \sd(\Delta^{l}) \arrow{r} \arrow{d}{\simeq} \arrow{dr}{(\scut_{n,d;1})_{\emptyset, \emptyset}}&
      \sBord_{d-1,d+n,(1,\ind{j}),l} \times \bbS^{(1,\ind{j})} \times
      \sd(\Delta^{l}) \arrow{d}{\scut_{n,d;1,(1,\ind{j}),l}} \\
      \sBord_{d,d+n,\ind{j},l} \times
      \bbS^{\ind{j}} \times \sd(\Delta^{l})  \arrow[swap]{r}{\scut_{n,d,\ind{j},l}} & \Top,
    \end{tikzcd}
  \]
  and similarly for $\cutT$.
\end{proposition}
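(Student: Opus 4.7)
The plan is to verify the commutativity of the square together with the diagonal $(\scut_{n,d;1})_{\emptyset,\emptyset}$ by unwinding all three morphisms on objects and observing that they produce literally the same preimage in $M$. The key input will be the emptiness condition defining the mapping space $\sBord_{d-1,d+n}(\emptyset,\emptyset)$, which forces a certain containment that makes two \emph{a priori} different boxes have equal preimages under the bordism projection.

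First I will unwind the left vertical equivalence. An object of $\sBord_{d-1,d+n}(\emptyset,\emptyset)_{\ind{j},l}$ is a datum $(M, \oul{I}')$ in $\sBord_{d-1,d+n,(1,\ind{j}),l}$ with $\oul{I}' = (I^0_0 \leq I^0_1,\oul{I})$ and $I^0_s = (a^0_s, b^0_s)$, subject to the emptiness conditions $p_{\{1\}}^{-1}(I^0_0) = p_{\{1\}}^{-1}(I^0_1) = \emptyset$ that express ``morphism from $\emptyset$ to $\emptyset$ in the first direction''. The delooping equivalence absorbs the first coordinate of $\R^{n+1}$ into the ambient vector space and produces $(M,\oul{I}) \in \sBord_{d,d+n,\ind{j},l}$ whose projection $\overline{\pi}$ is the composite of $\pi$ with $B(\oul{I}') \twoheadrightarrow B(\oul{I})$. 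The top horizontal inclusion pairs this with the obvious map $\bbS^{\ind{j}} \hookrightarrow \bbS^{(1,\ind{j})}$ sending $\xi$ to $(0,1,\xi)$, where $(0,1)$ represents the identity inert map $[1] \hookrightarrow [1]$.

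Second, I compute the two composites on an object $((M, \oul{I}'), \xi, \tau)$. Since $(0,1)^*_{\ext} I^0 = I^0$, the right-then-down composite yields
\[
  \pi^{-1}\!\bigl(B'(\tau^*_{\xint} I^0) \times B'(\xi^*_{\ext} \tau^*_{\xint} \oul{I})\bigr),
\]
while the down-then-right composite yields, after identifying $\overline{\pi}^{-1}(X) = \pi^{-1}(B(I^0) \times X)$, the set
\[
  \pi^{-1}\!\bigl(B(I^0) \times B'(\xi^*_{\ext} \tau^*_{\xint} \oul{I})\bigr).
\]
The required equality then reduces to showing $\pi^{-1}(B(I^0) \times X) = \pi^{-1}(B'(\tau^*_{\xint} I^0) \times X)$ for any such $X$, which will follow from the containment chain
\[
  p_{\{1\}}(M) \;\subseteq\; [b^0_0, a^0_1] \;\subseteq\; B'(\tau^*_{\xint} I^0).
\]
Here the first inclusion uses the emptiness hypothesis, since $(a^0_0,b^0_1)\setminus(I^0_0\cup I^0_1) = [b^0_0, a^0_1]$, and the second follows from $B'(I^0) = [\tfrac{a^0_0+b^0_0}{2}, \tfrac{a^0_1+b^0_1}{2}] \supseteq [b^0_0, a^0_1]$ by the midpoint formula, a containment that is preserved by every face map $\tau^*_{\xint}$ since face maps act pointwise on the family of intervals.

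The identical argument with $B''$ replacing $B'$ handles $\cutT$, using $[b^0_0, a^0_1] \subseteq B''(I^0) = [\tfrac{2a^0_0+b^0_0}{3}, \tfrac{a^0_1+2b^0_1}{3}]$; naturality in $\ind{j} \in \simp^{n,\op}$, $[l]\in\Dinj^{\op}$, $\xi$ and $\tau$ is automatic from the functoriality of the cutting construction in the simplicial structure of $\Int$. I do not expect any serious obstacle: the main point to be careful about is merely bookkeeping the correct inclusion $\bbS^{\ind{j}} \hookrightarrow \bbS^{(1,\ind{j})}$ induced by the delooping from \cite{CS}*{\S 7.2} and ensuring the projection $\overline{\pi}$ is correctly identified with the forgotten version of $\pi$.
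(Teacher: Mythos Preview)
Your proposal is correct and carries out in detail the ``by inspection'' that the paper leaves to the reader. The paper offers no proof beyond the phrase ``By inspection, we immediately get from the definitions that both cutting functors are compatible with the deloopings''; your argument is precisely the unwinding that justifies this, with the key observation being that the emptiness condition forces $p_{\{1\}}(M)\subseteq[b^0_0,a^0_1]\subseteq B'(\tau^*_{\xint}I^0)$, making the two preimages agree.
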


\begin{corollary}
  For every $d,n$ we have a commutative square of $n$-fold Segal spaces
  \[
    \begin{tikzcd}[column sep=large]
      \Bord_{d-1,d+n}(\emptyset,\emptyset) \arrow{r}{(\cut_{n,d;1})_{\emptyset, \emptyset}} \arrow{d}{\simeq} &
      \Cospan_{n+1}(\mathcal{S})(\emptyset,\emptyset) \arrow{d}{\simeq} \\
      \Bord_{d,d+n} \arrow{r}{\cut_{n,d}}& \Cospan_{n}(\mathcal{S}),
    \end{tikzcd}
  \]
and similarly for $\cutT$.
\end{corollary}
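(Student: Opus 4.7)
The plan is to deduce the corollary by upgrading the strict commutativity of the preceding proposition to a commutativity statement of $n$-fold Segal spaces. The key observation is that both vertical equivalences and both horizontal cutting functors in the corollary are, by construction, obtained from strict multisimplicial data, so it suffices to check strict commutativity on that data and then apply localisation functors.

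First I would recall how $\cut_{n,d}$ is extracted from the strict family $\scut_{n,d,\ind{j},l}$ via Procedure \ref{proc:recipe_for_functor}: for each $\ind{j}$, strict naturality in $l\in\Dinj^\op$ and the semi-simplicial adjoint to subdivision turn the family into a map of semisimplicial sets $\sBord_{d,d+n,\ind{j}}^{\semi}\to\Ex^1\mathrm{N}\Fun(\bbS^{\ind{j},\op},\Top)^W$; strict naturality in $\ind{j}\in\simp^{n,\op}$ then assembles these maps into a morphism of $n$-fold simplicial semisimplicial sets, which under the presentation of $(\infty,n)$-categories by $n$-fold Segal objects (cf.\ \S\ref{subsec:spanmodel}) gives precisely the functor $\cut_{n,d}$; the same recipe applied with $(1,\ind{j})$ and restricted to mapping objects yields $(\cut_{n,d;1})_{\emptyset,\emptyset}$. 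Analogously for $\cutT$.

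Next I would observe that the two vertical equivalences are also built from strict multisimplicial shifts. On the bordism side, the equivalence $\Bord_{d-1,d+n}(\emptyset,\emptyset)\simeq\Bord_{d,d+n}$ is obtained, as in \cite{CS}*{\S 7.2}, from the strict isomorphism of $n$-fold simplicial semisimplicial sets $\sBord_{d-1,d+n}(\emptyset,\emptyset)_{\ind{j},l}\cong\sBord_{d,d+n,\ind{j},l}$ induced by prepending a trivial interval coordinate; on the cospan side, the delooping $\Cospan_{n+1}(\mathcal{S})(\emptyset,\emptyset)\simeq\Cospan_{n}(\mathcal{S})$ of \cite{spans}*{Proposition 12.1} is similarly realized at the strict level by the inclusion $\bbS^{\ind{j}}\hookrightarrow\bbS^{(1,\ind{j})}$.

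With both horizontal and vertical arrows exhibited as coming from morphisms of strict multisimplicial data, the commutativity of the square of the corollary is obtained by applying Procedure \ref{proc:recipe_for_functor} levelwise to the strict commutative square of the preceding proposition, and then passing to the underlying $n$-fold Segal spaces. The same argument with $\Gamma_{\ind{j},l}$ in place of identities, combined with Proposition \ref{propn:cut_functor}, gives the statement for $\cutT$ (and shows that the natural equivalence $\Gamma$ is itself compatible with the delooping). I do not expect any essential obstacle beyond carefully matching the two strict deloopings to the ones of \cite{CS} and \cite{spans}; this matching is immediate from the definitions of $\grid_{\ind{j},l}$ and of the delooping shifts, since both simply forget or duplicate an interval coordinate.
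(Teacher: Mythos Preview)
Your proposal is correct and follows essentially the same approach as the paper: the corollary is stated there as an immediate consequence of the preceding proposition (with no explicit proof given), precisely because passing from the strict levelwise commutative squares to the square of $n$-fold Segal spaces is automatic once all four arrows are known to come from strict multisimplicial data. Your write-up spells out in more detail than the paper does how this passage works, which is a reasonable thing to do, but the underlying argument is the same.
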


\begin{corollary}\label{cor:cutsymmon}
  The cutting functors $\cut_{n,d}$ and $\cutT_{n,d}$ have natural symmetric
  monoidal structures coming from the functors $\cut_{n,d;r}$ and $\cutT_{n,d;r}$; therefore they are (extended) unoriented topological field theories.
\end{corollary}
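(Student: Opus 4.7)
The plan is to leverage the preceding corollary, which exhibits the cutting functors as compatible with the specific deloopings that define the symmetric monoidal structures on source and target. Recall that, per the discussion just before this corollary, the symmetric monoidal structure on $\Bord_{d,d+n}$ is obtained from the tower of weak equivalences $\sBord_{d-r,d+n}^{(\mathrm{or})}(\emptyset,\emptyset) \xrightarrow{\simeq} \sBord_{d-r+1,d+n}^{(\mathrm{or})}$ for $r\geq 0$, which exhibit $\Bord_{d,d+n}$ as an iterated loop object based at $\emptyset$ in the sequence $\{\Bord_{d-r,d+n}\}_{r\geq 0}$, and hence as an $\mathbb{E}_\infty$-algebra. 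Dually, by [spans, Proposition 12.1], the symmetric monoidal structure on $\Cospan_n(\mathcal{S})$ arises from the tower $\Cospan_{n+r}(\mathcal{S})(\emptyset,\emptyset) \simeq \Cospan_{n+r-1}(\mathcal{S})$.

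The first step is to observe that the family $\{\cut_{n,d;r}\}_{r\geq 0}$ defined by $\cut_{n,d;r} := \cut_{n+r,d-r}$ is genuinely compatible with these two towers. This is exactly the content of the preceding corollary: for each $r$, the square
\[
\begin{tikzcd}[column sep=large]
\Bord_{d-r-1,d+n}(\emptyset,\emptyset) \arrow{r}{(\cut_{n,d;r+1})_{\emptyset,\emptyset}} \arrow{d}{\simeq} & \Cospan_{n+r+1}(\mathcal{S})(\emptyset,\emptyset) \arrow{d}{\simeq} \\
\Bord_{d-r,d+n} \arrow{r}{\cut_{n,d;r}} & \Cospan_{n+r}(\mathcal{S})
\end{tikzcd}
\]
commutes, so that $\cut_{n,d;r}$ is recovered from $\cut_{n,d;r+1}$ by passing to endomorphisms of the unit and applying the delooping equivalences on both sides. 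The same holds verbatim for $\cutT_{n,d;r}$.

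The second step is to conclude from this tower of compatible deloopings that $\cut_{n,d}$ and $\cutT_{n,d}$ acquire symmetric monoidal structures. This is the standard ``$\mathbb{E}_\infty$ via iterated delooping'' principle (cf.\ [spans, \S 7] for the model of $\mathbb{E}_\infty$-structures on $(\infty,n)$-categories by such towers): a morphism of towers that is levelwise a functor of $(\infty,n+r)$-categories, and whose transition maps intertwine the delooping equivalences, is the same data as a symmetric monoidal functor at the bottom of the tower. The commutativity of the above squares for all $r$ therefore packages the family $\{\cut_{n,d;r}\}_{r\geq 0}$ into a single symmetric monoidal functor $\cut_{n,d}\colon \Bord_{d,d+n}\to \Cospan_n(\mathcal{S})$, and similarly for $\cutT_{n,d}$. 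The hardest (but most straightforward) part will be matching our tower precisely with the delooping conventions of the two references, but since both structures are defined via the same kind of loop-based-at-$\emptyset$ procedure and the squares commute on the nose at each level, no further combinatorial work is needed.

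Finally, for the TFT assertion: by definition, an extended unoriented $(d+n)$-dimensional TFT is a symmetric monoidal functor out of the completion $\widehat{\Bord}_{d,d+n}$, and by the universal property of completion (which preserves products and hence symmetric monoidal structures, cf.\ [spans, Lemma 7.10]) such functors correspond to symmetric monoidal functors out of $\Bord_{d,d+n}$. The symmetric monoidal structures on $\cut_{n,d}$ and $\cutT_{n,d}$ therefore produce the desired extended TFTs.
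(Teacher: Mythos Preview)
Your proposal is correct and matches the paper's approach: the corollary is stated without proof in the paper precisely because it is meant to follow immediately from the preceding commutative square of deloopings, together with the general principle (recalled in the background section on iterated Segal spaces) that a compatible tower of deloopings based at the unit yields a symmetric monoidal structure. Your elaboration of this implicit argument, including the final remark on completion, is accurate and fills in exactly what the paper leaves to the reader.
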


\section{Extended TFTs in Spans}\label{subsec:TFTspan}
Combining Corollary~\ref{cor:cutsymmon} with the functoriality of
spans discussed in Remark~\ref{rmk:spanfun}, we obtain the following
family of unoriented TFTs:
\begin{corollary}\label{cor:SpanunorTFT}
  Let $\mathcal{C}$ be an \icat{} with finite limits. Then for every
  object $x \in \mathcal{C}$ there is an $n$-dimensional unoriented
  extended TFT obtained as the composite
  \[ \Bord_{0,n} \xto{\cut_{n}} \Cospan_{n}(\mathcal{S}_{\fin}) \to
    \Span_n(\mathcal{C}),\] where the second functor arises from the
  limit-preserving functor
  \[ x^{(\blank)} \simeq \lim_{(\blank)} x \colon \mathcal{S}_{\fin}^{\op}
    \to \mathcal{C} \]
  given by the
  natural cotensoring of $\mathcal{C}$ over $\mathcal{S}_{\fin}$. \qed
\end{corollary}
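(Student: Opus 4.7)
This is essentially a matter of assembling the pieces already in place, so the plan is to (i) extract the required symmetric monoidal functor $\Cospan_{n}(\mathcal{S}_{\fin}) \to \Span_{n}(\mathcal{C})$ from the cotensoring $x^{(\blank)}$, and then (ii) compose with the cutting functor from Corollary~\ref{cor:cutsymmon}.

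For step (i), recall that $\Cospan_{n}(\mathcal{S}_{\fin}) = \Span_{n}(\mathcal{S}_{\fin}^{\op})$, so I need a pullback-preserving functor $F \colon \mathcal{S}_{\fin}^{\op} \to \mathcal{C}$, since by the general functoriality of iterated spans (Remark~\ref{rmk:spanfun}) such an $F$ induces a functor of $(\infty,n)$-categories $\Span_{n}(\mathcal{S}_{\fin}^{\op}) \to \Span_{n}(\mathcal{C})$. I take $F := x^{(\blank)} \simeq \lim_{(\blank)} x$. This functor takes colimits in $\mathcal{S}_{\fin}$ to limits in $\mathcal{C}$ (being a cotensoring, equivalently the restriction of the right adjoint to the constant-diagram functor $\mathcal{C} \to \Fun(\mathcal{S}_{\fin}^{\op},\mathcal{C})$); in particular it sends pushouts in $\mathcal{S}_{\fin}$ to pullbacks in $\mathcal{C}$, as required. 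Hence the induced functor $\Cospan_{n}(\mathcal{S}_{\fin}) \to \Span_{n}(\mathcal{C})$ exists.

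Still in step (i), I need the symmetric monoidal refinement. The symmetric monoidal structure on $\Cospan_{n}(\mathcal{S}_{\fin})$ is obtained, via the delooping procedure recalled in \S\ref{subsec:itlagsymmon} and \cite{spans}*{Proposition 12.1}, from the coproduct on $\mathcal{S}_{\fin}$; correspondingly the symmetric monoidal structure on $\Span_{n}(\mathcal{C})$ arises from the cartesian product on $\mathcal{C}$. Since $x^{(\blank)}$ sends coproducts of finite spaces to products in $\mathcal{C}$ (again because it is limit-preserving on $\mathcal{S}_{\fin}^{\op}$, and the empty space goes to the terminal object), the induced functor intertwines these structures. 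Applying this naturally across the delooping tower yields the symmetric monoidal functor $\Cospan_{n}(\mathcal{S}_{\fin}) \to \Span_{n}(\mathcal{C})$.

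For step (ii), I compose with $\cut_{n} \colon \Bord_{0,n} \to \Cospan_{n}(\mathcal{S}_{\fin})$, which is symmetric monoidal by Corollary~\ref{cor:cutsymmon}. The composite is then a symmetric monoidal functor $\Bord_{0,n} \to \Span_{n}(\mathcal{C})$, i.e.\ an $n$-dimensional unoriented extended TFT. There is no real obstacle here; the only mild point is bookkeeping the symmetric monoidal structures through the delooping equivalences, which is already handled in the constructions cited.
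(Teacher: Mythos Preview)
Your proposal is correct and follows exactly the approach the paper intends: the corollary is stated with \qed and is introduced merely by ``Combining Corollary~\ref{cor:cutsymmon} with the functoriality of spans discussed in Remark~\ref{rmk:spanfun}'', so no separate argument is given. Your write-up just unpacks these two ingredients and the symmetric monoidal compatibility, which is precisely what the paper leaves implicit.
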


\begin{example}
  If we take $\mathcal{C}$ to be the \icat{} $\mathcal{S}$, then the
  cotensoring $X^{T}$ is simply the mapping space $\Map(T, X)$. For
  every space $X$ we thus have a family of unoriented TFTs
  $\Bord_{0,n} \to \Span_{n}(\mathcal{C})$ that to a closed manifold
  $M$ assign the mapping space $\Map(M, X)$ (where we also write $M$
  for the underlying homotopy type of the manifold), to a cobordism
  $M$ from $M_{0}$ to $M_{1}$ assigns the span
  \[ \Map(M_{0}, X) \from \Map(M,X) \to \Map(M_{1}, X) \]
  given by restricting maps to the boundary, and so forth.
\end{example}

Recall that for any symmetric monoidal $(\infty,n)$-category
$\mathcal{X}$, the Cobordism Hypothesis implies that there is a
canonical (homotopical) $O(n)$-action on the $\infty$-groupoid of
fully dualizable objects in $\mathcal{X}$. In the
$(\infty,n)$-category $\Span_{n}(\mathcal{C})$ it was proved in
\cite{spans}*{\S 12} that every object is fully dualizable, so that
the space of fully dualizable objects is the underlying
$\infty$-groupoid $\mathcal{C}^{\simeq}$ of $\mathcal{C}$. We will now
show that Corollary~\ref{cor:SpanunorTFT} implies that this
$O(n)$-action is trivial:
\begin{corollary}\label{cor:Ontrivact}
  The $O(n)$-action on $\mathcal{C}^{\simeq}$, viewed as the space of
  fully dualizable objects in $\Span_{n}(\mathcal{C})$, is trivial.
\end{corollary}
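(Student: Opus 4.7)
The plan is to leverage Corollary \ref{cor:SpanunorTFT} together with the Cobordism Hypothesis in order to produce a natural trivialization of the $O(n)$-action on $\mathcal{C}^{\simeq}$. Applied to the symmetric monoidal $(\infty,n)$-category $\Span_n(\mathcal{C})$ (in which every object is fully dualizable by \cite{spans}*{\S 12}), the Cobordism Hypothesis yields natural equivalences
\[
\Fun^{\otimes}(\Bord_{0,n}^{\txt{fr}}, \Span_n(\mathcal{C})) \simeq \mathcal{C}^{\simeq}, \qquad
\Fun^{\otimes}(\Bord_{0,n}, \Span_n(\mathcal{C})) \simeq (\mathcal{C}^{\simeq})^{hO(n)},
\]
and identifies the restriction map on the left, induced by the forgetful functor $\Bord_{0,n}^{\txt{fr}} \to \Bord_{0,n}$, with the natural projection $(\mathcal{C}^{\simeq})^{hO(n)} \to \mathcal{C}^{\simeq}$ on the right. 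By definition, the $O(n)$-action on $\mathcal{C}^{\simeq}$ is trivial \IFF{} the functor $\rho \colon BO(n) \to \widehat{\mathcal{S}}$ encoding it is equivalent to the constant functor at $\mathcal{C}^{\simeq}$.

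Next, I would observe that the assignment $x \mapsto \mathcal{Z}_x$ from Corollary \ref{cor:SpanunorTFT} is functorial in $x \in \mathcal{C}^{\simeq}$, since it is obtained by composing $\cut_n$ with the cotensoring $\mathcal{C}^{\simeq} \times \mathcal{S}_{\fin}^{\op} \to \mathcal{C}$, $(x, K) \mapsto x^K$, which is natural in the first variable. This produces a map of spaces
\[
s \colon \mathcal{C}^{\simeq} \longrightarrow \Fun^{\otimes}(\Bord_{0,n}, \Span_n(\mathcal{C})) \simeq (\mathcal{C}^{\simeq})^{hO(n)}.
\]
Unwinding the construction, the underlying framed TFT of $\mathcal{Z}_x$ assigns to the positively framed point the object $x^{\mathrm{pt}} \simeq x$, so the composite of $s$ with the canonical projection $(\mathcal{C}^{\simeq})^{hO(n)} \to \mathcal{C}^{\simeq}$ is homotopic to the identity of $\mathcal{C}^{\simeq}$; in other words, $s$ is a section of this projection.

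To finish, I would use the standard identification $(\mathcal{C}^{\simeq})^{hO(n)} \simeq \lim_{BO(n)} \rho \simeq \Map_{\Fun(BO(n), \widehat{\mathcal{S}})}(c_{*}, \rho)$. Under this identification, a map $s$ from $\mathcal{C}^{\simeq}$ corresponds to a natural transformation $c_{\mathcal{C}^{\simeq}} \Rightarrow \rho$ in $\Fun(BO(n), \widehat{\mathcal{S}})$, and the condition that $s$ is a section of the evaluation-at-basepoint map translates into the statement that the component of this natural transformation at the unique object of $BO(n)$ is the identity of $\mathcal{C}^{\simeq}$. Since this component is an equivalence and natural transformations between functors are equivalences precisely when they are pointwise equivalences, the natural transformation itself is an equivalence $c_{\mathcal{C}^{\simeq}} \simeq \rho$, proving that the $O(n)$-action is trivial. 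The main obstacle to verify cleanly is that the construction $x \mapsto \mathcal{Z}_x$ really does assemble into a genuine map of spaces, rather than a pointwise collection of TFTs; this amounts to checking that the cutting functor $\cut_n$, the cotensoring $(\blank)^{(\blank)}$, and the equivalence asserted by the Cobordism Hypothesis all fit together as families of functors indexed by $\mathcal{C}^{\simeq}$, which should be straightforward but requires some care.
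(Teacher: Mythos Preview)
Your proof is correct and follows essentially the same approach as the paper. The paper isolates the observation that a section of $X^{hG} \to X$ forces the $G$-action to be trivial as a separate lemma (proved via the fibration picture $\mathcal{S}_{/BG}$ rather than your functor picture $\Fun(BG,\widehat{\mathcal{S}})$, but these are of course equivalent), and then constructs the section exactly as you do, by assembling the family of TFTs from Corollary~\ref{cor:SpanunorTFT} into a map $\mathcal{C}^{\simeq} \to \Fun^{\otimes}(\Bord_{0,n},\Span_n(\mathcal{C}))$ and checking that evaluation at the point recovers the identity.
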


For the proof we use the following observation, which was explained to
us by Tyler Lawson:
\begin{lemma}\label{lem:hGsecttriv}
  Suppose $G$ is a topological group and the space $X$ has a
  homotopical $G$-action. If the natural map
  $X^{h G} \to X$ (where $X^{h G}$ is the space of homotopy fixed
  points) has a section, then the $G$-action on $X$ is trivial.
\end{lemma}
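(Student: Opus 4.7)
My plan is to model the $G$-action on $X$ as a functor $F \colon BG \to \mathcal{S}$ with $F(*) = X$, where $BG$ is the one-object $\infty$-groupoid with loop space $G$. Under this identification, the homotopy fixed points $X^{hG}$ are the limit $\lim_{BG} F$, and the natural map $X^{hG} \to X$ is the projection of the limit cone at the unique object $*$. The key observation is that a trivial $G$-action on $X$ is, by definition, one for which $F$ is equivalent to the constant functor $c_X \colon BG \to \mathcal{S}$ with value $X$.

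Next I would use the universal property of the limit to rewrite a section $s \colon X \to X^{hG}$ as a natural transformation $\alpha \colon c_X \to F$ in $\Fun(BG, \mathcal{S})$. Unwinding definitions, the condition that $X \xto{s} X^{hG} \to X$ is homotopic to $\id_X$ is exactly the condition that the single component $\alpha_{*} \colon X \to X$ is homotopic to $\id_X$, and in particular is an equivalence.

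Finally, since equivalences in functor $\infty$-categories are detected pointwise and $BG$ has a unique object (up to equivalence), the fact that $\alpha_{*}$ is an equivalence implies that $\alpha$ itself is an equivalence in $\Fun(BG, \mathcal{S})$. Hence $F \simeq c_X$, so the $G$-action on $X$ is trivial. The only step requiring attention is the translation between sections of $X^{hG} \to X$ and natural transformations $c_X \to F$ with prescribed component at $*$; this is routine once the universal property of $\lim_{BG} F$ is spelled out, and there is no genuine obstacle.
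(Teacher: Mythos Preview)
Your proposal is correct and is essentially the same argument as the paper's. The paper phrases everything in the equivalent language of $\mathcal{S}_{/BG}$ (writing the action as a space $\overline{X} \to BG$ and the constant functor as $X \times BG \to BG$), but the steps are identical: use the adjunction between the constant functor and homotopy fixed points to turn the section $\sigma$ into a map $c_X \to F$ (equivalently $X \times BG \to \overline{X}$ over $BG$), observe that its component at the basepoint is $X \xto{\sigma} X^{hG} \to X \simeq \id_X$, and conclude by detecting equivalences pointwise/fibrewise.
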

\begin{proof}
  Let $BG$ be the classifying space of $G$. Then a $G$-action on $X$
  is the same thing as a functor $BG \to \mathcal{S}$ whose value on
  the point of $BG$ is $X$, or equivalently a space over $BG$ whose
  fibre is $X$. Let us write $\overline{X} \to BG$ for the space over
  $BG$ corresponding to a $G$-action. Then the homotopy fixed points $X^{h G}$ are the limit of
  the corresponding functor, or the space of sections of $\overline{X}
  \to BG$.

  The functor $(\blank)^{h G}$ is hence right adjoint to the constant
  diagram functor $\mathcal{S} \to \Fun(BG, \mathcal{S})$, or to
  $\blank \times BG \colon \mathcal{S} \to \mathcal{S}_{/BG}$ (which
  corresponds to the trivial $G$-action on a space). The
  canonical map $f \colon X^{h G} \to X$ is the map on fibres from the
  counit map
  \[ X^{h G} \times BG \xto{\epsilon} \overline{X} \] over $BG$. If we have a
  section $\sigma \colon X \to X^{h G}$ then by adjunction we have a map
  \[ X \times BG \xto{\sigma \times \id} X^{h G} \times BG \xto{\epsilon}
    \overline{X}\]
  over $BG$. On fibres, we get the composite $X \xto{\sigma}
  X^{hG} \xto{f} X$, which by assumption is equivalent to the
  identity. Since equivalences in $\mathcal{S}_{/BG} \simeq \Fun(BG,
  \mathcal{S})$ are detected fibrewise, it follows that the map
  $X \times BG \to \overline{X}$ is an equivalence. In other words,
  the $G$-action on $X$ is equivalent to the trivial action, as required.
\end{proof}

\begin{proof}[Proof of Corollary~\ref{cor:Ontrivact}]
  By Lemma~\ref{lem:hGsecttriv} it suffices to show that the canonical
  map $(\mathcal{C}^{\simeq})^{h O(n)} \to \mathcal{C}^{\simeq}$ has a
  section. By the unoriented version of the Cobordism Hypothesis, we
  can identify $(\mathcal{C}^{\simeq})^{h O(n)}$ with
  $\Fun^{\otimes}(\Bord_{0,n}, \Span_{n}(\mathcal{C}))$, and under
  this equivalence our map corresponds to evaluation at the point in
  $\Bord_{0,n}$. We have a family of TFTs in $\Span_{n}(\mathcal{C})$ defined
  as the composite
  \[
    \begin{split}
\mathcal{C}^{\simeq} \to
    \Map^{\txt{flim}}(\mathcal{S}_{\fin}^{\op}, \mathcal{C}) & \to 
    \Map^{\otimes}(\Cospan_{n}(\mathcal{S}_{\fin}),
    \Span_{n}(\mathcal{C})) \\ & \xto{\cut_{n}^{*}}
    \Fun^{\otimes}(\Bord_{0,n}, \Span_{n}(\mathcal{C})),
    \end{split}
  \]
    where $\Map^{\txt{flim}}(-,-)$ means finite limit preserving maps.
  Unravelling the definition we see that the composite
  \[ \mathcal{C}^{\simeq} \to     \Fun^{\otimes}(\Bord_{0,n},
    \Span_{n}(\mathcal{C}))\xto{\txt{ev}_{\txt{pt}}}
    \mathcal{C}^{\simeq}\]
  is the identity.  
\end{proof}

Applying the Cobordism Hypothesis again, we deduce:
\begin{corollary}
  For any map $\xi \colon X \to BO(n)$, the space of
  $(X,\xi)$-structured $n$-dimensional extended TFTs valued in
  $\Span_{n}(\mathcal{C})$ is $\Map(X, \mathcal{C}^{\simeq})$. 
\end{corollary}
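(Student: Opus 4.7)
The plan is to combine the $(X,\xi)$-structured version of the Cobordism Hypothesis with the triviality of the $O(n)$-action established in Corollary~\ref{cor:Ontrivact}.

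First, I would recall the general form of the Cobordism Hypothesis with tangential structure: given a symmetric monoidal $(\infty,n)$-category $\mathcal{X}$, the space $\Fun^{\otimes}(\Bord_{0,n}^{(X,\xi)}, \mathcal{X})$ is equivalent to the space of lifts
\[
  \begin{tikzcd}
     & (\mathcal{X}^{\txt{fd}})_{hO(n)} \arrow{d}{p} \\
    X \arrow[dashed]{ur} \arrow{r}{\xi} & BO(n),
  \end{tikzcd}
\]
where $\mathcal{X}^{\txt{fd}}$ denotes the underlying space of fully dualizable objects, carrying its canonical $O(n)$-action, and $(\mathcal{X}^{\txt{fd}})_{hO(n)} \to BO(n)$ is the bundle over $BO(n)$ classified by this action. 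Specializing to $\mathcal{X} = \Span_{n}(\mathcal{C})$, we recall from \cite{spans}*{\S 12} that every object of $\Span_{n}(\mathcal{C})$ is fully dualizable, so that $\mathcal{X}^{\txt{fd}} \simeq \mathcal{C}^{\simeq}$.

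Next I would invoke Corollary~\ref{cor:Ontrivact}: the canonical $O(n)$-action on $\mathcal{C}^{\simeq}$ viewed as fully dualizable objects in $\Span_{n}(\mathcal{C})$ is trivial. A trivial $G$-action on a space $Y$ means that the classifying functor $BG \to \mathcal{S}$ is equivalent to the constant functor at $Y$, so that the associated bundle $Y_{hG} \to BG$ is (equivalent to) the projection $Y \times BG \to BG$. Applying this in our setting gives an equivalence
\[ (\mathcal{C}^{\simeq})_{hO(n)} \simeq \mathcal{C}^{\simeq} \times BO(n) \]
over $BO(n)$.

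Finally, the space of lifts of any map $\xi \colon X \to BO(n)$ along the projection $\mathcal{C}^{\simeq} \times BO(n) \to BO(n)$ is naturally equivalent to $\Map(X, \mathcal{C}^{\simeq})$, since lifts to a trivial bundle are just maps to the fibre. Combining these three steps yields the asserted equivalence. The only nontrivial ingredient is the triviality of the $O(n)$-action, which was the content of Corollary~\ref{cor:Ontrivact}; once this is in hand, the rest is a formal consequence of the Cobordism Hypothesis for tangential structures.
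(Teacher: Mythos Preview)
Your proposal is correct and follows essentially the same approach as the paper: invoke the Cobordism Hypothesis for tangential structures to identify the space of $(X,\xi)$-structured TFTs with $\Map_{/BO(n)}(X, (\mathcal{C}^{\simeq})_{hO(n)})$, then use Corollary~\ref{cor:Ontrivact} to trivialize the bundle as $\mathcal{C}^{\simeq} \times BO(n)$, reducing the space of lifts to $\Map(X, \mathcal{C}^{\simeq})$. The paper's proof compresses all of this into a single sentence, but the content is the same.
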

\begin{proof}
  By the Cobordism Hypothesis the space of
  such TFTs is identified with the space $\Map_{/BO(n)}(X, \mathcal{C}^{\simeq}
  \times BO(n))$, since the $O(n)$-action is trivial.
\end{proof}

\begin{remark}
  As a special case, we get that $n$-dimensional unoriented TFTs
  valued in $\Span_{n}(\mathcal{S})$ correspond to maps $BO(n) \to
  \mathcal{S}^{\simeq}$, or equivalently spaces over $BO(n)$, which we
  can view as spaces equipped with an $n$-dimensional vector bundle.
\end{remark}

\chapter{From Oriented Cobordisms to Oriented Cospans}\label{sec:orcob}
Our goal in this chapter is to upgrade our construction of the cutting
functor $\cut_{n} \colon \Bord_{0,n} \to
\Cospan_{n}(\mathcal{S}_{\fin})$ from the
previous chapter to obtain a symmetric monoidal functor
\[ \orcut_{n} \colon \Bordor_{0,n} \longrightarrow \Or_{n}^{\mathcal{S},0} \]
from the oriented bordism $(\infty,n)$-category to the
$(\infty,n)$-category of oriented iterated cospans of spaces (which we
introduced in \S \ref{subsec:orspc}). Combining this with the Betti
stack functor $\Or_{n}^{\mathcal{S},0} \xto{(\blank)_{B} \times S}
\Or_{n}^{S,0}$ from Corollary~\ref{cor:BettiOrn} and the AKSZ functor
$\Or_{n}^{S,0} \xto{\txt{AKSZ}_{n,X}^{S,d}} \Lag_{n}^{S,s}$ from
Corollary~\ref{cor:AKSZOrLag} for an $s$-symplectic $S$-stack $X$, we
obtain a family of extended TFTs valued in $\Lag_{n}^{S,s}$,
\[ \Bordor_{0,n} \xrightarrow{\orcut_{n}}
\Or_{n}^{\mathcal{S},0}  \xrightarrow{(\blank)_{B} \times S}
\Or_{n}^{S,0} \xrightarrow{\txt{AKSZ}_{n,X}^{S,d}} \Lag_{n}^{S,s}.\]

We will first define $\orcut_{n}$ as a symmetric monoidal functor
\[ \Bordor_{0,n} \to \Cospan_{n}(\POr_{0}^{\mathcal{S}});\] we give an
informal description of this construction in \S\ref{sec:coborcospidea}
and then the full details in
\S\S\ref{subsec:somediags}--\ref{subsec:orsymmmon}: the key point is
to construct a symmetric monoidal functor from $\Bordor_{0,n}$ to
spans in $\Mod_{\k/\k}$ that encodes orientations, and to define this
as a strict functor we first define certain diagrams of chain
complexes in \S\ref{subsec:somediags}, which we then use to define the
target of this orientation functor in \S\ref{sec:cobpreordet} before
we use integration of differential forms to actually define the
functor itself in \S\ref{subsec:integral} and then show that it is
symmetric monoidal in \S\ref{subsec:orsymmmon}.  We then prove in
\S\ref{sec:cobnondeg} that this functor satisfies the non-degeneracy
condition required to land in the sub-$(\infty,n)$-category
$\Or_{n}^{\mathcal{S},0}$, which boils down to Poincar\'e--Lefschetz
duality for manifolds with boundary, and then use the resulting
construction of oriented TFTs to deduce some information about TFTs
valued in $\Lag_{n}^{S,s}$ more generally.

\section{From Cobordisms to Preoriented Spaces: Idea}\label{sec:coborcospidea}
Our overall goal in this chapter is to construct a family of functors
\[ \BORD_{d,d+n}^{\txt{or}} \to \COSPAN_{n}(\POr_{d}^{\mathcal{S}}),\]
from which we can extract a symmetric monoidal functor
\[ \Bord_{d,d+n}^{\txt{or}} \to \Cospan_{n}(\POr_{d}^{\mathcal{S}}).\]
Since we have by definition a pullback square of \icats{}
\[\begin{tikzcd}
\POr^{\mathcal{S}}_d \arrow{r} \arrow{d} & \mathcal{S}_{\fin}
\arrow{d}{C^{*}}\\
(\Mod_{\k/\k[-d]})^{\op} \arrow{r} & \Mod_{\k}^{\op},
\end{tikzcd}
\]
and $\COSPAN_{k}(\blank)$ preserves limits by Lemma~\ref{lem:Spanlim}, it suffices to construct a 
commutative square\footnote{Some care is necessary with opposites
  here: Note that while we have $\Span_{k}(\mathcal{C})
  \simeq \Cospan_{n}(\mathcal{C}^{\op})$, it is not true that $\SPAN_{n}(\mathcal{C})$
  is $\COSPAN_{n}(\mathcal{C}^{\op})$ --- rather it is the levelwise
  opposite.}
\[
\begin{tikzcd}
  \BORDor_{d,d+n} \arrow{r} \arrow{d} & \COSPAN_{n}(\mathcal{S}_{\fin})
  \arrow{d}{\COSPAN_{n}(C^{*})} \\
  \COSPAN_{n}(\Mod_{\k/\k[-d]}^{\op}) \arrow{r} & \COSPAN_{n}(\Mod^{\op}_{\k}).
\end{tikzcd}
\]
The top horizontal morphism will be the composite of the functor
$\cut_{d,n}$  we constructed
in \S\ref{subsec:cut} with the forgetful functor from oriented cobordisms to unoriented cobordisms,
\[\BORDor_{d,d+n} \longrightarrow
\BORD_{d,d+n} \xrightarrow{\cut_{d,n}} \COSPAN_{n}(\mathcal{S}_{\fin}).\]
 We are thus required to define the left vertical
functor, which should take a closed oriented $d$-manifold $M$ to the morphism
$C^{*}(M) \to \k[-d]$ dual to the fundamental class of $M$.

To define this functor we will again use the definition of
$\sBORDor_{d,d+n}$ as a strict $n$-uple Segal object in simplicial
sets. We will describe $\Mod_{\k}$ in terms of the model category
$\Ch_{\k}$ of cochain complexes, and in order to get a strict model
for the dual fundamental classes we will model $C^{*}(M)$ by the de Rham
complex of differential forms $\Omega^{*}(M; \k)$. This forces us to take
$\k = \RR$ (or an extension of $\RR$, such as $\CC$), which is in any
case the relevant choice in the physical examples.
The basic idea is then to assign to a closed oriented $d$-manifold $M$ the
morphism
\[ \Omega^{*}(M; \k) \to \k[-d] \]
given by $\omega \mapsto \int_{M} \omega$ for
$\omega \in \Omega^{d}(M; \k)$ (and $0$ in other degrees). There are
a number of complications that arise from extending this to a strict
functor, and we now explain step by step our strategies to deal with them
below:

\begin{problem}[Unoriented issues]
  We encounter the same issues with compositions and degeneracies in the spatial
  direction as in \S\ref{subsec:cutidea}. This
  suggests that, as we did there, we should aim to define a family of
  morphisms of semi-simplicial sets 
  \[ \sBORDor_{d,d+n,\ind{j}} \to \Ex^{1}\Nrv\Fun(\bbSj,
    \Ch_{\k/\k[-d]})^{\op},\]
  natural in $\ind{j} \in \Dnop$, 
  or equivalently a family of functors\footnote{Again we need to be careful with opposites: We set $Y=\Fun(\bbS^{\ind{i}},
    \Ch_{\k/\k[-d]})$ in $\Hom(\Delta^l, \Ex^1(Y^\op) ) = \Hom(\sd(\Delta^l), Y^\op ) = \Hom (\sd(\Delta^l)^\op, Y)$.}
  
  \[ \sBORDor_{d,d+n,\ind{j},l} \times \bbSj \times \sdDl^{\op}
    \to \Ch_{\k/\k[-d]},\]
   strictly natural in $l\in \Dinjop$ and
  $\ind{j} \in \Dnop$.
 However, it turns out that we can't actually define a family of
 functors to $\Ch_{\k/\k[-d]}$, as we explain next.
\end{problem}

\begin{problem}[Commutativity over {$\k[-d]$}] \label{prob:commutativity}
To an oriented cobordism $\Sigma$ from $M$ to $N$ we want to
assign a commutative diagram
\[
\begin{tikzcd}
  \Omega^{*}(M; \k) \arrow{dr}[swap]{\int_{M}} & \Omega^{*}(\Sigma; \k) \arrow{l} \arrow{r}
  \arrow{d}
  & \Omega^{*}(N; \k) \arrow{dl}{\int_{N}} \\
  & \k[-d], 
\end{tikzcd}
\]
but unfortunately this diagram does not  commute, as the maps
going through $\Omega^{*}(M;\k)$ and $\Omega^{*}(N;\k)$ are not
\emph{equal}. However, if we view $\k[-d]$ as a constant diagram and replace it by a quasi-isomorphic one, we can define a commutative diagram
\[
\begin{tikzcd}
  \Omega^{*}(M; \k) \arrow{d}{\int_{M}} & \Omega^{*}(\Sigma; \k) \arrow{l} \arrow{r}
  \arrow{d}{I}
  & \Omega^{*}(N; \k) \arrow{d}{\int_{N}} \\
  \k[-d] & Q_{1}[-d] \arrow{l} \arrow{r} & \k[-d].
\end{tikzcd}
\]
as follows.
The cochain complex $Q_{1}$ is 
\[ \k^{2} \to \k,\]
in degrees $0$ and $1$,
with differential $d(x,y) = y-x$, and the horizontal maps are
\[\begin{tikzcd}[column sep=large]
  \k[-d] & Q_{1}[-d] \arrow[swap]{l}{(x,y)\mapsto x} \arrow{r}{(x,y)\mapsto y} & \k[-d],
\end{tikzcd}\] which are quasi-isomorphisms. The bottom row is thus
constant up to quasi-isomorphism. The vertical middle map
$I \colon \Omega^{*}(\Sigma; \k) \to Q_{1}[-d]$ is given, in degrees
$d$ and $d+1$, respectively, by
\[\omega \mapsto (\int_{M}\omega|_{M}, \int_{N} \omega|_{N}), \qquad
  \omega \mapsto \int_{\Sigma}\omega,\] and $0$ otherwise. This is a
morphism of cochain complexes since\footnote{More precisely, this is
  true in the non-degenerate case. In the degenerate case, where
  $\Sigma = N = M$, the left-hand side is $0$ for degree reasons and
  on the right-hand side the two terms cancel.}  by Stokes's theorem
we have, for $\omega \in \Omega^{d}(\Sigma; \k)$,
\[ I(d\omega) = \int_{\Sigma} d\omega = \int_{N} \omega|_{N} -
  \int_{M} \omega|_{M} = d \, I(\omega).\]

To generalize this, observe that since
\[\begin{tikzcd}
\Mod_{\k/\k[-d]} \arrow{r}\arrow{d} & \Mod_{\k}^{\Delta^1} \arrow{d}{ev_1}\\
\Delta^0 \arrow{r}{\k[-d]} & \Mod_{\k}
\end{tikzcd}\]
is a pullback square of \icats{}, we obtain a pullback square
\[
  \begin{tikzcd}
\COSPAN_{n}(\Mod^{\op}_{\k/\k[-d]}) \arrow{r} \arrow{d} & \COSPAN_{n}((\Mod_{\k}^{\Delta^1})^{\op}) \arrow{d}{ev_1} \\
* \simeq \COSPAN_{n}(\Delta^{0,\op}) \arrow{r}{\k[-d]} & \COSPAN_{n}(\Mod^{\op}_{\k}) ,
\end{tikzcd}
\]
where the lower horizontal arrow is given by picking out the constant
diagram with value $\k[-d]$. So, instead of defining a to functor
$\Chkd$ we can therefore try to define a functor to
$\Ch_{\k}^{\Delta^{1}}$, such that evaluating at $1$ we get a diagram
equivalent to the constant diagram at $\k[-d]$, rather than a functor
to $\Chkd$. This would involve chain complexes $Q_n\simeq \k[-d]$, which
together form a suitable non-constant diagram equivalent to the
constant one. Unfortunately, the situation turns out to be even more
complicated, because the diagrams required to get a strict functor
must also depend on the semisimplicial variable: we will have to
define functors
\[ \sBORDor_{d,d+n,\ind{j},l} \times \bbSj \times \sdDl^{\op} \times [1]
  \to \Chk \]
where the restriction to $1 \in [1]$ is given by a diagram
\[ \Pjl[-d] \colon \bbSj \times \sdDl^{\op} \to \Chk \]
that is quasi-isomorphic to the constant diagram $\k[-d]$. We will
explain the details below, for now we just note one further wrinkle in
the construction:
\end{problem}

\begin{problem}[Compositions]
  When we compose cobordisms our fix in Problem
  \ref{prob:commutativity} means that we consider a diagram of cochain
  complexes of shape $\bbS^{2}$ over
\[
\begin{tikzcd}
{}   &  & Q_{2}[-d]\arrow{dl} \arrow{dr} \\
 & Q_{1}[-d]\arrow{dl} \arrow{dr} & & Q_{1}[-d]\arrow{dl} \arrow{dr} \\
\k[-d] & & \k[-d] & & \k[-d],
\end{tikzcd}
\]
where $Q_{2}$ is a cochain complex of the form $\k^{3} \to \k^{2}$,
which allows us to encode the integrals over the parts of the
composite cobordism. However, if we na\"ively try to extract the
composite span from this, we obtain a diagram over
\[ \k[-d] \longleftarrow Q_{2}[-d] \longrightarrow \k[-d],\] but here
we ought to have $Q_{1}$ in place of $Q_{2}$. To fix this we simply
compose with a canonical map of the form
\[
  \begin{tikzcd}
    \k[-d] \arrow[equals]{d} & Q_{2}[-d] \arrow{d} \arrow{l} \arrow{r} & \k[-d]
    \arrow[equals]{d} \\
    \k[-d]  & Q_{1}[-d] \arrow{l} \arrow{r} & \k[-d],
  \end{tikzcd}
\]
where the middle map adds up the contributions from the two parts of
the composed bordism.
\end{problem}

\section{Some Diagrams of Chain Complexes}\label{subsec:somediags}
As explained in the previous section, we are going to define maps
\[ \mathcal{O}_{\ind j,l} \colon \sBORDor_{d,d+n,\ind{j},l} \times
  \bbS^{\ind j} \times \sdDl^{\op} \times [1] \to \Ch_{\k},\]
where the restriction $\mathcal{O}_{\ind j,l}(0)$ to $0 \in
[1]$ is the composite
\[ \sBORDor_{d,d+n,\ind j,l} \times 
  \bbS^{\ind j,\op} \times \sdDl \to \sBORD_{d,d+n,\ind{j},l}  \times
  \bbS^{\ind j,\op} \times \sdDl \!\xto{\scut_{\ind j,l}} \Mfd \xto{\Omega^{*}}
  \Ch_{\k}^{\op},\]
with $\scut_{\ind j,l}$ the functor defined in
Definition~\ref{defn:cut}. On the other hand, the functor $\mathcal{O}_{\ind j,l}(1)$ given by
restriction to $1 \in [1]$ will be a composite
\[ \sBORDor_{d,d+n,\ind j,l} \times \bbS^{\ind j} \times \sdDl^{\op}
  \longrightarrow \bbS^{\ind j} \times \sdDl^{\op} \xto{P_{\ind
      j,l}[-d]} \Ch_{\k},\] for a certain family of functors
$P_{\ind j,l}\colon \bbS^{\ind j} \times \sdDl^{\op} \to \Chk$.

Our goal in this section is to define these diagrams $\Pjl$. We
will do so using the natural (co)chain complexes associated to certain
simple semisimplicial sets, for which we first need to introduce some
notation:
\begin{notation}
  If $K$ is a semisimplicial set, we write $C(K)$ for the associated
  cochain complex over $\k$, given by
  \[ C(K)^{-b} = \k K_{b}, \]
  with differential $d = \sum_{i} (-1)^{i}d_{i}$. 
\end{notation}
  
\begin{notation}
  Let $\Delta^{l}_{\nd,b}$ denote the set of $b$-dimensional
  non-degenerate simplices of $\Delta^{l}$, \ie{} the set of strictly
  increasing sequences
  \[ [i_{0}i_{1}\cdots i_{b}] = 0 \leq i_{0} < i_{1} < \cdots < i_{b} \leq l.\]
  These form a semisimplicial set $\Delta^{l}_{\nd,\bullet}$, where
  the $s$th face map $d_{s}$ simply omits the $s$th element, \ie{}
  \[ d_{s}[i_{0}\cdots i_{b}] = [i_{0} \cdots i_{s-1} i_{s+1} \cdots
    i_{b}].\]
  This semisimplicial set is nothing but the representable presheaf
  $\Hom_{\Dinj}(\blank, [l])$, and so determines a functor
  \[\Dnd^{\bullet} \colon \Dinj \longrightarrow \ssSet.\]
  Composing with the forgetful functors $\sdDl\cong \simp_{\inj/[l]} \to \Dinj$ we
  obtain functors
  \[ \mathcal{C}_{l}\colon \sdDl \to \Dinj
    \xto{C(\Dnd^{\bullet})} \Ch.\]
  For $\phi \colon [l] \to [k]$ in $\Dinj$, we have a commutative
  triangle
  \[
    \begin{tikzcd}
      \sdDl \arrow{dd}[swap]{\sd(\phi)} \arrow{dr}{\mathcal{C}_{l}} \\
      & \Ch_{\k} \\
      \sd(\Delta^{k}), \arrow{ur}[swap]{\mathcal{C}_{k}}
    \end{tikzcd}
  \]
  since $\sd(\phi)$ just corresponds to the functor $\simp_{\inj/[l]}
  \to \simp_{\inj/[k]}$ given by composition with $\phi$.
\end{notation}

\begin{remark}
  Note that the cochain complex $C(\Dnd^{l})$ is isomorphic to
  the \emph{normalized chains} on the simplicial set $\Delta^{l}$
  (viewed as a cochain complex). In particular, this means that
  $C(\Dnd^{l})$ is contractible. Moreover, it makes
  $C(\Dnd^{\bullet})$ a functor from $\simp$ to $\Ch_{\k}$
  rather than just from $\Dinj$, though we will not make use of this.
\end{remark}

\begin{definition}
  Let \[N_{l} := C(\Dnd^{l})^{\vee} \]
  denote the dual cochain complex of $C(\Dnd^{l})$,
  which is isomorphic to the normalized \emph{co}chains of
  $\Delta^{l}$. This gives a semisimplicial cochain complex $N_{\bullet}$ using the
  functoriality of the normalized (co)chains. 
  Composing with the forgetful functors $\sdDl\cong \simp_{\inj/[l]} \to \Dinj$ we
  obtain functors
  \[ \mathcal{N}_{l} := \mathcal{C}_{l}^{\vee} \colon
    \sdDl^{\op} \to \Ch_{\k}.\]
\end{definition}

\begin{warning}\label{warn:sign}
  We will deviate from the Koszul sign rule and use the geometric sign
  convention for both $C(\Dnd^{l})$ and its dual $N_{l}$ (and
  similarly for other duals). This is harmless, but will lead to a
  4-periodic sign showing up below.
\end{warning}

\begin{notation}
  Let $\Delta^{l}_{\spi}$ denote the \emph{spine} of $\Delta^{l}$,
  meaning the simplicial set
  \[ \Delta^{l}_{\spi} :=  \Delta^{1} \amalg_{\Delta^{0}} \cdots
    \amalg_{\Delta^{0}} \Delta^{1}.\]
  We write $\Dndspi^{l}$
  for the semisimplicial set of non-degenerate simplices of
  $\Delta^{l}_{\spi}$, which we can also define as a colimit
  \[ \Dndspi^{l} := \Dnd^{1} \amalg_{\Dnd^{0}} \cdots
    \amalg_{\Dnd^{0}} \Dnd^{1}\]
  of semisimplicial sets. Thus
  \[ \Delta^{l}_{\nd,\spi,b} =
    \begin{cases}
      \{[0],[1],\ldots,[l]\}, & b = 0,\\
      \{[01],[12],\ldots,[(l-1)l]\}, & b = 1,\\
      \emptyset, & b > 1,
    \end{cases}
  \]
  with the only non-trivial face maps being
  \[ d_{0}[(s-1)s] = s, \quad d_{1}[(s-1)s] = s-1.\]
  If $\phi \colon [i] \to [j]$ is a morphism in $\Dinj$, then the
  induced map $\phi_{*} \colon \Dnd^{i} \to \Dnd^{j}$
  restricts to a map $\Delta^{i}_{\nd,\spi} \to
  \Delta^{j}_{\nd,\spi}$ \IFF{} $\phi$ is inert; thus
  $\Delta^{\bullet}_{\nd,\spi}$ gives a functor $\Dint \to \ssSet$ from the subcategory
  $\Dint$ of inert maps.
\end{notation}

\begin{remark}
  It will sometimes be convenient to think of the elements of
  $\Delta^{l}_{\nd,\spi}$ in degrees $0$ and $1$ as the inert maps
  from $[0]$ and $[1]$ to $[i]$, respectively. For an inert map $[i]
  \to [j]$, the map $\phi_{*} \colon \Dndspi^{i} \to
  \Dndspi^{j}$ is then simply given by composition.
\end{remark}
  
\begin{remark}
  The associated cochain
  complex is given by
  \[
    C(\Dndspi^{l})^{-b} =
    \begin{cases}
      \k \{[0],\ldots,[l]\} \cong \k^{l+1}, & b = 0,\\
      \k \{[01],\ldots,[(l-1)l]\} \cong \k^{l}, & b = 1,\\
      0, & b>1, b<0,
    \end{cases}
  \]
  with differential \[d[(s-1)s] = [s] - [s-1].\] Alternatively, if we
  think of the generators in degree $-1$ as inert maps $\sigma \colon
  [1] \to [l]$, we have
  \[ d\sigma = \sigma(1)-\sigma(0).\]
  We can also identify $C(\Dndspi^{l})$ with the
  normalized chains on $\Delta^{l}_{\spi}$.
\end{remark}

\begin{definition}
  We can extend $C(\Dndspi^{\bullet})$ to a functor
  \[C^{\bullet}_{\spi} \colon \simp
    \to \Chk\]
  as follows: For $\phi \colon [i] \to [j]$ in $\simp$ we define
  $\phi_{*} \colon C(\Dndspi^{i}) \to
  C(\Dndspi^{j})$ on generators by
  \begin{align*}
 \phi_{*}[s] &= [\phi(s)], \\
  \phi_{*}[(s-1)s] &=
                     [\phi(s-1)(\phi(s-1)+1)] + [(\phi(s-1)+1)(\phi(s-1)+2)]\\
     & \phantom{=} + \cdots + [(\phi(s)-1)\phi(s)],
  \end{align*}
  where this sum is defined to be $0$ if $\phi(s-1)=\phi(s)$.
  This is indeed a chain map, since we have
  \[
    \begin{split}
d \phi_{*}[(s-1)s] & = [\phi(s)] - [\phi(s)-1] + \cdots + [\phi(s-1)+1]
    - [\phi(s-1)] \\ & = [\phi(s)]-[\phi(s-1)] = \phi_{*}d[(s-1)s].
    \end{split}
  \]
  (Note that if $\phi$ is inert, then $\phi_{*}[(s-1)s]$ is always a
  generator, and indeed $\phi_{*}$ is induced by the map of
  semisimplicial sets $\Dndspi^{i} \to
  \Dndspi^{j}$ described above.)
\end{definition}

\begin{remark}
  Recall that the partially ordered set $\bbS^{j,\op}$ can be
  described as the category $\simp_{\xint/[j]}$ of inert maps to $[j]$
  (with $(a,b) \in \bbS^{j,\op}$ corresponding to the inert inclusion
  $\{a,a+1,\ldots,b\} \hookrightarrow [j]$). The forgetful functor
  $\simp_{\xint/[j]} \to \simp$ corresponds under this equivalence to
  a functor
  \[ u_{j}\colon \bbS^{j,\op} \to \simp \]
  that takes $(a,b)$ to $[b-a]$. For a morphism $\phi \colon [j] \to
  [k]$ in $\simp$, we have a natural transformation
  \[ \eta_{\phi} \colon u_{j} \to u_{j'}\circ \bbS(\phi)\]
  of functors $\bbS^{j,\op} \to \simp$, given at $(a,b)$ by the
  morphism
  \[ \{a,a+1,\ldots,b\} \to \{\phi(a),\phi(a)+1,\ldots,\phi(b)\} \]
  obtained by restricting $\phi$. These natural transformations are
  functorial in the sense that $\eta_{\phi'\phi} = \eta_{\phi'}\bbS(\phi)
  \circ \eta_{\phi}$. In other words, they make $u_{(\blank)}$ a
  lax natural transformation $\bbS^{\bullet,\op} \to
  \simp$ of functors $\simp \to \Cat$.
\end{remark}

\begin{definition}
  We define functors
  \[ \mathcal{C}_{j}^{\spi} \colon \bbS^{j,\op} \to \Chk \]
  as the composites
  \[ \bbS^{j,\op} \xto{u_{j}} \simp \xto{C^{\bullet}_{\spi}} \Chk.\]
  The natural transformations $\eta_{\phi}$ determine natural
  transformations $\eta_{\phi}^{\spi} \colon \mathcal{C}_{j}^{\spi}
  \to \mathcal{C}^{\spi}_{k}$ for $\phi \colon [j] \to [k]$, which
  make $\mathcal{C}^{\spi}_{(\blank)}$ a lax natural transformation
  $\bbS^{\bullet,\op} \to \Chk$ of functors $\simp \to \Cat$.
\end{definition}

\begin{definition}
  We write 
  \[N^{\spi}_{l} := C(\Dndspi^{l})^{\vee} \]
  for the dual cochain complex of $C(\Dndspi^{l})$,
  which is isomorphic to the normalized \emph{co}chains of
  $\Delta^{l}_{\spi}$. Taking the duals of the functors
  $\mathcal{C}^{\spi}$ we also get
  functors
  \[ \mathcal{N}^{\spi}_{j} := (\mathcal{C}^{\spi}_{j})^{\vee} \colon
    \bbS^{j} \to \Chk.\]
  The natural transformation $\eta_{\phi}^{\spi}$ for $\phi \colon [j] \to
  [k]$ induces a natural transformation $\nu_{\phi} \colon
  \mathcal{N}^{\spi}_{k} \circ \bbS(\phi)\to \mathcal{N}^{\spi}_{j}$;
  these make $\mathcal{N}^{\spi}_{(\blank)}$ an oplax natural
  transformation $\bbS^{\bullet} \to \Chk$ of functors $\Dop \to \Cat$.
\end{definition}

\begin{remark}
  Explicitly, $N^{\spi}_{l}$
  is the cochain complex
  \[ \k^{l+1} \to \k^{l}\]
  in degrees 0 and 1 with differential $d(x_{0},\ldots,x_{l}) =
  (x_{1}-x_{0},\ldots,x_{l}-x_{l-1})$. For $\phi
  \colon [n] \to [m]$ in $\simp$,
  the induced map $N^{\spi}(\phi) \colon N^{\spi}_{m} \to N^{\spi}_{n}$ 
  is given in degree $0$ by
  $(x_{0},\ldots,x_{m}) \mapsto (x_{\phi(0)},\ldots, x_{\phi(n)})$ and
  in degree $1$ by \[(a_{1},\ldots,a_{m}) \mapsto
  \left(\sum_{\phi(0)<i\leq\phi(1)}a_{i}, \ldots,
    \sum_{\phi(n-1)<i\leq\phi(n)} a_{i}\right).\]
  This is indeed a cochain map, since we have
  \[
    \begin{split}
\phi^{*}d(x_{0},\ldots,x_{m}) & = \left(\sum_{\phi(0)<i\leq
      \phi(1)} x_{i}-x_{i-1},\ldots\right) =
  (x_{\phi(1)}-x_{\phi(0)},\ldots) \\ & = d\phi^{*}(x_{0},\ldots,x_{m}).
    \end{split}
  \]
  
\end{remark}

\begin{definition}
  For $\ind{j} = (j_{1},\ldots,j_{n}) \in \simp^{n}$ we define
  \[\mathcal{N}^{\spi}_{\ind{j}} := \mathcal{N}^{\spi}_{j_{1}} \otimes
    \cdots \otimes \mathcal{N}^{\spi}_{j_{n}} \colon \bbSj
    \to \Chk. \]
  The natural transformations $\nu_{\phi}$ for $\phi$ in $\simp$
  combine to give natural transformations $\nu^{n}_{\Phi} \colon
  \mathcal{N}^{\spi}_{\ind{j}} \circ \bbS(\Phi) \to
  \mathcal{N}^{\spi}_{\ind{i}}$ for $\Phi \colon \ind{i} \to \ind{j}$
  in $\simp^{n}$; these make $\mathcal{N}^{\spi}_{(\blank)}$ an oplax
  natural transformation $\bbS^{(\blank)} \to \Chk$ of functors $\Dnop
  \to \Cat$.
\end{definition}

\begin{example}
  Let us spell this out in detail for $\ind{j}=(1,1)$. The cochain
  complex
  $C_\bullet(\Dnd^{1})=C_\bullet(\Dndspi^{1})$ is
  given by
  \[\k \longrightarrow \k^2=\k\{x,y\}, \quad 1\longmapsto y-x,\]
  in degrees $-1$ and $0$, and its square is
  \[ \arraycolsep=1pt%
    \begin{array}{ccccc}
	\k\{1\otimes 1\} &\longrightarrow &\k\{x\otimes 1,1\otimes x, y\otimes 1, 1\otimes y\} &\longrightarrow & \k\{x\otimes x, x\otimes y, y\otimes x, y\otimes y\}\\
	1\otimes 1 & \longmapsto & (y-x)\otimes 1 + 1\otimes (y-x),\\
	&& 1\otimes x & \longmapsto & (y-x)\otimes x\\
	&& x\otimes 1 & \longmapsto & -x\otimes (y-x)\\
	&& 1\otimes y & \longmapsto & (y-x)\otimes y\\
	&& y\otimes 1 & \longmapsto & -y\otimes (y-x).
\end{array}\]
Note that this is precisely the cellular chain complex associated to the square with the cells labelled as follows:
\begin{center}
\includestandalone{pics/pic_square_cell_complex}
\end{center}
 This cochain complex is the dual of the value of
 $\mathcal{N}^{\spi}_{(1,1)}((0,1),(0,1))$, which is thus given by
 \[ \k^{4} \to \k^{4} \to \k,\]
 with the induced dual differentials.
\end{example}

\begin{definition}\label{defn:Pjl}
  Finally, we define
  $\Pjl \colon \bbSj \times \sdDl^{\op} 
   \to \Chk$ as
  \[\Pjl := \mathcal{N}^{\spi}_{\ind{j}} \otimes
    \mathcal{N}_{l}.\]
  For morphisms $\Phi \colon \ind{i} \to \ind{j}$ in $\simp^{n}$ and
  $\psi \colon [k] \to [l]$ in $\Dinj$, the triangle
  \[
    \begin{tikzcd}
      \bbSj \times \sd(\Delta^{k})^{\op} \arrow{dd}[swap]{\id \times
        \sd(\psi)} \arrow{dr}{P_{\ind{j},k}} \\
      & \Chk \\
      \bbSj \times \sdDl^{\op} \arrow{ur}[swap]{\Pjl}
    \end{tikzcd}
  \]
  commutes, while we have a natural morphism in the triangle
  \[
    \begin{tikzcd}
      \bbS^{\ind{i}} \times \sdDl^{\op}
      \arrow{dd}[swap]{\bbS(\Phi) \times \id} \ar[dr, "P_{\ind{i},l}"{name=B}] \\
      & \Chk \\
      \bbSj \times \sdDl^{\op}.
      \arrow{ur}[swap]{\Pjl}
      \arrow[to=B,from=3-1,Rightarrow,shorten=5mm,"\alpha_{\Phi,l}"]
    \end{tikzcd}
  \]
  These two functorialities are independent, so that
  \[ \alpha_{\Phi,l} \circ (\id \times \sd(\psi)) = (\id \times
    \sd(\psi)) \circ \alpha_{\Phi,k}.\]
  This makes $P_{(\blank,\blank)}$ an oplax natural transformation
  $\bbS^{(\blank)} \times \sd(\Delta^{\bullet}) \to \Chk$ of functors $\Dnop \times
  \Dinjop \to \Cat$ (which is actually strict in the second variable).
\end{definition}

\begin{remark}
  Note that neither $\mathcal{N}^{\spi}_{\ind{j}}$ nor $\Pjl$
  are given by cochains on products of semisimplicial sets, since the
  (co)chain functor is only lax monoidal.
\end{remark}

\begin{remark}\label{rmk:gens}
  We will often think of an object $\xi \in \bbS^{\ind{j}} \times
  \sd(\Delta^{l})$ as a pair $(\sigma, \tau)$ where $\sigma =
  (\sigma_{1},\ldots,\sigma_{n})$ is a list of inert morphisms
  $\sigma_{i} \colon [s_{i}] \to [j_{i}]$ and $\tau$ is an injective
  morphism $[t] \to [l]$.
  By definition, the dual cochain complex $\Pjl(\xi)^{\vee}$ is
  then the tensor
  product
  $C(\Dndspi^{s_{1}}) \otimes \cdots \otimes
  C(\Dndspi^{s_{n}}) \otimes C(\Dnd^{t})$. In
  degree $-b$ we thus have have the free $\k$-module
  \[ (\Pjl(\xi)^{\vee})^{-b} \cong \k \mathcal{G}_{b}^{\xi},\]
  where we define
  \[ \mathcal{G}_{b}^{\xi} := \coprod_{c_{1}+\cdots+c_{n}+a = b}
    \mathcal{G}^{\xi}_{c_{1},\ldots,c_{n},a},\]
  \[ \mathcal{G}^{\xi}_{c_{1},\ldots,c_{n},a} :=  
    \Delta^{s_{1}}_{\nd,\spi,c_{1}} \times \cdots \times
    \Delta^{s_{n}}_{\nd,\spi,c_{n}} \times \Delta^{t}_{\nd,a},\]
  for $c_{i} \in \{0,1\}, a \in \mathbb{N}$.
  Note that we can think of the elements of
  $\Delta^{s}_{\nd,\spi,c}$ as the inert maps $[c] \to [s]$ 
  and the elements of $\Delta^{t}_{\nd,a}$ as the injective maps $[a]
  \to [t]$ in $\simp$. Thus a generator $\Gamma \in
  \mathcal{G}^{\sigma}_{c_{1},\ldots,c_{n},a}$ corresponds to a pair
  $(\gamma, \delta)$ with $\gamma = (\gamma_{1},\ldots,\gamma_{n})$ a
  list of inert maps $[c_{i}] \to [s_{i}]$ and $\delta
  \colon [a] \to [t]$ an injective map in $\simp$. With this notation,
  the
  differential in $\Pjl(\xi)^{\vee}$ is given on generators by
   \[ \begin{split}
      d \Gamma & = (d \gamma_{1}) \otimes \gamma_{2} \otimes \cdots
      \otimes \gamma_{n} \otimes \delta + (-1)^{c_{1}} \gamma_{1}
      \otimes d\gamma_{2} \otimes \cdots \otimes \gamma_{n} \otimes
      \delta \\
       & \phantom{=} +  \cdots + (-1)^{c_{1}+\cdots+c_{n-1}} \gamma_{1} \otimes \cdots \otimes
       d \gamma_{n} \otimes \delta\\
       & \phantom{=} + (-1)^{c_{1}+\cdots+c_{n}}
       \gamma_{1} \otimes \cdots \otimes \gamma_{n} \otimes d \delta
       \\
       & = \sum_{\substack{i=1,\ldots,n\\ c_{i}=1}}
       (-1)^{c_{1}+\cdots+c_{i}} \gamma_{1} \otimes \cdots \otimes
       \gamma_{i-1} \otimes (\gamma_{i}(1)-\gamma_{i}(0)) \otimes
       \gamma_{i+1} \otimes \cdots \otimes \gamma_{n} \otimes \delta\\
       & \phantom{=} + (-1)^{b-a} \sum_{k=1}^{l} (-1)^{k} \gamma_{1}
       \otimes \cdots \otimes \gamma_{n}\otimes (\delta \circ d_{k})\\
       & = \sum_{\substack{i=1,\ldots,n\\ c_{i}=1}}
       (-1)^{c_{1}+\cdots+c_{i}} (\Gamma \circ (d_{0}^{(i)},\id) -
        \Gamma \circ (d_{1}^{(i)},\id)) \\
         & \phantom{=} + (-1)^{b-a} \sum_{k=1}^{l}
       (-1)^{k} \Gamma \circ (\id, d_{k}),
        \end{split}\]
     where 
  \[ d_{0}^{(i)} = (\id,\ldots,\id,d_{0},\id,\ldots,\id) \]
  with $d_{0}$ in the $i$th position, and similarly for $d_{1}^{(i)}$.
\end{remark}

\begin{lemma}
  $\Pjl$ is quasi-isomorphic to the constant diagram with value $\k$.
\end{lemma}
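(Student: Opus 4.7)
The plan is to exhibit an explicit pointwise quasi-isomorphism $\eta \colon \underline{\k} \to \Pjl$ from the constant diagram and show it is natural in $\bbSj \times \sd(\Delta^l)^{\op}$. Since $\Pjl = \mathcal{N}^{\spi}_{\ind{j}} \otimes \mathcal{N}_l$ is a tensor product (over $\k$) of normalized cochain complexes of simplicial sets, and the augmentation of a non-empty simplicial set has a canonical dual unit, we can define $\eta$ factorwise.

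First, for each $s \geq 0$ the spine $\Delta^s_{\spi}$ is a finite chain of $1$-simplices glued along vertices, hence contractible; dually, the map $\k \to N^{\spi}_s$ sending $1$ to the element $(1,1,\ldots,1) \in \k^{s+1}$ in degree $0$ is a quasi-isomorphism. Similarly, $\Delta^t$ is contractible and the unit $\k \to N_t$ sending $1$ to the sum $\sum_{v \in \Delta^t_{\nd,0}} v^\vee$ of vertex duals is a quasi-isomorphism. Tensoring these together yields, for each $\xi = (\sigma,\tau) \in \bbSj \times \sd(\Delta^l)^{\op}$, a degree-zero map
\[ \eta_\xi \colon \k \longrightarrow N^{\spi}_{s_1} \otimes \cdots \otimes N^{\spi}_{s_n} \otimes N_t = \Pjl(\xi). \]
By the Künneth theorem (which holds with no correction term since $\k$ is a field), $\eta_\xi$ is again a quasi-isomorphism.

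Next I would check that $\eta$ is natural. A morphism $(\sigma,\tau) \to (\sigma',\tau')$ in $\bbSj \times \sd(\Delta^l)^{\op}$ is an inclusion in each factor, inducing on the level of the underlying posets inert maps $[s_i] \hookrightarrow [s_i']$ and an injection $[t'] \hookrightarrow [t]$. Under the identifications of $N^{\spi}_s$ and $N_t$ as normalized cochains of $\Delta^s_{\spi}$ and $\Delta^t$, the structure map of $\Pjl$ is (the tensor product of) the cochain maps dual to the inclusions of subcomplexes $\Delta^{s_i}_{\spi} \hookrightarrow \Delta^{s_i'}_{\spi}$ and $\Delta^{t'} \hookrightarrow \Delta^t$. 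These inclusions are injective on vertices, so the dual cochain maps send the all-ones element (resp.\ the sum of vertex duals) to the all-ones element (resp.\ the sum of vertex duals), which is exactly what is required for compatibility with $\eta$.

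The main step is simply the naturality check; there is no substantial obstacle, as the statement reduces, factor by factor, to the well-known fact that the augmentation $\k \to C^*(K;\k)$ of the cochains of a connected semisimplicial set is natural under maps of semisimplicial sets. Combining pointwise quasi-isomorphism with naturality gives the desired quasi-isomorphism of diagrams $\underline{\k} \xrightarrow{\sim} \Pjl$.
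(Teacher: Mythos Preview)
Your proof is correct, but it takes a more explicit route than the paper. The paper's proof is a single sentence: $\Pjl$ sends every morphism to a quasi-isomorphism (since every value is the cochain complex of a contractible space), and the indexing category $\bbSj \times \sdDl^{\op}$ is weakly contractible (it has an initial object); a diagram that inverts all morphisms over a weakly contractible source is homotopy constant, with value any of its values, hence $\k$.

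Your approach instead writes down an explicit natural transformation $\eta \colon \underline{\k} \to \Pjl$ via the ``all-ones'' cocycles and verifies pointwise quasi-isomorphism and naturality. This is exactly the map the paper introduces \emph{right after} the lemma as $\gamma_{\ind{j},l}$, described there as coming from the degeneracies $\k = N_0 \to N_l$ and $\k = N_0^{\spi} \to N_l^{\spi}$; the dual of the augmentation is precisely your all-ones vector. So you have effectively folded the definition of $\gamma_{\ind{j},l}$ into the proof of the lemma, which is arguably cleaner since this explicit map is what is actually used later. One small slip: for a morphism $\sigma \to \sigma'$ in $\bbSj$ the intervals \emph{shrink}, so the inert map goes $[s_i'] \hookrightarrow [s_i]$ rather than the direction you wrote; this does not affect your argument since the dual restriction still carries all-ones to all-ones.
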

\begin{proof}
  The functor $\Pjl$ takes every morphism in
  $\bbSj \times \sdDl^{\op}$ to a
  quasi-isomorphism, and this category is weakly contractible.
\end{proof}

\section{A Model for Cospans in $\Chkd$}\label{sec:cobpreordet}
In this section we will explain why constructing functors
\[ \mathcal{O}_{\ind j,l} \colon \sBORDor_{d,d+n,\ind{j},l} \times
  \bbS^{\ind j} \times \sdDl^{\op} \times [1] \to \Ch_{\k}\] as
discussed above actually leads to a functor of $n$-uple \icats{}
\[\BORDor_{d,d+n} \to \COSPAN_{n}(\Modkd^{\op}).\]
We first define the strict object that is actually the target of our
functor, and then relate this to $\COSPAN_{n}(\Modkd^{\op})$.

\begin{notation}
  Let $\Fun'(\bbSj \times
    \sdDl, \Chk)$ denote the full subcategory of $\Fun(\bbSj \times
    \sdDl, \Chk)$ containing those functors $F \colon \bbSj \times
    \sdDl \to \Chk$ such that $F(\sigma, \blank)$ takes every morphism
    in $\sdDl$ to a quasi-isomorphism for every $\sigma \in
    \bbSj$. We also write $\Fun'(\bbSj \times
    \sdDl, \Chk)_{/\Pjl[-d]}$ for the corresponding overcategory for
    $\Pjl[-d]$.
\end{notation}

\begin{construction}
  The strict naturality of $\Pjl$ in $l$ means that we can
  define a family of semisimplicial sets $\Xdj$ as
  \[ \Xdjl := \{ F \in \Fun'(\bbSj \times
    \sdDl, \Chk)_{/\Pjl[-d]}\},\]
  with the semisimplicial structure maps given by restriction.
  The oplax structure in $\ind{j}$ then makes this a functor
  $\Xd_{(\blank)} \colon \Dnop \to \ssSet$, with the map
  $\Xdj \to \Xd_{\ind{i}}$ corresponding to
  $\Phi \colon \ind{i} \to \ind{j}$ in $\simp^{n}$ given by
  restricting along $\bbS(\Phi)$ and composing with the natural maps
  $\Pjl \circ \bbS(\Phi) \to P_{\ind{i},l}$.
\end{construction}

\begin{remark}
  The semisimplicial set $\Xd_{\ind{j}}$ can be defined as a
  pullback of semisimplicial sets
  \[
    \begin{tikzcd}
      \Xd_{\ind{j}} \arrow{r} \arrow{d} &
      (\Ex^{1}\mathrm{N}\,\Fun(\bbSj \times [1],
      \Chk)^{(W)})^{\semi} \arrow{d} \\
      \Delta^{0,\semi}\arrow{r}{\Pj[-d]} & (\Ex^{1}\mathrm{N}\,\Fun(\bbSj,
      \Chk)^{(W)})^{\semi},
    \end{tikzcd}
  \]
  where the superscript $(W)$ means we only include those morphisms
  that are natural weak equivalences.
\end{remark}

\begin{definition}
  We define another semisimplicial set by
  \[ \Ydj := (\Ex^{1}\mathrm{N}\,\Fun(\bbSj, \Chkd)^{(W)})^{\semi}.\]  
  There are canonical natural transformations $\gamma_{\ind{j},l} \colon \k
  \to \Pjl$ (where we write $\k$ for the constant diagram
  with value $\k$), induced by the degeneracy maps
  \[ \k = N_{0} \to N_{l},\qquad \k = N_{0}^{\spi} \to N_{l}^{\spi}.\]
  Composition with these give a natural transformation $\Gamma^{(d)}_{(\blank)} \colon
  \Yd_{(\blank)} \to
  \Xd_{(\blank)}$.
\end{definition}

\begin{remark}
Unfortunately we do not know if the maps $\Gamma_{\ind{j}}^{(d)}$ are weak
equivalences. However, we can relate $\Xdj$ to diagrams in $\Chkd$ by
a more indirect route:  
\end{remark}

\begin{construction}\label{constr:tXj}
We can view $\Xdj$ as the
semisimplicial set of 0-simplices in a semisimplicial simplicial set
$\tXd_{\ind{j},\bullet}$ given by
\[ \mathrm{N}\,\Fun'(\bbSj \times \sd(\Delta^{(\blank)}),
  \Chk)^{(W)}_{/P_{\ind{j},(\blank)}[-d]}.\]
 We then have a natural
commutative square of semisimplicial simplicial sets
\[
  \begin{tikzcd}
    \disc \Ydj
    \arrow{d}{\Gamma_{\ind{j}}^{(d)}} \arrow{r} & \tYdj \arrow{d}{\widetilde{\Gamma}_{\ind{j}}^{(d)}}\\
  \disc \Xdj \arrow{r}     & \tXdj,
\end{tikzcd}
\]
where $\disc$ indicates ``constant in the simplicial direction'',
$\tYdj$ is the semisimplicial simplicial set
\[ \tYdj := \mathrm{N}\,\Fun'(\bbSj \times
  \sdDl, \Chkd)^{(W)},\]
and $\widetilde{\Gamma}^{(d)}_{\ind{j}}$ is again given by composition with
the maps $\gamma_{\ind{j},l}$ from above.  
\end{construction}

\begin{proposition}\label{propn:Pdiagweqs}\ 
  \begin{enumerate}[(i)]  \item The semisimplicial structure maps for both
    $\tYdjl$ and
    $\tXdjl$ are all weak equivalences.    
  \item The map $\widetilde{\Gamma}^{(d)}_{\ind{j}} \colon \tYdj \to
    \tXdj$ is a weak equivalences for all
    $\ind{j}$.
  \item The map $\disc \Ydj \to \tYd_{\ind{j}}$ induces
    an equivalence in the \icat{} $\mathcal{S}$.
  \end{enumerate}
\end{proposition}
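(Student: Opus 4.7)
The plan is to identify, up to equivalence in $\mathcal{S}$, each of the simplicial sets $\tYdjl$ and $\tXdjl$, and likewise the total realizations of the bisimplicial sets $\disc\Ydj$ and $\tYdj$, with a form of the $\infty$-groupoid core $\Fun(\bbSj, \Modkd)^{\simeq}$, and then to verify that the structure and comparison maps all respect these identifications.

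For part (i), the key observation is that the $1$-category $\Fun'(\bbSj \times \sdDl, \Chkd)$, equipped with its natural quasi-isomorphisms, forms a relative category whose Dwyer--Kan localization should be identified with the functor $\infty$-category $\Fun(\bbSj \times L_l, \Modkd)$, where $L_l$ is the $\infty$-groupoid obtained by inverting all morphisms in $\sdDl$. Since $\sdDl \cong \simp_{\inj/[l]}$ admits a terminal object (namely $\id_{[l]}$), it is weakly contractible, so $L_l \simeq \ast$ and the localization reduces to $\Fun(\bbSj, \Modkd)$. The nerve $\tYdjl$ of the subcategory with only natural weak equivalences then presents the $\infty$-groupoid core $\Fun(\bbSj, \Modkd)^{\simeq}$, independently of $l$; the semisimplicial structure maps, being restrictions along $\sd(\phi)$ for $\phi$ in $\Dinj$, descend to the identity on this core and are therefore weak equivalences. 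The case of $\tXdj$ is identical once one notes that $\gamma_{\ind{j},l} \colon \k \to \Pjl$ is a natural quasi-isomorphism (it comes from degeneracy maps of normalized (co)chains on contractible semisimplicial sets), so $(\Modk)_{/\Pjl[-d]} \simeq \Modkd$ and the analysis reduces to that for $\tYdj$.

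Part (ii) is then an immediate consequence: postcomposition with $\gamma_{\ind{j},l}[-d]$ induces an equivalence of slice $\infty$-categories, which upgrades to an equivalence on the functor $\infty$-categories used to present $\tYdjl$ and $\tXdjl$, and on $\infty$-groupoid cores this is exactly $\widetilde{\Gamma}_{\ind{j}}^{(d)}$ at semisimplicial level $l$. For part (iii), we compute total realizations of bisimplicial sets. On the one hand, $\disc\Ydj$ is discrete in the simplicial direction, so its total realization equals the realization of the semisimplicial set $\Ydj$; using the weak equivalence $X \to \Ex^1 X$ for any simplicial set $X$, this is $|N(\Fun(\bbSj, \Chkd))^{(W)}| \simeq \Fun(\bbSj, \Modkd)^{\simeq}$. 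On the other hand, by part (i) the semisimplicial object $\tYdj$ in $\mathcal{S}$ is levelwise constant up to weak equivalence, and since $\Dinj$ is weakly contractible (a Bousfield--Kan spectral sequence argument), its total realization is equivalent to its value at $[l] = [0]$, namely $\tYdj_0 = N(\Fun(\bbSj, \Chkd))^{(W)}$, presenting the same $\infty$-groupoid; both identifications are induced by restriction to a vertex of $\sdDl$, so the map $\disc\Ydj \to \tYdj$ is an equivalence in $\mathcal{S}$.

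The main obstacle will be rigorously justifying the presentation of the $\infty$-groupoid core of the localized $\infty$-category by the nerve with only weak equivalences, for the non-model-categorical relative categories at hand. This is a standard fact for model categories, or more generally for partial model categories in the sense of Barwick--Kan, so the cleanest route is probably to put a projective model structure on $\Fun(\bbSj \times \sdDl, \Chkd)$ transferred from a model structure on $\Chkd$, and then extract the statements about $\Fun'$ and its localization from standard arguments about functor categories, noting that the condition ``sends $\sdDl$-morphisms to quasi-isomorphisms'' cuts out precisely the homotopy-constant functors up to equivalence.
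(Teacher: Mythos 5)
Parts (i) and (ii) of your argument are essentially the paper's: the identification of $\tYdjl$ and $\tXdjl$ with (cores of) functor $\infty$-categories is exactly Corollary~\ref{cor:relcatspanmodel} applied to the relative category $\bbSj\times\sdDl$ (so your worry about non-model-categorical relative categories is already handled there — only the target needs to be a cofibrantly generated model category), the contractibility of $\|\sdDl\|$ does the rest, and (ii) follows from $\gamma_{\ind{j},l}$ being a quasi-isomorphism, hence inducing a (left Quillen) equivalence on slices. That is all fine.

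Part (iii) has a genuine gap at its last step. You correctly compute that the realization of $\disc\Ydj$ is $\|\Fun(\bbSj,\Chkd)^{(W)}\|\simeq\Map(\bbSj,\Modkd)$ and that the realization of $\tYdj$ is the same space (the diagram over $\Dinjop$ inverts all maps and $\Dinjop$ is weakly contractible). But knowing that source and target are abstractly equivalent does not show that \emph{the given map} is an equivalence, and your justification — ``both identifications are induced by restriction to a vertex of $\sdDl$'' — is not accurate: the identification of $|\disc\Ydj|$ comes from the weak equivalence $K\to\Ex^{1}K$ together with Corollary~\ref{cor:relcatspanmodel}, not from evaluation at a vertex, while the levelwise maps $\Ydjl\to\tYdjl$ (inclusion of the $0$-simplices of a nerve) are certainly not weak equivalences, so no levelwise comparison is available. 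The missing ingredient is Lemma~\ref{lem:diagdiscwe}: writing everything as (restrictions of) bisimplicial sets $\disc K\to K^{\Delta^{\bullet}}$ with $K=\mathrm{N}\,\Fun(\bbSj,\Chkd)^{(W)}$, one passes to diagonals and observes that $\diag(\disc K\to K^{\Delta^{\bullet}})$ agrees, after swapping the two simplicial coordinates, with $\diag(\const K\to K^{\Delta^{\bullet}})$, which \emph{is} a levelwise weak equivalence by the hypothesis that $K\to K^{\Delta^{l}}$ is a weak equivalence (Example~\ref{ex:diagdiscwemodcat}). Equivalently, one can use the retraction $\diag K^{\Delta^{\bullet}}\to K$ given by the diagonal $\Delta^{l}\to\Delta^{l}\times\Delta^{l}$ and run a two-out-of-three argument with the two sections. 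Some such device is needed; without it the proof of (iii) does not close.
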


For the proof we need the following observation:
\begin{lemma}\label{lem:diagdiscwe}
  Let $K$ be a simplicial set such that the degeneracy
  $K \to K^{\Delta^{l}}$ is a weak equivalence for all $l$ (or,
  equivalently, the structure maps in the bisimplicial set
  $K^{\Delta^{\bullet}}$ are all weak equivalences). Then the natural
  morphism of bisimplicial sets
  \[ \disc K \to K^{\Delta^{\bullet}}\]
  (given in each degree by the inclusion of the $0$-simplices)
  induces a weak equivalence
  \[ \hocolim \disc K \isoto \hocolim K^{\Delta^{\bullet}}.\]
\end{lemma}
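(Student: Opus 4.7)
The plan is to reduce the statement to the classical fact that a levelwise weak equivalence of bisimplicial sets induces a weak equivalence on diagonals, together with the standard identification of the homotopy colimit of a simplicial object in $\sSet$ with its diagonal.

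First I would unwind the bisimplicial structure. Viewing $K^{\Delta^\bullet}$ as a bisimplicial set whose $(p,q)$-bisimplices are $\Hom(\Delta^p \times \Delta^q, K)$, the natural map $\disc K \to K^{\Delta^\bullet}$ at outer degree $p$ is precisely the map $K = K^{\Delta^0} \to K^{\Delta^p}$ obtained by precomposition with the projection $\Delta^p \to \Delta^0$. This is exactly the degeneracy appearing in the hypothesis, hence a weak equivalence for every $p \geq 0$.

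Next I would invoke the comparison between $\hocolim$ and the diagonal: for any simplicial object $X_{\bullet,\bullet}$ in $\sSet$, there is a natural weak equivalence $\hocolim X_{\bullet,\bullet} \simeq \diag X_{\bullet,\bullet}$ (see e.g.\ Goerss–Jardine, Chapter~IV). Under this identification the displayed map becomes the induced map of diagonals $\diag(\disc K) \to \diag(K^{\Delta^\bullet})$; note that $\diag(\disc K) = K$.

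Finally, I would apply the standard result that a map of bisimplicial sets which is a weak equivalence in each row (i.e.\ at every outer degree) induces a weak equivalence on diagonals (Goerss–Jardine, Proposition~IV.1.9). By the first step this hypothesis is exactly the assumption in the lemma, so we conclude that $\hocolim \disc K \to \hocolim K^{\Delta^\bullet}$ is a weak equivalence. There is no real obstacle here beyond carefully matching the bisimplicial conventions; the only point to double-check is that the outer face/degeneracy maps in $K^{\Delta^\bullet}$ really are given by precomposition in the $\Delta^p$-variable, so that the hypothesis of the Goerss–Jardine comparison is satisfied on the nose.
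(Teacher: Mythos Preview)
Your identification of the map $\disc K \to K^{\Delta^\bullet}$ at each outer degree is incorrect, and this is precisely the point the lemma is getting at. Here $\disc K$ is the bisimplicial set with $(\disc K)_{p,q}=K_p$, i.e.\ \emph{discrete} in the inner direction; at outer degree $p$ the map $(\disc K)_p \to K^{\Delta^p}$ is the inclusion of the set of $0$-simplices of $K^{\Delta^p}$ (namely $K_p$) as a discrete simplicial set, exactly as stated in the lemma. It is \emph{not} the degeneracy $K=K^{\Delta^0}\to K^{\Delta^p}$. That degeneracy is the levelwise description of a different map of bisimplicial sets, namely $\const K \to K^{\Delta^\bullet}$, where $\const K$ is constant in the \emph{outer} direction. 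In particular, the rows of $\disc K \to K^{\Delta^\bullet}$ are in general not weak equivalences (already at $p=0$ the map is $\text{discrete}(K_0)\to K$), so the Goerss--Jardine rowwise criterion does not apply directly.

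The paper's proof supplies the missing step: one first passes to diagonals and then observes that $\diag(\disc K)=K$ and that the induced map $K_l\to \Hom(\Delta^l\times\Delta^l,K)$ is composition with a projection $\Delta^l\times\Delta^l\to\Delta^l$. Because $K^{\Delta^\bullet}$ is symmetric in its two simplicial coordinates, this is the \emph{same} map on diagonals as the one coming from $\const K\to K^{\Delta^\bullet}$. The latter \emph{is} a levelwise weak equivalence by hypothesis, so the rowwise-to-diagonal argument applies to it and yields the conclusion. In short, your plan is essentially right once you insert this symmetry trick to trade $\disc K$ for $\const K$; without it the argument has a genuine gap.
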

\begin{proof}
  Recall that in bisimplicial sets the homotopy colimit, modelled as
  the usual coend via the Reedy model structure (where all objects are
  cofibrant), is isomorphic to the functor $\diag$ given
  by restriction along the diagonal $\Dop \to \Dop \times \Dop$.
  Here $\diag\disc K \cong K$, while
  \[ (\diag K^{\Delta^{\bullet}})_{l} = \Hom(\Delta^{l}\times
    \Delta^{l}, K), \] with the map
  $K_{l}\to (\diag K^{\Delta^{\bullet}})_{l}$ given by composition
  with the projection $\Delta^{l} \times \Delta^{l}\to
  \Delta^{l}$. But (since we can permute the two coordinates) this is
  the same as the map on diagonals for the morphism
  $\const K \to K^{\Delta^{\bullet}}$ induced by the degeneracies $[l]
  \to [0]$, where $\const K$ denotes the constant bisimplicial set
  with value $K$. This is a levelwise weak equivalence by assumption,
  hence \[\diag \const K \to \diag K^{\Delta^{\bullet}}\] is a weak
  equivalence (since $\diag$ is a left Quillen functor).
\end{proof}

\begin{example}\label{ex:diagdiscwemodcat}
  The hypothesis of Lemma~\ref{lem:diagdiscwe} is satisfied if $K$ is
  a Kan complex, but also if it is the nerve
  $\mathrm{N}\mathbf{M}^{(W)}$ where $\mathbf{M}$ is a model category
  and $\mathbf{M}^{(W)}$ its subcategory containing only the weak
  equivalences: We may identify
  $(\mathrm{N}\mathbf{M}^{(W)})^{\Delta^{l}}$ with the nerve
  $\mathrm{N}\,\Fun'([l], \mathbf{M})^{(W)}$, where
  $\Fun'([l], \mathbf{M})^{(W)}$ is the subcategory of
  $\Fun([l], \mathbf{M})$ whose objects are the functors that take all
  morphisms in $[l]$ to weak equivalences and whose morphisms are the
  natural weak equivalences. By Corollary~\ref{cor:relcatspanmodel}
  this models the $\infty$-groupoid $\Map(\|[l]\|,
  \mathbf{M}[W^{-1}])$, which is equivalent to
  $\mathbf{M}[W^{-1}]^{\simeq}$ since $[l]$ is weakly contractible.
\end{example}

\begin{proof}[Proof of Proposition~\ref{propn:Pdiagweqs}]
  From Corollary~\ref{cor:relcatspanmodel} we know that the simplicial set
  $\tYdjl$ models the $\infty$-groupoid
  \[ \Map(\|\sdDl\|, \Fun(\bbSj,
    \Chkd))\simeq \Map(\bbSj, \Chkd),\]
  which shows that the semisimplicial structure maps are weak
  equivalences. On the other hand,
  $\tXdjl$ models
  \[ \Fun(\|\sdDl\| \times \bbSj,
    \Chk)_{/\Pjl[-d]}^{\simeq}.\]
  Since the map $\gamma_{\ind{j},l}$ is a quasi-isomorphism, composing
  with it gives a left Quillen equivalence, and hence an equivalence
  of \icats{}. This shows that $\Gamma^{(d)}_{\ind{j},l}$ is a weak
  equivalence, \ie{} (ii), and also proves (i).

  To prove (iii), it suffices to prove that the natural map
  \[ \disc \mathrm{N}\,\Fun(\bbSj,
    \Chkd)^{(W)} \to \Fun'(\bbSj \times
    \Delta^{(\blank)}, \Chkd)^{(W)} \]
  gives a weak equivalence in $\mathcal{S}$, since \[\mathrm{N}\,\Fun(\bbSj,
  \Chkd)^{(W)} \to \Ex^{1} \mathrm{N}\,\Fun(\bbSj,
  \Chkd)^{(W)}\] is a weak equivalence, as is the corresponding map
  \[ \mathrm{N}\Fun'(\bbSj \times
  \Delta^{(\blank)}, \Chkd)^{(W)} \to \tYd_{\ind{j}}\]
  (levelwise). Since these maps of semisimplicial simplicial sets
  are restricted from maps of bisimplicial sets, this follows from 
  Lemma~\ref{lem:diagdiscwe} applied to the simplicial set
  $\mathrm{N}\,\Fun(\bbSj, \Chkd)^{(W)}$, which
  satisfies the required hypothesis by Example~\ref{ex:diagdiscwemodcat}.
\end{proof}

\begin{remark}\label{rmk:functortoXdj}
  In the next section, we will define natural maps of
  semisimplicial sets
  \[\sBORDorsemi_{d,d+n,\ind{j}} \to \Xdj\] that fit in
  commutative diagrams
  \[
    \begin{tikzcd}
      {} & \Xdj \arrow{dd}\\
      \sBORDorsemi_{d,d+n,\ind{j}} \arrow{ur} 
      \arrow{dr}[swap]{\Omega^{*} \circ \scut} &  & \Ydj \arrow{dl} \arrow{ul} \\
      & (\Ex^{1} \mathrm{N}
      \Fun(\bbSj, \Chk)^{(W)})^{\semi}.
    \end{tikzcd}
  \]
  Since we can combine these with the diagram of semisimplicial
  simplicial sets from Construction~\ref{constr:tXj}, Proposition~\ref{propn:Pdiagweqs} implies
  that this induces natural commutative diagrams of $\infty$-groupoids
  \[
    \begin{tikzcd}
          \BORDor_{d,d+n,\ind{j}}  \arrow{r} \arrow{dr}[swap]{C^{*} \circ
            \cut} &
          \Map(\bbSj, \Mod_{\k/\k[-d]}) \arrow{d}\\
           & \Map(\bbSj, \Mod_{\k}),
    \end{tikzcd}
  \]
  and thus a commutative square
  \[
    \begin{tikzcd}
      \BORDor_{d,d+n} \arrow{rr} \arrow{d}{\cut} & & \oCOSPAN_{n}(\Mod_{\k/\k[-d]}^{\op})
      \arrow{d} \\
      \COSPAN_{n}(\mathcal{S}) \arrow{r}{C^{*}} & \COSPAN_{n}(\Mod_{\k}^{\op})
      \arrow[hookrightarrow]{r} & \oCOSPAN_{n}(\Mod_{\k}^{\op})
    \end{tikzcd}
  \]
  in $\Fun(\Dnop, \mathcal{S})$. Since the forgetful functor
  $\Mod_{\k/\k[-d]} \to \Mod_{\k}$ is conservative and preserves
  pullbacks, a diagram $\bbSj \to \Mod_{\k/\k[-d]}$ is cartesian
  \IFF{} the composite $\bbSj \to \Mod_{\k/\k[-d]} \to \Mod_{\k}$ is
  cartesian; this means that we also have a pullback square
  \[
    \begin{tikzcd}
      \COSPAN_{n}(\Mod_{\k/\k[-d]}^{\op}) \arrow[hookrightarrow]{r} \arrow{d} &
      \oCOSPAN_{n}(\Mod_{\k/\k[-d]}^{\op})
    \arrow{d} \\
    \COSPAN_{n}(\Mod_{\k}^{\op})
    \arrow[hookrightarrow]{r} & \oCOSPAN_{n}(\Mod_{\k}^{\op})
    \end{tikzcd}
  \]
  in $\Fun(\Dnop, \mathcal{S})$. Thus our functor above
  factors through $\COSPAN_{n}(\Mod_{\k/\k[-d]}^{\op})$ automatically, giving a
  commutative square
  \[
    \begin{tikzcd}
      \BORDor_{d,d+n} \arrow{r} \arrow{d}{\cut} & \COSPAN_{n}(\Mod_{\k/\k[-d]}^{\op})
      \arrow{d} \\
      \COSPAN_{n}(\mathcal{S}) \arrow{r}{C^{*}} & \COSPAN_{n}(\Mod_{\k}^{\op})
    \end{tikzcd}
  \]
  of $n$-uple Segal spaces.
\end{remark}

\section{The Integration Functor}\label{subsec:integral}
In this section we will use integration of differential forms to
define a family of maps of semisimplicial sets
\[ \sBORDorsemi_{d,d+n,\ind{j}} \to \Xdj,\] which, as we explained in
the previous section, induce a functor of $n$-uple \icats{}
\[ \BORDor_{d,d+n} \to \COSPAN_{n}(\Ch_{\k/\k[-d]}^{\op}).\]

\begin{notation}
  For
  $(M, \oul{I}=(I^i_0\leq\ldots\leq I^i_{j_i})_{1\leq i \leq k}, f)\in
  \sBORD_{d,d+n,\ind{j},l}^{\txt{or}}$, recall the short-hand notation
  for the pieces of the bordism from
  \ref{notn:Mxitau},
  \[M_{\sigma,\tau} :=
    \pi^{-1}B'(\sigma^{*}_{\ext}\tau^{*}_{\xint}\oul{I}).\]
  This is well-defined
  for \emph{any} $\sigma\colon \ind{i} \to \ind{j}$ in $\simp^{n}$ and
  $\tau\colon [k]\to [l]$ in $\simp$.

  Given $\xi = (\sigma,\tau) \in \bbSj \times \sdDl^{\op}$, we will
  view $\sigma$ as an inert morphism $\ind{i} \to \ind{j}$ and $\tau$
  as an injective morphism $\tau \colon [k] \to [l]$ via the isomorphisms
  $\bbS^{\ind{j},\op} \cong (\Dint^{n})_{/\ind{j}}$ and
  $\sdDl \cong (\Dinj)_{/[l]}$. We will then write
  \[ M_{\xi} := M_{\sigma,\tau}.\]
\end{notation}

\begin{definition}\label{defn:O,iota}
  For $\xi = (\sigma,\tau) \in \bbSj \times \sdDl^{\op}$
  and $(M, \oul{I},f) \in \sBORDor_{d,d+n,\ind{j},l}$ we wish to define
  a morphism of cochain complexes
  \[ \mathcal{O}_{\ind{j},l}(M, \oul{I}, f, \xi) \colon
    \Omega^{*}(M_{\xi}) \longrightarrow
    \Pjl(\xi)[-d],\] where we have
  \[\cut(M,\oul{I},\xi) = M_{\sigma,\tau} = M_{\xi}.\]
  This will be obtained as
  the adjoint of a morphism
  \[ \Pjl(\xi)^{\vee} \otimes \Omega^{*}(M_{\xi})
    \longrightarrow \k[-d],\] which we define as follows: First, since
  $\k[-d]$ is concentrated in cohomological degree $d$, such a map
  corresponds to a linear map
\[ \iota_{\ind{j},l,\xi} \colon ( \Pjl(\xi)^{\vee} \otimes \Omega^{*}(M_{\xi}) )^{d} \to \k \]
with the property that $\iota_{\ind{j},l,\xi}(d\zeta) = 0$ for $\zeta \in
(\Pjl(\xi)^{\vee} \otimes \Omega^{*}(M_{\xi}))^{d-1}$. Here we
have
\[ (\Pjl(\xi)^{\vee} \otimes \Omega^{*}(M_{\xi})  )^{d} \cong
\bigoplus_{t} (\Pjl(\xi)^{\vee})^{-t} \otimes \Omega^{d+t}(M_{\xi}),\]
where $(\Pjl(\xi)^{\vee})^{-t}$ is the free $\k$-module on
 generators $\Gamma \in \mathcal{G}^{\xi}_{t}$ as in 
 Remark~\ref{rmk:gens}. To define such a
linear map it then suffices to define linear maps 
\[ \iota_{\ind{j},l,\xi}^{\Gamma} \colon \Omega^{d+t}(M_{\xi})
  \to \k\] for each generator $\Gamma \in \mathcal{G}^{\xi}_{t}$. We
define this linear map in turn as the composite
\[ \Omega^{d+t}(M_{\xi}) \to \Omega^{d+t}(M_{\xi\Gamma}) \xto{(-1)^{\epsilon(t)}\int_{M_{\xi\Gamma}}}
  \k,\]
where we view $\Gamma$ as a pair $(\gamma, \delta)$ with $\gamma$ an inert
map in $\simp^{n}$ and $\delta$ an injective map in $\simp$,
with $\xi\Gamma = (\sigma\gamma, \tau\delta)$ indicating composition,
the first map is restriction of differential forms to the
submanifold $M_{\xi\Gamma}$, and the sign is given by
\[ \epsilon(t) =
  \begin{cases}
    0, & t \equiv 0,1 \pmod{4} \\
    1, & t \equiv 2,3 \pmod{4}
  \end{cases}
\]
(so that $(-1)^{\epsilon(t)}=(-1)^{t-1}(-1)^{\epsilon(t-1)}$).
\end{definition}

\begin{remark}
  As in Warning~\ref{warn:sign}, the sign $(-1)^{\epsilon(t)}$ appears
  here because we have not followed the Koszul sign convention for the
  duals $\Pjl(\xi)^{\vee}$; this means that the tensor-Hom
  adjunction on cochain complexes involves a 4-periodic sign, and the
  signs $(-1)^{\epsilon(t)}$ exactly cancel these, so that the adjoint
  map $\Omega^{*}(M_{\xi}) \to \Pjl(\xi)[-d]$ does not involve
  a sign.
\end{remark}

The following proposition tells us that the previous definition does
in fact define a cochain map:
\begin{proposition}\label{propn:iotachainmap}
  The linear map $\iota_{\ind{j},l,\xi}$ defined above satisfies
  $\iota_{\ind{j},l,\xi}(d \zeta) = 0$.
\end{proposition}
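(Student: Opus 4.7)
The plan is to reduce the claim to Stokes's theorem and a combinatorial identification of boundary components. By linearity it suffices to check the identity on a pure tensor $\zeta = \Gamma \otimes \omega$ with $\Gamma \in \mathcal{G}^{\xi}_{t}$ a generator of $(\Pjl(\xi)^{\vee})^{-t}$ and $\omega \in \Omega^{d+t-1}(M_{\xi})$. Writing $\Gamma = (\gamma,\delta)$ with $\gamma = (\gamma_1,\ldots,\gamma_n)$ a list of inert maps $[c_i] \to [s_i]$ and $\delta\colon [a] \to [t]$ injective (so that $t = c_1+\cdots+c_n+a$), the differential on the tensor product splits as
\[ d(\Gamma \otimes \omega) \;=\; (d\Gamma)\otimes \omega \;+\; (-1)^{t}\,\Gamma \otimes d\omega. \]
Evaluating $\iota_{\ind j,l,\xi}$ on the second summand and applying Stokes's theorem (valid since $M_{\xi\Gamma}$ is a compact oriented manifold with corners by Remark \ref{remark:mfldcorners} and the orientation discussion of Remark \ref{rem:fiberwiseorientation}) gives
\[ (-1)^{t+\epsilon(t-1)}\int_{M_{\xi\Gamma}} d\omega \;=\; (-1)^{t+\epsilon(t-1)}\int_{\partial M_{\xi\Gamma}} \omega|_{\partial M_{\xi\Gamma}}. \]
The task is thus to match this with $-\iota_{\ind j,l,\xi}((d\Gamma)\otimes \omega)$.

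The first main step is to identify $\partial M_{\xi\Gamma}$ with the submanifolds appearing in $d\Gamma$. Since $M_{\xi\Gamma} = \pi^{-1}B'(\sigma\gamma,\tau\delta)$ and $B'(\sigma\gamma,\tau\delta)$ is, over each point of $|\Delta^{a}|$, a product of closed intervals in $\mathbb{R}^n$ with one nondegenerate factor for each index $i$ with $c_i = 1$, the boundary of the box decomposes into the two endpoints of each such interval together with the $a{+}1$ faces of $|\Delta^{a}|$. Taking $\pi$-preimages and using the submersivity hypothesis of \ref{defn:BORDV}(3), which guarantees transversality, we obtain a decomposition of $\partial M_{\xi\Gamma}$ as
\[ \partial M_{\xi\Gamma} \;=\; \bigsqcup_{i\,:\,c_i=1} \bigl( M_{\xi\Gamma\circ(d_0^{(i)},\id)} \sqcup M_{\xi\Gamma\circ(d_1^{(i)},\id)} \bigr) \;\sqcup\; \bigsqcup_{k=0}^{a} M_{\xi\Gamma\circ(\id,d_k)}, \]
which are precisely the subscripts of the faces of $\Gamma$ appearing in the formula for $d\Gamma$ of Remark \ref{rmk:gens}. (Faces with $\delta \circ d_k$ degenerate contribute zero integrals, matching the vanishing of the corresponding term in $d\Gamma$.)

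The second main step is the sign bookkeeping. Each face component $M_{\xi\Gamma'}$ of $\partial M_{\xi\Gamma}$ inherits an induced orientation from the product orientation on $\mathbb{R}^n \times |\Delta^l|_e$ coupled with the fibrewise orientation on $M$; standard sign conventions for boundary orientations of products of intervals and simplices produce the alternating signs $\pm 1$ on the two endpoints of each spatial interval and the signs $(-1)^k$ on the simplicial faces. Comparing these with the signs in Remark \ref{rmk:gens} and the prefactor $(-1)^{\epsilon(t')}$ attached to the face $\xi\Gamma'$ (of degree $t' = t-1$) in the definition of $\iota_{\ind j,l,\xi}$, and using the defining property $(-1)^{\epsilon(t)} = (-1)^{t-1}(-1)^{\epsilon(t-1)}$ of the 4-periodic sign, one checks term by term that
\[ (-1)^{t+\epsilon(t-1)}\int_{\partial M_{\xi\Gamma}} \omega \;=\; -\,\iota_{\ind j,l,\xi}\bigl((d\Gamma)\otimes \omega\bigr), \]
and the two sides of $\iota_{\ind j,l,\xi}(d\zeta)=0$ cancel.

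The hard part will be this final sign verification, which has to be done separately for spatial faces (where the two endpoints of an interval in the $i$-th direction contribute signs $+1$ and $-1$ that must match $(-1)^{c_1+\cdots+c_i}$ with the correct relative sign between the $d_0^{(i)}$ and $d_1^{(i)}$ contributions) and for simplicial faces (where the sign $(-1)^k$ on the $k$-th face of $|\Delta^a|$ must match the sign $(-1)^{t-a+k}$ in $d\Gamma$); the 4-periodicity of $\epsilon$ is precisely what is needed to reconcile our non-Koszul convention (Warning \ref{warn:sign}) with the signs coming from Stokes. Everything else is formal once the boundary decomposition above is in hand.
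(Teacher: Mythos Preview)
Your approach is the paper's: reduce to a pure tensor $\Gamma\otimes\omega$, apply Stokes to $\int_{M_{\xi\Gamma}}d\omega$, and match $\int_{\partial M_{\xi\Gamma}}\omega$ against the expansion of $\iota_{\ind j,l,\xi}((d\Gamma)\otimes\omega)$ coming from the formula for $d\Gamma$ in Remark~\ref{rmk:gens}. The paper isolates the oriented boundary decomposition, with the product-of-intervals-and-simplex sign pattern you describe, as a separate lemma (Lemma~\ref{lem:bdryisd}); otherwise the arguments coincide.

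There is one genuine gap. Your Stokes step and boundary decomposition tacitly assume that $M_{\xi\Gamma}$ has the expected dimension $d+t$. But $(M,\oul{I},f)$ may be \emph{degenerate} in some bordism direction $i$ with $c_i=1$: then the $i$th factor of $B'((\xi\Gamma)^*\oul{I})$ collapses to a point, $M_{\xi\Gamma}$ has strictly smaller dimension, and your decomposition is false since $M_{\xi\Gamma(d_0^{(i)},\id)}=M_{\xi\Gamma(d_1^{(i)},\id)}=M_{\xi\Gamma}$ are not boundary components at all. In this case your identity $\int_{\partial M_{\xi\Gamma}}\omega = \pm\,\iota_{\ind j,l,\xi}((d\Gamma)\otimes\omega)$ fails: the left side vanishes for degree reasons, while on the right side the two terms $\int_{M_{\xi\Gamma(d_0^{(i)},\id)}}\omega$ and $\int_{M_{\xi\Gamma(d_1^{(i)},\id)}}\omega$ need not individually vanish---they are equal and cancel, which is a separate observation. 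The paper treats this degenerate case by exactly this cancellation argument. Your parenthetical about ``$\delta\circ d_k$ degenerate'' does not cover this and is in any case vacuous, since $\delta$ and $d_k$ are both injective. As a minor aside, the prefactor in your first display should be $(-1)^{t+\epsilon(t)}$ rather than $(-1)^{t+\epsilon(t-1)}$.
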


The proof reduces to an application of Stokes's Theorem; for this we
first need to identify the oriented boundary of the manifold involved:
\begin{lemma}\label{lem:bdryisd}
  Given a non-degenerate element $(M, \oul{I}, f)\in
  \sBORDor_{d,d+n,\ind{j},l}$ where $\ind{j}=(j_{1},\ldots,j_{n})$
  with each $j_{i}$ either $0$ or $1$ and $\iota = (\id,\id) \in
  \bbS^{\mathbf{j}} \times \sdDl^{\op}$ the initial object,
  the oriented boundary of $M_{\iota}$ is
  \[
    \begin{split}
\partial M_{\iota} & \cong
    \bigcup_{i,j_{i}\neq 0} (-1)^{j_{1}+\cdots+j_{i-1}}
    \left(M_{(d_{0}^{(i)},\id)} \cup -M_{(d_{1}^{(i)},\id)}\right) \\
    & \phantom{\cong} \cup (-1)^{j_{1}+\cdots+j_{n}} \bigcup_{t=0}^{l} (-1)^{t}M_{(\id,d_{t})},
    \end{split}
  \]
  where 
  \[ d_{0}^{(i)} = (\id,\ldots,\id,d_{0},\id,\ldots,\id) \]
  with $d_{0}$ in the $i$th position, and similarly for $d_{1}^{(i)}$.
\end{lemma}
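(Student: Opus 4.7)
The plan is to compute the oriented boundary of $M_\iota = \pi^{-1}(B'(\oul{I}))$ by recognizing it as the preimage of a product of intervals and a simplex, and then combining standard orientation conventions for products and simplex boundaries. I would proceed in four steps.

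First, I would describe $B'(\oul{I})$ concretely as a manifold with corners. Under the hypothesis that each $j_i \in \{0,1\}$, the factor $B'(I^i)$ is a single point $\{(a^i_0+b^i_0)/2\}$ when $j_i = 0$, and a closed interval $[(a^i_0+b^i_0)/2,(a^i_1+b^i_1)/2]$ when $j_i = 1$. Moreover, the construction $B'$ involves the inner simplex $|\Delta^l|$ rather than $|\Delta^l|_e$, so $B'(\oul{I})$ is a family over $|\Delta^l|$ of products of intervals, hence a manifold with corners of dimension $l + \#\{i : j_i = 1\}$, sitting inside $\RR^n \times |\Delta^l|$.

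Second, I would verify that $M_\iota$ is a manifold with corners whose topological boundary is exactly $\pi^{-1}(\partial B'(\oul{I}))$. The endpoints of $B'(I^i)$ lie inside the open intervals $I^i_k$ at which Definition~\ref{defn:BORDV}(3) prohibits critical values of $\pi$, giving the required transversality; a standard Morse-theoretic argument analogous to \cite[Proposition~8.9]{CS} then identifies the preimage with a manifold with corners. The same submersivity arguments handle the simplex direction.

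Third, I would identify the pieces of the boundary. The boundary of $B'(\oul{I})$ decomposes into the ``vertical'' faces coming from the two endpoints of each interval factor together with the $l+1$ faces $d_t|\Delta^{l-1}|$ of the simplex. Unwinding Notation~\ref{notn:Mxitau}: the face map $d_0$ drops the interval $I^i_0$, leaving $I^i_1$, so $B'((d_0^{(i)})^*\oul{I})$ produces the right endpoint $\{(a^i_1+b^i_1)/2\}$, and similarly $d_1^{(i)}$ produces the left endpoint. Taking preimages under $\pi$ identifies these faces of $M_\iota$ with $M_{(d_0^{(i)},\id)}$, $M_{(d_1^{(i)},\id)}$ and $M_{(\id, d_t)}$ respectively.

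Fourth, I would determine the orientation signs via the product boundary formula
\[ \partial(A_1 \times \cdots \times A_m) = \bigcup_{s=1}^{m} (-1)^{\dim A_1 + \cdots + \dim A_{s-1}} A_1 \times \cdots \times \partial A_s \times \cdots \times A_m. \]
Applied to $B'(I^1) \times \cdots \times B'(I^n) \times |\Delta^l|$, this yields the prefactor $(-1)^{j_1+\cdots+j_{i-1}}$ for the $i$-th interval direction (each preceding interval factor with $j_k=1$ contributes dimension $1$, and factors with $j_k=0$ contribute nothing), while the right/left endpoint of an oriented interval contribute with signs $+/-$, matching $M_{(d_0^{(i)},\id)} \cup -M_{(d_1^{(i)},\id)}$. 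The simplex factor occurs last, producing the overall prefactor $(-1)^{j_1+\cdots+j_n}$, and its oriented boundary is the standard $\sum_t (-1)^t d_t|\Delta^{l-1}|$. The fiberwise orientation $f$ together with the standard orientation of $|\Delta^l|$ (coming from the inclusion $|\Delta^l| \hookrightarrow |\Delta^l|_e$) orients $M_\iota$ compatibly with this decomposition via $\pi$.

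The main obstacle will be reconciling the orientation conventions: the bordism $M$ carries a fiberwise orientation in the sense of Remark~\ref{rem:fiberwiseorientation}, and one must carefully combine this with the standard orientation on $|\Delta^l|$ and the outward-normal convention on each interval factor to see that preimage under $\pi$ transports the product boundary signs on $\partial B'(\oul{I})$ to the claimed signs on $\partial M_\iota$.
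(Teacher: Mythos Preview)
Your proposal is correct and follows essentially the same approach as the paper: identify $B'(\oul{I})$ with $\prod_{i,j_i\neq 0}|\Delta^1| \times |\Delta^l|$, apply the standard oriented boundary formula for a product, and then take preimages under $\pi$ using that the orientation of $M_\iota$ is compatible with that of $B'(\oul{I})$. Your steps~2 and~3 spell out more detail than the paper does (in particular the identification of the face maps $d_0^{(i)}, d_1^{(i)}$ with the right and left endpoints), but the argument is the same.
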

\begin{proof}
   By definition, the manifold $M_{\iota}$ lives over $B'(\oul{I})$,
   which is diffeomorphic to $\prod_{i,j_{i}\neq 0} |\Delta^{1}| \times
   |\Delta^{l}|$.
   The boundary of $M_{\iota}$ is therefore the subspace that lives
   over the boundary of $\prod_{i,j_{i}\neq 0} |\Delta^{1}| \times
   |\Delta^{l}|$. Using the standard description of the oriented
   boundary of a product, we get
   \[
     \begin{split}
     \partial\left(\prod_{i,j_{i}\neq 0} |\Delta^{1}| \times
   |\Delta^{l}|\right)  & \cong \bigcup_{i,j_{i}\neq 0}
                          (-1)^{j_{1}+\ldots+j_{i-1}}\left(\prod_{i'<i,j_{i'}\neq 0} |\Delta^{1}|\right) \times \partial |\Delta^{1}| \\
                        & \phantom{\cong \bigcup_{i,j_{i}\neq 0}
                          (-1)^{j_{1}+\ldots+j_{i-1}}}
                          \times
 \left(\prod_{i'> i,j_{i'}\neq 0} |\Delta^{1}|\right) \times |\Delta^{l}|  \\
  & \phantom{\cong} \cup (-1)^{j_{1}+\ldots+j_{i-1}}
  \prod_{i,j_{i}\neq 0} |\Delta^{1}| \times \partial |\Delta^{l}|.
     \end{split}
   \]
   Since the orientation of $M_{\iota}$ is compatible with that of
   $B'(\oul{I})$, we get the required decomposition by taking the
   preimage of this.
\end{proof}

\begin{proof}[Proof of Proposition~\ref{propn:iotachainmap}]
  It suffices to consider $\zeta$ of the form $\Gamma \otimes \omega$
  with $\omega \in \Omega^{d+t-1}(M_{\xi})$ and $\Gamma \in
  \mathcal{G}^{\xi}_{c_{1},\ldots,c_{n},a}$ a
  generator of $(\Pjl(\xi)^{\vee})^{-t}$. Now
  $d \zeta = d\Gamma \otimes \omega + (-1)^{t} \Gamma \otimes d
  \omega$, where $d \Gamma$ is a signed sum of generators of
  $(\Pjl(\xi)^{\vee})^{-(t-1)}$.
  By definition, we have
  \[\iota_{\ind{j},l,\xi}(\Gamma \otimes d\omega)  =
    (-1)^{\epsilon(t)}  \int_{M_{\xi\Gamma}}
    d\omega|_{M_{\xi\Gamma}},\]
  while the formula for $d\Gamma$ in
  Remark~\ref{rmk:gens} gives
  \[
    \begin{split}
    \iota_{\ind{j},l,\xi}(d\Gamma \otimes \omega) & =
\sum_{\substack{i=1,\ldots,n\\ c_{i}=1}}
       (-1)^{c_{1}+\cdots+c_{i}} (\iota_{\ind{j},l,\xi}(\Gamma
        (d_{0}^{(i)},\id) \otimes \omega) -
       \iota_{\ind{j},l,\xi}(\Gamma (d_{1}^{(i)},\id)
       \otimes \omega)) \\
       & \phantom{=} + (-1)^{t-a} \sum_{k=1}^{l}
       (-1)^{k} \iota_{\ind{j},l,\xi}(\Gamma (\id, d_{k})
       \otimes \omega) \\
       &= (-1)^{\epsilon(t-1)}( \sum_{\substack{i=1,\ldots,n\\ c_{i}=1}}
       (-1)^{c_{1}+\cdots+c_{i}}
       \left(\int_{M_{\xi\Gamma(d_{0}^{(i)},\id)}} \omega -
       \int_{M_{\xi\Gamma(d_{1}^{(i)},\id)}} \omega\right)
       \\
       & \phantom{= (-1)^{\epsilon(t-1)}} + (-1)^{t-a} \sum_{k=1}^{l}
       (-1)^{k} \int_{M_{\xi\Gamma (\id, d_{k})}} \omega) 
    \end{split}
  \]
  If $M_{\xi\Gamma}$ is non-degenerate, the description of $\partial M_{\xi\Gamma}$ from
  Lemma~\ref{lem:bdryisd} lets us identify this with the integral of
  $\omega$ over $\partial M_{\xi \Gamma}$, giving
\[
  \begin{split}
\iota_{\ind{j},l,\xi}(d \zeta) & = \iota_{\ind{j},l,\xi}(d
  \Gamma \otimes \omega) + (-1)^{t} \iota_{\ind{j},l,\xi}(\Gamma
  \otimes d \omega) \\ & = (-1)^{\epsilon(t-1)} \int_{\partial
    M_{\xi\Gamma}} \omega + (-1)^{t}(-1)^{\epsilon(t)}
  \int_{M_{\xi\Gamma}} d\omega \\ & = 0,
  \end{split}
\]
since we have $\int_{M_{\xi\Gamma}} d\omega = \int_{\partial
  M_{\xi\Gamma}} \omega$ by Stokes's Theorem, and
\[ (-1)^{\epsilon(t-1)}+(-1)^{t}(-1)^{\epsilon(t)} = 0\]
using  $(-1)^{\epsilon(t)} = (-1)^{t-1}(-1)^{\epsilon(t-1)}$.

In the degenerate case, we have $\int_{M_{\xi\Gamma}} d\omega = 0$
since the dimension of $M_{\xi\Gamma}$ is smaller than that of
$d\omega$, while in the formula for $\iota_{\ind{j},l,\xi}(d\Gamma
\otimes \omega)$ all the terms vanish for the same reason except
potentially for 
those of the form
\[ \pm \left(\int_{M_{\xi\Gamma(d_{0}^{(i)},\id)}} \omega -
    \int_{M_{\xi\Gamma(d_{1}^{(i)},\id)}} \omega\right) \]
where the $i$th direction is degenerate. But in this case the manifolds 
$M_{\xi\Gamma(d_{0}^{(i)},\id)}$ and $M_{\xi\Gamma(d_{1}^{(i)},\id)}$
are equal and so the two integrals cancel. Thus
$\iota_{\ind{j},l,\xi}(d\Gamma \otimes \omega) = 0$, as required.
\end{proof}

\begin{lemma}
  The construction of Definition~\ref{defn:O,iota} determines a functor
  \[\mathcal{O}_{\ind{j},l} \colon
    \sBORDor_{d,d+n,\ind{j},l} \times \bbSj \times
  \sdDl^{\op}  \times [1] \to \Chk.\]
\end{lemma}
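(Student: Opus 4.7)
The statement bundles together several independent pieces of functoriality. Dependence on the bordism variable is trivial: $\sBORDor_{d,d+n,\ind{j},l}$ is a mere set (of $l$-simplices), so there is nothing to check there. Functoriality in the $[1]$-direction consists of assigning to the unique non-identity arrow $0\to 1$ the collection of maps $\mathcal{O}_{\ind{j},l}(M,\oul{I},f,\xi)\colon \Omega^{*}(M_{\xi})\to \Pjl(\xi)[-d]$, and the fact that these are actual morphisms in $\Chk$ is precisely Proposition~\ref{propn:iotachainmap}. The substantive work is therefore to verify functoriality in $\bbSj\times\sdDl^{\op}$, which has two aspects: (a) $\xi\mapsto \Omega^{*}(M_{\xi})$ is a functor (in the appropriate variance), and (b) the maps $\mathcal{O}(\xi)$ assemble into a natural transformation from this functor to $\Pjl(-)[-d]$.

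For (a), I would simply invoke Definition~\ref{defn:grid}: the assignment $\xi=(\sigma,\tau)\mapsto B'(\sigma^{*}_{\ext}\tau^{*}_{\xint}\oul{I})$ is strictly functorial into the category of subspaces of $B(\oul{I})$ under inclusion, hence $\xi\mapsto M_{\xi}=\pi^{-1}B'(\sigma^{*}\tau^{*}\oul{I})$ defines a functor into $\Mfd$ valued in inclusions of manifolds with corners, and composing with the contravariant functor $\Omega^{*}\colon\Mfd\to\Chk^{\op}$ gives the desired functor with the variance matching that of $\Pjl$. For (b), I would reduce to elementary morphisms in each factor. A general morphism $\phi\colon\xi\to\xi'$ in $\bbSj\times\sdDl^{\op}$ factors as an arrow in $\bbSj$ followed by one in $\sdDl^{\op}$, and each is a composite of elementary face/degeneracy-type maps. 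Writing the naturality square in adjoint form, commutativity reduces to verifying, for every generator $\Gamma'\in\mathcal{G}^{\xi'}$ of $\Pjl(\xi')^{\vee}$ and every $\omega\in\Omega^{*}(M_{\xi})$, the identity
\[
\iota_{\ind{j},l,\xi}\!\bigl(\Pjl(\phi)^{\vee}(\Gamma')\otimes \omega\bigr)=\iota_{\ind{j},l,\xi'}\bigl(\Gamma'\otimes \omega|_{M_{\xi'}}\bigr).
\]
Expanding $\Pjl(\phi)^{\vee}(\Gamma')$ as a signed sum of generators $\Gamma\in\mathcal{G}^{\xi}$ via Remark~\ref{rmk:gens} and Definition~\ref{defn:Pjl}, the left-hand side becomes a signed sum of integrals $\int_{M_{\xi\Gamma}}\omega|_{M_{\xi\Gamma}}$. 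Using the fundamental identity $(\xi^{*}\tau^{*}M)_{\xi',\tau'}=M_{\xi\xi',\tau\tau'}$ from Remark~\ref{rmk:Misthesame} together with the ``active-map invariance'' $M_{\xi\alpha,\tau}=M_{\xi,\tau}$ from Notation~\ref{notn:Mxitau}, the manifolds appearing on the two sides can be matched; any extra contributions paired with active injections vanish for purely dimensional reasons (as in the degenerate case of the proof of Proposition~\ref{propn:iotachainmap}).

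The principal obstacle, as in Proposition~\ref{propn:iotachainmap}, is bookkeeping: the sign factors $(-1)^{\epsilon(t)}$ of Definition~\ref{defn:O,iota} and the geometric sign convention of Warning~\ref{warn:sign} must reconcile with the Koszul signs built into $\Pjl=\mathcal{N}^{\spi}_{\ind{j}}\otimes\mathcal{N}_{l}$ and into its functoriality in $\Phi\colon\ind{i}\to\ind{j}$. I would dispatch this case by case across the elementary generators of morphisms in $\bbS^{j_{i}}$ and in $\sdDl^{\op}$; no new geometric input is required, only a careful replay of the signs already verified in the Stokes-theorem computation of Proposition~\ref{propn:iotachainmap}. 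Finally, since $\mathcal{O}_{\ind{j},l}(-,0)$ and $\mathcal{O}_{\ind{j},l}(-,1)$ are each built out of strictly natural constructions (preimages under $\pi$ respectively the strict functor $\Pjl$), the remaining compatibility with the oplax naturality in $\ind{j}$ and strict naturality in $l$ asserted implicitly by ``determines a functor'' is automatic and will be recorded in passing.
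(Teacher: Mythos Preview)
Your approach is correct but substantially overcomplicated, and you are importing difficulties from the \emph{next} two lemmas into this one. The statement concerns a \emph{fixed} $\ind{j}$ and $l$; naturality in $\ind{j}\in\simp^{n,\op}$ and $l\in\Dinjop$ is handled separately afterwards, so the ``oplax naturality in $\ind{j}$'' and the ``active-map invariance / dimensional vanishing'' machinery you invoke is not needed here.

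The paper's proof is essentially a one-line observation. Functoriality at $0$ and at $1$ is already known (restriction of forms along submanifold inclusions, respectively the strict functor $\Pjl$), so one only has to check that the maps $\Omega^{*}(M_{\xi})\to\Pjl(\xi)[-d]$ are natural in $\xi$. The key point you are missing is that for a morphism $\lambda\colon\xi\to\eta$ \emph{inside} $\bbSj\times\sdDl^{\op}$ (so $\eta=\xi\lambda$), the dual map $\Pjl(\eta)^{\vee}\to\Pjl(\xi)^{\vee}$ sends a generator $\Gamma$ to the \emph{single} generator $\lambda\Gamma$ (composition), with no signed sums whatsoever: the maps on $\mathcal{C}^{\spi}$ induced by inert morphisms and on $C(\Dnd^{\bullet})$ induced by injections are both just inclusions of generators. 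The naturality identity then reads
\[
(-1)^{\epsilon(t)}\int_{M_{\xi(\lambda\Gamma)}}\omega = (-1)^{\epsilon(t)}\int_{M_{\eta\Gamma}}\omega|_{M_{\eta\Gamma}},
\]
which holds immediately since $M_{\xi\lambda\Gamma}=M_{\eta\Gamma}$. There is no factorization into elementary moves, no replay of Stokes, and no sign bookkeeping: the degree $t$ is unchanged, so the $\epsilon(t)$ factors match. Your plan would of course arrive at the same conclusion, but only after unpacking a ``signed sum'' that has exactly one term.
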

\begin{proof}
  By definition we have functoriality when we restrict to $0,1 \in
  [1]$, so it suffices to check that for a morphism $\lambda \colon
  \xi \to \eta$ in $\bbS_{\sd}^{\ind{j}} \times \sdDl^{\op}$
  (which means we have $\eta = \xi \lambda$)
  we have commutative squares
  \[
    \begin{tikzcd}
      \Omega^{*}(M_{\xi}) \arrow{r} \arrow{d} &
      \Omega^{*}(M_{\eta}) \arrow{d} \\
      \Pjl(\xi)[-d] \arrow{r} & \Pjl(\eta)[-d].
\end{tikzcd}
\]
By duality this amounts to the diagram
\[
\begin{tikzcd}
  {} & \Omega^{*}(M_{\xi}) \otimes \Pjl(\eta)^{\vee} \arrow{dl}
  \arrow{dr} \\
\Omega^{*}(M_{\xi}) \otimes \Pjl(\xi)^{\vee} \arrow{dr}  & &
\Omega^{*}(M_{\eta}) \otimes \Pjl(\eta)^{\vee} \arrow{dl} \\
 & \k[-d]
\end{tikzcd}
\]
commuting. To check that it does, it suffices to consider
$\omega \in \Omega^{d+t}(M_{\xi})$ and a generator $\Gamma$
of $(\Pjl(\eta)^{\vee})^{-t}$. The left composite takes
$\omega \otimes \Gamma$ to
\[ \iota_{\ind{j},l,\xi}(\lambda\Gamma \otimes \omega) =
  (-1)^{\epsilon(t)} \int_{M_{\xi\lambda\Gamma}} \omega|_{M_{\xi\lambda\Gamma}},\]
while the right composite takes this to
\[ \iota_{\ind{j},l,\eta}(\Gamma \otimes \omega|_{M_{\eta}}) =
  (-1)^{\epsilon(t)}
  \int_{M_{\eta\Gamma}} \omega|_{M_{\eta\Gamma}}. \]
Since we have $M_{\xi\lambda\Gamma} = M_{\eta\Gamma}$, these integrals
are indeed equal.
\end{proof}

\begin{lemma}
  The functors $\mathcal{O}_{\ind{j},l}$ are natural in
  $[l] \in \Dinjop$.
\end{lemma}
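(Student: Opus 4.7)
The plan is to unwind the definitions and observe that the naturality statement reduces entirely to the strict naturality already recorded for $\Pjl$ and $\scut$, together with functoriality of restriction and integration of differential forms.

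Fix $\psi \colon [k] \to [l]$ in $\Dinj$. The claim of naturality unwinds to the identity
\[\mathcal{O}_{\ind{j},l}\bigl((M, \oul{I}, f),\, \sigma,\, \sd(\psi)(\tau),\, -\bigr) \;=\; \mathcal{O}_{\ind{j},k}\bigl(\psi^{*}(M, \oul{I}, f),\, \sigma,\, \tau,\, -\bigr)\]
for every $(M, \oul{I}, f) \in \sBORDor_{d,d+n,\ind{j},l}$ and every $(\sigma,\tau) \in \bbSj \times \sd(\Delta^{k})^{\op}$, regarded as functors from $[1]$ to $\Chk$. So first I would verify agreement on the two objects of $[1]$. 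At $1 \in [1]$ both functors take the value $\Pjl(\sigma, \sd(\psi)(\tau))[-d] = P_{\ind{j},k}(\sigma, \tau)[-d]$, which holds on the nose by the strict naturality of $P_{(\blank,\blank)}$ in $[l]$ recorded in Definition~\ref{defn:Pjl}. At $0 \in [1]$ both functors take the value $\Omega^{*}$ of the cut submanifold; by Remark~\ref{rmk:Misthesame} we have the strict equality $(\psi^{*}M)_{(\sigma,\tau)} = M_{(\sigma,\psi\tau)} = M_{(\sigma,\sd(\psi)(\tau))}$ of submanifolds of the ambient manifold, so applying $\Omega^{*}$ gives the same cochain complex.

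Next I would verify agreement on the unique morphism $0 \to 1$, which is the cochain map $\mathcal{O}_{\ind{j},-}(M, \oul{I}, f, \xi)$ of Definition~\ref{defn:O,iota}. By Step~1 both sides have matching source and target, so by passing to adjoints it suffices to check that the associated linear maps $\iota_{\ind{j},l,(\sigma, \sd(\psi)(\tau))}$ and $\iota_{\ind{j},k,(\sigma,\tau)}$ agree. Since both are assembled from components $\iota^{\Gamma}$ indexed by generators $\Gamma = (\gamma,\delta) \in \mathcal{G}^{(\sigma,\tau)}_{t}$ (which literally parametrize a basis of the shared cochain complex $P_{\ind{j},k}(\sigma,\tau)^{\vee}$ by the strict naturality at $1$), it is enough to check equality componentwise. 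But both $\iota^{\Gamma}$ are, by definition, the map $\omega \mapsto (-1)^{\epsilon(t)}\int_{N}\omega|_{N}$ where $N$ is the submanifold obtained by composing the chosen maps: on the left $N = M_{(\sigma\gamma,\, \sd(\psi)(\tau)\delta)} = M_{(\sigma\gamma,\, \psi\tau\delta)}$, and on the right $N = (\psi^{*}M)_{(\sigma,\tau)\Gamma} = (\psi^{*}M)_{(\sigma\gamma,\tau\delta)}$, which by Remark~\ref{rmk:Misthesame} also equals $M_{(\sigma\gamma,\psi\tau\delta)}$. The two integrals are thus taken over the same oriented submanifold of the same integrand, and agree.

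There is no serious obstacle here: the only thing to watch is the bookkeeping for the generators, but once one observes that $\Pjl$ and $\scut$ are strictly (rather than merely up to homotopy) natural in $[l]$, and that restriction of differential forms is a strict functor of inclusions, the naturality square commutes on the nose. No Stokes's theorem argument is needed for this naturality, in contrast to the functoriality in $\xi$ that required Proposition~\ref{propn:iotachainmap}.
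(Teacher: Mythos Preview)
Your proposal is correct and follows essentially the same approach as the paper's proof. Both arguments first dispose of naturality at the objects $0,1 \in [1]$ by invoking the strict naturality of $\scut$ and $P_{\ind{j},l}$ in $l$, then check the morphism $0 \to 1$ by passing to the dual pairing and verifying on generators $\Gamma$ that the submanifolds being integrated over are literally equal, using Remark~\ref{rmk:Misthesame}.
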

\begin{proof}
  We know from the naturality of $\scut_{\ind{j},l}$ and 
  $\Pjl$  in $l$ that $\mathcal{O}_{\ind{j},l}(0)$ and
  $\mathcal{O}_{\ind{j},l}(1)$ are natural in $l$. It therefore
  suffices to check that for $\phi \colon [k] \to [l]$ in $\Dinj$,
  $\xi = (\sigma,\tau) \in \bbSj \times \sdDl^{\op}$, and $(M, \oul{I}, f)\in
  \sBORDor_{d,d+n,\ind{j},l}$, 
  the diagram
  \[
    \begin{tikzcd}
      \Omega^{*}(M_{\xi}) \arrow{r}\arrow{d} &
      \Omega^{*}(M_{\xi'}) \arrow{d} \\
      \Pjl(\xi)[-d] \arrow{r} & P_{\ind{j},k}(\xi')[-d]
    \end{tikzcd}
  \]
  commutes, where $\xi' = \xi(\id,\phi)=(\sigma,\tau\phi)$. Taking duals, this is
  equivalent to the square
  \[
  \begin{tikzcd}
    & P_{\ind{j},k}(\xi')^{\vee} \otimes \Omega^{*}(M_{\xi})
    \arrow{dl} \arrow{dr} \\
    \Pjl(\xi)^{\vee} \otimes \Omega^{*}(M_{\xi})
    \arrow{dr} & &  P_{\ind{j},k}(\xi')^{\vee} \otimes
    \Omega^{*}(M_{\xi'}) \arrow{dl} \\
    & \k[-d]
  \end{tikzcd}
\]
commuting. To see this it suffices to consider a generator $\Gamma$ of
$(P_{\ind{j},k}(\xi')^{\vee})^{-t}$ and $\omega \in
\Omega^{d+t}(M_{\xi})$. Since we have an identification $P_{\ind{j},k}(\xi')^{\vee} \cong
\Pjl(\xi)^{\vee} $ under which $\Gamma$ corresponds to
$\Gamma' = (\id,\phi)\Gamma$,
the composite on the left is
\[ (-1)^{\epsilon(t)}\int_{M_{\xi\Gamma'}} \omega|_{M_{\xi\Gamma'}}, \]
while that on the right is
\[ (-1)^{\epsilon(t)}\int_{M_{\xi'\Gamma}}
  \omega|_{M_{\xi'\Gamma}}. \]
Here $\xi'\Gamma = \xi(\id,\phi)\Gamma = \xi \Gamma'$ so these are
indeed equal.
\end{proof}

This implies that the maps $\mathcal{O}_{\ind{j},l}$ for varying
$l$ determine a morphism of semisimplicial sets
\[ \sorcut_{\ind{j}} \colon \sBORDor_{d,d+n,\ind{j}} \to
  \Xdj.\]

\begin{lemma}
  For any morphism $\Phi \colon \ind{i} \to \ind{j}$ in $\simp^{n}$,
  the square
  \[
  \begin{tikzcd}[column sep=large]
    \sBORDor_{d,d+n,\ind{j}} \arrow{d}{\Phi^{*}} \arrow{r}{\sorcut_{\ind{j}}} 
    & \Xdj \arrow{d}{\Phi^{*}} \\
    \sBORDor_{d,d+n,\ind{i}}  \arrow{r}{\sorcut_{\ind{i}}} 
    & \Xd_{\ind{i}}
  \end{tikzcd}
  \]
  commutes.
\end{lemma}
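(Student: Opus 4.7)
The plan is to dualize the desired commutative square so that it becomes a statement about pairings, and then to verify the pairing identity via the spine decomposition of integrals. Concretely, applying the $(\blank)^\vee \otimes \Omega^{*}(M_{(\Phi\xi,\tau)})$ construction in the style of Definition~\ref{defn:O,iota}, the commutativity of the square reduces to showing that for every $(\xi,\tau)=(\sigma,\tau)\in\bbS^{\ind{i}}\times\sdDl^{\op}$, every generator $\Gamma=(\gamma,\delta)\in\mathcal{G}^{\xi,\tau}_{c_{1},\ldots,c_{n},a}$ (with $t=c_1+\cdots+c_n+a$) and every $\omega\in\Omega^{d+t}(M_{\Phi\xi,\tau})$, one has
\[
\iota^{\Gamma}_{\ind{i},l,(\xi,\tau)}\bigl(\omega|_{(\Phi^{*}M)_{\xi,\tau}}\bigr)\;=\;\iota^{\alpha_{\Phi,l}^{\vee}(\Gamma)}_{\ind{j},l,(\bbS(\Phi)\xi,\tau)}(\omega).
\]
By Remark~\ref{rmk:Misthesame} the LHS equals $(-1)^{\epsilon(t)}\int_{M_{\Phi\xi\gamma,\tau\delta}}\omega$, so the entire content is to compute the RHS and match the two.

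The second step is to unwind $\alpha_{\Phi,l}^{\vee}$ explicitly. Since $P_{\ind{j},l}=\mathcal{N}^{\spi}_{\ind{j}}\otimes\mathcal{N}_{l}$ is defined as a tensor product of duals, and the oplax structure is the identity in the $\mathcal{N}_{l}$ factor and is induced factorwise by the spine maps $\mathcal{C}^{\spi}([b_{s}-a_{s}])\to\mathcal{C}^{\spi}([\phi_{s}(b_{s})-\phi_{s}(a_{s})])$ in the $\mathcal{N}^{\spi}$ factors, the double dual $\alpha_{\Phi,l}^{\vee}$ is \emph{literally} this spine map. So a $0$-spine $\gamma_{s}=[t_{s}]$ maps to the single generator $[\phi_{s}(a_{s}+t_{s})-\phi_{s}(a_{s})]$, while a $1$-spine $\gamma_{s}=[(t_{s}-1)t_{s}]$ maps to the sum
\[
\sum_{r=\phi_{s}(a_{s}+t_{s}-1)-\phi_{s}(a_{s})+1}^{\phi_{s}(a_{s}+t_{s})-\phi_{s}(a_{s})}[(r-1)r].
\]
Composing with the factorization $\bbS(\Phi)\xi=(\phi_{s}(a_{s}),\phi_{s}(b_{s}))_{s}$ in $\bbS^{\ind{j}}$, the RHS is therefore $(-1)^{\epsilon(t)}$ times a (signed) sum of integrals $\int_{M_{\Phi\xi\gamma',\tau\delta}}\omega$ indexed by the generators $\gamma'$ in this spine expansion of $\gamma$.

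The crucial geometric step is then the additivity identity
\[
\int_{M_{\Phi\xi\gamma,\tau\delta}}\omega\;=\;\sum_{\gamma'\text{ in spine expansion}}\int_{M_{\Phi\xi\gamma',\tau\delta}}\omega.
\]
This follows from the elementary observation $B'(I_{q}\le\cdots\le I_{r})=\bigcup_{m=q}^{r-1}B'(I_{m}\le I_{m+1})$, which holds because $B'(I_{m}\le I_{m+1})=[\tfrac{a_{m}+b_{m}}{2},\tfrac{a_{m+1}+b_{m+1}}{2}]$ and consecutive such closed intervals meet only at a single point. Taking products over the remaining $n-1$ factors and preimages under the submersion $\pi$, the submanifolds $M_{\Phi\xi\gamma',\tau\delta}$ cover $M_{\Phi\xi\gamma,\tau\delta}$ with overlaps contained in a set of codimension one, which has measure zero, so the integral of the top-dimensional form $\omega$ over the union equals the sum of the integrals. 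Applying this one spine direction at a time handles the full $n$-fold case.

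Finally, the degenerate case where some $\phi_{s}$ collapses $[a_{s}+t_{s}-1,a_{s}+t_{s}]$ to a single point is automatic: the spine expansion is empty so $\alpha_{\Phi,l}^{\vee}(\Gamma)=0$ and the RHS vanishes, while on the LHS the manifold $M_{\Phi\xi\gamma,\tau\delta}$ has dimension strictly less than $d+t$, forcing $\int_{M_{\Phi\xi\gamma,\tau\delta}}\omega=0$. Compatibility of both sides with the sign rule of Remark~\ref{rmk:gens} is automatic because $\alpha_{\Phi,l}^{\vee}$ preserves the degree and the factorwise structure of $\Gamma$. The main obstacle is the geometric additivity identity in the third paragraph, since it is the only place where one must leave purely combinatorial manipulations and use the submersivity built into condition \ref{defn:Bord_cond_grid} of Definition~\ref{defn:BORDV} to ensure that $\pi^{-1}B'(\cdots)$ decomposes cleanly along the spine.
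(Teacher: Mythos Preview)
Your proposal is correct and follows the same overall strategy as the paper: dualize to a pairing identity, identify $\alpha_{\Phi,l}^{\vee}$ with the spine-chain map, and then verify the resulting integral identity by additivity in the non-degenerate case and a dimension count in the degenerate case.

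The organizational difference is that the paper first disposes of the inert case (where $\alpha_{\Phi,l}^{\vee}$ is an isomorphism), then uses the active--inert factorization and functoriality in $\Phi$ to reduce to an elementary inner face map $d_{l}$ or degeneracy $s_{l}$ in a single coordinate. In those cases the spine expansion of a generator has at most two terms, so the additivity identity collapses to a single split $\int_{A\cup B}=\int_{A}+\int_{B}$. You instead treat a general $\Phi$ in one shot, which requires the full multi-directional spine additivity over the tiling $B'(I_{q}\le\cdots\le I_{r})=\bigcup_{m}B'(I_{m}\le I_{m+1})$. Your route is shorter and avoids the structural reduction; the paper's route makes each elementary verification a one-liner at the cost of that reduction. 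One notational slip: in your additivity identity the right-hand pieces should be $M_{\sigma'\gamma',\tau\delta}$ with $\sigma'=\bbS(\Phi)\xi$, since the generators $\gamma'$ produced by the spine expansion live in $\mathcal{G}^{(\sigma',\tau)}$ and compose with $\sigma'$, not with $\Phi\xi$; the expression $\Phi\xi\gamma'$ does not typecheck.
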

\begin{proof}
  For $\sigma \in \bbS^{\ind{i}}$, viewed as an inert map $\ind{a} \to
  \ind{i}$, the object $\sigma' = \bbS(\Phi)(\sigma)$ corresponds to
  the inert part of the active-inert factorization of $\Phi \circ
  \sigma \colon \ind{a} \to \ind{j}$. 
Given $\tau \in \sdDl$ and $(M, \oul{I},f) \in
\sBORDor_{d,d+n,\ind{j}}$, we have by definition
\[ (\Phi^{*}M)_{(\sigma,\tau)} = M_{(\sigma',\tau)}.\]
  Unwinding the definitions, we must check
  that the following diagram commutes:
  \[
    \begin{tikzcd}
      \Omega^{*}(M_{(\sigma',\tau)}) \arrow{r} \arrow[equals]{d}& \Pjl(\sigma',\tau) \arrow{d}{\alpha_{\Phi,l}} \\
      \Omega^{*}((\Phi^{*}M)_{(\sigma,\tau})) \arrow{r} &
      P_{\ind{i},l}(\sigma,\tau), 
    \end{tikzcd}
  \]
  where $\alpha_{\Phi,l}$ is the map from Definition~\ref{defn:Pjl}.
  Equivalently, the dual square
  \[
    \begin{tikzcd}
      \Omega^{*}(M_{(\sigma',\tau)}) \otimes P_{\ind{i},l}(\sigma,\tau)^{\vee}
      \arrow{r}{\id \otimes \alpha_{\Phi,l}^{\vee}} \arrow[equals]{d} &
      \Omega^{*}(M_{(\sigma',\tau)}) \otimes \Pjl(\sigma',\tau)^{\vee}
      \arrow{d} \\
      \Omega^{*}((\Phi^{*}M)_{(\sigma,\tau)}) \otimes
      P_{\ind{i},l}(\sigma,\tau)^{\vee} \arrow{r} & \k[-d]
    \end{tikzcd}
  \]
  should commute. If $\Phi$ is inert then this is immediate since  $\sigma' = \Phi \circ
  \sigma$ and $\alpha_{\Phi,l}^{\vee}$ is an isomorphism. By
  functoriality in $\Phi$ it therefore suffices to check this when
  $\Phi$ is active, for which in turn it is enough to consider the
  cases where $\Phi$ is given by an inner face map or a degeneracy in one coordinate and
  by identities in the others; for notational convenience we'll just check
  the case where the first coordinate is non-trivial.

  First suppose $\Phi = (d_{l},\id,\ldots,\id)$ for
  $d_{l} \colon [i_{1}] \to [i_{1}+1]$.  If
  $\sigma = (\sigma_{1},\sigma_{+})$ then
 \[\sigma' =
   \begin{cases}
     (\sigma'_{1}, \sigma_{+}), & \sigma_{1}(0) \leq l-1 < l \leq \sigma_{1}(a_{1}), \\
     (d_{l} \circ \sigma_{1}, \sigma_{+}), & \text{otherwise,}
   \end{cases}
 \]
 where $\sigma'_{1}\colon [a_{1}+1] \to [i_{1}+1]$ is the inert map
 determined by setting $\sigma'_{1}(0)=\sigma_{1}(0)$.
  Given a generator $\Gamma = (\gamma,\delta) = (\gamma_{1},\ldots,\gamma_{n},\delta)$ in $
  (P_{\ind{i},l}(\sigma,\tau)^{\vee})^{-t}$ and writing $\Gamma' =
  (\gamma_{2},\ldots,\gamma_{n},\delta)$, we have
  \[\alpha_{\Phi,l}^{\vee}\Gamma =
    \begin{cases}
      (\gamma_{1}^{-},\Gamma') + (\gamma_{1}^{+},\Gamma'), & \sigma_{1}\gamma_{1} =
      \rho_{l}, \\
      (\gamma'_{1},\Gamma'), & \text{otherwise}.
    \end{cases}
  \]
  where $\sigma'_{1}\gamma_{1}^{-} = \rho_{l},
  \sigma'_{1}\gamma_{1}^{+}=\rho_{l+1}$, $\sigma'_{1}\gamma'_{1} =
  d_{l}\sigma_{1}\gamma_{1}$, and $\rho_{i}$ denotes the inclusion of
  the subset $\{i-1,i\}$.
  We abbreviate $\Upsilon := (\sigma_{+},\tau)\Gamma'$. Given
  $\omega \in \Omega^{d+t}(M_{\sigma',\tau})$, there are thus two
  cases two consider: if either the image of $\sigma_{1}$ doesn't
  contain both $l-1$ and $l$ or $\sigma_{1}\gamma_{1}\neq \rho_{l}$
  then the required identity
\[ \int_{M_{(\sigma'_{1}\gamma'_{1}, \Upsilon)}} \omega
  = \int_{(\Phi^{*}M)_{(\sigma_{1}\gamma_{1}, \Upsilon)}}
  \omega \]
holds since $\sigma'_{1}\gamma'_{1} = d_{l}\sigma_{1}\gamma_{1}$ and so the two
manifolds are identical. In the other case, where
$\sigma_{1}\gamma_{1}=\rho_{l}$, we instead have
\[ (\Phi^{*}M)_{(\rho_{l}, \Upsilon)} =
M_{(\rho_{l},\Upsilon)} \cup
M_{(\rho_{l+1},\Upsilon)},\]
so that 
\[ \int_{(\Phi^{*}M)_{(\sigma_{1}\gamma_{1}, \Upsilon)}}
  \omega = \int_{M_{(\rho_{l},\Upsilon)}} \omega +
  \int_{M_{(\rho_{l+1},\Upsilon)}} \omega
  = \int_{M_{(\sigma'_{1}\gamma^{-}_{1}, \Upsilon)}}
  \omega + \int_{M_{(\sigma'_{1}\gamma^{+}_{1}, \Upsilon)}} \omega,\]
as required.

Now consider the case where $\Phi = (s_{l},\id,\ldots,\id)$ for $s_{l}
\colon [i_{1}] \to [i_{1}-1]$.
Then
 \[\sigma' =
   \begin{cases}
     (\sigma'_{1}, \sigma_{+}), & \sigma_{1}(0) \leq l < l+1 \leq \sigma_{1}(a_{1}) \\
     (s_{l} \circ \sigma_{1}, \sigma_{+}), & \text{otherwise,}
   \end{cases}
 \]
 where $\sigma'_{1} \colon [a_{1}-1]\to [i_{1}-1]$ is the inert map
 determined by setting $\sigma'_{1}(0)=\sigma_{1}(0)$. Given a generator
 $\Gamma$ in $(P_{\ind{i},l}(\sigma,\tau)^{\vee})^{-t}$ as before, we
 have
  $\alpha_{\Phi,l}^{\vee}\Gamma = (\gamma_{1}', \Gamma')$
 where $\sigma_{1}'\gamma_{1}' = s_{l}\sigma_{1}\gamma_{1}$ if
 $\sigma_{1}\gamma_{1} \neq \rho_{l+1}$, while in the case where
 $\sigma_{1}\gamma_{1} = \rho_{l+1}$ we have
 $\alpha_{\Phi,l}^{\vee}\Gamma = 0$.

 There are again two cases to
 consider: if $\sigma_{1}\gamma_{1} \neq \rho_{l+1}$ then we have
 \[ \int_{M_{(\sigma'_{1}\gamma'_{1}, \Upsilon)}} \omega
  = \int_{(\Phi^{*}M)_{(\sigma_{1}\gamma_{1}, \Upsilon)}}
  \omega \]
since $\sigma'_{1}\gamma'_{1} = s_{l}\sigma_{1}\gamma_{1}$, which
means the two manifolds are identical. On the other hand, if 
$\sigma_{1}\gamma_{1} = \rho_{l+1}$ then as required we have
\[ \int_{(\Phi^{*}M)_{(\sigma_{1}\gamma_{1}, \Upsilon)}}
  \omega = 0\]
for degree reasons: since $s_{l}\sigma_{1}\gamma_{1}$ is constant
(at $l$) the dimension of the manifolds we integrate over cannot equal
the degree of $\omega$.
\end{proof}

\begin{remark}
  In the simplest case, where $n = 1$ and we consider the inner face
  map $d_{1} \colon [1] \to [2]$, we have shown that the square
  \[
    \begin{tikzcd}
      \Omega^{*}(X) \arrow[equals]{r} \arrow{d} & \Omega^{*}(X) \arrow{d} \\
      N^{\spi}_{2}[-d] \arrow{r} & N^{\spi}_{1}[-d],
    \end{tikzcd}
  \]
  commutes for a composition $X = X_{1} \cup X_{2}$ in the bordism
  direction, because we have
  $\int_{X} = \int_{X_{1}} + \int_{X_{2}}$.
\end{remark}

Putting the preceding results together, we have shown that we have
defined a natural family of maps of semisimplicial sets
\[ \sBORDorsemi_{d,n,\ind{j}} \xto{\sorcut_{\ind{j}}} \Xdj, \]
where by construction we have a commutative square
\[
  \begin{tikzcd}
    \sBORDorsemi_{d,n,\ind{j}} \arrow{r}{\sorcut_{\ind{j}}} \arrow{d}{\scut_{\ind{j}}} & \Xdj
    \arrow{d} \\
    (\Ex^{1}\mathrm{N}\,\Fun(\bbSj, \Mfd)^{(W)})^{\semi} \arrow{r}{\Omega^{*}} & (\Ex^{1}\mathrm{N}\,\Fun(\bbSj, \Chk)^{(W)})^{\semi}.
  \end{tikzcd}
  \]
As we saw in Remark~\ref{rmk:functortoXdj}, this implies that we get a
functor of $n$-uple Segal spaces:
\begin{proposition}
  The maps $\sorcut_{\ind{j},l}$  induce functors
  \[\orcut_{d,n}\colon \BORDor_{d,d+n} \to
    \COSPAN_{n}(\POr_{d}^{\mathcal{S}}) \]
  of $n$-uple Segal spaces, and hence functors
  \[\orcut_{d,n}\colon \Bordor_{d,d+n} \to
    \Cospan_{n}(\POr_{d}^{\mathcal{S}}) \]
  of $n$-fold Segal spaces. \qed
\end{proposition}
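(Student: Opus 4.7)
The plan is to assemble the pieces constructed above into a cone over the pullback square that defines $\COSPAN_n(\POr_d^{\mathcal{S}})$, and then to observe that passing to underlying $n$-fold Segal spaces is automatic. Since $\POr_d^{\mathcal{S}}$ is defined as the pullback $\mathcal{S}_{\fin} \times_{\Mod_\k^{\op}} \Mod_{\k/\k[-d]}^{\op}$, and $\COSPAN_n(\blank)\simeq\SPAN_n((\blank)^{\op})$ preserves limits by Lemma~\ref{lem:Spanlim}, the target $\COSPAN_n(\POr_d^{\mathcal{S}})$ will again be a pullback square of $n$-uple Segal spaces. So the problem reduces to producing compatible morphisms from $\BORDor_{d,d+n}$ to the three remaining corners.

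First I would produce the morphism to $\COSPAN_n(\Mod_{\k/\k[-d]}^{\op})$ by executing exactly the recipe outlined in Remark~\ref{rmk:functortoXdj}, now that the natural semisimplicial maps $\sorcut_{\ind{j}}\colon\sBORDorsemi_{d,d+n,\ind{j}}\to\Xdj$ have been constructed and checked to be natural in $\ind{j}\in\Dnop$. Combining these with the semisimplicial simplicial diagram of Construction~\ref{constr:tXj} and invoking Proposition~\ref{propn:Pdiagweqs}(i)--(iii) --- which supplies, in the homotopy category of spaces, the chain of weak equivalences $\disc\Ydj\simeq\tYdj\simeq\tXdj\simeq\disc\Xdj$ needed to identify the right-hand side of each $\sorcut_{\ind{j}}$ with $\Map(\bbSj,\Mod_{\k/\k[-d]})$ --- yields a morphism in $\Fun(\Dnop,\mathcal{S})$ from $\BORDor_{d,d+n}$ into $\oCOSPAN_n(\Mod_{\k/\k[-d]}^{\op})$. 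To see that this lands in the cartesian sub-object $\COSPAN_n(\Mod_{\k/\k[-d]}^{\op})$, I would appeal to the fact that the forgetful functor $\Mod_{\k/\k[-d]}\to\Mod_\k$ is conservative and preserves pullbacks: a $\bbSj$-diagram in $\Mod_{\k/\k[-d]}$ is cartesian iff its image in $\Mod_\k$ is, and by the next step its image there coincides with $C^*\circ\cut$, which is already known to be cartesian.

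The second step is to furnish the compatibility with the map $\cut_{d,n}\colon\BORD_{d,d+n}\to\COSPAN_n(\mathcal{S}_{\fin})$ of Proposition~\ref{propn:cut_functor}. This is exactly what the explicit commutative square of semisimplicial sets
\[
  \begin{tikzcd}[column sep=large]
    \sBORDorsemi_{d,d+n,\ind{j}} \arrow{r}{\sorcut_{\ind{j}}} \arrow{d}{\scut_{\ind{j}}} & \Xdj \arrow{d} \\
    (\Ex^{1}\mathrm{N}\,\Fun(\bbSj, \Mfd)^{(W)})^{\semi} \arrow{r}{\Omega^{*}} & (\Ex^{1}\mathrm{N}\,\Fun(\bbSj, \Chk)^{(W)})^{\semi}
  \end{tikzcd}
\]
recorded just before the proposition provides, together with the natural quasi-isomorphism $\Omega^*(M)\simeq C^*(M)$ available on manifolds. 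Passing to $\infty$-groupoids via Proposition~\ref{propn:Pdiagweqs} turns this into the commuting square of $n$-uple Segal space morphisms needed to close the cone, and the universal property of the pullback then yields the desired $\orcut_{d,n}\colon\BORDor_{d,d+n}\to\COSPAN_n(\POr_d^{\mathcal{S}})$.

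Finally, the passage to $n$-fold Segal spaces is formal: the underlying $n$-fold Segal space is given by a right adjoint (colocalization) construction, hence is functorial, and by definition $\Bordor_{d,d+n}$ and $\Cospan_n(\POr_d^{\mathcal{S}})$ are the underlying $n$-fold Segal spaces of their $n$-uple counterparts, so $\orcut_{d,n}$ restricts. The real difficulty in the whole construction lay not in this assembly step but in the earlier verifications that $\sorcut_{\ind{j}}$ is a well-defined chain map (Proposition~\ref{propn:iotachainmap}, which uses Stokes's theorem and Lemma~\ref{lem:bdryisd}) and is natural in $\ind{j}$; once those are in hand, the present proposition is essentially a bookkeeping exercise around the pullback definition of $\POr_d^{\mathcal{S}}$.
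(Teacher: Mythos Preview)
Your approach is correct and matches the paper's, which marks this proposition with a \qed{} precisely because the entire argument is already packaged in Remark~\ref{rmk:functortoXdj}; your write-up is essentially an expanded version of that remark together with the observation that $\COSPAN_n$ preserves the defining pullback of $\POr_d^{\mathcal{S}}$.

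One imprecision worth correcting: you assert a chain of equivalences ending in $\tXdj\simeq\disc\Xdj$, but Proposition~\ref{propn:Pdiagweqs} does \emph{not} establish this last equivalence (indeed the paper explicitly remarks that it is not known whether the map $\Gamma_{\ind{j}}^{(d)}\colon\Ydj\to\Xdj$ is a weak equivalence). What you actually have is only the canonical map $\disc\Xdj\to\tXdj$ given by inclusion of $0$-simplices, and this is all that is needed: composing $\sorcut_{\ind{j}}$ with this map and then inverting the genuine equivalences $\disc\Ydj\isoto\tYdj\isoto\tXdj$ from parts (ii) and (iii) yields the required morphism of $\infty$-groupoids $\BORDor_{d,d+n,\ind{j}}\to\Map(\bbSj,\Mod_{\k/\k[-d]})$. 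The rest of your argument is unaffected.
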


\section{Symmetric Monoidal Structure}\label{subsec:orsymmmon}
In this section we will show that the functors $\orcut_{d,n}$ we
defined above also have a symmetric monoidal structure, which is moreover compatible
with that we previously constructed on the functors $\cut_{d,n}$. We
prove this by showing that $\orcut_{d,n}$ is a delooping of
$\orcut_{d+1,n-1}$. Using ``reduced'' versions of our strict models
from before, corresponding to passing from $n$-uple to $n$-fold Segal
spaces, we will construct commutative diagrams of semisimplicial sets
of the following form:
\[
  \begin{tikzcd}[column sep=-0.2cm]
    \sBordorsemi_{d,d+n,\ponej}(\emptyset,\emptyset) \arrow{rr}
    \arrow{d}{\sorcut^{\pred}_{d,n,\ponej}(\emptyset,\emptyset)} \arrow[bend right=80]{ddd}[near start]{\Omega^{*}\scut_{d,n,\ponej}(\emptyset,\emptyset)}& &
    \sBordorsemi_{d+1,d+n,\ind{j}} \arrow{d}[swap]{\sorcut^{\pred}_{d+1,n-1,\ind{j}}} \arrow[bend
    left=80]{ddd}[near start,swap]{\Omega^{*}\scut_{d+1,n-1,\ind{j}}} \\
    \Xdr_{\ponej}(0,0)  \arrow[bend right=60]{dd} & & \Xdpr_{\ind{j}} \arrow{ll} \arrow[bend left=60]{dd}\\
    \Ydr_{\ponej}(0,0) \arrow{u} \arrow{d} \arrow{r}{\sim} &
    \Zd_{\ponej} \arrow{dl} \arrow{ul}
    &  \Ydpr_{\ind{j}} \arrow{d} \arrow{l}{\sim} \arrow{u} \\
    \Ex^{1}\mathrm{N}\,\Fun_{(\txt{red})}(\bbS^{\ponej},
    \Chk)^{(W)}_{(0,0)}& &
    \Ex^{1}\mathrm{N}\,\Fun_{(\txt{red})}(\bbSj, \Chk)^{(W)} \arrow{ll}{\cong}.
  \end{tikzcd}
\]
natural in $\ind{j}$.
To see that this gives a symmetric monoidal structure, we extend the
middle part of the diagram to a commutative diagram of
semisimplicial simplicial sets of the following form:
\[
  \begin{tikzcd}
    \disc \mathfrak{Y}_{\ind{j}}^{(d+1,\txt{red})} \arrow{rr}[near start]{(\sim)} \arrow{dr}
    \arrow{dd}[swap]{(\sim)} & &
    \widetilde{\mathfrak{Y}}_{\ind{j}}^{(d+1,\txt{red})} \arrow{dd}[near start]{\sim} \arrow{dr}{\sim} \\
    & \disc \mathfrak{X}_{\ind{j}}^{(d+1,\txt{red})} \arrow[crossing over]{rr} & &
    \tX_{\ind{j}}^{(d+1,\txt{red})}  \\
   \disc \mathfrak{Z}_{\ponej}^{(d)} \arrow{rr}[near start]{(\sim)} & &
   \widetilde{\mathfrak{Z}}_{\ponej}^{(d)} \arrow{ddr}{\sim} \\
   \disc \mathfrak{Y}_{\ponej}^{(d,\txt{red})}(0,0) \arrow{u}{(\sim)} \arrow{rr}[near start]{(\sim)} \arrow{dr} & &
   \widetilde{\mathfrak{Y}}_{\ponej}^{(d,\txt{red})}(0,0) \arrow{dr}[swap]{\sim} \arrow{u}[swap]{\sim} \\
    & \disc \mathfrak{X}_{\ponej}^{(d,\txt{red})}(0,0) \arrow{rr}
    \arrow[crossing over,leftarrow]{uuu}  \arrow[crossing
    over,leftarrow]{uul}& &
   \tX_{\ponej}^{(d,\txt{red})}(0,0) \arrow[crossing over,leftarrow]{uuu}{\sim}
  \end{tikzcd}
\]
In this diagram we will show that the maps labelled $\sim$ are levelwise weak
equivalences and those labelled $(\sim)$ give weak equivalences on
geometric realization.

Combining the two diagrams, we get on the \icatl{} level the following
commutative diagram of $(n-1)$-fold Segal spaces:
\[
  \begin{tikzcd}
  \Bordor_{d,d+n}(\emptyset,\emptyset) \arrow{r}{\sim}
  \arrow{d}{\orcut_{d,n}}  \arrow[bend right=85]{dd}[swap]{\cut_{d,n}} &
  \Bordor_{d+1,d+n} \arrow{d}[swap]{\orcut_{d+1,n-1}}  \arrow[bend left=85]{dd}{\cut_{d+1,n-1}}\\
  \Cospan_{n}(\POr_{d}^{\mathcal{S}})(\emptyset,\emptyset)
  \arrow{r}{\sim}  \arrow{d} & \Cospan_{n-1}(\POr_{d+1}) \arrow{d} \\
  \Cospan_{n}(\mathcal{S})(\emptyset,\emptyset) \arrow{r}{\sim} &
  \Cospan_{n-1}(\mathcal{S}).
\end{tikzcd}
\]
Taken together for all $d$ and $n$, this encodes the desired symmetric
monoidal structure on $\orcut$ and moreover shows that this is
compatible with that on the unoriented cutting functors.

We will now define the objects that appear in our strict diagrams
above. For this we use the following notation:
\begin{definition}
  $\Fun_{(\txt{red})}(\bbSj, \Chk)$ is the full
  subcategory of $\Fun(\bbSj, \Chk)$ spanned by the
  functors that are \emph{reduced} in the sense that they take the
  morphisms in $R_{\ind{j}}$ from Definition~\ref{def:redbbSj} to weak
  equivalences.
\end{definition}

\begin{remark}
  By Corollary~\ref{cor:relcatspanmodel}, the simplicial set
  $\Nrv \Fun_{\pred}(\bbSj, \Chk)^{(W)}$ models the
  sub-$\infty$-groupoid of $\Map(\bbSj, \Mod_{\k})$ spanned
  by the reduced functors, as does the weakly equivalent simplicial
  set $\Ex^{1} \Nrv \Fun_{\pred}(\bbSj, \Chk)^{(W)}$. The
  \igpd{} $\Cospan_{n}(\Mod_{\k})_{\ind{j}}$ is in turn contained in
  this as the sub-\igpd{} of cartesian reduced diagrams by
  Remark~\ref{rmk:Spanasredftr}.
\end{remark}

\begin{definition}
  We define
  $\Fun_{\pred}(\bbS^{\ponej}, \Chk)_{(0,0)}$ to be
  the full subcategory of the category
  $\Fun_{\pred}(\bbS^{\ponej}, \Chk)$ spanned by
  those functors $\bbS^{1}\times \bbSj \to \Chk$ whose restriction to $\{(0,0), (1,1)\} \times \bbSj$ is
  constant at $0$. We define a functor
  \[Z_{\ind{j}} \colon \Fun_{\pred}(\bbSj, \Chk) \to
    \Fun_{\pred}(\bbS^{\ponej}, \Chk)_{(0,0)}\] by
  taking the right Kan extension along the inclusion
  \[ \bbSj\times \sdDl \cong \{(0,1)\} \times
    \bbSj \times \sdDl \hookrightarrow
    \bbS^{\ponej}\times \sdDl,\] which amounts to
  adding two copies of the unique map to the terminal object (which is
  the
  constant functor with value $0$). This is clearly an isomorphism of
  categories, with inverse the restriction to
  $\{(0,1)\} \times \bbSj$, and is moreover natural in
  $\ind{j}$.
\end{definition}

\begin{remark}
  The restriction
  \[\Fun(\bbS^{\ponej}, \Chk)^{(W)} \to (\Fun(\bbSj,
  \Chk)^{(W)})^{\times 2}\] to
  $\{(0,0),(1,1)\} \times \bbSj$ is a cocartesian fibration, and the fibre
  at $(F,G)$ can be identified with the category
  $(\Fun(\bbSj, \Chk)_{/F,G})^{(W)}$, with the cocartesian pushforward
  functors given by composition. Since we only consider weak
  equivalences in the base, these functors induce weak
  equivalences on classifying spaces. It therefore follows from
  Quillen's Theorem B~\cite{QuillenK} that the simplicial set
  $\Nrv \Fun_{\pred}(\bbS^{\ponej}, \Chk)_{(0,0)}^{(W)}$
  models the \igpd{} $\Map(\bbS^{\ponej}, \Mod_{\k})_{(0,0)}$
  that is the fibre of the corresponding restriction
  \[ \Map(\bbS^{\ponej}, \Mod_{\k}) \to \Map(\bbSj,
    \Mod_{\k})^{\times 2} \]
  at two copies of the
  constant functor with value $0$. Moreover, the restriction of the
  isomorphism $Z_{\ind{j}}$ and its inverse model the equivalence
  \[ \Map(\bbSponej,
    \Mod_{\k})_{(0,0)} \isoto \Map(\bbSj, \Mod_{\k}) \]
  obtained by restricting to $\{(0,1)\} \times \bbSj$.
\end{remark}

\begin{definition}
  We define \[ \Fun_{\pred}(\bbSj, \Chkd), \quad
  \Fun_{\pred}(\bbS^{\ponej}, \Chkd)_{(0,0)}\] as the full
  subcategories of $\Fun(\bbSj, \Chkd)$ and
  $\Fun(\bbS^{\ponej}, \Chkd)$ containing those objects whose
  compositions with the forgetful functor to $\Chk$ lie in
  $\Fun_{\pred}(\bbSj, \Chk)$ and
  $\Fun_{\pred}(\bbS^{\ponej}, \Chk)_{(0,0)}$, respectively. We
  then define 
  \[ \Ydrj := \Ex^{1} \Nrv \Fun_{\pred}(\bbSj, \Chkd)^{(W)},\]
  \[ \Ydr_{\ponej}(0,0) := \Ex^{1} \Nrv
    \Fun_{\pred}(\bbS^{\ponej}, \Chkd)_{(0,0)}^{(W)}. \]
\end{definition}

\begin{remark}
  We would like to use the functor $Z_{\ind{j}}$ to define maps of
  simplicial sets \[\Ydpr_{\ind{j}} \to \Ydr_{\ponej}(0,0)\] that model
  the equivalence
  \[   \Map(\bbS^{\ponej},
    \Mod_{\k/\k[-d-1]})_{(0,0)} \isoto \Map(\bbSj, \Modkd),\]
  which uses that $\k[-d-1]$ is the pullback of
  \[ 0 \to \k[-d] \from 0.\]
  However, we can't model this homotopy pullback
  just using the chain complexes $\k[-d]$ and $\k[-d-1]$, which forces
  us to instead define a zig-zag of functors, as follows:
\end{remark}

\begin{definition}
  Let $Q_{\ind{j}} \colon \bbS^{\ponej} \to \Chk$ denote the composite
  \[ \bbS^{1} \times \bbSj \xto{\txt{proj}} \bbS^{1}
    \xto{\mathcal{N}^{\spi}_{1}} \Chk.\]
  We then define \[\Fun_{\pred}(\bbS^{\ponej},
    \Chk)_{/Q_{\ind{j}[-d]},(0,0)} \subseteq \Fun(\bbS^{\ponej},
    \Chk)_{/Q_{\ind{j}}[-d]}\] to be the full subcategory spanned by the objects whose image in $\Fun(\bbS^{\ponej},
    \Chk)$ lies in $\Fun_{\pred}(\bbS^{\ponej},
    \Chk)_{(0,0)}$. Now consider the natural transformations
  \[ Z_{0}(\k[-1]) \xto{\ell} \mathcal{N}^{\spi}_{1} \xfrom{r} \k\] depicted
  as
  \[
    \begin{tikzcd}
      0  \arrow{r} & \k[0] & \k[0] \arrow[equals]{l} \\
      \k[-1]  \arrow{u} \arrow{d} \arrow{r}{\ell_{(0,1)}} & N_{1}^{\spi}\arrow{u} \arrow{d}  & \k[0]\arrow[equals]{u} \arrow[equals]{d} 
      \arrow{l}[swap]{r_{(0,1)}} \\
      0 \arrow{r} & \k[0]  & \k[0] \arrow[equals]{l}
    \end{tikzcd}
  \]
  where $\ell_{(0,1)} \colon \k[-1] \to N_{1}^{\spi}$ is simply the
  identity in cohomological degree $1$ and
  $r_{(0,1)}\colon \k[0] \to N_{1}^{\spi}$ is the diagonal
  $\k \to \k \oplus \k$ in cohomological degree $0$. Here $r$ is a
  natural weak equivalence, while $\ell$ exhibits $\k[-1]$ as the
  homotopy pullback of two copies of the zero map $0 \to \k[0]$.

  From $\ell$ and $r$ we get
  natural transformations
  \[ Z_{\ind{j}}(\k[-d-1]) \xto{\ell_{\ind{j}}[-d]} Q_{\ind{j}}[-d]
    \xfrom{r_{\ind{j}}[-d]} \k[-d],\]
  which in turn induce functors
  \[
    \begin{tikzcd}
      \Fun_{\pred}(\bbSj, \Ch_{\k/\k[-d-1]}) \arrow{d}{Z'_{\ind{j}}} \\
      \Fun_{\pred}(\bbS^{\ponej},
      \Chk)_{/Q_{\ind{j}[-d]},(0,0)} \\
      \Fun_{\pred}(\bbS^{\ponej},
    \Chkd)_{(0,0)}, \arrow{u}{R_{\ind{j}}}
    \end{tikzcd}
  \]
  where $Z'_{\ind{j}}$ is given by applying $Z_{\ind{j}}$ and then
  composing with $\ell_{\ind{j}}$, while $R_{\ind{j}}$ is given by
  composition with $r_{\ind{j}}$. If we define
  \[ \Zd_{\ponej} := \Ex^{1} \Nrv \Fun_{\pred}(\bbS^{\ponej},
    \Chk)_{/Q_{\ind{j}},(0,0)}^{(W)},\]
  then $Z'_{\ind{j}}$ and $R_{\ind{j}}$ induce natural maps of
  simplicial sets
  \[ \Ydpr_{\ind{j}} \xto{Z'_{\ind{j}}} \Zd_{\ponej}
    \xfrom{R_{\ind{j}}} \Ydr_{\ponej}(0,0),\]
  both of which are weak equivalences.
\end{definition}
Passing to \igpds{}, this zig-zag of weak equivalences models the
equivalence
\[ \Map_{\pred}(\bbSj, \Mod_{\k/\k[-d-1]}) \simeq
  \Map_{\pred}(\bbS^{\ponej}, \Modkd)_{(0,0)},\]
which gives the equivalence
\[ \Cospan_{n}(\Mod_{\k/\k[-d-1]})_{\ind{j}} \simeq
  \Cospan_{n+1}(\Modkd)(0,0)_{\ind{j}} \]
upon restricting to the cartesian diagrams.

So far we have constructed the bottom part of the first diagram
above. Next, it is convenient to consider the second diagram, starting
with the following definitions:
\begin{definition}
  We define $\Fun_{\pred}(\bbSj \times \sdDl, \Chk)$ to be the full
  subcategory of the category $\Fun(\bbSj \times \sdDl, \Chk)$ containing the
  functors that take the morphisms in $R_{\ind{j}} \times \sdDl$ to
  weak equivalences. Next, we define
  \[ \Fun_{\pred}(\bbS^{\ponej} \times \sdDl, \Chk)_{(0,0)} \subseteq
\Fun_{\pred}(\bbS^{\ponej} \times \sdDl, \Chk) \]
   to be the
  full subcategory
  spanned by those functors
  $\bbS^{1}\times \bbSj \times \sdDl \to \Chk$ such that the
  restriction to $\{(0,0), (1,1)\} \times \bbSj \times \sdDl$ is
  constant at $0$. As in the case $l = 0$, we have an isomorphism of
  categories
  \[ Z_{\ind{j},l} \colon \Fun_{\pred}(\bbSj \times \sdDl, \Chk) \to
    \Fun_{\pred}(\bbS^{\ponej} \times \sdDl, \Chk)_{(0,0)} \]
  that adds two copies of the unique map to the terminal object.
\end{definition}

\begin{definition}
  Let $\Fun_{\pred}(\bbSj \times \sdDl, \Chkd)$ and
  $\Fun_{\pred}(\bbS^{\ponej} \times \sdDl, \Chkd)_{(0,0)}$ be the full
  subcategories of $\Fun(\bbSj \times \sdDl, \Chkd)$ and
  $\Fun(\bbS^{\ponej} \times \sdDl, \Chkd)$ containing those objects whose
  compositions with the forgetful functor to $\Chk$ lie in the full subcategories
  $\Fun_{\pred}(\bbSj \times \sdDl, \Chk)$ and
  $\Fun_{\pred}(\bbS^{\ponej} \times \sdDl, \Chk)_{(0,0)}$, respectively. We
  then define families of bisimplicial sets $\tYdrj$ and $\tYdr_{\ponej}(0,0)$ by
  \[ \tYdrjl := \Nrv \Fun_{\pred}(\bbSj \times \sdDl, \Chkd)^{(W)},\]
  \[ \tYdr_{\ponej,l}(0,0) := \Nrv
    \Fun_{\pred}(\bbS^{\ponej} \times \sdDl, \Chkd)_{(0,0)}^{(W)}. \]
  Note that taking the underlying simplicial sets of 0-simplices we
  get $\Ydrj$ and $\Ydr_{\ponej}(0,0)$, respectively.
\end{definition}

\begin{definition}
  Let $Q_{\ind{j},l} \colon \bbS^{\ponej} \times \sdDl \to \Chk$ denote the
  composite
  \[ \bbS^{\ponej} \times \sdDl \to \bbS^{\ponej} \xto{Q_{\ind{j}}} \Chk.\]
  We define $\Fun_{\pred}(\bbS^{\ponej} \times \sdDl, \Chk)_{/Q_{\ind{j},l}[-d],(0,0)}$ as the full
  subcategory of the slice category 
  $\Fun(\bbS^{\ponej} \times \sdDl, \Chk)_{/Q_{\ind{j},l}[-d]}$ containing those objects whose
  images in $\Fun(\bbS^{\ponej} \times \sdDl, \Chk)$ lie in
  $\Fun_{\pred}(\bbS^{\ponej} \times \sdDl, \Chk)_{(0,0)}$. We
  then define a family of bisimplicial sets $\tZd_{\ponej}$ by
  \[ \tZd_{\ponej,l} := \Nrv
    \Fun_{\pred}(\bbS^{\ponej} \times \sdDl, \Chk)_{/Q_{\ind{j},l}[-d],(0,0)}^{(W)}. \]
  Note that taking the underlying simplicial set of 0-simplices we
  get  $\Zd_{\ponej}$.  
\end{definition}

\begin{definition}
  Composing with the projection $\bbS^{\ponej}\times \sdDl \to
  \bbS^{\ponej}$, the natural transformations  $\ell_{\ind{j}}$ and $r_{\ind{j}}$ induce
  natural transformations
  \[ Z_{\ind{j},l}(\k[-d-1]) \xto{\ell_{\ind{j},l}[-d]} Q_{\ind{j},l}[-d]
    \xfrom{r_{\ind{j},l}[-d]} \k[-d],\]
  which in turn induce functors
  \[
    \begin{tikzcd}
      \Fun_{\pred}(\bbSj \times \sdDl, \Ch_{\k/\k[-d-1]})
      \arrow{d}{Z'_{\ind{j},l}} \\
      \Fun_{\pred}(\bbS^{\ponej} \times \sdDl,
      \Chk)_{/Q_{\ind{j}[-d]},(0,0)} \\
      \Fun_{\pred}(\bbS^{\ponej}\times \sdDl,
    \Chkd)_{(0,0)}, \arrow{u}{R_{\ind{j},l}}
    \end{tikzcd}
    \]
  where $Z'_{\ind{j},l}$ is given by applying $Z_{\ind{j},l}$ and then
  composing with $\ell_{\ind{j},l}[-d]$, while $R_{\ind{j},l}$ is given by
  composition with $r_{\ind{j},l}[-d]$. These give  natural maps of
  bisimplicial sets
  \[ \tYdpr_{\ind{j},l} \xto{Z'_{\ind{j},l}} \Zd_{\ponej,l}
    \xfrom{R_{\ind{j},l}} \Ydr_{\ponej,l}(0,0),\]
  both of which are weak equivalences.
\end{definition}

\begin{definition}
  We define the categories
  \[ \Fun_{\pred}(\bbSj \times \sdDl, \Chk)_{/\Pjl[-d]} \subseteq \Fun(\bbSj \times \sdDl, \Chk)_{/\Pjl[-d]},\] 
  \[ \Fun_{\pred}(\bbS^{\ponej} \times \sdDl, 
  \Chk)_{/P_{\ponej,l}[-d],(0,0)} \subseteq \Fun(\bbS^{\ponej} \times \sdDl, \Chk)_{/P_{\ponej,l}[-d]},\]  as the full
  subcategories of containing
  those objects whose source lies in
  \[ \Fun_{\pred}(\bbSj \times \sdDl, \Chk), \quad
  \Fun_{\pred}(\bbS^{\ponej} \times \sdDl, \Chk)_{(0,0)},\] respectively. We
  then define families of semisimplicial simplicial sets $\tXdrj$ and
  $\tXdr_{\ponej}(0,0)$ by
  \[ \tXdrjl := \Nrv \Fun_{\pred}(\bbSj \times \sdDl, \Chk)_{/\Pjl[-d]}^{(W)},\]
  \[ \tXdr_{\ponej,l}(0,0) := \Nrv
    \Fun_{\pred}(\bbS^{\ponej} \times \sdDl, \Chk)_{/P_{\ponej,l}[-d],(0,0)}^{(W)}. \]
  We define $\Xdrjl$ and $\Xdr_{\ponej,l}(0,0)$ to be the underlying
  semisimplicial sets of 0-simplices of these semisimplicial
  simplicial sets.
\end{definition}

\begin{definition}
  By definition, using the decomposition $\bbS^{\ponej} \times \sdDl
  \cong \bbS^{1} \times (\bbSj \times \sdDl)$ we have natural isomorphisms
  \[ P_{\ponej,l} \cong \mathcal{N}_{1}^{\spi}
    \otimes \Pjl,\]
  \[ Z_{\ind{j},l}(\Pjl) \cong Z_{0}(\k) \otimes \Pjl.\]
  Tensoring with the map $\ell \colon Z_{0}(\k[-1]) \to
  \mathcal{N}^{\spi}_{1}$ from above therefore gives natural
  transformations
  \[ \zeta^{(d)}_{\ind{j},l} \colon Z_{\ind{j},l}(\Pjl[-d-1]) \to
    P_{\ponej, l}[-d]. \]
  Using this we get a natural transformation
  \[ \Fun_{\pred}(\bbSj \times \sdDl, \Chk)_{/\Pjl[-d-1]} \to 
    \Fun_{\pred}(\bbS^{\ponej} \times \sdDl,
    \Chk)_{/P_{\ponej,l}[-d],(0,0)}\]
  by applying $Z_{\ind{j},l}$ and then composing with
  $\zeta^{(d)}_{\ind{j},l}$.
\end{definition}

Tensoring with the natural map $\k \to \Pjl$ we now get a commutative
diagram
\[
  \begin{tikzcd}
    Z_{\ind{j},l}(\Pjl[-d-1]) \arrow{rr} & & P_{\ponej,l}[-d] \\
    Z_{\ind{j},l}(\k[-d]) \arrow{u} \arrow{r} & Q_{\ind{j},l}[-d]
    \arrow{ur} & \k[-d] \arrow{u} \arrow{l}
  \end{tikzcd}
\]
of functors $\bbS^{\ponej} \times \sdDl \to \Chk$. Composing with
this, we get a natural commutative diagram of semisimplicial
simplicial sets
\[
  \begin{tikzcd}
    \tX^{(d+1),\txt{red}}_{\ind{j},l} \arrow{rr} & & \tXdr_{\ponej,l}(0,0)
    \\
    \tY^{(d+1),\txt{red}}_{\ind{j},l} \arrow{r} \arrow{u}& \tZd_{\ponej,l}\arrow{ur} &
    \tYdr_{\ponej,l}(0,0), \arrow{l} \arrow{u} 
  \end{tikzcd}
\]
where moreover all the maps are weak equivalences. This is the right-hand
face of the second diagram above. We get the bottom part of the first
diagram by taking $0$-simplices, and we get the entire second diagram using
the counit maps for the adjunction between $\disc$ and the $0$-simplex
functor. Here the maps
\[ \disc \Ydpr_{\ind{j}} \to \tYdpr_{\ind{j}},\]
\[ \disc \Zd_{\ponej} \to \tZd_{\ponej},\]
\[ \disc \Ydr_{\ponej}(0,0) \to \tYdr_{\ponej}(0,0)\] all give weak
equivalences on geometric realizations by applying
Lemma~\ref{lem:diagdiscwe} as in the proof of
Proposition~\ref{propn:Pdiagweqs}.

We are left with showing the following:
\begin{proposition}
We have a commutative square
\[
  \begin{tikzcd}
    \sBordor_{d,d+n,\ponej}(\emptyset,\emptyset) \arrow{r}
    \arrow{d}[swap]{\sorcut^{\pred}_{d,n,\ponej}(\emptyset,\emptyset)} &
    \sBordor_{d+1,d+n,\ind{j}} \arrow{d}{\sorcut^{\pred}_{d+1,n-1,\ind{j}}} \\
    \Xdr_{\ponej}(0,0)   & \Xdpr_{\ind{j}}. \arrow{l}
  \end{tikzcd}
\]
\end{proposition}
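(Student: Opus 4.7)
The plan is to verify strict commutativity of the square on the level of semisimplicial sets by unwinding both compositions and comparing them pointwise. Fix an $l$-simplex $(M,\oul{I},f)$ of $\sBordorsemi_{d,d+n,\ponej}(\emptyset,\emptyset)$, where $\oul{I} = (I^1_0 \leq I^1_1,\oul{I}')$. Under the identification of Construction from \cite{CS}*{\S 7.2} recalled above, this corresponds to the $l$-simplex $(M,\oul{I}',f)$ of $\sBordorsemi_{d+1,d+n,\ind{j}}$ with the same underlying manifold of dimension $d+n+l$, and the $(\emptyset,\emptyset)$-condition says exactly that $M_{((d_0,\sigma),\tau)} = M_{((d_1,\sigma),\tau)} = \emptyset$ for every $\sigma \in \bbS^{\ind{j}}$ and every $\tau \in \sd(\Delta^l)$. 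Both compositions of the square assign to $((\sigma_0,\sigma),\tau) \in \bbS^{\ponej} \times \sd(\Delta^l)^{\op}$ a cochain map from $\Omega^*(M_{((\sigma_0,\sigma),\tau)})$ to the value at $((\sigma_0,\sigma),\tau)$ of $P_{\ponej,l}[-d]$, and the task is to check these agree.

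First I would dispense with the case where $\sigma_0$ is one of the two face maps $[0] \to [1]$: the $(\emptyset,\emptyset)$-condition forces $M_{((\sigma_0,\sigma),\tau)} = \emptyset$, so the source of both cochain maps is $0$, while the target is also $0$ by the defining condition of $\Xdr_{\ponej}(0,0)$ and the construction of the natural transformation $\zeta^{(d)}_{\ind{j},l}$ (whose value at $(0,0)$ and $(1,1)$ factors through the degree-$0$ part of $\mathcal{N}^{\spi}_1$, which is $0$ after the dualisation and the constraint from $\ell$). There is nothing to check in this case.

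The main case is $\sigma_0 = \id_{[1]}$. Then $M_{((\id,\sigma),\tau)}$ equals the manifold $M'_{\sigma,\tau}$ that arises from viewing $M$ as an object of $\sBord_{d+1,d+n,\ind{j},l}$, and both have dimension $d+1+\dim(\sigma,\tau)$. By duality it is enough, for each generator $\Gamma = (\gamma_0,\gamma,\delta) \in \mathcal{G}_t^{((\id,\sigma),\tau)}$ and each $\omega \in \Omega^{d+t}(M_{((\id,\sigma),\tau)})$, to compare the two pairings with $\k[-d]$. By Definition~\ref{defn:O,iota} the LHS pairing is $(-1)^{\epsilon(t)} \int_{M_{((\gamma_0,\sigma\gamma),\tau\delta)}} \omega$, while the RHS pairing goes through $\ell^{\vee}$ on the $\gamma_0$-tensor factor. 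When $\gamma_0$ is a degree-$0$ element of $N^{\spi}_1$, $\ell^{\vee}$ vanishes on it, and correspondingly $M_{((\gamma_0,\sigma\gamma),\tau\delta)}$ is empty by the $(\emptyset,\emptyset)$-condition, so both pairings give zero. When $\gamma_0 = \id_{[1]}$ (the unique degree-$1$ generator), $\ell^{\vee}$ acts as the identity, and the RHS pairing becomes the $(d+1)$-dimensional integral $(-1)^{\epsilon(t-1)}\int_{M'_{(\sigma\gamma,\tau\delta)}} \omega$. Since $M_{((\id,\sigma\gamma),\tau\delta)}$ and $M'_{(\sigma\gamma,\tau\delta)}$ are the \emph{same} oriented submanifold of dimension $d+t$, both integrals are equal up to an overall sign.

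The main obstacle I anticipate is therefore purely in the sign bookkeeping: the shifts $[-d]$ versus $[-d-1]$, the convention fixed in Warning~\ref{warn:sign} that the signs $(-1)^{\epsilon(t)}$ absorb the difference between Koszul and geometric conventions, and the sign that the natural transformation $\ell \colon Z_0(\k[-1]) \to \mathcal{N}^{\spi}_1$ contributes through the dualisation and shift. Concretely one has to check that the identity $(-1)^{\epsilon(t)} = (-1)^{t-1}(-1)^{\epsilon(t-1)}$ combines with the sign convention built into $\ell$ (coming from the suspension isomorphism $\k[-1] \simeq N^{\spi}_1$ in degree $1$) so that both pairings agree on the nose. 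Naturality in $\ind{j}$ then follows because all the ingredients, namely the delooping identification, the natural transformations $\zeta^{(d)}_{\ind{j},l}$, and the integration formula of Definition~\ref{defn:O,iota}, are strictly natural in $\ind{j}$. Once the sign chase is carried out, the square commutes semisimplicial-set-wise, which is what was to be shown.
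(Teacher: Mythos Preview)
Your proposal is correct and follows essentially the same route as the paper: split on whether $\sigma_0$ is a face vertex or $\id_{[1]}$, dualize the resulting square to a pairing triangle, and then compare generator by generator according to whether the first tensor factor $\gamma_0$ lies in degree $0$ or $1$. This matches the paper's argument step for step.

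Two small comments. First, in the degenerate case ($\sigma_0 \in \{(0,0),(1,1)\}$), your claim that ``the target is also $0$'' is not quite right as stated: the object $P_{\ponej,l}((0,0),\xi')[-d] = P_{\ind{j},l}(\xi')[-d]$ is nonzero. What is true is that the \emph{top row} of the square---namely $Z_{\ind{j},l}(\Omega^*(M_{(\id,\blank)}))$ and $\Omega^*(M_{\xi})$---vanishes at these points, so there is a unique map and nothing to check. This is exactly how the paper phrases it.

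Second, the sign obstacle you flag is largely self-inflicted: by the Remark following Definition~\ref{defn:O,iota}, the $(-1)^{\epsilon(t)}$ factors are precisely what make the \emph{adjoint} maps $\Omega^*(M_{\xi}) \to P_{\bullet}(\xi)[-d]$ sign-free. If you compare the two composites directly at the level of these adjoint maps rather than passing back through the $\iota$-pairings, the apparent discrepancy between $\epsilon(t)$ and $\epsilon(t-1)$ never arises, and both paths send $\omega$ to the functional $(\alpha,\Gamma) \mapsto \int_{M_{(\alpha,\xi'\Gamma)}}\omega$ without any sign. The paper simply asserts this identification; you have correctly identified all the ingredients, just carried an extra layer of signs that cancel.
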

\begin{proof}
Unwinding all the definitions, this amounts to checking that for
\[ (M,\oul{I},f) \in \sBordor_{d,d+n,\ponej}(\emptyset,\emptyset)\] and $\xi
= (\sigma_{0},\xi')
\in \bbS^{1}\times \bbS^{\ind{j}} \times \sdDl$, the following square commutes:
\[
  \begin{tikzcd}
    Z_{\ind{j},l}(\Omega^{*}(M_{(\id_{[1]},\blank)})(\xi) \arrow{d} \arrow[equals]{r} &
    \Omega^{*}(M_{\xi}) \arrow{d} \\
    (Z_{\ind{j},l}P_{\ind{j},l}[-d-1])(\xi) \arrow{r}{\zeta_{\ind{j},l}^{(d)}(\xi)} & P_{\ponej,l}(\xi)[-d]
  \end{tikzcd}
\]
If $\sigma_{0} \in \bbS^{1}$ is $(0,0)$ or $(1,1)$ the top row is zero, and there
is nothing to check. In the remaining case, where $\sigma_{0}$ is
$(0,1)$, we want to show that the square
\[
  \begin{tikzcd}
    \Omega^{*}(M_{(\id_{[1]},\xi')}) \arrow{d} \arrow[equals]{r} &
    \Omega^{*}(M_{(\id_{[1]},\xi')}) \arrow{d} \\
    P_{\ind{j},l}(\xi')[-d-1] \arrow{r} & P_{\ponej,l}(\id_{[1]},\xi')[-d]
  \end{tikzcd}
\]
commutes, or dually that the triangle
\[
    \begin{tikzcd}
  \Omega^{*}(M_{(\id_{[1]},\xi')}) \otimes
  P_{\ponej,l}(\id_{[1]},\xi')^{\vee} \arrow{rr} \arrow{dr} & & 
  \Omega^{*}(M_{(\id_{[1]},\xi')}) \otimes P_{\ind{j},l}(\xi')^{\vee}[1]
  \arrow{dl} \\
  & \k[-d]
\end{tikzcd}
\]
commutes. The horizontal map is the identity on
$\Omega^{*}(M_{(\id_{[1]},\xi')}) \otimes P_{\ind{j},l}(\xi')^{\vee}$
tensored with the dual of the map $\ell_{(0,1)} \colon \k[-1] \to
N_{1}^{\spi}$ given by the identity in cohomological degree $1$. The
dual $\ell_{(0,1)}^{\vee} \colon C(\Dndspi^{1}) \to \k[1]$
is then given by the identity in cohomological degree $-1$ and $0$
elsewhere. Consider a generator $\Gamma$ in
$(P_{\ind{j},l}(\xi')^{\vee})^{-t}$, a generator $\alpha$ in
$C(\Dndspi^{1})^{-s}$ ($s=0$ or $1$) and $\omega \in
\Omega^{d+t+s}(M_{(\id_{[1]},\xi)})$. If $s = 0$ then
$\ell_{(0,1)}^{\vee}(\alpha) = 0$ and we must show that
\[ \int_{M_{(\alpha,\xi'\Gamma)}} \omega = 0;\]
this is true since $M_{(\alpha,\xi'\Gamma)} = \emptyset$ by
assumption. On the other hand, if $s = 1$ and $\alpha$ is the unique
generator in degree $1$, corresponding to $\id_{[1]}$, then both maps
take $\omega \otimes (\alpha, \Gamma)$ to
$\int_{M_{(\id_{1},\xi'\Gamma)}} \omega$, as required.
\end{proof}  

Putting everything together and passing to the \icatl{} level, we have proved:
\begin{corollary}\label{cor:orcutsymmon}
  The functor
  \[\orcut_{n} := \orcut_{n,0} \colon \Bord_{d,d+n} \to
  \Cospan_{n}(\POr_{d}^{\mathcal{S}})\] has a natural symmetric
  monoidal structure coming from the functors $\orcut_{n,d+r}$.\qed
\end{corollary}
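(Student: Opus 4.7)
The plan is to obtain the symmetric monoidal structure by the same delooping strategy used earlier for $\Span_n(\PSymp_{S,s})$ (Corollary~\ref{cor:SpanPSympSM}), $\Cospan_n(\POr_{S,d})$, and the AKSZ functor (Corollary~\ref{cor:AKSZsymmon}): a symmetric monoidal $n$-fold Segal space can be recovered from a tower of $(n{+}r)$-fold Segal spaces whose $r$-fold endomorphisms of a distinguished object recover the original, and compatibility of the tower gives the symmetric monoidal structure. The distinguished object will be $\emptyset$ on both sides. Since $\Bord_{d,d+n}^{\ori}$ is already defined via the tower $\sBord_{d-r,d+n}^{\ori}$, and $\Cospan_{n+r}(\POr_{d-r}^{\mathcal{S}})$ has the analogous tower structure via the equivalences $\Cospan_{n+r}(\POr_{d-r}^{\mathcal{S}})(\emptyset,\emptyset)\simeq \Cospan_{n+r-1}(\POr_{d-r+1}^{\mathcal{S}})$ (an oriented analogue of \cite{spans}*{Proposition~8.3} combined with the identification in Proposition~\ref{propn:deloopPOr}), it suffices to promote the family $\orcut_{n+r,d-r}$ for $r\geq 0$ to a compatible family of morphisms between these towers.

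First I would verify the three ingredients that were already essentially assembled in the large diagrams at the end of the preceding subsection. (i) Construct the morphism of bisimplicial simplicial sets whose top row relates $\sBord^{\ori}_{d,d+n,(1,\mathbf{j})}(\emptyset,\emptyset)\to \sBord^{\ori}_{d+1,d+n,\mathbf{j}}$, and whose three middle columns relate $\widetilde{\mathfrak{Y}}^{(d+1,\mathrm{red})}_{\mathbf{j}}$, $\widetilde{\mathfrak{Z}}^{(d)}_{(1,\mathbf{j})}$ and $\widetilde{\mathfrak{Y}}^{(d,\mathrm{red})}_{(1,\mathbf{j})}(0,0)$, checking the commuting square proved just above the corollary. (ii) Verify that all the maps decorated ``$\sim$'' or ``$(\sim)$'' are weak equivalences (levelwise or on geometric realization, respectively): these use Lemma~\ref{lem:diagdiscwe} as in Proposition~\ref{propn:Pdiagweqs}, together with the fact that $\ell\colon Z_0(\mathbb{K}[-1])\to \mathcal{N}^{\spi}_1$ is the homotopy pullback square exhibiting $\mathbb{K}[-1]$ as a delooping of $0$ over $\mathbb{K}[0]$. (iii) Pass to \icats{} by inverting these weak equivalences.

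Combining the two layers gives a commutative square of $(n-1)$-fold Segal spaces
\[
\begin{tikzcd}
\Bord^{\ori}_{d,d+n}(\emptyset,\emptyset) \arrow{r}{\sim} \arrow{d}[swap]{\orcut_{n,d}} & \Bord^{\ori}_{d+1,d+n}\arrow{d}{\orcut_{n-1,d+1}} \\
\Cospan_n(\POr_d^{\mathcal{S}})(\emptyset,\emptyset) \arrow{r}{\sim} & \Cospan_{n-1}(\POr_{d+1}^{\mathcal{S}}),
\end{tikzcd}
\]
natural in $\mathbf{j}$. Iterating this identification $r$ times and taking the colimit over the finite-dimensional vector space filtration of $\sBord^{\ori}$ (which is a filtered colimit and hence commutes with the relevant constructions) identifies $\orcut_{n+r,d-r}$ at $r\geq 1$ as $(r\text{-fold})$ delooping data for $\orcut_{n,d}$. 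By the general recognition principle for symmetric monoidal $n$-fold Segal spaces as $E_\infty$-monoids (\cite{CS}*{\S7.2} and \cite{spans}*{Proposition~12.1}, used previously in Corollaries~\ref{cor:SpanPSympSM} and~\ref{cor:cutsymmon}), this gives the desired symmetric monoidal structure on $\orcut_n$, and by construction it is compatible with the one on $\cut_n$ from Corollary~\ref{cor:cutsymmon} via the forgetful functor $\POr_d^{\mathcal{S}}\to\mathcal{S}_{\fin}$.

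The only nontrivial step is step (ii), and the only genuinely subtle piece there is the ``$(\sim)$'' arrow $\disc\mathfrak{Z}^{(d)}_{(1,\mathbf{j})}\to \widetilde{\mathfrak{Z}}^{(d)}_{(1,\mathbf{j})}$: unlike the $\mathfrak{X}$ and $\mathfrak{Y}$ cases this is an overcategory of a non-constant diagram $Q_{\mathbf{j},l}[-d]$, so one must check that the simplicial set $\Nrv\Fun_{\pred}(\bbS^{(1,\mathbf{j})},\Chk)^{(W)}_{/Q_{\mathbf{j}}[-d],(0,0)}$ satisfies the hypothesis of Lemma~\ref{lem:diagdiscwe}. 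This follows because composition with quasi-isomorphisms on the slice datum gives weak equivalences of classifying spaces (an instance of Quillen's Theorem~B applied to the projection to $\Nrv\Fun_{\pred}(\bbS^{(1,\mathbf{j})},\Chk)^{(W)}_{(0,0)}$), reducing to the analogous property for $\mathfrak{Y}$; everything else is formal bookkeeping of the constructions already fixed above.
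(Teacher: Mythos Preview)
Your proposal is correct and follows essentially the same approach as the paper. The corollary carries a \qed\ precisely because all the work has already been done in the preceding section: the two large diagrams of (semi)simplicial sets, the verification that the decorated maps are weak equivalences via Lemma~\ref{lem:diagdiscwe} and Proposition~\ref{propn:Pdiagweqs}, and the key commutative square proved in the Proposition immediately before the corollary. Passing to the \icatl{} level yields the commutative square of $(n-1)$-fold Segal spaces you wrote down (this is exactly the diagram the paper displays just before stating the corollary), and iterating over $r$ gives the delooping tower, hence the symmetric monoidal structure by the same mechanism as in Corollaries~\ref{cor:SpanPSympSM} and~\ref{cor:cutsymmon}.

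Your extra paragraph on the map $\disc\mathfrak{Z}^{(d)}_{(1,\ind{j})}\to\widetilde{\mathfrak{Z}}^{(d)}_{(1,\ind{j})}$ is a reasonable elaboration: the paper simply asserts this case is handled ``by applying Lemma~\ref{lem:diagdiscwe} as in the proof of Proposition~\ref{propn:Pdiagweqs}'' without spelling out why the slice over a non-constant diagram $Q_{\ind{j},l}[-d]$ still satisfies the hypothesis. Your reduction via the cocartesian fibration structure and Quillen's Theorem~B is one clean way to fill this in; alternatively one can note that the slice of a functor model category over an object is again a model category, so Example~\ref{ex:diagdiscwemodcat} applies directly.
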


\section{Non-Degeneracy and TFTs in Lagrangian
  Correspondences}\label{sec:cobnondeg}

In this section our main goal is to prove that the symmetric
monoidal functors
\[ \orcut_{d,n} \colon \Bordor_{d,d+n} \to
  \Cospan_{n}(\POr_{d}^{\mathcal{S}}) \] we just defined factor
through the subobject $\Or_{n}^{\mathcal{S},d}$, which allows us to
define topological field theories via the AKSZ construction by
composing with the symmetric monoidal functors we have previously
constructed. The key point to check is the following:
\begin{proposition}
  For every $(M,\oul{I},f) \in \BORDor_{d,d+n,\ind{j}}$, where each $j_{i}$
  is $0$ or $1$, the image
  $\orcut_{d,d+n}(M,\oul{I},f) \in
  \COSPAN_{n}(\POr_{d}^{\mathcal{S}})_{\ind{j}}$ is oriented.
\end{proposition}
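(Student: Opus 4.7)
The strategy is to reduce the non-degeneracy condition to a family of Poincaré--Lefschetz duality statements for the various pieces of $M$ cut out by the grid, and then verify each of these via Verdier duality, generalizing Example~\ref{ex:cobordnondeg}. Throughout, set $i := \sum_t j_t$ so that only $i$ of the indices are non-zero, and note that $M$ is a compact oriented $(d+i)$-dimensional manifold with corners whose $2i$ codimension-one faces are indexed by $\{0,1\} \times \{t : j_t = 1\}$.

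First I would unwind explicitly the diagram $\widehat{p}_{A,\mathcal{E}} \colon \tSp^{\,i} \to \Mod_A$ attached by Corollary~\ref{cor:cospanEdiag} to $p := \orcut_{d,d+n}(M,\oul{I},f)$ and a dualizable $\mathcal{E} \in \Fun(M, \Mod_A)$ (where $M$ here denotes the homotopy type of the top piece). Using the formula from Definition~\ref{defn:grid} for $\scut_{\ind{j},0}$ together with the explicit description of the preorientation maps on each piece (given by integration of differential forms via $\mathcal{O}_{\ind{j},0}$), one sees that $\widehat{p}_{A,\mathcal{E}}$ sends each object $\xi \in \Sp^{i,\op}$ to $C^*(M_\xi; \mathcal{E}^\vee|_{M_\xi}) \otimes_\k A$ and each dualized object $\xi^\vee$ to $C_*(M_\xi; \mathcal{E}|_{M_\xi})[-d] \otimes_\k A$, with the morphisms $\xi \to \xi^\vee$ given by the Poincaré--Lefschetz pairing induced from the fibrewise orientation of Remark~\ref{rem:fiberwiseorientation}.

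Next, I would apply Proposition~\ref{propn:nondeghalf}(i) followed by the $n$-fold analogue of Proposition~\ref{propn:foldLagcrit} to reduce non-degeneracy of $\widehat{p}_{A,\mathcal{E}}$ to checking an inductively defined sequence of $i$ cartesian squares in $\Mod_A$. The innermost such square has the form
\nolabelcsquare{C^*(M; \mathcal{E}^\vee)}{C^*(A(M); \mathcal{E}^\vee|_{A(M)})}{C^*(B(M); \mathcal{E}^\vee|_{B(M)})}{C_*(M; \mathcal{E})[-d]}
where $A(M)$ and $B(M)$ are the incoming and outgoing faces in the ``last'' direction, and the other squares in the tower are of the same shape but applied to successively smaller strata of $M$ obtained by intersecting boundary faces.

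Each such square is an instance of Poincaré--Lefschetz duality with coefficients in a dualizable local system, applied to a compact oriented manifold with corners. This I would deduce from Verdier duality: the fibrewise orientation trivializes the dualizing complex $\omega_M$ up to a shift, which identifies $C^*(M, \partial M; \mathcal{E}^\vee) \simeq C_*(M; \mathcal{E})[-d]$, and the usual long exact sequence of the pair $(M, \partial M)$ together with the decomposition $\partial M = A(M) \cup_{\partial A(M) = \partial B(M)} B(M)$ yields exactly the claimed cartesian square; the same argument applied to each corner stratum gives the remaining squares, because each stratum is itself a compact oriented manifold with corners inheriting a fibrewise orientation from $f$. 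Standard references (e.g.\ \cite{IversenSheaves}*{\S VI.3}) handle the case of constant coefficients, and the local system case follows by the same Verdier-duality argument since the constant-coefficient statement is already sheaf-theoretic.

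\textbf{Main obstacle.} The principal difficulty is combinatorial bookkeeping: matching the abstract indexing by $\tSp^{\,i}$ (and the pullback tower from Proposition~\ref{propn:foldLagcrit}) with the concrete geometry of how $M$ decomposes under the grid, and verifying that the orientations (and hence the duality identifications) on all the boundary and corner strata are compatible with the maps in the cospan. Once these identifications are made, each individual cartesian square is a standard consequence of Verdier duality, so the geometric content of the proof is exactly Poincaré--Lefschetz duality with local coefficients for manifolds with corners, distributed over the $\tSp^{\,i}$-shaped diagram.
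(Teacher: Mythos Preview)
Your strategy lands on the right endpoint (Poincar\'e--Lefschetz duality with local coefficients), but the route you take is both more complicated than the paper's and has a genuine gap in the reduction step.

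The paper does not use the inductive pullback tower of Proposition~\ref{propn:foldLagcrit} at all. Instead, it applies Proposition~\ref{propn:Spklimfibseq}: since $\Mod_A$ is stable, non-degeneracy of $\widehat{p}_{A,\mathcal{E}}$ is equivalent to a \emph{single} cartesian square
\nolabelcsquare{C^{*}(M;\mathcal{E}^{\vee})}{\lim_{I \in \Sp^{m,\circ}} C^{*}(M_{I};\mathcal{E}^{\vee}|_{M_{I}})}{0}{C_{*}(M;\mathcal{E})[1-m-d].}
The key computation you are missing is the identification of the upper-right limit: since $\colim_{I \in \Sp^{m,\circ}} M_{I} \simeq \partial M$ as a homotopy colimit (the pieces $M_I$ are exactly the boundary faces and corners, glued along Serre cofibrations), one has $\lim_{I} C^{*}(M_{I};\mathcal{E}^{\vee}|_{M_{I}}) \simeq C^{*}(\partial M;\mathcal{E}^{\vee}|_{\partial M})$. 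After smoothing corners, this becomes the standard Poincar\'e--Lefschetz square for a single oriented $(m+d)$-manifold with boundary, and one application of Verdier duality (as in Example~\ref{ex:cobordnondeg}) finishes the proof.

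Your approach, by contrast, attempts to apply an analogue of Proposition~\ref{propn:foldLagcrit} and claims this yields $i$ separate cartesian-square conditions. But Proposition~\ref{propn:foldLagcrit} is stated for $\lsp^{n,\triangleright}$-diagrams and only asserts \emph{one} condition ($F(-\infty) \isoto M_n$); the intermediate $M_k$'s are definitions, not conditions. More seriously, you assert that these intermediate pullbacks $M_k$ can be identified with $C^{*}$ of successively smaller strata, but no argument is given for this identification, and it is not automatic: each $M_k$ is a pullback in $\Mod_A$, and recognizing it geometrically would itself require a duality statement at each stage. So your inductive scheme, even if it can be made to work, re-derives the single boundary computation through $i$ separate duality verifications, each of which needs the bookkeeping you flag as the main obstacle. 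The paper's use of Proposition~\ref{propn:Spklimfibseq} together with the colimit identification $\colim M_I \simeq \partial M$ bypasses all of this.
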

\begin{proof}
  Let $m=\sum_{i}j_{i}$; then $\orcut_{d,d+n}(M,\oul{I},f)$ is a
  diagram
  $p \colon \Sp^{m} \to \POr_{d}^{\mathcal{S}}$, where $p(-\infty)$ is
  (the homotopy type of) the manifold $M$ together with its
  (pre)orientation. By Definition~\ref{def:orcospS}, we must show that
  for any $A \in 
  \CAlg(\k)$ and any dualizable object $\mathcal{E} \in \Fun(M,
  \Mod_{A})$ (\ie{} a local system of dualizable $A$-modules), the
  induced diagram
  \[ \hat{p}_{A,\mathcal{E}} \colon \tSp{}^{n} \to \Mod_{A} \]
  is non-degenerate. Using the criteria of
  Propositions~\ref{propn:nondeghalf} and \ref{propn:Spklimfibseq}
  this is equivalent to the induced commutative square
  \[
    \begin{tikzcd}
          \hat{p}_{A,\mathcal{E}}(-\infty) \arrow{r} \arrow{d} &
          \lim_{I \in \Sp^{n,\circ}} \hat{p}_{A,\mathcal{E}}(I)
          \arrow{d} \\
          0  \arrow{r} & \hat{p}_{A,\mathcal{E}}(\infty)[1-m]
    \end{tikzcd}
  \]
  being cartesian. Here we can identify
  $\hat{p}_{A,\mathcal{E}}(-\infty)$ with $C^{*}(M; \mathcal{E}^{\vee})$ and
  $\hat{p}_{A,\mathcal{E}}(\infty)$ with $C_{*}(M; \mathcal{E})[-d]$,
  while if we write $M_{I}$ for the appropriate value of the cutting
  functor we have
  \[ \lim_{I \in \Sp^{n,\circ}} \hat{p}_{A,\mathcal{E}}(I) \simeq
    \lim_{I \in \Sp^{n,\circ}} C^{*}(M_{I}; \mathcal{E}^{\vee}|_{M_{I}}). \]
  Here we can identify $\colim_{I \in \Sp^{n,\circ}} M_{I}$ with the
  homotopy type of the \emph{boundary} of $M$: if we compute this
  colimit in $\Top$ we get precisely the boundary, but this is an
  iterated pushout along maps that are relative cell complexes, hence
  this is also the homotopy colimit. We then have
  \[
    \begin{split}
\lim_{I \in \Sp^{n,\circ}} C^{*}(M_{I}; \mathcal{E}^{\vee}|_{M_{I}}) & 
    \simeq
    \lim_{I \in \Sp^{n,\circ}} \lim_{q \in M_{I}} \mathcal{E}^{\vee}_{q} \\
    & \simeq \lim_{q \in \colim_{I \in \Sp^{n,\circ}} M_{I}}
    \mathcal{E}^{\vee}_{q} \\ &  \simeq \lim_{q \in \partial M} \mathcal{E}^{\vee}_{q} \simeq C^{*}(\partial M; \mathcal{E}^{\vee}|_{\partial M}),
    \end{split}
  \]
  where the second equivalence holds since we can first rewrite the iterated
  limit as a limit over the right fibration $\mathcal{F} \to \Sp^{n,\circ}$ for the functor $I
  \mapsto M_{I}$ and then note that the resulting functor from
  $\mathcal{F}$ takes all morphisms to equivalences and hence factors
  through the coinitial map
  \[ \mathcal{F} \to \|\mathcal{F}\| \simeq \colim_{I \in
      \Sp^{n,\circ}} M_{I}.\]
  We can thus identify the square as
  \[
    \begin{tikzcd}
          C^{*}(M; \mathcal{E}^{\vee}) \arrow{r} \arrow{d} &
          C^{*}(\partial M; \mathcal{E}^{\vee}|_{\partial M})
          \arrow{d} \\
          0  \arrow{r} & C_{*}(M; \mathcal{E})[1-m-d]
    \end{tikzcd}
  \]
  Here $M$ is an oriented $m+d$-manifold with corners, but we can
  smoothe out the corners and replace $M$ by an oriented $m+d$-manifold
  with boundary $M'$ that gives rise to an equivalent square
  \[
    \begin{tikzcd}
          C^{*}(M'; \mathcal{E}^{\vee}) \arrow{r} \arrow{d} &
          C^{*}(\partial M'; \mathcal{E}^{\vee}|_{\partial M'})
          \arrow{d} \\
          0  \arrow{r} & C_{*}(M'; \mathcal{E})[1-m-d].
    \end{tikzcd}
  \]
  This is cartesian by Poincar\'e--Lefschetz duality, as we discussed
  in Example~\ref{ex:cobordnondeg}.
\end{proof}

\begin{corollary}
  The morphism \[\orcut_{d,d+n} \colon \BORDor_{d,d+n} \to
  \COSPAN_{n}(\POr_{d}^{\mathcal{S}})\] takes values in
  $\OR_{n}^{d,\mathcal{S}}$ and so induces a symmetric monoidal functor
  \[ \Bordor_{d,d+n} \longrightarrow \Or_{n}^{d,\mathcal{S}} \]
  of $n$-fold Segal spaces.\qed
\end{corollary}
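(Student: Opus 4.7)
The plan is to reduce the statement to the proposition just proved, namely that for each $(M,\oul{I},f) \in \BORDor_{d,d+n,\ind{j}}$ with every $j_i \in \{0,1\}$, the image $\orcut_{d,d+n}(M,\oul{I},f)$ is an oriented $m$-uple cospan (where $m = \sum_i j_i$). First I would unpack the definition of $\OR_{n}^{\mathcal{S},d}$: by construction it is the full sub-$n$-uple Segal space of $\COSPAN_{n}(\POr_{d}^{\mathcal{S}})$ spanned, in each level $\ind{k}$, by those cartesian functors $\bbS^{\ind{k},\op} \to \POr_{d}^{\mathcal{S}}$ whose restriction along every inert inclusion $\bbS^{\ind{i}} \hookrightarrow \bbL^{\ind{k}}$, with $\ind{i} = (i_1,\ldots,i_n)$ and each $i_t \in \{0,1\}$, is an oriented $m$-uple cospan in the sense of Definition~\ref{def:orcospS}.

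Next I would check these two conditions for the diagram $\orcut_{d,d+n}(M,\oul{I},f)$ at level $\ind{k}$. The cartesianness is automatic because the construction in \S\ref{subsec:cut} gives $\cut_{d,n}$ as a functor landing in $\Cospan_{n}(\mathcal{S}_{\fin})$, \ie{} its values at level $\ind{k}$ are already cartesian diagrams in $\mathcal{S}_{\fin}$; since the forgetful functor $\POr_{d}^{\mathcal{S}} \to \mathcal{S}_{\fin}$ detects cartesianness (as it arises from a pullback square of \icats{} in which the right vertical is conservative and preserves limits), $\orcut_{d,d+n}$ itself is cartesian. For the non-degeneracy condition, given any inert $\ind{i} \hookrightarrow \ind{k}$ with $i_t \in \{0,1\}$, the naturality of the construction $\scut_{\ind{j},l}$ and $\mathcal{O}_{\ind{j},l}$ in $\ind{j} \in \Dnop$ (established in \S\ref{subsec:cut} and \S\ref{subsec:integral}) implies that the restriction of $\orcut_{d,d+n}(M,\oul{I},f)$ along this inert map is precisely $\orcut_{d,d+n}(M',\oul{I}',f')$ for a suitable $\ind{i}$-indexed bordism obtained by restricting to the appropriate sub-box of $B(\oul{I})$. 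The proposition already proved then applies to show this restriction is an oriented $m$-uple cospan, which completes the verification that $\orcut_{d,d+n}$ factors through $\OR_{n}^{\mathcal{S},d}$.

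For the second half of the statement, the factored morphism of $n$-uple Segal spaces passes to a functor of underlying $n$-fold Segal spaces $\Bordor_{d,d+n} \to \Or_{n}^{\mathcal{S},d}$; the symmetric monoidal structure on this functor is inherited from the symmetric monoidal structure on $\orcut_{n} = \orcut_{n,0}$ established in Corollary~\ref{cor:orcutsymmon} (and its analogues for all $d$ via the deloopings $\orcut_{d,n,r}$), combined with the fact that the inclusion $\Or_{n}^{\mathcal{S},d} \hookrightarrow \Cospan_{n}(\POr_{d}^{\mathcal{S}})$ is symmetric monoidal by definition of the symmetric monoidal structure on $\Or_{n}^{\mathcal{S},d}$ (disjoint union, which preserves orientedness because disjoint unions of non-degenerate boundary diagrams remain non-degenerate).

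The main obstacle is really just the bookkeeping in the first step: one must be careful that the naturality of $\orcut_{d,d+n}$ in $\ind{j}$ genuinely identifies restrictions along inert maps with cutting functors applied to restricted bordism data, so that the previously-proved non-degeneracy statement (which was phrased only for tuples $\ind{j}$ with entries in $\{0,1\}$) actually controls the non-degeneracy required at arbitrary $\ind{k}$. Once this naturality is observed, the remainder of the argument is formal.
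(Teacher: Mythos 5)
Your proposal is correct and is essentially the argument the paper intends: the corollary is stated with no proof because it follows immediately from the preceding proposition, via exactly the two observations you make — membership in $\OR_{n}^{\mathcal{S},d}$ is tested on restrictions along inert maps to tuples with entries in $\{0,1\}$, and naturality of $\orcut_{d,d+n}$ in $\ind{j}$ identifies those restrictions with the cutting construction applied to the restricted bordism, to which the proposition applies; symmetric monoidality then restricts from Corollary~\ref{cor:orcutsymmon}. (Your separate verification of cartesianness is redundant — the codomain in the statement is already $\COSPAN_{n}$ rather than $\oCOSPAN_{n}$, so this was settled in Remark~\ref{rmk:functortoXdj} — and your parenthetical justification slightly misidentifies which leg of the defining pullback square is conservative, but nothing in the argument depends on it.)
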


Combining this with the symmetric monoidal functors from
Corollaries~\ref{cor:BettiOrn} and \ref{cor:AKSZOrLag}, we get:
\begin{corollary}
  Suppose $X$ is an $s$-symplectic derived $S$-stack. Then there is a
  $n$-dimensional oriented extended TFT obtained as the composite
  \[ \Bordor_{0,n} \xto{\orcut_{n}} \Or_{n}^{\mathcal{S},0}
    \xto{(\blank)_{B} \times S} \Or_{n}^{\txt{cpt},X\txt{-good},S,0}
    \xto{\AKSZ_{n,X}^{S,0}} \Lag_{n}^{S,s} \]
  for every $n$.\qed
\end{corollary}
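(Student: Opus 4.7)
The statement is essentially a formal consequence assembling three symmetric monoidal functors whose existence has already been established in the paper. My plan is therefore to verify compatibility of the domains and codomains, and then invoke composability of symmetric monoidal functors of $(\infty,n)$-categories.

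First I would recall the three functors being composed. The preceding corollary supplies the symmetric monoidal $\orcut_{n} := \orcut_{0,n} \colon \Bordor_{0,n} \to \Or_{n}^{\mathcal{S},0}$; Corollary~\ref{cor:BettiOrn} supplies the symmetric monoidal Betti stack functor $(\blank)_{B} \times S \colon \Or_{n}^{\mathcal{S},0} \to \Or_{n}^{S,0}$; and Corollary~\ref{cor:AKSZOrLag} supplies the symmetric monoidal AKSZ functor $\AKSZ_{n,X}^{S,0} \colon \Or_{n}^{\txt{cpt},X\txt{-good},S,0} \to \Lag_{n}^{S,s}$.

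The only genuine matching-up step is to explain why the Betti stack functor factors through the subobject $\Or_{n}^{\txt{cpt},X\txt{-good},S,0} \subseteq \Or_{n}^{S,0}$, so that it can be composed with $\AKSZ_{n,X}^{S,0}$. For this I would quote the remark following Corollary~\ref{cor:AKSZOrLag}: if $K \in \mathcal{S}_{\txt{fin}}$ is a finite cell complex, then the mapping stack $X_{S}^{K_{B}\times S} \simeq \lim_{K} X$ is a finite limit of the Artin $S$-stack $X$ in $\dSt_{S}$ and is therefore itself an Artin $S$-stack by Proposition~\ref{propn:geomfinlim}; hence $K_{B}\times S$ is $X$-good. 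Similarly, by Lemma~\ref{lem:BettiOcpt}, the projection $K_{B}\times S \to S$ is $\mathcal{O}$-compact in the sense of Notation~\ref{not:Orcpt}. Since these properties are preserved under the cospan structure (both $X$-goodness and $\mathcal{O}$-compactness are closed under the pushouts used to compose iterated cospans, by Propositions~\ref{propn:UCCcolim} and \ref{propn:geomfinlim}), the Betti stack functor lands in $\Or_{n}^{\txt{cpt},X\txt{-good},S,0}$, as required.

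Composing the three symmetric monoidal functors of $(\infty,n)$-categories yields a symmetric monoidal functor $\Bordor_{0,n} \to \Lag_{n}^{S,s}$. By definition (see \S\ref{sec bordism background}) an extended $n$-dimensional oriented TFT valued in $\Lag_{n}^{S,s}$ is precisely such a symmetric monoidal functor (after passing to the completion, which is automatic from the universal property). I do not expect any genuine obstacle here: the work has been front-loaded into establishing each of the three functors and their symmetric monoidal structures, so this final step is purely formal and amounts to verifying that the intermediate $(\infty,n)$-category is the correct target and source for the flanking functors. The statement is then a one-line composition.
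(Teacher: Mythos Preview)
Your proposal is correct and matches the paper's approach exactly: the corollary is marked with \qed and has no proof, since the paper treats it as an immediate consequence of composing the three symmetric monoidal functors established in Corollaries~\ref{cor:orcutsymmon}, \ref{cor:BettiOrn}, and \ref{cor:AKSZOrLag}. The factorization of the Betti stack functor through $\Or_{n}^{\txt{cpt},X\txt{-good},S,0}$ that you carefully justify is already recorded in the remark following Corollary~\ref{cor:AKSZOrLag}, so your added detail is faithful to the paper's reasoning.
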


Since this is natural in the space $\Lag_{0}^{S,s}$ of $s$-symplectic
$S$-stacks, by the same proof as that of Corollary~\ref{cor:Ontrivact}
we also have:
\begin{corollary}
  The $SO(n)$-action on $\Lag_{0}^{S,s}$, viewed as the space of fully
  dualizable objects in $\Lag_{n}^{S,s}$, is trivial.\qed
\end{corollary}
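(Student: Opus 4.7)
The plan is to mimic the proof of Corollary~\ref{cor:Ontrivact}, replacing the unoriented cobordism hypothesis and the symmetric monoidal functor $\cut_{n}$ by their oriented analogues constructed in the preceding results. By Lemma~\ref{lem:hGsecttriv} applied to $G = SO(n)$ and the space $X = \Lag_{0}^{S,s}$ of fully dualizable objects in $\Lag_{n}^{S,s}$, it suffices to produce a section of the canonical evaluation map
\[ (\Lag_{0}^{S,s})^{hSO(n)} \longrightarrow \Lag_{0}^{S,s}. \]

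The first step is to invoke the oriented version of the Cobordism Hypothesis, which identifies the homotopy fixed-point space $(\Lag_{0}^{S,s})^{hSO(n)}$ with the space of symmetric monoidal functors $\Fun^{\otimes}(\Bordor_{0,n}, \Lag_{n}^{S,s})$, under which the map above corresponds to evaluation at the positively-oriented point. The second step is to assemble a natural family of such oriented TFTs parametrized by $\Lag_{0}^{S,s}$: given an $s$-symplectic $S$-stack $X$, we compose the functors already produced to obtain
\[ \Bordor_{0,n} \xrightarrow{\orcut_{n}} \Or_{n}^{\mathcal{S},0} \xrightarrow{(\blank)_{B} \times S} \Or_{n}^{\txt{cpt},X\txt{-good},S,0} \xrightarrow{\AKSZ_{n,X}^{S,0}} \Lag_{n}^{S,s}, \]
and this construction is natural in $X$ since each of the three functors is. This naturality packages into a map
\[ \Lag_{0}^{S,s} \longrightarrow \Fun^{\otimes}(\Bordor_{0,n}, \Lag_{n}^{S,s}) \simeq (\Lag_{0}^{S,s})^{hSO(n)}. \]

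The third step is to verify that composing this map with evaluation at the point yields the identity of $\Lag_{0}^{S,s}$. This is essentially a computation of the value of our three-step composite at the positively-oriented point: $\orcut_{n}$ sends it to the preoriented Betti stack of a point, whose associated $S$-stack is $S$ itself with its canonical $0$-orientation, and then $\AKSZ_{n,X}^{S,0}$ sends $S$ to the mapping stack $X^{S}_{S} \simeq X$ with its original symplectic form. So the composite is homotopic to the identity, giving the required section.

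The only subtle point — which here is not a genuine obstacle, merely bookkeeping — is that we are implicitly invoking the naturality of the oriented Cobordism Hypothesis in the target $(\infty,n)$-category, so that the equivalence $(\Lag_{0}^{S,s})^{hSO(n)} \simeq \Fun^{\otimes}(\Bordor_{0,n}, \Lag_{n}^{S,s})$ intertwines evaluation at the point with the forgetful map to $\Lag_{0}^{S,s}$. Granting this (as in the unoriented case of Corollary~\ref{cor:Ontrivact}), the section produced above together with Lemma~\ref{lem:hGsecttriv} forces the $SO(n)$-action on $\Lag_{0}^{S,s}$ to be trivial, exactly as claimed.
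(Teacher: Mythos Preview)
Your proof is correct and follows exactly the approach the paper intends: the paper simply states ``by the same proof as that of Corollary~\ref{cor:Ontrivact}'' together with the observation that the AKSZ construction is natural in $X \in \Lag_{0}^{S,s}$, and you have faithfully written out what that argument becomes in the oriented setting --- reduce via Lemma~\ref{lem:hGsecttriv}, identify homotopy fixed points using the oriented Cobordism Hypothesis, and use the naturality of $\orcut_{n}$, the Betti stack functor, and $\AKSZ_{n,X}^{S,0}$ to build the required section.
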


\begin{corollary}
  The $O(n)$-action on $\Lag_{0}^{S,s}$, viewed as the space of fully
  dualizable objects in $\Lag_{n}^{S,s}$, is just the $\ZZ/2$-action
  given by $(X,\omega) \mapsto (X,-\omega)$, taking each object to its
  dual. \qed
\end{corollary}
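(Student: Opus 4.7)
By the preceding corollary, the $SO(n)$-action on $\Lag_0^{S,s}$ is trivial, so the $O(n)$-action factors through $\pi_0 O(n) \cong \ZZ/2$. To identify this residual involution, I would use the oriented Cobordism Hypothesis, which identifies $(\Lag_0^{S,s})^{hSO(n)}$ with the space of oriented $n$-TFTs valued in $\Lag_n^{S,s}$, and under which the $\ZZ/2 = O(n)/SO(n)$-action corresponds to precomposition with the orientation-reversing autoequivalence $\rho$ of $\Bordor_{0,n}$ (which exchanges $\text{pt}^+$ and $\text{pt}^-$).

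The key step is to establish the natural identity $\mathcal{Z}_{(X,\omega)} \circ \rho \simeq \mathcal{Z}_{(X,-\omega)}$ for every $s$-symplectic $S$-stack $X$, where $\mathcal{Z}_{(X,\omega)}$ denotes the AKSZ TFT from the preceding corollary. Tracing through the construction: the functor $\orcut_n$ is defined via integration of differential forms against fundamental classes (Definition~\ref{defn:O,iota}), and $\rho$ negates these fundamental classes; this sign passes unchanged through the Betti functor $(\blank)_B \times S$, which preserves preorientations; and it finally enters the AKSZ construction via the map $\int_{[T]} \txt{ev}^* \omega$ of Remark~\ref{rmk:pushfwddesc}, which depends linearly on the preorientation $[T]$. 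Hence negating $[T]$ has the same effect as replacing $\omega$ by $-\omega$.

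Given this identity, I would conclude as in the previous corollary: the AKSZ section $\Lag_0^{S,s} \to (\Lag_0^{S,s})^{hSO(n)}$, sending $(X,\omega) \mapsto \mathcal{Z}_{(X,\omega)}$, becomes $\ZZ/2$-equivariant once we equip the source with the involution $(X,\omega) \mapsto (X,-\omega)$. Since the $\ZZ/2$-equivariant forgetful map $(\Lag_0^{S,s})^{hSO(n)} \to \Lag_0^{S,s}$ composes with this section to the identity, the residual $\ZZ/2$-action on $\Lag_0^{S,s}$ must agree with $(X,\omega) \mapsto (X,-\omega)$. The identification of $\overline{X} = (X,-\omega)$ with the dual of $X$ in $\Lag_n^{S,s}$ is precisely the content of the remark preceding this corollary.

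The main obstacle is establishing $\mathcal{Z}_{(X,\omega)} \circ \rho \simeq \mathcal{Z}_{(X,-\omega)}$ as an equivalence of symmetric monoidal functors, rather than merely a pointwise equivalence on objects; this requires the constructions of $\orcut_n$ and the AKSZ functor to behave coherently with respect to orientation reversal at all levels of iterated cospans. A clean way to see this is to upgrade the AKSZ construction to a family varying over the space of $s$-symplectic $S$-stacks and observe that orientation reversal on the source corresponds, through the entire composite, to the self-involution $\omega \mapsto -\omega$ on this family.
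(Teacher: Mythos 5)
Your argument reaches the right conclusion, but it takes a genuinely different and substantially longer route than the one the paper intends. The paper states this corollary with no proof because, once the preceding corollary shows that $SO(n)$ acts trivially, the $O(n)$-action factors through $\pi_0 O(n)\cong \ZZ/2$, and the action of this residual $\ZZ/2$ on objects is already known to be the duality involution: this is the standard description of the restriction of the $O(n)$-action along $O(1)\subseteq O(n)$ in Lurie's cobordism-hypothesis framework, and since $\pi_0 O(1)\to\pi_0 O(n)$ is an isomorphism it determines the whole residual action. The earlier remark in the full-dualizability section identifies the dual of $(X,\omega)$ in $\Lag_n^{S,s}$ with $(X,-\omega)$, and that finishes it. Your route instead computes the residual involution directly: you identify the $\ZZ/2$-action on $(\Lag_0^{S,s})^{hSO(n)}\simeq \Fun^{\otimes}(\Bordor_{0,n},\Lag_n^{S,s})$ with precomposition by orientation reversal $\rho$, prove $\mathcal{Z}_{(X,\omega)}\circ\rho\simeq\mathcal{Z}_{(X,-\omega)}$ by tracing the sign of the fundamental class through $\orcut_n$, the Betti functor, and the (linear-in-$[T]$) AKSZ integral, and then run the same section argument that the paper uses for the triviality statements. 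What your approach buys is independence from the a priori identification of the $O(1)$-action with duality — you derive the involution from the explicit field theories — and it gives a second, constructive verification that duality is $\omega\mapsto-\omega$.

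The cost is the coherence step you yourself flag: you need $\mathcal{Z}_{(X,\omega)}\circ\rho\simeq\mathcal{Z}_{(X,-\omega)}$ not just objectwise but as an equivalence of the two maps $\Lag_0^{S,s}\to\Fun^{\otimes}(\Bordor_{0,n},\Lag_n^{S,s})$, which requires a coherent "negation" involution threaded through $\orcut_n$, the Betti functor, and $\txt{aksz}$ at all levels of iterated (co)spans. This is plausible but is real work on the order of the constructions in the last two chapters, and none of it is needed for the paper's intended one-line argument; so if you want the short proof, replace your middle two paragraphs by the citation of the $O(1)$-duality fact together with the remark identifying $\overline{X}$ as the dual.
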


\appendix


\chapter{Background on Higher Categories}\label{sec:cat}

In this appendix we review some background on higher categories: We
first recall iterated Segal spaces in \S\ref{subsec:segsp} and higher
categories of iterated spans in \S\ref{subsec:spans}. Then we discuss
how to describe higher categories of spans in the \icat{} associated
to a model category in \S\ref{subsec:spanmodel}, and finally in
\S\ref{subsec:twarr} we recall twisted arrow \icats{} and provide a
description of the left adjoint to this construction.

\section{Iterated Segal Spaces}\label{subsec:segsp}
In this section we briefly recall the definition of \emph{iterated
  Segal spaces} (originally introduced by Barwick in
\cite{BarwickThesis}), which is the model for $(\infty,n)$-categories
we use in this paper. For more details and motivation, see for
instance the more extensive discussion in \cite{spans}*{\S 2}.

\begin{definition}
  Suppose $\mathcal{C}$ is an \icat{} with finite limits. A
  \emph{category object} in $\mathcal{C}$ is a simplicial object $X_{\bullet}
  \colon \simp^{\op} \to \mathcal{C}$ such that the natural maps
  \[ X_{n} \to X_{1} \times_{X_{0}} \cdots \times_{X_{0}} X_{1} \]
  induced by the maps $\sigma_{i}\colon [0] \to [n]$ sending $0$ to
  $i$ and $\rho_{i} \colon [1] \to [n]$ sending $0$ to $i-1$ and $1$
  to $i$ are equivalences in $\mathcal{C}$ for all $n$. We
  write $\Cat(\mathcal{C})$ for the full subcategory of
  $\Fun(\simp^{\op}, \mathcal{C})$ spanned by the category objects. We
  refer to a category object in the
  \icat{} $\mathcal{S}$ of spaces as a (\emph{1-uple}) \emph{Segal space}.
\end{definition}

\begin{definition}
  We say a morphism $\phi \colon [n] \to [m]$ in $\simp$ is
  \emph{inert} if $\phi(i) = \phi(0)+i$ for all $i$, and \emph{active}
  if $\phi(0)=0$ and $\phi(n)=m$. The active and inert maps form a
  (strict) factorization system on $\simp$, \ie{} every morphism
  factors uniquely as an active map followed by an inert map. We write
  $\Dint$ for the subcategory of $\simp$ containing only the inert maps.
\end{definition}

\begin{remark}
  Let $\Del$ be the full subcategory of $\Dint$ spanned by the
  objects $[0]$ and $[1]$, \ie{}
  \[ [0] \rightrightarrows [1]. \]
  It is easy to see that a simplicial object $X \colon \Dop \to \mathcal{C}$ is
  a category object \IFF{} $X|_{\Dintop}$ is a right Kan extension of
  its restriction $X|_{\Delop}$.
\end{remark}

\begin{definition}
  An \emph{$n$-uple category object} in an \icat{} $\mathcal{C}$ (with
  finite limits) is inductively defined to be a category object in the
  \icat{} of $(n-1)$-uple category objects. We
  write \[\Cat^{n}(\mathcal{C}) := \Cat(\Cat^{n-1}(\mathcal{C}))\]
  for the \icat{} of $n$-uple category objects in $\mathcal{C}$. We
  refer to an $n$-uple category object in $\mathcal{S}$ as an
  \emph{$n$-uple Segal space}.
\end{definition}

\begin{remark}\label{app:cat:notation Del}
  If we write $\Del^{n} := (\Del)^{\times n}$, then an $n$-uple
  simplicial object $X \colon \Dnop \to \mathcal{C}$ is an $n$-uple
  category object \IFF{} $X|_{\Dintnop}$ is a right Kan extension of
  its restriction to $X|_{\Delnop}$. In other words, $X$ is an
  $n$-uple category object precisely if the canonical map
  \[ X(\ind{i}) \to \lim_{\DnelopI} X \]
  is an equivalence for all $\ind{i} \in \Dnop$, where $\DelnI :=
  \Del^{n}  \times_{\Dint^{n}} (\Dint^{n})_{/\ind{i}}$.
\end{remark}

\begin{definition}
  Suppose $\mathcal{C}$ is an \icat{} with finite limits. A
  \emph{1-fold Segal object} in $\mathcal{C}$ is just a category
  object in $\mathcal{C}$. For $n > 1$ we inductively define an
  \emph{$n$-fold Segal object} in $\mathcal{C}$ to be an $n$-uple
  category object $X$ such that
  \begin{enumerate}[(i)]
  \item the $(n-1)$-uple category object
    $X_{0,\bullet,\ldots,\bullet}$ is constant,
  \item the $(n-1)$-uple category object
    $X_{k,\bullet,\ldots,\bullet}$ is an $(n-1)$-fold Segal
    object for all $k$.
  \end{enumerate}
  We write $\Seg_{n}(\mathcal{C})$ for the full subcategory of
  $\Cat^{n}(\mathcal{C})$ spanned by the $n$-fold Segal objects. When
  $\mathcal{C}$ is the \icat{} $\mathcal{S}$ of spaces, we refer to
  $n$-fold Segal objects in $\mathcal{S}$ as \emph{$n$-fold Segal spaces}.
\end{definition}

\begin{definition}
  If $X$ is an $n$-fold Segal space, we refer to the elements of
  $X_{0,\ldots,0}$ as the \emph{objects} of $X$, and the elements of
  $X_{1,\ldots,1,0,\ldots,0}$ (with $i$ 1's) as the
  \emph{$i$-morphisms}. If $x,y$ are objects of $X$, we denote by
  $X(x,y)$ the $(n-1)$-fold Segal space of morphisms from $x$ to $y$,
  given by the pullback square of $(n-1)$-fold Segal spaces
  \nolabelcsquare{X(x,y)}{X_1}{\{(x,y)\}}{X_0 \times X_0.}  
\end{definition}

As far as the algebraic structure of compositions and identities is
concerned, $n$-uple Segal spaces are a model for $n$-uple \icats{} and
$n$-fold Segal spaces model $(\infty,n)$-categories. However, to get
the correct \icats{} of the latter we must invert the appropriate
class of ``fully faithful and essentially surjective'' maps. This can
be done by restricting to the \emph{complete} ones, as proved by
Rezk~\cite{RezkCSS} for $n = 1$ and Barwick~\cite{BarwickThesis} for
$n > 1$. We will not recall the definition here, as it will not play
much of a role in this paper --- see \eg{} \cite{spans}*{\S 5} or
\cite{LurieGoodwillie}.

We will make use of the observation that from an $n$-uple Segal space
we can extract an underlying $n$-fold Segal space:
\begin{proposition}[\cite{spans}*{Proposition 4.12}]
  Let $\mathcal{C}$ be an \icat{} with finite limits. Then the
  inclusion $\Seg_{n}(\mathcal{C}) \hookrightarrow
  \Cat^{n}(\mathcal{C})$ has a right adjoint $U_{\txt{fold}} \colon
  \Cat^{n}(\mathcal{C}) \to \Seg_{n}(\mathcal{C})$.\qed
\end{proposition}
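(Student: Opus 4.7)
My plan is to proceed by induction on $n$. The base case $n=1$ is trivial, since by definition $\Seg_{1}(\mathcal{C}) = \Cat(\mathcal{C}) = \Cat^{1}(\mathcal{C})$ and the inclusion is the identity. For the inductive step I would assume the right adjoint $U_{\txt{fold}}^{(n-1)} \colon \Cat^{n-1}(\mathcal{C}) \to \Seg_{n-1}(\mathcal{C})$ exists, and build $U_{\txt{fold}}^{(n)}$ by addressing the two defining conditions of an $n$-fold Segal object separately.

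To enforce condition (ii) --- that each $X_{k,\bullet,\ldots,\bullet}$ is an $(n-1)$-fold Segal object --- I would apply $U_{\txt{fold}}^{(n-1)}$ levelwise. Since $U_{\txt{fold}}^{(n-1)}$ is a right adjoint it preserves all limits, and in particular sends category objects in $\Cat^{n-1}(\mathcal{C})$ to category objects in $\Seg_{n-1}(\mathcal{C})$ because the Segal condition is a limit condition. This yields a right adjoint $V \colon \Cat^{n}(\mathcal{C}) = \Cat(\Cat^{n-1}(\mathcal{C})) \to \Cat(\Seg_{n-1}(\mathcal{C}))$ to the inclusion.

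To enforce condition (i), I would give an explicit pullback construction. Note that the constant functor $c \colon \mathcal{C} \to \Seg_{n-1}(\mathcal{C})$ has a right adjoint given by evaluation at $(0,\ldots,0)$: since $(0,\ldots,0)$ is the initial object of $\simp^{n-1,\op}$, we have $\lim_{\simp^{n-1,\op}} Y_{0} \simeq Y_{0,\ldots,0}$ for any $Y_{0} \in \Seg_{n-1}(\mathcal{C})$. Given $Y \in \Cat(\Seg_{n-1}(\mathcal{C}))$, define
\[ \widetilde{Y}_{k} := Y_{k} \times_{Y_{0}^{\times (k+1)}} c(Y_{0,\ldots,0})^{\times (k+1)} \]
as a pullback inside $\Seg_{n-1}(\mathcal{C})$, where $Y_{k} \to Y_{0}^{\times (k+1)}$ is the map induced by the $k+1$ Segal inclusions $[0]\to[k]$ and $c(Y_{0,\ldots,0}) \to Y_{0}$ is the counit. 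Then $\widetilde{Y}_{0} \simeq c(Y_{0,\ldots,0})$ is constant, and standard pullback manipulations combined with the Segal condition on $Y$ show that $\widetilde{Y}_{k} \simeq \widetilde{Y}_{1} \times_{\widetilde{Y}_{0}} \cdots \times_{\widetilde{Y}_{0}} \widetilde{Y}_{1}$, so $\widetilde{Y}$ is a category object satisfying condition (i); composing with condition (ii) already imposed via $V$ puts us into $\Seg_{n}(\mathcal{C})$.

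To verify that this is a right adjoint, I would check that for any $Z \in \Seg_{n}(\mathcal{C})$ the canonical map $\Map(Z, \widetilde{Y}) \to \Map(Z, Y)$ is an equivalence. Given $f \colon Z \to Y$, at each level $k$ the composite $Z_{k} \to Y_{k} \to Y_{0}^{\times (k+1)}$ factors through $Z_{0}^{\times (k+1)}$ via the Segal map of $Z$, and then through $c(Z_{0,\ldots,0})^{\times (k+1)}$ (since $Z_{0}$ is constant), and so finally through $c(Y_{0,\ldots,0})^{\times (k+1)}$ via $f_{0,\ldots,0}$. The universal property of the pullback then produces a unique lift $Z_{k} \to \widetilde{Y}_{k}$, natural in $k$. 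The main obstacle will be the bookkeeping of iterated pullbacks: one must check that the $\widetilde{Y}_{k}$ assemble into a simplicial object in $\Seg_{n-1}(\mathcal{C})$ naturally in $Y$, and that the composite $U_{\txt{fold}}^{(n)} := (\text{constant-ification}) \circ V$ is genuinely right adjoint to the inclusion $\Seg_{n}(\mathcal{C}) \hookrightarrow \Cat^{n}(\mathcal{C})$; both reduce to routine verifications once one observes that $\Seg_{n-1}(\mathcal{C})$ is closed under finite limits in $\Fun(\simp^{n-1,\op}, \mathcal{C})$, which itself follows by an inner induction.
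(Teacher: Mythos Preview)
The paper does not supply a proof of this statement; it simply cites \cite{spans}*{Proposition 4.12} and marks the proposition with \qed. Your inductive argument is correct and gives a self-contained proof. The levelwise application of $U_{\txt{fold}}^{(n-1)}$ handles condition (ii) because right adjoints preserve the limits appearing in the Segal condition, and your pullback construction for condition (i) is valid --- it may be written more invariantly as $\widetilde{Y} = Y \times_{\mathrm{cosk}_0(Y_0)} \mathrm{cosk}_0(c(Y_{0,\ldots,0}))$ in simplicial objects of $\Seg_{n-1}(\mathcal{C})$, which makes the simplicial functoriality automatic. The one point to be slightly careful about is that both steps require $\Seg_{n-1}(\mathcal{C})$ to be closed under finite limits inside $\Cat^{n-1}(\mathcal{C})$ (so that the inclusion preserves the Segal pullbacks and so that your $\widetilde{Y}_k$ remain $(n-1)$-fold Segal objects); as you note, this follows by an inner induction since conditions (i) and (ii) are themselves limit conditions.
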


Next, we recall the definition of iterated monoids, which specialize
to define iterated monoidal structures on $n$-uple and $n$-fold Segal
spaces:
\begin{definition}
  Suppose $\mathcal{C}$ is an \icat{} with finite products. Then an
  (\emph{associative}) \emph{monoid} in $\mathcal{C}$ is a functor $X
  \colon \Dop \to \mathcal{C}$ such that the natural maps
  \[ X_{n} \to \prod_{i=1}^{n} X_{1},\]
  induced by the inert maps $\rho_{i} \colon [1] \to [n]$, are
  equivalences. (Equivalently $X$ is a category object such that
  $X_{0} \simeq *$.) We write $\Mon(\mathcal{C})$ for the full
  subcategory of $\Fun(\Dop, \mathcal{C})$ spanned by the
  monoids. This is again an \icat{} with finite products, so we can 
  inductively define $\Mon^{n}(\mathcal{C}) :=
  \Mon(\Mon^{n-1}(\mathcal{C}))$; we refer to the objects of
  $\Mon^{n}(\mathcal{C})$ as \emph{$n$-uple
    monoids}. Equivalently, an $n$-uple monoid is a functor
  $X \colon \Dnop \to \mathcal{C}$ such
  that the natural maps
  \[ X_{k_{1},\ldots,k_{n}} \to \prod_{i_{1}=1}^{k_{1}} \cdots
  \prod_{i_{n}=1}^{k_{n}} X_{1,\ldots,1}\] are equivalences.
\end{definition}

\begin{definition}
  Evaluating at $[1]$ in the first coordinate, we get a diagram
  \[ \cdots \to \Mon^{n}(\mathcal{C}) \to \Mon^{n-1}(\mathcal{C}) \to
  \cdots \to \Mon(\mathcal{C}) \to \mathcal{C}.\]
  We let $\Mon^{\infty}(\mathcal{C})$ be the limit of this sequence of
  \icats{}, and call its objects \emph{$\infty$-uple
    monoids}.
\end{definition}

\begin{remark}
  If $\mathcal{C}$ is an \icat{} with finite products, then $n$-uple
  monoids can be identified with $E_{n}$-algebras in $\mathcal{C}$
  (with respect to the cartesian product) and $\infty$-uple monoids
  can be identified with $E_{\infty}$-algebras in $\mathcal{C}$. This
  follows immediately from the Additivity Theorem for
  $E_{n}$-algebras, \ie{} \cite{HA}*{Theorem 5.1.2.2}; this is spelled
  out in more detail in \cite{spans}*{Proposition 10.11}.
\end{remark}

\begin{definition}
  We call an $m$-uple monoid in $\Cat^{n}(\mathcal{S})$ or
  $\Seg_{n}(\mathcal{S})$ an
  \emph{$m$-uply monoidal} $n$-uple or $n$-fold Segal space. For $n =
  \infty$ we will generally use \emph{symmetric monoidal} instead of
  \emph{$\infty$-uply monoidal}.
\end{definition}

\begin{remark}
  By definition, $m$-uply monoidal $n$-fold Segal spaces are certain
  $m$-uple simplicial objects in $n$-uple simplicial spaces. Viewed as
  $(n+m)$-uple simplicial objects we see that they are precisely the
  $(n+m)$-fold Segal spaces $X$ such that
  $X_{1,\ldots,1,0,\ldots,0} \simeq *$ when there are at most $m$
  1's. If $X$ is an $n$-fold Segal space and $x$ is an object of $X$,
  then we can extract a monoid $\Omega_{x}X$ in $(n-1)$-fold Segal
  spaces, which exhibits $X(x,x)$ as a monoidal $(n-1)$-fold Segal
  space. This process can be iterated, giving an $m$-uple monoid
  $\Omega^{m}_{x}X$ in $(n-m)$-fold Segal spaces, which exhibits an
  $m$-uply monoidal structure on the endomorphisms of the identity
  $(m-1)$-morphism of $x$. (See \cite{spans}*{\S 10} for more
  detail.\footnote{Note that this section was corrected in the final arXiv
  version of the paper.})
  In other words, if we have a sequence $(X_{i},x_{i})$ where $X_{i}$
  is an $(n+i)$-fold Segal space and $x_{i}$ is an object of $X_{i}$
  such that
  $(X_{i}(x_{i},x_{i}), \id_{x_{i}}) \simeq (X_{i-1}, x_{i-1})$, then
  $X_{0}$ inherits an $n$-uply monoidal structure (with $x_{0}$ as
  unit). If we can iterate this ``delooping'' process infinitely many
  times, we similarly obtain a symmetric monoidal structure. We will
  use this construction repeatedly to obtain symmetric monoidal
  $(\infty,n)$-categories and symmetric monoidal functors between
  them.
\end{remark}

\section{Higher Categories of Spans}\label{subsec:spans}
In this section we will briefly recall the construction of higher
categories of iterated spans from \cite{spans}, which is the basis for
the construction of higher categories of symplectic and oriented
stacks in this paper. We begin by discussing the notions of $n$-fold
and $n$-uple spans, and making explicit the relation between them
(which was not discussed in \cite{spans}):

\begin{definition}\label{defn:foldspan}
  We define a partially ordered set $\lsp^{n}$ as follows: First, let
  $\lsp^{n,\circ}$ denote the partially ordered set with objects
  $A_{n}$ and $B_{n}$ ($n = 1,\ldots,n$) and with ordering defined by:
  \begin{itemize}
  \item $A_{n} \leq A_{k}$ and $B_{n} \leq B_{k}$ \IFF{} $n \geq k$,
  \item $A_{n} \leq B_{k}$ and $B_{n} \leq A_{k}$ \IFF{} $n > k$.
  \end{itemize}
  Then we set $\lsp^{n} := (\lsp^{n,\circ})^{\triangleleft}$ (\ie{} we
  add an initial object). An \emph{$n$-fold span} in an \icat{}
  $\mathcal{C}$ is a functor $\lsp^{n} \to \mathcal{C}$.
\end{definition}

\begin{examples}\ 
  \begin{enumerate}[(i)]
  \item $\lsp^{0}$ is the terminal category.
  \item $\lsp^{1}$ can be depicted as
    \[
      \begin{tikzcd}
        {} & -\infty \arrow{dl} \arrow{dr} \\
        A_{1} & & B_{1}.
      \end{tikzcd}
    \]
  \item $\lsp^{2}$ can be depicted as
    \[
      \begin{tikzcd}
         & -\infty \arrow{dl} \arrow{dr} \\
        A_{2} \arrow{d} \arrow{drr}& & B_{2} \arrow[crossing over]{dll} \arrow{d} \\
        A_{1} & &  B_{1}.
      \end{tikzcd}
    \]
  \end{enumerate}
\end{examples}
Although in the context of TFTs we are primarily interested in
$n$-fold spans, it turns out that to define higher categories of spans
it is convenient to work with a slightly different version of higher
spans, which is inductively defined:
\begin{definition}\label{defn:uplespan}
  We write $\Sp^{n} := (\lsp^{1})^{\times n}$. An \emph{$n$-uple span}
  in an \icat{} $\mathcal{C}$ is a functor $\Sp^{n} \to \mathcal{C}$.
\end{definition}

\begin{notation}
  It is convenient to denote the initial object
  $(-\infty,\ldots,-\infty)$ of $\Sp^{n}$ simply by $-\infty$.
  We define $\Sp^{n,\circ}$ to be the full subcategory of $\Sp^{n}$
  containing all objects \emph{except} this initial object. Then
  $\Sp^{n} \simeq (\Sp^{n,\circ})^{\triangleleft}$ since there are no
  non-trivial maps to $-\infty$ in $\Sp^{n}$.
\end{notation}

\begin{example}
  $\Sp^{2}$ can be depicted as
  \[
    \begin{tikzcd}
      \bullet &  \bullet \arrow{l} \arrow{r} & \bullet \\
      \bullet \arrow{u} \arrow{d} &  \bullet \arrow{l} \arrow{r}
      \arrow{u} \arrow{d}  & \bullet\arrow{u} \arrow{d} \\
      \bullet &   \bullet \arrow{l} \arrow{r} & \bullet. \\
    \end{tikzcd}
  \]
  Observe that $\lsp^{2}$ can be obtained by contracting the
  horizontal edges in the top and bottom rows, and so a 2-fold span
  can be viewed as a 2-uple span where these edges are sent to
  equivalences. We will now prove that the same statement holds also
  for $n \geq 2$, using the following functor:
\end{example}
\begin{definition}\label{defn:jn}
  We define a functor
  $j^{\circ}_{n} \colon \Sp^{n,\circ} \to \lsp^{n,\circ}$ as follows:
  Given $X = (I_{1},\ldots,I_{n})$ in $\Sp_{n}^{\circ}$, let $k$ be
  the least index such that $I_{k} \neq -\infty$. Then
  $j^{\circ}_{n}(X) = A_{k}$ if $I_{k} = A_{1}$ and
  $j^{\circ}_{n}(X) = B_{k}$ if $I_{k} = B_{1}$ --- this respects the
  partial ordering, and so gives a functor. We then write $j_{n}$ for
  the induced functor
  $(j_{n}^{\circ})^{\triangleleft} \colon \Sp^{n}\to \lsp^{n}$.
\end{definition}

\begin{definition}\label{defn:redspan}
  Let $R_{n}$ denote the set of maps in $\Sp^{n}$ that are taken to
  identities in $\lsp^{n}$. We say an $n$-uple span $\Phi \colon
  \Sp^{n} \to \mathcal{C}$ is \emph{reduced} if $\Phi$ takes the
  morphisms in $R_{n}$ to equivalences in $\mathcal{C}$.
\end{definition}

\begin{remark}
  Consider a morphism $f = (f_{1},\ldots,f_{n})$ in $\Sp^{n}$ with
  $f_{i}\colon X_{i}\to Y_{i}$ a morphism in $\Sp^{1}$. Then $f$
  lies in $R_{n}$ precisely when
  \begin{itemize}
  \item $f_{1}$ is an identity morphism and $X_{1} \neq -\infty$, or
  \item $f_{1}=\id_{-\infty}$, $f_{2}$ is an identity morphism, and
    $X_{2}\neq -\infty$, or
  \item \ldots,
  \item $f_{1}=\ldots=f_{n-2}=\id_{-\infty}$, $f_{n-1}$ is an identity
    morphism, and $X_{n-1} \neq -\infty$, or
  \item $f_{1}=\ldots=f_{n-1}=\id_{-\infty}$ and $f_{n}$ is an
    identity morphism.
  \end{itemize}
  This gives the following inductive description of reduced $n$-uple
  spans:
\end{remark}

\begin{lemma}
  A diagram $\Phi \colon \Sp^{n} \to \mathcal{C}$ is reduced \IFF{}
  the following conditions hold:
    \begin{enumerate}[(i)]
  \item $\Phi(A_{1},\blank) \colon \Sp^{n-1} \to \mathcal{C}$ is
    constant,
  \item $\Phi(B_{1},\blank) \colon \Sp^{n-1} \to \mathcal{C}$ is
    constant,
  \item $\Phi(-\infty,\blank) \colon \Sp^{n-1}\to \mathcal{C}$ is
    reduced. \qed
  \end{enumerate}
\end{lemma}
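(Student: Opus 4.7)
The proof is essentially a direct unpacking of the characterization of $R_n$ given in the preceding remark. The plan is to verify the two implications by matching the cases in that remark against the three conditions in the lemma, and the key observation is that each case in the remark's list either concerns morphisms that keep $X_1$ fixed in $\{A_1, B_1\}$, or morphisms of the form $(\id_{-\infty}, g)$ where $g$ is (essentially) a morphism in $R_{n-1}$.

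For the forward direction, I would assume $\Phi$ is reduced and verify each of (i), (ii), (iii) in turn. Given a morphism $g \colon Z \to W$ in $\Sp^{n-1}$, the morphism $(\id_{A_1}, g) \colon (A_1, Z) \to (A_1, W)$ is in $R_n$ by the first case of the remark (since $f_1 = \id$ and $X_1 = A_1 \neq -\infty$), so $\Phi$ sends it to an equivalence, and this shows $\Phi(A_1, \blank)$ is constant; the argument for $B_1$ is identical. For (iii), given $g = (f_2, \ldots, f_n) \in R_{n-1}$ matching some case $k \in \{1, \ldots, n-1\}$ of the inductive list for $R_{n-1}$, the morphism $(\id_{-\infty}, g)$ matches case $k+1$ of the list for $R_n$, hence lies in $R_n$; so $\Phi(\id_{-\infty}, g)$ is an equivalence, showing $\Phi(-\infty, \blank)$ sends every morphism in $R_{n-1}$ to an equivalence.

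For the converse, I would assume (i)--(iii) and take an arbitrary $f = (f_1, \ldots, f_n) \in R_n$. Using the remark I would do a case distinction: if $f$ falls in the first case (so $f_1 = \id_{X_1}$ with $X_1 \in \{A_1, B_1\}$), then $f = (\id_{X_1}, g)$ for a morphism $g$ in $\Sp^{n-1}$, and $\Phi(f)$ is an equivalence by (i) or (ii). If $f$ falls in any of the remaining cases (so $f_1 = \id_{-\infty}$), then $f = (\id_{-\infty}, g)$ where $g = (f_2, \ldots, f_n)$ fits exactly one case lower in the inductive list characterizing $R_{n-1}$, so $g \in R_{n-1}$, and (iii) gives that $\Phi(f)$ is an equivalence.

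There is no real obstacle here: the content of the lemma is purely combinatorial bookkeeping that translates the recursive structure of the product $\Sp^n = \Sp^1 \times \Sp^{n-1}$ (together with the observation that the morphisms in $R_n$ are determined by looking at the first coordinate where source or target is not $-\infty$) into the three inductive conditions. The only minor point worth noting is the convention that $\Phi(A_1, \blank)$ being ``constant'' is interpreted as sending every morphism of $\Sp^{n-1}$ to an equivalence, which (since $\Sp^{n-1}$ has an initial object) is equivalent to being essentially constant.
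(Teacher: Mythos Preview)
Your proof is correct. The paper gives no proof at all (the statement ends with \qed), treating the lemma as an immediate consequence of the preceding remark, and your argument is precisely the routine unpacking of that remark's case list.
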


\begin{lemma}\label{lem:foldspanloc}\ 
  \begin{enumerate}[(i)]
  \item For $X \in \lsp^{n}$, let $\Sp^{n}_{X}$ denote
  the subcategory of $\Sp^{n}$ containing the objects $I$ such that
  $j_{n}(I) = X$. Then $\Sp^{n}_{X}$ is weakly contractible for
  every $X$.
  \item For every $X \in \lsp^{n}$, the inclusion $\Sp^{n}_{X}\hookrightarrow
    \Sp^{n}_{/X} := \Sp^{n} \times_{\lsp^{n}} \lsp^{n}_{/X}$ is cofinal.
  \item The functor $j^{\circ}_{n} \colon \Sp^{n,\circ} \to \lsp^{n,\circ}$ is coinitial.
  \item For any \icat{} $\mathcal{C}$, the functor
    \[ j_{n}^{*} \colon \Fun(\lsp^{n}, \mathcal{C}) \to \Fun(\Sp^{n},
      \mathcal{C})\]
    is fully faithful with image the full subcategory of reduced
    $n$-uple spans.
  \item $j_{n}$ exhibits $\lsp^{n}$ as the
  localization $\Sp^{n}[R_{n}^{-1}]$.
  \end{enumerate}
\end{lemma}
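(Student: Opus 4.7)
My plan is to establish (i) and (ii) by combinatorial inspection of the posets involved, deduce (iii) as an immediate consequence, and then derive (iv) and (v) together via a left Kan extension argument built on (i) and (ii).

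For (i), the fibre $\Sp^n_{-\infty}$ is the one-point category, while for $X = A_k$ the fibre consists of tuples of the form $(-\infty, \ldots, -\infty, A_1, I_{k+1}, \ldots, I_n)$ with $A_1$ in position $k$ and arbitrary $I_j \in \Sp^1$ for $j > k$. This subposet admits the element with all $I_j = -\infty$ as a minimum and is therefore weakly contractible; the case $X = B_k$ is symmetric. For (ii), I apply Quillen's \ThmA{}: given $I \in \Sp^n_{/X}$, one must verify that the subposet $\{J \in \Sp^n_X : I \leq J\}$ is weakly contractible. The constraint $j_n(I) \leq A_k$ forces $I_j = -\infty$ for $j < k$ and $I_k \in \{-\infty, A_1\}$, from which the subposet is identified with $\prod_{j > k} (\Sp^1)_{I_j/}$, visibly admitting the tuple $(I_j)_{j > k}$ as an initial object; the argument for $X = B_k$ is symmetric, and the case $X = -\infty$ is trivial.

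For (iii), the comma $\Sp^{n,\circ} \times_{\lsp^{n,\circ}} (\lsp^{n,\circ})_{/X}$ is the full subposet of $\Sp^n_{/X}$ obtained by removing $-\infty$. Since the initial objects produced in the proof of (ii) are never $-\infty$, the inclusion $\Sp^n_X \hookrightarrow \Sp^{n,\circ} \times_{\lsp^{n,\circ}} (\lsp^{n,\circ})_{/X}$ is still cofinal; weak contractibility then follows from (i), and so $j_n^\circ$ is coinitial.

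For (iv) and (v), note that (iv) immediately implies (v) via the universal property of localization, so it suffices to prove (iv). The essential image of $j_n^*$ is clearly contained in the reduced functors, since any functor factoring through $j_n$ is constant on fibres. For the reverse inclusion and for full faithfulness, I use the left adjoint $(j_n)_!$ (after embedding $\mathcal{C}$ into an \icat{} with enough colimits if necessary): the pointwise formula together with (ii) gives $(j_n)_! F(X) = \colim_{I \in \Sp^n_{/X}} F(I) \simeq \colim_{I \in \Sp^n_X} F|_{\Sp^n_X}$. When $F$ is reduced, $F|_{\Sp^n_X}$ inverts every morphism (each lies in $R_n$) and so factors through $\|\Sp^n_X\| \simeq *$ by (i), collapsing the colimit to $F(I_0)$ for any $I_0 \in \Sp^n_X$. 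Specializing to $F = j_n^* G$, the counit $(j_n)_! j_n^* G(X) \to G(X)$ is the equivalence out of a constant colimit over a contractible diagram, establishing full faithfulness of $j_n^*$; applied instead to a reduced $F$, the unit $F \to j_n^*(j_n)_! F$ at $I$ identifies $F(I)$ with $F(I_0)$ for any $I_0 \in \Sp^n_{j_n(I)}$ via the canonical equivalence coming from the reduced property on the contractible fibre, so every reduced functor lies in the essential image. The main technical point is the case analysis in (ii) pinning down the initial objects; once that is in hand, (iii) is immediate and the Kan extension calculation in (iv) is standard.
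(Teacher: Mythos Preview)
Your proof is correct and follows essentially the same approach as the paper's: the combinatorial identification of the fibres $\Sp^n_{A_k}$, the \ThmA{} argument for (ii) exhibiting an initial object in each comma category, and the left Kan extension computation for (iv) are all the same. The only minor difference is that the paper argues directly that $j_{n,!}\Phi$ exists in $\mathcal{C}$ for reduced $\Phi$ (the relevant colimits collapse to single values of $\Phi$, so no auxiliary colimits are needed), whereas you pass through an embedding into a cocomplete \icat{}; both routes are fine.
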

\begin{proof}
  Both (i) and (ii) are trivial if $X$ is the initial object; we will
  prove the statements for $X = A_{k}$, the case of $B_{k}$ being
  exactly the same. In this case we see that $\Sp^{n}_{A_{k}}$ is
  the partially ordered set of
  $I = (I_{1},\ldots, I_{n}) \in \Sp^{n}$ such that
  $I_{s} = -\infty$ for $s = 1,\ldots,k-1$ and
  $I_{k}= A_{1}$. Thus $\Sp^{n}_{A_{k}}$ is isomorphic to
  $\Sp^{n-k}$, which is weakly contractible since it has an initial
  object; this proves (1). To prove (2), it suffices by \ThmA{} to
  show that for every
  $J = (J_{1},\ldots,J_{n}) \in \Sp^{n}_{/A_{k}}$,
  the category $\Sp^{n}_{A_{k},J/}$ is weakly contractible. If
  $j_{n}(J) = A_{k}$, then this trivially has an initial object. On
  the other hand, if $j_{n}(J) < A_{k}$ then $J$ must satisfy
  $J_{s} = -\infty$ for $s = 1,\ldots,k$, and
  $\Sp^{n}_{A_{k},J/}$ is isomorphic to $(\Sp_{n-k})_{J'/}$ where
  $J' = (J_{k+1},\ldots,J_{n})$; this again has
  an initial object and thus is weakly contractible.

  The proof of (ii) also shows that the inclusion
  $\Sp^{n}_{X} \to \Sp^{n,\circ}_{/X}$ is cofinal, and together with
  (i) this implies that $\Sp^{n,\circ}_{/X}$ is weakly contractible
  for every $X \in \lsp^{n}$. Hence $j_{n}^{\circ}$ is coinitial by
  \ThmA{}.

  To prove (iv), we will show that for any reduced $n$-uple span
  $\Phi \colon \Sp^{n} \to \mathcal{C}$, the left Kan
  extension \[j_{n,!}\Phi \colon \lsp^{n} \to \mathcal{C}\] exists,
  and the unit map $\Phi \to j_{n}^{*}j_{n,!}\Phi$ is an equivalence.

  The (pointwise) left Kan extension $j_{n,!}\Phi$ exists \IFF{} for
  every $X \in \lsp^{n}$, the colimit of $\Phi$ over
  $\Sp^{n}_{/X}$ exists in $\mathcal{C}$. If $\Phi$ is a reduced $n$-uple span, then $\Phi$ is constant on
  $\Sp^{n}_{X}$ for every $X$, so (i) implies that the colimit of
  $\Phi$ over $\Sp^{n}_{X}$ exists, and for every $I \in
  \Sp^{n}_{X}$ the canonical map $\Phi(I) \to \colim
  \Phi|_{\Sp^{n}_{X}}$ is an equivalence. Now (ii) implies that
  $\colim \Phi|_{\Sp^{n}_{X}} \simeq \colim \Phi|_{\Sp^{n}_{/X}}$
  --- in particular the latter colimit exists in $\mathcal{C}$, and so the
  Kan extension $j_{n,!}\Phi$ exists. Moreover, the component of $\Phi
  \to j_{n}^{*}j_{n,!}\Phi$ at $I$ is the map $\Phi(I) \to \colim
  \Phi|_{\Sp^{n}_{/j_{n}(I)}}$, which is an equivalence. This proves
  (iv), of which (v) is a reformulation.
\end{proof}

To define higher categories where the $i$-morphisms are $i$-fold
spans, we can thus first consider the structure formed by $i$-uple
spans and then restrict to the reduced ones. This strategy was carried
out in \cite{spans}, and we now briefly review the construction:

\begin{definition}\label{defn:bbS}
  Let $\bbS^{n}$ be the partially ordered set of pairs $(i,j)$ with $0
  \leq i \leq j \leq n$, where $(i,j) \leq (i',j')$ when $i \leq i'$
  and $j' \leq j$. We write $\bbL^{n}$ for the subcategory of pairs
  $(i,j)$ where $j-i \leq 1$, and for $\ind{i} =
  (i_{1},\ldots,i_{k})$ we abbreviate $\bbS^{\ind{i}} :=
  \bbS^{i_{1}} \times \cdots \times \bbS^{i_{k}}$ and
  $\bbL^{\ind{i}} := \bbL^{i_{1}} \times \cdots \times
  \bbL^{i_{k}}$. Note that $\lsp^{1} \simeq \Sp^{1} \simeq \bbS^{1}$
  and $\Sp^{n} \simeq \bbS^{1,\ldots,1}$. 
\end{definition}

\begin{definition}
  For varying $\mathbf{i}$ the categories
  $\bbS^{\mathbf{i}}$ combine to a functor $\simp^{\times k} \to
  \CatI$, and if $\mathcal{C}$ is an \icat{} we
  define $\oSPAN^{+}_{k}(\mathcal{C}) \to (\simp^{\op})^{\times k}$ to be
  the cocartesian fibration associated to the functor
  $\Fun(\bbS^{(\blank)}, \mathcal{C})$.
\end{definition}

\begin{definition}
  If $\mathcal{C}$ is an \icat{} with pullbacks we say a functor
  $\bbS^{\mathbf{n}} \to \mathcal{C}$ is \emph{cartesian} if it is a
  right Kan extension of its restriction to $\bbL^{\mathbf{n}}$. We
  define $\SPAN^{+}_{k}(\mathcal{C})$ to be the full subcategory of
  $\oSPAN^{+}_{k}(\mathcal{C})$ spanned by the cartesian diagrams in
  $\Fun(\bbS^{\mathbf{n}}, \mathcal{C})$ for all $\mathbf{n}$.
\end{definition}

We then have:
\begin{theorem}[\cite{spans}*{Corollary
    5.12 and Proposition 5.14}]
  The restricted projection
  \[\SPAN^{+}_{n}(\mathcal{C}) \to \simp^{n,\op}\] is a cocartesian
  fibration, and the corresponding functor $\simp^{n,\op} \to \CatI$
  is an $n$-uple category object. As a consequence, the underlying left fibration corresponds to an $n$-uple Segal space $\SPAN_{n}(\mathcal{C})$. \qed
\end{theorem}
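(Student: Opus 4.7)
The statement has two components: (1) the restricted projection $\SPAN^{+}_{n}(\mathcal{C}) \to \simp^{n,\op}$ is a cocartesian fibration, and (2) the associated functor satisfies the $n$-uple Segal condition. The plan is to treat these in turn.

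For part (1), I would start from the fact that $\oSPAN^{+}_{n}(\mathcal{C}) \to \simp^{n,\op}$ is already a cocartesian fibration, with cocartesian pushforward along a morphism $\phi \colon \mathbf{k} \to \mathbf{l}$ in $\simp^{n}$ given by precomposition $(\bbS^{\phi})^{*} \colon \Fun(\bbS^{\mathbf{l}}, \mathcal{C}) \to \Fun(\bbS^{\mathbf{k}}, \mathcal{C})$. Thus, to obtain a cocartesian fibration after restricting to cartesian diagrams, it suffices to check that the full subcategory $\SPAN^{+}_{n}(\mathcal{C}) \subseteq \oSPAN^{+}_{n}(\mathcal{C})$ is closed under these pushforwards: if $G \colon \bbS^{\mathbf{l}} \to \mathcal{C}$ is a right Kan extension of $G|_{\bbL^{\mathbf{l}}}$, then $G \circ \bbS^{\phi} \colon \bbS^{\mathbf{k}} \to \mathcal{C}$ must also be a right Kan extension of its restriction to $\bbL^{\mathbf{k}}$.

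Using the active--inert factorization system and functoriality of precomposition, the verification reduces to elementary face and degeneracy maps $d^{s}, s^{s}$ acting in a single coordinate. In each case one must show that for every $\mathbf{x} \in \bbS^{\mathbf{k}}$, the limit computing $G(\bbS^{\phi}(\mathbf{x}))$ (indexed by $\bbL^{\mathbf{l}}_{\bbS^{\phi}(\mathbf{x})/}$) agrees with the limit dictated by the cartesian property for $G \circ \bbS^{\phi}$ (indexed by $\bbL^{\mathbf{k}}_{\mathbf{x}/}$). For degeneracies, this is essentially automatic: $\bbS^{s^{s}}$ collapses certain intervals, so the required cartesian squares reduce to pullbacks along identities. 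For face maps, particularly inner ones, $\bbS^{d^{s}}$ embeds $\bbS^{k-1}$ as a sub-iterated-span of $\bbS^{k}$, and the slice analysis reduces to a cofinality statement between the relevant indexing posets. This case analysis is the main obstacle --- the active face map case is the most delicate, since $\bbS^{\phi}$ does not preserve the $\bbL$ subcategory, so cofinality must be established through the full $\bbS$.

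For part (2), the Segal condition asserts that $\SPAN^{+}_{n}(\mathcal{C})_{\mathbf{k}} \simeq \lim_{\mathbf{e} \in \DnelopI} \SPAN^{+}_{n}(\mathcal{C})_{\mathbf{e}}$. By definition of $\SPAN^{+}_{n}(\mathcal{C})$ and the general fact that right Kan extensions form a full subcategory equivalent to the source of restriction, each fiber is equivalent to $\Fun(\bbL^{\mathbf{k}}, \mathcal{C})$. The Segal condition then becomes the assertion that
\[ \Fun(\bbL^{\mathbf{k}}, \mathcal{C}) \isoto \lim_{\mathbf{e} \in \DnelopI} \Fun(\bbL^{\mathbf{e}}, \mathcal{C}), \]
which follows from identifying $\bbL^{\mathbf{k}}$ as the colimit in $\CatI$ of the diagram $\mathbf{e} \mapsto \bbL^{\mathbf{e}}$ indexed by $\DnelopI^{\op}$. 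Via the product decomposition $\bbL^{\mathbf{k}} \cong \prod_{i} \bbL^{k_{i}}$, this reduces to the one-dimensional statement $\bbL^{k} \simeq \bbL^{1} \cup_{\bbL^{0}} \cdots \cup_{\bbL^{0}} \bbL^{1}$, which is the standard Segal decomposition of the ``walking spine'' and is easily verified at the level of posets. Compatibility of the identifications of fibers with the transition functors of the cocartesian fibration is then automatic from the construction, completing the proof.
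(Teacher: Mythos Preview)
The paper does not prove this theorem: it is stated with a terminal \qed and attributed entirely to \cite{spans}*{Corollary 5.12 and Proposition 5.14}, so there is no in-paper proof to compare against. Your sketch is a reasonable outline of how the cited argument goes --- showing closure of the cartesian diagrams under cocartesian pushforward (reducing via the active--inert factorization to elementary face and degeneracy maps) and then verifying the Segal condition via the colimit decomposition of $\bbL^{\mathbf{k}}$ --- and this is indeed essentially the strategy used in \cite{spans}.
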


\begin{definition}
  We let $\Span_{n}(\mathcal{C})$ be the underlying $n$-fold Segal
  space of $\SPAN_{n}(\mathcal{C})$. Regarding \icats{} as complete
  Segal spaces, we may also view $\SPAN^{+}_{n}(\mathcal{C})$ as an
  $(n+1)$-uple Segal space; we let $\Span_{n}^{+}(\mathcal{C})$ be its
  underlying $(n+1)$-fold Segal space.
\end{definition}

\begin{definition}\label{def:redbbSj}
  Let us define $R_{\ind{j}}$ to be the set of maps $f=(f_{1},\ldots,f_{n})$ in
  $\bbS^{\ind{j}}$ such that either $f$ is an identity morphism or
  \begin{itemize}
  \item $f_{1}$ is an identity morphism at an object of the form
    $(a_{1},a_{1}) \in \bbS^{j_{1}}$, or
  \item $f_{1}$ is an identity morphism at an object of the form
    $(a_{1},b_{1})$, $b_{1} \neq a_{1}$, and $f_{2}$ is an identity morphism at an object of the form
    $(a_{2},a_{2}) \in \bbS^{j_{2}}$, or
  \item \ldots,
  \item $f_{1},\ldots,f_{n-2}$ are identity morphisms at objects of
    the form $(a_{i},b_{i}) \in \bbS^{j_{i}}$ with $a_{i} \neq b_{i}$,
    and $f_{n-1}$ is an identity morphisms at an object of the form
    $(a_{n-1},a_{n-1})\in \bbS^{j_{n-1}}$.
  \end{itemize}
  We say a functor $\bbS^{\ind{j}} \to \mathcal{C}$ is \emph{reduced}
  if it takes the morphisms in $R_{\ind{j}}$ to equivalences.
\end{definition}
\begin{remark}\label{rmk:Spanasredftr}
  We can identify the \icat{} $\Span_{n}^{+}(\mathcal{C})_{\ind{j}}$ with the full
  subcategory of $\Fun(\bbS^{\ind{j}}, \mathcal{C})$ spanned by
  functors that are both cartesian and reduced. Similarly,
  $\Span_{n}(\mathcal{C})_{\ind{j}}$ is the subspace of
  $\Map(\bbS^{\ind{j}}, \mathcal{C})$ spanned by the cartesian and
  reduced functors.
\end{remark}

We have the following results from \cite{spans}:

\begin{proposition}[\cite{spans}*{Corollary 8.5}]
  For any \icat{} $\mathcal{C}$ with pullbacks, the $n$-fold Segal space $\Span_{n}(\mathcal{C})$ is complete. \qed
\end{proposition}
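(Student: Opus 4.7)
The plan is to prove completeness by induction on $n$, reducing both the base case and the inductive step to a statement about the underlying Segal space in the ``top'' simplicial direction. Recall that an $n$-fold Segal space $X$ is complete precisely when (a) each row $X_{k,\bullet,\ldots,\bullet}$ is a complete $(n-1)$-fold Segal space, and (b) the underlying Segal space in the outermost simplicial direction satisfies the usual Rezk completeness condition. This reduces the proof to verifying two things: that mapping objects in $\Span_n(\mathcal{C})$ are themselves of the form $\Span_{n-1}(\mathcal{D})$ for some \icat{} $\mathcal{D}$ with pullbacks (enabling induction), and that in the $n=1$-type condition the space of objects matches the space of equivalences.

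For the inductive step, I would invoke \cite{spans}*{Proposition 8.3}, which identifies
\[ \Span_{n}(\mathcal{C})(x,y) \simeq \Span_{n-1}(\mathcal{C}_{/x,y}). \]
Since $\mathcal{C}_{/x,y}$ inherits pullbacks from $\mathcal{C}$ (these being computed as ordinary pullbacks in $\mathcal{C}$), the inductive hypothesis applies to give completeness of $\Span_{n-1}(\mathcal{C}_{/x,y})$, and hence completeness of each mapping $(n-1)$-fold Segal space. It will also be necessary to identify the rows $\Span_n(\mathcal{C})_{k,\bullet,\ldots,\bullet}$, using the Segal condition in the first variable, as iterated fibre products of mapping spaces over the space of objects, and then argue componentwise.

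For the base case, which is really the top-direction Rezk completeness at every level, I would use \cite{spans}*{Lemma 6.2}, which characterizes equivalences in $\Span_n(\mathcal{C})$: a $1$-morphism $x \xleftarrow{f} z \xrightarrow{g} y$ is invertible \IFF{} both legs $f$ and $g$ are equivalences in $\mathcal{C}$ (and higher morphisms are invertible \IFF{} their components are equivalences in $\mathcal{C}$). The space of such spans is equivalent to the space of pairs $(z, f \colon z \xrightarrow{\sim} x)$ together with a second equivalence $g \colon z \xrightarrow{\sim} y$; contracting along $f$ identifies this space with the path space of $\mathcal{C}^{\simeq}$, which in turn is equivalent to $\mathcal{C}^{\simeq} = \Span_n(\mathcal{C})_{0,\ldots,0}$ via either endpoint. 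Thus the map from objects to equivalences is an equivalence.

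The main technical obstacle will be organizing these two observations into a clean inductive proof that cleanly handles the nested Segal and completeness conditions simultaneously --- in particular, one must carefully check that the row-wise completeness at intermediate levels follows from the mapping-space identification rather than requiring a separate argument, and that the characterization of equivalences from \cite{spans}*{Lemma 6.2} really does give the completeness condition at each intermediate level, not just at the outermost. This is essentially the bookkeeping done in \cite{spans}*{\S 8}, and no genuinely new ideas beyond Propositions 8.3 and Lemma 6.2 should be required.
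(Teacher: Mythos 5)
Your proposal is correct and follows essentially the same route as the cited source: the paper itself offers no proof beyond the reference to \cite{spans}*{Corollary 8.5}, and the argument there is exactly your induction — completeness of the outermost Segal space $\Span_1(\mathcal{C})$ via the characterization of invertible spans as those with both legs equivalences, combined with the identification $\Span_n(\mathcal{C})(x,y)\simeq \Span_{n-1}(\mathcal{C}_{/x,y})$ of Proposition 8.3 to run the induction (noting that $\mathcal{C}_{/x,y}$ again has pullbacks, computed in $\mathcal{C}$). No further comparison is needed.
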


\begin{proposition}
  For objects, $x,y \in \mathcal{C}$, the $(n-1)$-fold Segal space
  of maps $\Span_{n}(\mathcal{C})(x,y)$ is naturally equivalent to
  $\Span_{n-1}(\mathcal{C}_{/x,y})$. 
\end{proposition}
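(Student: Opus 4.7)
The plan is to use the explicit description of $\Span_{n}(\mathcal{C})_{\ind{k}}$ from Remark~\ref{rmk:Spanasredftr} as the space of cartesian and reduced functors $\bbS^{\ind{k}} \to \mathcal{C}$, which lets us write down both sides concretely and construct an equivalence by hand; this is essentially the argument of \cite{spans}*{Proposition 8.3}.

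First, I would unwind the pullback definition of the mapping $(n-1)$-fold Segal space to identify $\Span_{n}(\mathcal{C})(x,y)_{\ind{k}}$ with the subspace of cartesian reduced functors $F\colon \bbS^{1,\ind{k}} \to \mathcal{C}$ satisfying $F(0,0,\ind{0}) \simeq x$ and $F(1,1,\ind{0}) \simeq y$. Applying the reducedness condition to morphisms in $R_{1,\ind{k}}$ whose first component is an identity at a diagonal object $(0,0)$ or $(1,1)$ then forces $F|_{\{(0,0)\}\times \bbS^{\ind{k}}}$ and $F|_{\{(1,1)\}\times \bbS^{\ind{k}}}$ to be essentially constant at $x$ and $y$, respectively.

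Second, using the relations $(0,1) \leq (0,0)$ and $(0,1) \leq (1,1)$ in $\bbS^{1}$, the restriction $F|_{\{(0,1)\}\times \bbS^{\ind{k}}}$ carries, at each $\ind{a}\in \bbS^{\ind{k}}$, canonical morphisms to $x$ and $y$; this data assembles into a functor $\Phi(F)\colon \bbS^{\ind{k}} \to \mathcal{C}_{/x,y}$. Conversely, given $G\colon \bbS^{\ind{k}} \to \mathcal{C}_{/x,y}$ one reconstructs $F$ by setting it to be constant at $x$ on $\{(0,0)\}\times \bbS^{\ind{k}}$, constant at $y$ on $\{(1,1)\}\times \bbS^{\ind{k}}$, equal to $G$ composed with the forgetful functor on $\{(0,1)\}\times \bbS^{\ind{k}}$, and using the structure maps of $G$ for the transition morphisms in the $\bbS^{1}$-direction.

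Third, I would verify that these assignments preserve and reflect both conditions defining $\Span$: cartesianness and reducedness. Since $\bbS^{1}$ has no object with $j-i>1$ we have $\bbL^{1}=\bbS^{1}$ and thus $\bbL^{1,\ind{k}} = \bbS^{1}\times \bbL^{\ind{k}}$, so cartesianness of $F$ reduces to cartesianness of each of the three restrictions $F(0,0,-)$, $F(0,1,-)$, $F(1,1,-)$ as functors $\bbS^{\ind{k}}\to \mathcal{C}$; the constant restrictions at $x$ and $y$ are automatically cartesian (the overcategories appearing in the Kan extension formula being weakly contractible), so cartesianness of $F$ amounts to cartesianness of $F(0,1,-)$. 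Since limits in $\mathcal{C}_{/x,y}$ are created by the forgetful functor to $\mathcal{C}$ together with the induced maps to $x$ and $y$, this is equivalent to cartesianness of $\Phi(F)$. A parallel direct analysis matches up the defining morphisms in $R_{\ind{k}}$ (for $\Phi(F)$) with those in $R_{1,\ind{k}}$ that act non-trivially only on the $(0,1)$-slice (for $F$). The main obstacle will be the bookkeeping in this step, especially verifying that both the Kan extension and the reducedness conditions match up bijectively between the two sides. Finally, the construction is visibly natural in $\ind{k}\in \simp^{n-1,\op}$, and since $\Span$ produces complete Segal objects, this levelwise equivalence assembles into the desired equivalence of $(n-1)$-fold Segal spaces.
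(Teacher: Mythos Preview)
Your proposal is correct and follows essentially the same approach as the paper, which simply cites \cite{spans}*{Proposition 8.3} and notes that the finite-limits hypothesis there can be dropped by replacing $\mathcal{C}_{/x\times y}$ with the double slice $\mathcal{C}_{/x,y}$. Your sketch unpacks precisely that argument: identifying the mapping space levelwise via reduced cartesian functors on $\bbS^{1,\ind{k}}$, using reducedness to make the $(0,0)$- and $(1,1)$-slices constant, and matching cartesianness and reducedness on the $(0,1)$-slice with the same conditions in $\mathcal{C}_{/x,y}$.
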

\begin{proof}
  This is essentially \cite{spans}*{Proposition 8.3}, except that as
  stated this makes the assumption that $\mathcal{C}$ has all finite
  limits. However, this is not required for the proof, provided the
  slice $\mathcal{C}_{/x \times y}$ is replaced by the double slice
  $\mathcal{C}_{/x,y}$.
\end{proof}

As in \cite{spans}*{Proposition 12.1}, this has the following consequence:
\begin{corollary}
  If $\mathcal{C}$ also has a terminal object then the
  $(\infty,n)$-category $\Span_{n}(\mathcal{C})$ has a natural
  symmetric monoidal structure.
\end{corollary}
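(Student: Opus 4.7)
The plan is to apply the delooping recipe described at the end of Section~\ref{subsec:segsp}: to produce a symmetric monoidal structure on an $(\infty,n)$-category $X_0$, it suffices to exhibit a sequence of pointed $(\infty,n+k)$-categories $(X_k, x_k)$ such that $(X_k(x_k, x_k), \id_{x_k}) \simeq (X_{k-1}, x_{k-1})$. I will take $X_k = \Span_{n+k}(\mathcal{C})$ with $x_k$ the terminal object $* \in \mathcal{C}$, viewed as an object of $\Span_{n+k}(\mathcal{C})$.

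First I would use the preceding Proposition (the mapping space identification $\Span_n(\mathcal{C})(x,y) \simeq \Span_{n-1}(\mathcal{C}_{/x,y})$) specialised at $x = y = *$. Since $*$ is terminal, the double slice $\mathcal{C}_{/*,*}$ is canonically equivalent to $\mathcal{C}$ itself (both projections are trivial), yielding a natural equivalence
\[
\Span_{n+k}(\mathcal{C})(*,*) \simeq \Span_{n+k-1}(\mathcal{C}_{/*,*}) \simeq \Span_{n+k-1}(\mathcal{C}).
\]
This gives the required delooping on the level of underlying $(\infty,m)$-categories.

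Next I would verify that, under this equivalence, the identity $1$-morphism of $*$ in $\Span_{n+k}(\mathcal{C})$ corresponds to the terminal object $*$ of $\Span_{n+k-1}(\mathcal{C})$. By construction of $\SPAN^{+}_m(\mathcal{C})$, the identity on $*$ is given by the degenerate span $* \leftarrow * \rightarrow *$, which under the equivalence $\mathcal{C}_{/*,*} \simeq \mathcal{C}$ is sent to $*$. So the pointings match, and iterating gives the chain of deloopings $(X_k, x_k)$ for all $k \geq 0$ with $X_0 = \Span_n(\mathcal{C})$ and $x_0 = *$. By the recipe cited in Section~\ref{subsec:segsp} (based on the identification of iterated monoids with $E_\infty$-algebras via Dunn additivity), this produces a symmetric monoidal structure on $\Span_n(\mathcal{C})$, as required. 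Naturality follows because all the equivalences used are natural in $\mathcal{C}$ (with respect to pullback-preserving functors that preserve the terminal object).

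The only potentially subtle step is checking the naturality and coherence of the identifications $\Span_{n+k}(\mathcal{C})(*,*) \simeq \Span_{n+k-1}(\mathcal{C})$ as $k$ varies, so that one genuinely obtains a tower rather than just equivalences level by level; this is handled uniformly by the mapping-space proposition, which is itself proved by the double-slice construction \cite{spans}*{Proposition 8.3}, so no new work is needed. The resulting tensor product can be read off as in Corollary~\ref{cor:SpanPSympSM}: on objects it is the cartesian product in $\mathcal{C}$ (when this exists), and in general it is given by the ``external product'' span, with unit the terminal object $*$.
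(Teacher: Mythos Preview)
Your argument is correct and is essentially the same as the paper's: the paper states this corollary as an immediate consequence of the preceding mapping-space proposition together with \cite{spans}*{Proposition 12.1}, which is exactly the delooping argument you spell out (and which is also recorded in the subsequent Remark~\ref{rmk:spanmondeloop}). You have simply made explicit what the paper leaves to the reference.
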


\begin{remark}\label{rmk:spanmondeloop}
  More relevant for the present paper is a slight variant of this
  symmetric monoidal structure: If we have a sequence of pointed
  \icats{} $(\mathcal{C}_{n}, c_{n})$ where $\mathcal{C}_{n}$ has
  pullbacks, and equivalences
  $\mathcal{C}_{n} \simeq (\mathcal{C}_{n+1})_{/c_{n+1}, c_{n+1}}$
  under which $c_{n}$ corresponds to the degenerate span
  \[ c_{n+1} \xfrom{\id} c_{n+1} \xto{\id} c_{n+1},\] then we get
  equivalences
  $\Span_{n+k}(\mathcal{C}_{k})(c_{k},c_{k}) \simeq
  \Span_{n+k-1}(\mathcal{C}_{k-1})$, under which $c_{k-1}$ corresponds
  to the identity map of $c_{k}$, which equips
  $\Span_{n}(\mathcal{C}_{0})$ with a symmetric monoidal structure
  with unit~$c_{0}$.
\end{remark}

\begin{remark}\label{rmk:spanfun}
  It is clear from the constructions of $\SPAN_{n}(\mathcal{C})$ and $\Span_{n}(\mathcal{C})$ that
  they are natural in the \icat{} $\mathcal{C}$: if
  $F \colon \mathcal{C} \to \mathcal{D}$ is a functor that preserves
  pullbacks then it induces natural morphisms
  \[ \SPAN_{n}(F) \colon \SPAN_{n}(\mathcal{C}) \to \SPAN_{n}(\mathcal{D}),\qquad
    \Span_{n}(F) \colon \Span_{n}(\mathcal{C}) \to
    \Span_{n}(\mathcal{D}).\]
  We thus get functors $\SPAN_{n}(\blank)$ and $\Span_{n}(\blank)$
  from the \icat{} of \icats{} with pullbacks and functors that
  preserve these to $n$-uple \icats{} and $(\infty,n)$-categories.
Moreover, for $x,y \in \mathcal{C}$ the
  induced functor
  \[ \Span_{n-1}(\mathcal{C}_{/x, y}) \simeq
    \Span_{n}(\mathcal{C})(x,y) \to \Span_{n}(\mathcal{D})(Fx,Fy)
    \simeq \Span_{n-1}(\mathcal{D}_{/Fx, Fy})\] can be identified
  with $\Span_{n-1}(F_{/x,y})$ where $F_{/x,y}$ is the functor
  $\mathcal{C}_{/x, y} \to \mathcal{D}_{/Fx, Fy}$ induced by $F$. Thus
  if $F$ also preserves the terminal object (\ie{} $F$ preserves
  all finite limits) then $\Span_{n}(F)$ is a symmetric
  monoidal functor. Thus we obtain a functor $\Span_{n}(\blank)$ from
  the \icat{} of \icats{} with finite limits and functors that
  preserve these to symmetric monoidal $(\infty,n)$-categories.
\end{remark}

\begin{lemma}\label{lem:Spanlim}
  The functors $\SPAN_{n}(\blank)$ and $\Span_{n}(\blank)$, viewed as
  functors from \icats{} with finite limits to $n$-uple \icats{} and
  $(\infty,n)$-categories, respectively, both preserve limits.
\end{lemma}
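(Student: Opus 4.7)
The plan is to unpack the construction of $\SPAN_{n}(\mathcal{C})$ step by step and verify that each step preserves limits. Let $\mathcal{C}_{\bullet} \colon \mathcal{I} \to \CatI^{\txt{flim}}$ be a diagram of \icats{} with finite limits and finite-limit-preserving functors, and let $\mathcal{C} := \lim_{i} \mathcal{C}_{i}$ denote its limit; since the inclusion $\CatI^{\txt{flim}} \hookrightarrow \CatI$ preserves limits, this limit is computed in $\CatI$.

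First, for each $\ind{j} \in \simp^{n,\op}$ the functor $\Fun(\bbS^{\ind{j}}, \blank) \colon \CatI \to \CatI$ preserves limits, so assembling we get an equivalence of functors $\Fun(\bbS^{(\blank)}, \mathcal{C}) \simeq \lim_{i} \Fun(\bbS^{(\blank)}, \mathcal{C}_{i}) \colon \simp^{n,\op} \to \CatI$. Straightening identifies cocartesian fibrations over $\simp^{n,\op}$ with functors to $\CatI$ and is a limit-preserving equivalence, so this gives $\oSPAN^{+}_{n}(\mathcal{C}) \simeq \lim_{i} \oSPAN^{+}_{n}(\mathcal{C}_{i})$.

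The main step is to check that the full subcategory $\SPAN^{+}_{n}(\mathcal{C}) \subseteq \oSPAN^{+}_{n}(\mathcal{C})$ of cartesian diagrams is compatible with this limit presentation. Concretely, I would show that a functor $F \colon \bbS^{\ind{n}} \to \mathcal{C}$ is a right Kan extension of its restriction to $\bbL^{\ind{n}}$ if and only if, for each $i \in \mathcal{I}$, the composite $p_{i} \circ F \colon \bbS^{\ind{n}} \to \mathcal{C}_{i}$ (where $p_{i}$ is the projection to $\mathcal{C}_{i}$) is a right Kan extension of its restriction to $\bbL^{\ind{n}}$. Since the cartesian condition is equivalent to a finite collection of squares being pullback squares, this reduces to the assertion that a finite limit diagram in $\mathcal{C}$ is detected by the projections $p_{i}$; this holds because mapping spaces in $\mathcal{C}$ are computed as limits of the mapping spaces in the $\mathcal{C}_{i}$, and crucially because the transition morphisms in $\mathcal{C}_{\bullet}$ preserve finite limits (this hypothesis is exactly what lets us conclude that each $p_{i}$ preserves the relevant pullbacks). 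Thus restriction to cartesian diagrams is a fibrewise operation that commutes with the limit, giving $\SPAN^{+}_{n}(\mathcal{C}) \simeq \lim_{i} \SPAN^{+}_{n}(\mathcal{C}_{i})$ as $n$-uple category objects in $\CatI$.

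Finally, passing to the underlying left fibration corresponds on straightenings to applying the groupoid core functor $(\blank)^{\simeq} \colon \CatI \to \mathcal{S}$ pointwise; this is right adjoint to the inclusion $\mathcal{S} \hookrightarrow \CatI$ and so preserves limits, yielding $\SPAN_{n}(\mathcal{C}) \simeq \lim_{i} \SPAN_{n}(\mathcal{C}_{i})$ as $n$-uple Segal spaces, hence as $n$-uple \icats{}. For the second statement, $\Span_{n}(\blank) = U_{\Seg} \circ \SPAN_{n}(\blank)$, and $U_{\Seg}$ is right adjoint to the inclusion $\Seg_{n}(\mathcal{S}) \hookrightarrow \Cat^{n}(\mathcal{S})$ (as recorded earlier), so it preserves limits and the conclusion follows. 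The main obstacle is the middle step, namely checking that cartesianness of a diagram in the limit $\mathcal{C}$ is detected by its projections to the $\mathcal{C}_{i}$, which is where the hypothesis that the structure morphisms in the diagram $\mathcal{C}_{\bullet}$ preserve pullbacks is essential.
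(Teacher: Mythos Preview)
Your proposal is correct and follows essentially the same route as the paper: both arguments reduce to the facts that limits in the source are computed in $\CatI$ (via \cite{HTT}*{Lemma 5.4.5.5}) and that limits in the target are computed levelwise in $\Fun(\simp^{n,\op}, \mathcal{S})$, with the key point being that cartesianness of a diagram in $\lim_{i}\mathcal{C}_{i}$ is detected by the projections because the transition functors preserve pullbacks. The paper compresses all of this into two sentences, whereas you unpack the construction through $\oSPAN^{+}_{n}$, the cartesian restriction, the groupoid core, and $U_{\Seg}$; the only stylistic difference is that for $\Span_{n}$ the paper appeals directly to limits in $(\infty,n)$-categories being computed in $\Fun(\simp^{n,\op}, \mathcal{S})$ rather than invoking that $U_{\Seg}$ is a right adjoint, but these are equivalent observations.
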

\begin{proof}
  This follows from the definitions, since limits in $n$-uple \icats{}
  and $(\infty,n)$-categories are both computed in
  $\Fun(\simp^{n,\op}, \mathcal{S})$, where limits are in turn given
  pointwise, while finite limits in the \icat{} of \icats{} with finite
  limits are computed in $\CatI$ by \cite{HTT}*{Lemma 5.4.5.5}.
\end{proof}

\section{Spans in Model Categories}\label{subsec:spanmodel}
In \S\S\ref{sec:cobcospan} and \ref{sec:orcob} we will construct
several functors from $(\infty,n)$-categories of cobordisms to various
$(\infty,n)$-categories of (co)spans. The cobordism
$(\infty,n)$-categories are defined using functors of ordinary
categories $\Dnop \to \sSet$, and the goal of this section is to
give a similarly strict model for $(\infty,n)$-categories of spans
valued in a model category, which we will use to define these
functors. We start by recalling a result of Cisinski:

\begin{theorem}[Cisinski]\label{thm:modcatfun}
  If $\mathbf{M}$ is a cofibrantly generated model category, and
  $\mathbf{M}[W^{-1}]$ denotes the \icat{} obtained by inverting the
  class $W$ of weak equivalences in $\mathbf{M}$, then for every small category
  $\mathbf{I}$ the natural map
  \[ \Fun(\mathbf{I},\mathbf{M})[W^{-1}_{\mathbf{I}}] \to \Fun(\mathbf{I},
  \mathbf{M}[W^{-1}])\]
  is an equivalence of \icats{}, where $W_{\mathbf{I}}$ denotes the
  class of natural weak equivalences, \ie{} natural transformations given objectwise by weak equivalences.
\end{theorem}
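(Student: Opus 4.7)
The plan is to produce strict models for both sides and compare them via rectification. Since $\mathbf{M}$ is cofibrantly generated, the projective model structure exists on $\Fun(\mathbf{I},\mathbf{M})$, with weak equivalences the objectwise weak equivalences $W_{\mathbf{I}}$. Hence $\Fun(\mathbf{I},\mathbf{M})[W_{\mathbf{I}}^{-1}]$ is the underlying \icat{} of this model category, which can be described (via fibrant-cofibrant replacement) in terms of the homotopy category of projectively cofibrant-fibrant diagrams together with derived mapping spaces. The comparison functor itself is easy to construct: the localization unit $\mathbf{M}\to\mathbf{M}[W^{-1}]$ sends $W$ to equivalences, so postcomposition yields $\Fun(\mathbf{I},\mathbf{M})\to\Fun(\mathbf{I},\mathbf{M}[W^{-1}])$; since equivalences in a functor \icat{} are detected objectwise, this sends $W_{\mathbf{I}}$ to equivalences and hence factors through the localization.

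The work is then to show the factorization is essentially surjective and fully faithful. For essential surjectivity one must lift a coherent diagram $F\colon\mathbf{I}\to\mathbf{M}[W^{-1}]$ to a strict (projectively fibrant-cofibrant) diagram in $\Fun(\mathbf{I},\mathbf{M})$. This is a form of Lurie's rectification theorem (e.g.\ \cite{HTT}*{Proposition 4.2.4.4}) for combinatorial simplicial model categories, and one can reduce to that situation by replacing $\mathbf{M}$ with a Quillen-equivalent combinatorial simplicial model category via a framing/simplicial resolution. For fully faithfulness, one computes the mapping spaces: on the source, using projectively cofibrant resolutions, the mapping space between two diagrams is a derived end $\int_{i\in\mathbf{I}}\Map^{h}_{\mathbf{M}}(F(i),G(i))$; on the target, the mapping space in $\Fun(\mathbf{I},\mathbf{M}[W^{-1}])$ is exactly this homotopy end. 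Matching these uses that cosimplicial framings in the projective model structure are produced objectwise from framings in $\mathbf{M}$, and that derived mapping spaces in $\mathbf{M}$ model those in $\mathbf{M}[W^{-1}]$.

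The main obstacle is the rectification step: without additional structure on $\mathbf{M}$ (simpliciality, combinatoriality), it is not automatic that every homotopy-coherent diagram $\mathbf{I}\to\mathbf{M}[W^{-1}]$ arises from a strict diagram. The cleanest way around this is to reduce from the cofibrantly generated case to the combinatorial simplicial case (using, say, a small object argument plus Dugger's theorem producing a Quillen-equivalent simplicial combinatorial model presenting the same \icat{}), after which rectification and the end-formula for mapping spaces in projective model structures (as in \cite{HTT}*{\S 4.2.4} and \cite{HA}*{\S 1.3.4}) deliver both essential surjectivity and fully faithfulness in one stroke.
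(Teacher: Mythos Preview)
The paper does not give a self-contained proof: it simply observes that the projective model structure on $\Fun(\mathbf{I},\mathbf{M})$ exists whenever $\mathbf{M}$ is cofibrantly generated, and then cites Cisinski's book (Theorem 7.9.8 there), which proves the comparison under precisely that hypothesis. The paper also remarks that under the stronger assumption that $\mathbf{M}$ is combinatorial and simplicial this is Lurie's \cite{HTT}*{Corollary A.3.4.14}.

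Your outline, by contrast, tries to reduce the general cofibrantly generated case to the combinatorial simplicial case via ``Dugger's theorem producing a Quillen-equivalent simplicial combinatorial model presenting the same \icat{}.'' This reduction does not go through. Dugger's presentation theorem applies to \emph{combinatorial} model categories, i.e.\ those that are both cofibrantly generated and locally presentable; cofibrant generation alone does not suffice. There exist cofibrantly generated model categories that are not combinatorial (e.g.\ $\Top$ with the Quillen model structure, or various module categories over large rings), and for these there is no general replacement by a Quillen-equivalent combinatorial simplicial model category. So the key step in your argument---passing to a setting where \cite{HTT}*{Proposition 4.2.4.4} applies---is exactly the step that fails under the stated hypotheses.

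Cisinski's proof avoids this by working directly with the projective model structure and the theory of localizations of \icats{} developed in his book, without ever reducing to the combinatorial or simplicial case. If you want to give more than a citation, that is the argument to reproduce; alternatively, you could weaken the hypothesis to ``combinatorial'' and then your reduction-to-Lurie strategy is fine (and indeed is essentially what the paper's remark after the proof says).
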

\begin{proof}
  This is a special case of \cite{CisinskiCat}*{Theorem 7.9.8}, since
  for a cofibrantly generated model category $\mathbf{M}$ the
  projective model structure on the functor
  category $\Fun(\mathbf{I}, \mathbf{M})$ exists for any small
  category $\mathbf{I}$.
\end{proof}
\begin{remark}
  Under stronger assumptions on $\mathbf{M}$, namely that it is a
  combinatorial simplicial model category, this result is due to Lurie
  in \cite{HTT}*{Corollary A.3.4.14}.
\end{remark}

Passing to the underlying spaces of these \icats{}, we obtain:
\begin{corollary}
  If $\mathbf{M}$ is a cofibrantly generated model category,
  $\mathbf{I}$ is a small category, and $\Fun(\mathbf{I},
  \mathbf{M})^{W}$ denotes the subcategory of $\Fun(\mathbf{I},
  \mathbf{M})$ where the morphisms are the weak equivalences, then the
  natural map
  \[ \|\Fun(\mathbf{I}, \mathbf{M})^{W}\| \to \Map(\mathbf{I},
  \mathbf{M}[W^{-1}])\]
  is an equivalence in $\mathcal{S}$.\qed
\end{corollary}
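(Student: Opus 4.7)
The plan is to pass to underlying $\infty$-groupoids on both sides of the equivalence in Theorem~\ref{thm:modcatfun}. By definition $\Map(\mathbf{I}, \mathbf{M}[W^{-1}])$ is the maximal subspace $\Fun(\mathbf{I}, \mathbf{M}[W^{-1}])^{\simeq}$ of the functor $\infty$-category, and applying $(\blank)^{\simeq}$ to Cisinski's equivalence gives
\[ \Fun(\mathbf{I},\mathbf{M})[W_{\mathbf{I}}^{-1}]^{\simeq} \isoto \Map(\mathbf{I}, \mathbf{M}[W^{-1}]).\]
It therefore suffices to identify the left-hand side with $\|\Fun(\mathbf{I},\mathbf{M})^{W}\|$.

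This identification is an instance of the general fact that for any $\infty$-category $\mathcal{C}$ and any subcategory $\mathcal{W} \subseteq \mathcal{C}$ (containing all identities and closed under composition), the natural map
\[ \|\mathcal{W}\| \longrightarrow (\mathcal{C}[\mathcal{W}^{-1}])^{\simeq} \]
is an equivalence in $\mathcal{S}$. The map itself arises because the localization functor $\mathcal{C} \to \mathcal{C}[\mathcal{W}^{-1}]$ sends morphisms in $\mathcal{W}$ to equivalences, so its restriction to $\mathcal{W}$ lands in $(\mathcal{C}[\mathcal{W}^{-1}])^{\simeq}$ and hence factors through $\|\mathcal{W}\|$ by the universal property of groupoidification. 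I would apply this with $\mathcal{C} = \Fun(\mathbf{I},\mathbf{M})$ and $\mathcal{W} = \Fun(\mathbf{I},\mathbf{M})^{W}$, which is indeed a subcategory since natural weak equivalences contain the identities and are closed under composition.

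The main — and in fact only substantive — step is invoking the displayed equivalence $\|\mathcal{W}\| \simeq (\mathcal{C}[\mathcal{W}^{-1}])^{\simeq}$. This is well known for Dwyer--Kan-style localizations of $\infty$-categories; in the language of relative categories it is essentially the Barwick--Kan theorem that the classifying space $\|W\|$ computes the underlying $\infty$-groupoid of the localization, and a clean $\infty$-categorical proof is available in Cisinski's book \emph{Higher Categories and Homotopical Algebra}. I expect no further obstacle, so this finishes the proof.
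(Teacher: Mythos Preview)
Your approach matches the paper's (which simply says ``passing to the underlying spaces''): apply $(\blank)^{\simeq}$ to Cisinski's equivalence and then identify $\|\Fun(\mathbf{I},\mathbf{M})^{W}\|$ with the maximal sub-$\infty$-groupoid of the localization. One caution, though: the ``general fact'' you invoke is \emph{false} as stated --- take $\mathcal{C} = B\mathbb{Z}$ and $\mathcal{W}$ the subcategory containing only the identity; then $\|\mathcal{W}\| \simeq *$ while $(\mathcal{C}[\mathcal{W}^{-1}])^{\simeq} \simeq B\mathbb{Z}$. The equivalence $\|\mathcal{W}\| \simeq (\mathcal{C}[\mathcal{W}^{-1}])^{\simeq}$ requires further hypotheses on $\mathcal{W}$ (for instance that it arises from a model structure, which is the case here since $\Fun(\mathbf{I},\mathbf{M})$ carries the projective model structure with weak equivalences $W_{\mathbf{I}}$); the references you cite prove exactly this version, so your argument goes through once the lemma is stated with the correct hypotheses.
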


Recall from \S\ref{subsec:spans} that the $n$-uple Segal space
$\SPAN_{n}(\mathcal{C})$ is given by the diagram
$\Dnop \to \mathcal{S}$ taking $\ind{i} \in \Dnop$ to
$\Map_{\txt{cart}}(\bbS^{\ind{i}}, \mathcal{C})$, where
$\Map_{\txt{cart}}(\bbS^{\ind{i}}, \mathcal{C})$ is the subspace of
connected components in $\Map(\bbS^{\ind{i}}, \mathcal{C})$
corresponding to the cartesian functors. To construct a
functor to $\SPAN_{n}(\mathcal{C})$ it is therefore sufficient to
construct a natural transformation of functors $\Dnop \to \mathcal{S}$
with target the diagram $\oSPAN_{n}(\mathcal{C})$ given by
$\Map(\bbS^{(\blank)}, \mathcal{C})$ and then check the
\emph{condition} that this diagram lands in $\SPAN_{n}(\mathcal{C})$.
Now if $\mathcal{C}$ is modelled by a cofibrantly generated model
category $\mathbf{C}$, we can describe $\oSPAN_{n}(\mathcal{C})$ using a strict
diagram of simplicial sets defined in terms of $\mathbf{C}$:
\begin{corollary}\label{cor:SPANsset}
  Let $\mathbf{M}$ be a cofibrantly generated model category. Then the
  $n$-simplicial space $\oSPAN_{n}(\mathbf{M}[W^{-1}])$ arises from
  the functor $\Dnop \to \sSet$ given by
  \[ \mathbf{i} \mapsto \mathrm{N}\,\Fun(\bbS^{\mathbf{i}}, \mathbf{M})^{W}\]
  under composition with the equivalence $\sSet[\mathrm{WHE}^{-1}]
  \isoto \mathcal{S}$, where $\mathrm{WHE}$ denotes the class of weak
  homotopy equivalences.\qed
\end{corollary}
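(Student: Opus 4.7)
The plan is to assemble the pointwise equivalences supplied by the preceding corollary into an equivalence of functors $\Dnop \to \mathcal{S}$. First I would recall that the functor $\oSPAN_{n}(\mathbf{M}[W^{-1}]) \colon \Dnop \to \mathcal{S}$ is by definition obtained from the functor $\bbS^{(\blank)} \colon \simp^{n} \to \Cat$ by postcomposing with $\Map(\blank, \mathbf{M}[W^{-1}]) \colon \Cat^{\op} \to \mathcal{S}$ (where $\mathbf{M}[W^{-1}]$ denotes the \icat{} obtained by localizing $\mathbf{M}$ at $W$). On the other hand, the candidate functor $\mathbf{i} \mapsto \mathrm{N}\,\Fun(\bbS^{\mathbf{i}}, \mathbf{M})^{W}$ is strictly functorial in $\mathbf{i} \in \Dnop$ via precomposition along the functors $\bbS^{\mathbf{i}} \to \bbS^{\mathbf{j}}$; composing with the localization $\sSet \to \sSet[\mathrm{WHE}^{-1}] \simeq \mathcal{S}$ gives a functor $\Dnop \to \mathcal{S}$ whose value at $\mathbf{i}$ is $\|\Fun(\bbS^{\mathbf{i}}, \mathbf{M})^{W}\|$.

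Next I would construct the comparison natural transformation. The localization functor $F \colon \mathbf{M} \to \mathbf{M}[W^{-1}]$, viewed as a functor of \icats{}, sends weak equivalences to equivalences. For any small category $\mathbf{I}$, postcomposition with $F$ gives a functor $\mathrm{N}\,\Fun(\mathbf{I},\mathbf{M}) \to \Fun(\mathbf{I}, \mathbf{M}[W^{-1}])$ that carries natural weak equivalences to equivalences, and hence factors through the localization $\Fun(\mathbf{I},\mathbf{M})[W^{-1}_{\mathbf{I}}]$; passing to underlying spaces we obtain a map
\[ \eta_{\mathbf{I}} \colon \|\Fun(\mathbf{I},\mathbf{M})^{W}\| \longrightarrow \Map(\mathbf{I}, \mathbf{M}[W^{-1}]) .\]
This construction is visibly natural in $\mathbf{I}$ with respect to functors $\mathbf{I} \to \mathbf{J}$, since precomposition is a 2-functorial operation that commutes with postcomposition by $F$. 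Specializing along the strict diagram $\bbS^{(\blank)}$ produces a natural transformation of functors $\Dnop \to \mathcal{S}$ from the first functor above to $\oSPAN_{n}(\mathbf{M}[W^{-1}])$.

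Finally I would verify this is a levelwise equivalence: for each fixed $\mathbf{i} \in \Dnop$ the map $\eta_{\bbS^{\mathbf{i}}}$ is precisely the natural map whose being an equivalence is the content of the corollary immediately preceding the statement (itself a consequence of Cisinski's Theorem~\ref{thm:modcatfun}). Since equivalences of functors valued in $\mathcal{S}$ are detected pointwise, this completes the argument.

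The only mildly subtle point is the naturality of $\eta_{\mathbf{I}}$ in $\mathbf{I}$: the preceding corollary is stated as a pointwise equivalence, and one must be careful that the comparison map is defined functorially rather than merely as a pointwise zig-zag. This is, however, automatic once one defines the map through the universal property of localization as above, rather than through an inverse to some comparison, so there is no genuine obstacle here; the rest is a formal rewriting of definitions.
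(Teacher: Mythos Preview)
Your proposal is correct and is exactly the argument the paper has in mind: the statement carries a \qed{} with no proof because it is meant as an immediate consequence of the preceding corollary (the pointwise equivalence $\|\Fun(\mathbf{I}, \mathbf{M})^{W}\| \isoto \Map(\mathbf{I}, \mathbf{M}[W^{-1}])$) together with the evident naturality in $\mathbf{I}$ that you spell out. Your discussion of naturality via the universal property of localization is the right way to handle the one point the paper leaves implicit.
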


The following variant of Theorem~\ref{thm:modcatfun} will also be useful:
\begin{proposition}
  Suppose $\mathbf{M}$ is a cofibrantly generated model category and
  $(\mathbf{I}, U)$ is a small relative category. Let
  $\Fun_{(U,W)}(\mathbf{I}, \mathbf{M})$ denote the full subcategory
  of $\Fun(\mathbf{I}, \mathbf{M})$ spanned by the functors that take
  the morphisms in $U$ to weak equivalences. Then we have equivalences
  of \icats{}
  \[ \Fun_{(U,W)}(\mathbf{I}, \mathbf{M})[W_{\mathbf{I}}^{-1}] \isoto
  \Fun_{(U)}(\mathbf{I}, \mathbf{M}[W^{-1}]) \isofrom
  \Fun(\mathbf{I}[U^{-1}], \mathbf{M}[W^{-1}]),\]
  where $\Fun_{(U)}(\mathbf{I}, \mathbf{M}[W^{-1}])$ denotes the
  \icat{} of functors taking the maps in $U$ to equivalences.
\end{proposition}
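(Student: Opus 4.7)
The proof will naturally split into two parts, one for each equivalence. The right-hand equivalence $\Fun(\mathbf{I}[U^{-1}], \mathbf{M}[W^{-1}]) \isoto \Fun_{(U)}(\mathbf{I}, \mathbf{M}[W^{-1}])$ is formal: it expresses the universal property of the localization $\gamma \colon \mathbf{I} \to \mathbf{I}[U^{-1}]$ in the \icat{} $\CatI$, which says that composition with $\gamma$ identifies $\Fun(\mathbf{I}[U^{-1}], \mathcal{D})$ with the full subcategory of $\Fun(\mathbf{I}, \mathcal{D})$ spanned by functors inverting $U$, for any \icat{} $\mathcal{D}$. Applying this to $\mathcal{D} = \mathbf{M}[W^{-1}]$ gives the claim.

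The substantive part is the left-hand equivalence. The strategy is to derive it from Theorem \ref{thm:modcatfun}, which provides an equivalence $\Fun(\mathbf{I}, \mathbf{M})[W_{\mathbf{I}}^{-1}] \isoto \Fun(\mathbf{I}, \mathbf{M}[W^{-1}])$. I would then restrict to the appropriate full subcategories on both sides. Since the weak equivalences in $\mathbf{M}$ are precisely the equivalences in $\mathbf{M}[W^{-1}]$, a functor $F \colon \mathbf{I} \to \mathbf{M}$ sends $U$ into $W$ iff the composite $\mathbf{I} \to \mathbf{M} \to \mathbf{M}[W^{-1}]$ sends $U$ to equivalences. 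Hence, under Cisinski's equivalence, the full subcategory $\Fun_{(U)}(\mathbf{I}, \mathbf{M}[W^{-1}])$ of the target corresponds exactly to the essential image of $\Fun_{(U,W)}(\mathbf{I}, \mathbf{M})$ inside $\Fun(\mathbf{I}, \mathbf{M})[W_{\mathbf{I}}^{-1}]$.

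It then remains to prove the following general lemma: if $\mathcal{A} \subseteq \mathcal{C}$ is a full sub-\icat{} and $W$ is a collection of morphisms in $\mathcal{C}$ such that $\mathcal{A}$ is \emph{closed under $W$}, in the sense that for every $X \in \mathcal{A}$ and every morphism $X \to Y$ or $Y \to X$ in $W$ we have $Y \in \mathcal{A}$, then the induced functor $\mathcal{A}[W_{\mathcal{A}}^{-1}] \to \mathcal{C}[W^{-1}]$ is fully faithful with image the essential image of $\mathcal{A}$. Applied to $\mathcal{A} = \Fun_{(U,W)}(\mathbf{I}, \mathbf{M})$ inside $\mathcal{C} = \Fun(\mathbf{I}, \mathbf{M})$ with $W = W_{\mathbf{I}}$, the closure hypothesis holds by a direct 2-out-of-3 argument applied to the naturality squares: if $F \to G$ is a natural weak equivalence and $u \in U$, then three of the four edges of the square associated to $u$ are weak equivalences in $\mathbf{M}$, forcing $G(u)$ to be one as well.

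The main obstacle is establishing this general closure lemma. It is folklore but deserves a careful proof; the cleanest approach will be to use Dwyer--Kan-style hammock models for the localizations, noting that any zigzag in $\mathcal{C}$ between two objects of $\mathcal{A}$, whose ``backward'' arrows lie in $W$, can be modified to a zigzag staying entirely within $\mathcal{A}$ by repeated application of the closure hypothesis, so that the mapping spaces computed in $\mathcal{A}[W_{\mathcal{A}}^{-1}]$ and $\mathcal{C}[W^{-1}]$ agree. Alternatively, one can verify the universal property of $\mathcal{A}[W_{\mathcal{A}}^{-1}]$ on the essential image of $\mathcal{A}$ in $\mathcal{C}[W^{-1}]$ directly, using that functors out of this essential image into any \icat{} $\mathcal{D}$ are determined by their restriction to $\mathcal{A}$, with compatibility ensured by the closure condition.
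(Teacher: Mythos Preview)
Your approach is essentially the same as the paper's: both arguments deduce the right-hand equivalence from the universal property of $\mathbf{I}[U^{-1}]$ and the left-hand equivalence from Theorem~\ref{thm:modcatfun} by matching full subcategories on both sides. The only difference is that you spell out the step the paper leaves implicit, namely that the localization of the full subcategory $\Fun_{(U,W)}(\mathbf{I},\mathbf{M})$ at $W_{\mathbf{I}}$ agrees with the full subcategory of $\Fun(\mathbf{I},\mathbf{M})[W_{\mathbf{I}}^{-1}]$ it spans; your closure-under-$W$ observation and hammock argument are a correct justification of this well-known fact, which the paper simply absorbs into the phrase ``the two subcategories correspond.''
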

\begin{proof}
  The first equivalence follows from Theorem~\ref{thm:modcatfun}
  since the two subcategories correspond under the equivalence
  $\Fun(\mathbf{I}, \mathbf{M})[W_{\mathbf{I}}^{-1}] \isoto
  \Fun(\mathbf{I}, \mathbf{M}[W^{-1}])$. The second equivalence
  follows from the universal property of $\mathbf{I}[U^{-1}]$.
\end{proof}

\begin{corollary}\label{cor:relcatspanmodel}
  Suppose $\mathbf{M}$ is a cofibrantly generated model category and
  $(\mathbf{I}, U)$ is a small relative category. Then there is a natural
  equivalence of spaces
  \[ \|\Fun_{(U,W)}(\mathbf{I}, \mathbf{M})^{W}\| \simeq
    \Map(\mathbf{I}[U^{-1}], \mathbf{M}[W^{-1}]).\]
\end{corollary}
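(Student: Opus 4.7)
The plan is to deduce this corollary directly from the equivalence of \icats{} established in the preceding proposition by passing to underlying $\infty$-groupoids (equivalently, taking the maximal sub-$\infty$-groupoids of both sides). The preceding proposition gives a natural equivalence
\[ \Fun_{(U,W)}(\mathbf{I}, \mathbf{M})[W_{\mathbf{I}}^{-1}] \simeq \Fun(\mathbf{I}[U^{-1}], \mathbf{M}[W^{-1}]),\]
so the task reduces to identifying the underlying $\infty$-groupoids on each side.

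For the right-hand side, the underlying $\infty$-groupoid of a functor \icat{} is by definition the mapping space, giving $\Map(\mathbf{I}[U^{-1}], \mathbf{M}[W^{-1}])$. For the left-hand side, I would appeal to the general fact that for any relative category $(\mathbf{C}, V)$, the underlying $\infty$-groupoid of the localization $\mathbf{C}[V^{-1}]$ is the groupoid completion $\|\mathbf{C}^{V}\|$ of the subcategory of weak equivalences; applying this to the relative category $(\Fun_{(U,W)}(\mathbf{I}, \mathbf{M}), W_{\mathbf{I}})$ yields $\|\Fun_{(U,W)}(\mathbf{I}, \mathbf{M})^W\|$, as desired.

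The key input is thus the identification of the underlying $\infty$-groupoid of a localization with the classifying space of the subcategory of weak equivalences. This is essentially tautological once one knows that inverting $V$ in $\mathbf{C}$ kills exactly the information beyond invertible morphisms when further restricted to $V$; more formally, it follows from the fact that the functor $\mathcal{C} \mapsto \mathcal{C}^{\simeq}$ is right adjoint to the inclusion $\mathcal{S} \hookrightarrow \CatI$, while $\mathcal{C} \mapsto \|\mathcal{C}\|$ is its left adjoint, and that in the relative-category setting the localization at $V$ coincides with $\|\mathbf{C}^V\|$ when restricted to the equivalences (by the universal property of localization). I do not expect any serious obstacle here, since both identifications are formal consequences of universal properties; the corollary is really just a packaging of the preceding proposition in a form convenient for the strict semisimplicial constructions in \S\ref{sec:cobcospan} and \S\ref{sec:orcob}.
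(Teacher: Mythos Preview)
Your overall strategy --- take underlying $\infty$-groupoids on both sides of the preceding proposition's equivalence --- is exactly what the paper has in mind (the corollary is stated without proof). However, the key lemma you invoke is stated too generally and is false as written. You claim that for \emph{any} relative category $(\mathbf{C}, V)$ one has $(\mathbf{C}[V^{-1}])^{\simeq} \simeq \|\mathbf{C}^{V}\|$, and that this is ``essentially tautological'' from universal properties. It is not: take $\mathbf{C}$ to be the contractible groupoid on two objects and $V$ to consist only of identities; then $\mathbf{C}[V^{-1}] = \mathbf{C} \simeq *$, while $\|\mathbf{C}^{V}\|$ is a two-point set.

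The missing hypothesis is that $V$ be \emph{saturated}, \ie{} consist exactly of the morphisms inverted by the localization. In the case at hand $W_{\mathbf{I}}$ is saturated on $\Fun_{(U,W)}(\mathbf{I},\mathbf{M})$: by the proposition, a natural transformation becomes an equivalence in the localization \IFF{} each component becomes an equivalence in $\mathbf{M}[W^{-1}]$, and since $\mathbf{M}$ is a model category this happens \IFF{} each component lies in $W$. With saturation the identification does hold (for instance because the Rezk classification diagram of $(\mathbf{C},V)$ is then a complete Segal space modeling $\mathbf{C}[V^{-1}]$). Alternatively, you can bypass this entirely: the earlier corollary already gives $\|\Fun(\mathbf{I},\mathbf{M})^{W}\| \simeq \Map(\mathbf{I}, \mathbf{M}[W^{-1}])$, and both sides of the desired equivalence are unions of the same connected components of these two spaces, since the condition ``takes $U$ to weak equivalences'' is stable under natural weak equivalences by 2-out-of-3.
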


We will need to consider a slight modification of the description of
Corollary~\ref{cor:SPANsset}: It turns out that the morphisms of
$n$-fold Segal spaces we want to define using diagrams of simplicial
sets do not behave nicely with respect to degeneracies. To circumvent
this problem we will instead work with semisimplicial sets, as we
will now briefly justify:
\begin{definition}
  Let $\Dinj$ denote the subcategory of $\simp$ containing
  only the injective morphisms, and let $i_{\semi} \colon \Dinj \to \simp$ be
  the inclusion. A \emph{semisimplicial set} is a
  presheaf of sets on $\Dinj$. We write $\ssSet$ for the category
  $\Fun(\Dinjop, \Set)$ of semisimplicial sets. The functor $i$
  induces an adjunction
  \[ i_{\semi,!} : \ssSet \rightleftarrows \sSet : i_{\semi}^{*},\]
  where $i_{\semi,!}$ ``freely adds degeneracies''. We also use the
  notation $X^{\semi} := i^{*}_{\semi}X$ for the underlying
  semisimplicial set of a simplicial set $X$.
\end{definition}

\begin{definition}
  We say a morphism $f \colon X \to Y$ in $\ssSet$ is a \emph{weak
    equivalence} if $|i_{\semi,!}f|$ is a weak homotopy equivalence of
  topological spaces, where $|\blank|$ denotes the geometric
  realization functor $\sSet \to \Top$, or equivalently if $i_{\semi,!}f$ is a weak
  equivalence of simplicial sets.
\end{definition}

\begin{remark}
  $|i_{\semi,!}(\blank)|$ is the left adjoint to the functor $\Top \to
  \ssSet$ induced by the restriction $\Dinj \to \simp \to \Top$ of the
  geometric simplices, and is therefore given by a coend over
  $\Dinj$. In particular, for a simplicial set $X$, the space
  $|i_{\semi,!}i_{\semi}^{*}X|$ is the classical ``fat geometric realization'' of
  $X$. This is well-known to be weakly equivalent to
  the usual geometric realization, which allows us to prove the following:
\end{remark}

\begin{proposition}\
  \begin{enumerate}[(i)]
  \item   For any $X \in \sSet$, the counit morphism $i_{\semi,!}i_{\semi}^{*}X \to X$ is a
    weak equivalence.
  \item For any $T \in \ssSet$, the unit morphism $T \to i_{\semi}^{*}i_{\semi,!}T$
    is a weak equivalence.
  \item The functors $i_{\semi,!}$ and $i_{\semi}^{*}$ both preserve weak equivalences.
  \end{enumerate}
\end{proposition}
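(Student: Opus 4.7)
The plan is to prove (i) first using classical facts about fat versus thin geometric realizations, and then derive (ii) and (iii) from (i) by formal arguments using the triangle identities and the two-out-of-three property.

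For (i), I would unwind the coend formula: since $i_{\semi,!}$ is the left Kan extension along $i_{\semi}\colon\Dinj \hookrightarrow \simp$, we have
\[ i_{\semi,!}i_{\semi}^{*}X \;\cong\; \int^{[n]\in\Dinj} X_{n}\cdot \Delta^{n}, \]
and hence $|i_{\semi,!}i_{\semi}^{*}X|$ is exactly the fat geometric realization of $X$ viewed as a simplicial space via the discrete inclusion $\Set \hookrightarrow \Top$. The counit to $|X|$ is the standard comparison map from the fat to the thin realization. The classical result (due to Segal, see also tom Dieck and May) says that the fat realization of a simplicial space $Y_{\bullet}$ is weakly equivalent to the ordinary realization whenever all degeneracies $s_{i}\colon Y_{n-1}\to Y_{n}$ are cofibrations. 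For a simplicial set each $X_{n}$ is discrete and each degeneracy is injective, so this criterion is automatic. I would either cite this directly or reprove it by skeletal induction: at each stage of the skeletal filtration of $X$ one has a pushout square attaching copies of $\Delta^{n}$ along $\partial\Delta^{n}$, and on the fat side one has an analogous pushout; a straightforward induction, using that both pushouts are homotopy pushouts (by the cofibration criterion), reduces the claim to the case $X=\Delta^{n}$, which is contractible on both sides.

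Given (i), statement (ii) follows from the triangle identity. The composite
\[ i_{\semi,!}T \xrightarrow{i_{\semi,!}\eta} i_{\semi,!}i_{\semi}^{*}i_{\semi,!}T \xrightarrow{\epsilon_{i_{\semi,!}T}} i_{\semi,!}T \]
is the identity, and the second map is a weak equivalence by (i) applied to $X := i_{\semi,!}T$; two-out-of-three then shows that $i_{\semi,!}\eta_{T}$ is a weak equivalence, which by the definition of weak equivalences in $\ssSet$ means that $\eta_{T}\colon T \to i_{\semi}^{*}i_{\semi,!}T$ is one as well.

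For (iii), $i_{\semi,!}$ preserves weak equivalences by the very definition of weak equivalences in $\ssSet$. For $i_{\semi}^{*}$, given a weak equivalence $f\colon X \to Y$ in $\sSet$, the naturality square
\[
  \begin{tikzcd}
    i_{\semi,!}i_{\semi}^{*}X \arrow{r}{i_{\semi,!}i_{\semi}^{*}f} \arrow{d}[swap]{\epsilon_{X}} & i_{\semi,!}i_{\semi}^{*}Y \arrow{d}{\epsilon_{Y}} \\
    X \arrow{r}{f} & Y
  \end{tikzcd}
\]
has both vertical maps weak equivalences by (i), so $i_{\semi,!}i_{\semi}^{*}f$ is a weak equivalence in $\sSet$ by two-out-of-three; hence $i_{\semi}^{*}f$ is a weak equivalence in $\ssSet$. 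The only nontrivial input to the whole argument is the fat-versus-thin comparison in (i); everything else is formal, and so the fat/thin equivalence is where the real work is concentrated.
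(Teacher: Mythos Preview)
Your proof is correct and follows essentially the same approach as the paper: (i) via the fat-versus-thin realization comparison (the paper cites Segal's Proposition~A.1 directly), (ii) via the triangle identity and two-out-of-three, and (iii) by unwinding the definition for $i_{\semi,!}$ and using the naturality of the counit together with (i) for $i_{\semi}^{*}$.
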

\begin{proof}
  To prove (i), we observe that the geometric realization of the
  counit $|i_{\semi,!}i_{\semi}^{*}X| \to |X|$ is precisely the canonical map from
  the fat geometric realization of $X$ to the geometric
  realization. This is a weak homotopy equivalence by
  \cite[Proposition A.1]{SegalCatCohlgy}. To prove (ii), it is
  equivalent to show that $i_{\semi,!}T \to i_{\semi,!}i_{\semi}^{*}i_{\semi,!}T$ is a weak
  equivalence in $\sSet$. The composite
  \[i_{\semi,!}T \to i_{\semi,!}i_{\semi}^{*}i_{\semi,!}T \to i_{\semi,!}T\] with the counit map is the
  identity, and we already saw that the counit is a weak equivalence,
  so (ii) follows from the 2-of-3 property of weak equivalences. To
  prove (iii), we recall that $i_{\semi,!}$ creates weak equivalences by
  definition, and $i_{\semi}^{*}$ therefore preserves weak equivalences by (i)
  and the 2-of-3 property.
\end{proof}

\begin{warning}
  The category $\ssSet$ does \emph{not} admit a model structure such
  that the adjunction $i_{\semi,!} \dashv i_{\semi}^{*}$ is a Quillen equivalence
  (see
  \url{https://mathoverflow.net/questions/133051/}).
\end{warning}

\begin{corollary}
  For any small category $\mathbf{I}$, the adjunction
  \[ \Fun(\mathbf{I}, \ssSet) \rightleftarrows \Fun(\mathbf{I},
    \sSet) \] given by composition with $i_{\semi,!}$ and $i_{\semi}^{*}$ induces an
  equivalence of \icats{}
  \[ \Fun(\mathbf{I}, \ssSet)[W^{-1}] \simeq \Fun(\mathbf{I},
    \sSet)[W^{-1}], \]
  where $W$ denotes the relevant class of natural weak
  equivalences. In particular, the \icat{} $\Fun(\mathbf{I}, \ssSet)[W^{-1}]$ is
  equivalent to the functor \icat{} $\Fun(\mathbf{I}, \mathcal{S})$. \qed
\end{corollary}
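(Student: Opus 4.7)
The plan is to derive the equivalence purely formally from the preceding Proposition, which supplies the three ingredients we need: both $i_{\semi,!}$ and $i_{\semi}^{*}$ preserve weak equivalences, and both the unit $T \to i_{\semi}^{*}i_{\semi,!}T$ and the counit $i_{\semi,!}i_{\semi}^{*}X \to X$ are weak equivalences. Postcomposition with $i_{\semi,!}$ and $i_{\semi}^{*}$ gives functors between $\Fun(\mathbf{I},\ssSet)$ and $\Fun(\mathbf{I},\sSet)$, and since they preserve weak equivalences objectwise they preserve natural weak equivalences; by the universal property of the Dwyer--Kan localization these descend to functors between $\Fun(\mathbf{I},\ssSet)[W^{-1}]$ and $\Fun(\mathbf{I},\sSet)[W^{-1}]$.

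Next, I would check that these induced functors are mutually inverse equivalences. Postcomposing the unit and counit of the adjunction $i_{\semi,!} \dashv i_{\semi}^{*}$ with a functor $F \colon \mathbf{I}\to \ssSet$ (respectively $\mathbf{I}\to\sSet$) produces natural transformations that are objectwise weak equivalences, hence lie in $W$. Under the localization they therefore become equivalences in the respective functor \icats{}, exhibiting $i_{\semi,!}$ and $i_{\semi}^{*}$ as quasi-inverse equivalences after localization. This is the main (and really the only) content, and I do not anticipate any serious obstacle here: the proof is a direct application of the universal property of localization combined with the fact that an adjunction whose unit and counit are pointwise in the class being inverted necessarily becomes an adjoint equivalence after localization.

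Finally, for the ``in particular'' statement, I would apply Cisinski's Theorem~\ref{thm:modcatfun} to the Kan--Quillen model structure on $\sSet$, which is cofibrantly generated and whose underlying \icat{} is $\mathcal{S}$. This gives an equivalence
\[ \Fun(\mathbf{I},\sSet)[W_{\mathbf{I}}^{-1}] \isoto \Fun(\mathbf{I},\mathcal{S}). \]
Combining this with the equivalence established in the first two paragraphs yields the desired equivalence $\Fun(\mathbf{I},\ssSet)[W^{-1}] \simeq \Fun(\mathbf{I},\mathcal{S})$.
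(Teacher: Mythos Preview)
Your proposal is correct and follows essentially the same approach as the paper: the paper's proof is a two-sentence version of your argument, noting that the adjunction descends to the localizations because both functors preserve weak equivalences, and that it becomes an equivalence there because the unit and counit are natural weak equivalences. Your explicit invocation of Cisinski's theorem for the ``in particular'' clause is exactly the intended justification, which the paper leaves implicit.
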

\begin{proof}
  Since $i_{\semi,!}$ and $i_{\semi}^{*}$ both preserve weak equivalences, the
  induced adjunction on functor categories descends to an adjunction
  between the localized \icats{}. This is an equivalence since the
  unit and counit transformations are both natural weak equivalences.
\end{proof}

\section{Twisted Arrow $\infty$-Categories and their Left
  Adjoint}\label{subsec:twarr}
In this section we will prove some results about twisted arrow
\icats{}; in particular, we will provide a description of the left
adjoint to the twisted arrow construction. We first recall the
definition of the twisted arrow \icat{} of an \icat{}:
\begin{definition}\label{defn:Tw}
  Let $\epsilon^{r} \colon \simp \to \simp$ be the functor $[n] \mapsto
  [n] \star [n]^{\op}$. Then composition with $\epsilon^{r}$ induces a
  functor $\Tw^{r}:= \epsilon^{r,*} \colon \Fun(\Dop, \mathcal{S}) \to \Fun(\Dop,
  \mathcal{S})$; on simplicial sets this is known as the
  \emph{edgewise subdivision} functor. The natural inclusions $[n],
  [n]^{\op} \hookrightarrow [n] \star [n]^{\op}$ induce natural
  transformations $\Tw^{r}X \to X, X^{\op}$.
\end{definition}

\begin{proposition}[\cite{cois}*{Proposition A.2.3}]\label{propn:TwCicat}\
  \begin{enumerate}[(i)]
  \item If $\mathcal{C}$ is a Segal space, then so is
    $\Tw^{r}\mathcal{C}$.
  \item If $\mathcal{C}$ is a Segal space, then the morphism
    $\Tw^{r}\mathcal{C} \to \mathcal{C} \times
    \mathcal{C}^{\op}$ is a right fibration.
  \item If $\mathcal{C}$ is a complete Segal space, then so is $\Tw^{r}\mathcal{C}$.
  \end{enumerate}
\end{proposition}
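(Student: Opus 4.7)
My plan is to handle the three parts in order, using (i) and (ii) to bootstrap (iii).

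For part (i), the key observation is that $(\Tw^{r}\mathcal{C})_{n} = \mathcal{C}_{\epsilon^{r}([n])} = \mathcal{C}_{2n+1}$, and that all structure maps of $\Tw^{r}\mathcal{C}$ arise from those of $\mathcal{C}$ via $\epsilon^{r}$. The Segal condition for $\Tw^{r}\mathcal{C}$ at level $n$ asks that $\mathcal{C}_{2n+1}$ decompose as the iterated fibre product $\mathcal{C}_{\epsilon^{r}(\rho_{1})} \times_{\mathcal{C}_{\epsilon^{r}(\{1\})}} \mathcal{C}_{\epsilon^{r}(\rho_{2})} \times_{\mathcal{C}_{\epsilon^{r}(\{2\})}} \cdots \times_{\mathcal{C}_{\epsilon^{r}(\{n-1\})}} \mathcal{C}_{\epsilon^{r}(\rho_{n})}$, where a direct computation shows that $\epsilon^{r}(\rho_{i}) \colon [3] \hookrightarrow [2n+1]$ has image $\{i-1, i, 2n+1-i, 2n+2-i\}$ and each $\epsilon^{r}(\{i\}) \colon [1] \hookrightarrow [2n+1]$ has image $\{i, 2n+1-i\}$, which is exactly the intersection of two consecutive images. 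I would first verify combinatorially that these images cover $[2n+1]$ with precisely these pairwise intersections, then refine each $\epsilon^{r}(\rho_{i}) \cong [3]$ along its spine and each $\epsilon^{r}(\{i\}) \cong [1]$ along itself to obtain the spine of $[2n+1]$; applying the Segal condition for $\mathcal{C}$ at levels $1$ and $3$ together with standard pullback algebra (rewriting pullbacks along middle factors) then identifies the iterated fibre product with the full spine decomposition of $\mathcal{C}_{2n+1}$, which equals $\mathcal{C}_{2n+1}$ by the Segal property of $\mathcal{C}$.

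For part (ii), the right fibration condition for $p \colon \Tw^{r}\mathcal{C} \to \mathcal{C} \times \mathcal{C}^{\op}$ amounts to showing that for every $n \geq 1$ the square induced by $\{n\} \hookrightarrow [n]$ (together with $p$) is cartesian. Since $n$ is the smallest element of $[n]^{\op}$, under $[n] \star [n]^{\op} \cong [2n+1]$ the image $\{n\}$ on the $\mathcal{C}^{\op}$ factor becomes $\{n+1\} \subseteq [2n+1]$, and unpacking the square yields the claim
\[
\mathcal{C}_{2n+1} \simeq \mathcal{C}_{\{n, n+1\}} \times_{\mathcal{C}_{\{n\}} \times \mathcal{C}_{\{n+1\}}} \bigl(\mathcal{C}_{\{0,\ldots,n\}} \times \mathcal{C}_{\{n+1,\ldots,2n+1\}}\bigr).
\]
This follows from the Segal condition for $\mathcal{C}$ applied to the cover $[2n+1] = \{0,\ldots,n\} \cup \{n,n+1\} \cup \{n+1,\ldots,2n+1\}$ (where consecutive pieces meet in the single vertices $\{n\}$ and $\{n+1\}$) by rewriting the three-term iterated pullback as a two-term fibre product in the displayed form.

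For part (iii), given (i) and (ii), $\Tw^{r}\mathcal{C}$ is a Segal space admitting a right fibration to the complete Segal space $\mathcal{C} \times \mathcal{C}^{\op}$. In a right fibration every morphism is $p$-cartesian, so an edge of $\Tw^{r}\mathcal{C}$ is an equivalence \IFF{} its image under $p$ is an equivalence; hence $(\Tw^{r}\mathcal{C})_{\mathrm{eq}} \simeq (\Tw^{r}\mathcal{C})_{1} \times_{(\mathcal{C} \times \mathcal{C}^{\op})_{1}} (\mathcal{C} \times \mathcal{C}^{\op})_{\mathrm{eq}}$. By completeness of $\mathcal{C} \times \mathcal{C}^{\op}$, the inclusion of $(\mathcal{C} \times \mathcal{C}^{\op})_{\mathrm{eq}}$ into $(\mathcal{C} \times \mathcal{C}^{\op})_{1}$ factors through $s_{0}$, identifying $(\mathcal{C} \times \mathcal{C}^{\op})_{\mathrm{eq}} \simeq (\mathcal{C} \times \mathcal{C}^{\op})_{0}$. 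Combining this with the right fibration equivalence $(\Tw^{r}\mathcal{C})_{1} \simeq (\Tw^{r}\mathcal{C})_{0} \times_{(\mathcal{C} \times \mathcal{C}^{\op})_{0}} (\mathcal{C} \times \mathcal{C}^{\op})_{1}$ from (ii) and a short pullback computation yields $(\Tw^{r}\mathcal{C})_{\mathrm{eq}} \simeq (\Tw^{r}\mathcal{C})_{0}$, which is completeness.

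The main obstacle I anticipate is the careful bookkeeping in (i): the cover of $[2n+1]$ by the ``non-contiguous'' subsets $\epsilon^{r}(\rho_{i})$ is not a standard Segal-type cover (the intersections are $1$-simplices, not vertices), so the Segal decomposition for $\Tw^{r}\mathcal{C}$ does not reduce directly to one instance of the Segal condition for $\mathcal{C}$. A clean induction---verifying the $n=2$ case by hand using pullback-algebra and bootstrapping via the Segal property of $\Tw^{r}\mathcal{C}$ established at level $n-1$---should make this manageable.
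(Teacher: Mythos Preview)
The paper does not give its own proof of this proposition; it is stated with a citation to \cite{cois}*{Proposition A.2.3} and no proof environment follows. Your proposal is therefore not comparable to anything in the paper itself.

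That said, your outline is a correct and reasonable sketch of the standard argument. The combinatorics in (i) are exactly right: the $\epsilon^{r}(\rho_{i})$ form an overlapping cover of $[2n+1]$ whose pairwise intersections are the $1$-simplices $\{i,2n+1-i\}$, and the reduction to the ordinary spine decomposition of $\mathcal{C}_{2n+1}$ goes through by the usual Segal manipulations (indeed, once $\mathcal{C}$ satisfies the Segal condition it satisfies the analogous condition for any iterated pullback along inert inclusions whose union is all of $[m]$). Your formulation of (ii) and its proof via the three-piece cover $\{0,\ldots,n\} \cup \{n,n+1\} \cup \{n+1,\ldots,2n+1\}$ is correct, and (iii) is the standard argument that right fibrations reflect completeness. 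The ``main obstacle'' you flag in (i) is real but routine; the induction you propose is unnecessary, as the general Segal condition for arbitrary inert covers of $[m]$ already gives it in one step.
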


\begin{remark}
  There are two possible conventions for the twisted arrow \icat{} ---
  we use the superscript $r$ as a reminder that we use the version
  that is a \emph{right} fibration (the other version being
  $\Tw^{\ell}(\mathcal{C}) := \Tw^{r}(\mathcal{C})^{\op}$, which can
  be defined using $\epsilon^{\ell}([n]) := [n]^{\op} \star [n]$).
\end{remark}

\begin{corollary}\label{cor:Tw!}
  Composition with $\epsilon^{r}$ induces a  functor $\Tw^{r} \colon \CatI \to
  \CatI$ with a left adjoint $\Tw^{r}_{!}$.
\end{corollary}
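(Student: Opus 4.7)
The plan is to obtain $\Tw^r_!$ by composing two known left adjoints. First, since $\Tw^r$ on presheaves $\Fun(\Dop, \mathcal{S})$ is, by definition, precomposition with the endofunctor $\epsilon^r \colon \simp \to \simp$, it is a restriction functor between presheaf $\infty$-categories and therefore has a left adjoint given by left Kan extension along $\epsilon^r$; call this $\epsilon^r_!$. Second, the inclusion $i \colon \CatI \hookrightarrow \Fun(\Dop, \mathcal{S})$ admits a left adjoint $L$, namely the localization functor onto complete Segal spaces (the reflective localization at the Segal maps and the completeness map).

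The key observation that lets these assemble is Proposition~\ref{propn:TwCicat}(iii): $\Tw^r$ preserves the property of being a complete Segal space, so it restricts to an endofunctor of $\CatI$ and we have a commutative square
\[
\begin{tikzcd}
\CatI \arrow{r}{\Tw^r} \arrow[hookrightarrow]{d}{i} & \CatI \arrow[hookrightarrow]{d}{i} \\
\Fun(\Dop, \mathcal{S}) \arrow{r}{\epsilon^{r,*}} & \Fun(\Dop, \mathcal{S}).
\end{tikzcd}
\]
Setting $\Tw^r_! := L \circ \epsilon^r_! \circ i$, then for every $X \in \CatI$ and $\mathcal{C} \in \CatI$ the chain of natural equivalences
\[
\Map_{\CatI}(\Tw^r_! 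X, \mathcal{C}) \simeq \Map(\epsilon^r_! iX, i\mathcal{C}) \simeq \Map(iX, \epsilon^{r,*} i\mathcal{C}) \simeq \Map_{\CatI}(X, \Tw^r \mathcal{C})
\]
(using $L \dashv i$ in the first step, $\epsilon^r_! \dashv \epsilon^{r,*}$ in the second, and the commutative square together with the full faithfulness of $i$ in the third) exhibits $\Tw^r_!$ as a left adjoint to $\Tw^r \colon \CatI \to \CatI$.

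There is essentially no obstacle here beyond quoting the two input adjunctions and Proposition~\ref{propn:TwCicat}; the only real subtlety is confirming that $L$ exists as a left adjoint to the inclusion $\CatI \hookrightarrow \Fun(\Dop, \mathcal{S})$, which is standard (one may either invoke Rezk's model structure for complete Segal spaces and apply Theorem~\ref{thm:modcatfun}, or use that $\CatI$ is presentable and the inclusion preserves limits and filtered colimits so that the adjoint functor theorem applies).
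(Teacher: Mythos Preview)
Your proof is correct and matches the paper's approach exactly: both use the presheaf-level adjunction $\epsilon^r_! \dashv \epsilon^{r,*}$ together with the localization $L$ onto complete Segal spaces, invoking Proposition~\ref{propn:TwCicat}(iii) to ensure $\Tw^r$ restricts to $\CatI$. Your version is simply more explicit in writing out the chain of mapping-space equivalences that the paper compresses into the phrase ``induces an adjunction.''
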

\begin{proof}
  Since $\Tw^{r}$ preserves complete Segal spaces, the adjunction
  \[ \epsilon^{r}_{!} : \Fun(\Dop, \mathcal{S}) \rightleftarrows
  \Fun(\Dop, \mathcal{S}) : \epsilon^{r,*}\]
  induces an adjunction 
  \[ \Tw^{r}_{!} : \CatI \rightleftarrows \CatI : \Tw^{r} \]
between the full subcategories of complete Segal spaces, where $\Tw^{r}$
is the restriction of $\epsilon^{r,*}$ and $\Tw^{r}_{!}$ is the composite of
$\epsilon^{r}_{!}$ and the localization functor from $\Fun(\Dop,
\mathcal{S})$ to $\CatI$.
\end{proof}

For $\mathcal{C}$ an \icat{} we refer to the \icat{}
$\Tw^{r} \mathcal{C}$ as the \emph{twisted arrow \icat{}} of
$\mathcal{C}$. If $\mathbf{C}$
is an ordinary category, it is easy to see that
$\Tw^{r}(\mathbf{C})$ is the usual twisted
arrow category of $\mathbf{C}$.

\begin{example}\label{ex:Twn}
  The category $\Tw^{r}[n]$ is equivalent to the partially ordered
  set $\bbS^{n}$ of Definition~\ref{defn:bbS}.
\end{example}

\begin{remark}
  There are natural inclusions $\mathcal{C}, \mathcal{C}^{\op}
  \hookrightarrow \Tw^{r}_{!} \mathcal{C}$, which correspond to the
  projections $\Tw^{r} \mathcal{D} \to \mathcal{D}, \mathcal{D}^{\op}$ in
  the sense that we have a commutative triangle
  \opctriangle{\Map(\Tw^r_! \mathcal{C}, \mathcal{D})}{\Map(\mathcal{C},
    \Tw^r \mathcal{D})}{\Map(\mathcal{C}, \mathcal{D}) \times
    \Map(\mathcal{C}^{\op}, \mathcal{D}),}{\sim}{}{}
  where the left diagonal is given by restricting along the inclusions
  $\mathcal{C}, \mathcal{C}^{\op} \hookrightarrow \Tw^{r}_{!}\mathcal{C}$,
  and the right diagonal to composing with the projections $\Tw^{r}
  \mathcal{D} \to \mathcal{D}, \mathcal{D}^{\op}$.
\end{remark}

Our goal in this section is to obtain the following explicit
description of $\Tw^{r}_{!}\mathcal{C}$ for an \icat{} $\mathcal{C}$:
\begin{proposition}\label{propn:Tw!desc}
  Let $\mathcal{C}$ be an \icat{}. Then the \icat{}
  $\Tw^{r}_{!}\mathcal{C}$ has a natural map to $\Delta^{1}$ such that
  \begin{enumerate}[(i)]
  \item $(\Tw^{r}_{!}\mathcal{C})_{0} \simeq \mathcal{C}$ (we denote the
    object of $\Tw^{r}_{!}\mathcal{C}$ corresponding to $x \in
    \mathcal{C}$ also by $x$),
  \item $(\Tw^{r}_{!}\mathcal{C})_{1} \simeq \mathcal{C}^{\op}$ (we denote the
    object of $\Tw^{r}_{!}\mathcal{C}$ corresponding to $x \in
    \mathcal{C}^{\op}$ by $x^{\vee}$),
  \item $\Map_{\Tw^{r}_{!}\mathcal{C}}(x, y^{\vee}) \simeq \| \mathcal{C}_{x,y/} \|$.
  \end{enumerate}
\end{proposition}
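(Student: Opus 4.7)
My plan is to verify this by constructing an explicit candidate \icat{} $\mathcal{D}$ with the prescribed structure and then checking that it satisfies the universal property of $\Tw^{r}_{!}\mathcal{C}$. The candidate $\mathcal{D}$ is obtained as the unstraightening of the profunctor $P\colon \mathcal{C}^{\op}\times\mathcal{C}^{\op}\to \mathcal{S}$, $(x,y)\mapsto \|\mathcal{C}_{x,y/}\|$ (which is contravariantly functorial in both arguments via precomposition), viewed as a correspondence from $\mathcal{C}$ to $\mathcal{C}^{\op}$. By construction $\mathcal{D}\to \Delta^{1}$ has fibres $\mathcal{C}$ and $\mathcal{C}^{\op}$ and cross-mapping spaces $P(x,y)$, so properties (i)--(iii) hold by fiat. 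The structural map $\Tw^{r}_{!}\mathcal{C}\to \Delta^{1}$ itself arises as $\Tw^{r}_{!}$ applied to the terminal map $\mathcal{C}\to *$, using that $\Tw^{r}_{!}(*)\simeq \epsilon^{r}_{!}(\Delta^{0}) \simeq [0]*[0]^{\op} = [1]$.

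To show $\mathcal{D}\simeq \Tw^{r}_{!}\mathcal{C}$, I would construct a canonical unit map $\eta\colon \mathcal{C}\to \Tw^{r}\mathcal{D}$ sending $x$ to the morphism $x\to x^{\vee}$ determined by the canonical basepoint of $\|\mathcal{C}_{x,x/}\|\simeq *$ (arising from the initial object $(\id_{x},\id_{x})\in\mathcal{C}_{x,x/}$), and then verify that precomposition with $\eta$ yields a natural equivalence
\[
\Map_{\CatI}(\mathcal{D},\mathcal{E}) \isoto \Map_{\CatI}(\mathcal{C},\Tw^{r}\mathcal{E})
\]
for every target \icat{} $\mathcal{E}$. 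Using the description of $\Tw^{r}\mathcal{E}\to \mathcal{E}\times\mathcal{E}^{\op}$ as a right fibration (Proposition~\ref{propn:TwCicat}(ii)), a functor $\mathcal{C}\to \Tw^{r}\mathcal{E}$ consists of functors $F_{0}\colon\mathcal{C}\to\mathcal{E}$, $F_{1}\colon\mathcal{C}^{\op}\to\mathcal{E}$, together with a section over $\mathcal{C}$ of the right fibration classifying $(\blank)\mapsto \Map_{\mathcal{E}}(F_{0}(\blank),F_{1}(\blank))$; by the universal property of the unstraightening, a functor $\mathcal{D}\to \mathcal{E}$ over $\Delta^{1}$ consists of such $(F_{0},F_{1})$ together with a natural transformation of profunctors $P\to \Map_{\mathcal{E}}(F_{0}(\blank),F_{1}(\blank))$.

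The main obstacle is thus matching these two sets of data, which amounts to showing that profunctor morphisms out of $P(x,y)=\|\mathcal{C}_{x,y/}\|$ are equivalent to sections of the relevant right fibration. To handle this, I would first check the representable case $\mathcal{C}=[n]$: here $\mathcal{D}$ reduces to the totally ordered poset $[n]*[n]^{\op}\cong[2n+1]$ (since $[n]_{i,j/}$ is a totally ordered set with initial object $\max(i,j)$, hence has contractible nerve), which matches $\Tw^{r}_{!}([n])=\epsilon^{r}_{!}([n])=[n]*[n]^{\op}$ tautologically, and the universal property is immediate. One then extends to general $\mathcal{C}$ by writing it as a colimit of simplices and invoking that $\Tw^{r}_{!}$, as a left adjoint, preserves colimits, while the candidate $\mathcal{D}$ depends on $\mathcal{C}$ in a colimit-preserving fashion via the coYoneda identification of $P$ as a canonical coend over the category of simplices of $\mathcal{C}$ (using the pullback identity $\mathcal{C}_{x,y/}\simeq \mathcal{C}_{x/}\times_{\mathcal{C}}\mathcal{C}_{y/}$).
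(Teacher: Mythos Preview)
Your candidate $\mathcal{D}$ and your identification of the ``main obstacle'' are both correct and coincide with the paper's argument. The divergence is in how that obstacle is resolved.

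The paper does not reduce to simplices. It proves the key step directly by observing that the right fibration $\phi\colon\mathcal{E}\to\mathcal{C}\times\mathcal{C}$ straightening to $P$ is the \emph{free} right fibration on the diagonal $\Delta\colon\mathcal{C}\to\mathcal{C}\times\mathcal{C}$. Concretely, the free cartesian fibration on $\Delta$ is $(\mathcal{C}\times\mathcal{C})^{\Delta^{1}}\times_{\mathcal{C}\times\mathcal{C}}\mathcal{C}$ (formed via $\ev_{1}$), which classifies $(c,c')\mapsto\mathcal{C}_{c,c'/}$; inverting morphisms fibrewise yields the free right fibration, which then classifies $(c,c')\mapsto\|\mathcal{C}_{c,c'/}\|=P(c,c')$. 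Freeness immediately gives that natural transformations $P\to Q$ agree with lifts of $\Delta$ to the right fibration for $Q$, i.e.\ with the sections you isolated. The paper then packages this, using the pushout description of the left adjoint $\mathfrak{L}$ to the functor $\mathfrak{R}$ of Theorem~\ref{thm:bifibeq} together with the adjunction $\Tw^{r}_{!}\dashv\Tw^{r}$, into an identification $\Tw^{r}_{!}\mathcal{C}\simeq\mathfrak{L}(\phi)$, and reads off (i)--(iii) via the Ayala--Francis--Stevenson equivalence (which is precisely your ``unstraightening'').

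Your simplices-plus-colimits route has a genuine gap: you have not established that $\mathcal{C}\mapsto\mathcal{D}(\mathcal{C})$ preserves colimits. The coYoneda formula you allude to expresses $P_{\mathcal{C}}$ as a colimit \emph{over $\mathcal{C}$} of representables on $\mathcal{C}\times\mathcal{C}$, but both the indexing category and the representables change as $\mathcal{C}$ does, so writing $\mathcal{C}\simeq\colim_{i}[n_{i}]$ does not exhibit $\mathcal{D}(\mathcal{C})$ as $\colim_{i}\mathcal{D}([n_{i}])$. Making this precise would force you back to the free-right-fibration statement above, so the detour is circular. Replace the final paragraph with the direct argument.
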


The proof uses a result of Ayala--Francis and Stevenson, which we need to recall. This
requires a bit of notation:
\begin{definition}
  Let $\txt{RCorr}$ denote the full subcategory of
  $\Fun(\Lambda^{2}_{0}, \CatI)$ spanned by correspondences
  \[
    \begin{tikzcd}
      {} & \mathcal{E} \arrow{dl}[above left]{p} \arrow{dr}{q} \\
      \mathcal{C} & & \mathcal{D}^{\op}
    \end{tikzcd}
  \]
  such that $(p,q) \colon \mathcal{E} \to \mathcal{C} \times
  \mathcal{D}^{\op}$ is a right fibration. 
  We can define a functor $\mathfrak{R} \colon \Cat_{\infty/\Delta^{1}} \to \txt{RCorr}$
  as follows: Given $\mathcal{E} \to \Delta^{1}$ we have the right
  fibration $\Tw^{r}\mathcal{E} \to \mathcal{E} \times
  \mathcal{E}^{\op}$, and we can take the pullback
  $(i_{0},i_{1}^{\op})^{*}\mathcal{E} \to \mathcal{E}_{0} \times
  \mathcal{E}^{\op}_{1}$ along the inclusions $i_{s} \colon \mathcal{E}_{s}
  \to \mathcal{E}$ of the fibres at $s \in \Delta^{1}$. It is easy to
  see that this has a
  left adjoint $\mathfrak{L}$, which takes a right fibration $\mathcal{E}
  \to \mathcal{C} \times \mathcal{D}^{\op}$ to the pushout
  $\Tw^{r}_{!}(\mathcal{E}) \amalg_{\mathcal{E} \amalg \mathcal{E}^{\op}}
  (\mathcal{C} \amalg \mathcal{D})$.
\end{definition}

\begin{theorem}[Ayala--Francis~\cite{AyalaFrancisFib}, Stevenson~\cite{Stevenson}]\label{thm:bifibeq}
  The adjunction $\mathfrak{L} \dashv \mathfrak{R}$ is an equivalence
  of \icats{}.
\end{theorem}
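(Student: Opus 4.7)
The plan is to exploit the observation that an $\infty$-category $\mathcal{E}$ equipped with a functor to $\Delta^1$ is completely determined by the following data: its two fibres $\mathcal{C} := \mathcal{E}_0$ and $\mathcal{D} := \mathcal{E}_1$, together with, for each pair $(c, d) \in \mathcal{C} \times \mathcal{D}$, the space $\Map_{\mathcal{E}}(c, d)$, depending functorially on $c$ and $d$. The last piece of data is the same as a functor $\mathcal{C}^{\op} \times \mathcal{D} \to \mathcal{S}$, or equivalently a right fibration over $\mathcal{C} \times \mathcal{D}^{\op}$ --- and this is exactly what $\mathfrak{R}$ extracts, via the description of the twisted arrow \icat{} as a right fibration from Proposition~\ref{propn:TwCicat}(ii) pulled back along $(i_0, i_1^{\op})$.

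First I would verify that $\mathfrak{L}$ is indeed left adjoint to $\mathfrak{R}$, which is formal: given a right fibration $\mathcal{E} \to \mathcal{C} \times \mathcal{D}^{\op}$ and an \icat{} $\mathcal{X}$ over $\Delta^1$, a map $\mathfrak{L}(\mathcal{E}) \to \mathcal{X}$ over $\Delta^1$ unpacks via the defining pushout to a triple: maps $\mathcal{C} \to \mathcal{X}_0$, $\mathcal{D} \to \mathcal{X}_1$, and a compatible map $\Tw^{r}_!(\mathcal{E}) \to \mathcal{X}$. By Corollary~\ref{cor:Tw!} the latter corresponds to a map $\mathcal{E} \to \Tw^{r}(\mathcal{X})$, and the compatibility conditions say precisely that this factors through $\mathfrak{R}(\mathcal{X})$.

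Next I would show the counit is an equivalence. Given $\mathcal{E} \to \Delta^1$ with fibres $\mathcal{C}$ and $\mathcal{D}$, the counit map $\mathfrak{L}\mathfrak{R}(\mathcal{E}) \to \mathcal{E}$ is a map of \icats{} over $\Delta^1$ which is by construction an equivalence on the two fibres. It therefore suffices to show that it induces an equivalence on mapping spaces between objects in different fibres. For $c \in \mathcal{C}$ and $d \in \mathcal{D}$ the mapping space in $\mathcal{E}$ is $\Map_{\mathcal{E}}(c, d)$, while the mapping space in $\mathfrak{L}\mathfrak{R}(\mathcal{E})$ from $c$ to $d^{\vee}$ is, by (iii) of Proposition~\ref{propn:Tw!desc} applied to the right fibration $\mathfrak{R}(\mathcal{E})$, equivalent to the fibre of $\mathfrak{R}(\mathcal{E}) \to \mathcal{C} \times \mathcal{D}^{\op}$ over $(c, d)$. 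But this fibre is, by the defining pullback of $\mathfrak{R}$, precisely $\Map_{\mathcal{E}}(c, d)$.

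Finally I would verify that the unit is an equivalence, i.e.\ that for a right fibration $p \colon \mathcal{F} \to \mathcal{C} \times \mathcal{D}^{\op}$ the canonical map $\mathcal{F} \to \mathfrak{R}\mathfrak{L}(p)$ of right fibrations over $\mathcal{C} \times \mathcal{D}^{\op}$ is an equivalence. Since both sides are right fibrations over the same base, it suffices to check this fibrewise, which reduces to the mapping-space computation of the preceding paragraph. The hard part is the mapping-space identification used there, essentially Proposition~\ref{propn:Tw!desc}(iii): one must compute homotopy types of mapping spaces in a pushout of \icats{} formed out of $\Tw^{r}_!(\mathcal{F})$, $\mathcal{F}$, $\mathcal{F}^{\op}$, $\mathcal{C}$, and $\mathcal{D}$. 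My approach would be to reduce to the universal case where $\mathcal{F} = \mathcal{C}_{c/} \times_{\mathcal{C}} (\mathcal{C} \times \mathcal{D}^{\op}) \times_{\mathcal{D}^{\op}} \mathcal{D}^{\op}_{/d}$ corepresents evaluation at $(c,d)$, and then identify the resulting pushout with a join of simplicial objects whose mapping spaces between the relevant objects are computed by $\|\mathcal{F}_{(c,d)}\|$. An alternative, more conceptual route is to invoke the observation that both functors $\mathfrak{L}, \mathfrak{R}$ preserve the obvious ``fibrewise'' forgetful functors to $\CatI \times \CatI$, and then use the fact that the (large) \icat{} $\txt{RCorr}$ is ``tensored'' over profunctor composition in a way compatible with composition in $\CatI_{/\Delta^1}$, so the comparison reduces to a universal check on generating profunctors.
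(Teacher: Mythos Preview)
The paper does not give its own proof of this theorem: it is attributed to Ayala--Francis and Stevenson (with an alternative proof by Hinich noted in the subsequent remark), and the paper only observes that their arguments, though stated fibrewise over $\CatI \times \CatI$, establish that the unit and counit are equivalences and hence give the statement as formulated here.

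Your proposal, by contrast, attempts an actual proof, and there is a genuine circularity in it. In your counit argument you invoke Proposition~\ref{propn:Tw!desc}(iii) to compute mapping spaces in $\mathfrak{L}\mathfrak{R}(\mathcal{E})$. But in the paper, Proposition~\ref{propn:Tw!desc} is proved \emph{using} Theorem~\ref{thm:bifibeq}: the proof identifies $\Tw^{r}_{!}\mathcal{C}$ with $\mathfrak{L}(\phi)$ for a certain right fibration $\phi$, and then appeals to the equivalence $\mathfrak{R}\mathfrak{L} \simeq \id$ (via Remark~\ref{rmk:bifibmaps}) to read off the mapping spaces. So you cannot use Proposition~\ref{propn:Tw!desc} here without an independent argument. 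You do seem to recognize this when you call the mapping-space identification ``the hard part'' and sketch an approach, but that sketch --- reducing to a universal case and computing mapping spaces in an iterated pushout of \icats{} --- is precisely the substantive content of the cited references, and your outline is too vague to stand on its own. Moreover, Proposition~\ref{propn:Tw!desc} as stated concerns $\Tw^{r}_{!}\mathcal{C}$ for a single \icat{} $\mathcal{C}$, not $\mathfrak{L}$ applied to a general right fibration over $\mathcal{C} \times \mathcal{D}^{\op}$, so even setting aside the circularity your invocation of it would need justification.
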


\begin{remark}
  More precisely, both Ayala--Francis and Stevenson only prove this statement fibrewise over
  $\CatI \times \CatI$. However, both proofs show that the unit and
  counit maps for this adjunction are equivalences, and hence also
  imply our statement. Another proof, using the
  representability of bimodules for (enriched) \icats{}, is given by
  Hinich in \cite{HinichYoneda}*{\S 8}.
\end{remark}

\begin{remark}\label{rmk:bifibmaps}
  Given $\pi \colon \mathcal{E} \to \Delta^{1}$, the right fibration
  $\mathfrak{R}(\pi)$ corresponds to the functor
  $\mathcal{E}_{0}^{\op} \times \mathcal{E}_{1} \to \mathcal{S}$ given
  by $(x,y) \mapsto \Map_{\mathcal{E}}(x,y)$ (as $\Tw^{r}\mathcal{E}$
  is the fibration for $\Map_{\mathcal{E}}(\blank, \blank)$ and
  pullback of fibrations corresponds to composition of functors).
  The equivalence $\mathfrak{R}\mathfrak{L} \simeq \id$ therefore has the
  following interpretation: If $(p,q) \colon \mathcal{E} \to
  \mathcal{C} \times \mathcal{D}^{\op}$ is the right fibration
  corresponding to
  a functor $\phi \colon \mathcal{C}^{\op} \times \mathcal{D} \to
  \mathcal{S}$, then the \icat{} $\mathfrak{L}(p,q)$ can be described
  as follows:
  \begin{itemize}
  \item There are equivalences $\mathcal{C} \simeq
    \mathfrak{L}(p,q)_{0}, \mathcal{D} \simeq \mathfrak{L}(p,q)_{1}$.
  \item We have $\Map_{\mathfrak{L}(p,q)}(c,d) \simeq \phi(c,d)$ for $c \in
    \mathcal{C} \simeq \mathfrak{L}(p,q)_{0}$, $d \in
    \mathcal{D} \simeq \mathfrak{L}(p,q)_{1}$.
  \end{itemize}
\end{remark}

\begin{proof}[Proof of Proposition~\ref{propn:Tw!desc}]
  For \icats{} $\mathcal{C}$ and $\mathcal{D}$ we have equivalences of
  spaces
  \[ \Map(\mathcal{C}, \Tw^{r}\mathcal{D})
    \simeq
    \left\{
        \begin{tikzcd}
          \mathcal{C} \arrow{r} \arrow{dr} & \Tw^{r}\mathcal{D}
          \arrow{d} \\
           & \mathcal{D} \times \mathcal{D}^{\op}
        \end{tikzcd}
      \right\}
          \simeq
    \left\{
        \begin{tikzcd}
          \mathcal{C} \arrow{r} \arrow{d}{\Delta} & \Tw^{r}\mathcal{D}
          \arrow{d} \\
          \mathcal{C} \times \mathcal{C} \arrow{r} & \mathcal{D} \times \mathcal{D}^{\op}.
        \end{tikzcd}
        \right\}
      \]
      Here $\Tw^{r}\mathcal{D} \to \mathcal{D} \times
      \mathcal{D}^{\op}$ is a cartesian fibration, so this is
      equivalent to the space of morphisms from the free cartesian
      fibration on $\Delta \colon \mathcal{C} \to \mathcal{C} \times
      \mathcal{C}$ (that preserve cartesian morphisms, but this is a
      vacuous condition since the target is a right fibration). This
      free fibration is the fibre product $(\mathcal{C} \times
      \mathcal{C})^{\Delta^{1}} \times_{\mathcal{C} \times
        \mathcal{C}} \mathcal{C}$, formed via $\txt{ev}_{1}$, with the
      fibration induced by $\txt{ev}_{0}$ \cite{freepres}*{Theorem 4.5}. This
      corresponds to the functor $(c,c') \mapsto
      \mathcal{C}_{c,c'/}$. Let $\mathcal{E} \xto{\phi} \mathcal{C} \times
      \mathcal{C}$ be the right fibration obtained by inverting all
      morphisms in the fibres, corresponding to the functor $(c,c')
      \mapsto \|\mathcal{C}_{c,c'/}\|$; this is the free right
      fibration on $\Delta$. We therefore have a natural equivalence
      \[
        \begin{split}
\Map(\mathcal{C}, \Tw^{r}\mathcal{D})
                  & \simeq
    \left\{
        \begin{tikzcd}
          \mathcal{E} \arrow{r} \arrow{d}{\phi} \pgfmatrixnextcell \Tw^{r}\mathcal{D}
          \arrow{d} \\
          \mathcal{C} \times \mathcal{C} \arrow{r} \pgfmatrixnextcell \mathcal{D} \times \mathcal{D}^{\op}.
        \end{tikzcd}
        \right\} \\ &
      \simeq
      \Map\left(\Tw^{r}_{!}\mathcal{E} \amalg_{\mathcal{E} \amalg
        \mathcal{E}^{\op}} (\mathcal{C} \amalg \mathcal{C}^{\op}), \mathcal{D}\right),
        \end{split}
  \]
  where the second equivalence follows from the adjunction
  $\Tw^{r}_{!} \dashv \Tw^{r}$.
  We thus have a natural identification
  \[\Tw_{!}^{r}\mathcal{C} \simeq \Tw_{!}^{r}(\mathcal{E}) \amalg_{\mathcal{E} \amalg
        \mathcal{E}^{\op}} (\mathcal{C} \amalg \mathcal{C}^{\op}) \simeq
      \mathfrak{L}(\phi).\]
    Applying Theorem~\ref{thm:bifibeq} as in
    Remark~\ref{rmk:bifibmaps}, this implies the required description
    of $\Tw_{!}^{r}\mathcal{C}$.
\end{proof}

\begin{remark}
  In particular, if for objects $x,y \in \mathcal{C}$ there exists a
  coproduct $x \amalg y$ in $\mathcal{C}$, then
  $\Map_{\Tw_{!}\mathcal{C}}(x, y^{\vee})$ is contractible, since the
  \icat{} $\mathcal{C}_{x,y/}$ has an initial object, and so is weakly
  contractible. Similarly, if $\mathcal{C}$ has a terminal object,
  then $\Map_{\Tw_{!}\mathcal{C}}(x, y^{\vee})\simeq *$ for all $x,y
  \in \mathcal{C}$.
\end{remark}

Let us spell out one particular case of
Proposition~\ref{propn:Tw!desc} that we will use in this paper:
\begin{corollary}\label{cor:Tw!poset}
  Suppose $\mathbf{P}$ is a partially ordered set such that for any
  $p,p' \in \mathbf{P}$, the partially ordered set $\{q : p \leq q, p'
  \leq q\}$ is either empty or has a minimal element (\ie{} $p$ and
  $p'$ have a coproduct). Then
  $\Tw_{!}\mathbf{P}$ is also a partially ordered set, and $p \leq
  (p')^{\vee}$ \IFF{} there exists an element $q$ such that $p \leq q$ and $p'
  \leq q$. \qed
\end{corollary}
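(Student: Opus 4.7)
The plan is to apply Proposition~\ref{propn:Tw!desc} directly and check that for a poset $\mathbf{P}$ satisfying the coproduct hypothesis, all mapping spaces in $\Tw^{r}_{!}\mathbf{P}$ are either empty or contractible, with no non-identity automorphisms, so that $\Tw^{r}_{!}\mathbf{P}$ is a (nerve of a) partially ordered set.

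By Proposition~\ref{propn:Tw!desc}, the \icat{} $\Tw^{r}_{!}\mathbf{P}$ comes equipped with a functor to $\Delta^{1}$, whose fibres are $\mathbf{P}$ and $\mathbf{P}^{\op}$. In particular, the set of objects is the disjoint union $\mathbf{P} \sqcup \mathbf{P}^{\vee}$, and there are no morphisms from any $p'^{\vee}$ to any $p$. Within each fibre, mapping spaces are those of $\mathbf{P}$ or $\mathbf{P}^{\op}$ (both posets), so they are empty or contractible, and composition inside the fibres is the given partial order.

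The only remaining mapping spaces to analyze are $\Map_{\Tw^{r}_{!}\mathbf{P}}(p, p'^{\vee})$, and by Proposition~\ref{propn:Tw!desc}(iii) these are given by $\|\mathbf{P}_{p,p'/}\|$. The double slice $\mathbf{P}_{p,p'/}$ is precisely the partially ordered set $\{q : p \leq q,\ p' \leq q\}$. By the hypothesis on $\mathbf{P}$, this poset is either empty or has a minimal element. In the first case $\|\mathbf{P}_{p,p'/}\| \simeq \emptyset$ and there is no morphism $p \to p'^{\vee}$; in the second case the double slice has an initial object and is therefore weakly contractible, giving a unique morphism $p \to p'^{\vee}$.

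This shows that every mapping space in $\Tw^{r}_{!}\mathbf{P}$ is either empty or contractible, so $\Tw^{r}_{!}\mathbf{P}$ is equivalent to the nerve of a partially ordered set, and the description of $p \leq (p')^{\vee}$ follows directly. No step is really an obstacle here: the work is essentially just translating Proposition~\ref{propn:Tw!desc} in the poset setting and invoking the hypothesis to identify when $\|\mathbf{P}_{p,p'/}\|$ is contractible versus empty.
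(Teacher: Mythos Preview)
Your proof is correct and is precisely the argument the paper has in mind: the corollary is marked with \qed{} because it follows immediately from Proposition~\ref{propn:Tw!desc} by observing that the double slice $\mathbf{P}_{p,p'/}$ is empty or has an initial object under the hypothesis. The only point one might make explicit is that antisymmetry holds because there are no maps from the fibre over $1$ to the fibre over $0$, but you effectively cover this.
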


\begin{example}\label{ex:epsTwn}
  The partially ordered set $\bbS^{n}$ (see Definition~\ref{defn:bbS})
  satisfies the hypothesis of Corollary~\ref{cor:Tw!poset}: If
  $\max\{i,i'\} > \min\{j,j'\}$ then there are clearly no pairs
  $(x,y)$ such that $(i,j) \leq (x,y)$ and $(i',j') \leq (x,y)$. On
  the other hand, if $\max\{i,i'\} \leq \min\{j,j'\}$ then
  $(\max\{i,i'\},\min\{j,j'\})$ is a minimal such object.  Thus
  $\Tw_{!}\bbS^{n}$ is equivalent to the partially ordered set
  $\tbbS^{n}$ with objects $(i,j)$ and $(i,j)^{\vee}$ with $0 \leq i
  \leq j \leq n$ where $(i,j) \leq (i',j')^{\vee}$ \IFF{}
  $\max\{i,i'\} \leq \min\{j,j'\}$.
\end{example}

\begin{example}\label{ex:epsTwnprod}
  Let $\mathbf{m} = (m_{1},\ldots,m_{n})$. Then $\tbbS^{\mathbf{m}}
  := \Tw_{!}\bbS^{\mathbf{m}}$ is also a partially ordered set
  --- there is a unique morphism
  $((i_{1},j_{1}),\ldots,(i_{n},j_{n})) \to
  ((i'_{1},j'_{1}),\ldots,(i'_{n},j'_{n}))^{\vee}$ if $\max\{i_{r},i'_{r}\}
  \leq \min\{j_{r},j'_{r}\}$ for all $r = 1,\ldots,n$, and no morphism
  otherwise. If $\mathbf{m} = (1,\ldots,1)$ we will also denote
  $\tbbS^{1,\ldots,1}$ by $\tSp{}^{n}$.
\end{example}

We now prove some further useful properties of the functor $\Tw^{r}_{!}$.

\begin{lemma}
  For any \icats{} $\mathcal{C}$ and $\mathcal{D}$ there is a natural
  equivalence
  \[ \Tw^{r}_{!}(\mathcal{C} \times \mathcal{D}) \simeq
    \Tw^{r}_{!}(\mathcal{C}) \times_{\Delta^{1}} \Tw^{r}_{!}(\mathcal{D})\]
\end{lemma}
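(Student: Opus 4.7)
The strategy is to exploit the description of $\Tw^{r}_{!}\mathcal{C}$ from Proposition~\ref{propn:Tw!desc} together with the equivalence of Theorem~\ref{thm:bifibeq} between \icats{} over $\Delta^{1}$ and right correspondences. First I would construct the natural comparison functor. Applying $\Tw^{r}_{!}$ to the two projections $\mathcal{C} \times \mathcal{D} \to \mathcal{C}, \mathcal{D}$ gives functors $\Tw^{r}_{!}(\mathcal{C} \times \mathcal{D}) \to \Tw^{r}_{!}\mathcal{C}, \Tw^{r}_{!}\mathcal{D}$; since each projection is covered by the terminal map $\mathcal{C} \times \mathcal{D} \to \ast$, and $\Tw^{r}_{!}(\ast) \simeq \Delta^{1}$ (because the left Kan extension $\epsilon^{r}_{!}$ sends $[0]$ to $\epsilon^{r}[0] = [1]$), both functors cover the canonical projection $\Tw^{r}_{!}(\mathcal{C}\times\mathcal{D}) \to \Delta^{1}$ and so assemble into a natural map
\[ \alpha \colon \Tw^{r}_{!}(\mathcal{C} \times \mathcal{D}) \longrightarrow \Tw^{r}_{!}\mathcal{C} \times_{\Delta^{1}} \Tw^{r}_{!}\mathcal{D}. \]

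Both sides are \icats{} over $\Delta^{1}$, so by Theorem~\ref{thm:bifibeq} (applied via Remark~\ref{rmk:bifibmaps}) it suffices to check that $\alpha$ induces equivalences of fibres at $0$ and $1$ and of mapping spaces from the fibre at $0$ to that at $1$. On fibres there is nothing to do: Proposition~\ref{propn:Tw!desc}(i)--(ii) applied to $\mathcal{C} \times \mathcal{D}$ identifies the fibres of the left-hand side with $\mathcal{C} \times \mathcal{D}$ and $(\mathcal{C} \times \mathcal{D})^{\op} \simeq \mathcal{C}^{\op} \times \mathcal{D}^{\op}$, which agree with the fibres of the right-hand side, and $\alpha$ restricts to the canonical identification on each fibre by construction.

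For mapping spaces from $(x,x')$ in the fibre at $0$ to $(y,y')^{\vee}$ in the fibre at $1$, Proposition~\ref{propn:Tw!desc}(iii) computes the left-hand side as
\[ \|(\mathcal{C} \times \mathcal{D})_{(x,x'),(y,y')/}\| \simeq \|\mathcal{C}_{x,y/} \times \mathcal{D}_{x',y'/}\|, \]
using that the double slice is a pullback in $\CatI$ and hence distributes over products. On the right-hand side, the mapping space in the fibre product of \icats{} over $\Delta^{1}$ is the pullback of mapping spaces over $\Map_{\Delta^{1}}(0,1) \simeq \ast$, so it is simply the product
\[ \|\mathcal{C}_{x,y/}\| \times \|\mathcal{D}_{x',y'/}\|, \]
again using Proposition~\ref{propn:Tw!desc}(iii) for each factor. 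The main obstacle is therefore reduced to checking that the classifying space functor $\|\cdot\| \colon \CatI \to \mathcal{S}$ preserves binary products. This is standard: since $\mathcal{S}$ is an exponential ideal in $\CatI$ (for any space $X$ and \icat{} $\mathcal{B}$ the functor \icat{} $\Fun(\mathcal{B}, X)$ is again a space, as natural transformations between functors into a groupoid are automatically invertible), one has for any $X \in \mathcal{S}$
\[ \Map(\mathcal{A}\times\mathcal{B}, X) \simeq \Map(\mathcal{A}, \Map(\|\mathcal{B}\|, X)) \simeq \Map(\|\mathcal{A}\|\times\|\mathcal{B}\|, X), \]
and the Yoneda lemma gives $\|\mathcal{A}\times\mathcal{B}\| \simeq \|\mathcal{A}\|\times\|\mathcal{B}\|$. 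One then checks that under these identifications the map induced by $\alpha$ is the canonical comparison, which is immediate from naturality of all constructions involved.
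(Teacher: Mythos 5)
Your proof is correct and is essentially the paper's argument in slightly different packaging: the paper transports everything through the equivalence $\mathfrak{L}$ of Theorem~\ref{thm:bifibeq} (which preserves products automatically) and reduces to the identification $\phi_{\mathcal{C}}\times\phi_{\mathcal{D}}\simeq\phi_{\mathcal{C}\times\mathcal{D}}$, whereas you construct the comparison map explicitly and verify it on fibres and mapping spaces — but both hinge on exactly the same key fact, that $\|\blank\|$ preserves finite products.
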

\begin{proof}
  In the proof of Proposition~\ref{propn:Tw!desc} we saw that there is
  an equivalence
  $\Tw^{r}_{!}(\mathcal{C}) \simeq \mathfrak{L}(\phi_{\mathcal{C}})$,
  where $\phi_{\mathcal{C}} \colon \mathcal{E}_{\mathcal{C}} \to
  \mathcal{C} \times \mathcal{C}$ is the right fibration for the
  functor $\mathcal{C}^{op} \times \mathcal{C}^{\op} \to \mathcal{S}$
  given by $(c,c') \mapsto \|\mathcal{C}_{c,c'/}\|$. Since
  $\mathfrak{L}$ is an equivalence, it preserves products, and so
  $\Tw^{r}_{!}(\mathcal{C}) \times_{\Delta^{1}}
  \Tw^{r}_{!}(\mathcal{D})$ is equivalent to
  $\mathfrak{L}(\phi_{\mathcal{C}} \times \phi_{\mathcal{D}})$. Here
  $\phi_{\mathcal{C}}\times \phi_{\mathcal{D}}$ is the right fibration
  for the functor $(\mathcal{C}^{\op} \times
  \mathcal{D}^{\op})^{\times 2} \to \mathcal{S}$ taking $(c,d,c',d')$
  to $\|\mathcal{C}_{c,c'/}\| \times \|\mathcal{D}_{d,d'/}\|$. But it
  is a classical fact that the
  functor $\|\blank\| \colon \CatI \to \mathcal{S}$ preserves
  products, so we can identify $\phi_{\mathcal{C}} \times
  \phi_{\mathcal{D}}$ with $\phi_{\mathcal{C} \times \mathcal{D}}$. In
  other words, we have a natural equivalence
  \[ \Tw^{r}_{!}(\mathcal{C} \times \mathcal{D}) \simeq
    \mathfrak{L}(\phi_{\mathcal{C} \times \mathcal{D}}) \simeq
    \mathfrak{L}(\phi_{\mathcal{C}} \times \phi_{\mathcal{D}}) \simeq
    \Tw^{r}_{!}(\mathcal{C}) \times_{\Delta^{1}}
    \Tw^{r}_{!}(\mathcal{D}),\]
  as required.
\end{proof}

\begin{lemma}\label{lem:Tw!initial}
  For any \icat{} $\mathcal{C}$ we have a natural equivalence
  \[ \Tw^{r}_{!}(\mathcal{C}^{\triangleleft}) \simeq
    \Tw_{!}^{r}(\mathcal{C})^{\triangleleft\triangleright}. \]
\end{lemma}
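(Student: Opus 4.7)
The plan is to identify $-\infty$ and $(-\infty)^{\vee}$ as an initial and a terminal object, respectively, of $\Tw^{r}_{!}(\mathcal{C}^{\triangleleft})$, and then construct and verify an equivalence $\Tw^{r}_{!}(\mathcal{C})^{\triangleleft\triangleright} \simeq \Tw^{r}_{!}(\mathcal{C}^{\triangleleft})$ by inspection of objects and mapping spaces using Proposition~\ref{propn:Tw!desc}.

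First I would verify that $-\infty \in \mathcal{C}^{\triangleleft}$, viewed as an object of the fibre $(\Tw^{r}_{!}(\mathcal{C}^{\triangleleft}))_{0}$, is initial. For $z$ in the same fibre, the mapping space is $\Map_{\mathcal{C}^{\triangleleft}}(-\infty, z)$, which is contractible. For $z = y^{\vee}$ in the other fibre, Proposition~\ref{propn:Tw!desc}(iii) gives $\Map(-\infty, y^{\vee}) \simeq \|(\mathcal{C}^{\triangleleft})_{-\infty, y/}\|$; the forgetful map $(\mathcal{C}^{\triangleleft})_{-\infty, y/} \to (\mathcal{C}^{\triangleleft})_{y/}$ is an equivalence since the space of maps from an initial object is contractible, and the target has initial object $\mathrm{id}_{y}$, hence is weakly contractible. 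A dual argument shows $(-\infty)^{\vee}$ is terminal.

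Next, the inclusion $\iota \colon \mathcal{C} \hookrightarrow \mathcal{C}^{\triangleleft}$ induces $\Tw^{r}_{!}(\iota) \colon \Tw^{r}_{!}(\mathcal{C}) \to \Tw^{r}_{!}(\mathcal{C}^{\triangleleft})$ by functoriality. For any target \icat{} $\mathcal{F}$ with an initial object $f_{0}$, the space of extensions $\mathcal{E}^{\triangleleft} \to \mathcal{F}$ of a given $F \colon \mathcal{E} \to \mathcal{F}$ sending $-\infty$ to $f_{0}$ is contractible — it is the space of natural transformations $\mathrm{const}_{f_{0}} \Rightarrow F$, which is contractible since each component $f_{0} \to F(e)$ is unique up to contractible choice. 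Together with the analogous statement for $(-)^{\triangleright}$ and a terminal object, this lets us extend $\Tw^{r}_{!}(\iota)$ essentially uniquely to a comparison functor
\[ \Phi \colon \Tw^{r}_{!}(\mathcal{C})^{\triangleleft\triangleright} \longrightarrow \Tw^{r}_{!}(\mathcal{C}^{\triangleleft}) \]
sending the adjoined initial and terminal objects to $-\infty$ and $(-\infty)^{\vee}$, respectively; the two extension steps are independent because $-\infty$ remains initial in $\Tw^{r}_{!}(\mathcal{C}^{\triangleleft})$ after the second step.

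Finally, I would check that $\Phi$ is an equivalence. Essential surjectivity is immediate: every object of $\Tw^{r}_{!}(\mathcal{C}^{\triangleleft})$ is either of the form $c$ or $c^{\vee}$ for $c \in \mathcal{C}$, lying in the image of $\Tw^{r}_{!}(\iota)$, or is $-\infty$ or $(-\infty)^{\vee}$, which are images of the adjoined objects. For full faithfulness, the cases involving an adjoined initial/terminal object on either side produce contractible mapping spaces on both sides (from the join description on the source and from Step~1 on the target). The substantive case is $\Map(x, y^{\vee})$ for $x, y \in \mathcal{C}$, where we need $\|\mathcal{C}_{x,y/}\| \simeq \|(\mathcal{C}^{\triangleleft})_{x,y/}\|$; this holds because the inclusion $\mathcal{C}_{x,y/} \hookrightarrow (\mathcal{C}^{\triangleleft})_{x,y/}$ is an isomorphism, since there are no morphisms from $x$ or $y$ (lying in $\mathcal{C}$) into the adjoined object $-\infty$. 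The main delicate point will be rigorously formulating the universal-property-based extension procedure; but once this is in hand, the remaining verification is a direct inspection using Proposition~\ref{propn:Tw!desc}.
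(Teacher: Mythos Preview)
Your proposal is correct and follows essentially the same approach as the paper: identify the cone point and its dual as initial and terminal objects of $\Tw^{r}_{!}(\mathcal{C}^{\triangleleft})$ via Proposition~\ref{propn:Tw!desc}, extend $\Tw^{r}_{!}(\iota)$ accordingly, and verify the result is fully faithful and essentially surjective by inspecting mapping spaces. The paper's proof is a terse two-sentence version of exactly this argument, so you have simply unwound the details the paper leaves implicit.
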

\begin{proof}
  From our description of $\Tw^{r}_{!}$ it is clear that
  $\Tw^{r}_{!}(\mathcal{C}^{\triangleleft})$ has an initial and
  terminal object (both induced by the cone point). Thus the functor
  $\Tw^{r}_{!}\mathcal{C} \to
  \Tw^{r}_{!}(\mathcal{C}^{\triangleleft})$ induced by the inclusion
  $\mathcal{C} \hookrightarrow \mathcal{C}^{\triangleleft}$
  canonically extends to a functor
  $\Tw_{!}^{r}(\mathcal{C})^{\triangleleft\triangleright} \to
  \Tw^{r}_{!}(\mathcal{C}^{\triangleleft})$, and from the description
  of these \icats{} we see that this is fully faithful and essentially surjective.
\end{proof}

\begin{proposition}\label{propn:CtoTw!Ccoinit}\ 
  \begin{enumerate}[(i)]
  \item For any \icat{} $\mathcal{C}$, the inclusion
    $\mathcal{C} \to \Tw^{r}_{!}(\mathcal{C})$ is coinitial.
  \item If a functor $f \colon \mathcal{C} \to \mathcal{D}$ is
    coinitial, so is the induced functor
    \[\Tw^{r}_{!}(f) \colon
    \Tw^{r}_{!}\mathcal{C} \to \Tw^{r}_{!}\mathcal{D}.\]
  \end{enumerate}
\end{proposition}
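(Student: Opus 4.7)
For (i) I plan to apply \ThmA{} and verify, for each $X \in \Tw^r_!(\mathcal{C})$, that the pullback $\mathcal{C} \times_{\Tw^r_!(\mathcal{C})} \Tw^r_!(\mathcal{C})_{/X}$ is weakly contractible. By Proposition~\ref{propn:Tw!desc}, $\Tw^r_!(\mathcal{C})$ fibres over $\Delta^1$ with fibres $\mathcal{C}$ and $\mathcal{C}^{\op}$, and there are no morphisms from the $1$-fibre back to the $0$-fibre. Thus when $X = c$ lies in the $0$-fibre, the pullback reduces to $\mathcal{C}_{/c}$, which is weakly contractible because $\id_c$ is a terminal object. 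The essential case is $X = c^\vee$: here the pullback is the right fibration over $\mathcal{C}$ classifying the presheaf $c' \mapsto \|\mathcal{C}_{c',c/}\|$ (using the mapping-space description in Proposition~\ref{propn:Tw!desc}), and so the required statement becomes that $\colim_{c' \in \mathcal{C}^{\op}} \|\mathcal{C}_{c',c/}\|$ is contractible.

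To handle this colimit I will identify $\mathcal{C}_{c',c/}$ with the pullback $\mathcal{C}_{c'/} \times_\mathcal{C} \mathcal{C}_{c/}$, regarded as a left fibration over $\mathcal{C}_{c/}$ classifying $(z, c \to z) \mapsto \Map_\mathcal{C}(c', z)$, and apply Fubini to rewrite
\[ \colim_{c' \in \mathcal{C}^{\op}} \|\mathcal{C}_{c',c/}\| \simeq \colim_{(z, c \to z) \in \mathcal{C}_{c/}} \colim_{c' \in \mathcal{C}^{\op}} \Map_\mathcal{C}(c', z) \simeq \colim_{(z, c \to z) \in \mathcal{C}_{c/}} \|\mathcal{C}_{/z}\|. \]
Each $\mathcal{C}_{/z}$ has terminal object $\id_z$, so the inner terms are contractible, and the remaining outer colimit is $\|\mathcal{C}_{c/}\|$, itself contractible because $\mathcal{C}_{c/}$ has initial object $\id_c$.

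For (ii) I will again apply \ThmA{}. For $X = d \in \mathcal{D}$ the same fibre-analysis collapses the pullback to $\mathcal{C} \times_\mathcal{D} \mathcal{D}_{/d}$, weakly contractible by the coinitiality of $f$. For $X = e^\vee$, write $\mathcal{G}_{e^\vee}$ for the pullback $\Tw^r_!(\mathcal{C}) \times_{\Tw^r_!(\mathcal{D})} \Tw^r_!(\mathcal{D})_{/e^\vee}$; this is the right fibration over $\Tw^r_!(\mathcal{C})$ classifying $Y \mapsto \Map_{\Tw^r_!(\mathcal{D})}(\Tw^r_!(f)(Y), e^\vee)$. Taking opposites in (i), the inclusion $\mathcal{C}^{\op} \hookrightarrow \Tw^r_!(\mathcal{C})^{\op}$ is cofinal, so the colimit computing $\|\mathcal{G}_{e^\vee}\|$ can be restricted to $\mathcal{C}^{\op}$, giving $\|\mathcal{G}_{e^\vee}\| \simeq \colim_{c \in \mathcal{C}^{\op}} \|\mathcal{D}_{f(c), e/}\|$. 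The same double-colimit rewriting as in (i) converts this to $\colim_{(z, e \to z) \in \mathcal{D}_{e/}} \|\mathcal{C} \times_\mathcal{D} \mathcal{D}_{/z}\|$, and the coinitiality of $f$ makes each $\|\mathcal{C} \times_\mathcal{D} \mathcal{D}_{/z}\|$ contractible, so the whole colimit reduces to $\|\mathcal{D}_{e/}\|$, which is contractible.

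The main bookkeeping challenge will be correctly identifying each pullback appearing in \ThmA{} as a right (or left) fibration classifying the expected presheaf; this relies entirely on Proposition~\ref{propn:Tw!desc}. Beyond that, no further input is required apart from standard interchanges of colimits and the fact that the classifying space of a left or right fibration computes the colimit of the associated functor.
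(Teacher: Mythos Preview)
Your argument is correct. For (i), it is essentially the paper's argument rephrased in colimit language: the paper identifies the cartesian fibration $\mathcal{C}_{x/}\times_{\mathcal{C}}\mathcal{C}^{\Delta^1}$ as classifying $c'\mapsto\mathcal{C}_{c',x/}$, cites a lemma to the effect that the induced map to the right fibration $\mathcal{C}_{/x^\vee}$ is coinitial, and then shows the source is weakly contractible by applying \ThmA{} to $\txt{ev}_1\colon\mathcal{C}^{\Delta^1}\to\mathcal{C}$. Your Fubini computation unwinds exactly the same content without invoking the external lemma, so it is slightly more self-contained.

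For (ii), your approach works but is more laborious than the paper's. The paper simply observes that in the commutative square
\[
\begin{tikzcd}
\mathcal{C}\arrow{r}{f}\arrow{d} & \mathcal{D}\arrow{d}\\
\Tw^r_!\mathcal{C}\arrow{r}{\Tw^r_!(f)} & \Tw^r_!\mathcal{D}
\end{tikzcd}
\]
the verticals are coinitial by (i) and the top is coinitial by hypothesis, so the composite $\mathcal{C}\to\Tw^r_!\mathcal{D}$ is coinitial; then \cite[Proposition 4.1.1.3(2)]{HTT} (if $p$ is coinitial then $q$ is coinitial iff $q\circ p$ is) forces $\Tw^r_!(f)$ to be coinitial. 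Your direct \ThmA{} verification, which bootstraps (i) via cofinality of $\mathcal{C}^{\op}\hookrightarrow\Tw^r_!(\mathcal{C})^{\op}$ and then reruns the Fubini argument, reaches the same conclusion but does more work; the payoff is that it makes the role of the coinitiality hypothesis on $f$ visible at the level of the individual comma categories.
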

\begin{proof}
  For (i), by \ThmA{} we need to show that for any
  $X \in \Tw^{r}_{!}(\mathcal{C})$, the \icat{}
  $\mathcal{C}_{/X} := \mathcal{C} \times_{\Tw^{r}_{!}(\mathcal{C})}
  \Tw^{r}_{!}(\mathcal{C})_{/X}$ is weakly contractible. If $X$ is the
  image of $x \in \mathcal{C}$ then this is obvious as
  $\mathcal{C}_{/X} \simeq \mathcal{C}_{/x}$ has a terminal
  object. Suppose therefore that $X$ is $x^{\vee}$ for some
  $x \in \mathcal{C}$. Then the forgetful functor
  $\mathcal{C}_{/x^{\vee}} \to \mathcal{C}$ is the right fibration for
  the functor $\mathcal{C}^{\op} \to \mathcal{S}$ taking
  $c \in \mathcal{C}$ to
  $\Map_{\Tw_{!}^{r}\mathcal{C}}(c, x^{\vee}) \simeq
  \|\mathcal{C}_{c,x/}\|$, where the equivalence is from
  Proposition~\ref{propn:Tw!desc}. The cartesian fibration for the
  functor $\mathcal{C}^{\op} \to \CatI$ taking $c$ to
  $\mathcal{C}_{c,x/}$ is given by
  $\mathcal{C}_{x/}\times_{\mathcal{C}} \mathcal{C}^{\Delta^{1}} \to
  \mathcal{C}$ (where the fibre product is via evaluation at 1 and the
  projection to $\mathcal{C}$ via evaluation at 0), so we have a
  coinitial functor
  \[\mathcal{C}_{x/}\times_{\mathcal{C}} \mathcal{C}^{\Delta^{1}} \to
    \mathcal{C}_{/x^{\vee}}\] 
  by \cite{cois}*{Lemma A.3.8}. Since coinitial functors
  are in particular weak homotopy equivalences, this implies that
  $\mathcal{C}_{/x^{\vee}}$ is weakly contractible precisely when
  $\mathcal{C}_{x/}\times_{\mathcal{C}} \mathcal{C}^{\Delta^{1}}$ is
  weakly contractible. Now applying \ThmA{} to the functor
  $\txt{ev}_{1} \colon \mathcal{C}^{\Delta^{1}} \to \mathcal{C}$ we
  see that this holds for all $x \in \mathcal{C}$ precisely when
  $\txt{ev}_{1}$ is cofinal. But $\txt{ev}_{1}$ is a cocartesian
  fibration whose fibres (\ie{} the slices $\mathcal{C}_{/c}$ for
  $c \in \mathcal{C}$) are all weakly contractible, hence
  $\txt{ev}_{1}$ 
  is cofinal for any \icat{} $\mathcal{C}$ by \cite{cois}*{Lemma A.3.5}. This proves (i).

  To prove (ii), observe that we have a commutative square
  \csquare{\mathcal{C}}{\mathcal{D}}{\Tw^{r}_{!}\mathcal{C}}{\Tw^{r}_{!}\mathcal{D}.}{f}{}{}{\Tw^{r}_{!}f}
  Here the vertical morphisms are coinitial by (i) and the top
  horizontal morphism is coinitial by assumption. Hence the bottom
  horizontal morphism is also coinitial by \cite{HTT}*{Proposition
    4.1.1.3(2)}.
\end{proof}

\chapter{Background on Derived Algebraic Geometry}\label{sec:dag}
In this appendix we review some background on derived algebraic
geometry, mainly following \cite{HAG2,PTVV,CPTVV}. Other useful
references on the subject include
\cite{SAG,GaitsgoryRozenblyum1,GaitsgoryRozenblyum2}.

We first look at some derived algebra: In \S\ref{subsec:derring} we
discuss derived (or homotopy-coherent) rings and modules over a field
of characteristic zero, and in \S\ref{subsec:algcotgt} we recall the
cotangent complex for a morphism of derived rings and its basic
properties. Then we move on to algebraic geometry in the broadest
sense and discuss \'etale sheaves and derived stacks in
\S\ref{subsec:derstack}. We next look at sheaves on derived stacks in
\S\ref{subsec:sheaves} and quasicoherent sheaves in
\S\ref{subsec:qcoh}. In \S\ref{subsec:affinemor} we recall affine
morphisms between derived stacks before we introduce geometric
morphisms and derived Artin stacks in \S\ref{subsec:geommor}. Base
change and the projection formula for quasicoherent sheaves play an
important role in the paper, and we review the proof of these results,
due to Ben-Zvi, Francis, and Nadler~\cite{BenZviFrancisNadler},
in \S\ref{subsec:basechg}. We then make a categorial digression in
\S\ref{subsec:natbasechg} where we discuss the naturality of
pushforward morphisms and use this to formulate the naturality of base
change and the projection formula. Turning back to geometry, we review
cotangent complexes for derived stacks in \S\ref{subsec:cotgt}. We end
by recalling the de Rham complex and differential forms on derived
stacks in \S\ref{subsec:deRham} and \S\ref{subsec:diffform},
respectively, following \cite{PTVV,CPTVV}.

\section{Derived Rings}\label{subsec:derring}
The basic idea of derived algebraic geometry is to replace ordinary
commutative rings with some notion of ``derived commutative
rings''. In general there are several non-equivalent ways of making
this notion precise:
\begin{enumerate}[(1)]
\item Connective\footnote{I.e.~non-negatively graded homologically or
    non-positively graded cohomologically.} commutative differential graded algebras --- this is
  perhaps the easiest notion to come to grips with, but it only really
  makes sense over a field of characteristic zero.
\item Simplicial commutative rings --- this is the notion closest to ordinary
  algebraic geometry, and leads to the what is usually called
  \emph{derived algebraic geometry} as in \cite{HAG2}.
\item Connective commutative ring spectra --- this is, unsurprisingly,
  the notion most relevant for applications in stable homotopy theory,
  and leads to what Lurie calls \emph{spectral algebraic geometry} \cite{SAG}.
\end{enumerate}
In this paper we will always be working over a fixed field $\k$ of
characteristic zero, and in this case these three notions\footnote{I.e.\
commutative algebras in connective (co)chain complexes of
$\k$-vector spaces, simplicial commutative $\k$-algebras, and connective
commutative algebras over the Eilenberg--MacLane spectrum $H\k$.} give the
same homotopy theory of derived rings. In most of this paper we work
entirely at the level of \icats{}, where there is no need
to choose any particular model, and we generally remain
agnostic whenever possible. 

\begin{definition}
  We denote by $\Mod_{\k}$ the symmetric monoidal \icat{} of
  \emph{$\k$-modules}, obtained either by inverting the
  quasi-isomorphisms between (unbounded) (co)chain complexes of
  $\k$-modules or as modules over $H\k$ in the \icat{} of
  spectra.
  Similarly, we write $\CAlg_{\k} := \CAlg(\Mod_{\k})$ for the
  \icat{} of \emph{commutative $\k$-algebras}, \ie{} commutative
  algebras in $\Mod_{\k}$; this can alternatively be obtained by
  inverting the quasi-isomorphisms between commutative dg-algebras
  over $\k$, or as commutative $H\k$-algebras in spectra. These are both
  symmetric monoidal \icats{} with respect to the derived tensor
  product of chain complexes, or equivalently the relative smash
  product over $H \k$.
\end{definition}

\begin{convention}
  We will to the greatest extent possible attempt to remain agnostic
  about homological versus cohomological grading conventions. In the
  topologist's convention an object $M$ of $\Mod_{\k}$ has
  \emph{homotopy groups} $\pi_{i}M$ ($i \in \mathbb{Z}$). If $M$ is
  modelled by a chain complex $\tilde{M}$ then
  $\pi_{i}M \cong H_{i}\tilde{M}$, and if $M$ is modelled by a cochain
  complex $\tilde{M}$ then $\pi_{i}M \cong H^{-i}\tilde{M}$. We say
  that $M$ is \emph{connective} if $\pi_{i}M = 0$ for $i < 0$ --- in
  cohomological terms, this means that $M$ is non-positively
  graded. The \emph{shifts} $M[n]$ can be defined model-independently
  by induction via the cartesian and cocartesian squares
  \nolabelcsquare{M[n]}{0}{0}{M[n+1],}
  starting with $M[0] := M$. Thus to a topologist $M[n]$ is
  the $n$-fold suspension $\Sigma^{n}M$ for $n > 0$ and the $n$-fold
  loop object $\Omega^{-n}M := \Sigma^{n}M$ for $n < 0$. This agrees
  with the standard homological and cohomological grading conventions
  for shifts
  in terms of (co)chain complexes.
\end{convention}

\begin{definition}
  We write $\Modconn_{\k}$ for the full subcategory of $\Mod_{\k}$
  spanned by the connective $\k$-modules, and
  $\CAlgconn_{\k} := \CAlg(\Modconn_{\k})$ for the full subcategory of
  $\CAlg_{\k}$ spanned by the connective commutative $\k$-algebras, or
  equivalently the \icat{} of commutative algebras in the symmetric
  monoidal \icat{} $\Modconn_{\k}$. (This can also be obtained by
  inverting the weak homotopy equivalences between simplicial
  commutative $\k$-algebras.)
\end{definition}

To do derived algebraic geometry over $\k$, we want to work with
\emph{connective} commutative $\k$-algebras, \ie{} the objects of
$\CAlgconn_{\k}$ --- many notions from algebraic geometry can be
extended to this setting, while it is still unclear to what extent it
is possible to ``do algebraic geometry'' with non-connective derived
rings. In particular, a number of definitions from commutative algebra
can easily be extended to connective derived algebras in terms of
homotopy (or (co)homology) groups:
\begin{definition}
  A morphism $f \colon A \to B$ in $\CAlgkc$ is \emph{strong} if for
  every $n$ the natural morphism
  $\pi_{n}(A) \otimes_{\pi_{0}(A)} \pi_{0}(B) \to \pi_{n}(B)$ is an
  isomorphism. We then say $f$ is \emph{flat}, \emph{faithfully flat},
  \emph{\'{e}tale}, or \emph{smooth} if $f$ is strong and the morphism
  $\pi_{0}f \colon \pi_{0}A \to \pi_{0}B$ is, respectively, flat,
  faithfully flat, \'{e}tale, or smooth in the ordinary sense.
\end{definition}

\begin{definition}
  We say a strong morphism $f \colon A \to B$ is \emph{surjective} if
  the induced morphism $\Spec \pi_{0}f \colon \Spec \pi_{0}B \to \Spec
  \pi_{0}A$ is surjective.
\end{definition}

The following is a useful finiteness condition for morphisms of
commutative $\k$-algebras:
\begin{definition}
  A morphism $f \colon A \to B$ in $\CAlgkc$ is \emph{finitely
    presented} if it is a compact object in $(\CAlgkc)_{A/}$. (See
  \cite{HAG2}*{2.2.2.4} for a more explicit criterion.)
\end{definition}

\section{Cotangent Complexes of Algebras}\label{subsec:algcotgt}
Every $A \in \CAlg_{\k}$ has a \emph{cotangent complex} $\mathbb{L}_{A}
\in \Mod_{A}$. This has the universal property that 
\[ \Map_{\Mod_{A}}(\mathbb{L}_{A}, M) \simeq \Map_{\CAlg_{\k/A}}(A,
A[M]),\]
where $A[M]$ is the module $A \oplus M$ equipped with the trivial
square-zero multiplication. Equivalently, maps from $\mathbb{L}_{A}$
to $M$ are $M$-valued \emph{derivations} on $A$, in that the space
$\txt{Der}_{\k}(A,M)$ of derivations can be defined as $\Map_{\CAlg_{\k/A}}(A,
A[M])$. More generally every morphism $A \to B \in \CAlg_{\k}$
has a \emph{relative cotangent complex} $\mathbb{L}_{B/A} \in
\Mod_{B}$, which satisfies
\[ \Map_{\Mod_{B}}(\mathbb{L}_{B/A}, M) \simeq \Map_{\CAlg_{A/B}}(B,
B[M]) =: \txt{Der}_{A}(B, M).\]
There are two (equivalent) approaches to defining these structures:
\begin{itemize}
\item Choosing a model (such as simplicial commutative $\k$-algebras
  or commutative differential graded $\k$-algebras) for $\CAlg_{\k}$,
  we can use the model to define the square-zero extensions
  $A[M]$. This determines the cotangent complex as the module
  representing the functor $\txt{Der}_{A}(A, \blank)$. See \cite[\S
  1.2.1]{HAG2}, \cite{PortaVezzosiSqZ}, or \cite{BasterraTAQ} for
  more details.
\item We may identify $\Mod_{A}$ with the stabilization of the \icat{}
  $\CAlg_{\k/A}$. Then we have a ``suspension spectrum''/``infinite
  loop space'' adjunction
  \[ \Sigma^{\infty}_{+} : \CAlg_{\k/A} \rightleftarrows \Mod_{A} :
  \Omega^{\infty} \]
  and we define $\mathbb{L}_{A} := \Sigma^{\infty}_{+}A$ and $A[M] :=
  \Omega^{\infty}M$. This is carried out in \cite{HA}*{\S 7.3.2}.
\end{itemize}

\begin{remark}\label{rmk:sqzpb}
  For $B \to C$ in $\CAlg_{A}$ and $M \in \Mod_{C}$ we have a natural
  pullback square \nolabelcsquare{B[M]}{C[M]}{B}{C.}  
  We can use this to reformulate the universal property of the
  cotangent complex: Let
  $\Mod \to \CAlg_{\k}$ denote the cocartesian (and cartesian)
  fibration corresponding to the functor $\CAlg_{\k} \to \LCatI$ taking
  $A$ to $\Mod_{A}$, then the cotangent complex $\mathbb{L}_{A}$ of
  $A$ has the universal property
  \[ \Map_{\Mod}(\mathbb{L}_{A}, M) \simeq \Map_{\CAlg_{\k}}(A,
  B[M]),\]
  for $M$ a $B$-module. To see this, it suffices to see we have an
  equivalence on the fibre over every map $f \colon A \to B$, where we indeed
  get 
  \[ \Map_{\Mod_{A}}(\mathbb{L}_{A}, f_{*}M) \simeq
  \Map_{\CAlg_{\k/A}}(A, A[f_{*}M]) \simeq \Map_{\CAlg_{\k/B}}(A,
  B[M]).\]
  where $f_{*}$ denotes the forgetful functor $\Mod_{B} \to \Mod_{A}$
  induced by $f$. Thus the cotangent complex gives a left adjoint to
  the square-zero extension functor $\Mod \to \CAlg_{\k}$, which takes
  $(A,M)$ to $A[M]$.
\end{remark}

\begin{lemma}
  For $A \to B \to C$ in $\CAlg_{\k}$, there is for $M \in \Mod_{C}$ a
  natural pullback square \nolabelcsquare{\Der_{B}(C, M)}{\Der_{A}(C,
    M)}{*}{\Der_{A}(B, M),} corresponding to a cofibre sequence
  \nolabelcsquare{\mathbb{L}_{B/A} \otimes_{B}
    C}{\mathbb{L}_{C/A}}{0}{\mathbb{L}_{C/B}} in $\Mod_{C}$.
\end{lemma}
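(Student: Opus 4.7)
The two statements are equivalent by the Yoneda lemma in $\Mod_C$: under the natural equivalences
\[ \Der_A(C,M) \simeq \Map_{\Mod_C}(\mathbb{L}_{C/A}, M), \quad \Der_B(C,M) \simeq \Map_{\Mod_C}(\mathbb{L}_{C/B}, M), \]
\[ \Der_A(B,M) \simeq \Map_{\Mod_B}(\mathbb{L}_{B/A}, f^*M) \simeq \Map_{\Mod_C}(\mathbb{L}_{B/A}\otimes_B C, M) \]
(the last by the tensor-forgetful adjunction along $f\colon B \to C$), a pullback square of derivation spaces as claimed translates, via contravariance of $\Map_{\Mod_C}(-, M)$, into a pushout square in $\Mod_C$ of the stated shape (the corner $*$ corresponds to $0\in\Mod_C$). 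So it suffices to produce the pullback square of derivation spaces, naturally in $M$.

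The plan is to extract the square from the pullback of square-zero extensions in Remark~\ref{rmk:sqzpb}:
\[
\begin{tikzcd}
B[f^*M] \ar[r] \ar[d] & C[M] \ar[d,"p"] \\
B \ar[r,"f"] & C,
\end{tikzcd}
\]
which exhibits $B[f^*M]$ as the pullback $B\times_C C[M]$ in $\CAlg_{A}$ and hence in the slice $\CAlg_{A/C}$. In that slice I would identify the three derivation spaces as
\[ \Der_A(C,M) \simeq \Map_{\CAlg_{A/C}}(C, C[M]), \quad \Der_B(C,M) \simeq \Map_{\CAlg_{B/C}}(C, C[M]), \]
and, using the above pullback, $\Der_A(B,M) \simeq \Map_{\CAlg_{A/C}}(B, C[M])$ (the mapping space into the pullback $B[f^*M]$ with projection $\id_B$ is the same as a map $B\to C[M]$ in $\CAlg_{A/C}$).

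Now apply $\Map_{\CAlg_{A/C}}(-, C[M])$ to the morphism $f\colon B \to C$ in $\CAlg_{A/C}$; this gives a morphism of pointed spaces
\[ \Der_A(C, M) \longrightarrow \Der_A(B,M), \]
where both carry canonical basepoints (the zero derivations). The homotopy fibre over the basepoint consists of $A$-linear derivations on $C$ whose restriction along $f$ is the trivial derivation on $B$ — but these are exactly the $B$-linear derivations on $C$, i.e.~$\Der_B(C,M)$. This produces the required pullback square, and naturality in $M$ is automatic from the functoriality of the construction.

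The main technical obstacle is the identification $\Der_A(B,M) \simeq \Map_{\CAlg_{A/C}}(B, C[M])$ together with the claim that the fibre of the induced map is $\Der_B(C,M)$: both require a careful homotopy-coherent check that the pullback $B[f^*M] = B\times_C C[M]$ is respected after taking mapping spaces out of $B$ and $C$ and after passing to basepoint fibres. Everything else is formal manipulation with square-zero extensions, the universal property of the cotangent complex, and Yoneda.
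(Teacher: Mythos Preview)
Your proposal is correct and follows essentially the same approach as the paper: apply $\Map_{\CAlg_{A/C}}(-, C[M])$ to $f\colon B \to C$, identify the target with $\Der_A(B,M)$ via the pullback square $B[f^*M] \simeq B\times_C C[M]$ from Remark~\ref{rmk:sqzpb}, and recognize the fibre over the trivial derivation as $\Map_{\CAlg_{B/C}}(C, C[M]) \simeq \Der_B(C,M)$. The ``technical obstacles'' you flag are exactly the steps the paper carries out without further comment, so your caution is unnecessary but not misplaced.
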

\begin{proof}
  (See \eg{} \cite{HAG2}*{Proposition
    1.2.1.6}.) For $M \in \Mod_{C}$, consider the map
  \[\Map_{\CAlg_{A/C}}(C, C[M]) \to \Map_{\CAlg_{A/C}}(B,
  C[M])\]
  given by composition with $B \to C$.
  Using the pullback square \nolabelcsquare{B[M]}{C[M]}{B}{C} of
  Remark~\ref{rmk:sqzpb} we may identify the target of this map with
  $\Map_{\CAlg_{A/B}}(B, B[M])$, so we have a map
  $\Der_{A}(C, M) \to \Der_{A}(B, M)$. Moreover, the fibre of the map
  at the trivial derivation $B \to B[M]$, which corresponds to the
  composite $B \to C \to C[M]$ (with the second map again the trivial
  derivation) is precisely \[\Map_{\CAlg_{A,B//C}}(B, C[M]) \simeq
    \Map_{\CAlg_{B/C}}(B, C[M]) \simeq \Der_{B}(C, M),\]
  since $\CAlg_{A,B/} \simeq \CAlg_{B}$.
  This gives the desired pullback square, which induces the cofibre
  sequence of cotangent complexes via the Yoneda Lemma.
\end{proof}

\begin{remark}
  As in Remark~\ref{rmk:sqzpb} we can reformulate the universal
  property of $\mathbb{L}_{B/A}$ in terms of the \icat{} $\Mod$:
  Namely, for $M \in \Mod$ over $C \in \CAlg_{\k}$ there is a natural
  pullback square
  \nolabelcsquare{\Map_{\Mod}(\mathbb{L}_{B/A}, M)}{\Map_{\CAlg_{\k}}(B,
    C[M])}{\Map_{\CAlg_{\k}}(A, C)}{\Map_{\CAlg_{\k}}(A, C[M])}
  where the right vertical map is given by composition with $A \to B$
  and the lower horizontal map by composition with the inclusion $C
  \to C[M]$.
\end{remark}

\begin{remark}\label{rmk:sqznat}
  The square-zero extension functors are compatible with base change
  in the sense that for $A \to B$ in $\CAlg_{\k}$ and $M \in \Mod_{A}$
  we have a natural equivalence
  $B \otimes_{A} A[M] \simeq B[B \otimes_{A} M]$. In other words, we
  have commutative squares
  \csquare{\Mod_{A}}{\CAlg_A}{\Mod_B}{\CAlg_B}{A[\blank]}{B \otimes_A
    \blank}{B \otimes_A \blank}{B[\blank]} and so square-zero
  extension gives a natural transformation
  $\Mod_{(\blank)} \to \CAlg_{(\blank)}$ of functors
  $\CAlg_{\k} \to \LCatI$.
\end{remark}

\section{Derived Stacks}\label{subsec:derstack}
We now turn from algebra to geometry, and introduce the most general
class of algebro-geometric objects we consider, the \emph{derived
  stacks}. These are a special case of a more general notion of
\'etale sheaves, which we discuss first:
\begin{definition}
  Suppose $\mathcal{C}$ is a very large presentable \icat{}. An
  \emph{\'etale sheaf} valued in $\mathcal{C}$ is a functor
  $\mathcal{F} \colon \CAlgkc \to \mathcal{C}$ such that
  \begin{enumerate}[(1)]
  \item $\mathcal{F}$ takes finite 
    products in $\CAlgkc$ to products in $\mathcal{C}$.
  \item If $\phi \colon A \to B$ is a surjective \'etale morphism
    then $\mathcal{F}(A)$ is the limit of the cosimplicial diagram
    \[ \mathcal{F}(B) \rightrightarrows \mathcal{F}(B \otimes_{A} B)\,
      \cdots \]
    obtained by applying $\mathcal{F}$ to the dual \v{C}ech nerve of
    $\phi$.
  \end{enumerate}
  We write $\Shet(\mathcal{C})$ for the full
  subcategory of $\Fun(\CAlgkc, \mathcal{C})$ spanned by the
  \'etale sheaves.
\end{definition}

\begin{remark}
  The collection of surjective \'etale morphisms in $\CAlgkcop$
  is easily seen to satisfy the criteria of \cite{DAG7}*{Proposition
    5.1}. The \icat{} $\CAlgkcop$ therefore has a Grothendieck
  topology --- the \emph{\'etale topology} --- where the covering
  families are those families of morphisms
  $\{f_{i} \colon A \to A_{i}\}$, $i \in I$, that contain a finite
  collection of morphisms $f_{j}$, $j \in J \subseteq I$, such that
  $A \to \prod_{j \in J} A_{j}$ is a surjective \'etale
  morphism. Such families are called \emph{\'etale covering families}
  in \cite{HAG2}. By \cite{DAG7}*{Proposition 5.7}, the \'etale
  sheaves according to our definition are precisely the sheaves with
  respect to the \'etale topology.
\end{remark}

\begin{definition}\label{defn:dSt}
  A \emph{derived stack} is an \'etale sheaf valued in the \icat{}
  $\hS$ of large spaces. We write $\dSt := \Shet(\hS)$ for the \icat{}
  of derived stacks. For any derived stack $S$ we use the abbreviation
  $\dSt_{S} := \dSt_{/S}$ when we think of $S$ as a fixed \emph{base}
  stack, and refer to its objects as stacks \emph{over} $S$.
\end{definition}

\begin{proposition}\label{propn:subcan}
  The \v{C}ech conerve of a surjective \'etale morphism is a limit
  diagram in $\CAlgkc$. In other words, the presheaf
  \[ \Spec A := \Map_{\CAlgkc}(A, \blank) \colon \CAlgkc \to
  \mathcal{S}\]
  is a derived stack for all $A \in \CAlgkc$, \ie{} the \'etale
  topology on $\CAlgkcop$ is \emph{subcanonical}.
\end{proposition}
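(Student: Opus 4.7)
The plan is to reduce the statement that $\Spec A$ is a derived stack to the claim that the \v{C}ech conerve of a surjective \'etale morphism is a limit in $\CAlgkc$: the functor $\Spec A = \Map_{\CAlgkc}(A,\blank)$ is representable and therefore preserves all limits, so it automatically takes finite products to products and, given that $R \simeq \Tot R'^{\otimes_R \bullet+1}$ in $\CAlgkc$ for every surjective \'etale $\phi\colon R \to R'$, it sends such conerves to limit diagrams in $\mathcal{S}$. Thus both assertions of the proposition reduce to showing that the natural map $R \to \Tot R'^{\otimes_R \bullet+1}$ is an equivalence in $\CAlgkc$.

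First I would reduce this to a statement about modules. The forgetful functor $\CAlgkc \to \Modconn_{\k}$ is conservative and preserves limits (being the right adjoint to the free commutative algebra functor), and the inclusion $\Modconn_{\k} \hookrightarrow \Mod_{\k}$ preserves limits, since a cosimplicial limit of connective objects remains connective. Hence it suffices to prove that $R \simeq \Tot R'^{\otimes_R \bullet+1}$ in $\Mod_{\k}$, or equivalently in $\Mod_{R}$ (the forgetful functor $\Mod_{R} \to \Mod_{\k}$ also preserves limits).

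The heart of the argument is then an appeal to faithfully flat descent for modules in derived algebra. Since $\phi$ is surjective \'etale, it is strong with $\pi_{0}\phi$ classically faithfully flat, hence $\phi$ is faithfully flat in the derived sense. Faithfully flat descent then yields an equivalence $\Mod_{R} \isoto \Tot \Mod_{R'^{\otimes_R \bullet+1}}$ of symmetric monoidal $\infty$-categories (see, e.g., Lurie's \emph{Higher Algebra}, or \cite{HAG2}*{\S 2.2.2}), and evaluating this equivalence on the unit $R \in \Mod_{R}$ shows that $R$ is indeed the totalization of $R'^{\otimes_R \bullet+1}$ in $\Mod_{R}$.

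The one substantive input here is faithfully flat descent for the $\infty$-category of modules over a connective commutative $\k$-algebra; everything else is formal manipulation with adjunctions, representability, and the preservation properties of forgetful functors. So the main obstacle is really just locating and citing the correct form of flat descent in the literature, since the derivation from that input is routine.
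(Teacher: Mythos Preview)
Your overall strategy is correct and is essentially what the references cited in the paper (\cite{HAG2}*{Corollary 1.3.2.5, Lemma 2.1.1.1} or \cite{SAG}*{Corollary D.6.3.4}) actually do: reduce to faithfully flat descent for modules. The paper itself gives no argument beyond those citations, so you are supplying the content behind them.

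There is, however, one genuine error in your reduction. You write that ``the inclusion $\Modconn_{\k} \hookrightarrow \Mod_{\k}$ preserves limits, since a cosimplicial limit of connective objects remains connective.'' This is false: the inclusion is a \emph{left} adjoint (its right adjoint is the connective cover $\tau_{\geq 0}$), so it preserves colimits, not limits, and totalizations of connective objects are not connective in general. For instance, if $K$ is any simplicial set with $H^{n}(K;\k)\neq 0$ for some $n>0$, the cosimplicial object $[m]\mapsto \k^{K_{m}}$ is levelwise discrete (hence connective) but its totalization is $C^{*}(K;\k)$, which has nontrivial $\pi_{-n}$.

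The fix is easy and does not disturb the rest of your argument. Limits in $\Modconn_{\k}$ are computed by taking the limit in $\Mod_{\k}$ and then applying $\tau_{\geq 0}$. Once faithfully flat descent tells you that the totalization of $R'^{\otimes_{R}\bullet+1}$ in $\Mod_{\k}$ is $R$, which is already connective, it follows that this is also the limit in $\Modconn_{\k}$, and hence (by conservativity and limit-preservation of the forgetful functor $\CAlgkc\to\Modconn_{\k}$) in $\CAlgkc$. So reorder the logic: first compute the totalization in $\Mod_{\k}$ via descent, then observe the answer is connective, rather than asserting a priori that the passage to $\Mod_{\k}$ is harmless.
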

\begin{proof}
  This is the content of \cite{HAG2}*{Corollary 1.3.2.5} combined with
  \cite{HAG2}*{Lemma 2.1.1.1}, or \cite{SAG}*{Corollary
    D.6.3.4}.
\end{proof}

\begin{corollary}
  The Yoneda embedding $(\CAlgkc)^{\op} \hookrightarrow \Fun(\CAlgkc, \hS)$
  factors through $\dSt$.\qed
\end{corollary}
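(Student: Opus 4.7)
The plan is to observe that this corollary is a direct consequence of Proposition~\ref{propn:subcan}, combined with the universal property of representable functors. Since the Yoneda embedding sends $A \in (\CAlgkc)^{\op}$ to the presheaf $\Spec A = \Map_{\CAlgkc}(A,-)$, factoring through $\dSt$ amounts to checking that each $\Spec A$ satisfies the two conditions in the definition of an \'etale sheaf valued in $\hS$.

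First I would handle the finite-product condition. Finite products in $\CAlgkc$ are (finite) coproducts in $\CAlgkc^{\op}$, and any representable functor $\Map_{\CAlgkc}(A,-)$ sends coproducts in its source to products of spaces; this is the standard fact that representable presheaves are limit-preserving, and requires no input from derived algebra beyond the existence of the relevant (co)products.

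Second, for the \v{C}ech descent condition, let $\phi \colon B \to C$ be a surjective \'etale morphism in $\CAlgkc$. Proposition~\ref{propn:subcan} asserts that the associated \v{C}ech conerve
\[ B \to C \rightrightarrows C \otimes_{B} C \,\cdots \]
is a limit diagram in $\CAlgkc$. Applying the representable $\Map_{\CAlgkc}(A,-)$, which preserves arbitrary limits, converts this into a limit diagram in $\hS$, which is precisely the descent condition required for $\Spec A$ to be an \'etale sheaf at the cover $\phi$.

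There is no real obstacle here: the entire argument is a formal application of the fact that representables preserve limits, once Proposition~\ref{propn:subcan} tells us what the relevant diagrams are in $\CAlgkc$. Putting the two verifications together shows that $\Spec A \in \dSt$ for every $A \in \CAlgkc$, so the Yoneda embedding $(\CAlgkc)^{\op} \hookrightarrow \Fun(\CAlgkc, \hS)$ factors through the full subcategory $\dSt$, as claimed.
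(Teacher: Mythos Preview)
Your proof is correct and is essentially what the paper intends: the corollary is marked with \qed{} precisely because Proposition~\ref{propn:subcan} already asserts that each $\Spec A$ is a derived stack, and your argument just unpacks why the first sentence of that proposition implies the second. One minor remark: your aside that ``$\Map_{\CAlgkc}(A,-)$ sends coproducts in its source to products of spaces'' is not the statement you need (the source is $\CAlgkc$, and you want products, not coproducts, to be preserved); the correct point, which you also state, is simply that representable functors preserve limits.
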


\begin{definition}
  We write $\Spec \colon (\CAlgkc)^{\op} \hookrightarrow \dSt$ for the Yoneda
  embedding, and write $\dAff$ for its essential image; the objects of
  $\dAff$ we refer to as \emph{affine derived stacks}.
\end{definition}

\begin{lemma}\label{lem:dStunivprop}
  The \icat{} $\dSt$ is an $\infty$-topos, and has the universal
  property that for every very large presentable \icat{}
  $\mathcal{C}$, the functor $\Fun(\dSt^{\op}, \mathcal{C}) \to
  \Fun(\CAlgkc, \mathcal{C})$ obtained by restriction along 
   $\Spec \colon \CAlgkcop \hookrightarrow \dSt$, restricts to
   an equivalence
   \[ \Fun^{R}(\dSt^{\op}, \mathcal{C}) \isoto \Shet(\mathcal{C}).\]
   where $\Fun^{R}(\mathcal{D}, \mathcal{C})$ denotes the full
   subcategory of $\Fun(\mathcal{D}, \mathcal{C})$ spanned by functors
   that preserve all large limits.
\end{lemma}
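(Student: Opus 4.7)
My approach is to identify $\dSt$ as a left exact accessible localization of the presheaf $\infty$-topos $\mathcal{P}(\CAlgkcop) = \Fun(\CAlgkc, \widehat{\mathcal{S}})$, and then deduce the universal property by dualizing the universal property of presheaves and passing through the sheafification adjunction.

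First, I would verify that $\dSt$ is an $\infty$-topos. Since the étale topology on $\CAlgkcop$ is an accessible Grothendieck topology (the covers are generated by a small set of surjective étale morphisms, as these are finitely presented), the full subcategory $\dSt \subseteq \mathcal{P}(\CAlgkcop)$ is an accessible localization; the localization functor $L \colon \mathcal{P}(\CAlgkcop) \to \dSt$ (sheafification) is left exact because the topology is subcanonical and the sheaf condition is stated using finite limits (the \v{C}ech nerves). This puts $\dSt$ in the scope of \cite{HTT}*{Proposition 6.2.2.7} (or its later refinements), so $\dSt$ is an $\infty$-topos.

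Next, for the universal property, I would dualize the free cocompletion property of $\mathcal{P}(\CAlgkcop)$: restriction along the Yoneda embedding $\Spec \colon \CAlgkcop \hookrightarrow \mathcal{P}(\CAlgkcop)$ induces, for any cocomplete $\mathcal{C}^{\op}$, an equivalence
\[ \Fun^R(\mathcal{P}(\CAlgkcop)^{\op}, \mathcal{C}) \isoto \Fun(\CAlgkc, \mathcal{C}). \]
The sheafification adjunction $L \dashv i$ passes to an adjunction $i^{\op} \dashv L^{\op}$ on opposites, exhibiting $\dSt^{\op}$ as a \emph{co}reflective subcategory of $\mathcal{P}(\CAlgkcop)^{\op}$. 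Restriction along $i^{\op}$ therefore gives a fully faithful functor
\[ i^{\op,*} \colon \Fun^R(\dSt^{\op}, \mathcal{C}) \hookrightarrow \Fun^R(\mathcal{P}(\CAlgkcop)^{\op}, \mathcal{C}) \]
whose essential image consists of those limit-preserving functors $F$ on $\mathcal{P}(\CAlgkcop)^{\op}$ that invert every morphism $L$ inverts (equivalently, $F$ factors through $L^{\op}$, which exists because the unit $\id \to iL$ becomes a counit on the opposite and $F$ preserves limits hence takes this counit to an equivalence precisely in this case).

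The final and main step is to identify this essential image under the Yoneda equivalence above with $\Shet(\mathcal{C})$. Concretely, a limit-preserving functor $F \colon \mathcal{P}(\CAlgkcop)^{\op} \to \mathcal{C}$ inverts the class of ``local equivalences'' generated by the étale covers \IFF{} its restriction $F \circ \Spec \colon \CAlgkc \to \mathcal{C}$ satisfies the sheaf condition. For one direction, the Čech nerve of a covering $\phi \colon A \to B$ is a colimit diagram in $\mathcal{P}(\CAlgkcop)^{\op}$ after sheafification (this is the definition of what $L$ inverts), and $F$ limit-preserving means $F$ takes the image of this to a limit, which is exactly the sheaf condition on $F \circ \Spec$; conversely, any limit-preserving $F$ on $\mathcal{P}(\CAlgkcop)^{\op}$ whose restriction along $\Spec$ is a sheaf automatically inverts the generating local equivalences (Čech nerves of covers), and by stability of local equivalences under colimits it then inverts all of them, hence descends to $\dSt^{\op}$. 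The hard part is managing the set-theoretic/size aspects and invoking the correct form of the accessibility results (since we work with very large categories and $\widehat{\mathcal{S}}$), but this is standard and can be handled by applying \cite{HTT}*{\S 5.5.4 and \S 6.2} in the larger Grothendieck universe.
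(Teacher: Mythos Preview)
Your approach is essentially the same as the paper's: both identify $\dSt$ as the \icat{} of sheaves for the \'etale Grothendieck topology on $\CAlgkcop$ and invoke \cite{HTT}*{Proposition 6.2.2.7} for the $\infty$-topos claim. For the universal property the paper simply cites \cite{DAG5}*{Proposition 1.1.12} as a black box, whereas you unroll its proof via the free cocompletion property of presheaves and the sheafification localization; the substance of your argument is correct and is indeed how that proposition is proved.

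Two small corrections. First, your stated reason for left exactness of sheafification (``because the topology is subcanonical and the sheaf condition is stated using finite limits'') is not right: subcanonicity is irrelevant here, and left exactness of sheafification for a Grothendieck topology is a nontrivial theorem, precisely what \cite{HTT}*{Proposition 6.2.2.7} supplies. Second, the map you label $i^{\op,*}$ goes the wrong way: restriction along the inclusion $i^{\op}\colon \dSt^{\op}\hookrightarrow \mathcal{P}(\CAlgkcop)^{\op}$ sends functors on $\mathcal{P}^{\op}$ to functors on $\dSt^{\op}$. The fully faithful embedding $\Fun^{R}(\dSt^{\op},\mathcal{C})\hookrightarrow \Fun^{R}(\mathcal{P}(\CAlgkcop)^{\op},\mathcal{C})$ you want is precomposition with $L^{\op}$ (which preserves limits since $L$ preserves colimits), with left inverse given by $i^{\op,*}$. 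Your description of its essential image is then correct.
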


\begin{proof}
  By \cite{DAG7}*{Proposition 5.7} the \icat{} $\dSt$ is the \icat{}
  of sheaves with respect to a Grothendieck topology on $\CAlgkcop$,
  hence it is an $\infty$-topos by \cite{HTT}*{Proposition 6.2.2.7}.
  The universal property then follows from \cite{DAG5}*{Proposition
    1.1.12}.
\end{proof}

\begin{remark}\label{rmk:dstcolimaff}
  Note that the inverse to the restriction along $\Spec$ is given by
  right Kan extension. As a consequence, for any derived
  stack $Y$ the functor
  $\Map_{\dSt}(\blank, Y) \colon \dSt^{\op} \to \hS$, which clearly
  preserves limits, is right Kan extended from $\dAff^{\op}$. Thus for
  any derived stack $X$, we have
  \[ \Map_{\dSt}(X, Y) \simeq \lim_{\Spec A \to X \in
    \dAff_{/X}} \Map_{\dSt}(\Spec A, Y).\]
  By the Yoneda Lemma, this implies that for every $X$ we have a
  canonical equivalence
  \[ X \simeq \colim_{\Spec A \to X \in \dAff_{/X}} \Spec A.\]
\end{remark}

\begin{definition}\label{defn:betti}
  For $\Sigma \in \mathcal{S}$ we write $\Sigma_{B} \in \dSt$ for the colimit
  $\colim_{\Sigma} \Spec \k$ of the constant diagram with value $\Spec
  \k$, and call this the \emph{Betti stack} of $\Sigma$. Equivalently,
  $\Sigma_{B}$ is the sheafification of the constant presheaf on
  $\dAff$ with value $\Sigma$, \ie{} the constant sheaf with
  value $\Sigma$.
\end{definition}

\section{Sheaves on Derived Stacks and $\mathcal{O}_{X}$-Modules}\label{subsec:sheaves}
In this section we introduce sheaves on derived stacks and sheaves of $\mathcal{O}_{X}$-modules.
\begin{definition}
  For $X$ a derived stack, we write $\dAff_{/X} := \dAff \times_{\dSt}
  \dSt_{/X}$, and we say a morphism in $\dAff_{/X}$ is an \emph{\'etale
    cover} if its image in $\dAff$ is an \'etale cover. If
  $\mathcal{C}$ is a very large presentable \icat{}, we say a functor
  $\mathcal{F} \colon \dAff_{/X}^{\op} \to \mathcal{C}$ is an
  \emph{\'etale sheaf on $X$} if
  \begin{enumerate}[(1)]
  \item $\mathcal{F}$ takes finite coproducts in $\dAff_{/X}$ to
    products.
  \item $\mathcal{F}$ takes the \v{C}ech nerves of \'etale covers in
    $\dAff_{/X}$ to limit diagrams.
  \end{enumerate}
  We write $\Shet_{X}(\mathcal{C})$ for the full subcategory of
  $\Fun(\dAff_{/X}^{\op}, \mathcal{C})$ spanned by the \'etale sheaves.
\end{definition}

As a variant of Lemma~\ref{lem:dStunivprop}, we have:
\begin{lemma}
  Let $\mathcal{C}$ be a very large presentable \icat{}. The functor \[\Fun(\dSt_{/X}^{\op}, \mathcal{C}) \to
  \Fun(\dAff_{/X}^{\op}, \mathcal{C}),\] induced by composition with
  the inclusion $\dAff_{/X} \hookrightarrow \dSt_{/X}$, restricts to
  an equivalence
  \[ \Fun^{R}(\dSt_{/X}^{\op}, \mathcal{C}) \isoto \Shet_{X}(\mathcal{C}).\]
\end{lemma}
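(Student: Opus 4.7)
The plan is to reduce this to the universal property of $\dSt$ itself established in Lemma~\ref{lem:dStunivprop}, by first identifying the slice $\dSt_{/X}$ with the \icat{} $\Shet_{X}(\hS)$ of \'etale sheaves on $\dAff_{/X}$ valued in $\hS$, and then applying the universal property of sheaf $\infty$-topoi as in the proof of Lemma~\ref{lem:dStunivprop}.

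First I would establish the key identification $\dSt_{/X} \simeq \Shet_{X}(\hS)$. Since $\dSt$ is an $\infty$-topos, its slice $\dSt_{/X}$ is again an $\infty$-topos by \cite{HTT}*{Proposition 6.3.5.1}. To match it with $\Shet_{X}(\hS)$, I would consider the restriction functor
\[ \rho \colon \dSt_{/X} \longrightarrow \Fun(\dAff_{/X}^{\op}, \hS), \quad (Y \to X) \longmapsto \bigl((\Spec A \to X) \mapsto \Map_{\dSt_{/X}}(\Spec A, Y)\bigr), \]
and show that it factors through $\Shet_{X}(\hS)$ and is an equivalence onto this full subcategory. The sheaf condition for $\rho(Y\to X)$ follows because \'etale covers in $\dAff_{/X}$ are, by definition, pullbacks of \'etale covers in $\dAff$, so the sheaf condition satisfied by $Y$ viewed in $\dSt = \Shet(\hS)$ is inherited in the slice. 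Essential surjectivity and full faithfulness of $\rho$ are then a direct consequence of Remark~\ref{rmk:dstcolimaff}: any $Y \in \dSt_{/X}$ is canonically the colimit $\colim_{\Spec A \to Y} \Spec A$ in $\dSt$, and this presentation lifts to $\dSt_{/X}$ via the given structure map $Y \to X$.

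Second, once $\dSt_{/X}$ is identified as the $\infty$-topos of sheaves on the site $(\dAff_{/X}, \textup{\'etale})$, the desired equivalence
\[ \Fun^{R}(\dSt_{/X}^{\op}, \mathcal{C}) \isoto \Shet_{X}(\mathcal{C}) \]
for an arbitrary very large presentable $\mathcal{C}$ is an immediate application of \cite{DAG5}*{Proposition 1.1.12}, exactly paralleling the proof of Lemma~\ref{lem:dStunivprop}; the inverse to restriction is right Kan extension along $\dAff_{/X} \hookrightarrow \dSt_{/X}$.

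The main obstacle is the first step: verifying cleanly that $\dSt_{/X} \simeq \Shet_{X}(\hS)$. This is a standard but somewhat fiddly piece of topos-theoretic bookkeeping, requiring one to confirm that the Grothendieck topology pulled back along $\dAff_{/X} \to \dAff$ matches the one implicit in the definition of $\Shet_{X}$, and that the slice of the sheaf $\infty$-topos on $\dAff$ coincides with sheaves on the slice site. Once these identifications are nailed down, the statement for general $\mathcal{C}$ reduces formally to Lurie's universal property of presentable sheaf $\infty$-topoi.
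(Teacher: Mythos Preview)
Your proposal is correct and takes essentially the same approach the paper intends: the paper states this lemma without proof, merely labelling it ``a variant of Lemma~\ref{lem:dStunivprop}'', so the implied argument is exactly the one you give --- identify $\dSt_{/X}$ with the sheaf $\infty$-topos $\Shet_{X}(\hS)$ on the slice site $\dAff_{/X}$, and then invoke \cite{DAG5}*{Proposition 1.1.12} as in the proof of Lemma~\ref{lem:dStunivprop}. Your write-up in fact makes explicit the bookkeeping the paper leaves to the reader.
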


\begin{remark}
  As a consequence, by \cite{HA}*{Proposition 4.8.1.17} we can
  identify $\Shet_{X}(\mathcal{C})$ with the cocomplete tensor
  product $\dSt_{/X} \otimes \mathcal{C}$.
\end{remark}

\begin{lemma}\label{lem:shalg}
  Suppose $\mathcal{C}$ is a very large presentably symmetric monoidal
  \icat{}. Then $\Shet_{X}(\mathcal{C})$ inherits a symmetric monoidal
  structure as a localization of the pointwise tensor product on
  $\Fun(\dAff_{/X}^{\op}, \mathcal{C})$. Explicitly, the tensor
  product of two sheaves $F, G$ is the sheafification of the presheaf
  $p \in \dAff_{/X}^{\op} \mapsto F(p) \otimes G(p)$. This symmetric
  monoidal \icat{} has the universal property that for any \iopd{}
  $\mathcal{O}$ the \icat{}
  $\Alg_{\mathcal{O}}(\Shet_{X}(\mathcal{C}))$ can be naturally
  identified with
  \begin{enumerate}[(i)]
  \item the full subcategory of
    $\Alg_{\mathcal{O}}(\Fun(\dAff_{/X}^{\op}, \mathcal{C}))$
    spanned by the $\mathcal{O}$-algebras whose images in
    $\Fun(\dAff_{/X}^{\op}, \mathcal{C})$ lie in
    $\Shet_{X}(\mathcal{C})$ for all objects of $\mathcal{O}$,
  \item the \icat{} $\Shet_{X}(\Alg_{\mathcal{O}}(\mathcal{C}))$.
  \end{enumerate}
\end{lemma}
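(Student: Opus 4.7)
The plan is to apply the criterion for symmetric monoidal localizations from \cite{HA}*{Proposition 4.1.7.4} (or the operadic refinement in \cite{HA}*{Proposition 2.2.1.9}): the \icat{} $\Fun(\dAff_{/X}^{\op}, \mathcal{C})$ inherits a presentable symmetric monoidal structure from $\mathcal{C}$ via the pointwise tensor product, and $\Shet_X(\mathcal{C})$ is a reflective localization of it. To descend the symmetric monoidal structure, I must verify that the class $W$ of local equivalences is compatible with this tensor product, meaning $f \otimes \id_G$ lies in $W$ whenever $f$ does and $G$ is arbitrary. Granted this, the resulting symmetric monoidal structure on $\Shet_X(\mathcal{C})$ is automatically described by sheafification of the pointwise tensor product, because the localization functor is forced to be symmetric monoidal and preserves all colimits.

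The main obstacle is the compatibility verification. I would argue that $W$ is the strongly saturated class generated by a small set $W_0$ of ``covering equivalences'' built from the \v{C}ech nerves of \'etale covers in $\dAff_{/X}$ (tensored with a set of generators of $\mathcal{C}$ through the Yoneda embedding). Since strongly saturated classes are closed under colimits and cobase changes, and the endofunctor $(\blank) \otimes G$ preserves both --- being, pointwise at each $p$, the functor $G(p) \otimes (\blank)$ in $\mathcal{C}$, which is cocontinuous in each variable by hypothesis --- it suffices to check stability on $W_0$, which reduces pointwise to $G(p) \otimes (\blank)$ preserving \v{C}ech-nerve colimit diagrams. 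A more conceptual alternative is to invoke the identification $\Shet_X(\mathcal{C}) \simeq \dSt_{/X} \otimes \mathcal{C}$ in $\PrL$ (implicit in the preceding remark and proved via \cite{HA}*{Proposition 4.8.1.17}), observe that $\dSt_{/X}$ is itself an $\infty$-topos and hence canonically a commutative algebra in $(\PrL, \otimes)$ under its cartesian product, and deduce that tensoring with such a commutative algebra produces a presentable symmetric monoidal \icat{} whose structure agrees with the one described.

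Granting the symmetric monoidal structure, part (i) of the universal property is immediate from \cite{HA}*{Proposition 2.2.1.9}, which identifies $\mathcal{O}$-algebras in a symmetric monoidal reflective localization with the full subcategory of $\mathcal{O}$-algebras in the ambient \icat{} spanned by those whose underlying objects are local, i.e.\ lie in $\Shet_X(\mathcal{C})$. For part (ii), I would use that the forgetful functor $\Alg_{\mathcal{O}}(\mathcal{C}) \to \prod_{c \in \ob \mathcal{O}} \mathcal{C}$ creates limits. This simultaneously gives a natural equivalence
\[ \Fun(\dAff_{/X}^{\op}, \Alg_{\mathcal{O}}(\mathcal{C})) \simeq \Alg_{\mathcal{O}}(\Fun(\dAff_{/X}^{\op}, \mathcal{C})) \]
and implies that a diagram into $\Alg_{\mathcal{O}}(\mathcal{C})$ satisfies the sheaf condition (a pointwise limit condition along \v{C}ech nerves) iff each of its underlying $\mathcal{C}$-valued component diagrams does; combining these observations matches (ii) with the description in (i).
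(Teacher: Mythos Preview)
Your proposal is correct and follows essentially the same route as the paper: invoke \cite{HA}*{Proposition 2.2.1.9} to obtain the symmetric monoidal localization and prove (i), then identify $\Alg_{\mathcal{O}}(\Fun(\dAff_{/X}^{\op},\mathcal{C}))$ with $\Fun(\dAff_{/X}^{\op},\Alg_{\mathcal{O}}(\mathcal{C}))$ and use that the forgetful functors to $\mathcal{C}$ jointly detect limits to match (i) with (ii). The only difference is one of emphasis: you spell out the compatibility condition on local equivalences in detail (and offer the alternative via $\dSt_{/X}\otimes\mathcal{C}$), whereas the paper compresses this step into the phrase ``\'etale covers are stable under base change''; and for the algebra--functor swap the paper invokes the Boardman--Vogt tensor product where you invoke limit creation, but these amount to the same identification.
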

\begin{proof}
  By \cite{HA}*{Proposition 2.2.1.9}  the localized
  symmetric monoidal structure on $\Shet_{X}(\mathcal{C})$ exists
  since \'etale covers are stable under base change. This
  localization has the universal property that the functor
  \[ \Alg_{\mathcal{O}}(\Shet_{X}(\mathcal{C})) \to
  \Alg_{\mathcal{O}}(\Fun(\dAff_{/X}^{\op}, \mathcal{C}))\]
  given by composition with the lax symmetric monoidal inclusion
  \[\Shet_{X}(\mathcal{C}) \hookrightarrow \Fun(\dAff_{/X}^{\op},
  \mathcal{C}))\]
  is fully faithful with image the $\mathcal{O}$-algebras whose images
  in $\Fun(\dAff_{/X}^{\op}, \mathcal{C})$ lie in the full
  subcategory of sheaves.  This proves (i).

  To prove (ii), we first note that the pointwise tensor product on
  $\Fun(\mathcal{I}, \mathcal{C})$ for any \icat{} $\mathcal{I}$ and
  symmetric monoidal \icat{} $\mathcal{C}$ has
  the universal property that
  \[ \Alg_{\mathcal{O}}(\Fun(\mathcal{I}, \mathcal{C})) \simeq
    \Fun(\mathcal{I}, \Alg_{\mathcal{O}}(\mathcal{C}));\]
  for example, this follows from the universal property of the
  Boardman--Vogt tensor product of \iopds{} and its symmetry, since we
  can identify functors $\mathcal{I} \to \mathcal{C}$ with algebras in
  $\mathcal{C}$ for $\mathcal{I}$ viewed as an \iopd{} with only unary
  operations.
  
  Thus in particular the \icat{} $\Alg_{\mathcal{O}}(\Fun(\dAff_{/X}^{\op}, \mathcal{C}))$ is
  naturally equivalent to $\Fun(\dAff_{/X}^{\op},
  \Alg_{\mathcal{O}}(\mathcal{C}))$. 
  Combining this with (i), we see that
  $\Alg_{\mathcal{O}}(\Shet_{X}(\mathcal{C}))$ is naturally equivalent
  to the full subcategory of
  $\Fun(\dAff_{/X}^{\op}, \Alg_{\mathcal{O}}(\mathcal{C}))$ spanned
  by functors whose images in $\Fun(\dAff_{/X}^{\op}, \mathcal{C})$
  are sheaves. Since these forgetful functors, given by evaluation at
  the objects of $\mathcal{O}_{\angled{1}}$, jointly detect limits,
  this is precisely the full subcategory
  $\Shet_{X}(\Alg_{\mathcal{O}}(\mathcal{C}))$ of \'etale sheaves
  valued in $\Alg_{\mathcal{O}}(\mathcal{C})$.
\end{proof}

\begin{definition}
  Let
  $\mathcal{O}_{X} \colon \dAff_{/X}^{\op} \to \dAff^{\op}\simeq
  \CAlgkc$
  denote the forgetful functor that takes $\Spec A \to X$ to $A$. This
  is an \'etale sheaf on $X$ valued in $\CAlgkc$ by
  Proposition~\ref{propn:subcan}; since the limit diagrams appearing
  in the definition of \'etale sheaves are also limit diagrams in
  $\CAlgk$, we may also view $\mathcal{O}_{X}$ as an \'etale sheaf
  on $X$ valued in $\CAlgk$. By Lemma~\ref{lem:shalg}, we may
  equivalently view $\mathcal{O}_{X}$ as a commutative algebra in
  $\Shet_{X}(\Modconn_{\k})$ or $\Shet_{X}(\Mod_{\k})$.
\end{definition}

\begin{definition}
  A \emph{sheaf of $\mathcal{O}_{X}$-modules} is an
  $\mathcal{O}_{X}$-module in $\Shet_{X}(\Mod_{\k})$; we write
  \[\Mod_{\mathcal{O}_{X}} :=
    \Mod_{\mathcal{O}_{X}}(\Shet_{X}(\Mod_{\k}))\]
  for the \icat{} of these.
\end{definition}

\begin{remark}\label{rmk:OXpsh}
  Equivalently, by Lemma~\ref{lem:shalg} applied to the left module
  operad, a sheaf of
  $\mathcal{O}_{X}$-modules is a module
  over $\mathcal{O}_{X}$ in $\Fun(\dAff_{/X}^{\op}, \Mod_{\k})$ whose
  underlying presheaf is a sheaf. 
\end{remark}

We now give an alternative description of sheaves of
$\mathcal{O}_{X}$-modules that will be used in the next section to
compare two definitions of quasicoherent sheaves:
\begin{proposition}\label{propn:OXpshsect}
  Let $\mathcal{M} \to \CAlgk$ denote the cocartesian fibration
  associated to the functor $\CAlgk \to \CatI$ that sends $A$ to
  $\Mod_{A}$, and for $X$ a derived stack let $\mathcal{M}_{X}$ denote
  the pullback $\mathcal{M} \times_{\CAlgk} \dAff^{\op}_{/X}$. Then an
  $\mathcal{O}_{X}$-module in $\Fun(\dAff_{/X}^{\op}, \Mod_{\k})$ is
  equivalently a section $\dAff^{\op}_{/X} \to \mathcal{M}_{X}$.
\end{proposition}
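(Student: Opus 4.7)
The plan is to deduce the equivalence from general facts about modules in pointwise-symmetric-monoidal functor categories, by first identifying the ambient structures and then taking fibers over $\mathcal{O}_X$.

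First, I would apply Lemma~\ref{lem:shalg}(ii) with $\mathcal{O}$ the commutative \iopd{} to identify $\CAlg(\Fun(\dAff_{/X}^{\op}, \Mod_{\k}))$ with $\Fun(\dAff_{/X}^{\op}, \CAlg_{\k})$, so that the commutative algebra $\mathcal{O}_X$ in the presheaf category corresponds to the structure-sheaf functor $\mathcal{O}_{X} \colon \dAff_{/X}^{\op} \to \CAlg_{\k}$ that was used to define $\mathcal{M}_X$ in the first place.

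Second, I would upgrade this to modules by applying Lemma~\ref{lem:shalg}(ii) to an \iopd{} $\mathcal{P}$ whose algebras in a symmetric monoidal \icat{} $\mathcal{D}$ are pairs $(A, M)$ consisting of a commutative algebra $A$ in $\mathcal{D}$ together with an $A$-module $M$; one can take $\mathcal{P}$ to be the tensor product (in the Boardman--Vogt sense) of the commutative \iopd{} with the left-module \iopd{} $LM$, or simply appeal to the ``$\Mod$'' construction of \cite{HA}*{\S 3.3.3} and its naturality. By construction $\Alg_{\mathcal{P}}(\Mod_{\k})$ is canonically equivalent, over $\CAlg_{\k}$, to the total \icat{} $\mathcal{M}$ of the cocartesian fibration. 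Lemma~\ref{lem:shalg}(ii) then yields an equivalence
\[
  \Alg_{\mathcal{P}}(\Fun(\dAff_{/X}^{\op}, \Mod_{\k})) \;\simeq\; \Fun(\dAff_{/X}^{\op}, \mathcal{M}),
\]
which fits in a commutative square with the equivalence of the previous paragraph (via the forgetful functors to algebras on both sides).

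Third, I would take fibers over $\mathcal{O}_{X}$ in this square. The left-hand fiber is, by Remark~\ref{rmk:OXpsh}, exactly the \icat{} of $\mathcal{O}_X$-modules in $\Fun(\dAff_{/X}^{\op}, \Mod_{\k})$. The right-hand fiber consists of those functors $\dAff_{/X}^{\op} \to \mathcal{M}$ whose composite with the projection $\pi \colon \mathcal{M} \to \CAlg_{\k}$ equals $\mathcal{O}_X$, which by the universal property of the pullback $\mathcal{M}_{X} = \mathcal{M} \times_{\CAlg_{\k}} \dAff_{/X}^{\op}$ is exactly the set of sections of $\mathcal{M}_X \to \dAff_{/X}^{\op}$. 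Combining these identifications gives the proposition.

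The main obstacle will be the second step: producing the ``modules-commute-with-pointwise-functor-categories'' equivalence in a form fibered over commutative algebras. The content is standard, but choosing a clean formulation --- either as a direct application of Lemma~\ref{lem:shalg} to a suitable \iopd{} $\mathcal{P}$, or by invoking the functoriality of Lurie's relative tensor-product/module constructions --- requires some care to ensure that the projection to $\CAlg_{\k}$ is respected on the nose, so that taking fibers over $\mathcal{O}_X$ makes sense in the third step.
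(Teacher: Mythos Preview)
Your approach is essentially the paper's own. The paper simply uses the left-module \iopd{} $\txt{LM}$ directly (identifying $\mathcal{M}$ with $\Alg_{\txt{LM}}(\Mod_{\k})$), applies the equivalence $\Alg_{\mathcal{O}}(\Fun(\mathcal{I},\mathcal{C})) \simeq \Fun(\mathcal{I},\Alg_{\mathcal{O}}(\mathcal{C}))$ for the pointwise tensor product (this fact appears in the \emph{proof} of Lemma~\ref{lem:shalg}, not in its statement, so your citation of Lemma~\ref{lem:shalg}(ii) should be adjusted), and then takes the fiber over $\mathcal{O}_{X}$.

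The ``main obstacle'' you identify is in fact a non-issue: there is no need to build a refined operad $\mathcal{P}$ encoding commutative algebras with a module. The fiber of $\Alg_{\txt{LM}}(\Mod_{\k}) \to \Alg(\Mod_{\k})$ at a fixed algebra $A$ is $\Mod_{A}$ regardless of whether $A$ happens to be commutative, so passing from $\txt{LM}$ to your hypothetical $\mathcal{P}$ is just pulling back along $\CAlg_{\k} \to \Alg_{\k}$ and changes nothing once you take the fiber at $\mathcal{O}_{X}$. (In particular, the suggested Boardman--Vogt tensor $\txt{Comm}\otimes\txt{LM}$ is not quite the right object and is not needed.)
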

\begin{proof}
  Let $\txt{LM}$ denote the \iopd{} for left modules; then
  $\mathcal{M}$ can be identified with the \icat{}
  $\Alg_{\txt{LM}}(\Mod_{\k})$. 
  Thus the forgetful functor $\mathcal{M} \to \CAlg_{\k}$ is
  the restriction induced by a map of operads. The pointwise tensor
  product on $\Fun(\dAff_{/X}^{\op}, \Mod_{\k})$ has the property that
  for any \iopd{} $\mathcal{O}$ the \icat{} of $\mathcal{O}$-algebras
  is equivalent to $\Fun(\dAff_{/X}^{\op},
  \Alg_{\mathcal{O}}(\Mod_{\k}))$. Thus the \icat{} of
  $\mathcal{O}_{X}$-modules is the pullback
  \nolabelcsquare{\Mod_{\mathcal{O}_{X}}(\Fun(\dAff_{/X}^{\op},
    \Mod_{\k}))}{\Fun(\dAff_{/X}^{\op},
    \mathcal{M})}{\{\mathcal{O}_{X}\}}{\Fun(\dAff_{/X}^{\op},
    \CAlgk).}
  But this is clearly the same as the \icat{} of sections of the
  projection $\mathcal{M}_{X} \to \dAff_{/X}^{\op}$.
\end{proof}

\begin{corollary}\label{cor:OXmodsection}
  There is a pullback square of \icats{}
  \nolabelcsquare{\Mod_{\mathcal{O}_{X}}}{\Fun_{/\dAff_{/X}^{\op}}(\dAff_{/X}^{\op},
    \mathcal{M}_{X})}{\Shet_{X}(\Mod_{\k})}{\Fun(\dAff_{/X}^{\op},
    \Mod_{\k}).}
  In other words, a sheaf of $\mathcal{O}_{X}$-modules is a section of
  $\mathcal{M}_{X}$ whose underlying presheaf of $\k$-modules is a sheaf.
\end{corollary}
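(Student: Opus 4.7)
The plan is to assemble this corollary directly from Lemma~\ref{lem:shalg} and Proposition~\ref{propn:OXpshsect}, which together already do essentially all the work.

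First, by definition $\Mod_{\mathcal{O}_X}$ denotes $\Mod_{\mathcal{O}_X}(\Shet_X(\Mod_\k))$. Applying Lemma~\ref{lem:shalg}(i) to the left-module \iopd{} $\txt{LM}$, we may identify this with the full subcategory of $\Mod_{\mathcal{O}_X}(\Fun(\dAff_{/X}^{\op}, \Mod_\k))$ spanned by those $\mathcal{O}_X$-modules whose underlying presheaf of $\k$-modules (obtained by forgetting the module structure, i.e.\ restricting along the inclusion of the module object of $\txt{LM}$) lies in $\Shet_X(\Mod_\k)$. Equivalently, this is the pullback of the forgetful functor $\Mod_{\mathcal{O}_X}(\Fun(\dAff_{/X}^{\op}, \Mod_\k)) \to \Fun(\dAff_{/X}^{\op}, \Mod_\k)$ along $\Shet_X(\Mod_\k) \hookrightarrow \Fun(\dAff_{/X}^{\op}, \Mod_\k)$.

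Next, Proposition~\ref{propn:OXpshsect} provides an equivalence
\[ \Mod_{\mathcal{O}_X}(\Fun(\dAff_{/X}^{\op}, \Mod_\k)) \simeq \Fun_{/\dAff_{/X}^{\op}}(\dAff_{/X}^{\op}, \mathcal{M}_X), \]
and by construction this equivalence is compatible with the forgetful functors to $\Fun(\dAff_{/X}^{\op}, \Mod_\k)$: on the right-hand side, a section $s$ of $\mathcal{M}_X \to \dAff_{/X}^{\op}$ has underlying presheaf of $\k$-modules given by composing $s$ with the fibrewise forgetful functor $\mathcal{M}_X \to \dAff_{/X}^{\op} \times \Mod_\k$ (using that the forgetful map $\mathcal{M} \to \CAlg_\k$ comes from a map of operads, together with the restriction functors $\Mod_A \to \Mod_\k$). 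Substituting this identification into the pullback square from the previous paragraph yields the desired square. There is no real obstacle here beyond carefully tracking that the two forgetful functors being pulled back against really do agree under the equivalence of Proposition~\ref{propn:OXpshsect}, which is built into its proof via the compatibility of operadic restriction with the pointwise tensor product.
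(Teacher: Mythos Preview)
Your proof is correct and follows essentially the same approach as the paper: first use Lemma~\ref{lem:shalg} (applied to the left-module \iopd{}) to exhibit $\Mod_{\mathcal{O}_X}$ as the pullback of $\Mod_{\mathcal{O}_X}(\Fun(\dAff_{/X}^{\op},\Mod_\k))$ along the inclusion of sheaves into presheaves, then substitute the identification from Proposition~\ref{propn:OXpshsect}. The paper routes the first step through Remark~\ref{rmk:OXpsh}, which is just a restatement of that application of Lemma~\ref{lem:shalg}, so there is no substantive difference.
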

\begin{proof}
  By Remark~\ref{rmk:OXpsh} we have a pullback square
  \nolabelcsquare{\Mod_{\mathcal{O}_{X}}}{\Mod_{\mathcal{O}_{X}}(\Fun(\dAff_{/X}^{\op},
    \Mod_{\k}))}{\Shet_{X}(\Mod_{\k})}{\Fun(\dAff_{/X}^{\op},
    \Mod_{\k})}
  The result now follows by combining this square with the description of the \icat{} $\Mod_{\mathcal{O}_{X}}(\Fun(\dAff_{/X}^{\op},
    \Mod_{\k}))$ in Proposition~\ref{propn:OXpshsect}.
\end{proof}

\section{Quasicoherent Sheaves}\label{subsec:qcoh}
We now introduce quasicoherent sheaves on derived stacks;
these can be defined as a certain class of sheaves of
$\mathcal{O}_{X}$-modules:
\begin{definition}\label{defn:qcohOXmod} 
  Let $X$ be a derived stack. We say a sheaf $\mathcal{M}$ of
  $\mathcal{O}_{X}$-modules is \emph{quasicoherent} if for any
  commutative triangle
  \opctriangle{\Spec A}{\Spec B}{X,}{f}{p}{q}
  the induced map 
  \[A \otimes_{B} q^{*}\mathcal{M} \to p^{*}\mathcal{M}\]
  in $\Mod_{A}$ is an equivalence.
\end{definition}
This is the natural generalization of one of the usual definitions of
quasicoherent sheaves on ordinary schemes. On the other hand, we have
the following common definition of the \icat{} of quasicoherent sheaves:
\begin{definition}\label{defn:QCoh}
  There is a functor $\txt{CAlg}_{\k} \to \PrL$ that to a ring $A$
  assigns the \icat{} $\Mod_{A}$ of $A$-modules and to a map
  $f \colon A \to B$ assigns the functor
  $f_{!} \colon \Mod_{A} \to \Mod_{B}$ given by
  $B \otimes_{A} \blank$. Restricted to connective algebras this
  satisfies a very strong form of descent (\cf{} \cite{SAG}*{\S D}), and
  in particular is an \'etale sheaf valued in $\PrL$. Thus it
  determines a limit-preserving functor $\QCoh \colon \dSt^{\op} \to
  \PrL$ by Lemma~\ref{lem:dStunivprop}. By
  Remark~\ref{rmk:dstcolimaff} the \icat{} $\QCoh(X)$ is given by
  \[\QCoh(X) \simeq \lim_{\Spec A \to X \in \dAff_{/X}}
  \Mod_{A},\]
  and we call this the \icat{} of \emph{quasicoherent sheaves} on
  $X$. Since the functors $f_{!}$ for $f$ a map of rings are strong
  monoidal with respect to the tensor products of modules, and the
  forgetful functor from presentably symmetric monoidal \icats{} to
  $\PrL$ detects limits, we can equally well think of $\QCoh$ as a
  limit-preserving functor from derived stacks to presentably
  symmetric monoidal \icats{}.  For a map $f \colon X \to Y$ of
  derived stacks we write $f^{*} \colon \QCoh(Y) \to \QCoh(X)$ for the
  induced symmetric monoidal functor, and $f_{*}$ for its right
  adjoint (which is lax symmetric monoidal).
\end{definition}
At first glance, these two definitions seem rather different. We will
now show that they are in fact equivalent:
\begin{proposition}\label{propn:qcohisOXmod}
  For any derived stack $X$, the \icat{} $\QCoh(X)$ is naturally
  equivalent to the full subcategory of $\Mod_{\mathcal{O}_{X}}$
  spanned by the quasicoherent sheaves of $\mathcal{O}_{X}$-modules in
  the sense of Definition~\ref{defn:qcohOXmod}.
\end{proposition}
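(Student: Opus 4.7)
The plan is to exploit the standard identification of the limit of a $\CatI$-valued diagram with the \icat{} of cocartesian sections of its classifying cocartesian fibration. The functor $\dAff^{\op}_{/X} \to \CatI$ whose limit is $\QCoh(X)$ by Definition~\ref{defn:QCoh} factors through the forgetful functor $\dAff^{\op}_{/X} \to \CAlg_{\k}$ and is just $A \mapsto \Mod_{A}$, which is by construction classified by the cocartesian fibration $\mathcal{M} \to \CAlg_{\k}$ appearing in Proposition~\ref{propn:OXpshsect}. Its classifying cocartesian fibration is therefore the pullback $\mathcal{M}_{X} \to \dAff^{\op}_{/X}$, and so we get a natural equivalence
\[ \QCoh(X) \;\simeq\; \Fun^{\mathrm{cocart}}_{/\dAff^{\op}_{/X}}\bigl(\dAff^{\op}_{/X},\, \mathcal{M}_{X}\bigr), \]
the full subcategory of the sections of $\mathcal{M}_X$ spanned by those that send every morphism in $\dAff^{\op}_{/X}$ to a cocartesian morphism of $\mathcal{M}_X$.

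The second step is to match the cocartesian condition with Definition~\ref{defn:qcohOXmod}. A commutative triangle
\[ \Spec A \xrightarrow{f} \Spec B \xrightarrow{q} X, \qquad p := q \circ f,\]
arising from $g \colon B \to A$ in $\CAlgkc$, defines a morphism in $\dAff^{\op}_{/X}$ from $q$ to $p$. A section $\mathcal{M}$ is cocartesian on this morphism precisely when the canonical comparison
\[ A \otimes_{B} \mathcal{M}(q) \;\longrightarrow\; \mathcal{M}(p),\]
obtained by applying $g_{!}$ to the structure map of the section, is an equivalence in $\Mod_{A}$. Identifying $\mathcal{M}(p)$ with $p^{*}\mathcal{M}$ and $\mathcal{M}(q)$ with $q^{*}\mathcal{M}$ under Corollary~\ref{cor:OXmodsection} (which presents $\Mod_{\mathcal{O}_X}$ as the full subcategory of sections whose underlying presheaf is an \'etale sheaf of $\k$-modules), this is exactly the condition in Definition~\ref{defn:qcohOXmod}. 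Hence cocartesian sections correspond precisely to those sections whose underlying $\mathcal{O}_X$-module is quasicoherent.

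The final step is to verify that the \'etale sheaf condition on the underlying presheaf of $\k$-modules, which is part of the definition of $\Mod_{\mathcal{O}_X}$ via Corollary~\ref{cor:OXmodsection}, is automatic for cocartesian sections. Given an \'etale cover $\Spec A \to \Spec B$ over $X$ with \v{C}ech conerve $A^{\otimes_{B}(\bullet+1)}$ in $\CAlgkc$, a cocartesian section $\mathcal{M}$ satisfies $\mathcal{M}(\Spec A^{\otimes_{B}(n+1)}) \simeq A^{\otimes_{B}(n+1)} \otimes_{B} \mathcal{M}(\Spec B)$, so the required descent equivalence
\[ \mathcal{M}(\Spec B) \;\isoto\; \lim_{n} A^{\otimes_{B}(n+1)} \otimes_{B} \mathcal{M}(\Spec B), \]
which may be checked in $\Mod_B$ since $\Mod_B \to \Mod_\k$ preserves limits, follows from \'etale descent for $\Mod$ applied to the object $\mathcal{M}(\Spec B)$; this descent is precisely the input to the construction of $\QCoh$ in Definition~\ref{defn:QCoh}. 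Combining the three steps identifies $\QCoh(X)$ as the full subcategory of $\Mod_{\mathcal{O}_X}$ consisting exactly of the quasicoherent sheaves of $\mathcal{O}_X$-modules. The main obstacle is purely organizational: keeping straight the opposite categories and directions of variance when translating between cocartesian morphisms in $\mathcal{M}_X$ and the pullback-based condition of Definition~\ref{defn:qcohOXmod}.
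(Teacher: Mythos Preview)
Your proof is correct and follows essentially the same three-step strategy as the paper: identify $\QCoh(X)$ with cocartesian sections of $\mathcal{M}_X$, match the cocartesian condition with Definition~\ref{defn:qcohOXmod}, and verify that cocartesian sections are automatically sheaves. The only difference is packaging: the paper isolates the descent step into a general lemma about cocartesian lifts along limit diagrams in a cocartesian fibration classified by a limit-preserving functor to $\PrL$ (this is Corollary~\ref{cor:qcohrellim}), whereas you invoke \'etale descent for $\Mod_{(\blank)}$ directly on the object $\mathcal{M}(\Spec B)$; both arguments amount to the same thing.
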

Using the notation of Corollary~\ref{cor:OXmodsection}, by a dual version
of \cite{HTT}*{Corollary 3.3.3.2} we may identify the limit
\[\QCoh(X) \simeq \lim_{\Spec A \to X \in \dAff_{/X}^{\op}}
  \Mod_{A}\] with the \icat{} of \emph{cocartesian} sections of the
fibration $\mathcal{M}_{X} \to \dAff_{/X}^{\op}$. This is a full
subcategory of the \icat{} of all sections, of which
$\Mod_{\mathcal{O}_{X}}$ is also a full subcategory by
Corollary~\ref{cor:OXmodsection}. Moreover, an element of
$\Mod_{\mathcal{O}_{X}}$ gives a cocartesian section precisely when it
is quasicoherent in the sense of Definition~\ref{defn:qcohOXmod}.  It
thus suffices to show that the functor
$\dAff_{/X}^{\op} \to \Mod_{\k}$ underlying a cocartesian section is
necessarily a sheaf. To see this we use the following technical
observation:
\begin{proposition}
  Suppose $F \colon \mathcal{C} \to \PrL$ is a limit-preserving
  functor, with $\pi \colon \mathcal{E} \to \mathcal{C}$ the
  associated cocartesian (and cartesian) fibration, and
  $\overline{p} \colon \mathcal{I}^{\triangleleft} \to \mathcal{C}$ is
  a limit diagram. Then for $X \in F(\overline{p}(-\infty))$, the
  cocartesian lift of $\mathcal{E}$ along $\overline{p}$ is a
  $\pi$-limit diagram. In other words, the diagram
  $\mathcal{I}^{\triangleleft} \to F(\overline{p}(-\infty))$ obtained
  by the cartesian pullback of this along $\overline{p}$ is a limit
  diagram.
\end{proposition}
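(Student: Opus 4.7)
My plan is to reduce the $\pi$-limit property of the cocartesian lift to a concrete limit formula in the fibre $\mathcal{D} := F(\overline{p}(-\infty))$, and then to establish the formula using that $F$ preserves limits together with the Yoneda lemma. Writing $f_i \colon \overline{p}(-\infty) \to \overline{p}(i)$ for the legs of the cone, $\mathcal{D}_i := F(\overline{p}(i))$, and $f_{i,!} \dashv f_i^*$ for the resulting adjunction (which exists because $F$ is valued in $\PrL$, so every transition functor of $F$ is a left adjoint and $\pi$ is simultaneously a cartesian fibration), the cocartesian lift $\tilde{p}\colon \mathcal{I}^{\triangleleft} \to \mathcal{E}$ satisfies $\tilde{p}(-\infty) = X$ and $\tilde{p}(i) \simeq f_{i,!} X$. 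Its cartesian pullback along $\overline{p}$ is then the diagram $i \mapsto f_i^* f_{i,!} X$ in $\mathcal{D}$, with transition maps induced by the units of the $f_{i,!} \dashv f_i^*$ adjunctions. The statement thus amounts to the claim
\[ X \isoto \lim_{i \in \mathcal{I}} f_i^* f_{i,!} X \quad\text{in } \mathcal{D}, \]
with the map assembled from the adjunction units $X \to f_i^* f_{i,!} X$.

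First I would verify that the $\pi$-limit property really reduces to this formula. Using the mapping-space criterion for $\pi$-limits from \cite{HTT}*{\S 4.3.1}, it suffices to show that for every $E \in \mathcal{E}$ with $\pi(E) = c$ and every morphism $\beta\colon c \to \overline{p}(-\infty)$, the natural map
\[ \Map^{\beta}_{\mathcal{E}}(E, X) \longrightarrow \lim_{i \in \mathcal{I}} \Map^{f_i \circ \beta}_{\mathcal{E}}(E, f_{i,!} X) \]
is an equivalence, where the superscripts denote mapping spaces over a fixed morphism in the base. Since $\pi$ is also cartesian, any such mapping space over a fixed base morphism $\gamma$ is computed in the fibre as $\Map_{F(c)}(E, \gamma^*(\blank))$; applying this to both sides, and using that $\beta^*$ is a right adjoint (hence commutes with limits), the criterion reduces to proving the boxed formula above after applying $\beta^*$, which follows from the formula itself.

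To prove the formula, I would use that limit-preservation of $F$ gives $\mathcal{D} \simeq \lim_{i \in \mathcal{I}} \mathcal{D}_i$ in $\PrL$. Since the forgetful functor $\PrL \to \LCatI$ preserves limits (\cite{HTT}*{Proposition 5.5.3.13}), this is also a limit in $\LCatI$ with the same legs $f_{i,!}$, so mapping spaces in $\mathcal{D}$ commute with the limit:
\[ \Map_{\mathcal{D}}(Y, X) \simeq \lim_{i} \Map_{\mathcal{D}_i}(f_{i,!}Y,\, f_{i,!}X) \simeq \lim_i \Map_{\mathcal{D}}(Y, f_i^* f_{i,!} X) \simeq \Map_{\mathcal{D}}\bigl(Y,\, \lim_i f_i^* f_{i,!} X\bigr), \]
using the adjunction $f_{i,!} \dashv f_i^*$ for the middle equivalence and that $\Map$ sends limits in its second argument to limits for the last. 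Naturality in $Y$ identifies this composite with the Yoneda image of the assembled unit map $X \to \lim_i f_i^* f_{i,!} X$, so Yoneda concludes.

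The main technical obstacle will be assembling the various coherences carefully: pinning down the cocartesian lift (canonical up to contractible choice once $X$ is fixed), identifying its cartesian pullback with the system $\{f_i^* f_{i,!} X\}_{i \in \mathcal{I}}$ together with the canonical unit transitions, and ensuring the mapping-space identifications respect the full $\mathcal{I}^{\triangleleft}$-shaped cone rather than only its $\mathcal{I}$-indexed part. Once this bookkeeping is set up, the actual verification of the $\pi$-limit property is the short calculation above.
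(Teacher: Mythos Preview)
Your argument is correct. The paper's own proof consists entirely of citations: it invokes \cite[Lemma 5.17]{DAG7} for the $\pi$-limit statement and \cite[Propositions 4.3.1.9 and 4.3.1.11]{HTT} for the translation between $\pi$-limits and limits in the fibre via cartesian pullback. What you have written is essentially an unpacking of those references in this setting: your reduction of the $\pi$-limit property to the fibrewise formula $X \simeq \lim_i f_i^* f_{i,!} X$ is precisely what HTT~4.3.1.9--11 buy, and your verification of that formula via the mapping-space description of $\lim_i \mathcal{D}_i$ in $\LCatI$ together with the adjunctions $f_{i,!} \dashv f_i^*$ is the standard argument behind Lurie's lemma. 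So the approaches are the same in substance; yours is self-contained where the paper defers to the literature.
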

\begin{proof}
  The first statement is part of \cite[Lemma 5.17]{DAG7}, and the
  second follows from \cite[Propositions 4.3.1.9 and 4.3.1.11]{HTT}.
\end{proof}

\begin{definition}\label{def:Qfib}
  We write $\mathcal{Q} \to \dSt^{\op}$ for the cocartesian fibration
  associated to the functor $\QCoh(\blank)$ given by pullbacks. For $S
  \in \dSt$ we also write $\mathcal{Q}_{S} \to \dSt_{S}^{\op}$ for the
  pullback $\mathcal{Q} \times_{\dSt^{\op}} \dSt_{S}^{\op}$.
\end{definition}

Since $\QCoh(\blank)$ takes colimits in $\dSt$ to limits of \icats{},
we obtain the following special case of this observation:
\begin{corollary}\label{cor:qcohrellim}
  Suppose $\overline{p} \colon \mathcal{I}^{\triangleright} \to \dSt$
  is a colimit diagram and $\mathcal{E}$ is an object of
  $\QCoh(\overline{p}(\infty))$. Then the cocartesian lift of
  $\mathcal{E}$ along $\overline{p}^{\op}$ is a limit diagram in
  $\mathcal{Q}$. In other words, the diagram
  $(\mathcal{I}^{\triangleright})^{\op} \to
  \QCoh(\overline{p}(\infty))$ obtained as the cartesian pullback of
  this along $\overline{p}^{\op}$ is a limit diagram. (Informally,
  $\mathcal{E}$ is the limit in $\QCoh(\overline{p}(\infty))$ of
  $f_{i,*}f_{i}^{*}\mathcal{E}$ where $f_{i} \colon \overline{p}(i)
  \to \overline{p}(\infty)$ is the image of the unique map $i \to
  \infty$.) \qed
\end{corollary}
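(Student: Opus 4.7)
The plan is simply to apply the preceding Proposition to the functor $\QCoh \colon \dSt^{\op} \to \PrL$, after reindexing the colimit as a limit in the opposite category. First I would note that by Definition~\ref{defn:QCoh} the functor $\QCoh \colon \dSt^{\op} \to \PrL$ is limit-preserving, essentially by construction as a right Kan extension from affines (and because descent for modules along étale covers yields a sheaf valued in $\PrL$). The associated cocartesian fibration is precisely $\mathcal{Q} \to \dSt^{\op}$ from Definition~\ref{def:Qfib}.

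Next I would rewrite the hypothesis in a form directly matching the preceding Proposition. A colimit diagram $\overline{p} \colon \mathcal{I}^{\triangleright} \to \dSt$ with cone point $\overline{p}(\infty)$ is the same datum as a limit diagram
\[
\overline{p}^{\op} \colon (\mathcal{I}^{\op})^{\triangleleft} \to \dSt^{\op}
\]
with cone point $\overline{p}(\infty)$; here I am using the canonical identification $(\mathcal{I}^{\triangleright})^{\op} \cong (\mathcal{I}^{\op})^{\triangleleft}$. Taking $\mathcal{C} = \dSt^{\op}$, $F = \QCoh$, and $X = \mathcal{E} \in \QCoh(\overline{p}(\infty))$, the hypothesis of the preceding Proposition is satisfied.

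Applying the Proposition then yields that the cocartesian lift of $\mathcal{E}$ along $\overline{p}^{\op}$ is a $\pi$-limit diagram in $\mathcal{Q}$, which is exactly the first assertion. For the second (equivalent) formulation, one uses \cite[Propositions 4.3.1.9 and 4.3.1.11]{HTT} (as in the proof of the Proposition) to translate the $\pi$-limit condition into the statement that the diagram obtained by taking the cartesian pullback to the fibre $\QCoh(\overline{p}(\infty))$ is an ordinary limit diagram there; informally, the cartesian lift at $i \in \mathcal{I}$ is $f_{i,*}f_i^{*}\mathcal{E}$, and the claim is that $\mathcal{E} \simeq \lim_{i} f_{i,*}f_i^{*}\mathcal{E}$.

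There is no real obstacle here; the entire content is in the preceding Proposition together with the descent property of $\QCoh$. The only thing to be careful about is the direction of opposites: the cocartesian lift along $\overline{p}^{\op} \colon (\mathcal{I}^{\op})^{\triangleleft} \to \dSt^{\op}$ is built using pullback functors $f_i^{*}$, whereas the cartesian pullback to a fixed fibre involves the right adjoints $f_{i,*}$, which is why the result is naturally stated in terms of pushforwards even though $\QCoh$ was defined via pullbacks.
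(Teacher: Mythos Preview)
Your proposal is correct and takes essentially the same approach as the paper: the corollary is stated there without proof (marked \qed), preceded only by the remark that since $\QCoh(\blank)$ takes colimits in $\dSt$ to limits of \icats{}, the statement is a special case of the preceding Proposition. Your careful unwinding of the opposites and of the cocartesian/cartesian translation is exactly the content that the paper leaves implicit.
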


In particular, we have:
\begin{corollary}
  For any derived stack $X$ and $\mathcal{E} \in \QCoh(X)$ we have a natural equivalence
  \[ \mathcal{E} \simeq \lim_{p \in \dAff_{/X}^{\op}} p_{*}p^{*}\mathcal{E}.\]
\end{corollary}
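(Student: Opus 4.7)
The plan is to reduce the statement to a direct application of Corollary~\ref{cor:qcohrellim} by choosing an appropriate colimit diagram presenting $X$. Recall from Remark~\ref{rmk:dstcolimaff} that every derived stack $X$ is canonically the colimit of its affine presentations:
\[ X \simeq \colim_{(\Spec A \to X) \in \dAff_{/X}} \Spec A. \]
First I would package this into a colimit diagram $\overline{p} \colon \dAff_{/X}^{\triangleright} \to \dSt$, where $\overline{p}(\infty) = X$, each object $p = (\Spec A \to X)$ is sent to $\Spec A$, and the edges from $p$ to $\infty$ are sent to the structure maps $p \colon \Spec A \to X$ themselves.

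Next I would feed this colimit diagram together with the given object $\mathcal{E} \in \QCoh(X) = \QCoh(\overline{p}(\infty))$ into Corollary~\ref{cor:qcohrellim}. That corollary produces a limit diagram
\[ (\dAff_{/X}^{\triangleright})^{\op} \longrightarrow \QCoh(X) \]
obtained as the cartesian pullback along $\overline{p}^{\op}$ of the cocartesian lift of $\mathcal{E}$ to $\mathcal{Q}$. By the informal description given in that corollary, this limit diagram sends each $p \in \dAff_{/X}$ to $p_{*} p^{*} \mathcal{E}$, and its cone point is $\mathcal{E}$ itself. Restricting to $\dAff_{/X}^{\op} \subseteq (\dAff_{/X}^{\triangleright})^{\op}$ and invoking the limit property at the cone point yields precisely the desired equivalence
\[ \mathcal{E} \simeq \lim_{p \in \dAff_{/X}^{\op}} p_{*} p^{*} \mathcal{E}. \]

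There is essentially no obstacle here: all of the real content has been absorbed into Corollary~\ref{cor:qcohrellim} (which in turn uses the limit-preservation of $\QCoh \colon \dSt^{\op} \to \PrL$ and the cited result of Lurie relating cocartesian limit diagrams in the total space of a fibration with limits in the fibre). The only thing to make sure of is naturality of the resulting equivalence in $\mathcal{E}$, which follows from the naturality of the cartesian pullback construction along $\overline{p}^{\op}$; this is automatic from the fibration-theoretic definition of $p_{*} p^{*}$.
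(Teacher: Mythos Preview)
Your proposal is correct and matches the paper's approach exactly: the paper presents this corollary with the prefatory phrase ``In particular, we have:'' immediately after Corollary~\ref{cor:qcohrellim}, indicating precisely the specialization you describe --- apply that corollary to the canonical colimit presentation $X \simeq \colim_{\dAff_{/X}} \Spec A$ of Remark~\ref{rmk:dstcolimaff}.
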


\begin{proof}[Proof of Proposition~\ref{propn:qcohisOXmod}]
  It remains to show that if $\mathcal{F} \colon \dAff_{/X}^{\op}
  \to \mathcal{M}_{X}$ is a cocartesian section, then $\mathcal{F}$
  gives a sheaf of $\k$-modules. Suppose
  \opctriangle{\Spec A}{\Spec B}{X}{f}{p}{q}
  is an \'etale cover. Then we must show that $\mathcal{F}(q)$ is
  the limit of $\mathcal{F}$ applied to the \v{C}ech nerve of
  $f$. Since $\mathcal{F}$ is cocartesian, the resulting diagram
  $\bar{f} \colon \simp^{\triangleleft} \to \mathcal{M}_{X}$ is the cocartesian
  pushforward of $\mathcal{F}(q)$ along the \v{C}ech
  nerve of $f$ viewed as a diagram $\simp^{\triangleleft} \to
  \dAff_{/X}^{\op}$. Now this diagram gives a limit diagram in
  $\dSt^{\op}$, and so by Corollary~\ref{cor:qcohrellim} we see that
  $\mathcal{F}(q)$ is the limit of the cartesian pullback of
  $\mathcal{F} \circ \bar{f}$ to $\Mod_{B}$. Since the forgetful
  functor $\Mod_{B} \to \Mod_{\k}$ preserves limits, this completes the proof.
\end{proof}

We also record the following observation about $\QCoh$:
\begin{proposition}
  Suppose $\bar{p} \colon \mathcal{I}^{\triangleright} \to \dSt$ is a
  colimit diagram. Then a morphism $\mathcal{E} \to \mathcal{F}$ in
  $\QCoh(\bar{p}(\infty))$ is an equivalence \IFF{} the map
  $f_{i}^{*}\mathcal{E} \to f_{i}^{*}\mathcal{F}$ is an equivalence
  for all $i \in \mathcal{I}$, where $f_{i}$ denotes the image under
  $\bar{p}$ of the unique map $i \to \infty$.
\end{proposition}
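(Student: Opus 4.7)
The plan is to prove this as an immediate consequence of the fact that $\QCoh \colon \dSt^{\op} \to \PrL$ preserves limits (Definition~\ref{defn:QCoh}), combined with the standard fact that equivalences in a limit of $\infty$-categories are detected componentwise.

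The forward direction is trivial, since each $f_{i}^{*}$ is a functor and so preserves equivalences. For the converse, I would argue as follows. Since $\bar{p}$ is a colimit diagram in $\dSt$, the functor $\QCoh$ sends the opposite diagram $\bar{p}^{\op} \colon (\mathcal{I}^{\op})^{\triangleleft} \to \dSt^{\op}$ to a limit diagram in $\PrL$. In particular, the natural map
\[ \QCoh(\bar{p}(\infty)) \isoto \lim_{i \in \mathcal{I}^{\op}} \QCoh(\bar{p}(i))\]
is an equivalence, with the projection to the $i$th factor given by $f_{i}^{*}$. A morphism in a limit of $\infty$-categories is an equivalence if and only if its image in each factor is an equivalence (this is standard; it follows, e.g., from \cite{HTT}*{Corollary 5.1.2.3} and the fact that a natural transformation of functors is a natural equivalence iff it is a pointwise equivalence). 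Translating through the above equivalence, this gives precisely the criterion that $f_{i}^{*}\mathcal{E} \to f_{i}^{*}\mathcal{F}$ be an equivalence for every $i \in \mathcal{I}$.

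There is essentially no obstacle here: the entire argument is a formal consequence of limit-preservation of $\QCoh$, which is built into the definition used in the paper. The only minor subtlety is checking that the projection $\QCoh(\bar{p}(\infty)) \to \QCoh(\bar{p}(i))$ coming from the limit presentation is indeed $f_{i}^{*}$, but this is immediate from the construction of the equivalence via right Kan extension along $\Spec$ (cf.\ Lemma~\ref{lem:dStunivprop} and Remark~\ref{rmk:dstcolimaff}).
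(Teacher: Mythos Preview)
Your proof is correct and is essentially the same approach as the paper's: the paper simply cites \cite[Lemma 5.17]{DAG7}, which packages exactly the fact you unpack, namely that the projections out of a limit of \icats{} coming from a limit-preserving functor to $\PrL$ are jointly conservative. Your argument just makes this explicit.
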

\begin{proof}
  This is a special case of \cite[Lemma 5.17]{DAG7}.
\end{proof}

As a useful special case, we have:
\begin{corollary}\label{cor:qcoheqonpts}
  For any $S$-stack $X$, a morphism $\mathcal{E} \to \mathcal{F}$
  in $\QCoh(X)$ is an equivalence \IFF{}
  $\overline{p}^{*}\mathcal{E} \to \overline{p}^{*}\mathcal{F}$ is an equivalence in
  $\QCoh(X_{p})$ for all maps $p \colon \Spec A \to S$, where
  $\overline{p} \colon X_{p} \to X$ is the base change of $p$ to $X$. \qed
\end{corollary}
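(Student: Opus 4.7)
The strategy is to deduce this directly from the preceding proposition by exhibiting $X$ as a colimit of the stacks $X_p$ over the slice $\dAff_{/S}$. By Remark~\ref{rmk:dstcolimaff} applied to $S$, we have the canonical equivalence
\[ S \simeq \colim_{p \colon \Spec A \to S \in \dAff_{/S}} \Spec A. \]
Since $\dSt$ is an $\infty$-topos (Lemma~\ref{lem:dStunivprop}), it is in particular locally cartesian closed, so pullback along the structure morphism $X \to S$ preserves colimits. Applying this to the colimit above gives
\[ X \simeq X \times_{S} S \simeq \colim_{p \colon \Spec A \to S} (X \times_{S} \Spec A) = \colim_{p} X_{p}, \]
with the colimit cocone given precisely by the base-change maps $\overline{p} \colon X_{p} \to X$.

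Having exhibited this colimit diagram $\bar{p} \colon (\dAff_{/S})^{\triangleright} \to \dSt$ with apex $X$, the corollary is then immediate from the preceding proposition: a morphism $\mathcal{E} \to \mathcal{F}$ in $\QCoh(X)$ is an equivalence if and only if $\overline{p}^{*}\mathcal{E} \to \overline{p}^{*}\mathcal{F}$ is an equivalence in $\QCoh(X_{p})$ for every $p \colon \Spec A \to S$, which is exactly the claim.

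There is no real obstacle here; the only thing one needs to verify is the local cartesian closedness of $\dSt$, which follows from its being an $\infty$-topos, and then the argument is a one-line application of the preceding proposition. If one wanted to avoid invoking local cartesian closedness directly, one could instead observe that the functor $X \times_{S} (\blank) \colon \dSt_{/S} \to \dSt_{/X}$ is a left adjoint (to pullback along $X \to S$) and so preserves the colimit in question.
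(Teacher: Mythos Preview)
Your argument is correct and is exactly what the paper intends: the corollary is marked with \qed and no proof is given, since it follows immediately from the preceding proposition once one notes that $X \simeq \colim_{p \in \dAff_{/S}} X_{p}$ by universality of colimits in the $\infty$-topos $\dSt$. Your write-up simply spells out this implicit step.
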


\begin{remark}
  In particular, taking $S = X$, we see that $\mathcal{E} \to
  \mathcal{F}$ is an equivalence \IFF{} $p^{*}\mathcal{E} \to
  p^{*}\mathcal{F}$ is an equivalence for all $p \colon \Spec A \to X$.
\end{remark}

\begin{corollary}\label{cor:colimeqonpoints}
  For any $S$-stack $X$ and \icat{} $\mathcal{I}$, we have:
  \begin{enumerate}[(i)]
  \item a diagram $\overline{f} \colon \mathcal{I}^{\triangleright}
    \to \QCoh(X)$ is a colimit diagram \IFF{}
    $\overline{p}^{*}\overline{f}$ is a colimit diagram in
    $\QCoh(X_{p})$ for all maps $p \colon \Spec A \to S$,
  \item if $\mathcal{I}$ is finite, a diagram $\overline{f} \colon
    \mathcal{I}^{\triangleleft} \to \QCoh(X)$ is a limit diagram
    \IFF{} $\overline{p}^{*}\overline{f}$ is a limit diagram in
    $\QCoh(X_{p})$ for all maps $p \colon \Spec A \to S$,
  \end{enumerate}
\end{corollary}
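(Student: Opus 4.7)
The plan is to reduce both statements to Corollary~\ref{cor:qcoheqonpts} by observing that the pullback functors $\overline{p}^{*} \colon \QCoh(X) \to \QCoh(X_{p})$ preserve colimits and also finite limits. The first is automatic since $\overline{p}^{*}$ is the left adjoint of $\overline{p}_{*}$. For the second, note that $\QCoh(X)$ and $\QCoh(X_{p})$ are stable presentable \icats{} (they are limits in $\PrL$ of stable \icats{} $\Mod_{A}$), and a colimit-preserving functor between stable \icats{} automatically preserves finite limits: it preserves the initial object, which coincides with the terminal object, and it preserves cofibre sequences, which coincide with fibre sequences.

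For the ``only if'' direction of both (i) and (ii), if $\overline{f}$ is a colimit (resp.\ finite limit) diagram, then since $\overline{p}^{*}$ preserves colimits (resp.\ finite limits) the diagram $\overline{p}^{*}\overline{f}$ is again a colimit (resp.\ limit) diagram.

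For the ``if'' direction of (i), form $L \coloneqq \colim \overline{f}|_{\mathcal{I}}$ in $\QCoh(X)$, which exists since $\QCoh(X)$ is presentable. There is then a canonical comparison morphism $c \colon L \to \overline{f}(\infty)$, and $\overline{f}$ is a colimit diagram precisely when $c$ is an equivalence. By Corollary~\ref{cor:qcoheqonpts} this can be tested after pullback along every $p \colon \Spec A \to S$; but $\overline{p}^{*}$ preserves colimits, so $\overline{p}^{*}L \simeq \colim \overline{p}^{*}\overline{f}|_{\mathcal{I}}$, and the hypothesis that $\overline{p}^{*}\overline{f}$ is a colimit diagram means precisely that the induced map to $\overline{p}^{*}\overline{f}(\infty)$ is an equivalence. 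The proof of (ii) is identical, forming $\lim \overline{f}|_{\mathcal{I}}$ (which exists for $\mathcal{I}$ finite) and using that $\overline{p}^{*}$ preserves finite limits.

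There is no serious obstacle; the only point requiring a moment's thought is the preservation of finite limits by $\overline{p}^{*}$ in (ii), which is where the finiteness hypothesis on $\mathcal{I}$ enters (and which uses stability of $\QCoh$ essentially, as $\overline{p}^{*}$ is in general not a right adjoint).
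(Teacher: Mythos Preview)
Your proof is correct and is precisely the argument the paper gives, only spelled out in more detail: the paper's proof is the single sentence ``This follows from Corollary~\ref{cor:qcoheqonpts} since the functors $\overline{p}^{*}$ preserve colimits and finite limits.'' Your explanation of why $\overline{p}^{*}$ preserves finite limits via stability is a welcome elaboration of what the paper leaves implicit.
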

\begin{proof}
  This follows from Corollary~\ref{cor:qcoheqonpts} since the functors
  $\overline{p}^{*}$ preserve colimits and finite limits.
\end{proof}

\begin{definition}\label{defn:Qotimes}
  The functor $\dAff^{\op} \to \CAlg(\PrL)$ that to a ring $A$ assigns
  $\Mod_{A}$ as a (presentably) symmetric monoidal \icat{} also
  satisfies descent (since limits in $\CAlg(\PrL)$ are detected in
  $\PrL$), and so extends to a limit-preserving functor
  $\QCoh^{\otimes} \colon \dSt^{\op} \to \CAlg(\PrL)$. Forgetting from
  $\CAlg(\PrL)$ to $\CAlg(\CatI)$, we can view the result as a functor
  $\dSt^{\op} \times \xF_{*} \to \CatI$; we write
  $\mathcal{Q}^{\otimes} \to \dSt^{\op} \times \xF_{*}$ for the
  corresponding cocartesian fibration, which encodes the symmetric
  monoidal \icats{} $\QCoh(X)^{\otimes}$ for $X \in \dSt^{\op}$
  together with the symmetric monoidal functoriality via pullback.
  For $S \in \dSt$, we  also write $\mathcal{Q}_{S}^{\otimes}$ for the
  pullback $\mathcal{Q}^{\otimes} \times_{\dSt^{\op}} \dSt_{S}^{\op}$.
\end{definition}

\begin{remark}
  With a bit more care, we can identify $\QCoh(X)$ as a symmetric
  monoidal full subcategory of $\Mod_{\mathcal{O}_{X}}$. In
  particular, $\mathcal{O}_{X}$ is the unit for the symmetric monoidal
  structure on $\QCoh(X)$.
\end{remark}

\section{Affine Morphisms and Relative Spec}\label{subsec:affinemor}
The classical notion of affine morphisms of schemes has a natural
generalization to derived stacks:
\begin{definition}\label{defn:affmor}
  A morphism $f \colon X \to Y$ of derived stacks is \emph{affine}
  \IFF{} for every map $p \colon \Spec A \to Y$ from an affine stack,
  the fibre product $X \times_{Y} \Spec A$ is again affine.
\end{definition}
For ordinary schemes, affine morphisms can be described as the
relative Spec's of quasicoherent sheaves of commutative algebras. Our
goal in this section is to prove the analogous equivalence for
derived stacks, where the latter has the following natural definition:
\begin{definition}
  The \icat{} of \emph{quasicoherent sheaves of commutative algebras}
  on a derived stack $X$ is $\QCAlg(X) := \CAlg(\QCoh(X))$. Since
  $\CAlg(\blank)$ preserves limits of (symmetric monoidal) \icats{},
  we may identify this \icat{} with
  \[ \lim_{\dAff_{/X}^{\op}} \CAlg(\Mod_{A}) \simeq
  \lim_{\dAff_{/X}^{\op}} (\CAlg_{\k})_{A/}.\]
  We say a quasicoherent sheaf $\mathcal{F} \in \QCoh(X)$ is
  \emph{connective} if $p^{*}\mathcal{F} \in \Mod_{A}$ is connective
  for all points $p \colon \Spec A \to X$, and denote the full
  subcategory of $\QCoh(X)$ spanned by the connective objects by
  $\QCohconn(X)$; we have
  $\QCohconn(X) \simeq \lim_{\dAff_{/S}^{\op}} \Modconn_{A}$.
  Similarly, we say a quasicoherent sheaf of commutative algebras is
  \emph{connective} if the underlying quasicoherent sheaf of 
  modules is connective. We write $\QCAlgconn(X)$ for the full
  subcategory $\QCAlg(X)$ spanned by the connective objects; then
  $\QCAlgconn(X) \simeq \CAlg(\QCohconn(X))$.
\end{definition}

\begin{definition}
  We say $\mathcal{F} \in \QCoh(X)$ is \emph{$n$-connective} if
  $\mathcal{F}[-n]$ is connective, and write $\QCoh^{c[n]}(X)$
  for the full subcategory of $\QCoh(X)$ spanned by the $n$-connective
  objects.
\end{definition}

\begin{proposition}\label{propn:affisqcalg}
  For a derived stack $S$, we write $\dSt_{/S}^{\txt{aff}}$ for the
  full subcategory of $\dSt_{/S}$ spanned by the affine
  morphisms. Then there is a natural equivalence
  \[ \dSt_{/S}^{\txt{aff}} \isoto \QCAlgconn(S)^{\op},\]
  where the quasicoherent sheaf of commutative algebras $\mathcal{A}$
  associated to an affine morphism $X \to S$ is characterized by
  a natural equivalence $X_{p} \simeq \Spec p^{*}\mathcal{A}$, while
  the affine morphism corresponding to $\mathcal{A}$ is given by
  \[ \colim_{p \in \dAff_{/S}} \Spec p^{*}\mathcal{A} \to  \colim_{p \in \dAff_{/S}} \Spec p^{*}\mathcal{O}_{S} \simeq S.\]
\end{proposition}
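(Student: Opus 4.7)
The plan is to establish the equivalence by first verifying it on affine base stacks and then globalizing via descent.

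When $S = \Spec A$ is affine, every map $X \to \Spec A$ from an affine stack is itself affine (since $\Spec A \times_{\Spec A} \Spec B \simeq \Spec B$), so $\dSt_{/\Spec A}^{\txt{aff}} = \dAff_{/\Spec A} \simeq ((\CAlgkc)_{A/})^{\op}$. On the other side, $\QCoh(\Spec A) \simeq \Mod_A$ by definition of $\QCoh$, hence $\QCohconn(\Spec A) \simeq \Modconn_A$ and $\QCAlgconn(\Spec A) \simeq \CAlg(\Modconn_A) \simeq (\CAlgkc)_{A/}$ since $\CAlg$ commutes with limits. Under these identifications the sought equivalence is the identity, sending $(\Spec B \to \Spec A)$ to $(A \to B)$. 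Naturality in $A$ is immediate from the base change formula $\Spec B \times_{\Spec A} \Spec A' \simeq \Spec(A' \otimes_A B)$, matching pullback of affine morphisms with tensor product of algebras.

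Next, both $\dSt_{/(\blank)}^{\txt{aff}}$ and $\QCAlgconn(\blank)^{\op}$ are \'etale sheaves on $\dAff^{\op}$. For $\QCAlgconn$, this follows from $\QCoh$ being a sheaf valued in symmetric monoidal \icats{} and $\CAlg(\blank)$ preserving limits, together with the fact that connectivity can be checked pointwise by Corollary \ref{cor:qcoheqonpts}. For $\dSt_{/(\blank)}^{\txt{aff}}$, we use that $\dSt_{/(\blank)}: \dSt^{\op} \to \LCatI$ is limit-preserving because $\dSt$ is an $\infty$-topos, combined with the observation that affineness of a morphism is detected by pulling back along all points, and hence is stable under descent along \'etale covers.

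Combining these ingredients, for any derived stack $S$ we write $S \simeq \colim_{p \in \dAff_{/S}} \Spec A$ using Remark \ref{rmk:dstcolimaff}, and descent gives
\[
\dSt_{/S}^{\txt{aff}} \simeq \lim_{\dAff_{/S}^{\op}} \dSt_{/\Spec A}^{\txt{aff}} \simeq \lim_{\dAff_{/S}^{\op}} \QCAlgconn(\Spec A)^{\op} \simeq \QCAlgconn(S)^{\op},
\]
where the middle equivalence applies the naturally-in-$A$ identification from the affine case. Unwinding: an affine morphism $X \to S$ corresponds to the quasicoherent algebra $\mathcal{A}$ with $\Spec p^*\mathcal{A} \simeq X_p = X \times_S \Spec A$, and conversely $\mathcal{A}$ corresponds to $\colim_{p \in \dAff_{/S}} \Spec p^*\mathcal{A} \to \colim_p \Spec A \simeq S$ induced by the structure maps $p^*\mathcal{O}_S \simeq A \to p^*\mathcal{A}$.

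The main obstacle is the effective descent property for affine morphisms along \'etale covers: one needs that a compatible family $\{\Spec B_p \to \Spec A\}_{p \in \dAff_{/S}^{\op}}$, meaning $B_{p'} \simeq A' \otimes_A B_p$ along transition maps $p' \to p$, glues to a morphism $X \to S$ which is genuinely affine (not merely has affine pullback to each $p$ in the chosen indexing system). This reduces to showing that for any further point $q \colon \Spec C \to S$, the pullback $X \times_S \Spec C$ is affine; but $q$ itself sits in $\dAff_{/S}$, so $X \times_S \Spec C$ is by construction $\Spec B_q$. The remaining subtlety is just that taking the colimit $\colim_p \Spec B_p$ in $\dSt$ correctly computes the glued stack, which follows because $\dSt_{/(\blank)}$ turns the colimit decomposition of $S$ into the corresponding limit of slice \icats{}.
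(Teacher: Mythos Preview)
Your proof is correct and follows essentially the same route as the paper: reduce to the affine base case where the equivalence is tautological, then globalize using the descent equivalence $\dSt_{/S} \simeq \lim_{\dAff_{/S}^{\op}} \dSt_{/\Spec A}$ (this is \cite{HTT}*{Theorem 6.1.3.9}, which you invoke as ``$\dSt_{/(\blank)}$ is limit-preserving because $\dSt$ is an $\infty$-topos'') and the fact that affineness is detected pointwise. The paper's proof does exactly this in three lines, restricting the full slice equivalence to the affine subcategories on each side.

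One stylistic note: your detour through ``both sides are \'etale sheaves on $\dAff^{\op}$'' is unnecessary and slightly misleading. What you actually use in the displayed chain is not \'etale descent but the full limit-preserving property over the canonical colimit presentation $S \simeq \colim_{\dAff_{/S}} \Spec A$, which is strictly stronger (though equivalent via Lemma~\ref{lem:dStunivprop}). It would be cleaner to drop the \'etale-sheaf paragraph and invoke the slice descent equivalence directly, as the paper does. Your final ``obstacle'' paragraph is fine but also redundant once you have the full slice equivalence: an object of $\lim_{\dAff_{/S}^{\op}} \dAff_{/\Spec A}$ corresponds under that equivalence to an object of $\dSt_{/S}$ whose pullback to \emph{every} $\Spec A \to S$ is affine, which is the definition of an affine morphism.
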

\begin{proof}
For a morphism $X \to S$ and a point $p \colon \Spec A \to S$, we
write $X_{p}$ for the pullback of $X$ along $p$. Then since colimits
in $\dSt$ are universal, we have a pullback square
\nolabelcsquare{\colim_{p} X_p}{X}{\colim_{p} \Spec A}{S,}
where the colimit is over $p \colon \Spec A \to S$ in
$\dAff_{/S}$. The lower horizontal map is an equivalence, hence we
have $X \simeq \colim_{p} X_{p}$. This observation can be enhanced to
an equivalence of \icats{}: by \cite{HTT}*{Theorem 6.1.3.9}
\[ \dSt_{/S} \isoto \lim_{\dAff_{/S}^{\op}} \dSt_{/\Spec A},\]
where the functor takes $X \to S$ to its fibres $X_{p} \to \Spec A$
and the inverse is given by taking the colimit in $\dSt$. Restricting
to the full subcategory $\dSt_{/S}^{\txt{aff}}$ on the left
corresponds to restricting to the full subcategory $\lim_{\dAff_{/S}^{\op}}
\dAff_{/\Spec A}$ on the right, giving the required equivalence
\[ \dSt^{\txt{aff}}_{/S} \isoto  \lim_{\dAff_{/S}^{\op}}
\dAff_{/\Spec A} \simeq \lim_{\dAff_{/S}^{\op}} (\CAlgkc)_{A/}^{\op}
\simeq \QCAlgconn(S)^{\op}.\qedhere\]
\end{proof}

\begin{definition}
  We write $\Spec_{S} \colon \QCAlgconn(S)^{\op} \to \dSt_{/S}$ for the
  inverse of this equivalence, which takes $\mathcal{A} \in
  \QCAlgconn(S)$ to $\colim_{p\in \dAff_{/S}} \Spec
  p^{*}\mathcal{A}$, and refer to this as the \emph{relative Spec}
  functor over $S$.
\end{definition}
Then an immediate reformulation of Proposition~\ref{propn:affisqcalg}
gives:
\begin{corollary}
  A morphism $X \to S$ is affine \IFF{} it is of the form
  $\Spec_{S}\mathcal{A} \to S$ for some $\mathcal{A} \in
  \QCAlgconn(S)$.\qed
\end{corollary}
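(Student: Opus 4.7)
The plan is to observe that this corollary is essentially a direct unpacking of Proposition~\ref{propn:affisqcalg}, which already gives an equivalence of $\infty$-categories $\Spec_S \colon \QCAlgconn(S)^{\op} \isoto \dSt_{/S}^{\txt{aff}}$. Since $\dSt_{/S}^{\txt{aff}}$ is defined as the full subcategory of $\dSt_{/S}$ spanned by the affine morphisms, an object $X \to S$ of $\dSt_{/S}$ lies in $\dSt_{/S}^{\txt{aff}}$ \IFF{} it is affine, and \IFF{} it lies in the essential image of $\Spec_S$.

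For a fully self-contained argument, I would check both implications explicitly. First, suppose $X = \Spec_S\mathcal{A}$ for some $\mathcal{A} \in \QCAlgconn(S)$. Then the characterization given in Proposition~\ref{propn:affisqcalg} states that for any $p \colon \Spec A \to S$, the pullback $X_p$ is equivalent to $\Spec p^{*}\mathcal{A}$, which is affine because $p^{*}\mathcal{A}$ is a connective commutative $A$-algebra. Hence $X \to S$ satisfies Definition~\ref{defn:affmor} and is affine.

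Conversely, suppose $X \to S$ is affine. Then $X \to S$ is an object of $\dSt_{/S}^{\txt{aff}}$, so by Proposition~\ref{propn:affisqcalg} there is some $\mathcal{A} \in \QCAlgconn(S)$ with $X \simeq \Spec_S\mathcal{A}$ over $S$, since $\Spec_S$ is defined as the inverse of that equivalence. This completes the proof. There is no real obstacle here: the only ``work'' is the equivalence established in Proposition~\ref{propn:affisqcalg}, and this corollary is simply the statement that the essential image of the inverse functor $\Spec_S$ is, by definition, the subcategory $\dSt_{/S}^{\txt{aff}}$.
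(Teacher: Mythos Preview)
Your proposal is correct and matches the paper's approach exactly: the paper states this corollary as ``an immediate reformulation of Proposition~\ref{propn:affisqcalg}'' and gives no proof beyond a \texttt{\textbackslash qed}. You have simply spelled out the tautological unpacking in more detail than the paper bothers to.
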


\begin{remark}
  For $\mathcal{A} \in \QCAlgconn(S)$, the equivalence of
  Proposition~\ref{propn:affisqcalg} implies that for every $p
  \colon \Spec A \to S$ we have a natural pullback square
  \csquare{\Spec p^* \mathcal{A}}{\Spec_S \mathcal{A}}{\Spec
    A}{S.}{}{}{}{p}
  This generalizes to the following observation:
\end{remark}

\begin{proposition}\label{propn:pbrelspec}
  Given a morphism $f \colon T \to S$ in $\dSt$ and $\mathcal{A} \in
  \QCAlgconn(S)$, we have a natural pullback square
  \csquare{\Spec_T f^* \mathcal{A}}{\Spec_S \mathcal{A}}{T}{S.}{}{}{}{f}
\end{proposition}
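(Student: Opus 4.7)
The plan is to reduce to the already-known affine case, which is the pullback square recorded in the Remark immediately preceding the proposition, by exploiting universality of colimits in the $\infty$-topos $\dSt$ together with the colimit description of $\Spec_T$.

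First, I would record that since the pullback functor $f^{*} \colon \QCoh(S) \to \QCoh(T)$ is symmetric monoidal and preserves connectivity pointwise, the object $f^{*}\mathcal{A}$ indeed lies in $\QCAlgconn(T)$, so that the left-hand $\Spec_{T} f^{*}\mathcal{A}$ makes sense. By Remark~\ref{rmk:dstcolimaff} we can write $T \simeq \colim_{q \in \dAff_{/T}} \Spec B$, where we use $q \colon \Spec B \to T$ to denote a typical object of the index category. For each such $q$, the composite $f \circ q \colon \Spec B \to S$ is an affine point of $S$, and the affine case of the pullback formula (the Remark before the proposition) gives a natural equivalence
\[ \Spec B \times_{S} \Spec_{S} \mathcal{A} \;\simeq\; \Spec (fq)^{*}\mathcal{A} \;\simeq\; \Spec q^{*} f^{*}\mathcal{A},\]
where the second equivalence uses functoriality of pullback.

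Next I would compute the fibre product $T \times_{S} \Spec_{S}\mathcal{A}$ using that $\dSt$ is an $\infty$-topos, so colimits are universal:
\[ T \times_{S} \Spec_{S}\mathcal{A} \;\simeq\; \Bigl(\colim_{q \in \dAff_{/T}} \Spec B\Bigr) \times_{S} \Spec_{S}\mathcal{A} \;\simeq\; \colim_{q \in \dAff_{/T}} \bigl(\Spec B \times_{S} \Spec_{S}\mathcal{A}\bigr).\]
Plugging in the affine identification above turns the right-hand side into $\colim_{q \in \dAff_{/T}} \Spec q^{*} f^{*}\mathcal{A}$, which is by definition $\Spec_{T} f^{*}\mathcal{A}$. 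Chasing the canonical maps shows that this equivalence is compatible with the projections to $T$ and to $\Spec_{S}\mathcal{A}$, producing the desired pullback square.

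The main obstacle, as usual when manipulating such constructions, is bookkeeping rather than conceptual: one must check that the identifications of $\Spec B \times_{S} \Spec_{S}\mathcal{A}$ with $\Spec q^{*}f^{*}\mathcal{A}$ assemble into a diagram $\dAff_{/T} \to \dSt$ (not merely a family of equivalences), and that the induced map from the colimit of this diagram to $T \times_{S} \Spec_{S}\mathcal{A}$ is the one coming from universality of colimits. This is most cleanly done by noting that both sides fit into the equivalence $\dSt_{/T}^{\txt{aff}} \simeq \QCAlgconn(T)^{\op}$ of Proposition~\ref{propn:affisqcalg}: both $\Spec_{T} f^{*}\mathcal{A}$ and $T \times_{S} \Spec_{S}\mathcal{A}$ are affine $T$-stacks whose pullback to any $\Spec B \to T$ agrees with $\Spec q^{*}f^{*}\mathcal{A}$, so they must be equivalent as objects of $\dSt^{\txt{aff}}_{/T}$. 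Naturality in $f$ and $\mathcal{A}$ then follows by functoriality of all constructions involved.
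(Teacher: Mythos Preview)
Your proposal is correct and follows essentially the same approach as the paper: both use universality of colimits in the $\infty$-topos $\dSt$ to express $T \times_{S} \Spec_{S}\mathcal{A}$ as a colimit over $\dAff_{/T}$, invoke the affine case from the preceding Remark to identify each term, and then recognize the result as the colimit definition of $\Spec_{T} f^{*}\mathcal{A}$. The paper's proof is terser and does not spell out the coherence bookkeeping you flag at the end, but the argument is the same.
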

\begin{proof}
  Since colimits in $\dSt$ are universal, we have a pullback square
  \csquare{\colim_{p \in \dAff_{/T}} (\Spec_S
      \mathcal{A})_{fp}}{\Spec_S \mathcal{A}}{T}{S.}{}{}{}{f}
  Here we have a natural equivalence $(\Spec_S
  \mathcal{A})_{fp} \simeq \Spec (fp)^{*}\mathcal{A}$, giving
  \[ \colim_{p \in \dAff_{/T}} (\Spec_S \mathcal{A})_{fp} \simeq
\colim_{p \in \dAff_{/T}} \Spec p^{*}f^{*}\mathcal{A} \simeq
\Spec_{T}f^{*}\mathcal{A},\]
as required.
\end{proof}

\begin{corollary}\label{cor:maptorelspec}
  For $f \colon X \to S$ in $\dSt$ and $\mathcal{A} \in
  \QCAlgconn(S)$ we have natural equivalences
  \[ \Map_{/S}(X, \Spec_{S} \mathcal{A}) \simeq
  \Map_{\QCAlg(X)}(f^{*}\mathcal{A}, \mathcal{O}_{X}) \simeq
  \Map_{\QCAlg(S)}(\mathcal{A}, f_{*}\mathcal{O}_{X}).\]
\end{corollary}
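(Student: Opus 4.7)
For the second equivalence, I would appeal to the standard fact that a monoidal adjunction lifts to an adjunction on commutative algebras. By Definition~\ref{defn:QCoh}, $f^{*} \colon \QCoh(S) \to \QCoh(X)$ is strong symmetric monoidal with lax monoidal right adjoint $f_{*}$, so passing to commutative algebra objects in $\QCoh(S)$ and $\QCoh(X)$ produces an induced adjunction $f^{*} \colon \QCAlg(S) \rightleftarrows \QCAlg(X) \colon f_{*}$. The equivalence $\Map_{\QCAlg(X)}(f^{*}\mathcal{A}, \mathcal{O}_{X}) \simeq \Map_{\QCAlg(S)}(\mathcal{A}, f_{*}\mathcal{O}_{X})$ is then immediate, noting that $f_{*}\mathcal{O}_{X} \in \QCAlg(S)$ because $f_{*}$ is lax monoidal.

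The first equivalence is the substantive content. My plan is to reduce to the affine case using Remark~\ref{rmk:dstcolimaff}, which writes $X \simeq \colim_{p \colon \Spec B \to X \in \dAff_{/X}} \Spec B$. The left-hand side $\Map_{/S}(X, \Spec_{S}\mathcal{A})$ turns this colimit into the limit $\lim_{p} \Map_{/S}(\Spec B, \Spec_{S}\mathcal{A})$ since mapping spaces in $\dSt_{/S}$ send colimits in the source to limits. For the middle term, I would use that $\QCAlg(X) \simeq \lim_{p \in \dAff_{/X}^{\op}} (\CAlg_{\k})_{B/}$, together with the fact that $p^{*}f^{*}\mathcal{A} \simeq (fp)^{*}\mathcal{A}$ and $p^{*}\mathcal{O}_{X} \simeq \mathcal{O}_{\Spec B} = B$, to obtain
\[ \Map_{\QCAlg(X)}(f^{*}\mathcal{A}, \mathcal{O}_{X}) \simeq \lim_{p} \Map_{\CAlg_{B}}((fp)^{*}\mathcal{A}, B).\]
It thus suffices to identify, naturally in $p$, the spaces $\Map_{/S}(\Spec B, \Spec_{S}\mathcal{A}) \simeq \Map_{\CAlg_{B}}((fp)^{*}\mathcal{A}, B)$.

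For this affine identification I would invoke Proposition~\ref{propn:pbrelspec}, which provides a pullback square exhibiting $\Spec_{\Spec B}((fp)^{*}\mathcal{A}) \simeq \Spec(fp)^{*}\mathcal{A}$ (since relative $\Spec$ over an affine agrees with ordinary $\Spec$ of the underlying algebra) as $\Spec_{S}\mathcal{A} \times_{S} \Spec B$. Taking sections of the right-hand projection, the space of maps $\Spec B \to \Spec_{S}\mathcal{A}$ over $S$ is equivalent to the space of $\Spec B$-maps $\Spec B \to \Spec(fp)^{*}\mathcal{A}$, which by the Yoneda embedding $\Spec \colon \CAlgkcop \hookrightarrow \dAff$ is $\Map_{\CAlg_{B}}((fp)^{*}\mathcal{A}, B)$, as required.

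The main obstacle is not any single step but rather verifying the naturality of the affine identification in $p \in \dAff_{/X}$, so that the equivalences assemble into an equivalence of limit diagrams. This amounts to checking that the pullback square of Proposition~\ref{propn:pbrelspec} is functorial in the source stack and that the identifications of $\QCAlg(X)$ and $\Map_{/S}(-, \Spec_{S}\mathcal{A})$ with their limit descriptions are compatible; both are essentially formal, following from the universality of colimits in the $\infty$-topos $\dSt$ and the limit-preservation of $\QCoh^{\otimes}$ from Definition~\ref{defn:QCoh}.
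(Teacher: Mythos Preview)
Your argument is correct, but the paper's proof is considerably shorter and avoids the affine decomposition of $X$ entirely. The paper applies Proposition~\ref{propn:pbrelspec} once, for the map $f \colon X \to S$ itself rather than for each affine point, to obtain
\[ \Map_{/S}(X, \Spec_{S}\mathcal{A}) \simeq \Map_{/X}(X, \Spec_{X}f^{*}\mathcal{A}), \]
and then reads off the first equivalence directly from the categorical equivalence $\dSt_{/X}^{\txt{aff}} \simeq \QCAlgconn(X)^{\op}$ of Proposition~\ref{propn:affisqcalg}, under which $\id_{X}$ corresponds to $\mathcal{O}_{X}$ and $\Spec_{X}f^{*}\mathcal{A}$ to $f^{*}\mathcal{A}$. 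Your approach unpacks this by writing $X$ as a colimit of affines and checking the identification pointwise; this is effectively re-deriving the piece of Proposition~\ref{propn:affisqcalg} that is needed, together with the naturality bookkeeping you flag. Both routes use Proposition~\ref{propn:pbrelspec}, but the paper leverages the already-packaged global equivalence to avoid the limit manipulation. Your treatment of the second equivalence via the induced adjunction on commutative algebras matches the paper's.
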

\begin{proof}
  By Proposition~\ref{propn:pbrelspec} we have a natural equivalence
  \[ \Map_{/S}(X, \Spec_{S}\mathcal{A}) \simeq \Map_{/X}(X,
  \Spec_{X}f^{*}\mathcal{A}).\]
  Now the equivalence between $\dSt_{/X}^{\txt{aff}}$ and
  $\QCAlgconn(X)^{\op}$ of Proposition~\ref{propn:affisqcalg} gives
  \[ \Map_{/X}(X,
  \Spec_{X}f^{*}\mathcal{A}) \simeq \Map_{\QCAlgconn(X)^{\op}}(f^{*}\mathcal{A}, \mathcal{O}_{X}).\]
  Combining these equivalences gives the first equivalence, and the
  second follows from the adjunction $f^{*} \dashv f_{*}$.
\end{proof}

\begin{remark}
  Let $\QCAconn \to \dSt^{\op}$ be the cocartesian fibration for
  $\QCAlgconn(\blank)$ (with respect to the left adjoint pullback
  functors --- this is also a cartesian fibration with respect to
  pushforward). The relative $\Spec$ functors for all derived stacks
  can be combined to a single functor, giving a commutative triangle
  \opctriangle{\QCAconnop}{\Fun(\Delta^{1}, \dSt)}{\dSt.}{\Spec}{}{\txt{ev}_1}
  Then Proposition~\ref{propn:pbrelspec} says that $\Spec$ preserves
  cartesian morphisms. Moreover, Corollary~\ref{cor:maptorelspec} says
  that $\Spec$ is the right adjoint to a functor $\Fun(\Delta^{1},
  \dSt) \to \QCAconnop$ that takes $f \colon X \to S$ to
  $f_{*}\mathcal{O}_{X}$. It follows that $\Spec$, viewed as a functor
  $\QCAconnop \to \dSt$ (by composing with $\txt{ev}_{0}$)
  is right adjoint to a functor $\dSt \to \QCAconnop$ taking
  $X$ to $\mathcal{O}_{X}$.
\end{remark}

\begin{corollary}\label{cor:pfOSpecSA}
  Let $\pi$ denote the
  projection $\Spec_{S} \mathcal{A} \to S$ where $\mathcal{A}$ is an object of $\QCAlgconn(S)$.
  Then we have a natural
  equivalence
  \[ \pi_{*}\mathcal{O}_{\Spec_{S}\mathcal{A}} \simeq \mathcal{A}\]
  in $\QCAlgconn(S)$.
\end{corollary}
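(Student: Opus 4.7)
The plan is to express both $\mathcal{A}$ and $\pi_*\mathcal{O}_{\Spec_S \mathcal{A}}$ as the same limit over points of $S$, reducing to the tautological affine case.

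First, from $S \simeq \colim_{q \in \dAff_{/S}} \Spec B$ (where $q \colon \Spec B \to S$) and the universality of colimits in the $\infty$-topos $\dSt$, combined with the base-change square of Proposition~\ref{propn:pbrelspec}, we obtain
\[ \Spec_S \mathcal{A} \simeq \colim_{q} \Spec q^*\mathcal{A}, \]
with canonical morphisms $f_q \colon \Spec q^*\mathcal{A} \to \Spec_S \mathcal{A}$ and $\pi'_q \colon \Spec q^*\mathcal{A} \to \Spec B$ forming a pullback over $\pi$ and $q$. Applying Corollary~\ref{cor:qcohrellim} to this colimit with the sheaf $\mathcal{O}_{\Spec_S \mathcal{A}}$ gives
\[ \mathcal{O}_{\Spec_S \mathcal{A}} \simeq \lim_q f_{q,*}\mathcal{O}_{\Spec q^*\mathcal{A}} \qquad \text{in } \QCAlg(\Spec_S \mathcal{A}). \]
Since $\pi_*$ is a right adjoint, and is lax symmetric monoidal so preserves limits also at the commutative algebra level, we deduce
\[ \pi_*\mathcal{O}_{\Spec_S \mathcal{A}} \simeq \lim_q (\pi f_q)_*\mathcal{O}_{\Spec q^*\mathcal{A}} \simeq \lim_q q_*\pi'_{q,*}\mathcal{O}_{\Spec q^*\mathcal{A}} \qquad \text{in } \QCAlg(S). \]

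Now for each $q$, the map $\pi'_q$ is a morphism between affine derived stacks, so under the equivalence $\QCoh(\Spec B) \simeq \Mod_B$ the pushforward $\pi'_{q,*}$ is identified with the forgetful functor along the algebra map $B \to q^*\mathcal{A}$. In particular, $\pi'_{q,*}\mathcal{O}_{\Spec q^*\mathcal{A}} \simeq q^*\mathcal{A}$ in $\QCAlgconn(\Spec B)$. Plugging this in,
\[ \pi_*\mathcal{O}_{\Spec_S \mathcal{A}} \simeq \lim_q q_* q^*\mathcal{A}, \]
and a second application of Corollary~\ref{cor:qcohrellim}, now to the colimit presentation $S \simeq \colim_q \Spec B$ and the algebra $\mathcal{A} \in \QCAlgconn(S)$ itself, identifies the right-hand side with $\mathcal{A}$. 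In particular $\pi_*\mathcal{O}_{\Spec_S \mathcal{A}}$ is connective, being equivalent to $\mathcal{A}$.

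The main obstacle is bookkeeping: each identification above is most naturally formulated at the level of quasicoherent sheaves, and one must check that it upgrades to an equivalence in $\QCAlgconn(\blank)$. This is handled uniformly by the facts that $\QCAlg(\blank) \simeq \CAlg(\QCoh(\blank))$, that $\CAlg$ preserves limits of symmetric monoidal \icats{}, and that pushforward functors are lax symmetric monoidal; the identification of $\pi'_{q,*}\mathcal{O}_{\Spec q^*\mathcal{A}}$ with $q^*\mathcal{A}$ as a $B$-algebra is then tautological from the definition of $\Spec$ on affine stacks.
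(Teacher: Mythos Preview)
Your proof is correct but takes a genuinely different route from the paper. You compute $\pi_*\mathcal{O}_{\Spec_S\mathcal{A}}$ explicitly by descent: write $\Spec_S\mathcal{A}$ as a colimit of affines, apply Corollary~\ref{cor:qcohrellim} to express the structure sheaf as a limit, push forward, use the affine identification $\pi'_{q,*}\mathcal{O}_{\Spec q^*\mathcal{A}} \simeq q^*\mathcal{A}$, and recognize the resulting limit $\lim_q q_*q^*\mathcal{A}$ as $\mathcal{A}$ by a second application of descent. The paper instead gives a two-line Yoneda argument: by Corollary~\ref{cor:maptorelspec} one has $\Map_{\QCAlgconn(S)}(\mathcal{B}, \pi_*\mathcal{O}_{\Spec_S\mathcal{A}}) \simeq \Map_{/S}(\Spec_S\mathcal{A}, \Spec_S\mathcal{B})$, and by Proposition~\ref{propn:affisqcalg} the latter is $\Map_{\QCAlgconn(S)}(\mathcal{B}, \mathcal{A})$, so Yoneda finishes. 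The paper's approach exploits that $\Spec_S$ was already shown to be right adjoint to $f \mapsto f_*\mathcal{O}_X$, making the statement a formal consequence of the adjunction; your approach avoids Corollary~\ref{cor:maptorelspec} entirely and instead gives a hands-on verification, at the cost of having to track the commutative algebra structure through several limit manipulations. Both are clean; the paper's is shorter because it leverages the already-established universal property.
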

\begin{proof}
  By Corollary~\ref{cor:maptorelspec} we have for $\mathcal{B} \in
  \QCAlgconn(S)$  a natural equivalence
  \[ \Map_{\QCAlgconn(S)}(\mathcal{B},
    \pi_{*}\mathcal{O}_{\Spec_{S} \mathcal{A}}) \simeq
    \Map_{/S}(\Spec_{S}\mathcal{A}, \Spec_{S}\mathcal{B}).\]
  From the equivalence of
  Proposition~\ref{propn:affisqcalg} we get
  \[ \Map_{/S}(\Spec_{S}\mathcal{A}, \Spec_{S}\mathcal{B}) \simeq
  \Map_{\QCAlgconn(S)}(\mathcal{B}, \mathcal{A}).\]
  By the Yoneda Lemma, these natural equivalences imply that
  $\pi_{*}\mathcal{O}_{\Spec_{S}\mathcal{A}}$ is naturally equivalent to $\mathcal{A}$.
\end{proof}

\section{Geometric Morphisms and Artin Stacks}\label{subsec:geommor}
We now recall the inductive definition of a \emph{geometric} derived
stack --- this is a subclass of derived stacks to which many notions
of algebraic geometry can be extended, which is typically not possible
in complete generality.
\begin{definition}\ 
  \begin{enumerate}[(1)]
  \item A stack is \emph{$(-1)$-geometric} if it is affine.
  \item A morphism of stacks $Y \to X$ is \emph{$(-1)$-geometric} if
    for any affine stack $Z$ and any morphism $Z \to X$, the pullback
    $Y \times_{X} Z$ is affine (\ie{} the morphism is affine in the
    sense of Definition~\ref{defn:affmor}).
  \item A morphism of stacks $Y \to X$ is \emph{$(-1)$-smooth} if it
    is $(-1)$-geometric and for any affine stack $Z$ and any morphism
    $Z \to X$, the pullback
    $Y \times_{X} Z \to Z$ is a smooth map of affine stacks.
  \item An \emph{$n$-smooth atlas} on a stack $X$ is an
    effective epimorphism $\coprod_{i \in I} U_{i} \to X$ (where $I$
    is a set) such that each $U_{i}$ is affine and each map $U_{i} \to
    X$ is $(n-1)$-smooth.
  \item A stack $X$ is \emph{$n$-geometric} if the diagonal $X \to X
    \times X$ is $(n-1)$-geometric and there exists an $n$-smooth
    atlas of $X$.
  \item A morphism of stacks $Y \to X$ is \emph{$n$-geometric} if
    for any affine stack $Z$ and any morphism $Z \to X$, the pullback
    $Y \times_{X} Z$ is an $n$-geometric stack.
  \item A morphism of stacks $Y \to X$ is \emph{$n$-smooth} if it is
    $n$-geometric and for any affine stack $Z$ and any morphism $Z \to
    X$, there exists an $n$-smooth atlas $\coprod U_{i} \to Z
    \times_{X} Y$ of the $n$-geometric stack $Z \times_{X} Y$ such
    that each composite $U_{i} \to Z$ is a smooth map of affine
    stacks.
  \item A stack $X$ is \emph{geometric} if it is $n$-geometric for
    some $n$.
  \end{enumerate}
  We write $\dStg \subseteq \dSt$ and $\Fun(\Delta^{1}, \dSt)^{\txt{geom}} \subseteq \Fun(\Delta^{1}, \dSt)$ for the
  full subcategories spanned by the
  geometric stacks and geometric morphisms, respectively. We also say an $S$-stack $X$ is \emph{geometric} if
  the morphism $X \to S$ is geometric, and write $\dStg_{S}$ for the full
  subcategory of geometric $S$-stacks.
\end{definition}

\begin{remark}
  This is the definition from \cite{HAG2}, which is slightly different from the definition used in
  \cite{AntieauGepnerBrauer,GaitsgoryRozenblyum1}: there $(-1)$-geometric (or
  actually $0$-geometric in \cite{GaitsgoryRozenblyum1}) is
  defined as an arbitrary coproduct of affines. There is also a characterization of $n$-geometric stacks as
  geometric realizations of smooth groupoids in $(n-1)$-geometric
  stacks, see \cite{HAG2}*{\S 1.3.4}.  
\end{remark}

\begin{definition}
  A morphism $f \colon X \to Y$ of derived stacks is \emph{locally of
    finite presentation} if it is $n$-geometric for some $n$, and for
  any affine stack $Z$ and any morphism $Z \to Y$ there exists an
  $n$-smooth atlas $\coprod U_{i} \to X \times_{Y} Z$ such that the
  composites $U_{i} \to Z$ (which are maps of affine derived stacks)
  are all finitely presented.
\end{definition}

\begin{definition}\label{defn:Artst}
  Following \cite{PTVV}, we say, for brevity, that a
  derived stack $X$ is an \emph{$n$-Artin stack} if it is
  $n$-geometric and locally of finite presentation over $\Spec(\k)$,
  and that it is an \emph{Artin stack} if it is an $n$-Artin stack for
  some $n$. Similarly, we say a morphism $X \to S$ that is
  ($n$-)geometric and locally of finite presentation is an
  \emph{($n$-)Artin $S$-stack}. We write $\dStArt_{S}$ for the full
  subcategory of $\dSt_{S}$ spanned by the Artin $S$-stacks.
\end{definition}

\begin{proposition}\label{propn:geomfinlim}
  A finite limit of $n$-geometric morphisms is $n$-geometric, and a
  finite limit of Artin $S$-stacks is an Artin $S$-stack.
\end{proposition}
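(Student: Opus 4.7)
The plan is to reduce both statements to closure of the respective classes under pullbacks, since any finite limit in $\Fun(\Delta^1,\dSt)$ (respectively, $\dSt_S$) is obtained from the terminal object and iterated pullbacks, and the terminal objects ($\id_{\Spec\k}$, which is affine and thus $(-1)$-geometric, and $\id_S$, trivially Artin) lie in the respective classes. The key closure properties we need from \cite{HAG2}*{\S 1.3.3} are: stability of $n$-geometric morphisms (and of their locally-finitely-presented variants) under composition and base change, and the decrement-under-diagonal property that the relative diagonal $\Delta_f\colon X\to X\times_Y X$ of an $n$-geometric (resp. $n$-Artin) morphism $f\colon X\to Y$ is $(n-1)$-geometric (resp. $(n-1)$-Artin).

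For the first assertion, given a cospan in $\Fun(\Delta^1,\dSt)$ of $n$-geometric morphisms $f_i\colon X_i\to Y_i$ ($i=1,2,3$) with compatible connecting maps $X_1\to X_3\leftarrow X_2$ and $Y_1\to Y_3\leftarrow Y_2$, I would factor the induced pullback morphism $X_1\times_{X_3}X_2 \to Y_1\times_{Y_3}Y_2$ as
\[ X_1\times_{X_3}X_2 \longrightarrow X_1\times_{Y_3}X_2 \longrightarrow Y_1\times_{Y_3}X_2 \longrightarrow Y_1\times_{Y_3}Y_2.\]
The last two arrows are base changes of $f_1$ and $f_2$ along the evident projections from the second and third pullbacks (using in each case that the relevant map factors through the target of the base). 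The first arrow is the base change of the relative diagonal $\Delta_{f_3}\colon X_3\to X_3\times_{Y_3}X_3$ along the map $X_1\times_{Y_3}X_2\to X_3\times_{Y_3}X_3$ induced by the cospan, as is verified by a direct computation of the pullback. Each factor is therefore $n$-geometric, so the composite is.

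For the second assertion, given a cospan of Artin $S$-stacks $X\to Z\leftarrow Y$ with connecting morphisms $f\colon X\to Z$ and $g\colon Y\to Z$, choose a uniform $n$ so all three are $n$-Artin, and factor the structure map $X\times_Z Y\to S$ as
\[ X\times_Z Y \longrightarrow X\times_S Y \longrightarrow X \longrightarrow S,\]
where the last arrow is $n$-Artin by hypothesis, the middle is the base change of $Y\to S$ along $X\to S$, and the first is the base change of the relative diagonal $\Delta_{Z/S}\colon Z\to Z\times_S Z$ (which is $(n-1)$-Artin) along $(f,g)\colon X\times_S Y\to Z\times_S Z$. Composition and base change closure then yields $n$-Artin. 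The main subtlety, beyond invoking the standard closure theorems, is confirming that diagonal-decrement holds for locally-finitely-presented $n$-geometric morphisms in addition to bare $n$-geometric ones; this follows because the diagonal of a locally finitely presented morphism is itself locally finitely presented, a routine check using the smooth atlas description.
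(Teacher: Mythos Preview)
Your argument is correct. The paper does not give a proof at all; it simply cites \cite{AntieauGepnerBrauer}*{Lemma 4.35}. Your reduction to pullbacks via the factorization through the diagonal is the standard argument for this kind of statement, and is essentially what one finds in the cited reference. The one point you flag as a subtlety---that the diagonal of a locally finitely presented geometric morphism is again locally finitely presented---is indeed the only place where one has to do more than invoke the axioms, and you handle it appropriately.
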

\begin{proof}
  This is \cite{AntieauGepnerBrauer}*{Lemma 4.35}.
\end{proof}

\section{Base Change and the Projection Formula}\label{subsec:basechg}
In this section we look at base change and projection formulas on
quasicoherent sheaves for a certain class of morphisms of derived
stacks. These results are originally due to Ben-Zvi, Francis and
Nadler~\cite{BenZviFrancisNadler}*{Proposition 3.10}, but we include a
self-contained exposition of their proof to make it clear that it
works under the abstract categorical condition we need rather than the
stronger geometric hypotheses they consider; see also
\cite{HalpernLeistnerPreygel}*{Proposition A.1.5},
\cite{GaitsgoryRozenblyum1}*{Chapter 3, Proposition 2.2.2}, and
\cite{SAG}*{Proposition 2.5.4.5} for other accounts, using different
geometric conditions.

\begin{definition}
  A commutative square in $\dSt$
\csquare{X'}{X}{Y'}{Y}{\xi}{f'}{f}{\eta}
induces a commutative square of left adjoints
\csquare{\QCoh(Y)}{\QCoh(Y')}{\QCoh(X)}{\QCoh(X'),}{\eta^{*}}{f^{*}}{f'^{*}}{\xi^{*}}
and hence a Beck-Chevalley transformation
\[ \eta^{*}f_{*} \to f'_{*}f'^{*}\eta^{*}f_{*} \simeq
f'_{*}\xi^{*}f^{*}f_{*} \to f'_{*} \xi^{*}.\]
We say that the commutative square  in $\dSt$ \emph{satisfies base change} if the
Beck-Chevalley transformation is an equivalence, and we say that $f$
 \emph{satisfies base change} if every such \emph{cartesian}
square satisfies base change.
\end{definition}

\begin{definition}
  For any $f \colon X \to Y$ the functor $f^{*} \colon \QCoh(Y) \to
  \QCoh(X)$ is symmetric monoidal, and so induces a natural map
  \[ f_{*} \mathcal{E} \otimes \mathcal{F} \to f_{*}(\mathcal{E}
  \otimes f^{*}\mathcal{F}),\]
  adjoint to
  $f^{*}(f_{*}\mathcal{E} \otimes \mathcal{F}) \simeq
  f^{*}f_{*}\mathcal{E} \otimes f^{*}\mathcal{F} \to \mathcal{E}
  \otimes f^{*}\mathcal{F}$.
  We say that $f$ \emph{satisfies the projection formula} if this is
  an equivalence for all $\mathcal{F} \in \QCoh(Y)$,
  $\mathcal{E} \in \QCoh(X)$.
\end{definition}

We start with the basic case of morphisms of affine stacks:
\begin{lemma}\label{lem:affinebc}\
  \begin{enumerate}[(i)]
  \item Given a morphism $\phi \colon A \to B$ in $\CAlgkc$, the
    morphism \[f := \Spec \phi \colon \Spec B \to \Spec A\] satisfies
    the projection formula.
  \item Given a pushout square
    \csquare{A}{B}{C}{C \otimes_{A} B}{\phi}{\gamma}{\gamma'}{\phi'}
    in $\CAlgkc$, the
  cartesian square
  \csquare{\Spec C \otimes_A B}{\Spec C}{\Spec B}{\Spec A,}{f'}{g'}{g}{f}
  where $g := \Spec \gamma$, $f := \Spec \phi$, $g' := \Spec \gamma'$,
  $f' := \Spec \phi'$, satisfies base change.
  \end{enumerate}
\end{lemma}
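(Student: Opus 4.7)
The first step is to make all the functors explicit using the equivalence $\QCoh(\Spec A) \simeq \Mod_A$: for a morphism $\phi \colon A \to B$ in $\CAlgkc$ with $f = \Spec \phi$, the pullback functor $f^*$ corresponds to the extension of scalars $\phi_! = B \otimes_A (\blank) \colon \Mod_A \to \Mod_B$, while its right adjoint $f_*$ corresponds to the restriction of scalars $\phi^* \colon \Mod_B \to \Mod_A$ along $\phi$. Under these identifications, the assertions of (i) and (ii) concern natural transformations between functors between module categories that can be written out explicitly in terms of relative tensor products.

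For part (i), we translate the projection formula natural transformation $f_*\mathcal{E} \otimes \mathcal{F} \to f_*(\mathcal{E} \otimes f^*\mathcal{F})$ into a morphism of $A$-modules
\[
\phi^*(\mathcal{E}) \otimes_A \mathcal{F} \longrightarrow \phi^*\bigl(\mathcal{E} \otimes_B (B \otimes_A \mathcal{F})\bigr).
\]
Using the associativity of the tensor product together with the unit equivalence $\mathcal{E} \otimes_B B \simeq \mathcal{E}$, the target simplifies canonically to $\mathcal{E} \otimes_A \mathcal{F}$, so it suffices to verify that the projection formula transformation coincides with this canonical equivalence. Unfolding the definition of the map as the adjoint of
\[
f^*(f_*\mathcal{E} \otimes_A \mathcal{F}) \simeq f^*f_*\mathcal{E} \otimes_B f^*\mathcal{F} \xrightarrow{\epsilon \otimes \id} \mathcal{E} \otimes_B f^*\mathcal{F}
\]
(where $\epsilon$ denotes the counit), one recognizes the whole composite as the associativity equivalence $B \otimes_A (\mathcal{E} \otimes_A \mathcal{F}) \simeq \mathcal{E} \otimes_B (B \otimes_A \mathcal{F})$ combined with the counit acting on the first factor. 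Alternatively, and more cleanly, one observes that both sides preserve colimits separately in $\mathcal{E} \in \Mod_B$ and $\mathcal{F} \in \Mod_A$, so it is enough to check the claim in the universal case $\mathcal{E} = B$, $\mathcal{F} = A$, where both sides reduce to $B$ and the map is visibly an equivalence.

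For part (ii), we use that $\Spec \colon \CAlgkcop \hookrightarrow \dSt$ is fully faithful and preserves limits, and so converts the given pushout square in $\CAlgkc$ into a cartesian square in $\dSt$. Under the module-theoretic translation, the Beck--Chevalley map $g^* f_* \mathcal{E} \to f'_* g'^* \mathcal{E}$ for $\mathcal{E} \in \Mod_B$ becomes
\[
C \otimes_A \mathcal{E} \longrightarrow (C \otimes_A B) \otimes_B \mathcal{E}
\]
in $\Mod_C$, which is again an instance of the associativity equivalence. The main obstacle in both parts is not the underlying algebraic identity, which is tautological, but the bookkeeping required to verify that the naturally-constructed Beck--Chevalley/projection formula transformations genuinely coincide with these canonical equivalences, tracking the units, counits, and the symmetric monoidal coherences of $\Mod_A$; the reduction-to-generators strategy circumvents this by using only that all the relevant functors preserve colimits in the affine setting.
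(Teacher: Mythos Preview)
Your proposal is correct and takes essentially the same approach as the paper: both identify $f_*$ with restriction of scalars and $f^*$ with extension of scalars, and then observe that the projection formula map and the Beck--Chevalley map become the canonical associativity equivalences $M \otimes_A N \simeq M \otimes_B (B \otimes_A N)$ and $C \otimes_A M \simeq (C \otimes_A B) \otimes_B M$, respectively. Your additional remarks about coherence and the alternative reduction-to-generators argument are fine but not needed --- the paper simply asserts these identifications directly.
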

\begin{proof}
  The functor $f_{*}$ is just the forgetful functor
  $\Mod_{B} \to \Mod_{A}$, so for $M \in \Mod_{B}$ and
  $N \in \Mod_{A}$ the projection formula morphism
  $f_{*}M \otimes_{A}N \to f_{*}(M \otimes_{B}f^{*}N)$ is just the
  natural equivalence
  $M \otimes_{A} N \to M \otimes_{B}(B \otimes_{A} N)$, which proves
  (i).
  
  For (ii), the Beck--Chevalley transformation
  \[ g^{*}f_{*}M \to f'_{*}g'^{*}M\]
  for $M \in \Mod_{B}$ is an equivalence since the underlying morphism
  of $\k$-modules is the equivalence
\[ C \otimes_{A} f_{*}M \isoto (C \otimes_{A}B) \otimes_{B} M. \qedhere\]
\end{proof}

Next, we consider the case of affine morphisms:
\begin{proposition}\label{propn:affinemorbc} 
  Suppose $\pi \colon \Spec_{S}\mathcal{A} \to S$ is an affine
  morphism. Then for $p \colon \Spec A \to S$ the pullback square
  \csquare{\Spec p^*\mathcal{A}}{\Spec_S \mathcal{A}}{\Spec
    A}{S}{\bar{p}}{\pi_p}{\pi}{p} satisfies base change.
\end{proposition}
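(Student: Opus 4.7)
The plan is to show that the Beck--Chevalley transformation
\[p^*\pi_*\mathcal{F} \to \pi_{p,*}\bar{p}^*\mathcal{F}\]
is an equivalence in $\Mod_A$ by constructing, via descent from the affine-to-affine case, a candidate functor $\pi'_* \colon \QCoh(\Spec_S\mathcal{A}) \to \QCoh(S)$ whose base change along every $q \colon \Spec B \to S$ in $\dAff_{/S}$ holds by construction, and then identifying $\pi'_*$ with the right adjoint $\pi_*$ via the Yoneda lemma.

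First, for each $q \in \dAff_{/S}$, I will use Proposition~\ref{propn:pbrelspec} to obtain the pullback square
\[
\begin{tikzcd}
\Spec q^*\mathcal{A} \arrow{r}{\bar{q}} \arrow{d}{\pi_q} & \Spec_S\mathcal{A} \arrow{d}{\pi} \\
\Spec B \arrow{r}{q} & S,
\end{tikzcd}
\]
with $\pi_{q,*} \colon \Mod_{q^*\mathcal{A}} \to \Mod_B$ the forgetful functor, and define $\Phi_q := \pi_{q,*}\bar{q}^* \colon \QCoh(\Spec_S\mathcal{A}) \to \Mod_B$. For any morphism $\alpha \colon q' \to q$ in $\dAff_{/S}$, pasting pullback squares exhibits $\Spec q'^*\mathcal{A} \to \Spec q^*\mathcal{A}$ as the affine base change of $\alpha$, so Lemma~\ref{lem:affinebc}(ii) provides natural equivalences $\alpha^*\Phi_q \simeq \Phi_{q'}$. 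Working with the cocartesian fibration $\mathcal{Q}$ of Definition~\ref{def:Qfib}, I will assemble these coherently into a functor $\pi'_* \colon \QCoh(\Spec_S\mathcal{A}) \to \QCoh(S)$ characterized, under the descent description $\QCoh(S) \simeq \lim_{q \in \dAff_{/S}^{\op}} \Mod_B$, by $q^*\pi'_*\mathcal{F} \simeq \pi_{q,*}\bar{q}^*\mathcal{F}$ for all $q$.

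Next, for any $\mathcal{G} \in \QCoh(S)$ and $\mathcal{F} \in \QCoh(\Spec_S\mathcal{A})$, I will combine descent, the identity $\bar{q}^*\pi^* \simeq \pi_q^* q^*$ (functoriality of $\QCoh$ applied to the pullback square), and the affine adjunction $\pi_q^* \dashv \pi_{q,*}$ to compute
\begin{align*}
\Map(\mathcal{G}, \pi'_*\mathcal{F})
&\simeq \lim_q \Map_{\Mod_B}(q^*\mathcal{G}, \pi_{q,*}\bar{q}^*\mathcal{F}) \\
&\simeq \lim_q \Map_{\Mod_{q^*\mathcal{A}}}(\pi_q^* q^*\mathcal{G}, \bar{q}^*\mathcal{F}) \\
&\simeq \lim_q \Map_{\Mod_{q^*\mathcal{A}}}(\bar{q}^*\pi^*\mathcal{G}, \bar{q}^*\mathcal{F}) \\
&\simeq \Map(\pi^*\mathcal{G}, \mathcal{F}).
\end{align*}
Yoneda then gives $\pi'_* \simeq \pi_*$, and specializing the characterizing property of $\pi'_*$ to $q = p$ yields the desired base change equivalence.

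The main technical obstacle lies in the second step: the pointwise equivalences $\alpha^*\Phi_q \simeq \Phi_{q'}$ supplied by affine base change must be upgraded to a fully coherent natural system, so that $\pi'_*$ is a genuine functor of $\infty$-categories rather than merely a collection of pointwise-compatible functors. This will require constructing a cocartesian section of the relevant fibration (arising from $\mathcal{Q}$ by restriction) instead of gluing the $\Phi_q$ one morphism at a time. Once this coherent assembly is in place, the identification $\pi'_* \simeq \pi_*$ via Yoneda is essentially formal.
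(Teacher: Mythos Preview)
Your proposal is correct and follows essentially the same approach as the paper: assemble a candidate functor from the pointwise $\pi_{q,*}\bar{q}^{*}$ using affine base change (Lemma~\ref{lem:affinebc}(ii)) for the compatibility, then identify it with $\pi_{*}$ via the same Yoneda/adjunction computation you wrote down. The paper also flags the coherent assembly as the nontrivial step, deferring it to the constructions in \S\ref{subsec:natbasechg}.
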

\begin{proof} 
  For a morphism \opctriangle{\Spec B}{\Spec A}{S}{\phi}{q}{p} in
  $\dAff_{/S}$, we have a pullback square of derived stacks
  \csquare{\Spec q^{*}\mathcal{A}}{\Spec p^* \mathcal{A}}{\Spec
    B}{\Spec A,}{\bar{\phi}}{\pi_q}{\pi_p}{\phi} \ie{}
  $q^{*}\mathcal{A} \simeq B \otimes_{A} p^{*}\mathcal{A}$. 
  By Lemma~\ref{lem:affinebc} the corresponding Beck--Chevalley transformation
  \[ \phi^{*}\pi_{p,*} \to \pi_{q,*}\bar{\phi}^{*}\]
  is an
  equivalence.
  As we will show in the next section, the maps
  \[ \Mod_{p^{*}\mathcal{A}} \xto{\pi_{p,*}}
\Mod_{p^{*}\mathcal{O}_{S}}\] are natural in $\dAff_{/S}^{\op}$
and so induce a functor
  \[ \Phi \colon \QCoh(X) \simeq \lim_{p} \Mod_{p^{*}\mathcal{A}}\to
\lim_{p} \Mod_{p^{*}\mathcal{O}_{S}}\simeq \QCoh(S),\]
which by construction satisfies $p^{*}\Phi \simeq
\pi_{p,*}\bar{p}^{*}$. Moreover, this functor
$\Phi$ is the right adjoint $\pi_{*}$, since we have natural
equivalences
  \[ 
  \begin{split}
\Map_{\QCoh(S)}(\mathcal{M}, \Phi(\mathcal{N})) & \simeq
\lim_{p}\Map_{\Mod_{p^{*}\mathcal{O}_{X}}}(p^{*}\mathcal{M},
p^{*}\Phi(\mathcal{N})) \\ & \simeq
\lim_{p}\Map_{\Mod_{p^{*}\mathcal{O}_{X}}}(p^{*}\mathcal{M},
\pi_{p,*}\bar{p}^{*}(\mathcal{N}))\\
&  \simeq
\lim_{p}\Map_{\Mod_{p^{*}\mathcal{A}}}(\pi_{p}^{*}p^{*}\mathcal{M},
\bar{p}^{*}(\mathcal{N}))  \\ & \simeq
\lim_{p}\Map_{\Mod_{p^{*}\mathcal{A}}}(\bar{p}^{*}\pi^{*}\mathcal{M},
\bar{p}^{*}(\mathcal{N})) \\ & \simeq \Map_{\QCoh(X)}(\pi^{*}\mathcal{M},
\mathcal{N}).
\end{split}
\]
Thus we indeed have $p^{*}\pi_{*} \simeq \pi_{p,*}\bar{p}^{*}$,
as required.
\end{proof}

\begin{corollary}\label{cor:affinemonadic} 
  Suppose $f \colon X \to S$ is an affine morphism. Then:
  \begin{enumerate}[(i)]
  \item $f_{*} \colon \QCoh(X) \to \QCoh(S)$ preserves colimits and
detects equivalences.
  \item The adjunction $f^{*} \dashv f_{*}$ is monadic.
  \item The functor $f_{*}$ has a right adjoint.
  \end{enumerate}
\end{corollary}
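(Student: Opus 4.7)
The plan is to deduce all three statements from Proposition~\ref{propn:affinemorbc} by reducing everything to a pointwise check on affine covers. Write $X = \Spec_S \mathcal{A}$, and for each $p \colon \Spec A \to S$ let $\bar{p} \colon \Spec p^*\mathcal{A} \to X$ and $\pi_p \colon \Spec p^*\mathcal{A} \to \Spec A$ denote the legs of the pullback square, so that by Proposition~\ref{propn:affinemorbc} there is a natural equivalence $p^* f_* \simeq \pi_{p,*} \bar{p}^*$. Note that $\pi_{p,*}$ is the forgetful functor $\Mod_{p^*\mathcal{A}} \to \Mod_A$, which preserves all colimits and is conservative (since $p^*\mathcal{A}$ is connective and modules over an algebra have colimits computed in the underlying category of modules over the base).

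For (i), given a diagram $\overline{q} \colon \mathcal{I}^{\triangleright} \to \QCoh(X)$, I would check that $f_*$ applied to it is a colimit diagram by using Corollary~\ref{cor:colimeqonpoints}(i): it suffices to show that $p^* f_* \overline{q} \simeq \pi_{p,*} \bar{p}^* \overline{q}$ is a colimit diagram for every $p$. Since $\bar{p}^*$ preserves colimits and $\pi_{p,*}$ preserves colimits between categories of modules over a connective algebra, this follows. Conservativity is the same argument using Corollary~\ref{cor:qcoheqonpts}: a morphism $\alpha$ in $\QCoh(X)$ becomes an equivalence after applying $f_*$ iff $p^* f_* \alpha \simeq \pi_{p,*}\bar{p}^*\alpha$ is an equivalence for every $p$, and since each $\pi_{p,*}$ is conservative this reduces to checking that $\bar{p}^*\alpha$ is an equivalence for every $p$, which by Corollary~\ref{cor:qcoheqonpts} applied on $X$ is equivalent to $\alpha$ being an equivalence.

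For (ii), I would invoke the Barr--Beck--Lurie monadicity theorem (\cite{HA}*{Theorem 4.7.3.5}): the adjunction $f^* \dashv f_*$ is monadic provided $f_*$ is conservative and preserves geometric realizations of $f_*$-split simplicial objects. Both conditions are implied by (i), since $f_*$ is conservative and preserves all (small) colimits.

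For (iii), observe that $\QCoh(X)$ and $\QCoh(S)$ are presentable $\infty$-categories, as each is a small limit in $\PrL$ of the presentable \icats{} $\Mod_A$. Since $f_*$ preserves all small colimits by (i), the adjoint functor theorem (\cite{HTT}*{Corollary 5.5.2.9}) produces a right adjoint. The only subtle point in the whole argument is ensuring that the pointwise base change equivalences of Proposition~\ref{propn:affinemorbc} are genuinely natural in $p \in \dAff_{/S}^{\op}$ so that the reductions in (i) are legitimate, but this is already encoded in the construction of $f_*$ as the right adjoint assembled from the $\pi_{p,*}\bar{p}^*$ in the proof of that proposition.
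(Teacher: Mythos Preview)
Your proof is correct and follows essentially the same route as the paper: reduce via the base change equivalence $p^{*}f_{*} \simeq \pi_{p,*}\bar{p}^{*}$ of Proposition~\ref{propn:affinemorbc} to the affine case, use that the forgetful functor $\Mod_{p^{*}\mathcal{A}} \to \Mod_{A}$ preserves colimits and is conservative, and then invoke Barr--Beck--Lurie and the adjoint functor theorem for (ii) and (iii). The only cosmetic difference is that you cite Corollaries~\ref{cor:qcoheqonpts} and~\ref{cor:colimeqonpoints} explicitly for the pointwise reductions, while the paper argues this directly from the limit description of $\QCoh(S)$; also, in your colimit argument you should start from a \emph{colimit} diagram $\overline{q}$ in $\QCoh(X)$, and the connectivity of $p^{*}\mathcal{A}$ is not actually needed for the forgetful functor to be conservative and colimit-preserving.
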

\begin{proof}
  From the equivalence $\QCoh(S) \simeq \lim_{p}
\Mod_{p^{*}\mathcal{O}_{X}}$ we see that the functors $p^{*} \colon \QCoh(S) \to
\Mod_{p^{*}\mathcal{O}_{X}}$ jointly detect equivalences. Since they
also preserve colimits, this implies that to prove that $f_{*}$ preserves
colimits it suffices to show that the functors $p^{*}f_{*}$ preserve
colimits and jointly detect equivalences. By
Proposition~\ref{propn:affinemorbc} if $f$ is equivalent to the
projection $\Spec_{S}\mathcal{A} \to S$ for $\mathcal{A} \in \QCAlgconn(S)$, then we may identify $p^{*}f_{*}$ with $f_{p,*}\bar{p}^{*}$. Here
$f_{p,*}$ is the forgetful functor $\Mod_{p^{*}\mathcal{A}} \to
\Mod_{p^{*}\mathcal{O}_{X}}$, which detects equivalences and preserves
colimits. Since $\bar{p}^{*}$ is a left adjoint, this implies that
$p^{*}f_{*}$ preserves colimits for all $p$, hence $f_{*}$ preserves
colimits. To see that $f_{*}$ detects equivalences, it suffices to
show that a morphism $\phi \colon \mathcal{M} \to \mathcal{N}$ in
$\QCoh(X)$ is an equivalence \IFF{} $p^{*}f_{*}\phi$ is an equivalence
for all $p$. The latter holds \IFF{} $f_{p,*}\bar{p}^{*}\phi$ is an
equivalence, and as $f_{p,*}$ detects equivalences this is true \IFF{}
$\bar{p}^{*}\phi$ is an equivalence. The functors $\bar{p}^{*}$ jointly
detect equivalences, since $\QCoh(X)$ is the limit
$\lim_{p}\Mod_{p^{*}\mathcal{A}}$, \ie{} $\phi$ is an equivalence
\IFF{} $\bar{p}^{*}\phi$ is an equivalence for all $p$. This proves
(i). Now (ii) follows from the monadicity theorem and (iii) from the
adjoint functor theorem, since the \icats{} $\QCoh(X)$ and $\QCoh(S)$
are presentable.
\end{proof}

This leads to a description of quasicoherent sheaves for an affine map
that will be useful later on:
\begin{proposition}\label{propn:relspecqcoh}
  Let $\pi$ denote the projection $\Spec_{S} \mathcal{A} \to S$ for an
  object $\mathcal{A} \in \QCAlgconn(S)$. Then the functor
  \[\pi_{*} \colon \QCoh(\Spec_{S}\mathcal{A}) \to \QCoh(S)\]
  is lax symmetric monoidal. The unit
  $\mathcal{O}_{\Spec_{S}\mathcal{A}}$ is mapped to $\mathcal{A}$ by
  Corollary~\ref{cor:pfOSpecSA} and so this functor canonically
  factors through
  \[ \QCoh(\Spec_{S}\mathcal{A}) \to \Mod_{\mathcal{A}}(\QCoh(S)).\]
  This functor is an equivalence, and under this equivalence the functor
  \[\pi^{*} \colon \QCoh(S) \to \QCoh(\Spec_{S}\mathcal{A})\]
  corresponds to $\mathcal{A} \otimes_{\mathcal{O}_{S}} \blank$.
\end{proposition}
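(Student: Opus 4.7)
The plan is to combine the monadicity of $\pi^{*}\dashv\pi_{*}$ established in Corollary~\ref{cor:affinemonadic} with a projection formula for $\pi$, and then identify the resulting monad. The first claim that $\pi_{*}$ is lax symmetric monoidal is automatic, since $\pi^{*}$ is symmetric monoidal. It then lifts canonically to a functor
\[
 \Phi \colon \QCoh(\Spec_{S}\mathcal{A}) \longrightarrow \Mod_{\pi_{*}\mathcal{O}_{\Spec_{S}\mathcal{A}}}(\QCoh(S)) \simeq \Mod_{\mathcal{A}}(\QCoh(S)),
\]
using the identification $\pi_{*}\mathcal{O}_{\Spec_{S}\mathcal{A}}\simeq \mathcal{A}$ from Corollary~\ref{cor:pfOSpecSA}.

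Next, I would establish the projection formula for $\pi$: for $\mathcal{M}\in\QCoh(\Spec_{S}\mathcal{A})$ and $\mathcal{N}\in\QCoh(S)$, the natural map
\[
 \pi_{*}\mathcal{M}\otimes_{\mathcal{O}_{S}}\mathcal{N} \longrightarrow \pi_{*}(\mathcal{M}\otimes_{\mathcal{O}_{\Spec_{S}\mathcal{A}}}\pi^{*}\mathcal{N})
\]
is an equivalence. By Corollary~\ref{cor:qcoheqonpts} it suffices to check this after applying $p^{*}$ for every $p\colon \Spec A\to S$. Using base change for the affine morphism $\pi$ (Proposition~\ref{propn:affinemorbc}), the fact that $p^{*}$ is symmetric monoidal, and the equivalence $\bar{p}^{*}\pi^{*}\mathcal{N}\simeq \pi_{p}^{*}p^{*}\mathcal{N}$, both sides reduce to expressions built from the projection formula for the affine morphism $\pi_{p}\colon\Spec p^{*}\mathcal{A}\to \Spec A$, which is Lemma~\ref{lem:affinebc}(i). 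Specializing to $\mathcal{M}=\mathcal{O}_{\Spec_{S}\mathcal{A}}$ then gives a natural equivalence $\pi_{*}\pi^{*}\mathcal{N} \simeq \mathcal{A}\otimes_{\mathcal{O}_{S}}\mathcal{N}$ of endofunctors of $\QCoh(S)$.

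Now both $\pi^{*}\dashv\pi_{*}$ and the free–forgetful adjunction $\mathcal{A}\otimes_{\mathcal{O}_{S}}\blank \dashv U$ for $U\colon \Mod_{\mathcal{A}}(\QCoh(S))\to\QCoh(S)$ are monadic: the first by Corollary~\ref{cor:affinemonadic}(ii), the second by the Barr–Beck criterion (equivalently, by the general monadicity of modules over an algebra in a presentable symmetric monoidal \icat{}). The functor $\Phi$ is a morphism of adjunctions lifting the identity of $\QCoh(S)$, so it induces a map of the associated monads $\pi_{*}\pi^{*}\to \mathcal{A}\otimes_{\mathcal{O}_{S}}\blank$, and unwinding the lax monoidal structure on $\pi_{*}$ identifies this map with the projection-formula equivalence just established, compatibly with units and multiplications. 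Hence the two monads are equivalent, and by monadicity $\Phi$ is an equivalence of \icats{}. Under this equivalence the left adjoint $\pi^{*}$ is transported to the free functor $\mathcal{A}\otimes_{\mathcal{O}_{S}}\blank$, by uniqueness of left adjoints.

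The main step where care is needed is the identification of the monad structures (not merely the underlying endofunctors): one must verify that the comparison $\pi_{*}\pi^{*}\to \mathcal{A}\otimes_{\mathcal{O}_{S}}\blank$ coming from $\Phi$ agrees with the one produced by the projection formula as maps of monads. This is a coherence check that follows formally from the fact that the projection formula morphism is defined via the lax monoidal structure on $\pi_{*}$ and the (strong) monoidal structure on $\pi^{*}$, but it is the one point where some bookkeeping is required.
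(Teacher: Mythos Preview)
Your proposal is correct and follows essentially the same approach as the paper: both arguments use monadicity of $\pi^{*}\dashv\pi_{*}$ (Corollary~\ref{cor:affinemonadic}) together with monadicity of the free--forgetful adjunction for $\mathcal{A}$-modules, and then identify the two monads by checking $\pi_{*}\pi^{*}\simeq\mathcal{A}\otimes_{\mathcal{O}_{S}}\blank$ pointwise over $\dAff_{/S}$ via base change. The paper is slightly more direct in two places: it does not pass through the full projection formula but checks the special case $\pi_{*}\pi^{*}\simeq\mathcal{A}\otimes_{\mathcal{O}_{S}}\blank$ immediately at affine points, and for the monad-structure coherence you flag at the end it simply invokes \cite[Corollary~4.7.3.16]{HA}, which in the situation of two monadic adjunctions over a common base shows that the comparison functor is an equivalence as soon as the induced map of underlying endofunctors is one.
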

\begin{proof} 
  In the commutative triangle
  \opctriangle{\QCoh(\Spec_{S}\mathcal{A})}{\Mod_{\mathcal{A}}(\QCoh(S))}{\QCoh(S)}{}{\pi_{*}}{}
  the two diagonal morphisms are both right adjoints, with left
  adjoints given respectively by $\pi^{*}$ and the free
  $\mathcal{A}$-module functor. The right-hand adjunction is monadic,
  with the underlying endofunctor of the monad given by
  $\mathcal{A} \otimes_{\mathcal{O}_{X}} \blank$, as is the left-hand
  adjunction by Corollary~\ref{cor:affinemonadic}(ii). Thus by
  \cite{HA}*{Corollary 4.7.3.16} to see that the horizontal morphism
  is an equivalence it suffices to show that the induced natural
  transformation
  $\pi_{*}\pi^{*} \to \mathcal{A} \otimes_{\mathcal{O}_{S}} \blank$ of
  endofunctors on $\QCoh(S)$ is an equivalence. To see this it
  suffices to check this transformation induces an equivalence after
  applying $p^{*}$ for every $p \colon \Spec A \to S$. Using
  Proposition~\ref{propn:affinemorbc} and the symmetric monoidality of
  $p^{*}$ we may identify this with the natural map
  \[ p^{*}\pi_{*}\pi^{*} \simeq \pi_{p,*}\bar{p}^{*}\pi^{*} \simeq
\pi_{p,*} \pi_{p}^{*}p^{*} \to p^{*}\mathcal{A} \otimes_{A}
p^{*}\blank,\] which is indeed an equivalence since $\pi_{p,*}$ is the
forgetful functor $\Mod_{p^{*}\mathcal{A}} \to \Mod_{A}$ with left
adjoint given by $p^{*}\mathcal{A} \otimes_{A} \blank$.
\end{proof}

\begin{definition}\label{defn:pfcc}
  A morphism $f \colon X \to Y$ of derived stacks is
  \emph{cocontinuous} if the functor \[f_{*} \colon
  \QCoh(X) \to \QCoh(Y)\] preserves colimits. We say that $f$ is
  \emph{universally cocontinuous} if for every pullback
  square
  \csquare{X'}{X}{Y'}{Y}{\xi}{f'}{f}{\eta}
  the morphism $f'$ is cocontinuous.
\end{definition}
Our goal is now to show that universally cocontinuous
morphisms of derived stacks satisfy base change and the projection
formula. We first consider the case of morphisms to an affine stack:
\begin{lemma}\label{lem:affproj}
  Suppose $f \colon X \to \Spec A$ is cocontinuous. Then
  $f$ satisfies the projection formula.
\end{lemma}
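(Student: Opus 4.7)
The strategy is to check that the projection formula morphism is a natural transformation between two colimit-preserving functors $\Mod_A \to \Mod_A$, and that it is an equivalence on a generator.

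Fix $\mathcal{E} \in \QCoh(X)$. Consider the two functors
\[ F, G \colon \Mod_A \to \Mod_A, \qquad F(\mathcal{F}) := f_*\mathcal{E} \otimes_A \mathcal{F}, \qquad G(\mathcal{F}) := f_*(\mathcal{E} \otimes_{\mathcal{O}_X} f^*\mathcal{F}),\]
together with the projection formula transformation $\alpha \colon F \to G$. First I would check that both $F$ and $G$ preserve colimits: this is clear for $F$ since the tensor product on $\Mod_A$ preserves colimits in each variable, and for $G$ it follows by combining (i) that $f^*$ is a left adjoint and hence preserves colimits, (ii) that $\mathcal{E} \otimes_{\mathcal{O}_X} (\blank)$ preserves colimits in $\QCoh(X)$, and (iii) that $f_*$ preserves colimits by the hypothesis that $f$ is cocontinuous.

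Next I would invoke the fact that $\Mod_A \simeq \Mod_{\Spec A}$ is generated under colimits by the monoidal unit $A$ (every $A$-module is a colimit of free modules, which are themselves coproducts of shifts of $A$). A natural transformation between colimit-preserving functors $\Mod_A \to \Mod_A$ is therefore an equivalence as soon as it is an equivalence at $\mathcal{F} = A$.

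Finally I would check that $\alpha_A$ is an equivalence. Since $A$ is the unit of $\Mod_A$, we have $F(A) = f_*\mathcal{E} \otimes_A A \simeq f_*\mathcal{E}$. On the other hand $f^*A \simeq \mathcal{O}_X$ because $f^*$ is symmetric monoidal and $A$ is the unit of $\Mod_A \simeq \QCoh(\Spec A)$; hence $G(A) = f_*(\mathcal{E} \otimes_{\mathcal{O}_X} \mathcal{O}_X) \simeq f_*\mathcal{E}$, and unwinding the construction of the projection formula morphism shows that under these identifications $\alpha_A$ becomes the identity of $f_*\mathcal{E}$.

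There is no real obstacle here; the only content is the cocontinuity hypothesis, which is precisely what is needed to make $G$ colimit-preserving so that the reduction to the generator $A$ applies.
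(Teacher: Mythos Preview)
Your proof is correct and matches the paper's approach exactly: fix $\mathcal{E}$, observe both sides are colimit-preserving functors of $\mathcal{F} \in \Mod_A$, and reduce to a generator. One small imprecision: $A$ alone does not generate $\Mod_A$ under colimits (the connective modules are closed under colimits, so $A[-1]$ is never reached), so as in the paper one should take $\{A[n]\}_{n \in \ZZ}$ as generators and then note---as the paper does---that colimit-preserving functors between stable \icats{} commute with shifts, reducing the check at $A[n]$ to the check at $A$.
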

\begin{proof}
  Given $\mathcal{E} \in \QCoh(X)$ we have a natural transformation
  \[ f_{*}\mathcal{E} \otimes_{A} (\blank) \to f_{*}(\mathcal{E} \otimes_{X}
  f^{*}(\blank))\]
  of functors $\Mod_{A} \to \Mod_{A}$.
  By assumption both functors preserve colimits, so as $\Mod_{A}$ is
  generated under colimits by $A[n]$ for $n \in \ZZ$ it suffices to show that this
  natural transformation gives an equivalence when evaluated at $A[n]$,
  which is clear since the functors also commute with shifts.
\end{proof}

\begin{lemma}\label{lem:affbasebc}
  Suppose $f \colon X \to \Spec A$ is cocontinuous. Then
  for every map \[g := \Spec \gamma \colon \Spec B \to \Spec A,\] the
  pullback square \csquare{X'}{X}{\Spec B}{\Spec A}{g'}{f'}{f}{g}
  satisfies base change.
\end{lemma}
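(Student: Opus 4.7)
My plan is to reduce the desired base change equivalence to the projection formula for $f$ (Lemma~\ref{lem:affproj}) by identifying both sides of the Beck--Chevalley transformation with concrete tensor constructions, using the fact that the base change $g'\colon X' \to X$ of the affine morphism $g$ is again affine and can be explicitly described.

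First, since $g = \Spec\gamma$ is affine, it corresponds (by Proposition~\ref{propn:affisqcalg}) to the quasicoherent algebra $\mathcal{B} := g_*\mathcal{O}_{\Spec B}$ on $\Spec A$, whose underlying module is simply $B \in \Mod_A$. Proposition~\ref{propn:pbrelspec} then identifies the pullback as $X' \simeq \Spec_X f^*\mathcal{B}$, and in particular $g'$ is an affine morphism. Setting $\mathcal{A} := f^*\mathcal{B}$, Proposition~\ref{propn:relspecqcoh} yields symmetric monoidal equivalences $\QCoh(\Spec B) \simeq \Mod_B(\Mod_A)$ and $\QCoh(X') \simeq \Mod_{\mathcal{A}}(\QCoh(X))$, under which $g^* \simeq B\otimes_A\blank$ and $g'^* \simeq \mathcal{A}\otimes_{\mathcal{O}_X}\blank$.

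Next, for $\mathcal{E} \in \QCoh(X)$, I would compute both sides of the Beck--Chevalley transformation as $B$-modules. Under the monadic equivalence for $\Spec B$, the left-hand side $g^*f_*\mathcal{E}$ corresponds to the free $B$-module $B\otimes_A f_*\mathcal{E}$. For the right-hand side, $g'^*\mathcal{E}$ corresponds to $\mathcal{A}\otimes_{\mathcal{O}_X}\mathcal{E}$ as an $\mathcal{A}$-module, and applying $f'_*$ gives $f_*(\mathcal{A}\otimes_{\mathcal{O}_X}\mathcal{E})$ with its induced $B$-module structure coming from the lax monoidal $f_*$ together with the unit $B \to f_*f^*B = f_*\mathcal{A}$. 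The projection formula for $f$, which holds by Lemma~\ref{lem:affproj} since $f$ is cocontinuous with target an affine stack, then provides a natural equivalence
\[
f_*(f^*\mathcal{B} \otimes_{\mathcal{O}_X} \mathcal{E}) \isoto \mathcal{B}\otimes_A f_*\mathcal{E} = B \otimes_A f_*\mathcal{E},
\]
and the naturality in $\mathcal{B}$ (as a $B$-algebra acting on itself) ensures this is an equivalence of $B$-modules.

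The main thing to verify is that the equivalence produced this way really is the Beck--Chevalley map $g^*f_*\mathcal{E} \to f'_*g'^*\mathcal{E}$ rather than some other comparison map. This should be a routine unwinding: the Beck--Chevalley transformation is built from the unit $\id \to g'_*g'^*$ and the counit $f^*f_* \to \id$, and both identifications above respect this via the monadic descriptions. As a sanity check that avoids these identifications altogether, one can also apply the conservative pushforward $g_*$ (using Corollary~\ref{cor:affinemonadic}(i)) to both sides: using $g_*f'_* \simeq f_*g'_*$ and the projection formula, one immediately sees $g_*g^*f_*\mathcal{E} \simeq B\otimes_A f_*\mathcal{E} \simeq g_*f'_*g'^*\mathcal{E}$, and conservativity of $g_*$ then forces the Beck--Chevalley map itself to be an equivalence.
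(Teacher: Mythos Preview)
Your proposal is correct and uses essentially the same ingredients as the paper's proof. In fact, your closing ``sanity check'' via the conservative pushforward $g_*$ is \emph{exactly} the paper's argument: the paper applies $g_*$ to the Beck--Chevalley map, uses the projection formulas for both $f$ and $g$ (from Lemma~\ref{lem:affproj}) to identify the source as $f_*(f^*g_*B \otimes \mathcal{E})$, and then invokes the monadic description of $\QCoh(X')$ (Proposition~\ref{propn:relspecqcoh}) to identify the target the same way. Your ``main approach'' of working directly at the level of $B$-modules is a mild repackaging of the same computation, and you correctly flag its one annoyance---tracking that the displayed equivalence really is the Beck--Chevalley map---which the conservativity argument neatly sidesteps.
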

\begin{proof}
  Since $g_{*} \colon \Mod_{B} \to \Mod_{A}$ detects equivalences,
  to show that $g^{*}f_{*}\mathcal{E} \to
  f'_{*}g'^{*}\mathcal{E}$ is an equivalence it suffices to
  show that \[g_{*}g^{*}f_{*}\mathcal{E} \to
  g_{*}f'_{*}g'^{*}\mathcal{E} \simeq
  f_{*}g'_{*}g'^{*}\mathcal{E}\] is an equivalence.
  By Lemma~\ref{lem:affproj} the maps $f$ and $g$ satisfy the
  projection formula, so we have equivalences
  \[ g_{*}g^{*}f_{*}\mathcal{E} \simeq g_{*}B \otimes_{A}
  f_{*}\mathcal{E} \simeq f_{*}(f^{*}g_{*}B \otimes \mathcal{E}).\]
  It is thus enough to show that
  $g'_{*}g'^{*}\mathcal{E}$ is equivalent to
  $f^{*}g_{*}B \otimes \mathcal{E}$. By Proposition~\ref{propn:pbrelspec}
  we can identify $X'$ with $\Spec_{X} f^{*}g_{*}B$, and by
  Proposition~\ref{propn:relspecqcoh} there is then an equivalence $\QCoh(X')
  \simeq \Mod_{f^{*}g_{*}B}(\QCoh(X))$ under which $g'_{*}$
  corresponds to the forgetful functor from $f^{*}g_{*}B$-modules
  to $\QCoh(X)$ and $g'^{*}$
  corresponds to tensoring with $f^{*}g_{*}B$. The natural map 
  $f^{*}g_{*}B \otimes \mathcal{E} \to
  g'_{*}g'^{*}\mathcal{E}$ is therefore an
  equivalence, which completes the proof.
\end{proof}

\begin{proposition}\label{propn:affpbbc}
  Let $f \colon X \to Y$ be a morphism such that for all morphisms $p
  \colon \Spec A \to Y$ the base change $f_{p} \colon X_{p}\to \Spec
  A$ is cocontinuous. Then the cartesian squares
  \csquare{X_p}{X}{\Spec A}{Y}{\bar{p}}{f_p}{f}{p}
  all satisfy base change.
\end{proposition}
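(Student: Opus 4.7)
The strategy is to reduce to the affine-target case already handled by Lemma~\ref{lem:affbasebc} by exhibiting the pushforward $f_{*}$ as a functor that is locally (over $Y$) given by the desired base-change formula. Since $Y \simeq \colim_{r \colon \Spec C \to Y} \Spec C$ in $\dSt$ by Remark~\ref{rmk:dstcolimaff}, the descent equivalence $\QCoh(Y) \simeq \lim_{r \in \dAff_{/Y}^{\op}} \Mod_{C}$ lets me assemble a functor $\Phi \colon \QCoh(X) \to \QCoh(Y)$ whose component at each $r$ is $f_{r,*}\bar{r}^{*}$, where $f_{r} \colon X_{r} \to \Spec C$ and $\bar{r} \colon X_{r} \to X$ are the base changes of $f$ and $r$ along each other.

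The key compatibility required to obtain such a functor is the following: given a morphism $g \colon \Spec C' \to \Spec C$ over $Y$, the outer rectangle and the two interior squares in
\[
\begin{tikzcd}
X_{C'} \arrow{r}{\bar{g}} \arrow{d}[swap]{f_{C'}} & X_{C} \arrow{r}{\bar{r}} \arrow{d}{f_{C}} & X \arrow{d}{f} \\
\Spec C' \arrow{r}{g} & \Spec C \arrow{r}{r} & Y
\end{tikzcd}
\]
are all cartesian, and $f_{C}$ is cocontinuous by hypothesis; hence Lemma~\ref{lem:affbasebc} applied to $f_{C}$ and $g$ supplies the required natural equivalence $g^{*}f_{C,*} \simeq f_{C',*}\bar{g}^{*}$. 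I will make $\Phi$ precise at the \icatl{} level exactly as in the proof of Proposition~\ref{propn:affinemorbc}, using the cocartesian fibration $\mathcal{Q} \to \dSt^{\op}$ to package the local data $\{f_{r,*}\bar{r}^{*}\}_{r}$ into an honest section over $\dAff_{/Y}^{\op}$ that is automatically cocartesian by the equivalences just produced, hence yields an object of $\QCoh(Y)$.

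To conclude, I will show that $\Phi$ is right adjoint to $f^{*}$, which by uniqueness of right adjoints gives $\Phi \simeq f_{*}$ and hence the base-change equivalence $p^{*}f_{*} \simeq f_{p,*}\bar{p}^{*}$ for every $p \colon \Spec A \to Y$. Using the descent limit $\QCoh(Y) \simeq \lim_{r} \Mod_{C}$, the equivalence $\QCoh(X) \simeq \lim_{r} \QCoh(X_{r})$ (coming from $X \simeq \colim_{r} X_{r}$, since colimits are universal in the \itopos{} $\dSt$), the adjunctions $f_{r}^{*} \dashv f_{r,*}$, and the base-change identity $f_{r}^{*}r^{*} \simeq \bar{r}^{*}f^{*}$ on left adjoints, for $\mathcal{F} \in \QCoh(Y)$ and $\mathcal{E} \in \QCoh(X)$ one computes
\[ \Map(\mathcal{F}, \Phi(\mathcal{E})) \simeq \lim_{r} \Map(r^{*}\mathcal{F}, f_{r,*}\bar{r}^{*}\mathcal{E}) \simeq \lim_{r} \Map(\bar{r}^{*}f^{*}\mathcal{F}, \bar{r}^{*}\mathcal{E}) \simeq \Map(f^{*}\mathcal{F}, \mathcal{E}). \]

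The main obstacle, as usual with this kind of statement, is the \icatl{} bookkeeping required to promote the fibrewise formulas and the compatibility supplied by Lemma~\ref{lem:affbasebc} into a genuine functor $\Phi$ together with a genuine adjunction $f^{*} \dashv \Phi$; this is completely analogous to the treatment of relative $\Spec$ in the proof of Proposition~\ref{propn:affinemorbc} and should go through with no essential new difficulty.
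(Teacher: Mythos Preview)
Your proposal is correct and takes essentially the same approach as the paper: both assemble the local data $\{f_{r,*}\bar{r}^{*}\}$ into a cocartesian section over $\dAff_{/Y}^{\op}$ using Lemma~\ref{lem:affbasebc}, then identify the resulting object with $f_{*}\mathcal{E}$. The only minor variation is in that last identification: the paper writes $\mathcal{F} = \lim_{p} p_{*}f_{p,*}\bar{p}^{*}\mathcal{E} \simeq f_{*}\lim_{p}\bar{p}_{*}\bar{p}^{*}\mathcal{E} \simeq f_{*}\mathcal{E}$ directly (using that $f_{*}$ preserves limits and Corollary~\ref{cor:qcohrellim}), whereas you verify the adjunction $f^{*} \dashv \Phi$ via the mapping-space computation, exactly as the paper itself does in the proof of Proposition~\ref{propn:affinemorbc}.
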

\begin{proof}
  We have an equivalence $\QCoh(Y) \simeq \lim_{p \in
    \dAff_{/Y}^{\op}} \Mod_{p^{*}\mathcal{O}_{Y}}$.  For
  $\mathcal{E} \in \QCoh(X)$ consider
  the section $p \mapsto f_{p,*}\bar{p}^{*}\mathcal{E}$ of the
  cocartesian fibration associated to the diagram $p \mapsto
  \Mod_{p^{*}\mathcal{O}_{X}}$. By Lemma~\ref{lem:affbasebc} for every
  map $g := \Spec \phi \colon
  \Spec B \to \Spec A$ the map
  \[ g^{*}f_{p,*}\bar{p}^{*}\mathcal{E} \to
  f_{q,*}\bar{q}^{*}\mathcal{E}\] is an equivalence (where $q =
  p\circ g$), hence this is a cocartesian section. By \cite[Lemma
  5.17]{DAG7} this means it can be extended to a cocartesian section
  of the extended fibration over $\dAff_{/Y}$, which is a relative
  limit diagram. In other words, the limit $\mathcal{F} := \lim_{p}
  p_{*}f_{p,*}\bar{p}^{*}\mathcal{E}$ satisfies $p^{*}\mathcal{F}
  \simeq f_{p,*}\bar{p}^{*}\mathcal{E}$. But on the other hand we have
  $\mathcal{F} \simeq \lim_{p}
  f_{*}\bar{p}_{*}\bar{p}^{*}\mathcal{E}$. Since $f_{*}$ preserves
  limits, this is equivalent to $f_{*} \lim_{p}
  \bar{p}_{*}\bar{p}^{*}\mathcal{E}$. Since $\dSt$ is an
  $\infty$-topos, colimits of derived stacks are universal, and so we
  have an equivalence $X \simeq \colim_{p} X_{p}$, giving an
  equivalence $\QCoh(X) \simeq \lim_{p}\QCoh(X_{p})$. By
  Corollary~\ref{cor:qcohrellim} this means that for $\mathcal{E} \in
  \QCoh(X)$ we have an equivalence $\mathcal{E} \simeq \lim_{p}
  \bar{p}_{*}\bar{p}^{*}\mathcal{E}$, hence $\mathcal{F}$ is
  equivalent to $f_{*}\mathcal{E}$ and we have proved
  $p^{*}f_{*}\mathcal{E} \simeq f_{p,*}\bar{p}^{*}\mathcal{E}$, as
  required.
\end{proof}

\begin{corollary}\label{cor:upfcocts}
  A morphism $f \colon X \to Y$ is universally
  cocontinuous \IFF{} for every morphism $p \colon \Spec A \to
  Y$ the base change $f_{p} \colon X_{p} \to \Spec A$ is
  cocontinuous.
\end{corollary}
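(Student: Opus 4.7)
My proof plan is as follows. The forward direction is immediate from the definition: if $f$ is universally cocontinuous, then in particular every base change along a map $p \colon \Spec A \to Y$ is cocontinuous. For the converse, I would start with an arbitrary pullback square
\[
  \begin{tikzcd}
    X' \arrow{r}{\xi} \arrow{d}[swap]{f'} & X \arrow{d}{f} \\
    Y' \arrow{r}{\eta} & Y
  \end{tikzcd}
\]
and aim to show that $f'_{*} \colon \QCoh(X') \to \QCoh(Y')$ preserves colimits.

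The natural strategy is to reduce to the affine case: any map $q \colon \Spec B \to Y'$ composes with $\eta$ to a map $p := \eta q \colon \Spec B \to Y$, and the pullback square for $f'$ along $q$ coincides with the pullback of $f$ along $p$, so the base change $f'_{q} \colon X'_{q} \to \Spec B$ is identified with $f_{p}$, which is cocontinuous by hypothesis. In particular, $f'$ itself satisfies the hypothesis of Proposition~\ref{propn:affpbbc}, so every cartesian square
\[
  \begin{tikzcd}
    X'_{q} \arrow{r}{\bar{q}} \arrow{d}[swap]{f'_{q}} & X' \arrow{d}{f'} \\
    \Spec B \arrow{r}{q} & Y'
  \end{tikzcd}
\]
satisfies base change, giving a natural equivalence $q^{*}f'_{*} \simeq f'_{q,*}\bar{q}^{*}$.

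To conclude, I would use the equivalence $\QCoh(Y') \simeq \lim_{q} \Mod_{q^{*}\mathcal{O}_{Y'}}$ (indexed over $q \in \dAff_{/Y'}^{\op}$): since the pullback functors $q^{*}$ jointly preserve and detect colimits in $\QCoh(Y')$, showing that $f'_{*}$ preserves colimits reduces to showing that $q^{*}f'_{*}$ preserves colimits for every $q$. But $q^{*}f'_{*} \simeq f'_{q,*}\bar{q}^{*}$, where $\bar{q}^{*}$ preserves colimits as a left adjoint and $f'_{q,*}$ preserves colimits by hypothesis, so the composite preserves colimits.

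There is no real obstacle here: the entire content is packaged into Proposition~\ref{propn:affpbbc}, and the argument is simply the standard ``check on affines and use base change'' reduction. The only point worth double-checking is that the hypothesis of Proposition~\ref{propn:affpbbc} is genuinely satisfied by $f'$ (which it is, as every $q \colon \Spec B \to Y'$ produces a corresponding $p = \eta q \colon \Spec B \to Y$ with matching pullback), and that the $q^{*}$ jointly detect colimits, which follows from the descent equivalence together with the fact that colimits in a limit of \icats{} are detected pointwise.
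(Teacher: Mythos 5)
Your proposal is correct and follows essentially the same route as the paper: both directions are handled identically, with the converse reduced to checking $q^{*}f'_{*}$ on points $q \colon \Spec B \to Y'$ of the base, identifying $f'_{q}$ with a base change of $f$ so that Proposition~\ref{propn:affpbbc} gives $q^{*}f'_{*} \simeq f'_{q,*}\bar{q}^{*}$, a composite of colimit-preserving functors. The justification that the $q^{*}$ jointly detect colimits via the descent equivalence is exactly the paper's argument as well.
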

\begin{proof}
  The second condition is obviously implied by $f$ being universally
  cocontinuous. Suppose therefore that the the second
  condition holds and consider a pullback square
  \csquare{X'}{X}{Y'}{Y.}{\xi}{f'}{f}{\eta} We want to show that
  $f'_{*}$ preserves colimits. Since the functors
  $p^{*} \colon \QCoh(Y) \to \Mod_{A}$ for $p \colon \Spec A \to Y$
  jointly detect equivalences and preserve colimits, it suffices to
  show that $p^{*}f'_{*}$ preserves colimits for all $p$. Since the
  second condition holds for $f'$ if it holds for $f$, we know from
  Proposition~\ref{propn:affpbbc} that $p^{*}f'_{*}$ is equivalent to
  $f'_{p,*}\bar{p}^{*}$, which preserves colimits since the base
  change $f'_{p}$ is also a base change of $f$.
\end{proof}

\begin{theorem}[Ben-Zvi--Francis--Nadler
  \cite{BenZviFrancisNadler}]\label{thm:bcpf}
  Suppose $f \colon X \to Y$ is universally
  cocontinuous. Then $f$ satisfies base change and the
  projection formula.
\end{theorem}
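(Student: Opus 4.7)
The plan is to deduce both statements from the affine-target case already established, by checking the relevant natural transformations pointwise on affine points of the base and then reducing to \Cref{propn:affpbbc} and \Cref{lem:affproj}.

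For base change, consider a cartesian square
\csquare{X'}{X}{Y'}{Y}{\xi}{f'}{f}{\eta}
with $f$ universally cocontinuous. Any pullback of $f'$ is again a pullback of $f$, so $f'$ is itself universally cocontinuous, and by \Cref{cor:upfcocts} the hypothesis of \Cref{propn:affpbbc} is satisfied by both $f$ and $f'$. To show the Beck--Chevalley transformation $\eta^{*}f_{*} \to f'_{*}\xi^{*}$ is an equivalence in $\QCoh(Y')$, by \Cref{cor:qcoheqonpts} it suffices to show $q^{*}\eta^{*}f_{*} \to q^{*}f'_{*}\xi^{*}$ is an equivalence for every $q \colon \Spec B \to Y'$. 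Now \Cref{propn:affpbbc} applied to $f$ along the composite $\eta q \colon \Spec B \to Y$ identifies $q^{*}\eta^{*}f_{*} \simeq f_{\eta q,*}\overline{\eta q}^{*}$, and \Cref{propn:affpbbc} applied to $f'$ along $q$ identifies $q^{*}f'_{*} \simeq f'_{q,*}\bar{q}^{*}$. The universal property of the pullback cube gives $f'_{q} = f_{\eta q}$ and $\xi \circ \bar{q} = \overline{\eta q}$. A standard diagram chase (compatibility of Beck--Chevalley transformations under pasting of cartesian squares) then identifies the two expressions, proving base change.

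For the projection formula, fix $\mathcal{E} \in \QCoh(X)$ and $\mathcal{F} \in \QCoh(Y)$; we must show the natural map $f_{*}\mathcal{E} \otimes \mathcal{F} \to f_{*}(\mathcal{E} \otimes f^{*}\mathcal{F})$ is an equivalence in $\QCoh(Y)$. Again, by \Cref{cor:qcoheqonpts} it suffices to check after applying $p^{*}$ for every $p \colon \Spec A \to Y$. Since $p^{*}$ is symmetric monoidal, the base change just established gives
\[ p^{*}(f_{*}\mathcal{E} \otimes \mathcal{F}) \simeq f_{p,*}\bar{p}^{*}\mathcal{E} \otimes_{A} p^{*}\mathcal{F}, \]
while base change together with $\bar{p}^{*}f^{*} \simeq f_{p}^{*}p^{*}$ gives
\[ p^{*}f_{*}(\mathcal{E} \otimes f^{*}\mathcal{F}) \simeq f_{p,*}(\bar{p}^{*}\mathcal{E} \otimes f_{p}^{*}p^{*}\mathcal{F}). \]
Under these identifications the natural transformation becomes precisely the projection formula map for the cocontinuous morphism $f_{p} \colon X_{p} \to \Spec A$, which is an equivalence by \Cref{lem:affproj}.

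The only real content beyond quoting the affine cases is verifying that the pointwise pulled-back natural transformations are genuinely identified with the affine-target Beck--Chevalley and projection formula maps — not merely that their sources and targets match. This is the single bookkeeping step that requires care, and it follows from the naturality built into the construction of these transformations together with the pasting lemma for Beck--Chevalley; given that \Cref{propn:affpbbc} was proved by exhibiting a cocartesian section whose structure maps are the Beck--Chevalley maps, the compatibility is automatic.
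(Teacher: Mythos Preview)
Your proof is correct and follows essentially the same approach as the paper: check the Beck--Chevalley and projection formula maps after pulling back along affine points of the base (using that the $p^{*}$ are jointly conservative), then reduce to \Cref{propn:affpbbc} and \Cref{lem:affproj} via the pasting of cartesian squares. The paper's write-up is slightly terser about the pasting step, but the argument is the same.
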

\begin{proof}
  Suppose we have a cartesian square
  \csquare{X'}{X}{Y'}{Y.}{\xi}{f'}{f}{\eta} Since
  $\QCoh(Y') \simeq \lim_{p \in \dAff_{/Y'}^{\op}}
  \Mod_{p^{*}\mathcal{O}_{Y'}}$,
  the projections $p^{*}$ are jointly conservative by \cite[Lemma
  5.17]{DAG7}. To see that the natural map
  $\eta^{*}f_{*}\mathcal{E} \to f'_{*}\xi^{*}\mathcal{E}$ is an
  equivalence for $\mathcal{E} \in \QCoh(X)$ it therefore suffices to
  show that
  $p^{*}\eta^{*}f_{*}\mathcal{E} \to p^{*}f'_{*}\xi^{*}\mathcal{E}$ is
  an equivalence for every morphism $p \colon \Spec A \to Y'$. Set
  $q := \eta p$, then by Proposition~\ref{propn:affpbbc} we know that the
  cartesian squares
  \[ \nodispcsquare{X'_{p}}{X'}{\Spec A}{Y'}{\bar{p}}{f'_{p}}{f'}{p}
  \qquad \nodispcsquare{X'_{p}}{X}{\Spec A}{Y}{\bar{q}}{f'_{p}}{f}{q}\]
  satisfy base change. Hence we have equivalences
  \[p^{*}\eta^{*}f_{*}\mathcal{E} \simeq q^{*}f_{*}\mathcal{E} \simeq
  f'_{p,*}\bar{q}^{*}\mathcal{E} \simeq
  f'_{p,*}\bar{p}^{*}\xi^{*}\mathcal{E} \simeq
  p^{*}f_{*}\xi^{*}\mathcal{E},\] which implies that $f$ satisfies
  base change.
  
  To see that $f$ satisfies the projection formula, it similarly
  suffices to show that $p^{*}f_{*}(\mathcal{E} \otimes
  f^{*}\mathcal{F}) \to p^{*}(f_{*}\mathcal{E} \otimes
  \mathcal{F}) \simeq p^{*}f_{*}\mathcal{E} \otimes
  p^{*}\mathcal{F}$ is an equivalence for every $p \colon \Spec
  A \to Y$. Using Proposition~\ref{propn:affpbbc} for the pullback along $p$ we
  can identify this with \[f_{p,*}\bar{p}^{*}(\mathcal{E} \otimes
  f^{*}\mathcal{F}) \simeq f_{p,*}(\bar{p}^{*}\mathcal{E} \otimes
  f_{p}^{*}p^{*}\mathcal{F}) \to f_{p,*}\bar{p}^{*}\mathcal{E} \otimes
  p^{*}\mathcal{F} \simeq p^{*}f_{*}\mathcal{E} \otimes
  p^{*}\mathcal{F},\]
  which is an equivalence by Lemma~\ref{lem:affproj}.
\end{proof}

\section{Naturality of Base Change and the Projection Formula}\label{subsec:natbasechg}

In this section we want to construct functors that encode the naturality of pushforward of
quasicoherent sheaves,  as well as its compatibility with tensor
products. We will first show that pushforward of
quasicoherent sheaves determines a functor
\[ \Gamma_{S} \colon \mathcal{Q}_{S} \to \QCoh(S),\] which takes
$(X \xto{f} S, \mathcal{E} \in \QCoh(X))$ to
$\Gamma_{S}\mathcal{E} := f_{*}\mathcal{E} \in \QCoh(S)$, where
$\mathcal{Q}_{S} \to \dSt_{S}^{\op}$ is the cocartesian fibration from
Definition~\ref{def:Qfib}. A morphism
in $\mathcal{Q}_{S}$ from $(X \xto{f} S, \mathcal{E})$ to
$(Y \xto{g} S, \mathcal{F})$ is a morphism $\phi \colon Y \to X$ over
$S$ together with a morphism $\mathcal{E} \to \mathcal{F}$ over
$\phi$, \ie{} a morphism $\phi^{*}\mathcal{E} \to \mathcal{F}$ in
$\QCoh(Y)$ or $\mathcal{E} \to \phi_{*}\mathcal{F}$ in $\QCoh(X)$; the
functor $\Gamma_{S}$ takes this to
$f_{*}\mathcal{E} \to f_{*}\phi_{*}\mathcal{F} \simeq
g_{*}\mathcal{F}$. Next, we will see that if we restrict to the full
subcategory
$\mathcal{Q}_{S}^{\txt{UCC}} := \dSt^{\txt{UCC},\op}_{S}
\times_{\dSt^{\op}} \mathcal{Q}$ lying over the universally
cocontinuous morphisms, then the functors $\Gamma_{S}$ are natural in
$S$ with respect to pullback. More precisely, we have a natural
transformation of functors $\mathcal{Q}^{\txt{UCC}}_{(\blank)} \to
\QCoh(\blank)$, which for
$\sigma \colon S' \to S$ supplies a commutative square
\csquare{\mathcal{Q}_{S'}^{\txt{UCC}}}{\QCoh(S')}{\mathcal{Q}_{S}^{\txt{UCC}}}{\QCoh(S),}{\Gamma_{S'}}{\sigma^{*}}{\sigma^*}{\Gamma_{S}}
where on the left $\sigma^{*}$ takes $(X \to S, \mathcal{E})$ to
$(X' \to S', \xi^{*}\mathcal{E})$, in terms of the pullback square
\csquare{X'}{X}{S'}{S.}{\xi}{}{}{\sigma}
We will also prove analogous statements that take the symmetric
monoidal structure on quasicoherent sheaves into account. The
technical input we need is the following observation:
\begin{construction}\label{constr:cartpb}
  Suppose $\pi \colon \mathcal{E} \to \mathcal{B}$ is a cartesian
  fibration, and consider a commutative diagram
  \[
    \begin{tikzcd}
    \mathcal{C} \arrow{r}{f} \arrow{d}{u} & \mathcal{E}
    \arrow{dd}{\pi} \\
    \mathcal{I} \times \{1\} \arrow{d} \\
    \mathcal{I} \times \Delta^{1} \arrow{r}{p} & \mathcal{B}.
    \end{tikzcd}
  \]
  We can reinterpret this as a diagram
  \[
    \begin{tikzcd}
      \{1\} \arrow{d} \arrow{rr} & & \mathcal{E}^{\mathcal{C}}
      \arrow{d} \\
      \Delta^{1} \arrow[dashed]{urr} \arrow{r} & \mathcal{B}^{\mathcal{I}} \arrow{r} & \mathcal{B}^{\mathcal{C}}.
    \end{tikzcd}
  \]
  Here the right vertical map is again a cartesian fibration by
  \cite{HTT}*{Proposition 3.1.2.1}, so we
  can choose a cartesian lift $\Delta^{1} \to
  \mathcal{E}^{\mathcal{C}}$, which we can unfold to a commutative square
  \[
    \begin{tikzcd}
      \mathcal{C} \times \Delta^{1} \arrow{d}{u \times \Delta^{1}} \arrow{r}{\bar{f}} \arrow{d} &
      \mathcal{E} \arrow{d}{\pi} \\
      \mathcal{I} \times \Delta^{1} \arrow{r}{p} & \mathcal{B}.
    \end{tikzcd}
  \]
  where $\bar{f}|_{\mathcal{C} \times \{1\}} \simeq f$ and
  $\bar{f}(c,0) \to \bar{f}(c,1)$ is the cartesian morphism over
  $p(u(c),0) \to p(u(c),1)$ with target $f(c)$. In particular,
  restricting to the fibre over $\{0\}$ we have a commutative square
  \[
    \begin{tikzcd}
      \mathcal{C} \arrow{d}{u} \arrow{r}{\bar{f}_{0}} & \mathcal{E}
      \arrow{d}{\pi} \\
      \mathcal{I} \arrow{r}{p_{0}} & \mathcal{B},
    \end{tikzcd}
  \]
  where $\bar{f}_{0}(c) \simeq \tilde{p}(u(c))^{*}f(c)$, where
  $\tilde{p}$ is $p$ viewed as a functor $\mathcal{I} \to \mathcal{B}^{\Delta^{1}}$.
\end{construction}

\begin{construction}\label{constr:GammaS}
  For $S \in \dSt$ we have a canonical map
  $\dSt_{S} \times \Delta^{1} \to \dSt$ (given at $0$ by the forgetful
  functor and at $1$ by the constant functor at $S$). We can then
  apply Construction~\ref{constr:cartpb} to the diagram
    \[
    \begin{tikzcd}
    \mathcal{Q}_{S} \arrow{r} \arrow{d}{u} & \mathcal{Q}
    \arrow{dd} \\
    \dSt^{\op}_{S} \times \{1\} \arrow{d} \\
    \dSt^{\op}_{S} \times \Delta^{1} \arrow{r} & \dSt^{\op}
    \end{tikzcd}
  \]
  to get a commutative square
  \csquare{\mathcal{Q}_{S}}{\mathcal{Q}}{\dSt^{\op}_{S}}{\dSt^{\op}.}{}{}{}{}
  Here the bottom horizontal functor is constant at $S$, so we can
  interpret this as a functor $\Gamma_{S} \colon \mathcal{Q}_{S} \to
  \QCoh(S)$.
\end{construction}

\begin{remark}\label{rmk:GammaSradj}
  The functor $\Gamma_{S} \colon \mathcal{Q}_{S} \to \QCoh(S)$ is
  right adjoint to the inclusion of $\QCoh(S)$ as the fibre of
  $\mathcal{Q}_{S}$ over $S$.
\end{remark}

\begin{definition}\label{defn:GammaSO}
  The cocartesian section
  $\mathcal{O} \colon \dSt^{\op} \to \mathcal{Q}$ pulls back to a
  section
  $\mathcal{O}_{/S} \colon \dSt^{\op}_{S} \to
  \mathcal{Q}_{S}$. Composing this with the ``global sections over
  $S$'' functor $\Gamma_{S} \colon \mathcal{Q}_{S} \to \QCoh(S)$ from
  Construction~\ref{constr:GammaS}, we get a functor
  \[\Gamma_{S}\mathcal{O} := \Gamma_{S} \circ \mathcal{O}_{/S} \colon
    \dSt^{\op}_{S} \to \QCoh(S). \] This assigns to $f \colon X \to S$
  the pushforward
  $\Gamma_{S}\mathcal{O}_{X} := f_{*}\mathcal{O}_{X} \in \QCoh(S)$,
  and to a morphism \opctriangle{X}{Y}{S}{\phi}{f}{g} the natural
  morphism
  $g_{*}\mathcal{O}_{Y} \to g_{*}\phi_{*}\phi^{*}\mathcal{O}_{Y}
  \simeq f_{*}\mathcal{O}_{X}$.
\end{definition}

\begin{construction}\label{constr:GammaSotimes}
  We can enhance $\Gamma_{S}$ to take the tensor product of
  quasicoherent sheaves into account: We have a cocartesian fibration
  $\mathcal{Q}^{\otimes} \to \dSt^{\op} \times \xF_{*}$, such
  that the composite $\mathcal{Q}^{\otimes} \to \dSt^{\op}$ is also a
  cartesian fibration, and the functor to $\dSt^{\op} \times
  \xF_{*}$ preserves cartesian morphisms (\ie{} cartesian
  morphisms in $\mathcal{Q}^{\otimes}$ map to isomorphisms in
  $\xF_{*}$). Setting $\mathcal{Q}^{\otimes}_{S} :=
  \dSt_{S}^{\op} \times_{\dSt^{\op}} \mathcal{Q}^{\otimes}$ we can
  apply Construction~\ref{constr:cartpb} to the diagram
    \[
    \begin{tikzcd}
    \mathcal{Q}^{\otimes}_{S} \arrow{r} \arrow{d}{u} & \mathcal{Q}^{\otimes}
    \arrow{dd} \\
    \dSt^{\op}_{S} \times \{1\} \arrow{d} \\
    \dSt^{\op}_{S} \times \Delta^{1} \arrow{r} & \dSt^{\op}
    \end{tikzcd}
  \]
  to get a commutative square
  \csquare{\mathcal{Q}_{S}^{\otimes}}{\mathcal{Q}^{\otimes}}{\dSt^{\op}_{S}}{\dSt^{\op},}{}{}{}{}
  where the bottom map is constant at $S$.
  Since cartesian morphisms map to isomorphisms in $\xF_{*}$,
  this is compatible with the projections to $\xF_{*}$, and we
  get a commutative square
  \[
    \begin{tikzcd}
      \mathcal{Q}_{S}^{\otimes} \arrow{r}{\Gamma_{S}^{\otimes}}
      \arrow{d} & \QCoh(S)^{\otimes} \arrow{d} \\
      \dSt_{S}^{\op} \times \xF_{*} \arrow{r} & \xF_{*}.
    \end{tikzcd}
  \]
  Moreover, since the pushforward functors are lax monoidal, it
  follows that $\Gamma_{S}^{\otimes}$ preserves cocartesian morphisms
  over inert morphisms in $\xF_{*}$ that lie over equivalences
  in $\dSt_{S}^{\op}$.  
\end{construction}

\begin{construction}\label{constr:cobase}
  Continuing the notation of Construction~\ref{constr:cartpb}, as a
  universal case thereof we can apply it to the diagram
  \[
    \begin{tikzcd}
    \txt{ev}_{1}^{*}\mathcal{E} \arrow{r} \arrow{d} & \mathcal{E}
    \arrow{dd}{\pi} \\
    \mathcal{B}^{\Delta^{1}} \times \{1\} \arrow{d} \\
    \mathcal{B}^{\Delta^{1}} \times \Delta^{1} \arrow{r}{\txt{ev}} & \mathcal{B}.
    \end{tikzcd}
  \]
  This gives a functor $\txt{ev}_{1}^{*}\mathcal{E} \times \Delta^{1}
  \to \mathcal{E}$ which takes a pair $(\phi \colon b \to b', x \in
  \mathcal{E}_{b'})$ to the cartesian morphism $\phi^{*}x \to
  x$. Restricting to $\txt{ev}_{1}^{*}\mathcal{E} \times \{0\}$ we get
  a commutative diagram
  \[
    \begin{tikzcd}
      \txt{ev}_{1}^{*}\mathcal{E} \arrow{dr} \arrow{rrrr} & & & & \mathcal{E} \arrow{ddll} \\
      & \mathcal{B}^{\Delta^{1}} \arrow{dr}{\txt{ev}_{0}} \\
      & & \mathcal{B}.
    \end{tikzcd}
  \]
  Suppose $\pi$ is also a cocartesian fibration, and that
  $\mathcal{B}$ has pushouts. Then both the left-hand maps are
  cocartesian fibrations, hence so is the composite
  $\txt{ev}_{1}^{*}\mathcal{E} \to \mathcal{B}$, with the cocartesian
  pushforward over $\phi \colon b \to b'$ at
  $(\psi \colon b \to b'', x \in \mathcal{E}_{b''})$ given in terms of
  the pushout \csquare{b}{b'}{b''}{b'''}{\phi}{\psi}{\psi'}{\phi'} by
  $(\psi' \colon b' \to b''', \phi'_{!}x)$. We can now ask whether the
  functor $\txt{ev}_{1}^{*}\mathcal{E} \to \mathcal{E}$ preserves
  cocartesian morphisms, which it does precisely when, in the
  situation above, the natural morphism
  \[ \phi_{!}\psi^{*}x \to \psi'^{*}\phi'_{!}x\]
  is an equivalence, \ie{} ``cobase change'' holds for the cartesian and
  cocartesian fibration $\pi$.
\end{construction}

\begin{construction}\label{constr:Gamma}
  We can apply Construction~\ref{constr:cobase} to the cartesian and
  cocartesian fibration $\mathcal{Q} \to \dSt^{\op}$, to obtain a 
  commutative diagram
  \[
    \begin{tikzcd}
      \txt{ev}_{1}^{*}\mathcal{Q} \arrow{dr} \arrow{rrrr}{\Gamma} & & &&
      \mathcal{Q} \arrow{ddll} \\
      & (\dSt^{\op})^{\Delta^{1}} \arrow{dr}{\txt{ev}_{0}} \\
      & & \dSt^{\op}.
    \end{tikzcd}
  \]
  The composite
  $\txt{ev}_{1}^{*}\mathcal{Q} \to \dSt^{\op}$ is a cocartesian
  fibration corresponding to a functor $\dSt^{\op} \to \CatI$ that
  assigns to $S \in \dSt$ the \icat{} $\mathcal{Q}_{S}$, and to a
  morphism $\sigma \colon S' \to S$ the functor $\sigma^{*} \colon
  \mathcal{Q}_{S} \to \mathcal{Q}_{S'}$ given by
  \[ (X \xto{f} S, \mathcal{E} \in \QCoh(X)) \mapsto
    (X' \xto{f'} S', \xi^{*}\mathcal{E} \in \QCoh(X')),\]
  where we have a pullback square
  \csquare{X'}{X}{S'}{S.}{\xi}{f'}{f}{\sigma}
  If we restrict to the full subcategory
  $\dSt^{\Delta^{1},\txt{UCC}}$ spanned by the universally
  cocontinuous morphisms we still get a cocartesian
  fibration (since these morphisms are closed under base change), and
  now Theorem~\ref{thm:bcpf} implies that the resulting functor
  \[ \Gamma \colon \dSt^{\Delta^{1},\txt{UCC},\op} \times_{\dSt^{\op}} \mathcal{Q}
    \to \mathcal{Q} \]
  preserves cocartesian morphisms. This amounts to a natural
  transformation of functors $\dSt^{\op} \to \CatI$ from
  $\mathcal{Q}_{(\blank)}^{\txt{UCC}}$ to $\QCoh(\blank)$. In
  particular, for $\sigma \colon S' \to S$ we have a naturality square
  \csquare{\mathcal{Q}_{S}^{\txt{UCC}}}{\QCoh(S)}{\mathcal{Q}_{S'}^{\txt{UCC}}}{\QCoh(S').}{\Gamma_{S}}{\sigma^{*}}{\sigma^{*}}{\Gamma_{S}}
\end{construction}

\begin{construction}\label{constr:GammaO}
  Consider the pullback $\dSt^{\Delta^{1},\UCC,\op}
  \times_{\dSt^{\op}} \mathcal{Q}$, where the pullback is over
  evaluation at $0$. The cocartesian section $\mathcal{O} \colon
  \dSt^{\op} \to \mathcal{Q}$ pulls back to a cocartesian section
  \[\mathcal{O}_{\Delta^{1}} \colon  \dSt^{\Delta^{1},\UCC,\op}
    \to \dSt^{\Delta^{1},\UCC,\op}
    \times_{\dSt^{\op}} \mathcal{Q}.\]
  Composing with the ``global sections'' functor $\Gamma \colon
  \dSt^{\Delta^{1},\UCC,\op} 
  \times_{\dSt^{\op}} \mathcal{Q} \to \mathcal{Q}$ we obtain a
  functor $\Gamma\mathcal{O} := \Gamma \circ
  \mathcal{O}_{\Delta^{1}} \colon \dSt^{\Delta^{1},\UCC,\op} \to
  \mathcal{Q}$ which takes $X \xto{f} S$
  to $f_{*}\mathcal{O}_{X} \in \QCoh(S)$. 

  This functor lives in a commutative triangle
  \opctriangle{\dSt^{\Delta^{1},\txt{UCC},\op}}{\mathcal{Q}}{\dSt^{\op}}{\Gamma\mathcal{O}}{\txt{ev}_{1}}{}
  where both maps to $\dSt^{\op}$ are cocartesian fibrations; since
  $\mathcal{O}$ is a cocartesian section and $\Gamma$ preserves
  cocartesian morphisms over $\dSt^{\op}$, we see that
  $\Gamma \mathcal{O}$ also preserves cocartesian morphisms.
\end{construction}

\begin{construction}\label{constr:Gammaotimes}
  We can also apply Construction~\ref{constr:cobase} to the cartesian and
  cocartesian fibration $\mathcal{Q}^{\otimes} \to \dSt^{\op}$, to obtain a 
  commutative diagram
  \[
    \begin{tikzcd}
      \txt{ev}_{1}^{*}\mathcal{Q}^{\otimes} \arrow{dr} \arrow{rrrr}{\Gamma^{\otimes}} & & &&
      \mathcal{Q}^{\otimes} \arrow{ddll} \\
      & (\dSt^{\op})^{\Delta^{1}} \arrow{dr}{\txt{ev}_{0}} \\
      & & \dSt^{\op},
    \end{tikzcd}
  \]
  where base change again implies that $\Gamma^{\otimes}$ preserves
  cocartesian morphisms when we restrict to universally cocontinuous
  maps of stacks. Taking the functors to $\xF_{*}$ into
  account, we obtain
\[
    \begin{tikzcd}
      (\dSt^{\Delta^{1},\txt{UCC}})^{\op} \times_{\dSt^{\op}}\mathcal{Q}^{\otimes} \arrow{dr} \arrow{rr}{\Gamma^{\otimes}} & & \mathcal{Q}^{\otimes} \arrow{dl} \\
      & \dSt^{\op} \times \xF_{*},
    \end{tikzcd}
  \]
  where $\Gamma^{\otimes}$ preserves cocartesian morphisms that lie
  over inert morphisms in $\xF_{*}$. This we can interpret as a
  ``lax monoidal'' natural transformation of functors $\mathcal{Q}_{(\blank)}^{\otimes}
  \to \QCoh(\blank)^{\otimes}$.
\end{construction}

Now we want to convince ourselves that the projection formula morphism
is natural and compatible with base change. We start by considering
the projection formula for pushforward to a fixed base stack $S$:
\begin{construction}
  Let $S$ be a derived stack. Applying
  Construction~\ref{constr:cartpb} to $\mathcal{Q}_{S} \to
  \dSt_{S}^{\op}$ we get a functor
  \[ \mathcal{Q}_{S} \times \Delta^{1} \to \mathcal{Q}_{S},\]
  lying over the functor $\dSt_{S}^{\op} \times \Delta^{1} \to
  \dSt_{S}^{\op}$ that takes $f \colon X \to S$ to the map
  \opctriangle{X}{S}{S.}{f}{f}{\id_S}
  This sends $(X, \mathcal{E} \in \QCoh(X))$ to the cartesian
  morphism $f_{*}\mathcal{E} \to \mathcal{E}$. Similarly, we have a
  functor
  \[ \dSt_{S}^{\op} \times \QCoh(S) \times \Delta^{1} \to
    \mathcal{Q}_{S},\]
  which takes $(X, \mathcal{F})$ to the cocartesian morphism
  $\mathcal{F} \to f^{*}\mathcal{F}$. Combining these, we get
  \[ \mathcal{Q}_{S} \times \QCoh(S) \times \Delta^{1} \to
    \mathcal{Q}_{S} \times_{\dSt_{S}^{\op}} \mathcal{Q}_{S},\]
  which we can compose with the tensor product
  \[ \mathcal{Q}_{S} \times_{\dSt_{S}^{\op}} \mathcal{Q}_{S} \to \mathcal{Q}_{S}\]
  to get a functor that takes $(X,\mathcal{E},\mathcal{F})$ to the
  natural map $f_{*}\mathcal{E} \otimes_{S} \mathcal{F} \to
  \mathcal{E} \otimes_{X} f^{*}\mathcal{F}$ over $f$. Now taking the
  cartesian pullback in $\mathcal{Q}_{S}$ we get a functor
  \[ \txt{PF}_{S} \colon \mathcal{Q}_{S} \times \QCoh(S) \times \Delta^{1} \to \QCoh(S)\]
  that sends $(X,\mathcal{E},\mathcal{F})$ to the projection formula
  morphism $f_{*}\mathcal{E} \otimes_{S} \mathcal{F} \to f_{*}(\mathcal{E}
  \otimes_{X} f^{*}\mathcal{F})$.
\end{construction}

\begin{construction}
  We can carry out the previous construction more globally:
  First we can define a functor
  \[ \dSt^{\Delta^{1},\op} \times_{\dSt^{\op}} \mathcal{Q} \to
    \mathcal{Q},\]
  where the fibre product is over evaluation at $0$, which takes $(f
  \colon X \to S, \mathcal{E} \in \QCoh(X))$ to the cartesian morphism $f_{*}\mathcal{E}
  \to \mathcal{E}$ over $f \in \dSt^{\op}$. Similarly, we have a
  functor
  \[ \dSt^{\Delta^{1},\op} \times_{\dSt^{\op}} \mathcal{Q} \to
    \mathcal{Q},\] where the fibre product is now over evaluation at
  $1$, which takes $(f \colon X \to S, \mathcal{F} \in \QCoh(S))$ to
  the cocartesian morphism $\mathcal{F} \to f^{*}\mathcal{F}$ over
  $f$.  We can combine these two functors, as well as the tensor
  product functor, to get
  \[ \dSt^{\Delta^{1},\op} \times_{\dSt^{\op} \times \dSt^{\op}}
    \mathcal{Q} \times \mathcal{Q} \times \Delta^{1} \to \mathcal{Q}
    \times_{\dSt^{\op}} \mathcal{Q} \xto{\otimes} \mathcal{Q},\]
  giving a functor that takes $(f \colon X \to S, \mathcal{E},
  \mathcal{F})$ to $f_{*}\mathcal{E} \otimes_{S} \mathcal{F} \to
  \mathcal{E} \otimes_{X}f^{*}\mathcal{F}$. Now a cartesian pullback
  produces from this a functor
  \[ \txt{PF} \colon \dSt^{\Delta^{1},\op} \times_{\dSt^{\op} \times \dSt^{\op}}
    \mathcal{Q} \times \mathcal{Q} \times \Delta^{1} \to \mathcal{Q}\]
  that takes $(f,\mathcal{E},\mathcal{F})$ to the projection formula
  morphism. This lives in a commutative diagram
  \[
    \begin{tikzcd}
      \dSt^{\Delta^{1},\op} \times_{\dSt^{\op} \times \dSt^{\op}}
    \mathcal{Q} \times \mathcal{Q} \times \Delta^{1} \arrow{rrr}
    \arrow{dr} & & & \mathcal{Q} \arrow{ddl} \\
    & \dSt^{\Delta^{1},\op} \arrow{dr}{\txt{ev}_{1}} \\
     & & \dSt^{\op},
    \end{tikzcd}
  \]
  where the two maps to $\dSt^{\op}$ are cocartesian fibrations. We
  can then ask if $\txt{PF}$ preserves cocartesian morphisms. If we
  restrict to universally cocontinuous morphisms we find that it does,
  since we have base change and the pullback functors preserve the
  tensor product. (Moreover, in this case the projection formula
  morphisms are all equivalences.) In particular, for every map
  $\sigma \colon S' \to S$ we get a commutative square
  \[
    \begin{tikzcd}
     \mathcal{Q}_{S} \times \QCoh(S) \times \Delta^{1}
     \arrow{r}{\txt{PF}_{S}} \arrow{d}{\sigma^{*}} & \QCoh(S)
     \arrow{d}{\sigma^{*}} \\
     \mathcal{Q}_{S'} \times \QCoh(S') \times \Delta^{1}
     \arrow{r}{\txt{PF}_{S'}} & \QCoh(S'),
   \end{tikzcd}
 \]
  so that the projection formula is compatible with base change.
\end{construction}

\section{Cotangent Complexes of Derived Stacks}\label{subsec:cotgt}
In this section we recall the definition of the \emph{cotangent
  complex} of a derived stack, and prove some formal properties --- in
particular, we will observe that the cotangent complex determines a
functor of \icats{}.

\begin{definition}
  It follows from the naturality of Remark~\ref{rmk:sqznat} that the square-zero extension functors $\Mod_{A} \to
  \CAlg_{A}$ induce natural square-zero extension functors
  \[ \mathcal{O}_{X}[\blank] \colon \QCoh(X) \simeq \lim_{p \in
    \dAff_{/X}^{\op}} \Mod_{p^{*} \mathcal{O}_{X}} \to \lim_{p \in
    \dAff_{/X}^{\op}} \CAlg_{p^{*} \mathcal{O}_{X}} \simeq
  \QCAlg(X)\] for $X \in \dSt$, which for $f \colon X \to Y$ satisfy
  $f^{*}(\mathcal{O}_{Y}[\mathcal{F}]) \simeq
  \mathcal{O}_{X}[f^{*}\mathcal{F}]$. These restrict to functors
  $\QCohconn(X) \to \QCAlgconn(X)$ and for $\mathcal{F} \in
  \QCohconn(X)$ we write
  \[ X[\mathcal{F}] := \Spec_{X}(\mathcal{O}_{X}[\mathcal{F}]).\]
  The augmentation $\mathcal{O}_{X}[\mathcal{F}] \to \mathcal{O}_{X}$
  induces a morphism $X \to X[\mathcal{F}]$ over $X$.
\end{definition}

\begin{definition}\label{defn:LXS}
  Let $X$ be an $S$-stack. We say a quasicoherent sheaf
  $\mathbb{L}_{X/S} \in \mathcal{Q}_{S}$ is a \emph{(relative) cotangent complex} of $X$ over $S$ if
  there is a natural equivalence 
  \[ \Map_{\mathcal{Q}_{S}}(\mathbb{L}_{X/S}, (Y,\mathcal{F})) \simeq
  \Map_{\dSt_{S}}(Y[\mathcal{F}], X)\] of functors $\QCconn_{S} \to
  \mathcal{S}$.
\end{definition}

\begin{remark}
  Suppose $\mathbb{L} \in \mathcal{Q}_{S}$ is a relative cotangent complex of $X$ over $S$. Let $\pi$ denote
  the projection $\mathcal{Q}_{S} \to \dSt_{S}^{\op}$. Taking
  $\mathcal{F}$ to be
  the zero object of $\QCoh(Y)$, we then have natural equivalences
  \[ \Map_{\dSt_{S}}(Y, X) \simeq \Map_{\dSt_{S}}(Y[0], X) \simeq
  \Map_{\mathcal{Q}_{S}}(\mathbb{L}, (Y,0)) \simeq \Map_{\dSt_{S}}(Y,
  \pi(\mathbb{L})).\] Thus by the Yoneda Lemma $\pi(\mathbb{L}) \simeq
  X$, \ie{} a cotangent complex for $X$ is necessarily
  a quasicoherent sheaf on $X$, as one would expect.
\end{remark}

\begin{remark}\label{rmk:cotgtunique}
  In general, the cotangent complex of an arbitrary derived stack over
  $S$ may well not exist. Moreover, note that we allow the cotangent
  complex of $X$ to be non-connective, while we only consider maps to
  connective $\mathcal{F}$. Thus we are not requiring $\mathbb{L}_{X/S}$
  to satisfy a universal property and so it is not a priori uniquely
  determined if it exists. However, if we assume that
  $\mathbb{L}_{X/S} \in \QCoh^{c[-n]}(X)$ for some $n$, then for
  $\mathcal{F} \in \QCoh^{c[-n]}(X)$ we have
  \[
    \begin{split}
\Map_{\QCoh^{c[-n]}(X)}(\mathbb{L}_{X/S}, \mathcal{F}) & \simeq
  \Omega^{n}\Map_{\QCoh(X)}(\mathbb{L}_{X/S}, \mathcal{F}[n]) \\ & 
  \simeq \Omega^{n}\Map_{\dSt_{X//S}}(X[\mathcal{F}[n]], X),
    \end{split}
  \]
  and so $\mathbb{L}_{X}$ is uniquely determined by the Yoneda
  Lemma. Thus a derived stack has at most one bounded-below cotangent
  complex, and by \cite{AntieauGepnerBrauer}*{Lemma 4.8} if there
  exists one bounded-below cotangent complex then all
  cotangent complexes are equivalent.
\end{remark}

\begin{notation}
  If $X$ is a derived stack over $\Spec A$, we will use the
  abbreviation $\mathbb{L}_{X/A}$ for $\mathbb{L}_{X/\Spec A}$. When
  $A$ is the base field $\k$, we will further abbreviate
  $\mathbb{L}_{X/\k}$ to $\mathbb{L}_{X}$ --- this is the
  \emph{absolute} cotangent complex of $X$ in our context.
\end{notation}

\begin{remark}
  To relate our definition of the cotangent complex to that given in
  \cite{HAG2}, observe that for $\mathbb{L}_{X/S}$ to be a cotangent
  complex for $X$ over $S$ it suffices to have a natural equivalence
  \[ \Map_{\mathcal{Q}_{S}}((X,\mathbb{L}_{X/S}), (Y,\mathcal{M})) \simeq \Map_{\dSt_{S}}(Y[\mathcal{M}], X)\]
  when $Y$ is affine. Indeed, since for a general derived stack $Y$ and quasicoherent sheaf
  $\mathcal{M}$ on $Y$ we have $Y[\mathcal{M}] \simeq \colim_{p}
  \Spec(A \oplus p^{*}\mathcal{M})$ we then have 
  \[ \Map_{\dSt_{S}}(Y[\mathcal{M}], X) \simeq \lim_{p} \Map_{\dSt_{S}}(\Spec(A \oplus
  p^{*}\mathcal{M}), X) \simeq \lim_{p} \Map_{\mathcal{Q}_{S}}(\mathbb{L}_{X/S},
  p^{*}\mathcal{M}).\]
  Taking the fibre at a fixed map $f \colon Y \to X$ we then see
  \[ \Map_{Y//S}(Y[\mathcal{M}], X) \simeq
  \lim_{p}\Map_{\Mod_{A}}(p^{*}f^{*}\mathbb{L}_{X/S}, p^{*}\mathcal{M})
  \simeq \Map_{\QCoh(Y)}(f^{*}\mathbb{L}_{X/S}, \mathcal{M}), \]
  as required. Note that, on the other hand, it is not enough to
  consider only $\Map_{X//S}(X[\mathcal{F}], X)$ for
  $\mathcal{F} \in \QCohconn(X)$ --- this does characterize
  $\mathbb{L}_{X/S}$ as an object of $\QCoh(X)$ if it exists, but in
  general it is a weaker property than that defining the cotangent
  complex.
\end{remark}

\begin{example}
  Suppose $X$ is a derived stack over $T$ and $T$ is a derived stack
  over $S$, such that the cotangent complexes
  $\mathbb{L}_{X/S}$ and $\mathbb{L}_{T/S}$ exist. Then if we define
  $\mathbb{L}_{X/T}$ by the cofibre sequence
  \nolabelcsquare{f^{*}\mathbb{L}_{T/S}}{\mathbb{L}_{X/S}}{0}{\mathbb{L}_{X/T},}
  it follows immediately from the definition that $\mathbb{L}_{X/T}$
  is a relative cotangent complex for $X$ over $T$.
\end{example}

\begin{theorem}\label{thm:geomhascotgt}\ 
  \begin{enumerate}[(i)]
  \item Suppose $f \colon X \to S$ is an $n$-geometric morphism. Then
    $f$ has an (essentially unique) $(-n-1)$-connective cotangent
    complex.
  \item Suppose $X$ is an Artin $S$-stack. Then the
    relative cotangent complex $\mathbb{L}_{X/S}$ is a dualizable
    object of $\QCoh(X)$.
  \end{enumerate}
\end{theorem}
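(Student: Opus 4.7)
The plan is to prove (i) by induction on $n$ and then deduce (ii) by a similar inductive argument exploiting (i). For (i), the base case $n=-1$ is the affine case: write $f \colon X \simeq \Spec_{S}\mathcal{A} \to S$ for $\mathcal{A} \in \QCAlgconn(S)$ via Proposition~\ref{propn:affisqcalg}. Using Proposition~\ref{propn:relspecqcoh} to translate $\mathcal{A}$-modules in $\QCoh(S)$ to quasicoherent sheaves on $X$, I would set $\mathbb{L}_{X/S}$ to correspond to the relative cotangent complex $\mathbb{L}_{\mathcal{A}/\mathcal{O}_{S}}$ of commutative algebras in $\QCoh(S)$ (which can be built by applying the usual algebraic cotangent complex in $\Mod_{B}$ on affine opens $\Spec B \to S$ and gluing via base change, as sketched in \S\ref{subsec:algcotgt}). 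This is connective, and the universal property follows by combining Corollary~\ref{cor:maptorelspec} (maps into a relative Spec are maps of sheaves of algebras) with the universal property of the square-zero extension functor on $\QCAlg(\blank)$, applied pointwise along the atlas $\dAff_{/S}$.

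For the inductive step of (i), let $f \colon X \to S$ be $n$-geometric and choose an $n$-smooth atlas $p \colon U = \coprod U_{i} \to X$. Each $U_{i} \to X$ is $(n-1)$-smooth, hence $(n-1)$-geometric, so by induction $\mathbb{L}_{U/X}$ exists; moreover for $(n-1)$-smooth maps one shows (either by induction together with the Postnikov tower argument, or directly from the definition of smoothness) that $\mathbb{L}_{U/X}$ is concentrated in non-positive Tor-amplitude with a bound that improves the connectivity by one degree. The composite $U \to S$ is again geometric, so $\mathbb{L}_{U/S}$ exists. I would then \emph{define} $p^{*}\mathbb{L}_{X/S}$ to be the fibre of the natural morphism $\mathbb{L}_{U/S} \to \mathbb{L}_{U/X}$, with the desired $(-n-1)$-connectivity read off from connectivity estimates on the two outer terms. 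To descend this to an object of $\QCoh(X)$, one invokes $\QCoh(X) \simeq \lim \QCoh(U^{\bullet})$ for the \v{C}ech nerve of the effective epimorphism $p$; the required simplicial compatibility data comes from base change for the cotangent complex (which follows from the universal property via the square-zero functoriality of Remark~\ref{rmk:sqznat}) applied to the face and degeneracy maps of $U^{\bullet}$. Essential uniqueness under the connectivity bound is Remark~\ref{rmk:cotgtunique}.

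For (ii), since $X$ is an Artin $S$-stack it is $n$-geometric and locally of finite presentation for some $n$, and I would argue by induction on $n$. In the base case of a finitely presented affine morphism $\Spec B \to \Spec A$, the relative cotangent complex $\mathbb{L}_{B/A}$ is perfect, hence dualizable in $\Mod_{B} \simeq \QCoh(\Spec B)$. For the inductive step, dualizability can be tested after pullback along the smooth atlas $p \colon U \to X$ because $p^{*}$ is symmetric monoidal and the functors $p_{i}^{*}$ from the \v{C}ech nerve jointly detect equivalences (Corollary~\ref{cor:qcoheqonpts}). On $U$ the fibre sequence $p^{*}\mathbb{L}_{X/S} \to \mathbb{L}_{U/S} \to \mathbb{L}_{U/X}$ exhibits $p^{*}\mathbb{L}_{X/S}$ as a finite limit in the stable \icat{} $\QCoh(U)$ of objects that are dualizable — $\mathbb{L}_{U/X}$ because smoothness forces it to be locally free in bounded degrees, and $\mathbb{L}_{U/S}$ by the inductive hypothesis applied to a smooth atlas of $U$ over $S$ — and dualizable objects are closed under finite limits in a stable symmetric monoidal \icat{}.

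The main obstacle I anticipate is the descent step at the heart of (i): verifying that the candidate $p^{*}\mathbb{L}_{X/S}$ built as a fibre on $U$ is equipped with coherent descent data on the full \v{C}ech nerve $U^{\bullet}$, so that it actually arises by pullback from a genuine object of $\QCoh(X)$. This requires packaging the cotangent-complex base change as a natural transformation between the relevant diagrams $\simp^{\op} \to \QCoh(U^{\bullet})$, in a way compatible enough with the squares in the cofibre sequence that the fibre gives a cartesian section. The remaining ingredients (connectivity bookkeeping, universal property, stability of perfectness under finite limits) are formal once this gluing step is in place.
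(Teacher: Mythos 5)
Your overall strategy is exactly the argument behind the results the paper invokes here: the paper gives no proof of its own for this theorem, citing Antieau--Gepner (Proposition 4.45) and \cite{HAG2} (Corollary 2.2.3.3) for (i), and for (ii) it explicitly says "the same inductive argument" with Lurie's \cite[Theorem 7.4.3.18]{HA} supplying the affine base case --- which is precisely your perfectness-of-$\mathbb{L}_{B/A}$ step. Your treatment of (ii) (dualizability is smooth-local by HA 4.6.1.11 plus flat descent, the fibre sequence $p^{*}\mathbb{L}_{X/S} \to \mathbb{L}_{U/S} \to \mathbb{L}_{U/X}$ exhibits the pullback as a finite limit of dualizables, and dualizables are closed under finite limits in a stable symmetric monoidal \icat{}) is sound, and you correctly identify the \v{C}ech descent datum as the technical heart of (i).

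There is, however, one step in the inductive part of (i) that does not close as written. You assert that "the composite $U \to S$ is again geometric, so $\mathbb{L}_{U/S}$ exists," but the composite of the $(n-1)$-geometric atlas map $U \to X$ with the $n$-geometric $f$ is again $n$-geometric, so the inductive hypothesis you are in the middle of proving does not apply to it; nor is $U \to S$ covered by your affine base case for general $S$, since a map from a (disjoint union of) affine(s) to an arbitrary $S$ need not be an affine \emph{morphism}. The standard repair is to first reduce to $S = \Spec A$ affine: the universal property in Definition~\ref{defn:LXS} only needs to be tested against affine $Y$, and $n$-geometricity is itself defined via base change to affines, so one constructs $\mathbb{L}_{X_{p}/A}$ for each $p \colon \Spec A \to S$ and assembles these using Lemma~\ref{lem:relcotgtpb}-type compatibility. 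With $S$ affine, $U$ is a disjoint union of affines over an affine base, so $\mathbb{L}_{U/S}$ comes from your base case rather than from induction, and the fibre-plus-descent construction then runs as you describe. The connectivity bookkeeping is also simpler than your appeal to Tor-amplitude of smooth maps suggests: $\mathbb{L}_{U/S}$ is connective and $\mathbb{L}_{U/X}$ is $(-n)$-connective by the inductive hypothesis, so the fibre is $(-n-1)$-connective by the long exact sequence; Tor-amplitude estimates are only needed for the dualizability claim in (ii).
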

\begin{proof}
  (i) is part of \cite{AntieauGepnerBrauer}*{Proposition 4.45}; the
  existence of the cotangent complex is also part of
  \cite{HAG2}*{Corollary 2.2.3.3}. 
  (ii) follows from the same inductive argument as in
  \cite{AntieauGepnerBrauer}*{Proposition 4.45}, appealing to 
  \cite[Theorem 7.4.3.18]{HA} on affines.
\end{proof}

\begin{lemma}\label{lem:relcotgtpb}
  Suppose $f \colon X \to S$ has a relative cotangent complex
  $\mathbb{L}_{X/S}$, and let
  \csquare{X'}{X}{S'}{S}{\xi}{f'}{f}{\sigma}
  be a pullback square. Then $\xi^{*}\mathbb{L}_{X/S}$ is a relative
  cotangent complex for $X'$ over $S'$.
\end{lemma}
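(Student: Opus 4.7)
The plan is to verify the universal property of Definition~\ref{defn:LXS} for $\xi^{*}\mathbb{L}_{X/S}$ directly, by reducing to the universal property of $\mathbb{L}_{X/S}$ via the two base-change adjunctions: one for morphisms into $X' \simeq X \times_{S} S'$, and one for cotangent data along $\xi$. Thus, for $(Y,\mathcal{F}) \in \mathcal{Q}_{S'}^{\txt{conn}}$, I would establish a natural chain of equivalences
\[
\Map_{\mathcal{Q}_{S'}}\bigl((X',\xi^{*}\mathbb{L}_{X/S}),(Y,\mathcal{F})\bigr)
\simeq \Map_{\mathcal{Q}_{S}}\bigl((X,\mathbb{L}_{X/S}),(Y,\mathcal{F})\bigr)
\simeq \Map_{\dSt_{S}}(Y[\mathcal{F}],X)
\simeq \Map_{\dSt_{S'}}(Y[\mathcal{F}],X'),
\]
where in the first and last terms we view $(Y,\mathcal{F})$, respectively $Y[\mathcal{F}]$, as living over $S$ through $\sigma$.

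First I would unpack the left-hand morphism space: by definition, a morphism $(X',\xi^{*}\mathbb{L}_{X/S}) \to (Y,\mathcal{F})$ in $\mathcal{Q}_{S'}$ is a pair consisting of a map $\phi' \colon Y \to X'$ in $\dSt_{S'}$ together with a morphism $\phi'^{*}\xi^{*}\mathbb{L}_{X/S} \to \mathcal{F}$ in $\QCoh(Y)$. By the universal property of the pullback $X' \simeq X \times_{S} S'$, the space of such $\phi'$ is naturally equivalent to the space of morphisms $\phi \colon Y \to X$ in $\dSt_{S}$ (with $Y$ viewed as an $S$-stack via $\sigma$), under which $\phi^{*}\mathbb{L}_{X/S} \simeq \phi'^{*}\xi^{*}\mathbb{L}_{X/S}$. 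This gives the first equivalence. The second equivalence is precisely the defining universal property of $\mathbb{L}_{X/S}$ applied to $(Y,\mathcal{F})$ viewed in $\mathcal{Q}_{S}$. The third equivalence is again the universal property of the pullback $X'$, applied now to $Z := Y[\mathcal{F}]$, which is naturally an $S'$-stack through the augmentation $Y[\mathcal{F}] \to Y \to S'$: one has $\Map_{\dSt_{S}}(Z,X) \simeq \Map_{\dSt_{S'}}(Z,X')$.

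The only thing that really requires care is naturality: we need all three equivalences to assemble into a natural equivalence of functors $\mathcal{Q}_{S'}^{\txt{conn}} \to \mathcal{S}$. For this I would assemble the arguments above as a chain of natural transformations of functors on $\mathcal{Q}_{S'}^{\txt{conn}}$, using the functors of Construction~\ref{constr:Gamma} and Definition~\ref{defn:LXS} and the functoriality of relative Spec from Proposition~\ref{propn:pbrelspec}; the naturality of pullback along $\sigma$ and $\xi$ is then built in. There is no real obstacle here, since each equivalence is a manifestly natural adjunction or universal property, but the bookkeeping to package them as functors (rather than just pointwise equivalences on objects) is the only nontrivial point. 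Once this is done, the Yoneda lemma identifies $\xi^{*}\mathbb{L}_{X/S}$ as a relative cotangent complex for $X'$ over $S'$.
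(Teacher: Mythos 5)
Your proposal is correct and follows essentially the same route as the paper: the paper packages your first equivalence as the adjunction $\sigma^{*} \dashv \sigma_{!}$ between $\mathcal{Q}_{S}$ and $\mathcal{Q}_{S'}$ (with $\sigma_{!}$ given by composing the structure map with $\sigma$), which handles the naturality you flag as the only delicate point, and then concatenates it with the defining property of $\mathbb{L}_{X/S}$ and the universal property of the pullback $X'$, exactly as you do.
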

\begin{proof}
  The adjunction
  \[ \sigma_{!} : \dSt_{/S'} \rightleftarrows \dSt_{/S} : \sigma^{*},\]
  where $\sigma_{!}$ is given by composition with $\sigma$ and
  $\sigma^{*}$ by pullback along $\sigma$, lifts to an adjunction
  \[ \sigma^{*} : \mathcal{Q}_{S} \rightleftarrows \mathcal{Q}_{S'} :
    \sigma_{!},\]
  where $\sigma^{*}$ takes $(X \to S, \mathcal{E} \in \QCoh(X))$ to $(X',
  \xi^{*}\mathcal{E})$ and $\sigma_{!}$ takes $(X \to S',
  \mathcal{E})$ to $(X \to S, \mathcal{E})$.

  We therefore have natural equivalences
  \[
    \begin{split}
\Map_{\mathcal{Q}_{S'}}(\xi^{*}\mathbb{L}_{X/S}, (Y,\mathcal{M})) & 
    \simeq \Map_{\mathcal{Q}_{S}}(\mathbb{L}_{X/S}, (Y, \mathcal{M})) \\ &
    \simeq \Map_{\dSt_{S}}(Y[\mathcal{M}], X) \\ & \simeq
    \Map_{\dSt_{S'}}(Y[\mathcal{M}], X'),
    \end{split}
  \]
  which exhibit $\xi^{*}\mathbb{L}_{X/S}$ as a cotangent complex for
  $X'$ over $S'$.
\end{proof}

\begin{example}
  Suppose $X$ is an Artin $S$-stack and $\Sigma$ is an arbitrary
  $S$-stack. Then the projection $X \times_{S} \Sigma \to \Sigma$ has
  a relative cotangent complex $\mathbb{L}_{X \times_{S} \Sigma / \Sigma}
  \simeq \pi_{X}^{*}\mathbb{L}_{X/S}$, where $\pi_{X}$ denotes the
  projection $X \times_{S} \Sigma \to X$.
\end{example}

\begin{lemma}\label{lem:QCohIsusp}
  The projection
  $\Map_{\mathcal{Q}_{S}}((X,\mathcal{F}), (Y,\mathcal{G})) \to
  \Map_{\dSt_{S}}(Y,X)$
  has a canonical section, taking $f \colon Y \to X$ to the zero map
  $\mathcal{F} \to f_{*}\mathcal{G}$. We write
  $\Omega_{\Map_{/S}(Y,X)}\Map_{\mathcal{Q}_{S}}((X, \mathcal{F}), (Y,
  \mathcal{G}))$
  for the pullback
  \nolabelcsquare{\Omega_{\Map_{/S}(Y,X)}\Map_{\mathcal{Q}_S}((X, \mathcal{F}),
    (Y,
    \mathcal{G}))}{\Map_{\dSt_S}(Y,X)}{\Map_{\dSt_S}(Y,X)}{\Map_{\mathcal{Q}_S}((X,
    \mathcal{F}), (Y, \mathcal{G}))}
  of two copies of this section.
  Then there are natural equivalences
  \[
    \begin{tikzcd}
      \Map_{\mathcal{Q}_{S}}((X, \mathcal{F}[1]), (Y, \mathcal{G}))\arrow{d}{\sim} \\
\Omega_{\Map_{/S}(Y,X)}\Map_{\mathcal{Q}_{S}}((X, \mathcal{F}), (Y,
\mathcal{G})) \\
      \Map_{\mathcal{Q}_{S}}((X, \mathcal{F}), (Y, \mathcal{G}[-1])) \arrow{u}[swap]{\sim} 
    \end{tikzcd}
  \]
  
\end{lemma}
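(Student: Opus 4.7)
The plan is to exploit the cocartesian fibration structure $\mathcal{Q}_S \to \dSt_S^{\op}$ to describe $\Map_{\mathcal{Q}_S}((X,\mathcal{F}),(Y,\mathcal{G}))$ as a left fibration over $\Map_{\dSt_S}(Y,X)$, and then use the stability of $\QCoh(Y)$ (equivalently $\QCoh(X)$) to identify shifts in either variable with fibrewise looping of this left fibration, pointed at the zero section.

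First, I would observe that because $\mathcal{Q}_S \to \dSt_S^{\op}$ is a cocartesian fibration (Definition~\ref{def:Qfib}), the projection $\pi \colon \Map_{\mathcal{Q}_S}((X,\mathcal{F}),(Y,\mathcal{G})) \to \Map_{\dSt_S^{\op}}(X,Y) \simeq \Map_{\dSt_S}(Y,X)$ is a left fibration whose fibre at $f \colon Y \to X$ is $\Map_{\QCoh(Y)}(f^{*}\mathcal{F},\mathcal{G}) \simeq \Map_{\QCoh(X)}(\mathcal{F},f_{*}\mathcal{G})$. Thus $\pi$ corresponds to the functor $\Map_{\dSt_S}(Y,X) \to \mathcal{S}$ defined by $f \mapsto \Map_{\QCoh(Y)}(f^{*}\mathcal{F},\mathcal{G})$. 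The zero section of $\pi$ picks out the zero morphism in each such mapping space, so this functor canonically lifts through $\mathcal{S}_{*} \to \mathcal{S}$.

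Second, by construction the fibre product $\Omega_{\Map_{/S}(Y,X)}\Map_{\mathcal{Q}_S}((X,\mathcal{F}),(Y,\mathcal{G}))$ is again a left fibration over $\Map_{\dSt_S}(Y,X)$, whose fibre at $f$ is the based loop space $\Omega_{0}\Map_{\QCoh(Y)}(f^{*}\mathcal{F},\mathcal{G})$. Since $\QCoh(Y)$ is stable, its mapping spaces are (infinite-loop spaces of) $\k$-module spectra, and the canonical equivalences
\[ \Omega \Map_{\QCoh(Y)}(f^{*}\mathcal{F},\mathcal{G}) \simeq \Map_{\QCoh(Y)}(f^{*}\mathcal{F}[1],\mathcal{G}) \simeq \Map_{\QCoh(Y)}(f^{*}\mathcal{F},\mathcal{G}[-1]) \]
are natural in $f$ (as they arise from the (co)fibre sequences $0 \to \mathcal{F}[1] \to \mathcal{F}[1]$ and their counterparts for $\mathcal{G}$). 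Under the identification from the first step, the right-hand sides compute the fibres of the left fibrations associated to $\Map_{\mathcal{Q}_S}((X,\mathcal{F}[1]),(Y,\mathcal{G}))$ and $\Map_{\mathcal{Q}_S}((X,\mathcal{F}),(Y,\mathcal{G}[-1]))$, respectively.

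Finally, since a morphism of left fibrations over the same base inducing an equivalence on fibres is an equivalence, the naturality in $f$ of the shift equivalences promotes them to equivalences of total spaces over $\Map_{\dSt_S}(Y,X)$, giving the required equivalences. The main technical step to spell out is the naturality of the shift-loop equivalence in $f$ at the level of left fibrations; this is formal once one views $\Map_{\mathcal{Q}_S}((X,\mathcal{F}[\bullet]),(Y,\mathcal{G}))$ as a diagram of left fibrations over $\Map_{\dSt_S}(Y,X)$ functorial in $\mathcal{F} \in \QCoh(X)$, but writing this out in a model-independent way will require choosing whether to work with the left fibration straightening or simply to check the equivalence fibrewise after verifying the comparison maps are maps of left fibrations.
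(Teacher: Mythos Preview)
Your proposal is correct and follows essentially the same strategy as the paper: both reduce to checking the comparison map is an equivalence on fibres over each $f \in \Map_{\dSt_S}(Y,X)$, where it becomes the standard shift/loop equivalence in the stable \icat{} $\QCoh(X)$ (or $\QCoh(Y)$).

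The one place where the paper is cleaner is precisely the step you flag as needing to be spelled out --- constructing the global comparison map. Rather than arguing abstractly about naturality of the fibrewise shift/loop equivalences (which requires straightening or some model-level care), the paper simply maps out of the pushout square
\[
\begin{tikzcd}
(X,\mathcal{F}) \arrow{r}\arrow{d} & (X,0) \arrow{d} \\ (X,0) \arrow{r} & (X,\mathcal{F}[1])
\end{tikzcd}
\]
in $\mathcal{Q}_S$. This immediately yields a commutative square of total mapping spaces, and identifying $\Map_{\mathcal{Q}_S}((X,0),(Y,\mathcal{G})) \simeq \Map_{\dSt_S}(Y,X)$ gives the map to the pullback $\Omega_{\Map_{/S}(Y,X)}\Map_{\mathcal{Q}_S}((X,\mathcal{F}),(Y,\mathcal{G}))$ directly, with no separate naturality argument needed. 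This is worth internalising: when you want a ``fibrewise'' loop/shift identification to globalise, it is often easiest to produce the comparison map from a (co)fibre sequence in the source or target rather than to argue naturality-in-the-base by hand.
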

\begin{proof}
  We consider the case of $\mathcal{F}[1]$; the case of
  $\mathcal{G}[-1]$ is proved similarly. The definition of
  $\mathcal{F}[1]$ as a pushout gives a commutative square
  \nolabelcsquare{\Map_{\mathcal{Q}_S}((X, \mathcal{F}[1]), (Y,
    \mathcal{G}))}{\Map_{\mathcal{Q}_S}((X, 0), (Y,
    \mathcal{G}))}{\Map_{\mathcal{Q}_S}((X, 0), (Y,
    \mathcal{G}))}{\Map_{\mathcal{Q}_S}((X, \mathcal{F}), (Y, \mathcal{G})),}
  and, identifying
  $\Map_{\mathcal{Q}_{S}}((X, 0), (Y, \mathcal{G})) \simeq \Map_{\dSt_{S}}(Y, X)$,
  this induces a map
  \[\Map_{\mathcal{Q}_{S}}((X, \mathcal{F}[1]), (Y, \mathcal{G})) \to
  \Omega_{\Map_{/S}(Y,X)}\Map_{\mathcal{Q}_{S}}((X, \mathcal{F}), (Y,
  \mathcal{G})).\]
  To see that this is an equivalence it suffices to see it is an
  equivalence on the fibres over each point of $\Map_{\dSt_{S}}(Y,X)$; at
  $f \colon Y \to X$ the map on fibres is the natural equivalence
  \[\Map_{\QCoh(X)}(\mathcal{F}[1], f_{*}\mathcal{G}) \isoto
  \Omega\Map_{\QCoh(X)}(\mathcal{F}, f_{*}\mathcal{G}).\qedhere\]
\end{proof}

\begin{remark}\label{rmk:relcotgtrepr}
  If $\mathbb{L}_{X/S}$ is the relative cotangent complex of $X \to
  S$, then for $\mathcal{G} \in \QCoh(Y)$ the zero
  section $\Map_{\dSt}(Y, X) \to \Map_{\mathcal{Q}}((X,\mathbb{L}_{X/S}),
  (Y, \mathcal{G}))$ corresponds to the commutative square
  \nolabelcsquare{\Map_{\dSt}(Y, X)}{\Map_{\dSt}(Y[\mathcal{G}],
    X)}{\Map_{\dSt}(Y, S)}{\Map_{\dSt}(Y[\mathcal{G}], S),}
  where the vertical maps are given by composition with $X \to S$
  and the horizontal maps by composition with $Y[\mathcal{G}]  \to Y$. Thus by Lemma~\ref{lem:QCohIsusp} the space
  $\Map_{\mathcal{Q}}((X, \mathbb{L}_{X/S}[n]), (Y, \mathcal{G}))$ is
  equivalent to \[\Omega^{n}_{\Map(Y,X)}\left(\Map_{\dSt}(Y,S)
  \times_{\Map_{\dSt}(Y[\mathcal{G}], S)} \Map_{\dSt}(Y[\mathcal{G}], X)\right).\]
\end{remark}

\begin{lemma}
  The  relative cotangent complex determines a functor
  \opctriangle{\Fun(\Delta^{1},\dSt)^{\txt{geom},\op}}{\mathcal{Q}}{\dSt}{}{\txt{ev}_{0}}{}
\end{lemma}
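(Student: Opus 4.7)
Plan: We construct $F$ by using the universal property of the cotangent complex (Definition~\ref{defn:LXS}) to express it as a partial left adjoint, which we then promote to a functor via \cite[Proposition 5.2.4.2]{HTT}.

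First I would build the global square-zero extension functor $\mathcal{E}\colon \QCconn \to \Fun(\Delta^{1},\dSt)^{\op}$ sending $(Y,\mathcal{F})$ to the augmentation $(Y[\mathcal{F}]\to Y)$, viewed in the opposite category. The pointwise assignment $\mathcal{F}\mapsto \mathcal{O}_{Y}[\mathcal{F}]$ is compatible with pullback in $Y$ by Remark~\ref{rmk:sqznat}, so it assembles via Lemma~\ref{lem:shalg} into a functor $\QCconn\to \QCAconn$ over $\dSt^{\op}$. Composing with the global relative Spec functor (which by Proposition~\ref{propn:affisqcalg} provides an embedding $\QCAconn\hookrightarrow \Fun(\Delta^{1},\dSt)^{\op}$) gives the desired $\mathcal{E}$.

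Second, I would reformulate the universal property of $\mathbb{L}_{X/S}$. Unpacking Remark~\ref{rmk:relcotgtrepr}, the defining equivalence becomes a natural equivalence
\[\Map_{\mathcal{Q}}((X,\mathbb{L}_{X/S}),(Y,\mathcal{F}))\simeq\Map_{\Fun(\Delta^{1},\dSt)^{\op}}((f\colon X\to S),\mathcal{E}(Y,\mathcal{F}))\]
in $(Y,\mathcal{F})\in\QCconn$, exhibiting $(X,\mathbb{L}_{X/S})$ as the value at $(f)$ of a would-be left adjoint to $\mathcal{E}$. Pointwise representability of this presheaf at every geometric $(f)$ is Theorem~\ref{thm:geomhascotgt}(i), and uniqueness among bounded-below sheaves is Remark~\ref{rmk:cotgtunique}. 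By \cite[Proposition 5.2.4.2]{HTT} these representing objects assemble into a functor $F\colon \Fun(\Delta^{1},\dSt)^{\txt{geom},\op}\to \mathcal{Q}$, and the compatibility with $\txt{ev}_{0}$ is automatic since $\mathbb{L}_{X/S}\in\QCoh(X)$.

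The main obstacle will be a technical wrinkle: since $\mathbb{L}_{X/S}$ need not be connective, the putative left adjoint of $\mathcal{E}$ does not land in $\QCconn$. One handles this either by extending $\mathcal{E}$ to the full subcategory of $\mathcal{Q}$ consisting of bounded-below sheaves (using shifts to reduce to the connective case, with Remark~\ref{rmk:cotgtunique} ensuring uniqueness persists), or equivalently by passing through the restricted Yoneda embedding $\mathcal{Q}\hookrightarrow\Fun(\QCconn^{\op},\widehat{\mathcal{S}})$, which is fully faithful because $\QCohconn(Y)$ generates $\QCoh(Y)$ under shifts and colimits fibrewise over $\dSt^{\op}$.
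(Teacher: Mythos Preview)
Your strategy is the paper's: exhibit $\mathbb{L}_{X/S}$ as a representing object via the universal property of Remark~\ref{rmk:relcotgtrepr} (your displayed equivalence is exactly its $n=0$ case), then invoke Yoneda-type machinery to assemble the pointwise assignment into a functor. The construction of $\mathcal{E}$ and the reformulation as a partial left adjoint are fine, and you correctly isolate the non-connectivity of $\mathbb{L}_{X/S}$ as the obstruction.

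Both proposed fixes have gaps, however. For option~(a), one cannot literally extend $\mathcal{E}$ to bounded-below sheaves: the square-zero extension $Y[\mathcal{F}]$ is only defined for connective $\mathcal{F}$, and there is no shift operation on the target $\Fun(\Delta^{1},\dSt)^{\op}$ with which to undo a shift on the source. For option~(b), the embedding actually relevant to the adjunction is the co-Yoneda $(X,\mathbb{L})\mapsto\Map_{\mathcal{Q}}((X,\mathbb{L}),-)$ into \emph{covariant} functors on $\QCconn$, not the Yoneda embedding into presheaves; its full faithfulness would require codensity of $\QCconn$ in $\mathcal{Q}$, and the criterion ``generates under shifts and colimits'' addresses density rather than codensity. (Relatedly, \cite[Proposition 5.2.4.2]{HTT} produces a partial left adjoint valued in the same \icat{} as the right adjoint's source, so it does not apply once $\mathbb{L}_{X/S}\notin\QCconn$.) The paper instead extends the \emph{presheaf}, not $\mathcal{E}$: for each $n$ it defines $\Lambda_{n}\colon\Fun(\Delta^{1},\dSt)^{\txt{geom}}\to\mathcal{P}(\mathcal{Q}^{c[-n],\op})$ by the $\Omega^{n}$-formula of Remark~\ref{rmk:relcotgtrepr} (which makes sense because $\mathcal{G}[n]$ is connective for $(Y,\mathcal{G})\in\mathcal{Q}^{c[-n]}$), observes via Theorem~\ref{thm:geomhascotgt}(i) that on $n$-geometric morphisms this lands in the image of the honest Yoneda embedding of $\mathcal{Q}^{c[-n-1],\op}$, extracts $\mathbb{L}^{n}$, and then assembles the resulting compatible system over $n$.
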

\begin{proof}
  Let
  $\Lambda_{n} \colon \Fun(\Delta^{1},\dSt)^{\txt{geom}} \to
  \mathcal{P}(\mathcal{Q}^{c[-n],\op})$ be the functor sending a
  morphism $f \colon X \to S$ to the presheaf
  \[(Y, \mathcal{G}) \mapsto
  \Omega^{n}_{\Map(Y,X)}\left(\Map_{\dSt}(Y,S)
    \times_{\Map_{\dSt}(Y[\mathcal{G}[n]], S)}
    \Map_{\dSt}(Y[\mathcal{G}[n]], X)\right).\]
  If the morphism $X \to S$ has a $(-n)$-connective relative cotangent complex,
  then by Remark~\ref{rmk:relcotgtrepr} the presheaf
  $\Lambda_{n}(Y, \mathcal{G})$ is naturally equivalent to
\[ \Map_{\mathcal{Q}}((X, \mathbb{L}_{X/S}[n]), (Y,
\mathcal{G}[n])) \simeq \Map_{\mathcal{Q}^{c[-n]}}((X,
\mathbb{L}_{X/S}), (Y, \mathcal{G})).\]

Denote by $\Fun(\Delta^{1}, \dSt)^{n\txt{-geom}}$ the full subcategory
of $\Fun(\Delta^{1}, \dSt)$ spanned by the $n$-geometric stacks. Then
the restriction of the functor  $\Lambda_{n+1}$ to the subcategory
$\Fun(\Delta^{1}, \dSt)^{n\txt{-geom}}$ lands in the full subcategory
of $\mathcal{P}(\mathcal{Q}^{c[-n],\op})$ spanned by the representable
presheaves by Theorem~\ref{thm:geomhascotgt}. By the Yoneda Lemma,
this means it determines a functor
$\mathbb{L}^{n} \colon \Fun(\Delta^{1}, \dSt)^{n\txt{-geom}} \to
\mathcal{Q}^{c[-n-1],\op}$.
Moreover, these functors are clearly compatible as we increase $n$,
and so combine to give a functor
\[\mathbb{L} \colon \Fun(\Delta^{1}, \dSt)^{\txt{geom}} \to \mathcal{Q}^{\op},\] as required.
\end{proof}

\begin{proposition}[\cite{AntieauGepnerBrauer}*{Lemma 4.11}]\label{propn:tgtlim}
  The cotangent complex functor takes finite limits in
  $\Fun(\Delta^{1},\dSt)^{\txt{geom}}$ to relative colimits in
  $\QC$. In other words, if
  $p \colon \mathcal{I}^{\triangleleft} \to
  \Fun(\Delta^{1},\dSt)^{\txt{geom}}$
  is a finite limit diagram, taking the initial object to $X \to S$,
  then the induced diagram
  $(\mathcal{I}^{\op})^{\triangleright} \to \QCoh(X)$ that sends
  $i \in \mathcal{I}$ to $p(\pi_{i})^{*}\mathbb{L}_{p(i)}$, with
  $\pi_{i}$ the unique map $-\infty \to i$ in
  $\mathcal{I}^{\triangleleft}$, is a colimit diagram.
\end{proposition}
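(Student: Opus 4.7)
The plan is to verify the colimit statement by appealing to Yoneda and converting everything to mapping spaces of stacks via the universal property of the cotangent complex. Fix $\mathcal{F} \in \QCoh(X)$; it suffices to show that the induced diagram
\[ (\mathcal{I}^{\triangleright})^{\op} \to \mathcal{S},\qquad i \mapsto \Map_{\QCoh(X)}(f_{i}^{*}\mathbb{L}_{p(i)}, \mathcal{F}),\qquad \infty \mapsto \Map_{\QCoh(X)}(\mathbb{L}_{X/S}, \mathcal{F}), \]
is a limit diagram. Here we denote the component morphisms $X \to X_{i}$ and $S \to S_{i}$ arising from the limit diagram by $f_{i}$ and $\sigma_{i}$, respectively. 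By Theorem~\ref{thm:geomhascotgt}(i) and the finiteness of $\mathcal{I}$, we may pick $n$ large enough that $\mathbb{L}_{X/S}$ and every $\mathbb{L}_{p(i)}$ is $(-n)$-connective, and (since shifts are equivalences) replace $\mathcal{F}$ by $\mathcal{G}[n]$ throughout, working with connective $\mathcal{G}$.

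Next, by the $(f_{i}^{*} \dashv f_{i,*})$-adjunction combined with the universal property of Definition~\ref{defn:LXS}, we have
\[ \Map_{\QCoh(X)}(f_{i}^{*}\mathbb{L}_{p(i)}[n], \mathcal{G}[n]) \simeq \Map_{\mathcal{Q}_{S_{i}}}((X_{i},\mathbb{L}_{p(i)}[n]),(X, f_{i,*}\mathcal{G}[n])), \]
and similarly for $\mathbb{L}_{X/S}$. Using Remark~\ref{rmk:relcotgtrepr} (which applies here since $\mathbb{L}_{p(i)}[n]$ is connective), this right-hand side is identified with $\Omega^{n}$ of the fiber product
\[ \Map_{\dSt}(X, S_{i}) \times_{\Map_{\dSt}(X[\mathcal{G}[n]], S_{i})} \Map_{\dSt}(X[\mathcal{G}[n]], X_{i}) \]
based over the point of $\Map_{\dSt}(X, X_{i})$ determined by $f_{i}$. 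The functor on $\mathcal{I}$ is realized by pre-composition with the structure morphisms of the limit diagram, so that the naturality is automatic.

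The punchline is then a purely formal consequence of the fact that $(X \to S)$ is the limit of $(X_{i} \to S_{i})$ in $\Fun(\Delta^{1},\dSt)$. Indeed, for any fixed object $Y$ of $\dSt$, the functors $\Map_{\dSt}(Y, \blank)$ and $\Map_{\Fun(\Delta^{1},\dSt)}((Y \to Y), \blank)$ preserve limits, so each of the four mapping spaces above assembles along $\mathcal{I}$ into a limit whose value at the cone point is the corresponding mapping space into $(X \to S)$. Since pullbacks and finite loop spaces both commute with limits in $\mathcal{S}$, the whole expression passes to a limit as required.

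The main obstacle is organizing these pointwise equivalences into a genuine equivalence of $(\mathcal{I}^{\triangleright})^{\op}$-shaped diagrams, rather than a collection of levelwise comparisons. The cleanest way to arrange this is to combine the functoriality of the cotangent-complex assignment $\mathbb{L} \colon \Fun(\Delta^{1}, \dSt)^{\geom,\op} \to \mathcal{Q}$ established before the statement with the naturality in Construction~\ref{constr:Gamma}/\ref{constr:GammaO}, so that both the source and target of the comparison in question arise as functors out of $\mathcal{I}^{\triangleright}$ that are then promoted to presheaves of spaces by mapping into $(X,\mathcal{G}[n])$; the argument of the preceding paragraph then shows that these presheaves agree and the target presheaf is a limit diagram.
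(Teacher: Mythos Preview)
Your approach is the same as the paper's: the cotangent complex is defined by a corepresentability property (Definition~\ref{defn:LXS} and Remark~\ref{rmk:relcotgtrepr}), and the functor it corepresents visibly takes limits in the variable $(X \to S)$ to limits of spaces, so the paper's proof is literally ``immediate from the definition''; you have unpacked this via Yoneda and the explicit $\Omega^{n}$-description. Two small slips to fix: the shift should go the other way (you must test against $(-n)$-connective $\mathcal{F} = \mathcal{G}[-n]$, since that is the subcategory containing the $\mathbb{L}$'s and closed under colimits), and in your displayed $\mathcal{Q}_{S_i}$-mapping space the target should be $(X,\mathcal{G})$ rather than $(X, f_{i,*}\mathcal{G}[n])$ --- note $f_{i,*}\mathcal{G}$ does not even live over $X$, and the adjunction is already absorbed into the fibrewise description of morphisms in $\mathcal{Q}$.
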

\begin{proof}
  This is immediate from the definition, since the functor
  corepresented by the cotangent complex preserves limits in
  the target variable.
\end{proof}

\begin{definition}
  Let $X \to S$ be a morphism of derived stacks that has a relative
  cotangent complex $\mathbb{L}_{X/S}$. We write $\mathbb{T}_{X/S}$ for its
  dual $\mathbb{L}_{X/S}^{\vee}$, and refer to this as the \emph{relative tangent
    complex} of $X$ over $S$. If $X \to S$ is a relative Artin stack,
  then we also have $\mathbb{L}_{X/S} \simeq
  \mathbb{T}_{X/S}^{\vee}$ by Theorem~\ref{thm:geomhascotgt}.
\end{definition}

Our final goal in this section is to show that, under strong
finiteness assumptions, we can identify the tangent complex of a
derived mapping stack.

\begin{definition}\label{defn:Ocompact}
  We say a morphism $f \colon X \to Y$ is \emph{perfect}
  if the functor $f_{*} \colon \QCoh(X) \to \QCoh(Y)$ preserves
  dualizable objects. We say $f$ is \emph{universally
  perfect} if any base change of $f$ is
  perfect. We say $f$ is \emph{$\mathcal{O}$-compact} if
  $f$ is both universally perfect and universally
  cocontinuous.
\end{definition}
\begin{warning}
  This notion of perfect morphism differs from that considered in
  \cite{BenZviFrancisNadler}. As we will only encounter this notion as
  part of the notion of $\mathcal{O}$-compact morphism, this should
  not cause any confusion.
\end{warning}

\begin{lemma}\label{lem:Ocpt}
  A morphism $f \colon X \to Y$ is $\mathcal{O}$-compact \IFF{} for
  every map $p \colon \Spec A \to Y$, the base change $f_{p} \colon
  X_{p} \to \Spec A$ is perfect and cocontinuous.
\end{lemma}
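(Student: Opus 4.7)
The forward direction is immediate: if $f$ is $\mathcal{O}$-compact then by definition any base change of $f$ is perfect and cocontinuous, so in particular $f_{p}$ is perfect and cocontinuous for every $p \colon \Spec A \to Y$.

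For the backward direction, the plan splits into establishing universal cocontinuity and universal perfectness separately. Universal cocontinuity is already handled: by Corollary~\ref{cor:upfcocts}, cocontinuity of $f_{p}$ for all affine points $p$ is equivalent to $f$ being universally cocontinuous. In particular, $f$ then satisfies base change by Theorem~\ref{thm:bcpf}. This base change identification is what will power the perfectness argument: for any pullback square
\[
\begin{tikzcd}
X' \arrow{r}{\xi} \arrow{d}{f'} & X \arrow{d}{f} \\
Y' \arrow{r}{\eta} & Y
\end{tikzcd}
\]
and any further point $p \colon \Spec A \to Y'$, the iterated base change $f'_{p} \colon X'_{p} \to \Spec A$ coincides with $f_{\eta p}$, which by hypothesis is perfect and cocontinuous.

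The heart of the argument is then to show $f'_{*}$ preserves dualizable objects. Given $\mathcal{E} \in \QCoh(X')$ dualizable, I would check this after pulling back along each $p \colon \Spec A \to Y'$: under the equivalence $\QCoh(Y') \simeq \lim_{p \in \dAff_{/Y'}^{\op}} \Mod_{p^{*}\mathcal{O}_{Y'}}$ of presentably symmetric monoidal \icats{}, dualizability is detected and created pointwise (the pullback functors $p^{*}$ are strong symmetric monoidal, so they preserve duals; conversely, if each $p^{*}X$ is dualizable then the family of duals $(p^{*}X)^{\vee}$ together with the pointwise units and counits assembles into a compatible cocartesian section, giving a dual in the limit). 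Using base change for $f'$ (justified by its universal cocontinuity), we have
\[ p^{*}f'_{*}\mathcal{E} \simeq f'_{p,*}\bar{p}^{*}\mathcal{E},\]
where $\bar{p} \colon X'_{p} \to X'$ is the base change. Since $\bar{p}^{*}$ is symmetric monoidal, $\bar{p}^{*}\mathcal{E}$ is dualizable in $\QCoh(X'_{p})$; and since $f'_{p} \simeq f_{\eta p}$ is perfect by hypothesis, $f'_{p,*}\bar{p}^{*}\mathcal{E}$ is dualizable in $\Mod_{A}$. Pointwise detection then yields that $f'_{*}\mathcal{E}$ is dualizable, completing the verification that $f'$ is perfect.

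The main obstacle (such as it is) is justifying the pointwise detection of dualizability in $\QCoh(Y')$; everything else is a routine combination of Corollary~\ref{cor:upfcocts} and Theorem~\ref{thm:bcpf}. This dualizability detection is standard, since dualizability is a property expressible entirely in terms of the symmetric monoidal structure, and the \icat{} of dualizable objects is closed under limits in the \icat{} of (presentably) symmetric monoidal \icats{}; I would simply cite or briefly sketch this fact.
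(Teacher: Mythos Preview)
Your proof is correct and follows essentially the same approach as the paper: use Corollary~\ref{cor:upfcocts} for universal cocontinuity, then combine base change with pointwise detection of dualizability in $\QCoh(Y')$ for universal perfectness. The paper streamlines slightly by observing that the affine-pointwise hypothesis is itself stable under base change, so it suffices to check that $f_{*}$ (rather than every $f'_{*}$) preserves dualizable objects; it also cites \cite{HA}*{Proposition 4.6.1.11} for the pointwise detection of dualizability rather than sketching it.
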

\begin{proof}
  Since the second condition holds for a base change of $f$ if it
  holds for $f$ it suffices to show that it implies that $f_{*}$
  preserves colimits and dualizable objects. For colimits we already
  proved this in Corollary~\ref{cor:upfcocts}. For dualizable objects
  it follows from base change (in the form of
  Proposition~\ref{propn:affpbbc}) together with
  \cite{HA}*{Proposition 4.6.1.11}, which implies that an object of
  $\QCoh(Y)$ is dualizable \IFF{} its image under $p^{*}$ is
  dualizable in $\Mod_{A}$ for each $p \colon \Spec A \to Y$.
\end{proof}

\begin{proposition}\label{propn:UCCcolim}
  For any derived stack $S$, the full subcategories of $\dSt_{S}$
  spanned by the
  \begin{enumerate}[(i)]
  \item cocontinuous,
  \item perfect,
  \item universally cocontinuous,
  \item universally perfect,
  \item $\mathcal{O}$-compact
  \end{enumerate}
  morphisms are all closed under finite colimits.
\end{proposition}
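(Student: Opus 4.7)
The plan is to reduce all five cases to the closure of each condition under the initial object and under pushouts, since finite colimits in $\dSt_{S}$ are generated by these. The initial object is $\emptyset \to S$, and all five conditions are trivially satisfied because $\QCoh(\emptyset) \simeq \{0\}$ and $\emptyset$ is preserved by base change. So the substantive work is pushouts.

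Given a pushout square in $\dSt_{S}$
\[
  \begin{tikzcd}
    X \arrow{r}{g} \arrow{d}{h} & Y \arrow{d}{y} \\
    Z \arrow{r}{z} & W,
  \end{tikzcd}
\]
writing $f \colon W \to S$, $f_{Y}:=fy$, $f_{Z}:=fz$, $f_{X}:=f_{Y}g=f_{Z}h$, I would first show that for every $\mathcal{E} \in \QCoh(W)$ there is a natural equivalence
\[ f_{*}\mathcal{E} \simeq f_{Y,*}y^{*}\mathcal{E} \times_{f_{X,*}(yg)^{*}\mathcal{E}} f_{Z,*}z^{*}\mathcal{E}.\]
This follows from Corollary~\ref{cor:qcohrellim} applied to the pushout, giving $\mathcal{E} \simeq y_{*}y^{*}\mathcal{E} \times_{(yg)_{*}(yg)^{*}\mathcal{E}} z_{*}z^{*}\mathcal{E}$, together with the fact that $f_{*}$ preserves limits.

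For case (i), if $f_{Y},f_{Z},f_{X}$ are cocontinuous, then each term in the formula preserves colimits in $\mathcal{E}$ (using that $y^{*},z^{*},(yg)^{*}$ are left adjoints). The key point is that the pullback is then also a colimit-preserving construction: since $\QCoh(S)$ is stable, the pullback of any cospan is equivalent to the shifted cofibre, hence preserves colimits as a functor of the diagram. Thus $f_{*}$ preserves colimits, proving (i). For (ii) the argument is identical, replacing ``preserves colimits'' by ``preserves dualizable objects,'' which is closed under finite limits in the symmetric monoidal stable \icat{} $\QCoh(S)$ (as used in the proof of Lemma~\ref{lem:BettiOcpt}). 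For (iii), (iv), (v), I would use that $\dSt$ is an $\infty$-topos so colimits (in particular pushouts) are universal: for any $p \colon \Spec A \to S$, the base-changed square of $p^{*}X, p^{*}Y, p^{*}Z, p^{*}W$ is again a pushout square. By Corollary~\ref{cor:upfcocts}, universal cocontinuity of $f$ is equivalent to cocontinuity of every such base change; so applying case (i) to the base-changed pushouts proves (iii). For (iv), I would prove the analogous ``pointwise'' criterion (the same proof as Corollary~\ref{cor:upfcocts} works, using that dualizable objects in $\QCoh(S)$ are detected pointwise via the $p^{*}$) and then argue as in (iii) using case (ii); case (v) is the conjunction by Lemma~\ref{lem:Ocpt}.

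The main obstacle I anticipate is making the interchange of the pullback with colimits (or with preservation of dualizable objects) fully rigorous in the first step — specifically, pinning down that in the stable setting the binary pullback functor $\QCoh(S)^{\Lambda^{2}_{2}} \to \QCoh(S)$ is exact in both senses required. This is where stability is essential and where the argument would otherwise fail; once that is established, the remaining steps are formal manipulations using universality of colimits in the $\infty$-topos $\dSt$.
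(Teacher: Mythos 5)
Your core argument is the paper's: the identity $f_{*}\simeq\lim_{i}f_{i,*}\,(\text{pullback})$ coming from Corollary~\ref{cor:qcohrellim} plus limit-preservation of $f_{*}$, the commutation of finite limits with colimits in the stable \icat{} $\QCoh(S)$ for (i), closure of dualizables under finite limits for (ii), and universality of colimits in $\dSt$ for the ``universal'' variants. The paper works with an arbitrary finite diagram rather than reducing to the initial object and pushouts, but that reduction is harmless, and your handling of the initial object and of the binary fibre product in the stable setting is fine.

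The one step that does not go through as written is your treatment of (iv). You propose to prove a pointwise criterion ``perfect on all affine base changes $\Rightarrow$ universally perfect'' by ``the same proof as Corollary~\ref{cor:upfcocts}.'' But the relevant argument (as in Lemma~\ref{lem:Ocpt}) identifies $p^{*}f_{*}\mathcal{E}$ with $f_{p,*}\bar{p}^{*}\mathcal{E}$ via base change, i.e.\ via Proposition~\ref{propn:affpbbc}, and that proposition requires the affine base changes of $f$ to be \emph{cocontinuous}. Perfectness alone gives you no base-change equivalence, so detecting dualizability of $f_{*}\mathcal{E}$ through the $p^{*}$ is not available in the purely ``universally perfect'' setting; this is precisely why the paper only states the pointwise criterion for $\mathcal{O}$-compact morphisms, where cocontinuity is part of the hypothesis. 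The fix is to drop the pointwise criterion entirely and argue as the paper does (and as you essentially already do for (iii)): for an arbitrary $\sigma\colon S'\to S$, the pullback of the colimit diagram is again a colimit diagram since colimits in $\dSt$ are universal, each leg of the pulled-back diagram is perfect (resp.\ cocontinuous) over $S'$ by the universal hypothesis on the legs, and then case (ii) (resp.\ (i)) applied over $S'$ finishes the proof; (v) is then the conjunction of (iii) and (iv).
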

\begin{proof}
  Consider a diagram $\mathcal{I} \to \dSt_{S}$ taking
  $i \in \mathcal{I}$ to $p_{i}\colon X_{i} \to S$ with colimit
  $p \colon X \to S$. To prove (i) we must show that if each functor
  $p_{i,*} \colon \QCoh(X_{i}) \to \QCoh(S)$ preserves colimits, then
  so does the functor $p_{*}$. Since $X$ is a colimit, we have an
  equivalence
  $\QCoh(X) \simeq \lim_{i \in \mathcal{I}^{\op}} \QCoh(X_{i})$. If we
  write $f_{i} \colon X_{i} \to X$ for the unique map in the colimit
  diagram, this means that for $\mathcal{E} \in \QCoh(X)$ we have a
  natural equivalence
  \[ \mathcal{E} \simeq \lim_{i \in\mathcal{I}^{\op}}
    f_{i,*}f_{i}^{*}\mathcal{E},\] by Corollary~\ref{cor:qcohrellim},
  and hence we have an equivalence of functors
  \[p_{*} \simeq \lim_{i \in \mathcal{I}^{\op}} p_{i,*}f_{i}^{*}.\]
  The functors $f_{i}^{*}$ preserve colimits (being left adjoints) and
  the functors $p_{i,*}$ preserve colimits by assumption. Moreover,
  since $\QCoh(S)$ is a stable \icat{}, finite limits commute with
  colimits. Thus, if the \icat{} $\mathcal{I}$ is finite, the functor
  $p_{*}$ also preserves colimits. This proves (i).

  The same argument proves (ii): If we assume the functors $p_{i,*}$
  preserve dualizable objects, and $\mathcal{E} \in \QCoh(X)$ is
  dualizable, then $p_{*}\mathcal{E} \simeq \lim_{i \in
    \mathcal{I}^{\op}} p_{i,*}f_{i}^{*}\mathcal{E}$ is again
  dualizable. To see this, first observe that  the functors $f_{i}^{*}$
  preserve dualizable objects (since they are symmetric monoidal), so
  each object $p_{i,*}f_{i}^{*}\mathcal{E}$ is dualizable. Then we use
  that  a finite limit of dualizable objects is again dualizable in a stable
  \icat{} with a symmetric monoidal structure where the tensor product
  preserves finite colimits in each variable.

  Now (iii) and (iv) follow since for every map $\sigma \colon S' \to S$, the
  pullback functor $\sigma^{*} \colon \dSt_{S} \to \dSt_{S'}$
  preserves colimits (as $\dSt$ is an $\infty$-topos). Finally,
  (v) is just a combination of (iii) and (iv).
\end{proof}

\begin{notation}
  Given $X \to S$ and $S' \to S$ in $\dSt$, we write $X^{S'}_{S}$ for
  the internal Hom in $\dSt_{/S}$.
\end{notation}

\begin{proposition}\label{propn:mapscotgt}
  Suppose $X \to S$ is a relative Artin stack, and $S' \to S$ is an
  $\mathcal{O}$-compact morphism.
  If $\txt{ev}$ denotes the evaluation map
  $S' \times_{S} X^{S'}_{S} \to X$ and $\pi$ the projection
  $S' \times_{S} X^{S'}_{S} \to X^{S'}_{S}$, then
  $(\pi_{*}\txt{ev}^{*}\mathbb{T}_{X/S})^{\vee}$ is a relative
  cotangent complex for $X^{S'}_{S}$ over $S$.  Under the induced
  identification of $\mathbb{L}_{S' \times_{S} X^{S'}_{S}/S'}$ with
  $\pi^{*}(\pi_{*}\txt{ev}^{*}\mathbb{T}_{X/S})^{\vee}$ via
  Lemma~\ref{lem:relcotgtpb}, the natural map
  $\txt{ev}^{*}\mathbb{L}_{X/S} \to \mathbb{L}_{S' \times_{S}
    X^{S'}_{S}/S'}$
  corresponds to the dual of the counit map
  $\pi^{*}\pi_{*}\txt{ev}^{*}\mathbb{T}_{X/S} \to
  \txt{ev}^{*}\mathbb{T}_{X/S}$.
\end{proposition}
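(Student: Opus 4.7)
The plan is to verify the defining universal property of Definition~\ref{defn:LXS}: for $(Y,\mathcal{F}) \in \mathcal{Q}^{c}_{S}$ we must produce a natural equivalence
\[
\Map_{\mathcal{Q}_{S}}((\pi_{*}\ev^{*}\mathbb{T}_{X/S})^{\vee},(Y,\mathcal{F}))
\simeq \Map_{\dSt_{S}}(Y[\mathcal{F}], X^{S'}_{S}).
\]
The first step is to unwind the right-hand side via the mapping stack adjunction, giving $\Map_{\dSt_{S}}(Y[\mathcal{F}]\times_{S}S',X)$, and to observe that square-zero extensions are compatible with base change (Remark~\ref{rmk:sqznat} and its globalization via descent), so that $Y[\mathcal{F}]\times_{S}S' \simeq (Y\times_{S}S')[\pi'^{*}\mathcal{F}]$, where $\pi'\colon Y\times_{S}S'\to Y$ is the base change of $\pi$ along $f\colon Y \to X^{S'}_{S}$. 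Fixing a point $f$, with adjoint $\tilde{f} = \ev\circ(f\times_{S}\id_{S'})\colon Y\times_{S}S'\to X$, and applying the defining property of $\mathbb{L}_{X/S}$ (which exists and is dualizable by Theorem~\ref{thm:geomhascotgt}) then identifies the fibre over $f$ with $\Map_{\QCoh(Y\times_{S}S')}(\tilde{f}^{*}\mathbb{L}_{X/S},\pi'^{*}\mathcal{F})$.

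The second step converts this expression into the desired one. Using dualizability of $\mathbb{L}_{X/S}$, this fibre becomes $\Map(\mathcal{O}_{Y\times_{S}S'},\tilde{f}^{*}\mathbb{T}_{X/S}\otimes\pi'^{*}\mathcal{F})$. Now $S'\to S$ is $\mathcal{O}$-compact, so by Lemma~\ref{lem:Ocpt} its base change $\pi\colon X^{S'}_{S}\times_{S}S' \to X^{S'}_{S}$ is universally cocontinuous and universally perfect, and the same holds for the further base change $\pi'$. Applying the projection formula for $\pi'$ (Theorem~\ref{thm:bcpf}), the expression becomes $\Map(\mathcal{O}_{Y},\pi'_{*}\tilde{f}^{*}\mathbb{T}_{X/S}\otimes\mathcal{F})$. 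Base change for the cartesian square relating $\pi,\pi',f,f\times_{S}\id_{S'}$ then gives $\pi'_{*}\tilde{f}^{*}\mathbb{T}_{X/S} \simeq f^{*}\pi_{*}\ev^{*}\mathbb{T}_{X/S}$, and universal perfectness of $\pi$ ensures that $\pi_{*}\ev^{*}\mathbb{T}_{X/S}$ is dualizable, so the fibre can finally be rewritten as $\Map(f^{*}(\pi_{*}\ev^{*}\mathbb{T}_{X/S})^{\vee},\mathcal{F})$, which is exactly the fibre of the left-hand side over $f$.

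The main technical obstacle is assembling the chain of equivalences into a single natural transformation of functors $\mathcal{Q}_{S}^{c} \to \mathcal{S}$, rather than just a pointwise equivalence. This is where one needs the functorialities carefully established in \S\ref{subsec:natbasechg}: the functoriality of the projection formula map constructed via $\txt{PF}$ (and its compatibility with base change along universally cocontinuous morphisms), together with the functoriality of $\Gamma$ encoded by Construction~\ref{constr:Gamma}. Once these are in place, the string of equivalences is natural in $(Y,\mathcal{F})$ and, by the Yoneda Lemma, yields the claimed cotangent complex.

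For the final assertion concerning the natural map $\ev^{*}\mathbb{L}_{X/S}\to \mathbb{L}_{S'\times_{S}X^{S'}_{S}/S'}$, I would argue as follows. Under the equivalence of Lemma~\ref{lem:relcotgtpb}, $\mathbb{L}_{S'\times_{S}X^{S'}_{S}/S'}$ is identified with $\pi^{*}(\pi_{*}\ev^{*}\mathbb{T}_{X/S})^{\vee}$, and the pullback-of-cotangent map corresponds, under the universal property, to the counit of the mapping stack adjunction $X^{S'}_{S}\times_{S}S' \to X$, namely $\ev$ itself. Tracing through the dualities and base change identifications in the computation above, the map in question is precisely the dual of the counit $\pi^{*}\pi_{*}\ev^{*}\mathbb{T}_{X/S}\to \ev^{*}\mathbb{T}_{X/S}$ of the adjunction $\pi^{*}\dashv \pi_{*}$; verifying this is a diagram chase using the explicit form of the projection formula and base change isomorphisms.
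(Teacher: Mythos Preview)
Your proposal is correct and follows essentially the same route as the paper: unwind via the mapping-stack adjunction and Proposition~\ref{propn:pbrelspec}, apply the universal property of $\mathbb{L}_{X/S}$, then use dualizability, the projection formula, and base change for the $\mathcal{O}$-compact morphism to reach $f^{*}(\pi_{*}\ev^{*}\mathbb{T}_{X/S})^{\vee}$. For the final assertion, the paper makes the diagram chase concrete by specializing to $Z = X^{S'}_{S}$, $f = \id_{X^{S'}_{S}}$, and $\mathcal{G} = (\pi_{*}\ev^{*}\mathbb{T}_{X/S})^{\vee}$, tracking what the identity map corresponds to; your description is the same in spirit but it would be worth spelling out this specialization.
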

\begin{proof}
  For any derived stack $Z$ over $S$ and 
  $\mathcal{G} \in \QCohconn(Z)$, we have a natural equivalence
  \[ \Map_{\dSt_{S}}(Z[\mathcal{G}], X^{S'}_{S}) \simeq
    \Map_{\dSt_{S}}(S' \times_{S} Z[\mathcal{G}], X) \simeq
    \Map_{\dSt_{S}}((S' \times_{S} Z)[\pi_{Z}^{*}\mathcal{G}], X),\]
  where $\pi_{Z}$ is the projection $S' \times_{S}Z \to Z$ and the
  second equivalence follows from
  Proposition~\ref{propn:pbrelspec}. Since $X$ has a relative
  cotangent complex over $S$, we may identify the right-hand side here
  with $\Map_{\mathcal{Q}_{S}}(\mathbb{L}_{X/S}, (S' \times_{S} Z,
  \pi_{Z}^{*}\mathcal{G}))$; its fibre at $\phi \colon S' \times_{S} Z \to X$
  is given by $\Map_{\QCoh(S' \times_{S}Z)}(\phi^{*}\mathbb{L}_{X/S}, \pi_{Z}^{*}\mathcal{G})$.

  Since $\mathbb{L}_{X/S}$ is dualizable, so is
  $\phi^{*}\mathbb{L}_{X/S}$, giving
\[ \Map_{\QCoh(Z
  \times_{S}S')}(\phi^{*}\mathbb{L}_{X/S}, \pi_{Z}^{*}\mathcal{G})
\simeq \Map_{\QCoh(Z
  \times_{S}S')}(\mathcal{O}_{Z \times_{S}S'},
\phi^{*}\mathbb{T}_{X/S} \otimes_{Z \times_{S}S'}
\pi_{Z}^{*}\mathcal{G}).\]
Since $\mathcal{O}_{Z \times_{S}S'} \simeq \pi_{Z}^{*}\mathcal{O}_{Z}$
we have by adjunction
  \begin{multline*}
\Map_{\QCoh(Z
  \times_{S}S')}(\mathcal{O}_{Z \times_{S}S'},
\phi^{*}\mathbb{T}_{X/S} \otimes_{Z \times_{S}S'}
\pi_{Z}^{*}\mathcal{G}) \\ \simeq \Map_{\QCoh(Z)}(\mathcal{O}_{Z}, \pi_{Z,*}(\phi^{*}\mathbb{T}_{X/S} \otimes_{Z \times_{S}S'}
\pi_{Z}^{*}\mathcal{G})).
\end{multline*}
Applying the projection formula to $\pi_{Z}$ (which is universally
cocontinuous since it is a base change of $S' \to S$) we
then get
\[ \Map_{\QCoh(Z)}(\mathcal{O}_{Z}, \pi_{Z,*}(\phi^{*}\mathbb{T}_{X/S} \otimes_{Z \times_{S}S'}
\pi_{Z}^{*}\mathcal{G})) \simeq \Map_{\QCoh(Z)}(\mathcal{O}_{Z},
\pi_{Z,*}\phi^{*}\mathbb{T}_{X/S} \otimes_{Z} \mathcal{G}).\]
Let $f \colon Z \to X^{S'}_{S}$ be the map corresponding to
$\phi$. Then we have a commutative diagram
\[
\begin{tikzcd}
  Z \times_{S} S' \arrow{r}{f \times_{S} S'} \arrow{d}{\pi_{Z}}
  \arrow[bend left=30]{rr}{\phi}
  & X^{S'}_{S} \times_{S}S'  \arrow{d}{\pi} \arrow{r}{\txt{ev}} & X \\
  Z \arrow{r}{f} & X^{S'}_{S},
\end{tikzcd}
\]
where the square is cartesian. Applying base change for this pullback
square, we have a natural equivalence
$\pi_{Z,*}\phi^{*}\mathbb{T}_{X/S} \simeq
f^{*}\pi_{*}\txt{ev}^{*}\mathbb{T}_{X/S}$. 
Since $\pi$ is perfect, being a base change of $S' \to
S$, the quasicoherent sheaf $\pi_{*}\txt{ev}^{*}\mathbb{T}_{X/S}$ is
dualizable, with dual preserved by the symmetric monoidal functor
$f^{*}$. Thus we get
\[
  \begin{split}
\Map_{\QCoh(Z)}(\mathcal{O}_{Z},
\pi_{Z,*}\phi^{*}\mathbb{T}_{X/S} \otimes_{Z} \mathcal{G}) & \simeq
\Map_{\QCoh(Z)}(f^{*}(\pi_{*}\txt{ev}^{*}\mathbb{T}_{X/S})^{\vee},
\mathcal{G}) \\ & \simeq
\Map_{\mathcal{Q}_{S}}((\pi_{*}\txt{ev}^{*}\mathbb{T}_{X/S})^{\vee}, \mathcal{G})_{f}.
  \end{split}
\]
Putting these natural equivalences together, we have
\[ \Map_{\dSt_{S}}(Z[\mathcal{G}], X^{S'}_{S})_{f} \simeq \Map_{\mathcal{Q}_{S}}((\pi_{*}\txt{ev}^{*}\mathbb{T}_{X/S})^{\vee}, \mathcal{G})_{f},\]
naturally in $f$ and $\mathcal{G}$, which identifies
$(\pi_{*}\txt{ev}^{*}\mathbb{T}_{X/S})^{\vee}$ as a relative cotangent complex
of $X^{S'}_{S} \to S$.

Unwinding these equivalences for $Z = X^{S'}_{S}$,
$f = \id_{X^{S'}_{S}}$, and
$\mathcal{G} = (\pi_{*}\txt{ev}^{*}\mathbb{T}_{X/S})^{\vee}$ we see
that the identity of $(\pi_{*}\txt{ev}^{*}\mathbb{T}_{X/S})^{\vee}$ is
identified with the map
$\txt{ev}^{*}\mathbb{L}_{X/S} \to
\pi^{*}(\pi_{*}\txt{ev}^{*}\mathbb{T}_{X/S})^{\vee}$ that is dual to the
counit map $\pi^{*}\pi_{*}\txt{ev}^{*}\mathbb{T}_{X/S} \to
\txt{ev}^{*}\mathbb{T}_{X/S}$. On the other hand, the
canonical map $X^{S'}_{S}[\mathbb{L}_{X^{S'}_{S}/S}] \to X^{S'}_{S}$
corresponding to the identity of $\mathbb{L}_{X^{S'}_{S}/S}$ is
identified under the first equivalence with the composite
\[(S' \times_{S} X^{S'}_{S})[\pi^{*}\mathbb{L}_{X^{S'}_{S}/S}] \simeq
S' \times_{S} (X^{S'}_{S}[\mathbb{L}_{X^{S'}_{S}/S}]) \to S'
\times_{S} X^{S'}_{S} \xto{\txt{ev}} X,\]
which precisely corresponds to the canonical map
$\txt{ev}^{*}\mathbb{L}_{X/S} \to
\mathbb{L}_{S'\times_{S}X^{S'}_{S}/S'}$.
\end{proof}

\begin{remark}
  Note that in the situation of Proposition~\ref{propn:mapscotgt},
  although $X^{S'}_{S} \to S'$ has a relative cotangent complex,
  it need not be a geometric morphism.
\end{remark}

In this paper we are  interested in mapping stacks of the
form $X_{S}^{\Sigma_{B} \times S}$ where $\Sigma_{B}$ is the Betti stack of a
finite cell complex $\Sigma \in \mathcal{S}$, defined as the colimit
$\colim_{X}\Spec \k$ of the constant functor with value $\Spec \k$.
The Betti stack of a finite cell complex is $\mathcal{O}$-compact by Lemma~\ref{lem:BettiOcpt}, and we
 now spell
out what Proposition~\ref{propn:mapscotgt} gives in this case:
\begin{corollary}
  Let $\Sigma \in \mathcal{S}$ be a finite cell complex. Then for any
  Artin $S$-stack $X$, the mapping stack $X_{S}^{\Sigma_{B} \times S}$ is an
  Artin $S$-stack, and its tangent complex is given by 
  $\pi_{*}\txt{ev}^{*}\mathbb{T}_{X/S}$, where
  $\txt{ev} \colon \Sigma_{B} \times X_{S}^{\Sigma_{B} \times S} \to X$ is the
  evaluation map and $\pi \colon \Sigma_{B} \times X_{S}^{\Sigma_{B} \times S} \to
  X_{S}^{\Sigma_{B} \times S}$ is the projection.
\end{corollary}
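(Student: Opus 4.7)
The statement has two parts: (1) showing that $X_S^{\Sigma_B \times S}$ is an Artin $S$-stack, and (2) computing its tangent complex. Part (2) will be essentially a direct appeal to Proposition~\ref{propn:mapscotgt}, so the main work is in (1).

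For part (1), my plan is to induct on the cellular structure of $\Sigma$. By definition, $\mathcal{S}_{\fin}$ is the smallest full subcategory of $\mathcal{S}$ containing the point and closed under finite colimits, so every finite cell complex is built by a finite sequence of finite colimits from $*$. Since the Betti stack functor $(\blank)_B \colon \mathcal{S} \to \dSt$ preserves colimits by definition, $\Sigma_B$ is a correspondingly finite colimit of copies of $\Spec \k$ in $\dSt$. Taking products with $S$ preserves colimits in $\dSt$ (as $\dSt$ is an $\infty$-topos and is in particular cartesian closed), so $\Sigma_B \times S$ is a finite colimit of copies of $S$ in $\dSt_{/S}$. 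The internal Hom functor $X_S^{(\blank)} \colon \dSt_S^{\op} \to \dSt_S$ turns this colimit into a finite limit, so
\[ X_S^{\Sigma_B \times S} \simeq \lim_{i \in \mathcal{I}} X_S^{S} \simeq \lim_{i \in \mathcal{I}} X \]
for some finite indexing diagram $\mathcal{I}$. Since $X$ is an Artin $S$-stack and Artin $S$-stacks are closed under finite limits in $\dSt_S$ by Proposition~\ref{propn:geomfinlim}, this finite limit is again an Artin $S$-stack.

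For part (2), I will invoke Proposition~\ref{propn:mapscotgt} with $S' := \Sigma_B \times S$. The hypothesis that $S' \to S$ is $\mathcal{O}$-compact is exactly the content of Lemma~\ref{lem:BettiOcpt}, since $\Sigma \in \mathcal{S}_{\fin}$. Proposition~\ref{propn:mapscotgt} then identifies the relative cotangent complex of $X_S^{\Sigma_B \times S}$ over $S$ with $(\pi_* \txt{ev}^* \mathbb{T}_{X/S})^{\vee}$. Having shown in part (1) that $X_S^{\Sigma_B \times S}$ is an Artin $S$-stack, Theorem~\ref{thm:geomhascotgt}(ii) guarantees that its cotangent complex is dualizable, and so the tangent complex is the dual
\[ \mathbb{T}_{X_S^{\Sigma_B \times S}/S} \simeq \bigl((\pi_* \txt{ev}^* \mathbb{T}_{X/S})^{\vee}\bigr)^{\vee} \simeq \pi_* \txt{ev}^* \mathbb{T}_{X/S}, \]
as required.

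The only subtle point is part (1), where one must keep track of the distinction between colimits in $\dSt$ and colimits in $\dSt_{/S}$; the key fact enabling the reduction is that the forgetful functor $\dSt_{/S} \to \dSt$ creates colimits, combined with the fact that $\Sigma_B \times S \simeq \colim_{\Sigma}(\Spec \k \times S) \simeq \colim_{\Sigma} S$ where the latter colimit is taken over the underlying diagram in $\mathcal{S}$ viewed as an $\infty$-groupoid mapping to $\dSt_{/S}$ via the constant functor at $S$. Beyond that, the argument is essentially formal.
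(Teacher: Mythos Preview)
Your proposal is correct and follows essentially the same approach as the paper: the paper's proof simply observes that $X_{S}^{\Sigma_{B} \times S}$ is the limit of the constant $\Sigma$-indexed diagram at $X$ in $\dSt_{S}$ (hence Artin by Proposition~\ref{propn:geomfinlim}), and then appeals to Proposition~\ref{propn:mapscotgt} for the tangent complex. You have just spelled out more of the details, including the invocation of Lemma~\ref{lem:BettiOcpt} for $\mathcal{O}$-compactness and the dualization step (the latter using that $\pi_{*}\txt{ev}^{*}\mathbb{T}_{X/S}$ is itself dualizable, which is noted in the proof of Proposition~\ref{propn:mapscotgt}).
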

\begin{proof}
  The stack $X_{S}^{\Sigma_{B} \times S}$ is the limit of the constant
  diagram in $\dSt_{S}$ with
  value $X$ indexed by $\Sigma$, hence it is an Artin stack as
  these are closed under finite limits. The identification of the
  tangent complex follows from Proposition~\ref{propn:mapscotgt}.
\end{proof}

\section{The de Rham Complex}\label{subsec:deRham}
In this section we review the construction of the (relative) de Rham
complex of a morphism of derived stacks, as introduced in
\cite{PTVV,CPTVV}, which will be used to define closed differential
forms in the next section. We also prove some basic properties of
these objects, which we will use to construct pushforward maps for
relative differential forms in \S\ref{subsec:DFpush}.

We first briefly recall the notion of \emph{graded mixed complexes};
see \cite{CPTVV}*{\S 1.1--1.2} for more details.
\begin{definition}
  Let $\Ch_{\k}$ denote the ordinary category of cochain complexes of
  $\k$-vector spaces; for $A$ a commutative dg-algebra over $\k$, we
  write $\sMod_{A}(\Ch_{\k})$ for the ordinary category of differential
  graded $A$-modules. A \emph{graded mixed $A$-module} is
  a 
  family of $A$-modules $E(p) \in \sMod_{A}(\Ch_{\k})$ for $p \in \ZZ$
  together with morphisms $\epsilon \colon E(p) \to E(p+1)[1]$ such
  that $\epsilon^{2} = 0$. We write $\sGMM_{A}(\Ch_{\k})$ for the
  ordinary category of graded mixed $A$-modules; this has a symmetric
  monoidal model structure where the weak equivalences are the
  componentwise quasi-isomorphisms and the symmetric monoidal
  structure is given by
  \[ (E\otimes E')(p) := \bigoplus_{i+j=p} E(i) \otimes E'(j)\]
  (and the symmetry does not involve signs). Let $\GMM_{A}$ denote
  the \icat{} obtained by inverting these weak equivalences; this
  inherits a symmetric monoidal structure, and we write $\GMC_{A}$ for
  the \icat{} $\CAlg(\GMM_{A})$; equivalently, this is the \icat{}
  underlying the model category of commutative algebras in
  $\sGMM_{A}(\Ch_{\k})$. We obtain a functor of \icats{} $\GMC \colon \CAlg_{\k}
  \to \PrL$.
\end{definition}

\begin{remark}
The functor $(\blank)(0) \colon \GMC_{A} \to \CAlg_{A}$ is accessible
and preserves limits, hence it has a left adjoint \[\DR_{A} \colon
\CAlg_{A} \to \GMC_{A}\] (see \cite{CPTVV}*{Proposition 1.3.8}).
\end{remark}

\begin{definition}
  Given a morphism of derived stacks $X \to \Spec A$, we define $\DR_{A}(X) \in
  \GMC_{A}$ to be $\lim_{p \in \dAff_{/X}^{\op}}
  \DR_{A}(p^{*}\mathcal{O}_{X})$. This determines a functor \[\DR_{A}
  \colon \dSt_{/\Spec A}^{\op} \to \GMC_{A}.\]
\end{definition}

\begin{definition}
  If $X$ is a derived stack over $S$, we define \[\DR_{S}(X)
  \colon \dAff_{/S}^{\op} \to \GMC_{\k}\] by sending $p \colon \Spec A
  \to S$ to $\DR_{A}(X_{p})$, where $X_{p}$ is the pullback of $X$
  along $p$. This is a (non-quasicoherent) sheaf of
  graded mixed commutative $\mathcal{O}_{S}$-algebras, giving a
  functor
  \[ \DR_{S} \colon \dSt_{S}^{\op} \to
  \Mod_{\mathcal{O}_{S}}(\Sh_{S}(\GMC_{\k})).\]
  We write $\DR(X/S)$ for the global sections
  \[\Gamma_{S}\DR_{S}(X) \simeq \DR_{S}(X)(S) \simeq \lim_{p \colon \Spec A \to S}
  \DR_{A}(X_{p}).\]
\end{definition}

\begin{lemma}\label{lem:relDRpb}
  For any commutative square \csquare{X'}{X}{S'}{S,}{\xi}{f'}{f}{\sigma}
  we have a natural morphism
  \[ \sigma^{*}\DR_{S}(X) \to \DR_{S'}(X').\]
  If the square is cartesian then this is an equivalence.
\end{lemma}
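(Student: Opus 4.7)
The plan is to construct the natural morphism componentwise at the level of sheaves on $S'$, using the fact that pullback of sheaves along $\sigma\colon S'\to S$ is given by precomposition with the functor $\sigma_{!}\colon \dAff_{/S'}\to \dAff_{/S}$ sending $(q\colon \Spec B\to S')$ to $(\sigma\circ q\colon \Spec B\to S)$ (which makes sense since the \'etale topology is stable under base change). Then the cartesian case will follow from the pasting lemma for pullback squares.

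Concretely, I would first observe that for $q\colon \Spec B\to S'$ we have the identification
\[ (\sigma^{*}\DR_{S}(X))(q) \simeq \DR_{S}(X)(\sigma\circ q) = \DR_{B}(X_{\sigma q}),\]
where $X_{\sigma q}$ denotes the pullback of $X$ along $\sigma q$, while by definition $\DR_{S'}(X')(q) = \DR_{B}(X'_{q})$ with $X'_{q}$ the pullback of $X'$ along $q$. The commutativity of the given square, together with the universal property of the pullback $X_{\sigma q} = X\times_{S}\Spec B$, yields a canonical morphism $\xi_{q}\colon X'_{q}\to X_{\sigma q}$ over $\Spec B$, and this construction is natural in $q\in \dAff_{/S'}^{\op}$. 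Applying $\DR_{B}$ contravariantly (and using that $\DR_{(\blank)}$ is natural in the base, so that these morphisms glue as $B$ varies) gives the desired natural transformation $\sigma^{*}\DR_{S}(X)\to \DR_{S'}(X')$ of sheaves of graded mixed commutative algebras on $S'$. One must also check that this is a morphism in $\Mod_{\mathcal{O}_{S'}}(\Sh_{S'}(\GMC_{\k}))$, but the $\mathcal{O}_{S'}$-module structure is induced by the canonical map $\sigma^{*}\mathcal{O}_{S}\to \mathcal{O}_{S'}$ together with the $\mathcal{O}_{S}$-algebra structure on $\DR_{S}(X)$, and the construction above respects this by functoriality.

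For the cartesian case, when the original square is cartesian we have $X'\simeq X\times_{S}S'$, so the pasting lemma gives
\[ X'_{q} \simeq (X\times_{S}S')\times_{S'}\Spec B \simeq X\times_{S}\Spec B = X_{\sigma q},\]
and under this identification $\xi_{q}$ is the equivalence. Hence each component of the natural transformation is an equivalence in $\GMC_{B}$, so the morphism of sheaves on $S'$ is an equivalence.

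The main obstacle will be the careful identification of $\sigma^{*}\DR_{S}(X)$ with the functor $q\mapsto \DR_{B}(X_{\sigma q})$: one must justify that pullback of sheaves valued in $\GMC_{\k}$ really is given by precomposition with $\sigma_{!}$ without further sheafification (which rests on $\sigma_{!}$ preserving \'etale covers), and one must track the $\mathcal{O}_{S'}$-algebra structure through the construction. Everything else is a formal consequence of the contravariant functoriality of $\DR_{B}$ in the stack variable (applied to the pointwise map $\xi_{q}$) and the universal property of fibre products.
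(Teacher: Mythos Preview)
Your proposal is correct and follows essentially the same approach as the paper: the paper's proof is a one-liner saying it is ``immediate from the definition'' since for $p\colon \Spec A\to S'$ one has $p^{*}\sigma^{*}\DR_{S}(X)\simeq \DR_{A}(X_{\sigma p})\to \DR_{A}(X'_{p})\simeq p^{*}\DR_{S'}(X')$, equivalences in the cartesian case. Your version spells out the same componentwise construction and the pasting-lemma argument in more detail, and flags the identification of $\sigma^{*}$ with precomposition along $\sigma_{!}$ as a point to check---the paper simply takes this as part of the definition and does not comment on the $\mathcal{O}_{S'}$-module compatibility or sheafification issue you raise.
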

\begin{proof}
  Immediate from the definition of $\DR_{S}(X)$, since for $p \colon
  \Spec A \to S'$ we have natural morphisms 
  \[p^{*}\sigma^{*}\DR_{S}(X)
  \simeq \DR_{A}(X_{\sigma p}) \to \DR_{A}(X'_{p}) \simeq
  p^{*}\DR_{S'}(X'),\]
  which are equivalences if the square is cartesian.
\end{proof}

\begin{remark}\label{rmk:DRfunctor}
  Given a commutative square \csquare{X'}{X}{S'}{S,}{}{}{}{\sigma} the map
  of sheaves $\sigma^{*}\DR_{S}(X) \to \DR_{S'}(X')$ gives a map
  \[ \sigma^{*} \colon \DR(X/S) \to \DR(X'/S') \]
  as the composite
  \[\DR(X/S) \simeq \DR_{S}(X)(S) \to \DR_{S}(X)(S') \simeq \Gamma_{S'}(\sigma^{*}\DR_{S}(X)) \to \DR(X'/S'),\]
  where the last map is an equivalence if the square is
  cartesian. Given another square
  \csquare{X''}{X'}{S''}{S'}{}{}{}{\sigma'}
  we get a commutative diagram
  \[
    \begin{tikzcd}
      \DR_{S}(X)(S) \arrow{r} & \DR_{S}(X)(S') \arrow{r} \arrow{d} &
      \DR_{S}(X)(S'') \arrow{d}\\
      & \DR_{S'}(X')(S') \arrow{r} & \DR_{S'}(X')(S'')\arrow{d}\\
       & & \DR_{S''}(X'')(S''),
    \end{tikzcd}
  \]
  which shows that $(\sigma\sigma')^{*} \colon \DR(X/S) \to
  \DR(X''/S'')$ agrees with the composite $\DR(X/S) \xto{\sigma^{*}}
  \DR(X'/S') \xto{\sigma'^{*}} \DR(X''/S'')$. Extending this
  observation to arbitrary sequences of maps, we see that $\DR(\blank/\blank)$ determines a
  functor $\dSt^{\Delta^{1},\op} \to \GMC_{\k}$.
\end{remark}

\begin{lemma}\label{lem:DR/Sdesc}
  If $S$ is a colimit $\colim_{i \in \mathcal{I}} S_{i}$, then
  $\DR(X/S)$ satisfies descent in the sense that the natural map
  \[ \DR(X/S) \to \lim_{i \in \mathcal{I}^{\op}} \DR(X_{i}/S_{i})\]
  is an equivalence, where $X_{i}$ denotes the pullback of $X$ to $S_{i}$.
\end{lemma}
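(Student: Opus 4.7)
The plan is to realise $\DR(X/S)$ as the value at the terminal object of $\dSt_{/S}$ of a suitable limit-preserving functor, and to identify each $\DR(X_i/S_i)$ as the value of the same functor at $\sigma_i \colon S_i \to S$. First I would invoke the variant of Lemma~\ref{lem:dStunivprop} for sheaves on $\dSt_{/S}$ with values in the (very large) presentable \icat{} $\GMC_\k$: since $\DR_S(X)$ is by construction a sheaf on $\dAff_{/S}$, it extends uniquely to a limit-preserving functor $\widetilde{\DR_S(X)} \colon \dSt_{/S}^{\op} \to \GMC_\k$. By the pointwise formula for right Kan extension along $\dAff_{/S}^{\op} \hookrightarrow \dSt_{/S}^{\op}$, its value at an object $\sigma \colon S' \to S$ of $\dSt_{/S}$ is the limit of $\DR_S(X)$ over $(\dAff_{/S})_{/(S' \to S)}^{\op}$.

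Next, I would unwind this formula. A morphism $\Spec A \to S'$ over $S$ is just any morphism $p \colon \Spec A \to S'$ (compatibility with the structure map to $S$ being automatic), and its image under $\DR_S(X)$ is $\DR_A(X_{\sigma p})$. By associativity of pullback, $X_{\sigma p} \simeq (X')_p$ where $X' := X \times_S S'$, so
\[ \widetilde{\DR_S(X)}(\sigma) \simeq \lim_{p \colon \Spec A \to S'} \DR_A((X')_p) \simeq \DR(X'/S'). \]
Specialising to $\sigma = \id_S$ yields $\widetilde{\DR_S(X)}(\id_S) \simeq \DR(X/S)$, and to $\sigma = \sigma_i$ yields $\widetilde{\DR_S(X)}(\sigma_i) \simeq \DR(X_i/S_i)$; naturality in $i$ is built into the right Kan extension construction and matches the maps of Remark~\ref{rmk:DRfunctor}.

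Finally, I would observe that the terminal object $\id_S$ of $\dSt_{/S}$ is the colimit of $\sigma_i \colon S_i \to S$, since the forgetful functor $\dSt_{/S} \to \dSt$ creates colimits and $\colim_i S_i \simeq S$ by hypothesis. Applying the limit-preserving functor $\widetilde{\DR_S(X)}$ to this colimit diagram then gives the desired equivalence
\[ \DR(X/S) \simeq \widetilde{\DR_S(X)}(\id_S) \simeq \lim_{i \in \mathcal{I}^{\op}} \widetilde{\DR_S(X)}(\sigma_i) \simeq \lim_{i \in \mathcal{I}^{\op}} \DR(X_i/S_i). \]
There is no real obstacle here: once one reformulates $\DR(\blank/\blank)$ through the universal property of sheaves on $\dAff_{/S}$, the result is a formal consequence of limit preservation. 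The only minor bookkeeping is matching the right Kan extension formula with the definition of $\DR(X'/S')$, which amounts to a re-indexing of the defining limit.
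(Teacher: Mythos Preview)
Your proof is correct and is essentially the same as the paper's: both arguments use that $\DR_S(X)$ is an \'etale sheaf on $\dAff_{/S}$, hence extends to a limit-preserving functor on $\dSt_{/S}^{\op}$, and then identify its value at each $\sigma_i \colon S_i \to S$ with $\DR(X_i/S_i)$. The paper expresses this last identification via the equivalence $\sigma_i^*\DR_S(X) \simeq \DR_{S_i}(X_i)$ of Lemma~\ref{lem:relDRpb}, while you unwind the right Kan extension directly, but the content is identical.
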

\begin{proof}
  Since $\DR_{S}(X)$ is a sheaf, we have
  \[
    \begin{split}
\DR(X/S) & \simeq \DR_{S}(X)(S) \simeq \lim_{i \in
      \mathcal{I}^{\op}} \DR_{S}(X)(S_{i}) \simeq \lim_{i \in
      \mathcal{I}^{\op}} (\sigma_{i}^{*}\DR_{S}(X))(S_{i})
    \\
    & \simeq
    \lim_{i \in \mathcal{I}^{\op}} \DR_{S_{i}}(X_{i})(S_{i}) \simeq
    \lim_{i \in \mathcal{I}^{\op}} \DR(X_{i}/S_{i}).      \qedhere
    \end{split}
\]
\end{proof}

\begin{proposition}\label{propn:symLXAdesc}
  For $f \colon X \to \Spec A$ a geometric morphism, the natural map
  \[ f_{*}\Sym_{\mathcal{O}_{X}}(\mathbb{L}_{X/A}[-1]) \to \lim_{p \in
  \dAff_{/X}^{\op}} (fp)_{*}\Sym_{p^{*}\mathcal{O}_{X}}(
\mathbb{L}_{p^{*}\mathcal{O}_{X}/A}[-1])\]
is an equivalence.
\end{proposition}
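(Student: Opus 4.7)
My plan is to proceed by induction on the level of geometricity of $f$. The base case is when $X$ is itself affine, $X = \Spec B$. In that case the identity $\id_X \colon X \to X$ is a terminal object of $\dAff_{/X}^{\op}$, so the limit on the right collapses to the single term $\Sym_B(\mathbb{L}_{B/A}[-1])$, which is exactly $f_*\Sym_{\mathcal{O}_X}(\mathbb{L}_{X/A}[-1])$ under the equivalence $\QCoh(\Spec B) \simeq \Mod_B$.

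For the inductive step, where $f$ is $n$-geometric with $n \geq 0$, I would choose a smooth atlas $u \colon U \to X$, where $U$ is a disjoint union of affines (or, for larger $n$, at least $(n-1)$-geometric over $\Spec A$), and let $U_\bullet$ denote its Čech nerve, so that $X \simeq |U_\bullet|$ in $\dSt_{/\Spec A}$. For the LHS, I would combine descent for $\QCoh$ with the fact that $f_*$ preserves all limits to rewrite
\[ f_{*}\Sym_{\mathcal{O}_{X}}(\mathbb{L}_{X/A}[-1]) \;\simeq\; \Tot_{[m] \in \simp} f_{m,*}\Sym_{\mathcal{O}_{U_m}}(u_m^*\mathbb{L}_{X/A}[-1]), \]
using the symmetric monoidality of $u_m^{*}$ and Lemma~\ref{lem:relcotgtpb}. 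For the RHS, I would reorganize the limit along the Čech nerve, using that each smooth affine over $X$ factors (up to refinement) through some $U_m$; applying the inductive hypothesis to each $U_m \to \Spec A$ would identify the $m$-th term of that totalization with $f_{m,*}\Sym_{\mathcal{O}_{U_m}}(\mathbb{L}_{U_m/A}[-1])$.

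The two totalizations are then linked, termwise, by the natural map induced by the cofiber sequence
\[ u_m^{*}\mathbb{L}_{X/A} \;\longrightarrow\; \mathbb{L}_{U_m/A} \;\longrightarrow\; \mathbb{L}_{U_m/X}, \]
and the smoothness of $u_m$ forces $\mathbb{L}_{U_m/X}$ to be locally free of finite rank in degree zero. This gives a Koszul/weight filtration on $\Sym_{\mathcal{O}_{U_m}}(\mathbb{L}_{U_m/A}[-1])$ whose associated graded pieces are tensor products of $\Sym_{\mathcal{O}_{U_m}}(u_m^{*}\mathbb{L}_{X/A}[-1])$ with $\Lambda^{k}\mathbb{L}_{U_m/X}$. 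The main obstacle, and the only truly delicate step, is to show that upon totalizing over $[m] \in \simp^{\op}$ all contributions from $\Lambda^{k}\mathbb{L}_{U_m/X}$ for $k \geq 1$ vanish, so that only the $k=0$ piece --- which matches the LHS --- survives. This cancellation encodes the fact that the Čech nerve of a smooth cover realizes $X$ relative to itself as a contractible simplicial diagram, and must be established by a careful descent argument for the smooth relative cotangent complex, likely using that $|U_{\bullet}\times_X U_0| \to U_0$ is an equivalence and hence forces the simplicial $\k$-modules $\Lambda^{k}\mathbb{L}_{U_m/X}$ ($k \geq 1$) to totalize to zero.
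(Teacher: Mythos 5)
Your skeleton --- the affine base case via the initial object of $\dAff_{/X}^{\op}$, induction on the geometricity level, \v{C}ech descent along a smooth atlas, and the filtration of $\Sym_{\mathcal{O}_{U_m}}(\mathbb{L}_{U_m/A}[-1])$ coming from the transitivity cofiber sequence --- is exactly the route the paper takes; the paper simply outsources everything after the reorganization of the two limits to \cite{PTVV}*{Lemma 1.15} (descent for $\Sym$ of the shifted cotangent complex along a smooth groupoid) together with the induction of \cite{PTVV}*{Proposition 1.14}. The formal parts of your argument are sound: the left-hand reorganization follows from Corollary~\ref{cor:qcohrellim} and the fact that $f_*$ preserves limits, the right-hand one because the right-hand side is by construction the value on $X$ of a limit-preserving extension of an affine-level functor, so it converts $X \simeq |U_\bullet|$ into a totalization, and the inductive hypothesis does apply to the $U_m$, which are $(n-1)$-geometric.

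The genuine gap is the step you yourself flag: the vanishing, after totalization over $[m]$, of the graded pieces $f_{m,*}\bigl(\Sym^{j}(u_m^*\mathbb{L}_{X/A}[-1]) \otimes \Sym^{k}(\mathbb{L}_{U_m/X}[-1])\bigr)$ for $k \geq 1$. This is not a formality; it is precisely the content of the cited PTVV lemma, and your proposed mechanism does not establish it. The splitting of the base-changed augmented simplicial object $U_\bullet \times_X U_0 \to U_0$ gives descent for quasicoherent sheaves pulled back from $X$ (i.e.\ it handles the $k=0$ piece, which is again Corollary~\ref{cor:qcohrellim}), but an equivalence of geometric realizations says nothing by itself about the totalization of the cosimplicial object $[m] \mapsto f_{m,*}\Lambda^{k}\mathbb{L}_{U_m/X}$: these sheaves are not pulled back from $X$, and one must use the explicit decomposition $\mathbb{L}_{U_m/X} \simeq \bigoplus_{i=0}^{m}\pi_i^*\mathbb{L}_{U/X}$ to build an actual simplicial contraction (extra-degeneracy argument) of the relevant cosimplicial diagram and check its compatibility with the pushforwards. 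Two further caveats: your claim that smoothness makes $\mathbb{L}_{U_m/X}$ a vector bundle in degree zero is only valid at the bottom of the induction (for an $n$-geometric $X$ with $n\geq 2$ the atlas map is merely $(n-1)$-smooth and its relative cotangent complex is perfect of higher amplitude --- this is exactly why the argument must be structured as an induction rather than a single \v{C}ech descent), and the convergence of the $\Sym$-filtration should be noted (it holds because the filtration is finite in each $\Sym$-weight). None of this is fatal to the strategy, but as written the proof is incomplete at exactly the point where the real work lies.
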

\begin{proof}
  The proof of \cite{PTVV}*{Lemma 1.15} shows that for a geometric
  groupoid $X_{\bullet}$ with colimit $X$ in $\dSt_{/\Spec A}$ we have
  \[ \Sym_{\mathcal{O}_{X}}(\mathbb{L}_{X/A}) \simeq \lim_{\simp}
  \pi_{\bullet,*}
  \Sym_{\mathcal{O}_{X_{\bullet}}}(\mathbb{L}_{X_{\bullet}/A}),\]
  where $\pi_{i}$ denotes the canonical map $X_{i} \to X$.  
  The statement holds for $-1$-geometric stacks over
  $\Spec A$, and we can induct on $n$ as in the proof of
  \cite{PTVV}*{Proposition 1.14} using this equivalence to
  show it holds for $n$-geometric stacks.
\end{proof}

\begin{corollary}\label{cor:DRAXisSymLXA}
  For $f \colon X \to \Spec A$ a geometric morphism, the underlying
  commutative $A$-algebra of $\DR_{A}(X)$ is
  $f_{*}\Sym_{\mathcal{O}_{X}}(\mathbb{L}_{X/A}[-1])$.
\end{corollary}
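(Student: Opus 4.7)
The plan is to deduce this essentially formally from Proposition~\ref{propn:symLXAdesc} together with the analogous affine statement. Starting from the definition
\[ \DR_{A}(X) \;\simeq\; \lim_{p \in \dAff_{/X}^{\op}} \DR_{A}(p^{*}\mathcal{O}_{X}) \]
in $\GMC_{A}$, I would first observe that the forgetful functor $\GMC_{A} \to \CAlg_{A}$ (``take the underlying commutative $A$-algebra by summing the weight components and forgetting the mixed differential'', or equivalently, evaluation at the internal grading plus restriction along the unit from $\GMM_{A}$ to $\Mod_{A}$) preserves limits, since it factors through $\Mod_{A}$ via forgetful functors that detect and preserve limits. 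Hence the underlying $A$-algebra of $\DR_{A}(X)$ is the limit, in $\CAlg_{A}$, of the underlying $A$-algebras of $\DR_{A}(p^{*}\mathcal{O}_{X})$ as $p \colon \Spec B \to X$ runs through $\dAff_{/X}$.

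Next, I would invoke the affine case: for a connective commutative $A$-algebra $B$, the underlying commutative $A$-algebra of $\DR_{A}(B)$ is $\Sym_{B}(\mathbb{L}_{B/A}[-1])$, regarded as an $A$-algebra via the structure map. This is the standard identification coming from the adjunction $\DR_{A} \dashv (\blank)(0)$: the universal property forces the underlying graded commutative algebra to be freely generated in weight $1$ by $\mathbb{L}_{B/A}[-1]$, and evaluation at weight $0$ returns $B$. Applying this pointwise, the limit above becomes
\[ \lim_{p \in \dAff_{/X}^{\op}} \Sym_{p^{*}\mathcal{O}_{X}}(\mathbb{L}_{p^{*}\mathcal{O}_{X}/A}[-1]), \]
where this limit is taken in $\CAlg_{A}$.

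Finally, for $p \colon \Spec B \to X$, the morphism $fp \colon \Spec B \to \Spec A$ has pushforward $(fp)_{*}$ equal to the forgetful functor from $B$-modules to $A$-modules, so $(fp)_{*}\Sym_{p^{*}\mathcal{O}_{X}}(\mathbb{L}_{p^{*}\mathcal{O}_{X}/A}[-1])$ is exactly the underlying $A$-algebra $\Sym_{B}(\mathbb{L}_{B/A}[-1])$ we just extracted. Thus the limit above coincides with
\[ \lim_{p \in \dAff_{/X}^{\op}} (fp)_{*}\Sym_{p^{*}\mathcal{O}_{X}}(\mathbb{L}_{p^{*}\mathcal{O}_{X}/A}[-1]),\]
and Proposition~\ref{propn:symLXAdesc} identifies this with $f_{*}\Sym_{\mathcal{O}_{X}}(\mathbb{L}_{X/A}[-1])$.

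The only non-routine ingredient is justifying the affine identification of the underlying $A$-algebra of $\DR_{A}(B)$; everything else is formal manipulation of limits. This is not really an obstacle, however, since it is essentially the definition (or a direct consequence of the adjunction characterization of $\DR_{A}$) and is already used implicitly in \cite{PTVV,CPTVV}. The main conceptual point is simply that forgetful functors out of $\GMC_{A}$ preserve limits, so that Proposition~\ref{propn:symLXAdesc} — which is the genuinely geometric input (descent of $\Sym_{\mathcal{O}_{X}}\mathbb{L}_{X/A}[-1]$ along smooth atlases for geometric stacks) — transports directly.
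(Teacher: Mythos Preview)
Your proposal is correct and follows essentially the same route as the paper's proof: both start from the limit definition of $\DR_{A}(X)$, pass the forgetful functor $U_{A}\colon \GMC_{A}\to\CAlg_{A}$ through the limit, invoke the affine identification $U_{A}\DR_{A}(B)\simeq\Sym_{B}(\mathbb{L}_{B/A}[-1])$ (the paper cites \cite{CPTVV}*{Proposition 1.3.12} for this), observe that $(fp)_{*}$ is just the forgetful functor on modules, and then apply Proposition~\ref{propn:symLXAdesc}. The only cosmetic difference is that the paper justifies limit-preservation of $U_{A}$ by noting it is a right adjoint, whereas you argue via factorization through $\Mod_{A}$.
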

\begin{proof}
  Let $U_{A}$ denote the forgetful functor $\GMC_{A} \to
  \CAlg_{A}$.
  By definition, $\DR_{A}(X)$ is
  $\lim_{p \in \dAff_{/X}^{\op}}
  \DR_{A}(p^{*}\mathcal{O}_{X})$.
  Using \cite{CPTVV}*{Proposition 1.3.12} we have a natural
  equivalence
 \[U_{A}\DR_{A}(p^{*}\mathcal{O}_{X}) \simeq
  \Sym_{p^{*}\mathcal{O}_{X}}(\mathbb{L}_{p^{*}\mathcal{O}_{X}/A}[-1]).\]
  Since $U_{A}$ preserves limits, being a right adjoint, this implies
  \[ U_{A}\DR_{A}(X) \simeq \lim_{p}
  \Sym_{p^{*}\mathcal{O}_{X}}(\mathbb{L}_{p^{*}\mathcal{O}_{X}/A}[-1])
  \simeq f_{*}\Sym_{\mathcal{O}_{X}}(\mathbb{L}_{X/A}[-1]),\]
  where the second equivalence holds by
  Proposition~\ref{propn:symLXAdesc} (since $(fp)_{*}$ is just a
  forgetful functor between \icats{} of modules).
\end{proof}

\begin{remark}
  For $f \colon X \to S$ geometric, consider a pullback square
  \csquare{X_p}{X}{\Spec A}{S.}{\bar{p}}{f_p}{f}{p}
  We have $p^{*}\DR_{S}(X) \simeq \DR_{A}(X_{p})$ by definition, and
  by applying Corollary~\ref{cor:DRAXisSymLXA} and Lemma~\ref{lem:relcotgtpb}
  we get
  \[ U_{A}\,p^{*}\DR_{S}(X) \simeq
  f_{p,*}\Sym_{\mathcal{O}_{X_{p}}}(\mathbb{L}_{X_{p}/A}[-1]) \simeq
  f_{p,*}\bar{p}^{*} \Sym_{\mathcal{O}_{X}}(\mathbb{L}_{X/S}[-1]).\]
  Now \emph{if} $f$ satisfies base change, we can rewrite this as
  \[ U_{A}\,p^{*}\DR_{S}(X) \simeq
  p^{*}f_{*}\Sym_{\mathcal{O}_{X}}(\mathbb{L}_{X/S}[-1]),\]
  and so in this case the underlying sheaf of
  $\mathcal{O}_{X}$-algebras of $\DR_{S}(X)$ would be
  $f_{*}\Sym_{\mathcal{O}_{X}}(\mathbb{L}_{X/S}[-1])$, which in
  particular is quasicoherent. In general, we still have:
\end{remark}

\begin{corollary}\label{cor:geomDR}
  For $f \colon X \to S$ a geometric morphism, the underlying
  graded commutative $\k$-algebra of $\DR(X/S)$ is
  $\Gamma_{X}(\Sym_{\mathcal{O}_{X}}(\mathbb{L}_{X/S}[-1]))$.
\end{corollary}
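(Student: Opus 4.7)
The plan is to reduce the statement to the affine case (Corollary~\ref{cor:DRAXisSymLXA}) via the defining limit presentation $\DR(X/S) \simeq \lim_{p \colon \Spec A \to S} \DR_{A}(X_{p})$, and then use descent for quasicoherent sheaves to recognize the limit as a single global sections functor applied to the symmetric algebra on the (relative) cotangent complex.

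First, I would apply the forgetful functor $U_{\k} \colon \GMC_{\k} \to \CAlg_{\k}$. Since $U_{\k}$ is a right adjoint, it preserves limits, so
\[ U_{\k}\DR(X/S) \simeq \lim_{p \colon \Spec A \to S} U_{\k}\DR_{A}(X_{p}). \]
For each $p$, the base change $f_{p} \colon X_{p} \to \Spec A$ is again geometric (geometric morphisms are stable under pullback), so Corollary~\ref{cor:DRAXisSymLXA} applies and gives
\[ U_{A}\DR_{A}(X_{p}) \simeq f_{p,*}\Sym_{\mathcal{O}_{X_{p}}}(\mathbb{L}_{X_{p}/A}[-1]).\]
The forgetful functor $U_{\k}$ factors through $U_{A}$ followed by the forgetful functor $\CAlg_{A} \to \CAlg_{\k}$, which on underlying $\k$-modules is the pushforward along $g \colon \Spec A \to \Spec \k$; hence $U_{\k}\DR_{A}(X_{p}) \simeq \Gamma_{X_{p}}\Sym_{\mathcal{O}_{X_{p}}}(\mathbb{L}_{X_{p}/A}[-1])$.

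Second, I would use Lemma~\ref{lem:relcotgtpb} to identify $\mathbb{L}_{X_{p}/A} \simeq \bar{p}^{*}\mathbb{L}_{X/S}$, where $\bar{p} \colon X_{p} \to X$ is the base change of $p$. Since pullback is symmetric monoidal and commutes with $\Sym$, this rewrites the right-hand side as $\Gamma_{X_{p}}\bar{p}^{*}\Sym_{\mathcal{O}_{X}}(\mathbb{L}_{X/S}[-1])$. Putting this together we obtain
\[ U_{\k}\DR(X/S) \simeq \lim_{p \colon \Spec A \to S} \Gamma_{X_{p}}\bar{p}^{*}\Sym_{\mathcal{O}_{X}}(\mathbb{L}_{X/S}[-1]).\]

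Finally, colimits in $\dSt$ are universal and $S \simeq \colim_{p \in \dAff_{/S}}\Spec A$ by Remark~\ref{rmk:dstcolimaff}, so pulling back to $X$ gives $X \simeq \colim_{p \in \dAff_{/S}} X_{p}$. By Corollary~\ref{cor:qcohrellim} (descent for quasicoherent sheaves), for any $\mathcal{E} \in \QCoh(X)$ we have $\mathcal{E} \simeq \lim_{p} \bar{p}_{*}\bar{p}^{*}\mathcal{E}$, and since $\Gamma_{X}$ preserves limits this yields $\Gamma_{X}\mathcal{E} \simeq \lim_{p} \Gamma_{X_{p}}\bar{p}^{*}\mathcal{E}$. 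Applying this to $\mathcal{E} = \Sym_{\mathcal{O}_{X}}(\mathbb{L}_{X/S}[-1])$ gives the desired equivalence.

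The main technical obstacle will be ensuring genuine functoriality: the equivalences in the first two steps need to be natural in $p \in \dAff_{/S}^{\op}$ in order to identify the resulting limit with $\Gamma_{X}\Sym_{\mathcal{O}_{X}}(\mathbb{L}_{X/S}[-1])$. This requires invoking the naturality of the equivalences of Corollary~\ref{cor:DRAXisSymLXA} and Lemma~\ref{lem:relcotgtpb} under pullback, as well as compatibility of the forgetful functors $U_{(\blank)}$ across different base rings --- details that are essentially formal but need to be spelled out by assembling the individual pieces into a natural transformation of functors $\dAff_{/S}^{\op} \to \CAlg_{\k}$ before taking the limit.
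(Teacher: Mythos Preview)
Your proposal is correct and follows essentially the same approach as the paper: apply the forgetful functor (which preserves limits), invoke Corollary~\ref{cor:DRAXisSymLXA} and Lemma~\ref{lem:relcotgtpb} termwise, and then use descent for quasicoherent sheaves along $X \simeq \colim_{p} X_{p}$ to collapse the limit to $\Gamma_{X}$ of the symmetric algebra. The paper's proof is slightly more terse and phrases the final step via $\mathcal{M} \simeq \lim_{p} \bar{p}_{*}\bar{p}^{*}\mathcal{M}$ directly rather than via Corollary~\ref{cor:qcohrellim}, but the content is the same.
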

\begin{proof}
  By definition, $\DR(X/S)$ is
  $\lim_{p \in \dAff_{/S}^{\op}} \DR_{p^{*}\mathcal{O}_{X}}(X_{p})$.
  From Corollary~\ref{cor:DRAXisSymLXA} we see that the underlying
  commutative algebra of this is
  \[\lim_{p \in \dAff_{/S}^{\op}}
  f_{p,*}\Sym_{\mathcal{O}_{X_{p}}}(\mathbb{L}_{\mathcal{O}_{X_{p}}/p^{*}\mathcal{O}_{S}}[-1])
  \simeq 
  \lim_{p \in \dAff_{/S}^{\op}}
  f_{p,*}\bar{p}^{*}\Sym_{\mathcal{O}_{X}}(\mathbb{L}_{X/S}[-1]),\]
  using Lemma~\ref{lem:relcotgtpb}.
  By functoriality of pushforwards, we can identify this limit with
  the global sections 
  $\Gamma_{X}(\lim_{p \in \dAff_{/S}^{\op}}
  \bar{p}_{*}\bar{p}^{*}\Sym_{\mathcal{O}_{X}}(\mathbb{L}_{X/S}[-1]))$. Since
  $\QCoh(X) \simeq \lim_{p \in \dAff_{/S}^{\op}} \QCoh(X_{p})$ we have
  that $\mathcal{M} \simeq \lim_{p  \in \dAff_{/S}^{\op}}\bar{p}_{*}\bar{p}^{*}\mathcal{M}$
  for any $\mathcal{M} \in \QCoh(X)$, hence we get
  \[ \DR(X/S) \simeq
  \Gamma_{X}(\Sym_{\mathcal{O}_{X}}(\mathbb{L}_{X/S}[-1])),\]
  as required.
\end{proof}

\begin{remark}
  The identification of Corollary~\ref{cor:geomDR} is natural, meaning
  that we have an equivalence
  \[U\DR(\blank/\blank) \simeq
    \Gamma(\Sym_{\mathcal{O}}(\mathbb{L}_{\blank/\blank}[-1])\]
  of functors
  $\dSt^{\Delta^{1},\txt{geom},\op} \to \GC_{\k}$.
\end{remark}

\begin{proposition}\label{propn:relDRpbaff}
  Suppose given a pullback square \csquare{X'}{X}{S'}{\Spec
    A.}{\xi}{f'}{f}{\sigma}
  Then there is a natural map
  \[ \sigma_{*}\mathcal{O}_{S'} \otimes_{A} \DR_{A}(X) \to
  \DR(X'/S').\]
  If $f$ is geometric and $\sigma$ is cocontinuous, then
  this map is an equivalence.
\end{proposition}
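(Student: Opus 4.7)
The natural map is constructed by assembling affine-local data and then taking a limit. For every $p \colon \Spec B \to S'$ the composite $\sigma p \colon \Spec B \to \Spec A$ makes $B$ an $A$-algebra, and the resulting pullback square
\csquare{X_p}{X}{\Spec B}{\Spec A}{\bar{p}}{f_p}{f}{\sigma p}
gives, by the functoriality of Remark~\ref{rmk:DRfunctor}, an $A$-linear morphism $\DR_A(X) \to \DR_B(X_p)$ in $\GMC_\k$; using the adjunction between $B \otimes_A (\blank) \colon \GMC_A \to \GMC_B$ and restriction of scalars, this extends to a morphism $B \otimes_A \DR_A(X) \to \DR_B(X_p)$ in $\GMC_B$. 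These morphisms are natural in $p \in \dAff_{/S'}^{\op}$, so combining them with the canonical comparison $(\lim_p B) \otimes_A \DR_A(X) \to \lim_p(B \otimes_A \DR_A(X))$ and the identifications $\sigma_*\mathcal{O}_{S'} \simeq \lim_p B$ and $\DR(X'/S') \simeq \lim_p \DR_B(X_p)$ produces the sought map.

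To prove that this map is an equivalence under the hypotheses, I would check that the underlying map of graded commutative $\k$-algebras is an equivalence; the graded mixed structures on both sides are induced by the de Rham differential and therefore correspond under naturality. Since $f' \colon X' \to S'$ is geometric (as a base change of a geometric morphism), Corollary~\ref{cor:geomDR} together with the equivalence $\mathbb{L}_{X'/S'} \simeq \xi^*\mathbb{L}_{X/A}$ from Lemma~\ref{lem:relcotgtpb} and the symmetric monoidality of $\xi^*$ identifies the underlying graded algebra of the right-hand side with $\Gamma_{S'} f'_*\xi^*\Sym_{\mathcal{O}_X}(\mathbb{L}_{X/A}[-1])$; viewed as an $A$-module via $\sigma$, this becomes $\sigma_* f'_*\xi^*\Sym_{\mathcal{O}_X}(\mathbb{L}_{X/A}[-1])$. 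On the other hand, Corollary~\ref{cor:DRAXisSymLXA} identifies the underlying $A$-module of the left-hand side as $\sigma_*\mathcal{O}_{S'} \otimes_A f_*\Sym_{\mathcal{O}_X}(\mathbb{L}_{X/A}[-1])$.

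Matching these two expressions requires two ingredients. First, Lemma~\ref{lem:affproj} applied to the cocontinuous morphism $\sigma$ with affine target provides the projection formula $\sigma_*\mathcal{O}_{S'} \otimes_A M \simeq \sigma_*\sigma^* M$ for any $M \in \Mod_A$. Second, the base change identity $\sigma^* f_* \simeq f'_*\xi^*$ must hold for the given pullback square. This base change is the main technical obstacle, since $f$ is only assumed geometric (not universally cocontinuous) and $\sigma$ only cocontinuous. I plan to verify it affine-locally on $S'$: by Corollary~\ref{cor:qcoheqonpts} it suffices to show that $q^* \sigma^* f_* \simeq q^* f'_*\xi^*$ for every $q \colon \Spec B \to S'$, and decomposing the pullback square along $q$ reduces this to base change for the geometric morphism $f$ along the map of affines $\sigma q$ together with base change for $f'$ along $q$. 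Base change for a geometric morphism against maps from affines is the content of Proposition~\ref{propn:affpbbc}, whose hypothesis---that base changes of a geometric morphism to affines are cocontinuous---I would verify by induction on the geometric level, the base case being the universal cocontinuity of affine morphisms (Corollary~\ref{cor:affinemonadic}) and the inductive step proceeding via smooth descent along an atlas, using that smooth morphisms are cocontinuous.
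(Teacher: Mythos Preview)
Your approach is essentially the same as the paper's: construct the map via the limit over $\dAff_{/S'}$, then check the equivalence on underlying graded commutative $\k$-algebras using the identification with $\Sym$ of the cotangent complex and the projection formula for $\sigma$ from Lemma~\ref{lem:affproj}. The paper carries this out more tersely: it identifies the underlying map directly as the projection-formula map
\[
  \sigma_*\mathcal{O}_{S'} \otimes_A f_*\Sym_{\mathcal{O}_X}(\mathbb{L}_{X/A}[-1]) \longrightarrow \sigma_*\sigma^*f_*\Sym_{\mathcal{O}_X}(\mathbb{L}_{X/A}[-1])
\]
and concludes, rather than passing through Corollary~\ref{cor:geomDR} to write the target as $\sigma_*f'_*\xi^*\Sym_{\mathcal{O}_X}(\mathbb{L}_{X/A}[-1])$ and then separately invoking base change to match these two descriptions.

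A word of caution about your last paragraph. Your inductive plan requires showing that every base change of a geometric morphism to an affine is cocontinuous (the hypothesis of Proposition~\ref{propn:affpbbc}). This is not established anywhere in the paper, and the sketch you give---inducting along a smooth atlas---would need to control how pushforward interacts with totalization along the atlas, which is not straightforward and may fail without further finiteness hypotheses on $X$. The paper's route through the limit description and Corollary~\ref{cor:DRAXisSymLXA} sidesteps this by writing the target directly as $\sigma_*\sigma^*M$ from the start; if you want to stay closer to your route via Corollary~\ref{cor:geomDR}, note that only the specific sheaf $\Sym_{\mathcal{O}_X}(\mathbb{L}_{X/A}[-1])$ is in play, so a targeted descent argument in the spirit of Proposition~\ref{propn:symLXAdesc} is a safer bet than proving cocontinuity in general.
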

\begin{proof}
  By definition, we have
  \[
    \begin{split}
\DR(X'/S') &  \simeq \lim_{p \in \dAff_{/S'}^{\op}}
  p^{*}\DR_{S'}(X') \\  & \simeq \lim_{p \in \dAff_{/S'}^{\op}}
  p^{*}\sigma^{*}\DR_{A}(X) \\ & \simeq \lim_{p \in \dAff_{/S'}^{\op}}
  p^{*}\mathcal{O}_{S'} \otimes_{A} \DR_{A}(X).
    \end{split}
  \]
  On the other hand, by descent we get 
$\sigma_{*}\mathcal{O}_{S'} \simeq \lim_{p \in \dAff_{/S'}^{\op}}
p^{*}\mathcal{O}_{S'}$, giving
\[\sigma_{*}\mathcal{O}_{S'} \otimes_{A}
\DR_{A}(X) \simeq \left( \lim_{p \in \dAff_{/S'}^{\op}}
  p^{*}\mathcal{O}_{S'} \right) \otimes_{A} \DR_{A}(X),\]
so there is indeed a natural map
  \[ \sigma_{*}\mathcal{O}_{S'} \otimes_{A} \DR_{A}(X) \to
  \DR(X'/S').\]
  To see that this is an equivalence, it suffices to show the
  underlying map of commutative $\k$-algebras is an equivalence. If $f$
  is geometric, then by Corollary~\ref{cor:DRAXisSymLXA} we may
  identify this underlying map with the natural map
  \[ \sigma_{*}\mathcal{O}_{S'} \otimes_{A}
  f_{*}\Sym_{\mathcal{O}_{X}}(\mathbb{L}_{X/A}[-1]) \to
  \sigma_{*}\sigma^{*}f_{*}\Sym_{\mathcal{O}_{X}}(\mathbb{L}_{X/A}[-1]).\]
  This is an equivalence by the projection formula, which holds for
  $\sigma$ by Lemma~\ref{lem:affproj}.
\end{proof}

\begin{remark}
  In particular, if $X$ is a geometric stack and $\Sigma$ is a stack
  such that $\mathcal{O}_{\Sigma}$ is compact in $\QCoh(\Sigma)$
  (which is equivalent to $\Sigma \to \Spec \k$ being
  cocontinuous) then we have an equivalence
  \[ \Gamma_{\Sigma}(\mathcal{O}_{\Sigma}) \otimes \DR(X) \simeq \DR(X
  \times \Sigma / \Sigma).\]
\end{remark}

\begin{corollary}\label{cor:relDRpb}
  Suppose given a pullback square \csquare{Y}{X}{T}{S.}{}{g}{f}{\phi}
  Then there is a natural map
  \[ \Gamma_{S}(\phi_{*}\mathcal{O}_{T} \otimes_{S} \DR_{S}(X)) \to
  \DR(Y/T).\]
  If $f$ is geometric and $\phi$ is universally
  cocontinuous, then this map is an equivalence.
\end{corollary}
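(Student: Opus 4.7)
The plan is to reduce the statement to the affine base case, Proposition~\ref{propn:relDRpbaff}, by $\dAff$-descent on $S$. Throughout, for $p \colon \Spec A \to S$, write $X_p, T_p, Y_p$ for the pullbacks along $p$, and let $q \colon T_p \to T$, $\phi_p \colon T_p \to \Spec A$ be the induced maps; by pasting pullback squares $Y_p \simeq T_p \times_{\Spec A} X_p$, and $\phi_p$ is universally cocontinuous (since $\phi$ is and this class is stable under base change), while $f_p \colon X_p \to \Spec A$ is geometric.

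First I would construct the natural map in complete generality. Using Lemma~\ref{lem:relDRpb} and naturality of the maps $\sigma^{*}\DR_{S}(X) \to \DR_{S'}(X')$ for the square pulling back $X$ along $\phi$, one obtains a morphism of sheaves of graded mixed $\mathcal{O}_S$-algebras $\DR_S(X) \to \phi_* \DR_T(Y)$, hence by the natural (commutative-algebra) map $\phi_*\mathcal{O}_T \otimes_S \phi_*\DR_T(Y) \to \phi_*\DR_T(Y)$ (coming from the $\phi_*\mathcal{O}_T$-algebra structure) and adjunction/global sections, a map $\Gamma_S(\phi_*\mathcal{O}_T \otimes_S \DR_S(X)) \to \Gamma_S\phi_*\DR_T(Y) \simeq \DR(Y/T)$. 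This assembles naturally over the diagram $\dAff_{/S}^{\op}$.

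Next, under the geometricity and universal-cocontinuity hypotheses, I would show this map is an equivalence by checking it ``pointwise'' over $\dAff_{/S}^{\op}$ and then assembling. For each $p \colon \Spec A \to S$, pulling back yields a natural equivalence
\[
p^{*}\bigl(\phi_{*}\mathcal{O}_{T} \otimes_{S} \DR_{S}(X)\bigr)
\;\simeq\;
p^{*}\phi_{*}\mathcal{O}_{T} \otimes_{A} p^{*}\DR_{S}(X)
\;\simeq\;
\phi_{p,*}\mathcal{O}_{T_{p}} \otimes_{A} \DR_{A}(X_{p}),
\]
where the first equivalence uses that $p^*$ is symmetric monoidal and the second combines base change for $\phi$ (valid by Theorem~\ref{thm:bcpf} since $\phi$ is universally cocontinuous, using also $q^*\mathcal{O}_T \simeq \mathcal{O}_{T_p}$) with Lemma~\ref{lem:relDRpb} applied to the cartesian square $X_p \to X$ over $p$. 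By Proposition~\ref{propn:relDRpbaff} applied to the cartesian square $Y_p \to X_p$ over $\phi_p \colon T_p \to \Spec A$, the right-hand side is naturally equivalent to $\DR(Y_p / T_p)$.

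Finally, taking $\Gamma_S$ on the left amounts to forming the limit $\lim_{p \in \dAff_{/S}^{\op}} p^*(\phi_*\mathcal{O}_T \otimes_S \DR_S(X))$, which by the preceding display becomes $\lim_{p} \DR(Y_p/T_p)$; and by descent (Lemma~\ref{lem:DR/Sdesc}), using $S \simeq \colim_{p \in \dAff_{/S}} \Spec A$ together with the identifications $Y \times_S \Spec A \simeq Y_p$ and $T \times_S \Spec A \simeq T_p$, this limit is naturally equivalent to $\DR(Y/T)$. To conclude, I would verify that the equivalence so constructed agrees with the global-sections morphism from step one; this is the main bookkeeping obstacle, but it reduces to the naturality assertions already built into Lemma~\ref{lem:relDRpb}, Proposition~\ref{propn:relDRpbaff}, and Remark~\ref{rmk:DRfunctor}. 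The essential geometric input—base change and the projection formula—enters only through Theorem~\ref{thm:bcpf} and the affine case.
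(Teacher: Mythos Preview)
Your proposal is correct and follows essentially the same strategy as the paper: reduce to the affine-base case (Proposition~\ref{propn:relDRpbaff}) via descent over $\dAff_{/S}$, using base change (Theorem~\ref{thm:bcpf}) to handle the $\phi_{*}\mathcal{O}_{T}$ factor, and Lemma~\ref{lem:DR/Sdesc} to reassemble. The only presentational difference is that the paper constructs the natural map directly as the limit over $p \in \dAff_{/S}^{\op}$ of the composite of the Beck--Chevalley map $p^{*}\phi_{*}\mathcal{O}_{T} \to \phi_{p,*}\mathcal{O}_{T_{p}}$ (tensored with $\DR_{A}(X_{p})$) and the affine-case map from Proposition~\ref{propn:relDRpbaff}, so that the construction of the map and the proof that it is an equivalence are done in one pass; you instead give a separate global construction via the $\phi_{*}\mathcal{O}_{T}$-algebra structure and then verify the equivalence pointwise, which introduces the extra compatibility check you flag as ``bookkeeping.'' That check is routine, and your approach and the paper's are equivalent.
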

\begin{proof}
  By Lemma~\ref{lem:DR/Sdesc} we have an equivalence
  \[ \DR(Y/T) \simeq \lim_{p \in \dAff_{/S}^{\op}}
  \DR(Y_{p}/T_{p}).\]
  Using Proposition~\ref{propn:relDRpbaff} we then get a map
  \[ \lim_{p \in \dAff_{/S}^{\op}} \phi_{p,*}\mathcal{O}_{T_{p}}
  \otimes_{p^{*}\mathcal{O}_{S}} \DR_{p^{*}\mathcal{O}_{S}}(X_{p}) \to
  \lim_{p \in \dAff_{/S}^{\op}} \DR(Y_{p}/T_{p}) \simeq
  \DR(Y/T),\]
  which is an equivalence if $f$ is geometric and $\phi$ is
  universally cocontinuous.
  Here $\phi_{p,*}\mathcal{O}_{T_{p}} \simeq
  \phi_{p,*}\bar{p}^{*}\mathcal{O}_{T}$ so by the naturality of
  Beck-Chevalley transformations we have a map
  \[ \lim_{p \in \dAff_{/S}^{\op}} p^{*}\phi_{*}\mathcal{O}_{T}
  \otimes_{p^{*}\mathcal{O}_{S}} \DR_{p^{*}\mathcal{O}_{S}}(X_{p}) \to
  \lim_{p \in \dAff_{/S}^{\op}} \phi_{p,*}\mathcal{O}_{T_{p}}
  \otimes_{p^{*}\mathcal{O}_{S}} \DR_{p^{*}\mathcal{O}_{S}}(X_{p}),\]
  which is an equivalence if $\phi$ is universally
  cocontinuous by Theorem~\ref{thm:bcpf}.  The
  left-hand side can be identified as
  \[
    \begin{split}
\lim_{p \in \dAff_{/S}^{\op}} p^{*}\phi_{*}\mathcal{O}_{T}
  \otimes_{p^{*}\mathcal{O}_{S}} \DR_{p^{*}\mathcal{O}_{S}}(X_{p})
  & \simeq \lim_{p \in \dAff_{/S}^{\op}}
  p^{*}(\phi_{*}\mathcal{O}_{T} \otimes_{S} \DR_{S}(X)) \\ & \simeq
  \Gamma_{S}(\phi_{*}\mathcal{O}_{T} \otimes_{S} \DR_{S}(X)),
    \end{split}
  \]
  so putting these two morphisms together we indeed get a natural map
  \[ \Gamma_{S}(\phi_{*}\mathcal{O}_{T} \otimes_{S} \DR_{S}(X)) \to
  \DR(Y/T),\]
  which is an equivalence if $f$ is geometric and $\phi$ is
  universally cocontinuous.
\end{proof}

\begin{remark}\label{rmk:pbderhamunderlying}
  On underlying graded commutative $\k$-algebras the map \[ \Gamma_{S}(\phi_{*}\mathcal{O}_{T} \otimes_{S} \DR_{S}(X)) \to
  \DR(Y/T)\]
  is the composite of projection formula and base change morphisms
  \[
  \begin{split}
  \Gamma_{S}(\phi_{*}\mathcal{O}_{T} \otimes f_{*}\Sym
  \mathbb{L}_{X/S}[-1]) & \to \Gamma_{S}(\phi_{*}(\mathcal{O}_{T}
  \otimes \phi^{*}f_{*}\Sym
  \mathbb{L}_{X/S}[-1])) \\ & \simeq \Gamma_{T}(\phi^{*}f_{*}\Sym
  \mathbb{L}_{X/S}[-1]) \\ & \to \Gamma_{T}(g_{*}\xi^{*}\Sym
  \mathbb{L}_{X/S}[-1]) \\ & \simeq \Gamma_{Y}(\Sym
  \mathbb{L}_{Y/T}[-1])    
  \end{split}
\]
\end{remark}

\begin{remark}\label{rmk:drpbbasechange}
  The construction of Corollary~\ref{cor:relDRpb}
  is compatible with base change in the following
  sense: Suppose we have a commutative cube
  \[
    \begin{tikzcd}
      Y' \arrow{rr} \arrow{dr}{\eta} \arrow{dd} & & X'
      \arrow{dr}{\xi} \arrow{dd} \\
      & Y \arrow[crossing over]{rr} & & X\arrow{dd} \\
      T' \arrow{rr}{\phi'}  \arrow{dr}{\tau}& & S' \arrow{dr}{\sigma} \\
      & T \arrow[leftarrow,crossing over]{uu} \arrow{rr}{\phi} && S
    \end{tikzcd}
  \]
  where all faces are cartesian. Then, unwinding the definitions, we
  have a commutative square
  \[
    \begin{tikzcd}
    \Gamma_{S}(\phi_{*}\mathcal{O}_{T} \otimes_{S} \DR_{S}(X))
    \arrow{r} \arrow{d} & \DR(Y/T) \arrow{d}{\eta^{*}} \\
    \Gamma_{S}(\phi'_{*}\mathcal{O}_{T'} \otimes_{S'} \DR_{S'}(X'))
    \arrow{r} & \DR(Y'/T'),
    \end{tikzcd}
  \]
  where the right vertical map is the natural map from
  Remark~\ref{rmk:DRfunctor}, while the left vertical map is the
  composite
  \[
    \begin{split}
    \Gamma_{S}(\phi_{*}\mathcal{O}_{T} \otimes_{S} \DR_{S}(X)) & \to
    \Gamma_{S}(\sigma_{*}\phi'_{*}\mathcal{O}_{T'} \otimes_{S}
    \DR_{S}(X)) \\  & \to \Gamma_{S}\sigma_{*}(\phi'_{*}\mathcal{O}_{T'}
    \otimes_{S} \sigma^{*}\DR_{S}(X)) \\ & \isoto
    \Gamma_{S'}(\phi'_{*}\mathcal{O}_{T'} \otimes_{S} \DR_{S'}(X')),      
    \end{split}
\]
  where the first map is induced by the map
  $\phi_{*}\mathcal{O}_{T} \to \sigma_{*}\phi'_{*}\mathcal{O}_{T'}$
  adjoint to the base change morphism
  $\sigma^{*}\phi_{*}\mathcal{O}_{T} \to
  \phi'_{*}\tau^{*}\mathcal{O}_{T} \simeq \phi'_{*}\mathcal{O}_{T'}$,
  the second can be thought of as the projection formula morphism for
  $\sigma^{*} \dashv \sigma_{*}$, and the third is the equivalence of
  Lemma~\ref{lem:relDRpb}.   
\end{remark}

\section{Differential Forms}\label{subsec:diffform}
In this section we briefly review the definitions of (closed)
differential forms on derived stacks from \cite{PTVV}.

\begin{definition}\label{defn:pforms}
  Let $\k(p)[-p-s] \in \GMM_{\k}$ denote the graded mixed $\k$-module
  given by $\k[-p-s]$ in weight $p$ with $0$ elsewhere. Given
  $X \to S$, the space of \emph{$s$-shifted closed relative $p$-forms}
  is defined by
   \[ \Apcl_{S}(X, s) := \Map_{\GMM_{\k}}(\k(p)[-p-s], \DR(X/S)), 
   \]
   while the space of    \emph{$s$-shifted relative $p$-forms} is
   defined by 
   \[ \Ap_{S}(X, s) := \Map(\k(p)[-p-s], U\DR(X/S)),\]
   where $U$ is the forgetful functor from graded mixed commutative
   algebras to graded commutative algebras.
\end{definition}

\begin{proposition}\label{propn:formscotgt}
  Suppose $f \colon X \to S$ is a geometric morphism. Then there is
  a natural equivalence 
  \[ \Ap_{S}(X, s) \simeq \Map(\k[-s],
  \Gamma_{X}(\Lambda^{p}\mathbb{L}_{X/S})) \simeq
  \Map_{\QCoh(X)}(\mathcal{O}_{X}[-s], \Lambda^{p}\mathbb{L}_{X/S}).\]
If $f$ is moreover a relative Artin stack, then $\Ap_{S}(X,s)$ is also naturally
  equivalent to $\Map_{\QCoh(X)}(\Lambda^{p}\mathbb{T}_{X/Y}, \mathcal{O}_{X}[s])$.
\end{proposition}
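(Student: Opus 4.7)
The plan is to reduce the computation of $\Ap_S(X,s)$ to a computation involving the symmetric algebra on $\mathbb{L}_{X/S}[-1]$, via the identification already provided by Corollary~\ref{cor:geomDR}. First, I would apply Corollary~\ref{cor:geomDR} (which holds precisely because $f$ is geometric) to rewrite the underlying graded commutative $\k$-algebra as
\[ U\DR(X/S) \simeq \Gamma_{X}\bigl(\Sym_{\mathcal{O}_{X}}(\mathbb{L}_{X/S}[-1])\bigr). \]
Since $\k(p)[-p-s]$ is concentrated in weight $p$, mapping out of it into a graded object picks out the weight-$p$ component, so
\[ \Ap_{S}(X,s) \simeq \Map_{\Mod_{\k}}\bigl(\k[-p-s],\, \Gamma_{X}(\Sym^{p}_{\mathcal{O}_{X}}(\mathbb{L}_{X/S}[-1]))\bigr). \]

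The key step is then the \emph{d\'{e}calage} identification $\Sym^{p}(\mathbb{L}_{X/S}[-1]) \simeq (\Lambda^{p}\mathbb{L}_{X/S})[-p]$, which is available since $\k$ has characteristic zero; this gives
\[ \Ap_{S}(X,s) \simeq \Map_{\Mod_{\k}}\bigl(\k[-s],\, \Gamma_{X}(\Lambda^{p}\mathbb{L}_{X/S})\bigr) \simeq \Map_{\QCoh(X)}(\mathcal{O}_{X}[-s], \Lambda^{p}\mathbb{L}_{X/S}), \]
where the second equivalence is the adjunction $f^{*} \dashv f_{*} = \Gamma_{X}$ applied to $\mathcal{O}_{S}[-s]$ (using $f^{*}\mathcal{O}_{S} \simeq \mathcal{O}_{X}$). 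This establishes the first displayed equivalence.

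For the final assertion, if $f$ is additionally a relative Artin stack, then by Theorem~\ref{thm:geomhascotgt}(ii) the cotangent complex $\mathbb{L}_{X/S}$ is dualizable in $\QCoh(X)$. Since dualizable objects are closed under tensor powers (and hence under exterior powers in characteristic zero, as retracts of tensor powers), $\Lambda^{p}\mathbb{L}_{X/S}$ is also dualizable, with dual $\Lambda^{p}\mathbb{T}_{X/S}$. The duality adjunction then gives
\[ \Map_{\QCoh(X)}(\mathcal{O}_{X}[-s], \Lambda^{p}\mathbb{L}_{X/S}) \simeq \Map_{\QCoh(X)}(\Lambda^{p}\mathbb{T}_{X/S}, \mathcal{O}_{X}[s]), \]
completing the chain of equivalences.

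The main technical point to verify carefully is the d\'{e}calage identification, which must be checked both at the level of individual affines (where it is the standard statement in characteristic zero) and compatibly with the descent limit over $\dAff_{/S}^{\op}$ implicit in $\Gamma_{X}$; all of these identifications are functorial in $f \colon X \to S$, which takes care of naturality.
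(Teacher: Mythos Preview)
Your proposal is correct and follows essentially the same route as the paper: invoke Corollary~\ref{cor:geomDR}, extract the weight-$p$ component via d\'{e}calage, then apply the adjunction; for the Artin case, use dualizability of $\mathbb{L}_{X/S}$ to pass to $\Lambda^{p}\mathbb{T}_{X/S}$. One small notational slip: you write ``$f^{*} \dashv f_{*} = \Gamma_{X}$'', but here $\Gamma_{X}$ denotes pushforward along $X \to \Spec \k$ (absolute global sections), not along $f \colon X \to S$; the adjunction you want is for the structure map to $\Spec \k$, though the conclusion is unaffected since $\k[-s]$ pulls back to $\mathcal{O}_{X}[-s]$ either way.
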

\begin{proof}
  From Corollary~\ref{cor:geomDR} we have an equivalence
  \[ \Ap_{S}(X, s) \simeq \Map(\k(p)[-p-s],
  \Gamma_{X}(\Sym_{\mathcal{O}_{X}}(\mathbb{L}_{X/S}[-1])).\]
  Since the weight-$p$ part of
  $\Gamma_{X}(\Sym_{\mathcal{O}_{X}}(\mathbb{L}_{X/S}[-1]))$ is 
  $(\Gamma_{X}\Lambda^{p}\mathbb{L}_{X/S})[-p])$, this gives the equivalences
  \[
    \begin{split}
\Ap_{S}(X, s) &  \simeq \Map(\k[-s-p],
  (\Gamma_{X}\Lambda^{p}\mathbb{L}_{X/S})[-p]) \\ & \simeq \Map(\k[-s],
  \Gamma_{X}\Lambda^{p}\mathbb{L}_{X/S}) \\ & \simeq
  \Map(\mathcal{O}_{X}[-s], \Lambda^{p}\mathbb{L}_{X/S}).
    \end{split}
  \]
  Since we're working over a field of characteristic zero, we may
  identify $(\Lambda^{p}\mathbb{L}_{X/S})^{\vee}$ with
  $\Lambda^{p}\mathbb{T}_{X/S}$, giving the last equivalence for $X \to
  S$ a relative Artin stack.
\end{proof}

\begin{proposition}[\cite{PTVV}*{Proposition 1.11}]\label{propn:Apsheaf}
  The functors $\Apcl_{S}(\blank,s)$ and $\Ap_{S}(\blank, s)$ are sheaves
  in the derived \'{e}tale topology, and thus give limit-preserving
  functors $\dSt_{/S}^{\op} \to \mathcal{S}$. \qed
\end{proposition}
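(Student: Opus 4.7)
The plan is to reduce the sheaf property for $\Apcl_S(\blank,s)$ and $\Ap_S(\blank,s)$ to the statement that $\DR(\blank/S)\colon \dSt_{/S}^{\op} \to \GMC_{\k}$ is limit-preserving. Since both functors are mapping spaces into $\DR(X/S)$ (the second composed with the limit-preserving forgetful functor $U\colon\GMC_\k\to\GC_\k$), any such limit-preservation transfers to them. Thus the whole content is to show that $\DR(\blank/S)$ takes colimits in $\dSt_{/S}$ to limits in $\GMC_\k$.

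The next step is to localize the problem over $S$. Using $\DR(X/S) = \Gamma_{S}\DR_S(X)$ and the fact that $\Gamma_{S}$ preserves limits (being a right adjoint, \cf{} Remark~\ref{rmk:GammaSradj}), it suffices to prove that the sheaf $\DR_S(X)$ depends on $X$ in a limit-preserving way. Since $\DR_S$ is defined pointwise by $p^{*}\DR_S(X) \simeq \DR_A(X_p)$, and since colimits in $\dSt$ are universal (so $(\colim_i X_i)_p \simeq \colim_i X_{i,p}$), this reduces to proving that $\DR_A\colon \dSt^{\op}_{/\Spec A} \to \GMC_A$ is limit-preserving for every $A \in \CAlgkc$.

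By construction, $\DR_A(X) = \lim_{q\in \dAff_{/X}^{\op}} \DR_A(q^{*}\mathcal{O}_X)$ exhibits $\DR_A$ on general stacks as the right Kan extension of its restriction to $\dAff^{\op}_{/\Spec A} \simeq \CAlgkc_{A/}$. By the universal property of $\dSt$ (Lemma~\ref{lem:dStunivprop}), this extension is limit-preserving if and only if the restriction to $\CAlgkc_{A/}$ is an étale sheaf. The task is therefore reduced to étale descent for $B \mapsto \DR_A(B)$ as a graded mixed $A$-algebra.

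For the descent statement, the key input is the description from Corollary~\ref{cor:DRAXisSymLXA} of the underlying graded commutative $A$-algebra of $\DR_A(B)$ as $\Sym_B(\mathbb{L}_{B/A}[-1])$, combined with the fact that for an étale map $B \to C$ one has $\mathbb{L}_{C/B} \simeq 0$, so $\mathbb{L}_{C/A} \simeq C \otimes_B \mathbb{L}_{B/A}$. This yields a natural base-change equivalence $\DR_A(C) \simeq C \otimes_B \DR_A(B)$ of the underlying graded $A$-modules, which together with flat descent for modules (applicable to étale covers) gives étale descent at the level of underlying graded $A$-algebras. The main obstacle — and the technical heart of \cite{PTVV}*{Proposition 1.11} — is to propagate this descent statement through the mixed structure, verifying that the de~Rham differential is compatible with the base-change identifications and with the limits that appear in the Čech nerve. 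Once descent is upgraded to the level of $\GMC_A$, all the earlier reductions combine to give the desired sheaf property for $\Apcl_S(\blank,s)$ and $\Ap_S(\blank,s)$, and hence their representability by objects of $\dSt_S$.
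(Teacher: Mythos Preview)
Your proposal is correct. The paper does not give a proof here: it simply cites \cite{PTVV}*{Proposition 1.11} and marks the statement with \qed, so there is nothing to compare against. What you have sketched is essentially the PTVV argument, carried through the relative setup over $S$ that the paper uses.

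Two small remarks. First, your appeal to Remark~\ref{rmk:GammaSradj} is slightly misplaced: that remark is about $\Gamma_S \colon \mathcal{Q}_S \to \QCoh(S)$, not about global sections of a sheaf of graded mixed algebras. What you actually need is that $\DR(X/S) \simeq \lim_{p} \DR_A(X_p)$ is a limit over $\dAff_{/S}^{\op}$, so limit-preservation in $X$ follows once each $\DR_A(\blank)$ is limit-preserving (limits commute with limits, and $X \mapsto X_p$ preserves colimits by universality of colimits in $\dSt$). Second, the step you flag as ``the main obstacle'' is in fact harmless: the forgetful functor $U \colon \GMC_A \to \GC_A$ is conservative (equivalences of graded mixed complexes are detected on underlying graded complexes) and preserves limits, so the comparison map $\DR_A(B) \to \lim_{\simp} \DR_A(C^{\otimes_B \bullet+1})$ in $\GMC_A$ is an equivalence as soon as its image under $U$ is. No separate compatibility check for the mixed differential is required.
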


\begin{lemma}\label{lem:twoformftr}
  There is a functor
  $((\dStg_{S})_{/\mathcal{A}^{p}_{S}(s)})^{\op} \to
  \mathcal{Q}^{\Delta^{1}}$ that sends a geometric morphism $X \to S$ equipped with a
  relative $p$-form $\omega$ to the corresponding map $\mathcal{O}_{X} \to
  \otimes^{p}\mathbb{L}_{X/S}[s]$.
\end{lemma}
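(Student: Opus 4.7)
The plan is to assemble the desired functor from functorial ingredients already available from earlier in the appendix. The construction proceeds in three stages.

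First, I will construct two ``endpoint'' functors $F_0, F_1 \colon (\dStg_S)^{\op} \to \mathcal{Q}$ lying over the forgetful functor $(\dStg_S)^{\op} \to \dSt^{\op}$, sending $X$ to $\mathcal{O}_X$ and to $\otimes^p \mathbb{L}_{X/S}[s]$, respectively. For $F_0$ I restrict the cocartesian section $\mathcal{O} \colon \dSt^{\op} \to \mathcal{Q}$. For $F_1$ I compose the cotangent-complex functor $\mathbb{L} \colon \Fun(\Delta^{1},\dSt)^{\txt{geom},\op} \to \mathcal{Q}$ (constructed in \S\ref{subsec:cotgt}) with fibrewise $p$-fold tensor power (using the symmetric monoidal cocartesian fibration $\mathcal{Q}^{\otimes} \to \dSt^{\op}$ from Definition~\ref{defn:Qotimes}) and fibrewise shift by $s$ (pointwise stable, since each $\QCoh(X)$ is stable), and then restrict along the inclusion $(\dStg_S)^{\op} \hookrightarrow \Fun(\Delta^{1},\dSt)^{\txt{geom},\op}$.

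Second, I will build a ``fibrewise mapping space'' right fibration. Let $\mathcal{Q}^{\Delta^{1},\txt{fib}} \subseteq \mathcal{Q}^{\Delta^{1}}$ be the full subcategory of arrows whose image in $\dSt^{\op}$ is an identity; the endpoint functor $(\txt{ev}_0, \txt{ev}_1) \colon \mathcal{Q}^{\Delta^{1},\txt{fib}} \to \mathcal{Q}\times_{\dSt^{\op}}\mathcal{Q}$ is a right fibration representing the internal Hom in each fibre $\QCoh(X)$. The pair $(F_0,F_1)$ gives a functor $(\dStg_S)^{\op} \to \mathcal{Q}\times_{\dSt^{\op}}\mathcal{Q}$; pulling back yields a right fibration $\mathcal{E} \to (\dStg_S)^{\op}$ whose fibre at $X$ is $\Map_{\QCoh(X)}(\mathcal{O}_X, \otimes^p \mathbb{L}_{X/S}[s])$, together with a tautological functor $\mathcal{E} \to \mathcal{Q}^{\Delta^{1}}$ that remembers the actual arrow in $\mathcal{Q}$.

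Third, I will identify $\mathcal{E}$ with $((\dStg_S)_{/\mathcal{A}^{p}_{S}(s)})^{\op}$. The slice functor $\mathcal{A}^p_S(\blank,s) \colon (\dStg_S)^{\op} \to \mathcal{S}$ is corepresented by the $S$-stack $\mathcal{A}^p_S(s)$, so its right fibration is precisely $((\dStg_S)_{/\mathcal{A}^{p}_{S}(s)})^{\op}$. On fibres at $X$, Proposition~\ref{propn:formscotgt} identifies $\mathcal{A}^p_S(X,s)$ with $\Map_{\QCoh(X)}(\mathcal{O}_X,\Lambda^p\mathbb{L}_{X/S}[s])$, and post-composition with the natural inclusion $\Lambda^p \hookrightarrow \otimes^p$ (split injection in characteristic zero, realized by the antisymmetrization projector) yields a map to $\Map_{\QCoh(X)}(\mathcal{O}_X,\otimes^p\mathbb{L}_{X/S}[s])$. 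Composing the resulting map $((\dStg_S)_{/\mathcal{A}^{p}_{S}(s)})^{\op} \to \mathcal{E}$ with the tautological $\mathcal{E}\to\mathcal{Q}^{\Delta^{1}}$ produces the desired functor.

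The main obstacle is the third stage: promoting the fibrewise identification to a strictly coherent map of right fibrations over $(\dStg_S)^{\op}$. To handle this, I will work at the level of presheaves valued in $\mathcal{S}$: the natural transformation $\mathcal{A}^p_S(\blank,s) \to \Map_{\mathcal{Q}}(\mathcal{O}_{(\blank)}, \otimes^p\mathbb{L}_{(\blank)/S}[s])$ arises from the chain of natural equivalences in the proof of Proposition~\ref{propn:formscotgt} (namely, the Corollary~\ref{cor:DRAXisSymLXA} identification of the weight-$p$ piece of $\DR(X/S)$ with $\Gamma_X \Lambda^p \mathbb{L}_{X/S}[-p]$, combined with the tensor-power adjunction) together with the canonical $\Sigma_p$-equivariant antisymmetrizer $\otimes^p \to \Lambda^p$ and its splitting. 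Passing from such a natural transformation of presheaves to a morphism of the associated right fibrations is formal, and applying this to $(F_0,F_1)$ supplies the required functor.
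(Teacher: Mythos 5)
Your strategy coincides with the paper's: both proofs reduce the lemma to Proposition~\ref{propn:formscotgt}, which identifies the presheaf $\mathcal{A}^p_S(\blank,s)$ on $\dStg_S$ with the mapping-space functor $\Map_{\mathcal{Q}}(\mathcal{O},\Lambda^p\mathbb{L}_{(\blank)/S}[s])$, then realize the fibration classifying that functor inside an arrow/slice construction on $\mathcal{Q}$, and finally insert the characteristic-zero splitting $\Lambda^p\mathbb{L} \to \otimes^p\mathbb{L}$. The only real difference is the packaging: the paper pulls back the slice $(\mathcal{Q}^{\op})_{/\mathcal{O}}$ along $\Lambda^{p}\mathbb{L}_{(\blank)/S}[s]$ and composes with $\Lambda^p\to\otimes^p$ at the very end, whereas you pull back a fibrewise arrow category of $\mathcal{Q}$ along the pair $(\mathcal{O},\otimes^p\mathbb{L}[s])$ and apply the antisymmetrizer splitting already at the level of presheaves. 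Both routes deliver the same functor.

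One claim in your second stage is false as stated: for an \icat{} $\mathcal{C}$ the evaluation $(\ev_0,\ev_1)\colon \Fun(\Delta^1,\mathcal{C})\to\mathcal{C}\times\mathcal{C}$ is \emph{not} a right (or left) fibration --- its fibres are indeed the mapping spaces, but cartesian lifts need not exist (already for $\mathcal{C}=\Set$ there may be several incomparable squares over a given morphism of pairs); the fibration that classifies mapping spaces is the twisted arrow category over $\mathcal{C}\times\mathcal{C}^{\op}$. Your construction nevertheless survives, for a reason you do not state: you only pull back along a pair whose first component is the \emph{cocartesian} section $\mathcal{O}$, so for a morphism $u$ in $(\dStg_S)^{\op}$ the edge $F_0(u)$ becomes an equivalence in the target fibre, and this is what makes $\mathcal{E}\to(\dStg_S)^{\op}$ a fibration classifying $X\mapsto\Map_{\QCoh(X)}(\mathcal{O}_X,\otimes^p\mathbb{L}_{X/S}[s])$ with the pullback functoriality. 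You should either argue this point directly, replace the fibrewise arrow category by the fibrewise twisted arrow construction, or simply use the slice $(\mathcal{Q}^{\op})_{/\mathcal{O}}$ as the paper does. (There is also a small variance slip: the fibration over $(\dStg_S)^{\op}$ classifying the covariant functor $\mathcal{A}^p_S(\blank,s)$ is a \emph{left} fibration; it is its opposite over $\dStg_S$ that is a right fibration.)
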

\begin{proof}
  The forgetful functor $\dStg_{S/\Ap_{S}(s)} \to \dStg$ is the right
  fibration for the functor $\dStgop_{S} \to \mathcal{S}$
  represented by $\Ap_{S}(s) \in \dSt_{S}$, which is
  $\Ap_{S}(\blank,s)$. By Proposition~\ref{propn:formscotgt} the
  functor $\mathcal{A}^{p}_{S}(\blank,s)$ is the composite
  \[ \dStgop_{S} \xto{\Lambda^{p}\mathbb{L}_{(\blank)/S}[s]} \mathcal{Q} \xto{\Map_{\mathcal{Q}}(\mathcal{O}, \blank)} \mathcal{S},\]
 hence the corresponding right fibration fits in a
  pullback square
  \csquare{\dStg_{S/\Ap_{S}(s)}}{(\mathcal{Q}^{\op})_{/\mathcal{O}}}{\dStg_{S}}{\mathcal{Q}^{\op}.}{}{}{}{\Lambda^{p}\mathbb{L}_{(\blank)/S}[s]}
  Composing with the forgetful map
  $(\mathcal{Q}^{\op})_{/\mathcal{O}} \to \Fun(\Delta^{1},\mathcal{Q}^{\op})$ we get a
  functor
  \[ \dStgop_{S/\Ap_{S}(s)} \to \Fun(\Delta^{1},
    \mathcal{Q}),\] which 
  restricts to $\mathcal{O}$ and $\Lambda^{p}\mathbb{L}_{\blank/S}[s]$
  over $0$ and $1$, respectively. We can now form the composite of
  this with the natural transformation
  $\Lambda^{p}\mathbb{L}_{\blank/S} \to
  \otimes^{p}\mathbb{L}_{\blank/S}$.
\end{proof}

\begin{lemma}\label{lem:pbdiffform}
  Suppose given a pullback square
  \csquare{X'}{X}{S'}{S}{\xi}{f'}{f}{\sigma}
  where $f$ is geometric. Then there is a natural equivalence
  \[ \Ap_{S'}(X', s) \simeq \Map(\mathcal{O}_{X'}[-s],
  \xi^{*}\Lambda^{p}\mathbb{L}_{X/S}).\]
  If $\sigma$ is moreover universally cocontinuous, then we also have
  an equivalence
  \[ \Ap_{S'}(X', s) \simeq \Map(\mathcal{O}_{S}[-s],
  \sigma_{*}\mathcal{O}_{S'} \otimes
  f_{*}\Lambda^{p}\mathbb{L}_{X/S}).\] Finally, if in addition $f$ is
relative Artin stack, then we have an identification
\[ \Ap_{S'}(X', s) \simeq
  \Map(\Lambda^{p}\mathbb{T}_{X/S}, f^{*}\sigma_{*}\mathcal{O}_{S'}[s]).\]
\end{lemma}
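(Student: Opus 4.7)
The plan is to chain together the representability of $p$-forms in terms of cotangent complexes (Proposition~\ref{propn:formscotgt}), compatibility of the cotangent complex with base change (Lemma~\ref{lem:relcotgtpb}), and base change and the projection formula for universally cocontinuous morphisms (Theorem~\ref{thm:bcpf}). Since $f$ is geometric, so is the base change $f'\colon X'\to S'$ (geometric morphisms are stable under pullback), and Proposition~\ref{propn:formscotgt} applied to $f'$ gives a natural equivalence
\[
\Ap_{S'}(X',s)\simeq \Map_{\QCoh(X')}(\mathcal{O}_{X'}[-s],\Lambda^{p}\mathbb{L}_{X'/S'}).
\]
By Lemma~\ref{lem:relcotgtpb} we have $\mathbb{L}_{X'/S'}\simeq \xi^{*}\mathbb{L}_{X/S}$; since $\xi^{*}$ is symmetric monoidal (so preserves $\Lambda^{p}$) this identifies the target with $\xi^{*}\Lambda^{p}\mathbb{L}_{X/S}$, yielding the first equivalence.

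For the second equivalence, I would rewrite $\mathcal{O}_{X'}\simeq f'^{*}\mathcal{O}_{S'}$ and use the adjunction $f'^{*}\dashv f'_{*}$ to pass to mapping spaces in $\QCoh(S')$:
\[
\Map_{\QCoh(X')}(f'^{*}\mathcal{O}_{S'}[-s],\xi^{*}\Lambda^{p}\mathbb{L}_{X/S})\simeq \Map_{\QCoh(S')}(\mathcal{O}_{S'}[-s],f'_{*}\xi^{*}\Lambda^{p}\mathbb{L}_{X/S}).
\]
Now under the assumption that $\sigma$ is universally cocontinuous, Theorem~\ref{thm:bcpf} gives base change along $\sigma$, so $f'_{*}\xi^{*}\simeq \sigma^{*}f_{*}$. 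Applying the adjunction $\sigma^{*}\dashv \sigma_{*}$ and then the projection formula for $\sigma$ (also from Theorem~\ref{thm:bcpf}) yields
\[
\sigma_{*}\sigma^{*}f_{*}\Lambda^{p}\mathbb{L}_{X/S}\simeq \sigma_{*}\mathcal{O}_{S'}\otimes f_{*}\Lambda^{p}\mathbb{L}_{X/S},
\]
giving the desired form.

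For the third equivalence, I would instead start from the first equivalence and use that when $f$ is a relative Artin stack, $\mathbb{L}_{X/S}$ is dualizable in $\QCoh(X)$ (Theorem~\ref{thm:geomhascotgt}(ii)), so in characteristic zero $\Lambda^{p}\mathbb{L}_{X/S}\simeq (\Lambda^{p}\mathbb{T}_{X/S})^{\vee}$, and $\xi^{*}$, being symmetric monoidal, commutes with both duals and $\Lambda^{p}$. The dualizing hom–tensor adjunction then rewrites the mapping space as
\[
\Map_{\QCoh(X')}(\xi^{*}\Lambda^{p}\mathbb{T}_{X/S},\mathcal{O}_{X'}[s]),
\]
and the adjunction $\xi^{*}\dashv \xi_{*}$ turns this into $\Map_{\QCoh(X)}(\Lambda^{p}\mathbb{T}_{X/S},\xi_{*}\mathcal{O}_{X'}[s])$. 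It remains to identify $\xi_{*}\mathcal{O}_{X'}\simeq f^{*}\sigma_{*}\mathcal{O}_{S'}$. Writing $\mathcal{O}_{X'}\simeq f'^{*}\mathcal{O}_{S'}$, this is precisely the Beck–Chevalley equivalence $\xi_{*}f'^{*}\simeq f^{*}\sigma_{*}$ for the pullback square, which holds by Theorem~\ref{thm:bcpf} applied to the universally cocontinuous morphism $\sigma$ (since base change for cartesian squares under $\sigma$ is symmetric in the two legs).

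There is no real obstacle in the proof; it is a purely formal chain of adjunctions and base change. The only thing worth being careful about is ensuring that the Beck–Chevalley equivalence used in part three follows from the universal cocontinuity of $\sigma$ rather than of $f$, which it does because Theorem~\ref{thm:bcpf} is formulated for any cartesian square in which one of the morphisms is universally cocontinuous, regardless of which leg plays the role of the base-changed morphism.
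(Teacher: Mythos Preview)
Your proof is correct and follows essentially the same path as the paper's: for each of the three equivalences you invoke exactly the same ingredients (Proposition~\ref{propn:formscotgt} and Lemma~\ref{lem:relcotgtpb} for the first, adjunctions plus base change and the projection formula from Theorem~\ref{thm:bcpf} for the second, and dualizability of $\mathbb{L}_{X/S}$ together with the Beck--Chevalley equivalence $\xi_{*}f'^{*}\simeq f^{*}\sigma_{*}$ for the third). The only cosmetic difference is that in the second step you pass through $\QCoh(S')$ before descending to $\QCoh(S)$, whereas the paper applies the adjunction for the composite $X'\to S$ in one go; the content is identical.
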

\begin{proof}
 By Lemmas~\ref{propn:formscotgt} and \ref{lem:relcotgtpb} we have equivalences
\[ \Ap_{S'}(X',s)\simeq \Map_{\QCoh(X')}(\mathcal{O}_{X'}[-s],
\Lambda^{p}\mathbb{L}_{X'/S'}) \simeq
\Map(\mathcal{O}_{X'}[-s], \xi^{*}\Lambda^{p}\mathbb{L}_{X/S}).\]
By adjunction, this is equivalent to
$\Map_{\QCoh(S)}(\mathcal{O}_{S}[-s],
\sigma_{*}f'_{*}\xi^{*}\Lambda^{p}\mathbb{L}_{X/S})$. 
If $\sigma$ is universally cocontinous, then this is
equivalent to 
\[ \Map_{\QCoh(S)}(\mathcal{O}_{S}[-s],
\sigma_{*}\sigma^{*}f_{*}\Lambda^{p}\mathbb{L}_{X/S}) \simeq
\Map_{\QCoh(S)}(\mathcal{O}_{S}[-s], \sigma_{*}\mathcal{O}_{S'}
\otimes f_{*} \Lambda^{p}\mathbb{L}_{X/S})\]
using base change and the projection formula. Alternatively, if $f$ is
a relative Artin stack, then we have
\[ 
\begin{split}
\Ap_{S'}(X',s) & \simeq
\Map_{\QCoh(X')}(\xi^{*}\Lambda^{p}\mathbb{T}_{X/S},
\mathcal{O}_{X'}[s]) \\ & \simeq
\Map_{\QCoh(X)}(\Lambda^{p}\mathbb{T}_{X/S},
\xi_{*}f'^{*}\mathcal{O}_{S'}[s]) \\ & \simeq
\Map_{\QCoh(X)}(\Lambda^{p}\mathbb{T}_{X/S},
f^{*}\sigma_{*}\mathcal{O}_{S'}[s]). \qedhere
\end{split}
\]
\end{proof}

\begin{remark}\label{rmk:formfunctor}
  The functoriality of the de Rham complex, as noted in
  Remark~\ref{rmk:DRfunctor}, makes $\Apcl_{(\blank)}(\blank,s)$ a
  functor $\dSt^{\Delta^{1},\op} \to \mathcal{S}$. Let $\mathrm{ClDF}^{p,s}
  \to \dSt^{\Delta^{1}}$ be the corresponding right fibration. The
  composite
  \[ \mathrm{ClDF}^{p,s} \to \dSt^{\Delta^{1}} \xto{\ev_{1}} \dSt \]
  is then a cartesian fibration, since $\ev_{1}$ is a cartesian
  fibration (as $\dSt$ has pullbacks). Over $S$ in $\dSt$, the right
  fibration $\mathrm{ClDF}^{p,s}_{S} \to \dSt_{S}$ corresponds to the presheaf
  $\Apcl_{S}(\blank,s)$ and so $\mathrm{ClDF}^{p,s}_{S} \simeq
  \dSt_{S/\Apcl_{S}(s)}$. The functor $\dSt^{\op} \to \CatI$
  corresponding to $\mathrm{ClDF}^{p,s} \to \dSt$ thus gives compatible functors
  \[ \sigma^{*} \colon \dSt_{S/\Apcl_{S}(s)} \to
    \dSt_{S'/\Apcl_{S'}(s)} \]
  for $\sigma \colon S' \to S$, given on underlying stacks by pullback
  along $S$. In particular, we have a canonical map
  $\sigma^{*}\Apcl_{S}(s) \to \Apcl_{S'}(s)$ in $\dSt_{S'}$, and the
  cartesian morphism over $\sigma$ with source $(X, \omega \colon X
  \to \Apcl_{S}(s))$ is given by the diagram
  \[
    \begin{tikzcd}
      X \arrow{d} & X' \arrow{l}\arrow{d} \arrow{dr} \\
      \Apcl_{S}(s) \arrow{d} & \sigma^{*}\Apcl_{S}(s) \arrow{l}
      \arrow{d} \arrow{r} & \Apcl_{S'}(s) \arrow{dl} \\
      S & S' \arrow{l}{\sigma}
    \end{tikzcd}
    \]
    where the two squares are cartesian.
\end{remark}

\backmatter

 \begin{bibdiv}
 \begin{biblist}
     \bibselect{refs}
 \end{biblist}
 \end{bibdiv}


\end{document}